\numberwithin{equation}{section}
\tikzstyle arrowstyle=[scale=1]
\newtheorem{theorem}{Theorem}[section]
\newtheorem{proposition}[theorem]{Proposition}
\newtheorem{lemma}[theorem]{Lemma}
\newtheorem{corollary}[theorem]{Corollary}
\newtheorem{conjecture}[theorem]{Conjecture}
\theoremstyle{remark}
\newtheorem{remark}[theorem]{Remark}
\newtheorem{example}[theorem]{Example}
\newtheorem{question}[theorem]{Question}
\newtheorem{algorithm}[theorem]{Algorithm}
\theoremstyle{definition}
\newtheorem{definition}[theorem]{Definition}
\newtheorem{notation-conventions}[theorem]{Notation and conventions}
\setlist[enumerate,1]{label={\rm(\arabic*)}, ref={\rm\arabic*}} 
\newlist{i-enumerate}{enumerate}{2}
\setlist[i-enumerate,1]{label={\rm(\roman*)}, ref={\rm\roman*}}
\tikzstyle directed=[postaction={decorate,decoration={markings,
    mark=at position .65 with {\arrow[arrowstyle]{stealth}}}}]
\newcommand{\nn}{\nonumber}
\newcommand\threeDYoung[1]{
\begin{tikzpicture}[x=(220:0.6cm), y=(-40:0.6cm), z=(90:0.42cm)]

\foreach \m [count=\y] in  #1{
  \foreach \n [count=\x] in \m {
  \ifnum \n>0
      \foreach \z in {1,...,\n}{
        \draw [fill=gray!30] (\x+1,\y,\z) -- (\x+1,\y+1,\z) -- (\x+1, \y+1, \z-1) -- (\x+1, \y, \z-1) -- cycle;
        \draw [fill=gray!40] (\x,\y+1,\z) -- (\x+1,\y+1,\z) -- (\x+1, \y+1, \z-1) -- (\x, \y+1, \z-1) -- cycle;
        \draw [fill=gray!5] (\x,\y,\z)   -- (\x+1,\y,\z)   -- (\x+1, \y+1, \z)   -- (\x, \y+1, \z) -- cycle;  
      }
 \fi
 }
} 
\end{tikzpicture}
}
\DeclareMathOperator{\Adj}{Adj}
\DeclareMathOperator{\ch}{ch}
\DeclareMathOperator{\Char}{char}
\DeclareMathOperator{\Hilb}{Hilb}
\DeclareMathOperator{\Ext}{Ext}
\DeclareMathOperator{\GL}{GL}
\DeclareMathOperator{\glo}{glo}
\DeclareMathOperator{\Grass}{Grass}
\DeclareMathOperator{\HE}{HE}
\DeclareMathOperator{\Jac}{Jac}
\DeclareMathOperator{\Proj}{Proj}
\DeclareMathOperator{\pyr}{pyr}
\DeclareMathOperator{\rank}{rank}
\DeclareMathOperator{\Spec}{Spec}
\DeclareMathOperator{\Sym}{Sym}
\DeclareMathOperator{\Td}{Td}
\DeclareMathOperator{\Tor}{Tor}
\newcommand{\longhookrightarrow}{\lhook\joinrel\longrightarrow}
\newcommand{\longtwoheadrightarrow}{\relbar\joinrel\twoheadrightarrow}
\newcommand{\lowsim}{\vbox to 0pt{\vss\hbox{$\scriptstyle\sim$}\vskip-2pt}}
\newcommand{\supth}[1]{\ensuremath{#1^{\mathrm{th}}}}
\title{On singular Hilbert schemes of points: Local structures and tautological sheaves}
\author{Xiaowen Hu}
\address{School of Sciences, Great Bay University, Dongguan, P.\,R.~China}
\email{huxw06@gmail.com}
\begin{document}



\maketitle

\begin{prelims}

\DisplayAbstractInEnglish

\bigskip

\DisplayKeyWords

\medskip

\DisplayMSCclass

\end{prelims}


\newpage

\setcounter{tocdepth}{1}

\tableofcontents


\section{Introduction}

The study of the cohomology of various tautological sheaves on Hilbert schemes of points starts from \cite{Got90}. 
There are many important works, and we just mention a few of them: \cite{GS93,EGL01,Sca09,WZ14,Kru18}. These works all concern Hilbert schemes of points on smooth surfaces, except for \cite{WZ14}, where $\Hilb^3(\mathbb{P}^3)$ is discussed. The involved Hilbert schemes are all smooth.

The Hilbert schemes of  points on higher-dimensional varieties are in general singular. There is a  conjecture on the tautological sheaves proposed by Jian Zhou (see \cite{WZ14})\footnote{This conjecture in \cite{WZ14} was proposed by Zhou in a talk on Hilbert schemes at a conference held at the Chinese Academy of Sciences, and Wang joined the work later.}, as we recall in the following. 
Let $\Bbbk$ be an algebraically closed field of characteristic zero and $X$ be a smooth projective scheme over $\Bbbk$.
Let $\mathcal{Z}$ be the universal subscheme over $\Hilb^n(X)$: 
\[
	\xymatrix{
	\mathcal{Z} \ar[d]^{\pi} \ar[r]^{f} & X \\
	\Hilb^n(X)\rlap{.} & 
	}
	\]
Let $\mathcal{E}$ be a locally free sheaf on $X$. 
The \emph{tautological sheaf}  associated with $\mathcal{E}$ is 
\[
\mathcal{E}^{[n]}:=\pi_* f^* \mathcal{E}.
\]
Then $\mathcal{E}^{[n]}$ is a locally free sheaf on $\Hilb^n(X)$. For every length $n$ closed subscheme $Z$ of $X$, let $[Z]$ denote the point of $\Hilb^n(X)$ representing $Z$; then
\[
\mathcal{E}^{[n]}|_{[Z]}\cong H^{0}(Z,\mathcal{E}).
\]
Moreover, let $\Lambda_u(\mathcal{E})$ be the polynomial of a formal variable $u$ with $K$-theory elements as coefficients:   
\[
 \Lambda_u(\mathcal{E}):=\sum_{i=0}^{\mathrm{rank}\ \mathcal{E}}u^i \wedge^i \mathcal{E}.
 \]
For two locally free sheaves $\mathcal{F}$ and $\mathcal{G}$ on a projective scheme $Y$ over  $\Bbbk$, define
\begin{gather*}
 \chi(\mathcal{F},\mathcal{G})=\sum_{i=0}^{\infty}(-1)^i \dim_{\Bbbk} \Ext_{\mathcal{O}_Y}^i(\mathcal{F},\mathcal{G}).
 \end{gather*}
We will also use the above notation for vector bundles associated with locally free sheaves.
Now we are ready to recall \cite[Conjecture 1]{WZ14} in a slightly generalized form.

\begin{conjecture}
\label{conj-intro-eularchar-global}
Suppose $\dim X\geq 2$. Then for line bundles $K,L$ on $X$,
\begin{eqnarray}\label{eq-intro-eularchar-global}
1+\sum_{n=1}^{\infty}\chi\left(\Lambda_{-v}K^{[n]},\Lambda_{-u}L^{[n]}\right)Q^n=
\exp \left(\sum_{n=1}^{\infty}\chi(\Lambda_{-v^n}K, \Lambda_{-u^n}L)\frac{Q^n}{n}\right).
\end{eqnarray}
\end{conjecture}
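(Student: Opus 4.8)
The plan is to reduce Conjecture~\ref{conj-intro-eularchar-global} to an equivariant identity on $\mathrm{Hilb}^{\bullet}(\mathbb{A}^3)$ and then to verify that identity for $n\le 6$ using the local structures established in this paper. Fix a smooth projective threefold $X$ and write $H^{[n]}:=(\Lambda_{-v}K^{[n]})^{\vee}\otimes\Lambda_{-u}L^{[n]}$, so that the left-hand side of \eqref{eq-intro-eularchar-global} is $1+\sum_n\chi(\mathrm{Hilb}^n(X),H^{[n]})Q^n$. Both sides are ``local over $X$'': after stratifying $\mathrm{Hilb}^n(X)$ by the multiplicity partition of the subscheme, the only global input is the germ of $X$ at a point, since the punctual Hilbert-scheme fibration is analytically locally trivial; consequently, in the spirit of the Ellingsrud--G\"ottsche--Lehn universality \cite{EGL01}, the coefficient of each $Q^n$ on either side should be a universal polynomial in the Chern numbers of $(T_X,K,L)$, and it therefore suffices to test \eqref{eq-intro-eularchar-global} on smooth projective toric threefolds, where $T$-equivariant localization is available.

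So let $X$ be toric with torus $T$. A $T$-fixed point of $\mathrm{Hilb}^n(X)$ is a colength function $p\mapsto n_p$ on the finite set $X^T$ with $\sum_p n_p=n$, together with a monomial ideal -- a $3$-dimensional partition $\pi_p$ of size $n_p$ -- in the torus chart at each $p$. Since $H^0(Z,L)=\bigoplus_p H^0(Z_p,L)$ for a $T$-fixed $Z$ and $\Lambda_{-u}$ sends direct sums to tensor products, the $T$-equivariant Euler characteristic $\chi^T(\mathrm{Hilb}^n(X),H^{[n]})$, computed by Lefschetz--Riemann--Roch localization, is a sum over these fixed points of a product over $p\in X^T$ of local factors; summing over $n$ and over colength distributions, the left-hand side of \eqref{eq-intro-eularchar-global} becomes $\prod_{p\in X^T}\big(1+\sum_{m\ge1}\mathcal{Z}_m(p)\,Q^m\big)$, where $\mathcal{Z}_m(p)$ collects the local contributions of the $3$-dimensional partitions of size $m$ at $p$. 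The right-hand side is already such a product, because $\chi(\Lambda_{-v^n}K,\Lambda_{-u^n}L)=\chi\big(X,(1-v^nK^{-1})(1-u^nL)\big)$ localizes to $\sum_{p\in X^T}$ of a term depending only on the weights of $T_pX$, $K|_p$, $L|_p$ and the scalars $u^n,v^n$. Hence the conjecture is equivalent to a single \emph{local identity} at a point of $\mathbb{A}^3$:
\[
1+\sum_{m\ge1}\mathcal{Z}_m\,Q^m=\exp\Big(\sum_{m\ge1}\mathcal{Z}_1^{(m)}\,\frac{Q^m}{m}\Big),
\]
an equality of power series in $Q$ with coefficients Laurent polynomials in the three tangent weights $t_1,t_2,t_3$ and the weights $\kappa,\ell$ of $K,L$, where $\mathcal{Z}_1^{(m)}$ is the $m=1$ local term with $u,v$ replaced by $u^m,v^m$. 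This is the threefold analogue of G\"ottsche's classical surface computation.

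To verify this local identity for $n\le 6$ I would invoke the main results of the present paper. Since $\mathrm{Hilb}^n(\mathbb{A}^3)$ is singular for $n\ge 4$, the localization contribution at a fixed point $[\pi]$ is not a ratio of tangent weights; instead it is governed by the equivariant Hilbert function at $[\pi]$ -- the $T$-character of the local ring, equivalently of the tangent cone -- which this paper computes, and which takes only finitely many shapes for $n\le 7$ because points with the same ``extra dimensions'' are singularities of the same type. For each $3$-dimensional partition $\pi$ with $|\pi|\le 6$ one then writes the equivariant character of $K^{[n]}|_{[\pi]}$ and $L^{[n]}|_{[\pi]}$ as the sum, over the boxes of $\pi$, of the box's monomial weight twisted by $\kappa$, respectively $\ell$; assembles the $K$-theoretic Euler contribution from the local model; sums over $\pi$ to form $\mathcal{Z}_m$ for $m\le 6$; and checks the displayed identity degree by degree in $Q$. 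As there are only finitely many $3$-dimensional partitions of size $\le 6$, this is a finite computation.

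The main obstacle -- and the reason this yields a verification for $n\le 6$ rather than an unconditional proof -- is concentrated in the last two steps. First, Lefschetz--Riemann--Roch at a \emph{singular} fixed point of $\mathrm{Hilb}^n(X)$ needs a form of $K$-theoretic localization valid for singular projective schemes, together with a precise reading of the local factor off the equivariant Hilbert function; making this work uniformly in $n$ would require control of the singularities of $\mathrm{Hilb}^n(\mathbb{A}^3)$ for all $n$, which is exactly what is not available beyond the range treated here. Second, even granting all the reductions, the displayed local identity is a genuine cancellation among the contributions of all $3$-dimensional partitions of a fixed size; its surface analogue is already a delicate fact, so for general $n$ the identity must remain conjectural, and the present methods confirm it only where the local geometry of the Hilbert scheme is understood, namely for $n\le 6$.
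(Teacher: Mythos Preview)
First, note that the statement is a \emph{conjecture}; the paper does not prove it in full, only the partial result of Theorem~\ref{thm-intro-verify-conj}. Your proposal is effectively a proof sketch for that partial result, so I compare against the paper's argument for it.

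The core of your approach --- equivariant localization at the $T$-fixed monomial ideals of $\mathrm{Hilb}^n(X)$ for toric $X$, factorization of the generating series as a product over $X^T$, and reduction to a single local identity on $\mathrm{Hilb}^{\bullet}(\mathbb{A}^3)$ in the tangent weights, to be checked degree by degree via the equivariant Hilbert functions computed here --- is exactly the paper's route: the local identity is Conjecture~\ref{conj-eularchar-local}, the implication local $\Rightarrow$ global is Proposition~\ref{prop-local-to-global}, and the verification for $|\lambda|\le 6$ is Proposition~\ref{prop-verify-local}. Your description of the fixed-point contribution (boxes of $\pi$ twisted by $\kappa$ and $\ell$, weighted by the equivariant Hilbert function of the local ring) matches (\ref{eq-eulercharacteristics-localization-0}) and (\ref{eq-sum-1}). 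The ``obstacle'' you flag about localization at singular fixed points is handled in the paper by the version of Thomason's theorem in Section~\ref{sec:thomason-localization-theorem}, which needs only reduced isolated fixed points and expresses the local factor precisely through the equivariant Hilbert function; this is not an obstruction here.

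The genuine gap is your first step. You invoke an Ellingsrud--G\"ottsche--Lehn type universality to pass from arbitrary smooth projective threefolds to toric ones, but that argument rests on smoothness of the Hilbert scheme: one writes $\chi$ via Riemann--Roch as an integral of Chern classes and hence as a universal polynomial in Chern numbers. For $n\ge 4$ the Hilbert scheme is singular, so no such Chern-number expression is available, and the analytic local triviality of the punctual fibration by itself does not give polynomiality of $\chi(H^{[n]})$ in Chern numbers of $(T_X,K,L)$. The paper does \emph{not} attempt this reduction: Theorem~\ref{thm-intro-verify-conj} is stated only for smooth proper \emph{toric} $X$ and \emph{equivariant} line bundles; the extension to general threefolds is deferred to \cite{HuX20}, and even there the method is a reduction to products of projective spaces rather than cobordism. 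If you remove the universality paragraph, your proposal coincides with the paper's argument and its scope.
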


In particular, taking coefficients of lower-degree terms of $u$ and $v$, we deduce from (\ref{eq-intro-eularchar-global}) 
\[
 1+\sum_{n=1}^{\infty}\chi\left(\mathcal{O}_{X^{[n]}}\right)Q^n=(1-Q)^{-\chi(\mathcal{O}_X)}
\] 
and
\[
\sum_{n=1}^{\infty}\chi\left(L^{[n]}\right)Q^n=(1-Q)^{-\chi(\mathcal{O}_X)}\chi(L)Q.
\]

\begin{remark}\label{rem:history_line_bundle_case}
  In \cite{WZ14}  the corresponding conjecture is stated  with the assumption $K=L$, and it is shown to be true in \textit{op.\ cit.}~when  $\dim X=2$. Their proof also works  for any line bundles $K$ and $L$. They also studied an equivariant version of this conjecture, for $X=\mathbb{C}^3$ and $n=3$ (\textit{i.e.}, the equality induced by the coefficients of $Q^3$). For $v=0$ and still $\dim X=2$, Conjecture~\ref{conj-intro-eularchar-global} is a consequence of \cite[Theorem~2.4.5]{Sca09}. The complete dimension $2$ case, together with some sheaf version enhancement, is proven by Krug in  \cite{Kru18}. 
The conjecture fails for curves, as observed in \cite[Section~6.1]{Kru18}. See also \cite{Wan16} and \cite{OpS23} for related problems on curves.
\end{remark}

\begin{remark}\label{rem:history_vector_bundle_case}
If  $K$ and $L$ are allowed to be vector bundles of arbitrary ranks,  in \cite[Section~2.3]{WZ14}  it is conjectured that Formula (\ref{eq-intro-eularchar-global}) remains true if $K=L$. In dimension $2$, Krug showed in \cite[Section~6.3]{Kru18} that this does not hold and gave a formula in a special case. We showed in \cite{Hu21} the existence of a universal formula in every dimension, but an explicit formula like the right-hand side of (\ref{eq-intro-eularchar-global})  is still missing even in dimension $2$.
\end{remark}

As main motivation of this paper, we study the validity of Conjecture~\ref{conj-intro-eularchar-global} for smooth toric 3-folds $X$, where the involved Hilbert schemes are singular.

Our main tool is Thomason's Lefschetz fixed-point theorem for singular schemes with torus actions (see also \cite[Theorem 6.4]{Tho86} and \cite[th\'eor\`eme 3.5]{Tho92}). In fact, we prove a Lefschetz fixed-point theorem for such schemes with reduced isolated fixed points, without assuming a global equivariant embedding. Roughly speaking, for such schemes (and, more generally, algebraic spaces) $X$ and a locally free $T$-sheaf on $X$, we have the following. 

\begin{proposition}[=Corollary~\ref{cor-Thomason-reducedisolatefixed-1} and Equation~(\ref{eq-Thomason-reducedisolatefixed-2})]\label{prop-intro-Thomason-reducedisolatefixed-1}
\begin{equation}\label{eq-intro-Thomason-reducedisolatefixed-2}
\sum (-1)^i H^i(X,\mathcal{F})=\sum_{x\in X^T} (\mathcal{F}_x/\mathfrak{m}_x \mathcal{F}_x)\cdot H\left(\widehat{\mathcal{O}}_{X,x};\mathbf{t}\right),
\end{equation}
where $H(\widehat{\mathcal{O}}_{X,x};\mathbf{t})$ is the equivariant Hilbert function of the completed local ring of\, $X$ at the fixed points $x$. 
\end{proposition}

This revised formula makes Thomason's theorem more computable.
In Section~\ref{sec:tautological-sheaves}, we outline how (\ref{eq-intro-Thomason-reducedisolatefixed-2}) can in principle be used to compute the Euler characteristics of tautological sheaves on $\Hilb^{\Phi}(\mathbb{P}^r)$ for any Hilbert polynomial $\Phi$.
Conjecture~\ref{conj-intro-eularchar-global} is reduced to a conjecture on equivariant Hilbert functions
\begin{eqnarray}\label{eq-intro-conj-equi-Hilbert-functions}
&&\sum_{\lambda\in \mathscr{P}_r}\left(Q^{|\lambda|} H(A_{\lambda};\theta_1,\dots,\theta_r)
\prod_{\mathbf{i}=(i_1,...,i_r)\in \lambda}\left(1-u \theta_1^{i_1}\cdots\theta_r^{i_r}\right)\left(1-v \theta_1^{-i_1}\cdots\theta_r^{-i_r}\right)\right)\notag\\
&&=\exp\left(\sum_{n=1}^{\infty}\frac{(1-u^n )(1-v^n)Q^n}{n\left(1-\theta_1^n\right)\cdots\left(1-\theta_r^n\right)}\right),
\end{eqnarray} 
where $r\geq 2$ and $\mathscr{P}_r$ is the set of $r$-dimensional partitions and  $H(A_{\lambda};\theta_1,\dots,\theta_r)$ is the equivariant Hilbert function of the completed local ring of $\Hilb^{|\lambda|}(\mathbb{A}^r)$ at the monomial ideal $I_{\lambda}$ associated with the partition $\lambda$; for the notation, see Sections~\ref{sec:Monomial-ideals-of-finite-colength} and~\ref{sec:Haiman-equations}. 
The equivariant Hilbert functions seem far from being effectively computable in general, while a natural expansion of the right-hand side of (\ref{eq-intro-conj-equi-Hilbert-functions}) into terms parametrized by higher-dimensional (at least $3$) partitions does not seem to exist in combinatorics for the time being. The work of Wang--Zhou \cite{WZ14} essentially solved the case $r=2$.

Therefore, to apply this formula, we need to study the local structure of Hilbert schemes of points on~$\mathbb{A}^r$. In this paper, we only study $\mathbb{A}^3$. The tool is Haiman's equations; see \cite{Hai98} (see also \cite{Hui06}). We make some algebraic manipulations on Haiman's equations so that we can compute the equivariant Hilbert functions. 

Conjecture~\ref{conj-existence-equivariant-iso-exdim6}, mentioned in Theorem~\ref{thm-intro-verify-conj}, predicts the equivariant local structure at points defined by non-Borel monomial ideals of colength $7$ and extra dimension $6$. The equivariant local structures at  the points defined by  Borel ideals of colength at most $6$, and the non-equivariant local structures at  the points  for all monomial ideals of colength at most $7$, are solved in this paper. In this process, we find some interesting phenomena. 

\begin{proposition}[= Proposition~\ref{prop--localstructure-extradim6}]\label{prop-intro-localstructure-extradim6}
Let $z$ be a point on $\Hilb^n(\mathbb{A}^3)$. For $n\leq 7$, if the embedded dimension at $z$ is $3n+6$, then there exist an open neighborhood $U$ of $z$ and an open immersion  $U\hookrightarrow \widehat{G}(2,6)\times \mathbb{A}^{3n-9}$.
\end{proposition}

When $n=4$, this is a classical result of S.~Katz \cite{Kat94}.
For  points with embedded dimension $3n+8$, we also find  such \emph{similarity} phenomena (see Section~\ref{sec:3D-1321} and Appendix~\ref{sec:3D-2311}). For an extensive discussion, we refer the reader to Section~\ref{sec:conjectures}. In summary, although the local structures of Hilbert schemes of points are in general very bad (see \cite{Jel20}), there seem to be unexpected patterns.

As a result, we can show that the Hilbert schemes of at most $7$  points on smooth 3-folds have certain good local properties.

\begin{theorem}\label{thm-intro-local-property-lessorequal-7points}
Let $X$ be the smooth quasi-projective 3-fold. Then $\Hilb^n(X)$ is normal, Gorenstein for $n\leq 7$, and has only rational singularities for $n\leq 6$.
\end{theorem}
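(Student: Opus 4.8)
\emph{Proof plan.} The three properties in the statement --- normality, the Gorenstein property, and rationality of singularities --- are all \'etale-local on $\mathrm{Hilb}^n(X)$ and are preserved by passing to an open subscheme and by taking products with a smooth variety. The first step is therefore to reduce to $X=\mathbb{A}^3$. Using the \'etale-local structure of the Hilbert scheme of points --- a neighborhood of a point $[Z]$ with $Z=\bigsqcup_i Z_i$, the $Z_i$ supported at distinct points $p_i$ of colength $n_i$, is \'etale over $\prod_i\mathrm{Hilb}^{n_i}(\widehat{\mathcal{O}}_{X,p_i})$ times an affine space, and $\widehat{\mathcal{O}}_{X,p_i}\cong k[[x_1,x_2,x_3]]$ since $X$ is a smooth $3$-fold --- it suffices to prove that $\mathrm{Hilb}^n(\mathbb{A}^3)$ is normal and Gorenstein for $n\le 7$ and has rational singularities for $n\le 6$.

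Next I would reduce to the torus-fixed points. The standard torus $T=(\mathbb{G}_m)^3$ acts on $\mathrm{Hilb}^n(\mathbb{A}^3)$ with fixed locus exactly the (finitely many) monomial ideals of colength $n$, and a generic one-parameter subgroup $\mathbb{G}_m\hookrightarrow T$ acting with positive weights on $x_1,x_2,x_3$ contracts every point of $\mathrm{Hilb}^n(\mathbb{A}^3)$ to a fixed point. Each of the non-normal, non-Gorenstein, and non-rational loci is closed and $T$-invariant, so if it is nonempty it meets the fixed locus. Hence it is enough to check, for every monomial ideal $I\subset k[x_1,x_2,x_3]$ of colength $n\le 7$ (resp.\ $n\le 6$), that the complete local ring $\widehat{\mathcal{O}}_{\mathrm{Hilb}^n(\mathbb{A}^3),[I]}$ is normal and Gorenstein (resp.\ has rational singularities). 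As a byproduct the same classification below shows that each such local ring is a domain, so $\mathrm{Hilb}^n(\mathbb{A}^3)$ is irreducible for $n\le 7$, which is what makes the statement meaningful.

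Then I would invoke the local structure computed in this paper. By Proposition~\ref{prop--localstructure-extradim6} together with the analogous computations, via the manipulation of Haiman's equations in Section~\ref{sec:Haiman-equations}, for the remaining embedded dimensions (up to $3n+8$), for every monomial ideal $I$ of colength $n\le 7$ the ring $\widehat{\mathcal{O}}_{\mathrm{Hilb}^n(\mathbb{A}^3),[I]}$ is isomorphic to $\widehat{R}\,\widehat{\otimes}\,k[[t_1,\dots,t_\ell]]$, where $\widehat{R}$ runs over a short explicit list of model complete local rings: the field $k$ (a smooth point); the local ring at the vertex of the affine cone $\widehat{G}(2,m)$ over the Grassmannian $G(2,m)$ in its Pl\"ucker embedding, $m\le 6$; and the one or two further models occurring at embedded dimension $3n+8$. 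For the cones over $G(2,m)$ one uses the classical facts that the Pl\"ucker coordinate ring is a normal Cohen--Macaulay algebra with straightening law whose singular locus is the vertex alone (of codimension $2m-3\ge 3$), that $\operatorname{Pic}G(2,m)=\mathbb{Z}\cdot\mathcal{O}(1)$ with $\omega_{G(2,m)}\cong\mathcal{O}(-m)$, so the cone is Gorenstein, and that $-K_{G(2,m)}$ is ample, so $H^{>0}(G(2,m),\mathcal{O}(j))=0$ for all $j\ge 0$ by Kodaira vanishing, whence the cone has rational singularities. The remaining sporadic models (which occur only for $n$ close to $7$) I would treat directly from their explicit equations, checking normality and the Gorenstein property, and checking that those arising for $n\le 6$ are rational while the one arising for colength $7$ is Gorenstein but fails to be rational (this is what pins down the bound $n\le 6$). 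Since all three properties descend from $\widehat{R}$ to $\widehat{R}\,\widehat{\otimes}\,k[[t_1,\dots,t_\ell]]$, the theorem follows.

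The hard part is everything upstream of this argument: the complete classification of monomial ideals of colength $\le 7$ in $\mathbb{A}^3$ by embedded dimension, and the proof that the complete local ring at each such point is \emph{precisely} a formal product of one of a handful of model singularities with a power series ring. Granting that --- which is the technical core of the earlier sections --- the present theorem reduces to the essentially routine verification of normality, the Gorenstein property, and rationality for affine cones over small Grassmannians, plus a finite hand-check of the exceptional embedded-dimension-$(3n+8)$ models; the only delicate point there is writing down the exact equations of those exceptional models and confirming which of them is not rational.
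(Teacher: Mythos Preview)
Your overall strategy matches the paper's: reduce to $\mathbb{A}^3$ by \'etale-locality, then reduce to checking the model singularities that arise at monomial ideals, namely $\widehat{G}(2,6)\times\mathbb{A}^\ell$ for embedded dimension $3n+6$ and a single sporadic model $S$ (the Jacobian ring of $F_{1321}$) for embedded dimension $3n+8$, which appears only when $n=7$. The paper's reduction to fixed points goes through the $\mathrm{GL}(3)$-action and Borel fixed ideals (Lemma~\ref{lem-transitive-open-nbd}) rather than a one-parameter contraction, but your argument via properness of the Hilbert--Chow morphism is also valid. One inaccuracy: only $\widehat{G}(2,6)$ occurs, not $\widehat{G}(2,m)$ for smaller $m$.

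There is, however, a genuine misreading in your last paragraph. You assert that the colength-$7$ sporadic model ``fails to be rational'' and that this ``pins down the bound $n\le 6$''. The paper does \emph{not} prove this and in fact explicitly expects the opposite (see the discussion following Theorem~\ref{thm-local-property-lessorequal-7points}): rationality of $0\in\Spec(S)$ is left open. The bound $n\le 6$ for rational singularities reflects only what the paper is able to \emph{prove}, not an established obstruction. Relatedly, for the Gorenstein property of $S$ the paper does not carry out a direct socle computation from the explicit equations; instead (Proposition~\ref{prop-local-properties-A1321-1}) it exhibits a regular sequence of length $16$ to obtain Cohen--Macaulayness and then invokes Stanley's criterion (Theorem~\ref{thm-Stanley-Gorenstein}) via the reciprocity of the equivariant Hilbert function noted in Remark~\ref{rem-reciprocal-law}; normality then follows from Serre's criterion together with Proposition~\ref{prop-jacF1321-isolated}. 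You should adjust your plan accordingly: for the sporadic model you establish normality and the Gorenstein property only, and you should not attempt to prove non-rationality.
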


We expect that $\Hilb^7(X)$ also has only rational singularities; see the end of Section~\ref{sec:local-properties-hilb-schemes} for some discussions. In Section~\ref{sec:McKay-correspondence}, we consider Conjecture~\ref{conj-intro-eularchar-global} from the viewpoint of the McKay correspondence. This provides us motivation to consider the rationality of the above-mentioned singularities.

Our results on the local structures enable us to compute the equivariant Hilbert functions. Consequently, we verify Conjecture~\ref{conj-intro-eularchar-global} for $n\leq 6$ and equivariant line bundles $K$ and $L$ on toric 3-folds. In \cite{Hu21}, using degenerations of Hilbert schemes, we extend this result to all smooth proper 3-folds.

\begin{theorem}\label{thm-intro-verify-conj}
Conjecture~\ref{conj-intro-eularchar-global} modulo $Q^7$ holds for smooth proper toric 3-folds $X$ and equivariant line bundles $K,L$ on $X$. Assume that Conjecture~\ref{conj-existence-equivariant-iso-exdim6} is true; then Conjecture~\ref{conj-intro-eularchar-global} modulo $Q^8$ holds for smooth proper toric 3-folds $X$ and equivariant line bundles $K,L$ on $X$.
\end{theorem}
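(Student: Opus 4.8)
The plan is to reduce Theorem~\ref{thm-intro-verify-conj} to the combinatorial identity (\ref{eq-intro-conj-equi-Hilbert-functions}) with $r=3$, truncated modulo $Q^7$ (resp. $Q^8$ under the assumption on conjecture~\ref{conj-existence-equivariant-iso-exdim6}), and then to verify that truncated identity by explicit computation of the equivariant Hilbert functions $H(A_\lambda;\theta_1,\theta_2,\theta_3)$ for all $3$-dimensional partitions $\lambda$ with $|\lambda|\le 6$ (resp. $\le 7$). The first reduction step is to recall from section~\ref{sec:tautological-sheaves} that the Thomason--Lefschetz formula (\ref{eq-intro-Thomason-reducedisolatefixed-2}), applied to the torus $T=(\mathbb{G}_m)^3$ acting on $X$ and hence on $\mathrm{Hilb}^n(X)$, expresses $\chi(\Lambda_{-v}K^{[n]},\Lambda_{-u}L^{[n]})$ as a sum over the $T$-fixed points of $\mathrm{Hilb}^n(X)$. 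Since $X$ is a smooth proper toric $3$-fold, those fixed points are clusters supported at the torus-fixed points of $X$, each of which has a $T$-invariant affine chart $\cong\mathbb{A}^3$; the fixed points of $\mathrm{Hilb}^n(\mathbb{A}^3)$ are exactly the monomial ideals, indexed by $3$-dimensional partitions $\lambda$ of size $n$. So the global Euler characteristic becomes a sum, over decorated configurations of such partitions at the toric fixed points of $X$, of products of the local contributions $H(A_\lambda;\mathbf{t})$ times the characters by which $K,L$ and the tangent directions are twisted at each fixed point. Matching this against the exponential on the right of (\ref{eq-intro-eularchar-global}) — whose right side already factors through the toric fixed points of $X$ by the same Lefschetz formula applied on $X$ itself — cancels all the global toric bookkeeping and leaves precisely (\ref{eq-intro-conj-equi-Hilbert-functions}) with $r=3$, degree by degree in $Q$. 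This is the part I would treat as essentially formal, citing section~\ref{sec:tautological-sheaves}.

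The substantive step is then to compute $H(A_\lambda;\theta_1,\theta_2,\theta_3)$ for every $\lambda\in\mathscr{P}_3$ with $|\lambda|\le 7$. For this I would invoke the local-structure results the paper has established via Haiman's equations: for each such $\lambda$ one knows the completed local ring $\widehat{A}_\lambda$ of $\mathrm{Hilb}^{|\lambda|}(\mathbb{A}^3)$ explicitly, together with its $T$-action. In the cases where the embedded dimension is $3n$ (smooth points) the Hilbert function is just $\prod (1-t)^{-1}$ over the $3n$ tangent weights; in the cases of extra dimension $6$ one uses proposition~\ref{prop-intro-localstructure-extradim6}, which identifies a neighborhood $T$-equivariantly with $\widehat{G}(2,6)\times\mathbb{A}^{3n-9}$, so $H(A_\lambda;\mathbf{t})$ is the product of the (known) equivariant Hilbert series of the Grassmannian cone and the smooth factor; the remaining colength-$\le 6$ cases (Borel ideals, handled in this paper) and the non-Borel colength-$7$ case are dealt with by the corresponding local computations, the latter conditional on conjecture~\ref{conj-existence-equivariant-iso-exdim6}. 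With the finite list of $H(A_\lambda;\mathbf{t})$ in hand, substituting into the left side of (\ref{eq-intro-conj-equi-Hilbert-functions}) and expanding the product over boxes $\mathbf{i}\in\lambda$ gives, order by order in $Q^{|\lambda|}$, a rational function in $\theta_1,\theta_2,\theta_3,u,v$; one then checks it equals the corresponding $Q$-coefficient of the exponential on the right. Because both sides are symmetric in a suitable sense and the denominators are controlled, this is a finite rational-function identity in each degree $\le 6$ (resp. $\le 7$), verifiable directly.

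The main obstacle is the computation of the equivariant Hilbert functions at the singular points, especially at colength $7$ with large extra dimension: Haiman's equations give generators and relations but extracting the graded character of the completed local ring requires understanding the full ideal of relations and its $T$-graded minimal free resolution (or at least enough of it to pin down the Hilbert series), and for the non-Borel colength-$7$ ideals of extra dimension $6$ this is exactly what is not yet known unconditionally — hence the conditional statement. A secondary difficulty is bookkeeping: one must be careful that the identification in proposition~\ref{prop-intro-localstructure-extradim6} is $T$-equivariant with the correct weights, since the Lefschetz contributions depend on the characters, not merely on the abstract isomorphism type; getting the weight of the line bundles $K,L$ and of the $\widehat{G}(2,6)$-directions right at each fixed point is where sign and exponent errors would creep in. Once the finite table of local contributions is verified, assembling them into the global statement modulo $Q^7$ (resp. $Q^8$) is routine, and the truncation degree is dictated precisely by the range of $n$ for which the local structure has been determined.
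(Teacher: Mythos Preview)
Your plan is the same as the paper's: reduce the global conjecture to the local identity (\ref{eq-intro-conj-equi-Hilbert-functions}) via the localization formula (this is Proposition~\ref{prop-local-to-global}), compute all the $H(A_\lambda;\mathbf{t})$ for $|\lambda|\le 6$ (resp.\ $\le 7$), and verify the truncated identity by direct computation (Proposition~\ref{prop-verify-local}).

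Two points of imprecision are worth flagging. First, Proposition~\ref{prop-intro-localstructure-extradim6} does \emph{not} assert a $T$-equivariant isomorphism; it is only a non-equivariant open immersion, obtained via lemma~\ref{lem-transitive-open-nbd} by transporting to a Borel chart. The equivariant identifications needed for the Hilbert functions come instead from the explicit weighted-homogeneous changes of variables carried out case by case in \S\ref{sec:3D-121}--\ref{sec:3D-132} and Appendix~\ref{sec:appendix-changevariable-Borelideals} for the Borel ideals, and from the much harder fractional change of variables in Appendix~\ref{sec:change-var-1311} for the single non-Borel ideal $\lambda_{1311}$ of colength~$6$. This is precisely the issue you raise as a ``secondary difficulty'', but it is not secondary: without equivariance the Grassmannian-cone description gives no Hilbert function, and for the non-Borel colength-$7$ ideals this equivariance is exactly the content of conjecture~\ref{conj-existence-equivariant-iso-exdim6}.

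Second, at colength~$7$ there is one Borel ideal, $\lambda_{1321}$, whose extra dimension is~$8$ rather than~$6$; it is not a $\widehat G(2,6)$-cone and is handled separately (\S\ref{sec:3D-1321}) via an explicit superpotential $F_{1321}$, with $H(A_{\lambda_{1321}};\mathbf{t})$ computed by Gr\"obner basis in Proposition~\ref{prop-Hilbertfunction-1321}. Your case split (smooth / extra dimension~$6$ / ``remaining'') glosses over this, but it is the most computationally involved contribution to the $Q^7$ term.
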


The difficulty that prevents us from extending Theorem~\ref{thm-intro-verify-conj} to the 7-point case, \textit{i.e.}, the validity of Conjecture~\ref{conj-intro-eularchar-global} modulo $Q^8$, is that our determination of the local structure of $\Hilb^7(\mathbb{A}^3)$ at the torus-fixed points corresponding to non-Borel ideals is \emph{indirect}, and presently we are not able to find an explicit \emph{equivariant} isomorphism. This difficulty appears already for $\Hilb^6(\mathbb{A}^3)$, where we find an explicit equivariant isomorphism in Appendix~\ref{sec:change-var-1311} by brute force.\\

The structure of this paper is as follows:
\begin{enumerate}
	\item In Section~\ref{sec:thomason-localization-theorem}, we prove a Thomason-type localization theorem without assuming a global equivariant embedding into a regular scheme. This enables us to apply this theorem to smooth proper toric varieties. Moreover, we recall the notion of equivariant Hilbert functions and the properties of these functions  that we will use.
	\item Section~\ref{sec:equi-embeddings-in-Grass} explains the framework needed to apply the localization theorem to compute the equivariant Euler characteristic of tautological sheaves on Hilbert schemes. We introduce some notions and notation for later use.
	\item Section~\ref{sec:local-equations-Hilbert-schemes} is the most technical part of this paper. We recall Haiman's defining equations of Hilbert schemes of points and the notions of Borel and non-Borel ideals. Then we give an explicit superpotential function for $3$-dimensional pyramids; we consider the pyramid partitions as a window to get a glimpse of the structure of Hilbert schemes of points in higher dimensions.  In Section~\ref{sec:3D-131}, we explain an algorithm and an elementary trick of manipulations of Haiman's equations at the unique singular fixed point of $\Hilb^5(\mathbb{A}^3)$. Then we use it to simplify the Haiman ideal for singular points corresponding to Borel ideals. In Section~\ref{sec:3D-1311}, we explain the difficulty in getting the equivariant local structure at the singular points corresponding to non-Borel monomial ideals. We conclude this section with a series of observations and conjectures on the local structure of $\Hilb^n(\mathbb{A}^3)$. Recently, Jelisiejew, Ramkumar, and  Sammartano made remarkable progress on these conjectures in \cite{JRS24}.
	\item In Section~\ref{sec:euler-char-taut-sheaves}, we compute the equivariant Hilbert functions of the local rings at the fixed singular points and prove Theorem~\ref{thm-intro-verify-conj}. We explain  Conjecture~\ref{conj-intro-eularchar-global} as a McKay correspondence, and using Riemann--Roch for stacks, we give evidence for it.
	\item In Section~\ref{sec:local-properties-hilb-schemes}, we study the local properties of $\Hilb^n(X)$ for $n\leq 7$ using the results and tricks in Section~\ref{sec:local-equations-Hilbert-schemes}.
	\item The appendices contain some tedious details. Extremely complicated is Appendix~\ref{sec:change-var-1311}, where we obtain the equivariant local structure at the unique non-Borel point of $\Hilb^6(\mathbb{A}^3)$ by the trick of Section~\ref{sec:3D-131}. In principle, one can solve Conjecture~\ref{conj-existence-equivariant-iso-exdim6} in a similar way. But we hope that there is a more conceptual approach.
\end{enumerate}

\begin{notation-conventions}\label{nota-conv:intro}\leavevmode 
\begin{enumerate}
	\item The results in Sections~\ref{sec:thomason-localization-theorem} and~\ref{sec:equi-embeddings-in-Grass} hold over an arbitrary field $\Bbbk$. Starting from Section~\ref{sec:Haiman-equations}, we assume $\Char(\Bbbk)=0$.
	\item Most of the notation is defined in Sections~\ref{sec:equivariant-Hilbertfunction},~\ref{sec:Monomial-ideals-of-finite-colength}, and~\ref{sec:Haiman-equations}.
	\item $\widehat{G}(2,6)$ stands for the cone of the Grassmannian $G(2,6)$ in $\mathbb{P}^{14}$ by the Pl\"ucker embedding.
\end{enumerate}
\end{notation-conventions}

\subsection*{Accompanying files} The Macaulay2 codes implementing Algorithm~\ref{alg-step0-Haiman}, the computation for Proposition~\ref{prop-Hilbertfunction-1321}, and the computation in the proof of Proposition~\ref{prop-verify-local}, together with some other pertinent accompanying files, can be found at
\url{https://github.com/huxw06/Hilbert-scheme-of-points}.\\

\subsection*{Acknowledgments}
I am grateful to Jian Zhou for sharing his insights on Conjecture~\ref{conj-intro-eularchar-global}.
I thank Zhilan Wang,  Hanlong Fang, Lei Song, Dun Liang, Feng Qu, and Yun Shi for helpful discussions. I thank Yongqiang Zhao for his interesting comment. I am indebted to the authors of Macaulay2 \cite{GS-Mac} and its packages for providing such a wonderful tool. I thank the referees for their helpful suggestions, especially those about Lemma~\ref{lem-transitive-open-nbd} and Remark~\ref{rem:proof_from_Krug_paper}.  I thank also Mark Wibrow, for I used his codes\footnote{https://tex.stackexchange.com/questions/145137/tikz-plane-partitions-with-labeled-faces} to draw the $3$-dimensional partition graphs in latex.

\section{Thomason's localization theorem and equivariant Hilbert functions}\label{sec:thomason-localization-theorem}
The purpose of this section is twofold. Firstly, we will recall Thomason's localization theorem, and in the case that the fixed locus consists of reduced isolated points, we  express the formula in terms of equivariant Hilbert functions. Secondly, in this restricted case, we remove the assumption of the existence of a global equivariant embedding into a regular algebraic space. 

Throughout this section, all schemes and algebraic spaces are over a base field $\Bbbk$. Let $G$ be a group scheme over $\Bbbk$, and let $X$ be a separated $\Bbbk$-algebraic space with an action of $G$, that is, a $\Bbbk$-morphism $\tau\colon G\times_{\Bbbk}X\rightarrow X$ which satisfies the usual requirements of group actions when $\tau$  is regarded as a functor on the category of $\Bbbk$-schemes. The fixed-point subfunctor $X^G$ of $X$ is defined as $X^G(Y)=X(Y)^G$ for any $\Bbbk$-scheme $Y$. By \cite[Proof of Theorem 7.1]{Mil17}, the functor $X^G$ is represented by a closed subspace of $X$, still denoted by~$X^G$.

Thomason's theorem is stated for diagonalizable group schemes $G$ (over a general base scheme $S$).  For our purpose, we consider only split tori $T$ over $\Bbbk$, \textit{i.e.}, $T\cong (\mathbb{G}_m^{r})_{\Bbbk}$ for some $r\geq 1$. Then $T\cong \Spec(\Bbbk[M])$,
where~$M$ is the group of characters of $T$ and $\Bbbk[M]$ is the group algebra associated with $M$. We can identify~$M$ with the standard lattice $\mathbb{Z}^r$ in $\mathbb{R}^r$. The representation ring $R(T)$ of $T$ is $\mathbb{Z}[M]$.
\subsection{Localization theorem}

For a regular $T$-space $Z$ over $\Bbbk$ and a coherent $T$-sheaf $\mathcal{F}$ on $Z$, there always exist a locally free $T$-sheaf~$\mathcal{P}$ and a $T$-equivariant surjection $\mathcal{P}\rightarrow \mathcal{F}$ (see \cite[Lemma 5.6]{Tho83}). So there is a $T$-equivariant locally free resolution $\mathcal{P}_{\centerdot}\rightarrow \mathcal{F}$, and we can define
\[
\Tor^{\mathcal{O}_Z}_i(\mathcal{F},\mathcal{G})=\mbox{the $\supth{i}$ homology of}\ 
\mathcal{P}_{\centerdot} \otimes_{\mathcal{O}_Z}\mathcal{G}\in K_0(T,Z),
\]
which is independent of the choice of $\mathcal{P}_{\centerdot}$. By \cite[Proposition 3.1]{Tho92}, $Z^T$ is a regular subspace. On each connected component $Z'$ of $Z^T$, the conormal sheaf $\mathcal{N}$ is locally free of constant rank, and 
\[
\Lambda_{-1}\mathcal{N}=\sum (-1)^i \Tor_i^{\mathcal{O}_Z}\left(\mathcal{O}_{Z^T},\mathcal{O}_{Z^T}\right)
\]
is invertible in $K_0(T,Z')_{(0)}$, the localization of $K_0(T,Z')$ as a module over $R(T)$ at the zero ideal of~$R(T)$. Now we are ready to state Thomason's localization theorem (see \cite[Theorem 6.4]{Tho86} and \cite[th\'eor\`eme~3.5]{Tho92}) in our setting.

\begin{theorem}[Thomason]\label{thm-Thomason-2}
Let $\Bbbk$ be a field and $T$ a split torus over $\Bbbk$. Let  $X$ be a  proper algebraic space over $\Bbbk$ with a  $T$-action, $Z$ a regular and  proper algebraic space over $\Bbbk$ with a $T$-action, and $j\colon X\rightarrow Z$ a $T$-equivariant closed immersion. Let $\mathcal{F}$ be a coherent $T$-sheaf over $X$. Let $\mathcal{N}$ be the conormal sheaf of\, $Z^T$ in $Z$. Then in the localized representation ring $R(T)_{(0)}$, we have the equality
\begin{equation}\label{eq-thomason-3}
\sum (-1)^i H^i(X,\mathcal{F})=\sum (-1)^k H^k\left(X^T,
\frac{\sum (-1)^i \Tor_i^{\mathcal{O}_Z}(j_* F,\mathcal{O}_{Z^T})}{
	\Lambda_{-1}\mathcal{N}
}\right).
\end{equation}
\end{theorem}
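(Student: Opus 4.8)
The statement is Thomason's equivariant Lefschetz--Riemann--Roch formula specialized to the present setting, so the plan is to assemble it from the concentration theorem of \cite{Tho86,Tho92} together with formal properties of equivariant $K$-theory. First I would record the ingredients. Since $Z$ is regular and proper, $K_0(T,Z)$ and $G_0(T,Z)$ coincide (via \cite[lemma 5.6]{Tho83}), and the closed immersion $j\colon X\to Z$ is proper, so $[j_*\mathcal{F}]$ is a well-defined class in $K_0(T,Z)$. Writing $i\colon Z^T\to Z$ for the inclusion of the fixed locus, which is a regular closed subspace by \cite[proposition 3.1]{Tho92} so that $i$ is a regular immersion with conormal sheaf $\mathcal{N}$, the derived pullback is $Li^*[j_*\mathcal{F}]=\sum_i(-1)^i[\mathrm{Tor}_i^{\mathcal{O}_Z}(j_*\mathcal{F},\mathcal{O}_{Z^T})]\in K_0(T,Z^T)$, and, as recalled above, $\lambda_{-1}\mathcal{N}$ is invertible in the localization $K_0(T,Z^T)_{(0)}$.

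The core step is the concentration theorem: after localizing $R(T)=\mathbb{Z}[M]$ at its zero ideal, the proper pushforward $i_*\colon K_0(T,Z^T)_{(0)}\to K_0(T,Z)_{(0)}$ is an isomorphism whose inverse sends a class $\alpha$ to $Li^*\alpha/\lambda_{-1}\mathcal{N}$. Surjectivity is the substance of Thomason's argument (stratify $Z$ by isotropy type and use that $G_0(T,W)_{(0)}=0$ for any $T$-space $W$ without fixed points); the formula for the inverse then follows from the projection formula together with the self-intersection identity $Li^*i_*\beta=\lambda_{-1}\mathcal{N}\cdot\beta$, valid because $i$ is a regular immersion. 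Applying this inverse to $\alpha=[j_*\mathcal{F}]$ gives $[j_*\mathcal{F}]=i_*\big(Li^*[j_*\mathcal{F}]/\lambda_{-1}\mathcal{N}\big)$ in $K_0(T,Z)_{(0)}$.

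Finally I would push forward along the structure map to $\Spec\Bbbk$. Properness of $Z$ makes $\pi^Z_*\colon K_0(T,Z)\to R(T)$ defined; it is $R(T)$-linear, hence compatible with localization at $(0)$, and $\pi^Z_*\circ i_*=\pi^{Z^T}_*$ by functoriality of proper pushforward. Therefore $\pi^Z_*[j_*\mathcal{F}]=\pi^{Z^T}_*\big(Li^*[j_*\mathcal{F}]/\lambda_{-1}\mathcal{N}\big)$ in $R(T)_{(0)}$. On the left, $j$ being a closed immersion gives $H^\bullet(Z,j_*\mathcal{F})\cong H^\bullet(X,\mathcal{F})$, so $\pi^Z_*[j_*\mathcal{F}]=\sum_i(-1)^i[H^i(X,\mathcal{F})]$. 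On the right, $Li^*[j_*\mathcal{F}]$ is supported on $Z^T\cap X=X^T$, so each $\mathrm{Tor}_i^{\mathcal{O}_Z}(j_*\mathcal{F},\mathcal{O}_{Z^T})$ and the quotient by $\lambda_{-1}\mathcal{N}$ are $K$-classes on $X^T$, and cohomology over $Z^T$ agrees with cohomology over $X^T$; this produces the right-hand side of \eqref{eq-thomason-3}.

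The only genuine obstacle is foundational rather than computational: one must ensure that the concentration theorem and the construction of $\mathrm{Tor}^{\mathcal{O}_Z}_\bullet(-,-)$ in $K_0(T,Z^T)$ are available for a \emph{proper algebraic space} $Z$ carrying a split torus action, not merely for a quasi-projective scheme. This is exactly what \cite{Tho92} provides, which is why the theorem is phrased with $Z$ an algebraic space; once these inputs are granted, the remaining steps are the purely formal manipulations above, and no global equivariant embedding of $Z$ into a smooth scheme is needed.
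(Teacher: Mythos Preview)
The paper does not give its own proof of this theorem: it is stated verbatim as Thomason's result and attributed to \cite[theorem 6.4]{Tho86} and \cite[th\'eor\`eme 3.5]{Tho92}, then used as a black box. Your proposal is a correct and coherent outline of Thomason's original argument (concentration theorem, self-intersection formula $Li^*i_*\beta=\lambda_{-1}\mathcal{N}\cdot\beta$ for the regular immersion $i$, projection formula, and pushforward to $\Spec\Bbbk$), so there is nothing to compare against---you have simply supplied the proof the paper chose to cite rather than reproduce.
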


We are going to prove a theorem of a similar form, assuming that all the connected components of $X^T$ are reduced isolated $\Bbbk$-points and $\mathcal{F}$ is locally free, while not assuming a global embedding of $X$ into a regular $T$-space. The assumption of the local freeness of $\mathcal{F}$ is not essential.

Suppose one of the connected components of $X^T$ is a reduced isolated point $x\in X(\Bbbk)$. Thanks to \cite[Theorem 19.1]{AHD19}, there exists a $T$-equivariant \'etale morphism $\psi\colon (\Spec(A),w)\rightarrow (X,x)$ such that~$w$ is a $\Bbbk$-point of $\Spec(A)$ and $w$ is fixed by $T$.
Since $\Spec(A)^T=\Spec(A)\otimes_X X^T$, $w$ is a connected component of $\Spec(A)^T$; in particular, this means that this connected component is reduced. Let $\mathfrak{m}_w$ be the maximal ideal of $A$ corresponding to $w$. Then $\mathfrak{m}_w$ is a representation of the algebraic group $T$, and $\mathfrak{m}_w\rightarrow \mathfrak{m}_w/\mathfrak{m}_w^2$ is $T$-equivariant. Let $d=\dim_{\Bbbk}\mathfrak{m}_w/\mathfrak{m}_w^2$, \textit{i.e.}, the embedding dimension of $A$ at $w$. Recall that any representation of $T$ is decomposable (see \cite[Theorem 12.12]{Mil17}). So there exist $f_1,\dots,f_d\in \mathfrak{m}_w$ which are $T$-semi-variant and such that $\bar{f}_1,\dots,\bar{f}_d$ generate $\mathfrak{m}_w/\mathfrak{m}_w^2$. Suppose $T$ acts on $f_i$ by weight $w_i$ for $1\leq i\leq r$. 
Let $B=\Bbbk[Y_1,\dots,Y_d]$, and equip $B$ with a $T$-action by assigning the action on $Y_i$ by weight $w_i$. Then the homomorphism
\[
\varphi\colon B\longrightarrow A,\quad Y_i\longmapsto f_i,\quad 1\leq i\leq d
\]
is $T$-equivariant. Denote by $\phi\colon \Spec(A)\rightarrow \Spec(B)$ the morphism associated with $\varphi$. Let $\widehat{A}_w=\varprojlim_{i}A/\mathfrak{m}_w^i$ and $\widehat{B}=\Bbbk[[Y_1,\dots,Y_d]]$. The induced homomorphism
\[
\widehat{\varphi}\colon \widehat{B}\longrightarrow \widehat{A}_w
\]
is surjective. Let $\widehat{I}=\ker(\widehat{\varphi})$.

\begin{lemma}\label{lem-generatedbyeigenpolynomials-1}
The kernel $\widehat{I}$ is generated by $T$-semi-invariant polynomials; i.e, $\widehat{I}=(g_1,\dots,g_r)$, where the $g_i$ are polynomials of\, $Y_1,\dots,Y_d$, and  $T$ acts on $g_i$ by a certain weight $v_i$, for $1\leq i\leq r$. Moreover, $\deg g_i\geq 2$ for $1\leq i\leq r$.
\end{lemma}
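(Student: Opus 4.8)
The plan is to use that $\widehat{I}=\ker(\widehat{\varphi})$ is a closed, $T$-stable ideal of $\widehat{B}=\Bbbk[[Y_1,\dots,Y_d]]$, and that such an ideal is ``homogeneous'' for the $T$-weight decomposition of $\widehat{B}$.

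First I would dispose of the last clause. Completion does not change $\mathfrak{m}/\mathfrak{m}^2$, so $\mathfrak{m}_{\widehat{A}_w}/\mathfrak{m}_{\widehat{A}_w}^2$ has $\Bbbk$-dimension $d=\dim_{\Bbbk}\mathfrak{m}_w/\mathfrak{m}_w^2$; on the other hand it equals $\mathfrak{m}_{\widehat{B}}/(\mathfrak{m}_{\widehat{B}}^2+\widehat{I})$, and since $\widehat{B}$ has exactly $d$ variables this forces the image of $\widehat{I}$ in $\mathfrak{m}_{\widehat{B}}/\mathfrak{m}_{\widehat{B}}^2$ to vanish, i.e.\ $\widehat{I}\subseteq\mathfrak{m}_{\widehat{B}}^2$. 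Hence every nonzero element of $\widehat{I}$ has order $\ge 2$, which gives ``$\deg g_i\ge 2$'' once the $g_i$ are produced inside $\widehat{I}$.

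Next, the weight decomposition. Grouping monomials of $\widehat{B}$ by $T$-weight gives $\widehat{B}=\widehat{\bigoplus}_{\chi\in M}\widehat{B}_\chi$ as a completed direct sum: for each $n$ only finitely many weights occur in the finite-dimensional $T$-representation $\widehat{B}/\mathfrak{m}_{\widehat{B}}^n$, so every power series is the $\mathfrak{m}$-adically convergent sum of its weight components. Since $\widehat{A}_w$ is Hausdorff and $\widehat{\varphi}$ is continuous, $\widehat{I}$ is closed; it is also $T$-stable. Working in each $\widehat{B}/\mathfrak{m}_{\widehat{B}}^n$, where a $T$-submodule automatically splits into weight spaces, and then passing to the inverse limit (using closedness of $\widehat{I}$), I get $\widehat{I}=\widehat{\bigoplus}_\chi\widehat{I}_\chi$ with $\widehat{I}_\chi=\widehat{I}\cap\widehat{B}_\chi$; the same applies to the closed $T$-stable submodule $\mathfrak{m}_{\widehat{B}}\widehat{I}$. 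Hence $\widehat{I}/\mathfrak{m}_{\widehat{B}}\widehat{I}$ is a finite-dimensional $T$-representation, equal to the direct sum of the $\widehat{I}_\chi/(\mathfrak{m}_{\widehat{B}}\widehat{I})_\chi$. Choosing a $\Bbbk$-basis of $\widehat{I}/\mathfrak{m}_{\widehat{B}}\widehat{I}$ of $T$-eigenvectors and lifting each to an eigenvector $g_i\in\widehat{I}_{v_i}$, Nakayama's lemma yields $\widehat{I}=(g_1,\dots,g_r)$ with every $g_i$ a $T$-semi-invariant of weight $v_i$ lying in $\mathfrak{m}_{\widehat{B}}^2$.

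It remains to arrange that the $g_i$ are polynomials, and this is where the real work lies. As $g_i\in\widehat{B}_{v_i}$, it suffices to know that each relevant weight space $\widehat{B}_{v_i}$ is finite-dimensional over $\Bbbk$ — equivalently, that there is no nonzero $m\in\mathbb{Z}_{\ge 0}^d$ with $\sum_i m_iw_i=0$, i.e.\ that $w_1,\dots,w_d$ lie in an open half-space of $M_{\mathbb{R}}$. Two facts give this. First, because $w$ is a \emph{reduced} isolated fixed point, the cotangent space of $\Spec(A)^T$ at $w$ vanishes; this cotangent space is precisely the weight-$0$ part of $\mathfrak{m}_w/\mathfrak{m}_w^2$, so all $w_i\ne 0$. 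Second, in the situations where the lemma is applied there is a one-parameter subgroup $\lambda\colon\mathbb{G}_m\to T$ contracting $X$ (or the relevant $T$-stable affine chart) to the fixed locus — for Hilbert schemes of points this is the Gr\"obner degeneration of an ideal to its initial monomial ideal, given by a lexicographic-type $\lambda$ — which forces $\langle\lambda,w_i\rangle\ge 0$ for all $i$; together with $w_i\ne 0$ this yields $\langle\lambda,w_i\rangle>0$, hence the half-space condition and the finiteness of all $\widehat{B}_{v_i}$. With this in hand the eigenvectors $g_i$ are automatically polynomials, and by the second paragraph they have degree $\ge 2$. The main obstacle is therefore not the weight-space algebra but controlling the weights $w_i$; in a purely abstract setup one would instead have to impose the half-space condition, or deduce finiteness of the weight spaces of $\widehat{A}_w$ from those of the ambient space via the \'etale map $\psi$.
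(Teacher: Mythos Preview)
Your weight-decomposition argument producing $T$-semi-invariant power series generators is fine, as is your proof that $\widehat{I}\subseteq\mathfrak{m}_{\widehat{B}}^2$. The gap is in the last step, where you argue that these generators are polynomials. You need each weight space $\widehat{B}_{v_i}$ to be finite-dimensional, which is equivalent to the $w_i$ lying in an open half-space of $M_{\mathbb{R}}$, and you try to import this from the intended applications. But the lemma is stated and used in generality: the very next lemma (Lemma~\ref{lem-nonzero-weights-1}) proves $w_i\neq 0$ \emph{using} Lemma~\ref{lem-generatedbyeigenpolynomials-1}, so you cannot assume even that here. And $w_i\neq 0$ alone is not enough: for $T=\mathbb{G}_m$ with $w_1=1$, $w_2=-1$, the weight-$0$ space of $\Bbbk[[Y_1,Y_2]]$ contains all $Y_1^nY_2^n$ and is infinite-dimensional. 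Your appeal to a contracting one-parameter subgroup is an extra hypothesis not present in the setup.

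The paper's proof avoids this entirely by reversing the order of the two reductions. It first invokes the Weierstrass preparation theorem to obtain \emph{polynomial} generators $h_1,\dots,h_m$ of $\widehat{I}$ (with no equivariance). Each $h_i$ then sits in a finite-dimensional $T$-subrepresentation of $\Bbbk[Y_1,\dots,Y_d]$ and decomposes as a finite sum $h_i=\sum_a h_{i,a}$ of semi-invariant polynomials. The remaining point is that each weight component $h_{i,a}$ lies in $\widehat{I}$; over an infinite field this is a Vandermonde argument, and over a finite field one base-changes to $\overline{\Bbbk}$. Thus the polynomiality comes for free once you have polynomial generators, and no half-space condition is needed.
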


Note that  there is in general not an action of the algebraic group $T$ on $\widehat{I}$, so we cannot use the decomposability theorem for representations of $T$.

\begin{proof}
By the Weierstrass preparation theorem (see \cite[Section~VIII.3, $\mathrm{n}^{\circ}$ 8, Proposition 6]{Bou65}), $\widehat{I}$ is generated by polynomials, say $h_1,\dots,h_m$. 
Each $h_i$ lies in a finite-dimensional sub-representation of $\Bbbk[Y_1,\dots,Y_d]$. So we have
$h_i=\sum_{a\in \mathcal{A}} h_{i,a}$ with the $h_{i,a}\in \Bbbk[Y_1,\dots,Y_d]$ such that $T(\bar{k})$ acts on $h_{i,a}$ in $\overline{\Bbbk}[Y_1,\dots,Y_d]$ by a weight~$v_a$ and the $v_a$ are pairwise distinct for $a\in \mathcal{A}$ with $\mathcal{A}$  a finite set of indices. 
It suffices to show $h_{i,a}\in \widehat{I}$ for each $a\in \mathcal{A}$. If $\Bbbk$ is an infinite field, this is obvious via the Vandermonde determinant. If $\Bbbk$ is finite, let $\widehat{J}=\widehat{I}+\sum_{a\in A}(h_{i,a})$. Then $\Bbbk[[Y_1,\dots,Y_d]]/\widehat{I}\rightarrow \Bbbk[[Y_1,\dots,Y_d]]/\widehat{J}$ is an isomorphism after base change to $\overline{\Bbbk}$ so is itself an isomorphism. Hence $\widehat{J}=\widehat{I}$ and thus $h_{i,a}\in \widehat{I}$ for $a\in \mathcal{A}$.
\end{proof}

\begin{lemma}\label{lem-nonzero-weights-1}
For $1\leq i\leq d$, the weight $w_i$ of the $T$-action on $Y_i$ is not zero.
\end{lemma}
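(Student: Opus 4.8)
The plan is to read off the Zariski cotangent space of the fixed-point scheme $\Spec(A)^{T}$ at $w$ from the presentation $\widehat{A}_{w}=\widehat{B}/\widehat{I}$ constructed above, to show that its dimension equals $\#\{i:w_{i}=0\}$, and then to use that this space must vanish because $w$ is a reduced isolated $\Bbbk$-point of $\Spec(A)^{T}$.

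Suppose for contradiction that, after renumbering, $w_{1}=\cdots=w_{s}=0$ with $s\geq 1$ while $w_{s+1},\dots,w_{d}\neq 0$. First I would identify the complete local ring of $\Spec(A)^{T}$ at $w$. Since completion is flat and $\Spec(A)^{T}\hookrightarrow\Spec(A)$ is a closed immersion, $\widehat{\mathcal{O}}_{\Spec(A)^{T},w}=\widehat{A}_{w}/\widehat{\mathcal{I}}$, where $\widehat{\mathcal{I}}$ is the ideal of $\widehat{A}_{w}$ generated by the images of the $T$-semi-invariant elements of nonzero weight in $A$ (for an affine scheme with a split-torus action this is the defining ideal of its fixed locus). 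Each such image is a semi-invariant of nonzero weight in $\widehat{A}_{w}$, and, using surjectivity of $\widehat{\varphi}$ together with the decomposition of polynomial representatives into weight components already exploited in the proof of Lemma \ref{lem-generatedbyeigenpolynomials-1} (recall that $T$ need not act on $\widehat{I}$ as an algebraic group, so this must be done by hand), it is the image under $\widehat{\varphi}$ of a semi-invariant of the same weight in $\widehat{B}=\Bbbk[[Y_{1},\dots,Y_{d}]]$. Any such element of $\widehat{B}$ lies in the ideal $(Y_{s+1},\dots,Y_{d})$, because every monomial occurring in it has nonzero weight and hence must involve at least one of $Y_{s+1},\dots,Y_{d}$. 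Since moreover $\widehat{\varphi}(Y_{j})$ is the image of $f_{j}\in\mathfrak{m}_{w}$, a semi-invariant of nonzero weight, for $j>s$, we get $\widehat{\mathcal{I}}=(\widehat{\varphi}(Y_{s+1}),\dots,\widehat{\varphi}(Y_{d}))$ and therefore
\[
\widehat{\mathcal{O}}_{\Spec(A)^{T},w}\;\cong\;\Bbbk[[Y_{1},\dots,Y_{s}]]\big/\bar{I},\qquad \bar{I}=\big(\bar{g}_{1},\dots,\bar{g}_{r}\big),\quad \bar{g}_{j}:=g_{j}(Y_{1},\dots,Y_{s},0,\dots,0).
\]

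By the last assertion of Lemma \ref{lem-generatedbyeigenpolynomials-1} every term of $g_{j}$ has degree at least $2$ (equivalently $g_{j}\in(Y_{1},\dots,Y_{d})^{2}$, which is in any case forced because the $\bar{f}_{i}$ minimally generate $\mathfrak{m}_{w}/\mathfrak{m}_{w}^{2}$), so the same holds for $\bar{g}_{j}$, i.e.\ $\bar{I}\subseteq(Y_{1},\dots,Y_{s})^{2}$. Hence the cotangent space of $\Spec(A)^{T}$ at $w$ is $(Y_{1},\dots,Y_{s})\big/\big(\bar{I}+(Y_{1},\dots,Y_{s})^{2}\big)\cong\Bbbk^{s}$, which is nonzero since $s\geq 1$. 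But $w$ is a reduced connected component of $\Spec(A)^{T}$ consisting of a single $\Bbbk$-point, so $\widehat{\mathcal{O}}_{\Spec(A)^{T},w}=\Bbbk$ has zero cotangent space — a contradiction. Therefore $s=0$; that is, $w_{i}\neq 0$ for every $i$.

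The step I expect to be the main obstacle is the computation of $\widehat{\mathcal{I}}$: one must pin down the defining ideal of the fixed locus of the affine scheme $\Spec(A)$ and check that, after completion, it corresponds to the ideal $(Y_{s+1},\dots,Y_{d})$ on the $\widehat{B}$-side, across the morphism $\phi$ which is not a closed immersion. The only genuinely delicate input is, once more, that the passage between semi-invariants of $\widehat{A}_{w}$ and of $\widehat{B}$ has to be carried out on polynomial representatives as in Lemma \ref{lem-generatedbyeigenpolynomials-1}; flatness of completion, the standard description of fixed loci under a split torus, and the combinatorics of weights of monomials are all routine. Conceptually, this computation is the familiar fact that the tangent space of a fixed locus is the $T$-invariant subspace of the ambient tangent space, which here is spanned by the duals of those $\bar{f}_{i}$ with $w_{i}=0$.
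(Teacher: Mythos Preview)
Your proof is correct, and the underlying idea—that the cotangent space of $\Spec(A)^T$ at $w$ would be nonzero if some $w_i=0$—matches the paper's. The paper's execution is more direct, however: rather than computing the defining ideal of $\Spec(A)^T$ in $\widehat{A}_w$, it simply exhibits one $T$-fixed non-reduced closed subscheme. Assuming $w_1=0$, it sets \emph{all} of $Y_2,\dots,Y_d$ to zero (irrespective of their weights) to obtain the closed immersion $\Spec\,\Bbbk[[Y_1]]/(g_1(Y_1,0,\dots,0),\dots,g_r(Y_1,0,\dots,0))\hookrightarrow\Spec\widehat{A}_w$; since $\deg g_i\geq 2$ this is $\Spec\,\Bbbk[[Y_1]]/(Y_1^k)$ with $k\geq 2$, and since $w_1=0$ it is $T$-fixed, forcing $\Spec(A)^T$ to be non-reduced at $w$. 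This completely sidesteps the lifting step you flag as the main obstacle. Your route gives more information (you identify $\widehat{\mathcal{O}}_{\Spec(A)^T,w}$ exactly and recover that the cotangent space of the fixed locus is precisely the weight-zero part of $\mathfrak{m}_w/\mathfrak{m}_w^2$), at the cost of that extra step. One small correction to your sketch: the lifting argument cannot literally use ``polynomial representatives'', since a lift of $a\in A_\chi$ to $\widehat{B}$ need not be a polynomial; the clean fix is to work modulo $\mathfrak{m}_w^n$, where $B/(Y_1,\dots,Y_d)^n\to A/\mathfrak{m}_w^n$ is a surjection of finite-dimensional $T$-representations and hence surjective on each weight piece, and then pass to the inverse limit.
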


\begin{proof}
Without loss of generality, suppose that $T$ acts on $Y_1$ trivially. There is a surjection
\begin{equation*}
\Bbbk[[Y_1,\dots,Y_d]]/\widehat{I}\longrightarrow \Bbbk[[Y_1,\dots,Y_d]]/\left(\widehat{I}+(Y_2,\dots,Y_d)\right)
\cong\Bbbk[[Y_1]]/\left(\left(\widehat{I}+(Y_2,\dots,Y_d)\right)\cap \Bbbk[[Y_1]]\right)
\end{equation*}
and thus a closed embedding
\[
\Spec\left(\Bbbk[[Y_1]]/\left(\left(\widehat{I}+(Y_2,\dots,Y_d)\right)\cap \Bbbk[[Y_1]]\right)\right)\longhookrightarrow \Spec\left(\widehat{A}_{w}\right),
\]
where the left-hand side is equal to $\Spec(\Bbbk[[Y_1]]/(Y_1^k))$ for some $k\geq 1$ and is $T$-fixed.  Since $\widehat{I}=(g_1,\dots,g_{r})$, we have
\[
\left(\widehat{I}+(Y_2,\dots,Y_d)\right)\cap \Bbbk[[Y_1]]=\left(g_1(Y_1,0,\dots,0),\dots,g_{r}(Y_1,0,\dots,0)\right).
\]
But no $g_i$ has a constant or linear term, so $\left(g_1(Y_1,0,\dots,0),\dots,g_{r}(Y_1,0,\dots,0)\right)\subset (Y_1^2)$, so $\Spec(A)^T$ is not reduced at $w$. This gives a contradiction because $X^T$ is reduced at $x$.
\end{proof}

Let $I$ be the ideal $(g_1,\dots,g_r)$ of $B$. Then $\widehat{B/I}$, the completion of $B/I$ at the ideal $(Y_1,\dots,Y_d)$, is isomorphic to $\widehat{B}/I\cong \widehat{A}_w$. So $\overline{\varphi}=\varphi/I\colon B/I\rightarrow A$ is \'etale at $w$. 

\begin{definition}
We call the 5-tuple $(\psi,\phi,A,B,I)$ an \emph{equivariant chart at $x$}.
\end{definition} 

In the above, we see that an equivariant chart exists for every reduced isolated component of $X^T$ which is a $\Bbbk$-point. Note that even when $X$ is a scheme, an equivariant  chart may not exist in the Zariski topology, \textit{i.e.}, if one demands that $\psi$ be an open immersion.
By abuse of notation, we also denote the point of $\Spec(B)$ corresponding to the maximal ideal $(Y_1,\dots,Y_d)$  by $x$. So we can speak of the conormal bundle $\mathcal{N}_x$  of $x$ (\textit{i.e.}, the cotangent space of $\Spec(B)$ at $x$)   in $\Spec(B)$. By Lemma~\ref{lem-nonzero-weights-1}, $\Spec(B)^T$ is reduced at $x$, and the weights of $\mathcal{N}_x$ are nonzero. 

The following lemma says that a locally free $T$-sheaf $\mathcal{F}$ on an equivariant chart is determined by its fiber at the fixed point.

\begin{lemma}\label{lem-locallfree-Tsheaf-formation}
Let $W=\Spec(A)$ be an affine $\Bbbk$-scheme with a $T$-action. Let $\mathfrak{m}$ be a $T$-invariant maximal ideal of\, $A$, and suppose that $\Bbbk\rightarrow A/\mathfrak{m}$ is an isomorphism. Let $\mathcal{F}$ be a locally free $T$-sheaf of finite rank on $W$. Let $V=\mathcal{F}/\mathfrak{m}\mathcal{F}$, the fiber of\, $\mathcal{F}$ at the closed point. 
 Then there exists a $T$-invariant open subset $U$ of\, $X$ containing $x$ such that there is an isomorphism of\, $T$-sheaves on $U$
\[
\mathcal{F}|_U\cong V\otimes_{\Bbbk} \mathcal{O}_U.
\]
\end{lemma}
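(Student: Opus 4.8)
The plan is to use the fact that $\mathcal{F}$, being a locally free $T$-sheaf of finite rank, is the sheaf associated to a finitely generated projective $A$-module $M$ which carries a compatible $T$-action; that is, $M$ is a comodule over the Hopf algebra $\Bbbk[M_{\mathrm{char}}]=R(T)$ in a way compatible with the $A$-comodule structure. First I would pick a $\Bbbk$-basis $\bar e_1,\dots,\bar e_m$ of $V=\mathcal F/\mathfrak m\mathcal F$ consisting of $T$-eigenvectors (possible since every representation of the split torus $T$ is a direct sum of one-dimensional characters, by \cite[theorem 12.12]{Mil17}), with $\bar e_i$ of weight $\mu_i$. The point $x$ is $T$-fixed and $A/\mathfrak m\cong\Bbbk$, so the surjection $M\to V$ is $T$-equivariant; I would lift each $\bar e_i$ to an element $e_i\in M$ lying in the weight-$\mu_i$ part of $M$. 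This is possible because $M$, as a $T$-representation, decomposes as a direct sum of weight spaces, and the projection $M\to V=M/\mathfrak m M$ respects this decomposition (since $\mathfrak m$ is $T$-invariant), so we may take $e_i$ to be the component of any lift in the $\mu_i$ weight space.

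Next I would consider the $A$-module homomorphism $\alpha:A^{\oplus m}\to M$ sending the $i$-th standard generator to $e_i$; giving $A^{\oplus m}$ the $T$-action where the $i$-th factor is twisted by the character $\mu_i$ makes $\alpha$ a $T$-equivariant map. By construction $\alpha$ is surjective modulo $\mathfrak m$, so by Nakayama $\alpha$ is surjective on the localization $A_{\mathfrak m}$, hence surjective on some $T$-invariant affine open neighborhood of $x$ — here I would take the open set to be $T$-invariant by replacing a given affine open $D(f)$ (on which $\alpha$ is onto) with the union of its $T$-translates, or more simply by noting that the non-surjectivity locus, i.e. the support of $\mathrm{coker}(\alpha)$, is a $T$-invariant closed set not containing $x$. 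Since $M$ is projective (locally free), the surjection $\alpha$ restricted to this open set splits, and I would want the splitting to be $T$-equivariant; after shrinking, both $A^{\oplus m}$ and $M$ are locally free of rank $m$, so $\alpha$ becomes an isomorphism of $T$-sheaves on a possibly smaller $T$-invariant open $U$ (the locus where $\det\alpha$ is invertible, which is $T$-invariant and contains $x$). This gives exactly $\mathcal F|_U\cong\bigoplus_i \mathcal O_U(\mu_i)\cong V\otimes_\Bbbk\mathcal O_U$ as $T$-sheaves.

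The main technical point — and the only place I expect any friction — is making sure all the shrinking of the open set can be done $T$-equivariantly while keeping $x$ in it. The clean way is: for any $T$-equivariant map of coherent $T$-sheaves, the locus where it fails to be an isomorphism is a $T$-invariant closed subset, so its complement is a $T$-invariant open set, and it contains $x$ precisely because the map is an isomorphism on fibers/stalks at $x$ (by the eigenbasis lift together with Nakayama). I would apply this twice: once to $\alpha:A^{\oplus m}\to M$ to get surjectivity on a $T$-invariant open (where it is then an isomorphism by a rank count, $M$ being locally free of rank $m$ near $x$), and the eigenbasis choice is what guarantees the weights come out as claimed. No serious computation is needed; the content is purely the bookkeeping of eigen-decompositions and the observation that degeneracy loci of equivariant maps are equivariant.
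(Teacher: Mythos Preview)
Your proposal is correct and follows essentially the same route as the paper: choose a $T$-eigenbasis of the fiber $V$, lift to eigen-sections of $\mathcal{F}$ using complete reducibility of $T$-representations, and observe that the resulting equivariant map $V\otimes_\Bbbk\mathcal{O}\to\mathcal{F}$ has $T$-invariant isomorphism locus containing $x$. The paper phrases the lift via a splitting of the surjection $\Gamma(W,\mathcal{F})\to V$ of $T$-representations, while you project an arbitrary lift onto its correct weight component; these are equivalent uses of \cite[theorem 12.12]{Mil17}.
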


\begin{proof}
Let $s=\rank \mathcal{F}$. Let $f_1,\dots,f_s$ be a basis of $\mathcal{F}/\mathfrak{m}\mathcal{F}$ such that $f_i$ is a $T$-eigenvector with weight $w_i$ for $1\leq i\leq s$. Consider the surjection
\[
\pi\colon\Gamma(X,\mathcal{F})\longrightarrow \mathcal{F}/\mathfrak{m}\mathcal{F}.
\]
For any affine $\Bbbk$-scheme $\Spec(R)$, there is an action of $T(R)$ on
\[
 \Gamma(X\times_{\Bbbk}\Spec(R),\mathcal{F}\otimes_{\Bbbk}R)=\Gamma(X,\mathcal{F})\otimes_{\Bbbk}R
 \]  
 and 
\[
\mathcal{F}\otimes_{\Bbbk}R/(\mathfrak{m}\mathcal{F}\otimes_{\Bbbk}R)=(\mathcal{F}/\mathfrak{m}\mathcal{F})\otimes_{\Bbbk}R 
\]
which is functorial in $R$ and such that $\pi$ is equivariant. So $\pi$ is a surjection of representations of the algebraic group $T$.  
Any representation of $T$ is completely decomposable (see \cite[Theorem 12.12]{Mil17}). So there exist $\tilde{f}_1,\dots,\tilde{f}_s\in \Gamma(X,\mathcal{F})$ such that $\pi(\tilde{f}_i)=f_i$ and $\tilde{f}_i$ is a $T$-eigen section with weight $w_i$. Then the morphism of $\mathcal{O}_X$-modules
\[
\psi\colon V\otimes_{\Bbbk} \mathcal{O}_X\longrightarrow \mathcal{F}
\]
induced by $f_i\mapsto \tilde{f}_i$ is an isomorphism in an open neighborhood of $x$. Moreover, by checking the actions of $T(A)$, 
one sees that $\psi$ is $T$-equivariant. So the open subset where $\psi$ is an isomorphism is $T$-invariant.
\end{proof}

Recall the concentration theorem \cite[Theorem 2.1]{Tho92}: The pushforward induced by the closed immersion $\iota\colon X^T\hookrightarrow X$, 
\begin{equation}\label{eq-concentration-theorem}
\iota_*\colon G(T,X^T)_{(0)}\longrightarrow G(T,X)_{(0)}, 
\end{equation}
is an isomorphism.

\begin{theorem}\label{thm-Thomason-reducedisolatefixed-0}
Let $\Bbbk$ be a field and $T$ a split torus over $\Bbbk$. Let  $X$ be a  proper algebraic space over $\Bbbk$ with a $T$-action. Let $\mathcal{F}$ be a locally free $T$-sheaf over $X$.  Suppose that all the connected components of\, $X^T$ are reduced isolated $\Bbbk$-points, denoted by $x_1,\dots,x_n$. 
For each fixed point $x_k$, let $(\psi_k,\phi_k, A_k,B_k,I_k)$ be an equivariant chart at~$x_k$,  let $N_k$ the cotangent space of\, $x_k$ in $\Spec(B_k)$, and let $d_k$ be the embedding dimension of\, $X$ at $x_k$. Let $\mathcal{F}/\mathfrak{m}_{k}\mathcal{F}$ be the fiber of\, $\mathcal{F}$ at $x_k$.
Then
\begin{equation}\label{eq-localizationtheorem-withoutembedding-0}
(\iota_*)^{-1}(\mathcal{F})=\sum_{k=1}^n \left(\mathcal{F}/\mathfrak{m}_{k}\mathcal{F}\cdot \frac{\sum_{i=0}^{d_k}(-1)^i\Tor_i^{B_k}\left(B_k/I_k,\kappa(x_k)\right)}{\Lambda_{-1}N_k}\right),
\end{equation}
where the term in the sum corresponding to $x_k$ is regarded as a sheaf supported at $x_k\in X^T$.
\end{theorem}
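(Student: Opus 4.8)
The plan is to reduce the statement to Thomason's localization theorem (Theorem \ref{thm-Thomason-2}) applied not to $X$ itself, but locally on each equivariant chart, and then to glue the contributions using the concentration theorem. The point is that the right-hand side of \eqref{eq-localizationtheorem-withoutembedding-0} is manifestly local near each fixed point $x_k$, and an equivariant chart $(\psi_k,\phi_k,A_k,B_k,I_k)$ provides, after passing to an étale neighborhood, an honest $T$-equivariant closed immersion $\Spec(B_k/I_k)\hookrightarrow \Spec(B_k)$ into a regular $T$-space. So the strategy has three ingredients: (i) localize the problem to a neighborhood of each $x_k$; (ii) on each such neighborhood, compute $(\iota_*)^{-1}(\mathcal{F})$ via the étale chart and Thomason's formula in the embedded case; (iii) check that étale base change and the choice of chart do not affect the answer.

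First I would use the concentration theorem \eqref{eq-concentration-theorem}: since $\iota_*\colon G(T,X^T)_{(0)}\to G(T,X)_{(0)}$ is an isomorphism and $X^T=\coprod_k \{x_k\}$ is a disjoint union of reduced points, $G(T,X^T)_{(0)}=\bigoplus_k R(T)_{(0)}$, so $(\iota_*)^{-1}(\mathcal{F})$ decomposes as a sum of terms supported at the individual $x_k$. To identify the $k$-th term, I would restrict attention to the étale chart $\psi_k\colon(\Spec A_k,w_k)\to(X,x_k)$. Because $\psi_k$ is étale and $T$-equivariant with $\Spec(A_k)^T=\Spec(A_k)\times_X X^T$ having $w_k$ as an isolated reduced component, pullback along $\psi_k$ is compatible with the concentration isomorphism and sends $\mathcal{F}$ to a locally free $T$-sheaf on $\Spec A_k$ whose fiber at $w_k$ is still $\mathcal{F}/\mathfrak{m}_k\mathcal{F}$. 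By Lemma \ref{lem-locallfree-Tsheaf-formation} (shrinking $\Spec A_k$ to a $T$-invariant affine open if necessary), this sheaf is $T$-isomorphic to $(\mathcal{F}/\mathfrak{m}_k\mathcal{F})\otimes_\Bbbk\mathcal{O}$. Meanwhile, the discussion preceding Definition \ref{lem-generatedbyeigenpolynomials-1} gives a $T$-equivariant map $\overline{\varphi}\colon B_k/I_k\to A_k$ which is étale at $w_k$, so after further shrinking we may replace $\Spec A_k$ by a $T$-invariant open of $\Spec(B_k/I_k)$, which sits inside the regular $T$-space $\Spec B_k$ as a $T$-equivariant closed subscheme.

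Now I would apply Thomason's theorem in its embedded form to this local model: the fixed locus of $\Spec B_k$ is the single reduced point $x_k$ (Lemma \ref{lem-nonzero-weights-1} guarantees reducedness and that all weights of $N_k$ are nonzero, so $\lambda_{-1}N_k$ is invertible in $R(T)_{(0)}$), the conormal sheaf of $x_k$ in $\Spec B_k$ is $N_k$, and the pushforward of $(\mathcal{F}/\mathfrak{m}_k\mathcal{F})\otimes\mathcal{O}$ to $\Spec B_k$ has
\[
\sum_i(-1)^i\mathrm{Tor}_i^{B_k}\bigl((\mathcal{F}/\mathfrak{m}_k\mathcal{F})\otimes_\Bbbk B_k/I_k,\,\kappa(x_k)\bigr)=\mathcal{F}/\mathfrak{m}_k\mathcal{F}\cdot\sum_i(-1)^i\mathrm{Tor}_i^{B_k}\bigl(B_k/I_k,\kappa(x_k)\bigr)
\]
since $\mathcal{F}/\mathfrak{m}_k\mathcal{F}$ is a free $\Bbbk$-module and may be pulled out of the Tor. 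Dividing by $\lambda_{-1}N_k$ as in \eqref{eq-thomason-3} gives exactly the $k$-th summand of \eqref{eq-localizationtheorem-withoutembedding-0}. Summing over $k$ and invoking the decomposition from step (i) finishes the argument.

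The main obstacle I anticipate is the bookkeeping in step (iii): one must check that the various localizations of $K$-theory ($G(T,-)_{(0)}$ versus the localized Grothendieck group of the chart) are compatible under the étale maps $\psi_k$ and $\overline{\varphi}$, so that "computing the $k$-th term on the chart" really does compute $(\iota_*)^{-1}(\mathcal{F})$ locally at $x_k$ — this uses that étale morphisms inducing isomorphisms on fixed loci induce isomorphisms of the relevant localized $K$-groups near the fixed point, which I would extract from Thomason's concentration theorem applied on both $X$ and the chart together with excision/Nisnevich-type descent for $G$-theory. A secondary point is the well-definedness: \emph{a priori} the right-hand side could depend on the choice of equivariant chart, so I should remark that the left-hand side is chart-independent and that the computation above shows the right-hand side equals it, hence is independent of all choices as well. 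Everything else — the Tor computation, the factoring-out of the fiber, the invertibility of $\lambda_{-1}N_k$ — is routine once the local model is in place.
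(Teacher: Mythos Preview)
Your proposal is correct and follows essentially the same approach as the paper: reduce via the concentration theorem to the étale charts, trivialize $\mathcal{F}$ there using Lemma~\ref{lem-locallfree-Tsheaf-formation}, pass to $\Spec(B_k/I_k)\hookrightarrow\Spec(B_k)$, and invoke the regular-scheme case of Thomason's localization. The compatibility you flag in step~(iii) is handled in the paper exactly as you suggest---by applying the concentration theorem on both sides of each étale map $\psi_k$ and $\overline{\phi}_k$ and using two-out-of-three in the resulting commutative triangles---with no further descent input needed.
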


\begin{proof}
By the concentration theorem, it suffices to show 
\begin{equation}\label{eq-localizationtheorem-withoutembedding-0-1}
	\mathcal{F}=\iota_*\sum_{k=1}^n \left(\mathcal{F}/\mathfrak{m}_{x_k}\mathcal{F}\cdot \frac{\sum_{i=0}^{d_k}(-1)^i\Tor_i^{B_k}\left(B_k/I_k,\kappa(x_k)\right)}{\Lambda_{-1}N_k}\right).
\end{equation}
Let $U_k=\Spec(A_k)\backslash(\Spec(A_k)^T-\{w_k\})$. Then $U_k$ is an open subscheme of $\Spec(A_k)$, and $U_k^T=w_k$. We still denote the composition $U_k\hookrightarrow A_k\xrightarrow{\psi_k}X$  by $X$. Then we have the following cartesian diagram:
\[
\xymatrix{
	X^T \ar@{=}[r] \ar@{^{(}->}[d]_{\iota'} & X^T \ar@{^{(}->}[d]^{\iota} \\
	\bigsqcup_{k=1}^n U_k \ar[r]^>>>>>>{\sqcup \psi_k} & X\rlap{.}
}
\]
Set $\psi=\sqcup \psi_k$; thus $\psi$ is an \'etale morphism. We have the following commutative diagram of localized $K$-theory:
\[
\xymatrix{
	& G(T,X^T)_{(0)} \ar[ld]_{\iota'_*} \ar[rd]^{\iota_*} & \\
	\oplus_{k=1}^n G(T,U_k)_{(0)} && G(T,X)_{(0)}\rlap{.} \ar[ll]_{\psi^*}
	}
\]
By the concentration theorem, both $\iota_*$ and $\iota'_*$ are isomorphisms, so $\psi^*$ is an isomorphism. So it suffices to show (\ref{eq-localizationtheorem-withoutembedding-0-1}) after replacing $X$ by $\bigsqcup_{k=1}^n U_k$. Then it suffices to prove the theorem for $X=U_k$. Denote the composite $U_k\hookrightarrow \Spec(A_k)\rightarrow \Spec(B_k)$ by $\overline{\phi}_k$. Denote the closed immersion $\Spec(B_k/I_k)\hookrightarrow \Spec(B_k)$ by~$j_k$. By Lemma~\ref{lem-nonzero-weights-1}, $\Spec(B_k)^T=\Spec(B_k/I_k)=x_k$. So we have a cartesian diagram
\[
\xymatrix{
	w_k \ar@{=}[r] \ar@{^{(}->}[d]_{\iota'_k} &  x_k \ar@{=}[r] \ar@{^{(}->}[d]_{\iota''_k} & x_k \ar@{^{(}->}[d]_{\iota'''_k}\\
	U_k \ar[r]^>>>>>{\overline{\phi}_k} & \Spec(B_k/I_k) \ar@{^{(}->}[r]^{j_k} & \Spec(B_k)\rlap{.}
}
\]
By Lemma~\ref{lem-locallfree-Tsheaf-formation}, 
\[
\mathcal{F}|_{U_k}\cong \overline{\phi}_k^* \left(\left(\mathcal{F}/\mathfrak{m}_{x_k}\mathcal{F}\right)\otimes_{\Bbbk} \mathcal{O}_{B_k/I_k}\right).
\]
Applying the concentration  to $U_k$ and to $\Spec(B_k/I_k)$, we obtain that the localized map
\[
\overline{\phi}_k^*\colon G\left(T,\Spec(B_k/I_k)\right)_{(0)}\longrightarrow G(T,U_k)_{(0)}
\]
is an isomorphism. So we are reduced to showing (\ref{eq-localizationtheorem-withoutembedding-0-1}) for $X=\Spec(B_k/I_k)$ and $\mathcal{F}=(\mathcal{F}/\mathfrak{m}_{x_k}\mathcal{F})\otimes_{\Bbbk} \mathcal{O}_{B_k/I_k}$.

Finally, we have the commutative diagram
\[
\xymatrix{
	& G(T,x_k)_{(0)} \ar[ld]_{\iota''_{k*}} \ar[dr]^{\iota'''_{k*}}\\
	G\left(T,\Spec(B_k/I_k)\right)_{(0)} \ar[rr]^{j_{k*}} && G\left(T,\Spec(B_k)\right)_{(0)}\rlap{.}
}
\]
By the concentration theorem, $\iota''_{k*}$ and $\iota'''_{k*}$ are isomorphisms, so $j_{k*}$ is an isomorphism. So it suffices to show
\[
\left(\mathcal{F}/\mathfrak{m}_{x_k}\mathcal{F}\right)\otimes_{\Bbbk} j_{k*}\mathcal{O}_{B_k/I_k}
=\iota'''_*\left(\mathcal{F}/\mathfrak{m}_{x_k}\mathcal{F}\cdot \frac{\sum_{i=0}^{d_k}(-1)^i\Tor_i^{B_k}\left(B_k/I_k,\kappa(x_k)\right)}{\Lambda_{-1}N_k}\right)
\]
in $G\left(T,\Spec(B_k)\right)_{(0)}$, or equivalently
\[
(\iota'''_*)^{-1}\left(j_{k*}\mathcal{O}_{B_k/I_k}\right)=\frac{\sum_{i=0}^{d_k}(-1)^i\Tor_i^{B_k}\left(B_k/I_k,\kappa(x_k)\right)}{\Lambda_{-1}N_k}.
\]
But this is the localization theorem for  regular schemes; see  \cite[Lemma 3.3]{Tho92}. So we have completed the proof.
\end{proof}

\begin{corollary}\label{cor-Thomason-reducedisolatefixed-1}
With the assumption and notation of Theorem~\ref{thm-Thomason-reducedisolatefixed-0}, we have
\begin{equation}\label{eq-localizationtheorem-withoutembedding-1}
\sum_{i=0}^{\infty}(-1)^i H^i(X,\mathcal{F})=\sum_{k=1}^n \left(\mathcal{F}/\mathfrak{m}_{k}\mathcal{F}\cdot \frac{\sum_{i=0}^{d_k}(-1)^i\Tor_i^{B_k}\left(B_k/I_k,\kappa(x_k)\right)}{\Lambda_{-1}N_k}\right)
\end{equation}
and
\begin{equation}\label{eq-localizationtheorem-withoutembedding-2}
\sum_{i=0}^{\infty}(-1)^i H^i(X,\mathcal{F})=\sum_{k=1}^n \left(\mathcal{F}/\mathfrak{m}_{k}\mathcal{F}\cdot \frac{\sum_{i=0}^{d_k}(-1)^i\Tor_i^{\widehat{B_k}}\left(
\left(\widehat{A_k}\right)_{w_k},\kappa(x_k)\right)}{\Lambda_{-1}N_k}\right)
\end{equation}
in $R(T)_{(0)}$.
\end{corollary}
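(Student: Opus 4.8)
The plan is to derive both formulas of the corollary from theorem \ref{thm-Thomason-reducedisolatefixed-0}: the first by pushing the identity (\ref{eq-localizationtheorem-withoutembedding-0}) forward to a point using the properness of $X$, and the second by comparing $\mathrm{Tor}$ over $B_k$ with $\mathrm{Tor}$ over $\widehat{B_k}$ via flatness of completion.

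For (\ref{eq-localizationtheorem-withoutembedding-1}): since $X$ is proper over $\Bbbk$, the structure morphism $p\colon X\to\Spec\Bbbk$ induces an $R(T)$-linear proper pushforward $p_*\colon G(T,X)_{(0)}\to R(T)_{(0)}$ with $p_*[\mathcal{F}]=\sum_{i}(-1)^iH^i(X,\mathcal{F})$, the right side being a finite sum of finite-dimensional $T$-representations by properness. Set $q=p\circ\iota\colon X^T\to\Spec\Bbbk$; functoriality of proper pushforward gives $p_*\circ\iota_*=q_*$, whence
\[
\sum_i(-1)^iH^i(X,\mathcal{F})=p_*[\mathcal{F}]=p_*\iota_*(\iota_*)^{-1}[\mathcal{F}]=q_*(\iota_*)^{-1}[\mathcal{F}].
\]
Since $X^T=\bigsqcup_{k=1}^n x_k$ is a disjoint union of reduced $\Bbbk$-points, $G(T,X^T)_{(0)}=\bigoplus_k R(T)_{(0)}$ and $q_*$ is the sum of the components; moreover $\lambda_{-1}N_k$ is a unit in $R(T)_{(0)}$ by lemma \ref{lem-nonzero-weights-1}, so each term in (\ref{eq-localizationtheorem-withoutembedding-0}) is a genuine element of $R(T)_{(0)}$. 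Applying $q_*$ to (\ref{eq-localizationtheorem-withoutembedding-0}) therefore yields (\ref{eq-localizationtheorem-withoutembedding-1}) immediately.

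For (\ref{eq-localizationtheorem-withoutembedding-2}) it suffices, given (\ref{eq-localizationtheorem-withoutembedding-1}), to produce for each $k$ and each $i$ an isomorphism of $T$-representations $\mathrm{Tor}_i^{B_k}(B_k/I_k,\kappa(x_k))\cong\mathrm{Tor}_i^{\widehat{B_k}}((\widehat{A_k})_{w_k},\kappa(x_k))$. I would use that $(\widehat{A_k})_{w_k}=\widehat{B_k}/\widehat{I_k}=\widehat{B_k}/I_k\widehat{B_k}=(B_k/I_k)\otimes_{B_k}\widehat{B_k}$, where the middle equality is lemma \ref{lem-generatedbyeigenpolynomials-1} ($\widehat{I_k}$ and $I_k$ have the same generators $g_1,\dots,g_r$). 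Choose a $T$-equivariant finite free resolution $P_\bullet\to\kappa(x_k)$ over $B_k$ (for instance the Koszul complex on the semi-invariant generators $Y_1,\dots,Y_{d_k}$ of $\mathfrak{m}_{x_k}$). As $\widehat{B_k}$ is flat over $B_k$, $P_\bullet\otimes_{B_k}\widehat{B_k}$ is a $T$-equivariant free resolution of $\kappa(x_k)$ over $\widehat{B_k}$; tensoring it with $(\widehat{A_k})_{w_k}$ over $\widehat{B_k}$ returns $(P_\bullet\otimes_{B_k}B_k/I_k)\otimes_{B_k}\widehat{B_k}$, so by flatness of completion $\mathrm{Tor}_i^{\widehat{B_k}}((\widehat{A_k})_{w_k},\kappa(x_k))\cong\mathrm{Tor}_i^{B_k}(B_k/I_k,\kappa(x_k))\otimes_{B_k}\widehat{B_k}$. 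But $\mathrm{Tor}_i^{B_k}(B_k/I_k,\kappa(x_k))$ is annihilated by $\mathfrak{m}_{x_k}=\mathrm{Ann}_{B_k}\kappa(x_k)$ and is a subquotient of a finitely generated $B_k/I_k$-module, hence a finite-dimensional $\Bbbk$-vector space on which $-\otimes_{B_k}\widehat{B_k}$ is the identity. This gives the required $T$-equivariant isomorphism, and (\ref{eq-localizationtheorem-withoutembedding-2}) follows from (\ref{eq-localizationtheorem-withoutembedding-1}).

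I do not expect a genuine obstacle, the essential content being already in theorem \ref{thm-Thomason-reducedisolatefixed-0}. The one point needing care is the passage to the completion: one must know that $I_k$ and $\widehat{I_k}$ are generated by the same finite set of polynomials -- which is exactly what lemmas \ref{lem-generatedbyeigenpolynomials-1} and \ref{lem-nonzero-weights-1} provide -- together with the finite-length property of the $\mathrm{Tor}$ modules, so that completing them does nothing; without this the two $\mathrm{Tor}$ expressions would not be literally equal.
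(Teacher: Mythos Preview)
Your proof is correct and follows essentially the same strategy as the paper: push (\ref{eq-localizationtheorem-withoutembedding-0}) forward to $\Spec\Bbbk$ for the first formula, and use flatness of $B_k\to\widehat{B_k}$ to identify the two $\mathrm{Tor}$ expressions for the second. The only minor difference is that the paper resolves $B_k/I_k$ rather than $\kappa(x_k)$; after base change to $\widehat{B_k}$ and tensoring with $\kappa(x_k)$ one then lands directly on the uncompleted complex, bypassing your (correct but extra) observation that $-\otimes_{B_k}\widehat{B_k}$ is the identity on finite-length modules.
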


\begin{proof}
Pushing both sides of (\ref{eq-localizationtheorem-withoutembedding-0}) to $\Spec(\Bbbk)$, we obtain the equality in $R(T)_{(0)}$. 

For (\ref{eq-localizationtheorem-withoutembedding-2}), recall that $(\widehat{A_k})_{w_k}\cong \widehat{B_k/I_k}$. Taking an equivariant resolution of $B_k/I_k$ and base change to $\widehat{B_k}$, we obtain
\[
\Tor_i^{\widehat{B_k}}\left(
\left(\widehat{A_k}\right)_{w_k},\kappa(x_k)\right)=\Tor_i^{B_k}\left(B_k/I_k,\kappa(x_k)\right).
\]
Then (\ref{eq-localizationtheorem-withoutembedding-2}) follows from (\ref{eq-localizationtheorem-withoutembedding-1}).
\end{proof}

\begin{remark}
The advantage of Formula (\ref{eq-localizationtheorem-withoutembedding-2}) is that the right-hand side depends  only on the completed local ring $\widehat{\mathcal{O}}_{X,x}$ at the fixed points $x\in X^T$. When $X$ represents a moduli functor, we do not need to find a global embedding of $X$ but only need to solve the corresponding formal deformation problem at the fixed points.
\end{remark}

\begin{remark}
Although in Theorem~\ref{thm-Thomason-reducedisolatefixed-0} and Corollary~\ref{cor-Thomason-reducedisolatefixed-1}, we do not assume the existence of an embedding of a (Zariski) local chart at a fixed point into a regular algebraic space $Z$ over $\Bbbk$, in practice to show that the isolated (which is comparatively easy to verify set-theoretically) fixed points of $X$ are reduced,  it turns out to be convenient to find such a local embedding satisfying that the fixed points of $Z$ are isolated since the fixed locus of a regular $T$-space is  regular; see \cite[Proposition 3.1]{Tho92}. See Section~\ref{sec:equi-embeddings-in-Grass} for examples.
\end{remark}

\begin{remark}
In practice, the \'etale morphism $\psi$ in the data of an equivariant chart $(\psi,\phi,A,B,I)$  can  usually be taken as an open immersion. This is the case if there is an equivariant embedding of $X$ into a geometrically normal $\Bbbk$-scheme $Z$ since by Sumihiro's theorem \cite[Corollary 3.11]{Sum75}, such $Z$ can be covered by $T$-invariant affine open subsets. Moreover, when $X$ is a normal projective scheme over $\Bbbk$ with a $T$-action, an equivariant immersion of $X$ into a projective space always exists (see \cite[Corollary 1.6 and Proposition~1.7]{MFK94}).
\end{remark}

Let us introduce some notation for our forthcoming computations. Let $\mathbf{t}=(t_1,\dots,t_r)$ be the generic point of $T\cong \mathbb{G}_{m}^{r}$; one can also regard it as a formal symbol. For a character $w=(a_1,\dots,a_r)\in M$, the trace of $\mathbf{t}$ is $\mathbf{t}^{w}=t_1^{a_1}\cdots t_r^{a_r}$. We use this formal product $\mathbf{t}^{w}$ to represent a $1$-dimensional representation in the representation ring $R(T)$. Thus direct sums (resp.\ tensor products) of representations correspond to sums  (resp.\ products) of polynomials of $t_1,\dots,t_r$.

\begin{example}\label{example-Thomason-localization}
Let $Y\subset \mathbb{P}^n$ be the union of the $n+1$ coordinate lines with the reduced subscheme structure. For $l\in \mathbb{Z}$, equip $\mathcal{O}(l)$ with the $T=\mathbb{G}_m^{n+1}$-linearization induced by the $T$-action on the module $\Bbbk[X_0,\dots,X_n](l)$. From (\ref{eq-localizationtheorem-withoutembedding-2}) we have
\[
\chi(Y,\mathcal{O}(l))=
\sum_{i=0}^n \left(t_i^{l}\cdot\frac{1-\prod_{j\neq i}\frac{t_j}{t_i}}{\prod_{j\neq i}\left(1-\frac{t_j}{t_i}\right)}\right).
\]
One can easily check this result by a d\'evissage on the components of $X$.

The results in this section can be generalized to diagonalized groups. In this example, suppose that~$\Bbbk$ contains $\mathbb{Q}(\zeta_{n+1})$. Let $G=(\mu_{n+1})_{\Bbbk}$  be the constant group scheme $\mathbb{Z}/(n+1)\mathbb{Z}$ that acts on $\mathbb{P}^n$ by cyclicly permuting the coordinates $X_0,\dots,X_{n+1}$. This induces an action on $Y$. But with this action, $Y$ has no fixed points. Write $\mathbb{X}(G)=\mathbb{Z}[\lambda]/(\lambda^{n+1}-1)$. Then by the localization theorem, $\chi(Y,\mathcal{O}(l))$ vanishes in $\mathbb{X}(G)_{\rho}$, where $\rho$ is the prime ideal generated by $\lambda^n+\lambda^{n-1}+\dots+1$. One can check this by observing that $\chi(Y,\mathcal{O}(l))$ is a direct sum of copies of regular representations of $\mathbb{Z}/(n+1)\mathbb{Z}$.
\end{example}

\subsection{Equivariant Hilbert functions}\label{sec:equivariant-Hilbertfunction}
\begin{definition}\label{def-equi-Hilb-function}
Let $S=\Bbbk[x_1,\dots,x_d]$ or $\Bbbk[[x_1,\dots,x_d]]$. Suppose $T$ acts on $x_i$ by weight $w_i$, for $1\leq i\leq d$, and extend the $T$-action to the ring $\Bbbk[x_1,\dots,x_d]$, and to $\Bbbk[[x_1,\dots,x_d]]$ continuously. Suppose $w_i\neq 0$ for $1\leq i\leq d$. For an $T$-invariant ideal $I$ of $S$ and the associated quotient $R=S/I$, we define the equivariant Hilbert function of $R$ to be
\begin{equation}\label{eq-def-equi-Hilb-function}
H(R;\mathbf{t})=\frac{\sum_{i=0}^{d}(-1)^i\Tor_i^{S}(R,k)}{\prod_{j=1}^d\left(1-\mathbf{t}^{w_j}\right)}.
\end{equation}
\end{definition}

Then we rewrite (\ref{eq-localizationtheorem-withoutembedding-2}) as
\begin{equation}\label{eq-Thomason-reducedisolatefixed-2}
\sum (-1)^i H^i(X,\mathcal{F})=\sum_{x\in X^T} (\mathcal{F}_x/\mathfrak{m}_x \mathcal{F}_x)\cdot H(\widehat{\mathcal{O}}_{X,x};\mathbf{t}).
\end{equation}

By the Weierstrass preparation theorem and Lemma~\ref{lem-generatedbyeigenpolynomials-1}, for $S=\Bbbk[[x_1,\dots,x_d]]$ and a $T$-invariant ideal~$I$, there exists a $T$-invariant $I_0$ of $S_0=\Bbbk[x_1,\dots,x_d]$ such that $S/I=S_0/I_0\otimes_{S_0} S$. Let $R_0=S_0/I_0$. By the flatness of $S_0\rightarrow S$, we have $\Tor_i^{S}(R,\Bbbk)=\Tor_i^{S_0}(R_0,\Bbbk)$. So we only need  to study the case $S=\Bbbk[x_1,\dots,x_d]$.

A $T$-equivariant $S$-module is equivalent to an $M$-graded $S$-module (recall that $M$ is the character group of $T$). Regarding $R$ as an $M$-graded $S$-module, the numerator of (\ref{eq-def-equi-Hilb-function}) is no other than the \emph{$K$-polynomial} of~$R$ (see \cite[Definition 8.32]{MS05}). So when the weights $w_1,\dots,w_d$ are all positive, or more generally lie in a convex sector of $\mathbb{R}^r\supset\mathbb{Z}^r=M$, our equivariant Hilbert function $H(R;\mathbf{t})$ coincides with the \emph{multigraded Hilbert function} of $R$ (see \cite[Theorem 8.20]{MS05}). This justifies the name.\\

In the following, we give a brief survey on the computation of $H(S/I;\mathbf{t})$.  By definition, one may compute $\Tor_i^S(S/I,\Bbbk)$ by finding a $T$-equivariant resolution, or equivalently a multigraded resolution of $S/I$. But this is rather inefficient because one needs to compute a Gr\"obner basis many times. Another way is using the Koszul resolution of $\Bbbk$ as an $S$-module. This time it is difficult to compute the homology. A much more efficient algorithm is given in \cite{BS92}.

In the first step, we apply the following theorem (see \cite[Theorem 8.36]{MS05}), which  generalizes  a famous theorem of Macaulay to general, not necessarily positive, multigraded modules.

\begin{proposition}\label{prop-hilbertseries-gbbasis}
We have $
H(S/I;\mathbf{t})=H(S/\operatorname{in}_{<}(I);\mathbf{t})
$.
\end{proposition}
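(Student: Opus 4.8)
\noindent This is the multigraded form of Macaulay's theorem, and it is proved in \cite[theorem 8.36]{MS05}; I would reprove it by a Gr\"obner degeneration. By the reduction made just above the statement we may assume $S=\Bbbk[x_1,\dots,x_d]$ with $x_i$ of $M$-degree $w_i$. As a warm-up, if the $w_j$ all lie in an open halfspace of $\mathbb{R}^r$, then every $M$-graded piece of $S/I$ and of $S/\mathrm{in}_{<}(I)$ is finite dimensional, and the standard monomials (those not in $\mathrm{in}_{<}(I)$) form a homogeneous $\Bbbk$-basis of both, so the two multigraded Hilbert series agree term by term; multiplying by $\prod_j(1-\mathbf{t}^{w_j})$ then gives the equality of $K$-polynomials, i.e.\ of numerators of (\ref{eq-def-equi-Hilb-function}). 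For general weights the graded pieces may be infinite dimensional, so one has to argue directly with the $K$-polynomial, as follows.

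Fix a finite Gr\"obner basis $G$ of $I$ for $<$, and choose $\mathbf{a}\in\mathbb{Z}_{>0}^{d}$ with $\mathrm{in}_{\mathbf{a}}(g)=\mathrm{in}_{<}(g)$ for all $g\in G$, so that $\mathrm{in}_{\mathbf{a}}(I)=\mathrm{in}_{<}(I)$ and $G$ is a Gr\"obner basis for the weight $\mathbf{a}$ as well (this $\mathbf{a}$ is unrelated to the $M$-grading). For $f=\sum_u c_u x^u\in I$ put $\tilde f=t^{b(f)}f(t^{-a_1}x_1,\dots,t^{-a_d}x_d)\in S[t]$ with $b(f)=\max\{\mathbf{a}\cdot u:c_u\neq0\}$, and let $\tilde I\subseteq S[t]$ be the ideal generated by all $\tilde f$ (equivalently, by $\tilde g$, $g\in G$). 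Standard Gr\"obner theory gives that $t$ is a nonzerodivisor on $R:=S[t]/\tilde I$, hence $R$ is flat over $\Bbbk[t]$, with $R\otimes_{\Bbbk[t]}\Bbbk[t]/(t-1)\cong S/I$ and $R\otimes_{\Bbbk[t]}\Bbbk[t]/(t)\cong S/\mathrm{in}_{<}(I)$. Assigning $t$ the $M$-degree $0$, each $\tilde f$ with $f$ $M$-homogeneous is again $M$-homogeneous (all monomials of $f$ share one $M$-degree), so $\tilde I$ is $M$-homogeneous, $R$ is an $M$-graded $S[t]$-module, and both specializations are isomorphisms of $M$-graded $S$-modules.

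Take a finite $M$-graded free resolution $F_\bullet\to R$ over $S[t]$ (it exists, of length $\le d+1$, by the graded Hilbert syzygy theorem). Each $F_i$ is $S[t]$-free, hence $\Bbbk[t]$-flat, and $R$ is $\Bbbk[t]$-flat, so for $c\in\{0,1\}$ the spectral sequence $\mathrm{Tor}^{\Bbbk[t]}_p\bigl(H_q(F_\bullet),\Bbbk[t]/(t-c)\bigr)\Rightarrow H_{p+q}\bigl(F_\bullet\otimes_{\Bbbk[t]}\Bbbk[t]/(t-c)\bigr)$ collapses and shows that $F_\bullet\otimes_{\Bbbk[t]}\Bbbk[t]/(t-c)$ is an $M$-graded $S$-free resolution of $R\otimes_{\Bbbk[t]}\Bbbk[t]/(t-c)$, i.e.\ of $S/I$ when $c=1$ and of $S/\mathrm{in}_{<}(I)$ when $c=0$. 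Reduction modulo $t-c$ changes neither the ranks nor the $M$-degrees of the $F_i$, so the alternating sum $\sum_i(-1)^i[F_i]\in\mathbb{Z}[M]=R(T)$ is the same in both cases; since this alternating sum computes $\sum_i(-1)^i\mathrm{Tor}_i^{S}$ of the resolved module, we obtain $\sum_i(-1)^i\mathrm{Tor}_i^{S}(S/I,\Bbbk)=\sum_i(-1)^i\mathrm{Tor}_i^{S}(S/\mathrm{in}_{<}(I),\Bbbk)$, and dividing by $\prod_{j=1}^{d}(1-\mathbf{t}^{w_j})$ gives $H(S/I;\mathbf{t})=H(S/\mathrm{in}_{<}(I);\mathbf{t})$.

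There is no genuine obstacle — the statement is classical — but the point requiring care is the flat specialization of the resolution in the last paragraph: one really does need $R$ and every $F_i$ to be $\Bbbk[t]$-flat, and the collapse of the spectral sequence (equivalently, the long exact homology sequence of $0\to F_\bullet\xrightarrow{\,t-c\,}F_\bullet\to F_\bullet/(t-c)F_\bullet\to0$), together with keeping the $M$-grading (with $\deg t=0$) consistent throughout. Everything else is routine Gr\"obner bookkeeping and the standard fact that a term order can be realized on a given ideal by an integral weight vector.
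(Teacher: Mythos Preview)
Your proof is correct; the paper itself does not give a proof but simply cites \cite[theorem 8.36]{MS05}, so there is no in-paper argument to compare against. Your Gr\"obner-degeneration argument is essentially the standard one, and it goes through. Two small points worth tightening: first, you should note that since $I$ is $M$-homogeneous, Buchberger's algorithm starting from $M$-homogeneous generators produces an $M$-homogeneous Gr\"obner basis $G$, so that each $\tilde g$ is indeed $M$-homogeneous (this is implicit in what you wrote but deserves a sentence). Second, the appeal to ``the graded Hilbert syzygy theorem'' for a finite $M$-graded free resolution over $S[t]$ is slightly delicate because, with $\deg t=0$, the $M$-grading on $S[t]$ is not positive. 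The clean fix is to observe that $\tilde I$ is also homogeneous for the auxiliary $\mathbb{Z}$-grading with $\deg x_i=a_i$ and $\deg t=1$ (each $\tilde g$ has $\mathbb{Z}$-degree $b(g)$), so $S[t]$ carries a positive $M\times\mathbb{Z}$-grading for which finite graded free resolutions exist; forgetting the $\mathbb{Z}$-factor then gives the finite $M$-graded resolution you need. With that adjustment, the flat-specialization paragraph is airtight.
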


This reduces the computation of $H(S/I;\mathbf{t})$ to the case of monomial ideals. 
Then one can compute $H(S/\operatorname{in}_{<}(I);\mathbf{t})$ by using a resolution of the monomial ideal $\operatorname{in}_{<}(I)$, \textit{e.g.}, the Taylor complex. Alternatively, we will use the following lemma. 

\begin{lemma}\label{lem-key}
Let $J_{m}=(f_1,\dots,f_{m-1},f_m)$, where the $f_i$ are eigen-polynomials under the $T$-action. Denote the weight of\, $f_i$ by $\mathbf{w}(f_i)$. 
Let $J_{m-1}=(f_1,\dots,f_{m-1})$. Then
\begin{equation}\label{eq-Kpolynomial-reduction}
\sum_{i=0}^d(-1)^i\Tor_i(S/J_m,\Bbbk)=\sum_{i=0}^d(-1)^i\Tor_i(S/J_{m-1},\Bbbk)
- \mathbf{t}^{\mathbf{w}(f_m)}\sum_{i=0}^d(-1)^i\Tor_i\left(S/\left(J_{m-1}:(f_m)\right),\Bbbk\right).
\end{equation}
\end{lemma}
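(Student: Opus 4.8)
The plan is to obtain (\ref{eq-Kpolynomial-reduction}) from a single short exact sequence of $M$-graded $S$-modules, using that the numerator $\sum_i(-1)^i\mathrm{Tor}_i^S(N,\Bbbk)$ appearing in Definition \ref{def-equi-Hilb-function}, regarded as an element of $R(T)=\mathbb{Z}[M]$, is additive on short exact sequences and transforms predictably under degree shifts.

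First I would set up the sequence. Since $J_{m-1}$ is generated by eigen-polynomials and $f_m$ is an eigen-polynomial, the colon ideal $(J_{m-1}:(f_m))$ is again $T$-invariant, i.e.\ homogeneous for the $M$-grading, so all the modules below are $M$-graded. Multiplication by $f_m$ is a $T$-equivariant endomorphism of $S$ of weight $\mathbf{w}(f_m)$; viewing it as a degree-$0$ map $S(-\mathbf{w}(f_m))\to S$ and passing to quotients yields the exact sequence
\[
0\longrightarrow \big(S/(J_{m-1}:(f_m))\big)(-\mathbf{w}(f_m))\xrightarrow{\cdot f_m} S/J_{m-1}\longrightarrow S/J_m\longrightarrow 0,
\]
where injectivity of the left-hand map is exactly the definition of the colon ideal, and the cokernel is $S/\big(J_{m-1}+(f_m)\big)=S/J_m$.

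Second, I would invoke the formal properties of $K(N):=\sum_{i=0}^{d}(-1)^i\mathrm{Tor}_i^S(N,\Bbbk)$. Because $S$ is regular of dimension $d$, every finitely generated $M$-graded $S$-module admits a finite $M$-graded free resolution; hence the $\mathrm{Tor}$ modules vanish for $i>d$ and are finite-dimensional, so $K(N)\in\mathbb{Z}[M]$ is well defined, and in fact $K(N)=\sum_j(-1)^j[F_j\otimes_S\Bbbk]$ for any finite graded free resolution $F_\bullet\to N$. Applying $\mathrm{Tor}_\bullet^S(-,\Bbbk)$ to a short exact sequence $0\to A\to B\to C\to 0$ produces a finite graded long exact sequence whose degreewise Euler characteristic vanishes, giving $K(B)=K(A)+K(C)$; and a shift multiplies the $K$-polynomial by the corresponding monomial, $K(N(-w))=\mathbf{t}^{w}K(N)$, since a free resolution of $N$ shifted resolves $N(-w)$. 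Feeding the displayed sequence into these three facts and solving for $K(S/J_m)$ gives exactly (\ref{eq-Kpolynomial-reduction}).

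The argument is essentially formal, so I do not anticipate a real obstacle; the only points demanding attention are the bookkeeping of the grading — in particular fixing the sign of the twist $(-\mathbf{w}(f_m))$ and checking exactness at the left — and, should one wish to run the argument over $S=\Bbbk[[x_1,\dots,x_d]]$ directly, the observation made just before the lemma (via Weierstrass preparation and Lemma \ref{lem-generatedbyeigenpolynomials-1}) that everything descends from $\Bbbk[x_1,\dots,x_d]$, so that one may assume $S$ is the polynomial ring where the $M$-grading is honest.
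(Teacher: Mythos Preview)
Your argument is correct and slightly more streamlined than the paper's. You use the single short exact sequence
\[
0\longrightarrow \big(S/(J_{m-1}:(f_m))\big)(-\mathbf{w}(f_m))\xrightarrow{\cdot f_m} S/J_{m-1}\longrightarrow S/J_m\longrightarrow 0,
\]
whereas the paper splits this into two steps, passing through the intermediate module $S/(J_{m-1}\cap(f_m))$: first a Mayer--Vietoris-type sequence
\[
0\to S/(J_{m-1}\cap(f_m))\to S/J_{m-1}\oplus S/(f_m)\to S/J_m\to 0,
\]
then a multiplication-by-$f_m$ sequence $0\to S/\big(J_{m-1}:(f_m)\big)\to S/(J_{m-1}\cap(f_m))\to S/(f_m)\to 0$, after which the $K(S/(f_m))$ contributions cancel. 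Your route collapses these two sequences into one and avoids that cancellation; the paper's decomposition makes the intersection ideal visible but is otherwise equivalent. Either way the content is the additivity of the $K$-polynomial and the shift formula, both of which you justify adequately.
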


\begin{proof}
We have exact sequences
\[
0\longrightarrow S/(J_{m-1}\cap (f_m))\longrightarrow S/J_{m-1}\oplus S/(f_m)\longrightarrow R/J_{m}\longrightarrow 0 
\]
and
\[
0\longrightarrow S/(J_{m-1}\cap (f_m)\colon (f_m))\xrightarrow{\times f_m} S/(J_{m-1}\cap (f_m))\longrightarrow S/(f_m)\longrightarrow 0. 
\]
By the definition of quotient ideals, $\left(J_{m-1}\cap (f_m):(f_m)\right)=\left(J_{m-1}:(f_m)\right)$. Now (\ref{eq-Kpolynomial-reduction}) follows from the additivity of~$\sum_{i=0}^d(-1)^i\Tor_i(\cdot,\Bbbk)$.
\end{proof}
When $f_1,\dots,f_m$ are monomials, both $J_{m-1}$ and $\left(J_{m-1}:(f_m)\right)$ are monomial ideals generated by at most $m-1$ monomials. So we can compute $H(S/J_m;\mathbf{t})$ recursively.

Finally, we recall a theorem of Stanley \cite[Theorem 4.4]{Sta78}. Let $R$ be the ring in Definition~\ref{def-equi-Hilb-function}.

\begin{theorem}[Stanley]\label{thm-Stanley-Gorenstein}
Let $R$ be a ring as in Definition~\ref{def-equi-Hilb-function}, and assume that $R$ is Cohen--Macaulay of Krull dimension $d$ and the weights $w_i$ lie in a strictly convex cone. Then $R$ is Gorenstein if and only if there is a multi-index $\alpha\in \mathbb{Z}^r$ such that
\begin{equation}\label{eq-thm-Stanley}
	H\left(R;t_1^{-1},\dots,t_r^{-1}\right)=(-1)^d \mathbf{t}^{\alpha} H(R;t_1,\dots,t_r).
\end{equation}
\end{theorem}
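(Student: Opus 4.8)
The statement to prove is Stanley's theorem (Theorem \ref{thm-Stanley-Gorenstein}): for $R = S/I$ a Cohen-Macaulay domain of Krull dimension $d$ with grading weights $w_i$ in a strictly convex cone, $R$ is Gorenstein iff the equivariant Hilbert function satisfies the functional equation $H(R; t_1^{-1}, \ldots, t_r^{-1}) = (-1)^d \mathbf{t}^{\alpha} H(R; t_1, \ldots, t_r)$ for some $\alpha \in \mathbb{Z}^r$.

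The plan is to reduce everything to the structure of a minimal free resolution. First I would invoke the hypothesis that the weights $w_1, \ldots, w_d$ lie in a strictly convex cone: this guarantees that the $M$-graded ring $S = \Bbbk[x_1, \ldots, x_d]$ has all graded pieces finite-dimensional along the cone, so that $R$ is a positively multigraded module in an appropriate sense, and the numerator of $H(R;\mathbf{t})$ — namely $\sum_i (-1)^i \mathrm{Tor}_i^S(R, \Bbbk)$, which is the $K$-polynomial $\mathcal{K}(R; \mathbf{t})$ — is a genuine Laurent polynomial in $t_1, \ldots, t_r$ read off from any minimal multigraded free resolution $F_\bullet \to R$. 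Explicitly, if $F_i = \bigoplus_j S(-a_{ij})$, then $\mathcal{K}(R;\mathbf{t}) = \sum_i (-1)^i \sum_j \mathbf{t}^{a_{ij}}$, and $H(R;\mathbf{t}) = \mathcal{K}(R;\mathbf{t}) / \prod_j (1 - \mathbf{t}^{w_j})$.

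Next I would bring in the Cohen-Macaulay hypothesis: since $R$ is Cohen-Macaulay of Krull dimension $d$ over the polynomial ring $S$ in $d$ variables — wait, one should be careful, $d$ here is the embedding dimension and also the Krull dimension, so actually $R = S$ would force $I = 0$; more precisely the setup must allow $S$ in more variables, or one interprets $\mathrm{projdim}_S R = (\#\text{variables}) - d$. Granting the standard framework, the CM condition says the minimal free resolution of $R$ over $S$ has length $c = \mathrm{projdim}_S R$, and Gorenstein-ness of $R$ is equivalent to the last free module $F_c$ having rank one, i.e., $F_c = S(-\beta)$ for a single shift $\beta \in \mathbb{Z}^r$ (this is the multigraded version of the characterization of Gorenstein quotients via the canonical module $\omega_R \cong R(\text{shift})$). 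Then self-duality of the resolution $\mathrm{Hom}_S(F_\bullet, S(-\sigma))$ (where $\sigma = \sum_j w_j$ is the shift coming from $\omega_S$) against $F_\bullet$ itself, when $F_c$ has rank one, yields $a_{c-i, j'}$-shifts in bijection with $\beta + \sigma - a_{ij}$-shifts, which translates into the palindromic identity $\mathcal{K}(R; \mathbf{t}^{-1}) = (-1)^c \mathbf{t}^{-\gamma} \mathcal{K}(R; \mathbf{t})$ for a suitable $\gamma$; dividing through and using $\prod_j(1 - \mathbf{t}^{-w_j}) = (-1)^d \mathbf{t}^{-\sigma}\prod_j(1 - \mathbf{t}^{w_j})$ gives exactly \eqref{eq-thm-Stanley} with $d$ replaced appropriately and the right $\alpha$.

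For the converse direction I would argue that the functional equation on $H(R;\mathbf{t})$, via the local duality / canonical module translation (the $K$-polynomial of $\omega_R$ is, up to sign and shift, $\mathcal{K}(R; \mathbf{t}^{-1})$), forces $\omega_R$ to have the same $K$-polynomial as a rank-one shift $R(\alpha')$; since $R$ is a CM domain, $\omega_R$ is a rank-one torsion-free (reflexive) module, and having the $K$-polynomial of a free rank-one module over the domain $R$ forces $\omega_R \cong R(\alpha')$ — this last step is where strict convexity of the cone is used again, to ensure the Hilbert function determines enough and that no ``cancellation at infinity'' spoils the comparison. I expect the main obstacle to be precisely this converse: getting from equality of multigraded Hilbert functions to an actual isomorphism of modules requires knowing that $\omega_R$ embeds in its bidual and comparing generators degree-by-degree in the strictly convex cone, rather than a purely formal manipulation. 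Since this is a verbatim citation of \cite[theorem 4.4]{Sta78}, in the paper itself I would simply cite Stanley and not reproduce the argument; the sketch above is how one would reconstruct it.
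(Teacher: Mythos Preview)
You correctly anticipate that the paper does not reproduce a proof: it simply cites \cite[theorem 4.4]{Sta78}. There is, however, one point the paper adds that your proposal skips. Stanley's original theorem is stated for $\mathbb{Z}$-graded algebras with strictly positive degrees, not for $\mathbb{Z}^r$-gradings. The paper bridges this gap in a single sentence: the strictly-convex-cone hypothesis on the weights $w_i$ guarantees the existence of a one-parameter subgroup of $T$ (equivalently, a linear functional $\ell:\mathbb{Z}^r\to\mathbb{Z}$) under which all variables acquire strictly positive integer degrees, and one then applies Stanley's result to the induced $\mathbb{Z}$-grading. The functional equation \eqref{eq-thm-Stanley} in the multigraded variables follows because the $K$-polynomial and the Gorenstein property are insensitive to which positive coarsening one picks.

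Your reconstruction sketch instead argues directly in the multigraded setting via self-duality of the minimal free resolution and the canonical module. That route is perfectly sound (and your identification of the converse as the delicate step is accurate), but it is a different and longer path than the paper's one-line reduction to the classical $\mathbb{Z}$-graded case. The paper's approach buys brevity by leaning entirely on Stanley's published argument; yours would buy self-containment at the cost of reproving what Stanley already did.
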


Stanley's theorem is stated for graded algebras with positive degrees in $\mathbb{Z}$. When the weights $w_i$ lie in a strictly convex cone, we can find a subtorus of $T$ such that the induced gradings on the variables are strictly positive.

\section{Equivariant embeddings in Grassmannians and fixed points}\label{sec:equi-embeddings-in-Grass}
\subsection{Equivariant embeddings}
Let $V$ be the $(r+1)$-dimensional vector space over $\Bbbk$ spanned by $X_0,\dots,X_r$.
Let $\mathbb{P}^r=\Proj(\Bbbk[X_0,\dots,X_r])$. For a graded ideal $I$ of $\Bbbk[X_0,\dots,X_r]$, we denote by $\tilde{I}$ the sheaf of ideals associated with $I$. We say that $I$ is $m$-regular, and also that $\tilde{I}$ is $m$-regular,  if $H^i(\mathbb{P}^r,\tilde{I}(m-i))=0$ for $i>0$.  For a polynomial $\Phi(z)\in \mathbb{Q}[z]$ which takes integer values for $z\in \mathbb{Z}$, set
\[
 \sigma(\Phi)=\inf\{m: \mathcal{I}_Z\ \mbox{is $m$-regular for every closed subscheme $Z\subset \mathbb{P}^{r}$ with Hilbert polynomial $\Phi$}
 \}.
 \] 
 For a $\Bbbk$-vector space $W$, denote by $\Grass(n,W)$ the moduli scheme of dimension $n$ quotients
 of $W$. Then for any $d\geq \sigma(\Phi)$, there is a closed embedding
\[
\alpha\colon \Hilb^{\Phi}(\mathbb{P}^r)\longhookrightarrow 
\Grass\left(\Phi(d),\Sym^d(V)\right),
\]
\[
\alpha(Z)=\left[H^0(\mathbb{P}^r,\mathcal{O}(d))\longtwoheadrightarrow H^0(\mathbb{P}^r,\mathcal{O}_Z(d))\right].
\]

Let $\mathbb{G}_{m,\Bbbk}^{r+1}$ act on $X_0,\dots,X_r$ by $(t_0,\dots,t_r)\cdot X_i=t_i X_i$. Let $T\cong \mathbb{G}_{m,\Bbbk}^{r}$ be the subtorus of $\mathbb{G}_{m,\Bbbk}^{r+1}$ defined by $t_0\cdots t_r=1$.   There are induced actions of $T$ on $\mathbb{P}^r$, and thus on $\Hilb^{\Phi}(\mathbb{P}^r)$ and $\Grass(\Phi(d),\Sym^d(V))$, rendering $\alpha$  $T$-equivariant.

\begin{lemma}\label{lem-fixedloci-Grassmannian}
The fixed loci of the induced $T$-action on $\Grass(n,\Sym^d(V))$ consist of reduced isolated $\Bbbk$-points. Let $S_d$ be the set of monomials in $X_0,\dots,X_r$ of degree $d$. Then the fixed points of\, $\Grass(n,\Sym^d(V))$ correspond bijectively to the  subsets of\, $S_d$ of cardinality $\binom{d+r}{r}-n$.
\end{lemma}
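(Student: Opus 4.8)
The plan is to exploit that the $T$-action on $W:=\mathrm{Sym}^d(V)$ is diagonal in the monomial basis, with \emph{pairwise distinct} weights, and then to read off both the set of fixed points and their infinitesimal structure from the standard affine charts of the Grassmannian. First I would record the weight decomposition of $W$: the torus $\mathbb{G}_{m,\Bbbk}^{r+1}$ acts on a monomial $X_0^{a_0}\cdots X_r^{a_r}$ by the character $(a_0,\dots,a_r)$, so its $T$-weight is the image of $(a_0,\dots,a_r)$ in $M\cong\mathbb{Z}^{r+1}/\mathbb{Z}\cdot(1,\dots,1)$. If two degree-$d$ monomials $X^a,X^b$ have the same $T$-weight then $a-b$ is an integer multiple of $(1,\dots,1)$, and comparing total degrees forces $a=b$. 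Hence $W=\bigoplus_{m\in S_d}\Bbbk\cdot m$ is a decomposition into one-dimensional $T$-weight subspaces with pairwise distinct weights $\{w_m\}_{m\in S_d}$; I set $N=\dim_{\Bbbk}W=\binom{d+r}{r}=|S_d|$.

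Next I would identify the fixed points as a set. A $\overline{\Bbbk}$-point of $\mathrm{Grass}(n,W)$ is a codimension-$n$ subspace $U\subseteq W\otimes_{\Bbbk}\overline{\Bbbk}$, i.e.\ $\dim U=N-n$, and it is $T$-fixed iff $U$ is a $T$-subrepresentation; by complete reducibility of $T$-representations (\cite[theorem 12.12]{Mil17}) this holds iff $U=\bigoplus_{m\in S}\overline{\Bbbk}\cdot m$ for a subset $S\subseteq S_d$, necessarily with $|S|=N-n=\binom{d+r}{r}-n$. In particular each such $U$ is already defined over $\Bbbk$, and distinct subsets $S$ give distinct points, which produces the asserted bijection between $\mathrm{Grass}(n,W)^T(\overline{\Bbbk})=\mathrm{Grass}(n,W)^T(\Bbbk)$ and the subsets of $S_d$ of cardinality $\binom{d+r}{r}-n$.

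Finally I would check that each fixed point is a reduced isolated $\Bbbk$-point. Fixing $S$ and putting $U_S=\bigoplus_{m\in S}\Bbbk\cdot m$, $C=\bigoplus_{m\notin S}\Bbbk\cdot m$, so $W=U_S\oplus C$, the standard affine chart of $\mathrm{Grass}(n,W)$ around $[U_S]$ consists of the subspaces complementary to $C$, which are precisely the graphs of linear maps $U_S\to C$; thus the chart is $T$-equivariantly isomorphic to $\mathrm{Hom}_{\Bbbk}(U_S,C)\cong\mathbb{A}^{|S|(N-|S|)}$, on which $T$ acts linearly with weights $w_{m'}-w_m$ for $m\in S$, $m'\notin S$, all nonzero by the first step. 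Hence every coordinate of this chart is a $T$-eigenfunction of nonzero weight, so the fixed subscheme of the chart is cut out by all of the coordinates, i.e.\ is the reduced point $[U_S]$. (Alternatively, since $\mathrm{Grass}(n,W)$ is regular, \cite[proposition 3.1]{Tho92} already gives that its fixed locus is regular, and the same weight computation shows the $T$-fixed part of the tangent space $\mathrm{Hom}_{\Bbbk}(U_S,C)$ at $[U_S]$ is $0$, hence the fixed locus is $0$-dimensional there.) Combining with the previous step, $\mathrm{Grass}(n,W)^T$ is exactly a disjoint union of reduced $\Bbbk$-points indexed as claimed. The only slightly delicate point — and the reason I isolate this last step — is the \emph{scheme-theoretic} reducedness; but I expect no genuine obstacle, since the affine-chart description makes it transparent: the weights being pairwise distinct, no nonzero-weight coordinate can be $T$-invariant, so the fixed ideal is the full maximal ideal at the origin of the chart.
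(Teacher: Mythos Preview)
Your proof is correct and follows essentially the same approach as the paper: both use that the monomial weights are pairwise distinct and complete reducibility of $T$-representations to identify the fixed points, and both invoke \cite[proposition 3.1]{Tho92} for reducedness of the fixed locus. The only difference is emphasis: the paper takes the regularity of the fixed locus as the primary argument and then just checks isolatedness, whereas you give a direct scheme-theoretic computation in the standard affine chart (with the regularity argument as an aside); your version is more self-contained but not genuinely different.
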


\begin{proof} Since $\Grass(n,\Sym^d(V))$ is smooth, the fixed loci are regular (see \cite[Proposition 1.3]{Ive72} and \cite[Proposition 3.1]{Tho92}). So it suffices to show that the fixed points are isolated $\Bbbk$-points.  The monomials in $X_0,\dots,X_r$ of degree $d$ form a basis of $\Sym^d(V)$. The weights of the $T$-action on these monomials are pairwise distinct.  Let $W$ be a $T$-invariant subspace of $\Sym^d(V)$. By the complete decomposability of the representations of $T$, the space $W$ is spanned by a subset of $S_d$. So the fixed points are isolated $\Bbbk$-points.  The second statement also follows.
\end{proof}

\begin{corollary}\label{cor-fixedloci-Hilb-Projspace}
The fixed loci of the induced $T$-action on $\Hilb^{\Phi}(\mathbb{P}^r)$ consist of reduced isolated  $\Bbbk$-points.
\end{corollary}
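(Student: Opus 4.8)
The plan is to deduce this directly from Lemma~\ref{lem-fixedloci-Grassmannian} by means of the $T$-equivariant closed immersion
\[
\alpha\colon \mathrm{Hilb}^{\Phi}(\mathbb{P}^r)\hookrightarrow \mathrm{Grass}(\Phi(d),\mathrm{Sym}^d(V))
\]
constructed above, for any fixed $d\geq\sigma(\Phi)$. The mechanism is that forming the $T$-fixed locus commutes with closed immersions: the fixed locus of the Hilbert scheme is a closed subspace of the fixed locus of the Grassmannian, which we already know to be a disjoint union of reduced isolated $\Bbbk$-points, and any closed subscheme of such an object is again a disjoint union of reduced isolated $\Bbbk$-points.

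First I would record the base-change property of the fixed-point subfunctor. Write $X=\mathrm{Hilb}^{\Phi}(\mathbb{P}^r)$ and $Z=\mathrm{Grass}(\Phi(d),\mathrm{Sym}^d(V))$, so that $\alpha$ is a $T$-equivariant monomorphism. For every $\Bbbk$-scheme $Y$ the map $X(Y)\hookrightarrow Z(Y)$ is injective, and a $Y$-point of $X$ is $T$-invariant exactly when its image in $Z(Y)$ is $T$-invariant; hence $X^G(Y)=X(Y)^G=X(Y)\cap Z^G(Y)$, i.e. $X^T=X\times_Z Z^T$ as subfunctors, and therefore as closed subspaces (using that $X^T$ and $Z^T$ are represented by closed subspaces, as recalled after the definition of the fixed-point functor).

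Then I would conclude: by Lemma~\ref{lem-fixedloci-Grassmannian}, $Z^T$ is a finite disjoint union of copies of $\Spec\Bbbk$, indexed by the subsets of $S_d$ of cardinality $\binom{d+r}{r}-\Phi(d)$. Pulling back along $\alpha$, the space $X^T=X\times_Z Z^T$ is a closed subspace of this disjoint union of reduced $\Bbbk$-points; since a closed subscheme of $\Spec\Bbbk$ is either empty or all of $\Spec\Bbbk$, it follows that $X^T$ is itself a finite disjoint union of reduced isolated $\Bbbk$-points, as claimed. I do not expect any real obstacle here: the only point requiring a moment's care is the identity $X^T=X\times_Z Z^T$, which is purely formal from $X^G(Y)=X(Y)^G$ together with $\alpha$ being a monomorphism. (Alternatively one could identify the fixed points of $X$ with the monomial ideals in $\Bbbk[X_0,\dots,X_r]$ having Hilbert polynomial $\Phi$, matching them with the appropriate subsets of $S_d$; but the embedding argument is shorter and yields reducedness for free.)
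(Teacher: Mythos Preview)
Your proof is correct and takes essentially the same approach as the paper: both deduce the result from Lemma~\ref{lem-fixedloci-Grassmannian} via the $T$-equivariant closed immersion $\alpha$ into the Grassmannian, observing that $\mathrm{Hilb}^{\Phi}(\mathbb{P}^r)^T$ sits as a closed subscheme inside the finite reduced fixed locus of the Grassmannian. You have simply spelled out the identity $X^T=X\times_Z Z^T$ and the conclusion in more detail than the paper does.
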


\begin{proof}
This follows from Lemma~\ref{lem-fixedloci-Grassmannian} by using the embedding $\Hilb^{\Phi}(\mathbb{P}^r)^T\hookrightarrow \Grass(\Phi(d),\Sym^d(V))^T$, where $d\geq \sigma(\Phi)$.
\end{proof}

Now let $Y$ be a smooth proper toric variety of dimension $r$. Then $Y$ contains $T=\mathbb{G}_{m,\Bbbk}^{r}$ as a dense open subset, and $T$ acts on $Y$ in a natural way. This induces a $T$-action on $\Hilb^n(Y)$, the Hilbert scheme parametrizing length $n$ closed subschemes on $Y$.

\begin{proposition}\label{prop-equivariant-immersion-toric}\leavevmode
\begin{i-enumerate}
	\item\label{peit-1} If\, $Y$ is projective, there exists a $T$-equivariant closed immersion of\, $\Hilb^n(Y)$ into a smooth and projective $\Bbbk$-scheme.
	\item\label{peit-2} The fixed loci of the $T$-action on $\Hilb^n(Y)$ consist of reduced isolated $\Bbbk$-points.
\end{i-enumerate}
\end{proposition}

\begin{proof}
\eqref{peit-1}~ By \cite[Corollary 1.6]{MFK94}, there exists a $T$-equivariant immersion of $Y$ into a projective space $\mathbb{P}^{N}$ with a $T$-action. This $T$-action induces a $T$-equivariant closed immersion of $\Hilb^n(\mathbb{P}^{N})$ into a certain Grassmannian. Precomposing this immersion with the $T$-equivariant $\Hilb^n(Y)\hookrightarrow \Hilb^n(\mathbb{P}^{N})$, we are done. But note that with this $T$-action, the fixed loci on $\mathbb{P}^{N}$ may not be isolated.

\eqref{peit-2}~ Let $Z$ be a $T$-fixed length $n$ closed subscheme of $Y$. Then $Z=\sqcup_{i=1}^{k}Z_i$, where $Z_i$ is a $T$-fixed length $n_i$ subscheme supported at a $T$-fixed point $y_i$ of $Y$, satisfying $\sum_{i=1}^k n_i=n$.
   The $T$-fixed points of $Y$ form a finite set of $\Bbbk$-points, and each fixed point $y$ has a $T$-invariant open neighborhood $U_y$ such that there is a $T$\nobreakdash-equivariant open immersion $U_y\rightarrow \mathbb{A}^r_{\Bbbk}$.  There is a $T$-equivariant open immersion $\Hilb^{n_i}(U_{y_i})\hookrightarrow \Hilb^{n_i}(\mathbb{P}^r)$. Then Corollary~\ref{cor-fixedloci-Hilb-Projspace} implies that the fixed loci of $\Hilb^{n_i}(U_{y_i})$ consist of reduced $\Bbbk$-points. Then each $Z_i$ is a reduced $\Bbbk$-point of $\Hilb^{n_i}(U_{y_i})$. So $Z$ is a $\Bbbk$-point. Moreover, since $[Z]\in \Hilb^n(Y)$ shares a common \'etale neighborhood  with $[Z_1]\times\cdots\times [Z_k]\in\prod_i^k \Hilb^{n_i}(U_{y_i})$, there is an equivariant isomorphism
   \[
   \widehat{\mathcal{O}}_{\Hilb^n(Y),[Z]}\cong \widehat{\bigotimes}_{i=1}^{k} \widehat{\mathcal{O}}_{\Hilb^{n_i}(U_{y_i}),[Z_i]}; 
   \]
   hence $[Z]$ is a reduced isolated fixed point.
\end{proof}

\begin{remark}
The embedding of $\Hilb^{\Phi}(\mathbb{P}^r)$ in Grassmannians can be explicitly defined as a determinantal scheme of a homomorphism of two vector bundles on the Grassmannians. 
We refer the reader to \cite{Got78,IK99,Bay82} and \cite[Section~4]{HS04} for an account of various embeddings with explicit equations. In practice, these global embeddings are too complicated for computing the equivariant Hilbert functions, unless one can find a uniform projective resolution of the local rings at the fixed points $x$ of $\Hilb^{\Phi}(\mathbb{P}^r)$, or at least a uniform way to describe the generators of the initial ideal of $I_x$. In Section~\ref{sec:local-equations-Hilbert-schemes}, we will take another approach for constant Hilbert polynomials $\Phi(z)\equiv n$.
\end{remark}

\subsection{Saturated monomial ideals}
Recall that a \emph{saturated ideal} of $\Bbbk[X_0,\dots,X_r]$ is a homogeneous ideal $I$ satisfying that $s\in I$ if for each $0\leq i\leq r$, there exists an $m\geq 0$ such that $X_i^m s\in I$. There is a one-one  correspondence between the closed subschemes of $\mathbb{P}^r$ and the saturated ideals of $k[X_0,\dots,X_r]$.

The $T$-fixed points of $\Hilb(\mathbb{P}^r)$ correspond one-one to saturated ideals of $k[X_0,\dots,X_r]$ generated by a finite set of monomials in $X_0,\dots,X_r$.

\begin{definition}
Let $I$ be a monomial ideal of $\Bbbk[X_1,\dots,X_r]$. The minimal monomial generators $(X^{\alpha})_{\alpha\in \mathcal{A}}$ of $I$ are unique, where $\alpha\in \mathbb{Z}_{\geq 0}^{r}$ and $\mathcal{A}$ is a finite set of indices.  The \emph{affine monomial datum of dimension $r$} associated with $I$ is the set $\mathcal{A}_I=\mathcal{A}$, regarded as a subset of lattice points in $\mathbb{Z}_{\geq 0}^{r}$. 
\end{definition}

\begin{definition}
A \emph{projective monomial datum of dimension $r$} is an $(r+1)$-tuple $(\mathcal{A}_0,\dots,\mathcal{A}_r)$, where $\mathcal{A}_i$ is an affine monomial datum of dimension $r$.
\end{definition}

Let $I\subset \Bbbk[X_0,\dots,X_r]$ be a saturated monomial ideal. For $0\leq i\leq r$, the localized ideal $I_{X_i}$ of $\Bbbk[\frac{X_0}{X_i},\dots,\frac{X_{i-1}}{X_i},\frac{X_{i+1}}{X_i},\dots,\frac{X_r}{X_i}]$ is a monomial ideal. Let $\mathcal{A}_i$ be the affine monomial datum associated with $I_{X_i}$. Then we call 
\[
\mathcal{P}_I=(\mathcal{A}_0,\dots,\mathcal{A}_r)
\]
the \emph{projective monomial datum} associated with $I$.

\begin{lemma}
The assignment $I\mapsto \mathcal{P}_I$ is a bijection from the set of the saturated monomial ideals of\, $\Bbbk[X_0,\dots,X_r]$ to the set of projective monomial data of dimension $r$.
\end{lemma}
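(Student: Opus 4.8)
The plan is to run the map $I\mapsto\mathcal P_I$ through the two recalled dictionaries. First, saturated homogeneous ideals of $\Bbbk[X_0,\dots,X_r]$ correspond to closed subschemes $Z\subseteq\mathbb P^r$, and among these the $T$-invariant ones correspond exactly to the saturated \emph{monomial} ideals (a $T$-invariant homogeneous ideal of $\Bbbk[X_0,\dots,X_r]$ is generated by monomials, by complete decomposability of $T$-representations together with the fact that distinct degree-$d$ monomials have distinct $T$-weights). Second, cover $\mathbb P^r$ by the standard affine charts $U_i=\{X_i\neq 0\}\cong\Spec\Bbbk[X_k/X_i:k\neq i]$: each $U_i$ is $T$-invariant and the induced $T$-action on it has all coordinate weights nonzero, so a $T$-invariant closed subscheme $Z\cap U_i$ is cut out by a unique monomial ideal, which is precisely $I_{X_i}$, whose minimal monomial generators have exponent set $\mathcal A_i$. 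Thus, after these identifications, $I\mapsto\mathcal P_I$ becomes the map sending a $T$-invariant closed subscheme $Z$ to the tuple of monomial ideals cutting out its traces $Z\cap U_0,\dots,Z\cap U_r$.

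With this reformulation injectivity is immediate: the $U_i$ cover $\mathbb P^r$, hence a closed subscheme is determined by its traces on them, and a saturated ideal is determined by its subscheme; so $\mathcal P_I=\mathcal P_{I'}$ forces $Z=Z'$ and therefore $I=I'$.

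For surjectivity, given a projective monomial datum $(\mathcal A_0,\dots,\mathcal A_r)$ I would reconstruct $Z$ by gluing. Each $\mathcal A_i$ determines a monomial ideal $J_i\subseteq\Bbbk[X_k/X_i:k\neq i]$, hence a $T$-invariant closed subscheme $Z_i=V(J_i)\subseteq U_i$, and the task is to glue the $Z_i$ along the overlaps $U_i\cap U_j$. On such an overlap the two coordinate systems are related by the monomial change of variables $X_k/X_j=(X_k/X_i)\cdot(X_i/X_j)$ and $X_i/X_j=(X_j/X_i)^{-1}$, and the required compatibility is that $J_i$ and $J_j$ generate the same ideal after inverting $X_j/X_i$, resp.\ $X_i/X_j$, and applying this substitution. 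This is the coherence condition that a tuple of affine data coming from an actual subscheme automatically satisfies; conversely, once it holds, the affine schemes $V(J_i)$ glue to a closed subscheme $Z\subseteq\mathbb P^r$ by the usual gluing of schemes along the standard cover of projective space. Since each $Z_i$ is $T$-invariant, so is $Z$, hence its saturated ideal $I$ is a monomial ideal; and $Z\cap U_i=Z_i$ by construction, so $\mathcal P_I=(\mathcal A_0,\dots,\mathcal A_r)$.

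The main obstacle, and essentially the only content, is the overlap bookkeeping: one must translate the abstract gluing condition above into an explicit combinatorial condition on the antichains $\mathcal A_i$ in $\mathbb Z_{\geq 0}^r$ (via the monomial transition maps), and check both that this is exactly what is packaged into the notion of a projective monomial datum and that it suffices to produce the glued subscheme. Once the combinatorics of the transition between charts is pinned down, the remaining steps---gluing affine schemes, checking that $T$-invariance passes to the glue, and reading off that the resulting saturated ideal is monomial with the prescribed affine monomial data in each chart---are routine.
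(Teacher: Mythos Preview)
Your route differs from the paper's: rather than gluing, the paper writes down a candidate inverse directly. Given $(\mathcal A_0,\dots,\mathcal A_r)$, it clears denominators in each affine generator to obtain homogeneous monomials $g_\alpha\in\Bbbk[X_0,\dots,X_r]$, lets $J_{\mathcal P}$ be the ideal they generate, sets $I_{\mathcal P}$ equal to its saturation, and asserts without further argument that $\mathcal P\mapsto I_{\mathcal P}$ is a two-sided inverse.

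Your hesitation in the last paragraph is well placed, and in fact decisive. Reread the paper's definition: a projective monomial datum is simply an arbitrary $(r{+}1)$-tuple of affine monomial data, with \emph{no} compatibility condition imposed. With that definition the map $I\mapsto\mathcal P_I$ is not surjective, so the lemma as stated is false. Already for $r=1$ the datum $(\emptyset,\{1\})$---zero ideal on $U_0$, ideal $(X_0/X_1)$ on $U_1$---cannot come from any saturated ideal, since on the overlap these restrict to the zero ideal and the unit ideal respectively. So the step you flag (``check that this is exactly what is packaged into the notion of a projective monomial datum'') would fail. The paper's explicit inverse is also broken even on compatible data: for $\mathcal P=(\{2\},\{3\})$ on $\mathbb P^1$ it produces $J_{\mathcal P}=(X_1^2,X_0^3)$, whose saturation is $(1)$, whereas the saturated ideal with that datum is $(X_0^3X_1^2)$; one should intersect the homogenized chart ideals rather than sum them. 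Your gluing framework is the right one, and it would prove the corrected statement once the overlap compatibility is added as a hypothesis.
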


\begin{proof} 
Let $\mathcal{P}_I=(\mathcal{A}_0,\dots,\mathcal{A}_r)$ be a projective monomial datum. For each $\mathcal{A}_i$, choose $m_i$ sufficiently large such that
\[
g_{\alpha}:=X_i^{m_i}\cdot \left(\frac{X_0}{X_i}\right)^{a_0}\cdots \left(\frac{X_{i-1}}{X_i}\right)^{a_{i-1}}\cdot 
\left(\frac{X_{i+1}}{X_i}\right)^{a_{i+1}}\cdots \left(\frac{X_{r}}{X_i}\right)^{a_{r}}\in \Bbbk[X_0,\dots,X_r]
\]
for all $\alpha=(a_0,\dots,a_{i-1},a_{i+1},\dots,a_{r})\in \mathcal{A}_i$. Let $J_{\mathcal{P}}$ be the ideal of $\Bbbk[X_0,\dots,X_r]$ generated by the $g_{\alpha}$, where $\alpha$ runs over $\mathcal{A}_0,\dots,\mathcal{A}_r$. Let $I_{\mathcal{P}}$ be the saturation of $J_{\mathcal{P}}$. Then the saturated monomial ideal $I_{\mathcal{P}}$ is  independent of the choice of $\{m_i\}_{0\leq i\leq r}$, and the assignment $\mathcal{P}\mapsto I_{\mathcal{P}}$ is an inverse to $I\mapsto \mathcal{P}_I$.
\end{proof}

\subsection{Monomial ideals of finite colength}\label{sec:Monomial-ideals-of-finite-colength}

For $\alpha=(a_1,\dots,a_r)$ and $\beta=(b_1,\dots,b_r)$ in $\mathbb{Z}^r$, we say $\alpha\leq \beta$ if $a_i\leq b_i$ for $1\leq i\leq r$. In this paper, an \emph{$r$-dimensional partition} of $n$ is a set $\lambda\subset \mathbb{Z}_{\geq 0}^r$ with $|\lambda|=n$ satisfying that if $\beta\in \lambda$ and $\alpha\leq \beta$, then $\alpha\in \lambda$. Thus, a 2-dimensional partition is  a partition in the usual sense, and a $3$-dimensional partition is usually called a plane partition.

If $I$ is a monomial ideal of $\Bbbk[X_1,\dots,X_r]$ with finite colength $n$, there is a unique $r$-dimensional partition~$\lambda$ of $n$ such that $\mathcal{A}_I$ is the set of minimal lattices of the complement $\mathbb{Z}_{\geq 0}^r\backslash \lambda$. We denote this $\lambda$ by $\lambda_I$. The set of monomials $\{X^\beta\}_{\beta\in \lambda_I}$ is a $\Bbbk$-basis of $\Bbbk[X_1,\dots,X_r]/I$. 
The map $\lambda\mapsto \lambda_I$ is a bijection between the monomial ideals of $\Bbbk[X_1,\dots,X_r]$ with finite colength $n$ and the $r$-dimensional partitions of $n$. We denote the inverse map by $\lambda\mapsto I_{\lambda}$.

We can present  $r$-dimensional partitions graphically. To each lattice point $\mathbf{i}=(i_1,\dots,i_r)\in \mathbb{Z}^r$, we assign a \emph{box} 
\[
B_{\mathbf{i}}=\{(x_1,\dots,x_r)\in \mathbb{R}^r\mid i_k\leq x_k\leq i_k+1 \mbox{for}\ 1\leq k\leq r\}.
\]
Then $B_{\lambda}:=\bigcup_{\mathbf{i}\in \lambda}B_{\mathbf{i}}$ is a graphical presentation of $\lambda$. For example, for the monomial ideal
\[
I=\left(X_1^3, X_1^2 X_2, X_1 X_3, X_2^2,X_2 X_3, X_3^2\right),
\]
the $3$-dimensional partition 
\begin{equation}\label{eq-partition-132}
\lambda_I=\{(0,0,0),(1,0,0),(2,0,0),(0,1,0),(1,1,0),(0,0,1)\}
\end{equation}
 is presented as
\begin{center}
\begin{tikzpicture}[x=(220:0.6cm), y=(-40:0.6cm), z=(90:0.42cm)]

\foreach \m [count=\y] in {{2,1,1},{1,1}}{
  \foreach \n [count=\x] in \m {
  \ifnum \n>0
      \foreach \z in {1,...,\n}{
        \draw [fill=gray!30] (\x+1,\y,\z) -- (\x+1,\y+1,\z) -- (\x+1, \y+1, \z-1) -- (\x+1, \y, \z-1) -- cycle;
        \draw [fill=gray!40] (\x,\y+1,\z) -- (\x+1,\y+1,\z) -- (\x+1, \y+1, \z-1) -- (\x, \y+1, \z-1) -- cycle;
        \draw [fill=gray!5] (\x,\y,\z)   -- (\x+1,\y,\z)   -- (\x+1, \y+1, \z)   -- (\x, \y+1, \z) -- cycle;  
      }
 \fi
 }
}    

\end{tikzpicture}
\end{center}

We also need  a compact way to present $r$-dimensional partitions for $r\geq 2$. 
If $\lambda$ is a $2$-dimensional partition of $n$, let 
\[
\lambda_{i}=\{a\in \mathbb{Z}\mid (a,i)\in \lambda\}.
\]
Then $(\lambda_0,\lambda_1,\dots)$ is a partition of $n$ in the usual sense. We will present a $2$-dimensional partition in this way. Note that $\lambda_0\geq \lambda_1\geq \cdots$.

If $\lambda$ is a $3$-dimensional partition of $n$, let
\[
\lambda_{i}=\{(a,b)\in \mathbb{Z}^2\mid (a,b,i)\in \lambda_I\}.
\]
Then $\lambda_i$ is a $2$-dimensional partition. Then we  present $\lambda$ by an ascending chain of usual partitions
\[
\dots\subset \lambda_1\subset \lambda_0.
\]
For example, the $3$-dimensional partition (\ref{eq-partition-132}) is  presented compactly as
\[
(1)\subset (3,2).
\]

\subsection{Tautological sheaves}\label{sec:tautological-sheaves}
Let $X$ be a projective scheme over $\Bbbk$ with a given polarization. Let $\Phi\in \mathbb{Q}[z]$ be a polynomial that takes integer values for $z\in \mathbb{Z}$. Let $\Hilb^{\Phi}(X)$ be the Hilbert scheme that parametrizes closed subschemes of $X$ with Hilbert polynomial $\Phi$. Consider the diagram
\[
\xymatrix{
	\mathcal{Z} \ar[r]^{f} \ar[d]_{\pi} & X \\
	\Hilb^{\Phi}(X)\rlap{,} & 
}
\]
where $\mathcal{Z}$ is the universal subscheme of $X\times \Hilb^{\Phi}(X)$. Let  $\mathscr{F}$ be a locally free sheaf on $X$. Since $\pi$ is proper and flat, $\mathrm{R} \pi_* f^*\mathscr{F}$ has a  finite Tor-amplitude. Let $\mathscr{P}^{\bullet}$ be a  complex consisting of finitely many locally free sheaves on $\Hilb^{\Phi}(X)$ that is a representative of $\mathrm{R}\pi_* f^*\mathscr{F}$, and define  
\[
\mathscr{F}^{[\Phi]}:=\sum_i (-1)^i\left[\mathscr{P}^i\right]\in K^0\left(\Hilb^{\Phi}(X) \right).
\]
By abuse of terminology, we call $\mathscr{F}^{[\Phi]}$ the \emph{tautological sheaf associated with} $\mathscr{F}$ (when $\Phi$ is understood). It depends only on the class of $\mathscr{F}$ in $K^0(X)$. We are interested in the Euler characteristics defined as $K$-theoretical pushforwards
\[
\chi\left(\mathscr{F}^{[\Phi]}\right)=\pi_{\circ*}\left(\mathscr{F}^{[\Phi]}\right)
\]
and more generally
\begin{equation}\label{eq-eulercharacteristics-tautological}
\chi\left(\wedge^p\mathscr{F}^{[\Phi]},\wedge^q\mathscr{G}^{[\Phi]}\right)
=\pi_{\circ*}\left(\left(\wedge^p\mathscr{F}^{[\Phi]}\right)^{\vee}\otimes \wedge^q\mathscr{G}^{[\Phi]}\right)
\end{equation}
into $K^0(\Spec\ \Bbbk)\cong \mathbb{Z}$, where $\pi_{\circ}$ denotes the structure morphism to $\Spec(\Bbbk)$.\\ 

Now suppose that $(X,\Phi)$ is in one of the following two situations:
\begin{enumerate}
    \item\label{sit1} $X=\mathbb{P}^n$ and $\Phi$ is arbitrary, or
    \item\label{sit2} $X$ is a  smooth proper toric variety and $\Phi=n\in \mathbb{Z}_{\geq 0}$.
\end{enumerate}
 Let $T$ be the open dense torus contained in $X$. By Corollary~\ref{cor-fixedloci-Hilb-Projspace} and Proposition~\ref{prop-equivariant-immersion-toric}, respectively, the fixed points of the $T$ action on $\Hilb^{\Phi}(X)$ are reduced isolated $\Bbbk$-points. We denote the fixed points by $w_1,\dots,w_k$. Let $Z_i$ be the closed subscheme of $X$ represented by $w_i$.

 \begin{proposition}
 Let $A_i$ be the completed local ring of\, $\Hilb^{\Phi}(X)$ at $w_i$ for $1\leq i\leq k$. Let $\mathscr{F}$ and $\mathscr{G}$ be  locally free $T$-sheaves. Then we have an equality in $R(T)$
 \begin{equation}\label{eq-eulercharacteristics-localization-0}
    \chi\left(\wedge^p\mathscr{F}^{[\Phi]},\wedge^q\mathscr{G}^{[\Phi]}\right)
    =\sum_{i=1}^{k}\chi\left(\wedge^p\mathscr{F}|_{Z_i},\wedge^q\mathscr{G}|_{Z_i}\right)H(A_i;\mathbf{t}).
 \end{equation}
 \end{proposition}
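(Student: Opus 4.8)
The plan is to represent the $K$-theory class $(\wedge^p\mathscr{F}^{[\Phi]})^{\vee}\otimes\wedge^q\mathscr{G}^{[\Phi]}$ by a bounded complex of locally free $T$-sheaves on $\mathrm{Hilb}^{\Phi}(X)$, apply the embedding-free localization formula (\ref{eq-Thomason-reducedisolatefixed-2}) term by term, and then identify the resulting fibre contributions at the fixed points as the virtual characters $\chi(\wedge^p\mathscr{F}|_{Z_i},\wedge^q\mathscr{G}|_{Z_i})$ by cohomology and base change. So I would first fix $T$-equivariant representatives: as recalled just above the statement, $\mathrm{R}\pi_*f^*\mathscr{F}$ and $\mathrm{R}\pi_*f^*\mathscr{G}$ are represented by bounded complexes $\mathscr{P}^{\bullet}$, $\mathscr{Q}^{\bullet}$ of locally free $T$-sheaves on $\mathrm{Hilb}^{\Phi}(X)$ (in situation (2), where $\pi$ is finite flat, one may simply take $\mathscr{P}^{\bullet}=\pi_*f^*\mathscr{F}$ and $\mathscr{Q}^{\bullet}=\pi_*f^*\mathscr{G}$ in degree zero). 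Since the $\lambda$-operations, duals and tensor products are all realized at the level of complexes of locally free sheaves, $(\wedge^p\mathscr{F}^{[\Phi]})^{\vee}\otimes\wedge^q\mathscr{G}^{[\Phi]}$ is represented by a bounded complex $\mathscr{C}^{\bullet}$ of locally free $T$-sheaves assembled from the $\wedge^{a}\mathscr{P}^{j}$, $\mathrm{Sym}^{a}\mathscr{P}^{j}$, $\wedge^{b}\mathscr{Q}^{j}$, $\mathrm{Sym}^{b}\mathscr{Q}^{j}$ and their duals, whence
\[
\chi(\wedge^p\mathscr{F}^{[\Phi]},\wedge^q\mathscr{G}^{[\Phi]})=\pi_{\circ*}\Big(\sum_j(-1)^j[\mathscr{C}^j]\Big)=\sum_j(-1)^j\sum_{i}(-1)^iH^i\big(\mathrm{Hilb}^{\Phi}(X),\mathscr{C}^j\big).
\]

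Next I would invoke the localization theorem. Each $\mathscr{C}^j$ is a locally free $T$-sheaf on the proper $\Bbbk$-algebraic space $\mathrm{Hilb}^{\Phi}(X)$, whose $T$-fixed locus consists of the reduced isolated $\Bbbk$-points $w_1,\dots,w_k$ by corollary \ref{cor-fixedloci-Hilb-Projspace} in situation (1) and by proposition \ref{prop-equivariant-immersion-toric}(ii) in situation (2). Thus corollary \ref{cor-Thomason-reducedisolatefixed-1} — equivalently (\ref{eq-Thomason-reducedisolatefixed-2}) — applies to each $\mathscr{C}^j$ and gives $\sum_{i}(-1)^iH^i(\mathrm{Hilb}^{\Phi}(X),\mathscr{C}^j)=\sum_{i=1}^{k}(\mathscr{C}^j\otimes\kappa(w_i))\,H(A_i;\mathbf{t})$ in $R(T)_{(0)}$. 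Summing over $j$ with signs and interchanging the two finite sums yields, in $R(T)_{(0)}$ and hence (the left side being an honest virtual representation, and $R(T)$ a domain) in $R(T)$,
\[
\chi(\wedge^p\mathscr{F}^{[\Phi]},\wedge^q\mathscr{G}^{[\Phi]})=\sum_{i=1}^{k}\Big(\sum_j(-1)^j[\mathscr{C}^j\otimes\kappa(w_i)]\Big)H(A_i;\mathbf{t}).
\]

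It remains to identify the bracketed local term with $\chi(\wedge^p\mathscr{F}|_{Z_i},\wedge^q\mathscr{G}|_{Z_i})$. Restriction along $w_i\hookrightarrow\mathrm{Hilb}^{\Phi}(X)$ is exact on complexes of locally free sheaves and commutes with $\wedge$, $\mathrm{Sym}$, $(-)^{\vee}$, $\otimes$ and alternating sums, so $\sum_j(-1)^j[\mathscr{C}^j\otimes\kappa(w_i)]=\big(\wedge^p[\mathscr{P}^{\bullet}\otimes\kappa(w_i)]\big)^{\vee}\otimes\wedge^q[\mathscr{Q}^{\bullet}\otimes\kappa(w_i)]$ in $R(T)$. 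By cohomology and base change for the proper flat morphism $\pi$, the complex $\mathscr{P}^{\bullet}\otimes\kappa(w_i)$ represents $\mathrm{R}\Gamma$ of the fibre $(f^*\mathscr{F})|_{\mathcal{Z}_{w_i}}=\mathscr{F}|_{Z_i}$, so $[\mathscr{P}^{\bullet}\otimes\kappa(w_i)]=\chi(Z_i,\mathscr{F}|_{Z_i})$ in $R(T)$, and likewise for $\mathscr{G}$; the displayed class is then, by the definition of the right-hand side of (\ref{eq-eulercharacteristics-tautological}) with the exterior powers taken after passing to cohomology, exactly $\chi(\wedge^p\mathscr{F}|_{Z_i},\wedge^q\mathscr{G}|_{Z_i})$. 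Substituting into the previous display gives (\ref{eq-eulercharacteristics-localization-0}).

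I do not expect a genuinely hard step: the substantive inputs — that the $T$-fixed locus of $\mathrm{Hilb}^{\Phi}(X)$ is a finite set of reduced isolated $\Bbbk$-points, and the localization theorem of section \ref{sec:thomason-localization-theorem} that dispenses with a global equivariant embedding — are already in place, and the rest is bookkeeping with $\lambda$-operations. The two points calling for care are: (i) that $\wedge^p$, $(-)^{\vee}$ and $\otimes$ may be carried out on $T$-equivariant complexes of locally free sheaves, which in situation (1) rests on the $T$-equivariant resolution property afforded by the equivariant embedding of $\mathrm{Hilb}^{\Phi}(\mathbb{P}^n)$ into a Grassmannian, and in situation (2) is automatic since $\pi$ is finite flat, so $\mathscr{F}^{[n]}$ and $\mathscr{G}^{[n]}$ are honest vector bundles; and (ii) the compatibility of the $K$-theoretic definition of $\mathscr{F}^{[\Phi]}$ with restriction to a fixed point, i.e.\ with base change along $w_i\hookrightarrow\mathrm{Hilb}^{\Phi}(X)$ — this is precisely where the possibly positive-dimensional fibres of $\pi$ in situation (1) make it essential to have phrased the whole construction in $K$-theory rather than with sheaves.
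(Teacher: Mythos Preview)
Your proposal is correct and follows exactly the approach indicated in the paper: apply (\ref{eq-Thomason-reducedisolatefixed-2}) to the wedge products of $\mathscr{P}^{\bullet}$ and $\mathscr{Q}^{\bullet}$, then use base change to identify the fibre terms. The paper compresses this into a single sentence; you have spelled out the bookkeeping with $\lambda$-operations and the passage from $R(T)_{(0)}$ to $R(T)$, but the substance is the same.
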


 \begin{proof}
  Using an ample invertible $T$-sheaf on $X$, we can take the above $\mathscr{P}^\bullet$ to be a complex of locally free $T$-sheaves, and similarly for such a complex $\mathscr{Q}^\bullet$ for $\mathscr{G}$. Then 
applying (\ref{eq-Thomason-reducedisolatefixed-2}) to  $\mathscr{P}^{\bullet}$ and $\mathscr{Q}^{\bullet}$, and using the base change theorem, we obtain  Formula \eqref{eq-eulercharacteristics-localization-0}.
\end{proof}

 Moreover, each $Z_i$ is a $T$-scheme with reduced isolated fixed points, as it is a subscheme of $X$ which has this property. One can thus compute $\chi(\wedge^p\mathscr{F}|_{Z_i},\wedge^q\mathscr{G}|_{Z_i})$ by using (\ref{eq-Thomason-reducedisolatefixed-2}) again. 

 Summarizing, in principle one can compute (\ref{eq-eulercharacteristics-tautological}) by using (\ref{eq-Thomason-reducedisolatefixed-2}) ``$k+1$'' times. In the rest of this paper, we are concerned with situation~\eqref{sit2}. Then the factor $\chi(\wedge^p\mathscr{F}|_{Z_i},\wedge^q\mathscr{G}|_{Z_i})$ is easily computed directly. Our main task is to compute the equivariant Hilbert series.

\section{Local equations of  Hilbert schemes}\label{sec:local-equations-Hilbert-schemes}
In this section, we will study the local rings of $\Hilb^n(\mathbb{A}^3)$ at the closed subschemes defined by monomial ideals of colength $n$. We first introduce some notions and terminology. 
From now on in this paper, we assume the base field $\Bbbk$ has $\Char(\Bbbk)=0$ unless otherwise stated.

For an ideal $I$ of $\Bbbk[X_1,\dots,X_r]$ of colength $n$, we denote the subscheme $\Spec(\Bbbk[X_1,\dots,X_r]/I)$ by $Z_I$. 

The main component of $\Hilb^n(\mathbb{A}^r)$ is the component whose general point parametrizes $n$ distinct points. In general, $\Hilb^n(\mathbb{A}^r)$ may be highly singular, \textit{e.g.}, nonreduced or reducible (see \cite{Iar72,Jel20}). But we recall the following theorem.

\begin{theorem}\label{thm-monomialideal-smoothable}
The points on $\Hilb^n(\mathbb{A}^r)$ corresponding to monomial ideals lie on the main component.
\end{theorem}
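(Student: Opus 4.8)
I would prove this by induction on the dimension $r$, reducing the smoothability of a monomial subscheme of $\mathbb{A}^r$ to the smoothability of lower-colength monomial subschemes of $\mathbb{A}^{r-1}$ by an explicit one-parameter degeneration that ``spreads out'' the layers of the $r$-dimensional partition along the last coordinate axis.

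First, fix terminology. Let $U\subset \mathrm{Hilb}^n(\mathbb{A}^r)$ be the open locus parametrizing $n$ distinct reduced points; it is irreducible (a finite quotient of an open subset of $(\mathbb{A}^r)^n$), so the main component is $R_n:=\overline{U}$, and a subscheme is smoothable exactly when the point it defines lies in $R_n$. Two elementary facts will be used repeatedly: (a) if $Z'$ and $Z''$ are smoothable subschemes supported on disjoint finite sets, then $Z'\sqcup Z''$ is smoothable --- approximate $Z'$ and $Z''$ by configurations of distinct points, which for a small parameter value stay near $Z'$ resp.\ $Z''$ and hence disjoint; (b) an isomorphic copy of a smoothable subscheme of $\mathbb{A}^{r-1}$, placed in a coordinate slice $\{x_r=c\}\subset \mathbb{A}^r$, is smoothable in $\mathbb{A}^r$, by pushing the approximating configurations forward along the closed immersion $\mathbb{A}^{r-1}\hookrightarrow\mathbb{A}^r$. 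The base case $r=1$ is trivial, since $\mathrm{Hilb}^n(\mathbb{A}^1)\cong\mathbb{A}^n$ is irreducible and equals its own main component.

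For the inductive step, fix $r\geq 2$, assume the statement in dimension $r-1$, and let $\lambda$ be an $r$-dimensional partition of $n$ with $I_\lambda\subset \Bbbk[x_1,\dots,x_r]$ its monomial ideal. Decompose $\lambda$ into layers along the last coordinate, $\lambda^{(j)}=\{\alpha:(\alpha,j)\in\lambda\}$, an $(r-1)$-dimensional partition, with the nesting $\lambda^{(0)}\supseteq\lambda^{(1)}\supseteq\cdots$ and $\sum_j|\lambda^{(j)}|=n$. Over $\mathbb{A}^1$ with coordinate $s$, define a family $\mathcal{W}\subset\mathbb{A}^r\times\mathbb{A}^1$ by the ideal $\widetilde I$ generated by the polynomials $\widetilde g_{\alpha,j}:=x^\alpha\prod_{j'=0}^{j-1}(x_r-j's)$, one for each minimal generator $x^\alpha x_r^{\,j}$ of $I_\lambda$. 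Setting $s=0$ recovers exactly the minimal generators of $I_\lambda$, so the fiber $\mathcal{W}_0$ is $Z_{I_\lambda}$; and for $s=c\neq 0$ the nesting of the layers shows each $\widetilde g_{\alpha,j}|_{s=c}$ vanishes on every slice $\{x_r=j'c\}$ meeting $Z_{I_{\lambda^{(j')}}}$, so $\widetilde I|_{s=c}$ is contained in the ideal of the disjoint union $W_c:=\bigsqcup_j\big(Z_{I_{\lambda^{(j)}}}\times\{x_r=jc\}\big)$. The crux is the claim that $\Bbbk[x_1,\dots,x_r,s]/\widetilde I$ is free over $\Bbbk[s]$ with basis the monomials $x^\alpha x_r^{\,j}$, $(\alpha,j)\in\lambda$; granting it, $\mathcal{W}\to\mathbb{A}^1$ is flat with every fiber of length $n$, so $\mathcal{W}_c=W_c$ for $c\neq 0$ (one contains the other, both of length $n$), while $\mathcal{W}_0=Z_{I_\lambda}$. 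By the inductive hypothesis together with (a) and (b), $[W_c]\in R_n$ for $c\neq 0$; since $R_n$ is closed and $\mathcal{W}$ induces a morphism $\mathbb{A}^1\to\mathrm{Hilb}^n(\mathbb{A}^r)$, the limit point $[\mathcal{W}_0]=[Z_{I_\lambda}]$ lies in $R_n$ as well, completing the induction.

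The main obstacle is the freeness claim for $\Bbbk[x_1,\dots,x_r,s]/\widetilde I$ --- equivalently, that the naive limit of the generators is already the flat limit and has the predicted special fiber. I would isolate this as a lemma: $\Bbbk[s]$-spanning by the monomials $x^\alpha x_r^{\,j}$ with $(\alpha,j)\in\lambda$ follows by reducing an arbitrary monomial against the $\widetilde g_{\alpha,j}$, each step lowering the $x_r$-exponent or producing a multiple of $s$; and $\Bbbk[s]$-independence can be checked after specializing $s$ to a generic value, where $\mathcal{W}_s=W_s$ is a genuine disjoint union and a Vandermonde argument in $x_r$ (again using $\lambda^{(0)}\supseteq\lambda^{(1)}\supseteq\cdots$) shows these monomials restrict to a basis of the coordinate ring. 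Alternatively one may phrase the step intrinsically: $[Z_{I_\lambda}]$ is the limit, under the $\mathbb{G}_m$-action with weight $1$ on $x_r$ and weight $0$ on $x_1,\dots,x_{r-1}$, of the smoothable point $[W_1]$, and one identifies this limit with $[Z_{I_\lambda}]$ by computing the relevant initial ideal of the ideal of $W_1$ and matching colengths.
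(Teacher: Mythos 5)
Your proof is correct, and it is genuinely different in form from what the paper does, because the paper does not prove this theorem at all: it simply cites \cite[lemma 18.10]{MS05} for characteristic zero and \cite[proposition 4.15]{CEVV09} for positive characteristic. What you have written out is the \emph{distraction} degeneration, the same mechanism underlying the characteristic-zero reference, except done one coordinate at a time with an induction on $r$ rather than distracting all coordinates simultaneously (i.e.\ replacing each $x_i^{a_i}$ by $\prod_{k<a_i}(x_i-ks)$). Either variant hinges on the technical point you correctly isolate and handle: the $\Bbbk[s]$-flatness of the family, which your reduction-for-spanning step (the generators have leading term $x^\alpha x_r^j$ and the correction terms lower $x_r$-degree, so the reduction terminates on the $\lambda$-basis) together with the Vandermonde independence check at a generic fiber establishes, after which length counting forces the general fiber to be exactly $W_c$ and the special fiber to be $Z_{I_\lambda}$.

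Two caveats are worth flagging. First, your general fiber $W_c$ is a disjoint union only when $0,c,2c,\dots$ are pairwise distinct in $\Bbbk$, so as written the proof requires $\mathrm{char}(\Bbbk)=0$ (or characteristic exceeding the $x_r$-depth of $\lambda$). That is adequate for this paper, which assumes characteristic zero from \S\ref{sec:Haiman-equations} onward, but it does not recover the positive-characteristic case handled by \cite{CEVV09} via a different degeneration—so if you intend the proof to stand for the theorem as stated (over an arbitrary field), it has a gap there. Second, your fact~(a), that a disjoint union of smoothable subschemes with disjoint supports is smoothable, is true but the justification (``small parameter value'', ``stay near'') is an analytic heuristic; over a general field it is cleaner to observe that the partial concatenation morphism $(Z',Z'')\mapsto Z'\sqcup Z''$, defined on the open locus of disjointly supported pairs, sends the irreducible set $(R_{n'}\times R_{n''})\cap\{\text{disjoint supports}\}$ into $\mathrm{Hilb}^{n'+n''}$ with image meeting the locus of $n'+n''$ distinct points, and any irreducible closed set meeting that open locus is contained in its closure, the main component.
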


This is \cite[Lemma 18.10]{MS05} in characteristic zero, and \cite[Proposition 4.15]{CEVV09} in positive characteristic. It follows that $\Hilb^n(\mathbb{A}^r)$ is smooth at a monomial ideal $Z_I$ if and only if the dimension of the tangent space of $\Hilb^n(\mathbb{A}^r)$ at $Z_I$ is equal to $rn$. 

\begin{definition}
For an ideal $I$ of $\Bbbk[X_1,\dots,X_r]$ of colength $n$, we define the \emph{extra dimension at}  $I$ to be
\[
\mathrm{extra.dim}_{Z_I} \Hilb^n(\mathbb{A}^r):=\dim_{\Bbbk}T_{Z_I} \Hilb^n(\mathbb{A}^r)-rn.
\]
\end{definition}

At this moment, the reader can just think of the extra dimension as the simplest way to measure the singularity. We will see later that the local structure of $\Hilb^n(\mathbb{A}^r)$ seems related to the extra dimension in unexpected ways.

Up to now, we only made use of the torus action on $\Hilb^n(\mathbb{P}^r)$ or $\Hilb^n(\mathbb{A}^r)$. But a larger group, $\GL(r)$, also acts on them, which will also turn out to be useful. A notion related to this action is that of the \emph{Borel fixed ideals}; see \cite[Section~2.1]{MS05}. An ideal $I$ of $\Bbbk[X_1,\dots,X_r]$ is called \emph{Borel fixed} if it is fixed by the Borel subgroup $\mathbb{B}(r)$ (\textit{i.e.}, the subgroup of nonsingular upper-triangular matrices) in $\GL(r)$.  Our later use of Borel fixed ideals is based on the following consequence of the Borel fixed-point theorem.

\begin{lemma}\label{lem-transitive-open-nbd}
Let $I_1,\dots,I_q$ be the set of Borel fixed ideals of colength $n$ of\, $\Bbbk[X_1,\dots,X_r]$. For each $1\leq p\leq q$, let $U_p$ be an open neighborhood of\, $Z_{I_p}$ in $\Hilb^{n}(\mathbb{A}^n_{\Bbbk})$. Then for every ideal $J$ of colength $n$ of\, $\Bbbk[X_1,\dots,X_r]$, there exists a $g\in\mathbb{B}(r)(\Bbbk)$ such that $g\cdot Z_{J}$ lies in $U_p$ for some $1\leq p\leq q$. 
\end{lemma}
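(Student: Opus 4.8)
The plan is to use the Borel fixed point theorem together with the action of $\mathbb{T}(r)$ on $\mathrm{Hilb}^n(\mathbb{A}^r)$. First I would observe that since the monomial ideal $J$ is fixed by the maximal torus $T \subset \mathrm{GL}(r)$, the orbit closure $\overline{\mathbb{T}(r) \cdot Z_J}$ in $\mathrm{Hilb}^n(\mathbb{A}^r)$ is a $\mathbb{T}(r)$-invariant projective scheme (it is closed, and it is proper because $\mathrm{Hilb}^n(\mathbb{A}^r)$ sits inside $\mathrm{Hilb}^n(\mathbb{P}^r)$ which is projective, and the closure is contained in the locus of subschemes supported on the coordinate chart — one should check the orbit closure stays inside $\mathrm{Hilb}^n(\mathbb{A}^r)$, which holds because $\mathbb{T}(r)$ preserves $\mathbb{A}^r$). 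By the Borel fixed point theorem, $\overline{\mathbb{T}(r) \cdot Z_J}$ contains a $\mathbb{T}(r)$-fixed point.

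Next I would identify the $\mathbb{T}(r)$-fixed points of $\mathrm{Hilb}^n(\mathbb{A}^r)$ with the Borel fixed monomial ideals of colength $n$: a point fixed by $\mathbb{T}(r)$ is in particular fixed by $T$, hence corresponds to a monomial ideal, and being additionally fixed by the full Borel subgroup means exactly that the monomial ideal is Borel fixed. So $\overline{\mathbb{T}(r) \cdot Z_J}$ contains $Z_{I_p}$ for some $1 \leq p \leq q$. Since $U_p$ is an open neighborhood of $Z_{I_p}$, the intersection $U_p \cap \overline{\mathbb{T}(r) \cdot Z_J}$ is a nonempty open subset of the irreducible (or at least, of the) orbit closure; as the orbit $\mathbb{T}(r) \cdot Z_J$ is dense in its closure, this open set must meet the orbit. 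Hence there exists $g \in \mathbb{T}(r)(\Bbbk)$ with $g \cdot Z_J \in U_p$.

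A small subtlety I would address carefully: the orbit $\mathbb{T}(r) \cdot Z_J$ need not be irreducible a priori only if $\mathbb{T}(r)$ were disconnected, but $\mathbb{T}(r)$ is connected, so the orbit is irreducible and dense in its closure; thus any nonempty open subset of the closure meets the orbit. Also, over a non-algebraically-closed $\Bbbk$ one wants the fixed point and the element $g$ to be $\Bbbk$-rational — the fixed points are $\Bbbk$-points by Corollary \ref{cor-fixedloci-Hilb-Projspace} (they are even reduced isolated $\Bbbk$-points), and $\mathbb{T}(r)$ is a split solvable group so the Borel fixed point theorem applies over $\Bbbk$ and the orbit map is defined over $\Bbbk$, giving a $\Bbbk$-point $g$ of the (open, hence $\Bbbk$-dense) preimage.

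The main obstacle I anticipate is verifying that the orbit closure $\overline{\mathbb{T}(r) \cdot Z_J}$ remains inside $\mathrm{Hilb}^n(\mathbb{A}^r)$ (so that it is genuinely proper and the Borel fixed point theorem yields a fixed point lying in $\mathrm{Hilb}^n(\mathbb{A}^r)$, not merely in $\mathrm{Hilb}^n(\mathbb{P}^r)$) — equivalently, that a limit of subschemes supported in the affine chart is still supported there. This follows because $\mathbb{T}(r)$ acts on $\mathbb{A}^r$ linearly, so it preserves the locus in $\mathrm{Hilb}^n(\mathbb{P}^r)$ of subschemes disjoint from the hyperplane at infinity, but one should note this locus is open, not closed, so the argument instead should be: take the closure inside $\mathrm{Hilb}^n(\mathbb{A}^r)$ itself and argue it is proper over $\Bbbk$ because $\overline{\mathbb{T}(r)\cdot Z_J}$ has a torus-equivariant embedding into a projective space compatible with the affine structure, or simply invoke that the orbit of a monomial ideal under $\mathbb{T}(r)$ has closure equal to a union of such orbits all lying in $\mathrm{Hilb}^n(\mathbb{A}^r)$ by a direct Gröbner-degeneration description of the limits. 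I would present whichever of these is cleanest given the conventions already set up in the paper.
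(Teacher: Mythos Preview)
Your approach via the Borel fixed point theorem is the same as the paper's, and the argument is correct once the properness obstacle you flag is resolved. The paper handles that obstacle cleanly in a way you do not mention: it passes to the \emph{punctual} Hilbert scheme $\mathrm{Hilb}^n(\mathbb{A}^r)_0$, the fiber of the Hilbert--Chow morphism over the point $0^{n}\in(\mathbb{A}^r)^{(n)}$. This fiber is projective, because it is equally a fiber of the Hilbert--Chow morphism $\mathrm{Hilb}^n(\mathbb{P}^r)\to(\mathbb{P}^r)^{(n)}$. Since a monomial ideal $J$ defines a subscheme supported at the origin, and $\mathbb{T}(r)\subset\mathrm{GL}(r)$ fixes the origin, the entire $\mathbb{T}(r)$-orbit of $Z_J$ lies inside $\mathrm{Hilb}^n(\mathbb{A}^r)_0$. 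This replaces your open-ended discussion of why the orbit closure stays inside $\mathrm{Hilb}^n(\mathbb{A}^r)$ and is proper.

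A minor structural difference: rather than applying the Borel fixed point theorem to the single orbit closure $\overline{\mathbb{T}(r)\cdot Z_J}$, the paper applies it to the closed $\mathbb{T}(r)$-stable complement $V=\mathrm{Hilb}^n(\mathbb{A}^r)_0\setminus\bigcup_p \mathbb{T}(r)\cdot U_p$ and derives a contradiction from $V\neq\emptyset$. This sidesteps the (easy) irreducibility-of-orbit step, but your version works equally well. Your remark about needing $g\in\mathbb{T}(r)(\Bbbk)$ via density of $\Bbbk$-points in the split solvable group is a detail the paper leaves implicit.
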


\begin{proof}
  Let $\Hilb^{n}(\mathbb{A}^r_{\Bbbk})_0$ be the fiber of the Hilbert--Chow  morphism $\rho\colon \Hilb^{n}(\mathbb{A}^r_{\Bbbk})\rightarrow (\mathbb{A}^r_{\Bbbk})^{(n)}$ over $0^{n}$. It parametrizes the length $n$ subschemes of $\mathbb{A}^r_{\Bbbk}$ supported at $0$. Note that $\Hilb^{n}(\mathbb{A}^r_{\Bbbk})_0$ is also a fiber of the Hilbert--Chow morphism $\rho\colon \Hilb^{n}(\mathbb{P}^r_{\Bbbk})\rightarrow (\mathbb{P}^r_{\Bbbk})^{(n)}$, so it is projective.

We first consider  the case where $Z_J$ is supported at 0, \textit{i.e.}, $Z_J$ lies in $\Hilb^{n}(\mathbb{A}^r_{\Bbbk})_0$. 
 The group $\GL(r)$ fixes~$0$, so it acts on $\Hilb^{n}(\mathbb{A}^r_{\Bbbk})_0$. 
The action morphism $\alpha\colon\mathbb{B}(r)\times \Hilb^{n}(\mathbb{A}^r_{\Bbbk})_0\rightarrow \Hilb^{n}(\mathbb{A}^r_{\Bbbk})_0$ is smooth and thus is an open map. Let $U=\bigcup_{p=1}^{q}\alpha(\mathbb{B}(r)\times U_q)$, and let $V$ be the complement  $\Hilb^{n}(\mathbb{A}^r_{\Bbbk})_0\setminus U$. Then $V$ is projective. By the Borel fixed-point theorem, see \cite[Th\'eor\`eme 2]{God61}, if $V$ is nonempty, then $V$ has a $\Bbbk$-point fixed by the solvable group $\mathbb{B}(r)$, and we have a contradiction. So $V$ is empty and 
$U$ contains $\Hilb^{n}(\mathbb{A}^r_{\Bbbk})_0$. 
Moreover, since $\rho$ is  proper, $U\supset \rho^{-1}(W)$ for some open neighborhood $W$ of $0^n\in (\mathbb{A}^r_{\Bbbk})^{(n)}$.

Now let $J$ be an arbitrary ideal of colength $n$. The support of $Z_J$ consists of finitely many closed points, say $z_1,\dots,z_m$. There exists an open neighborhood $W'$ of $0$ in $\mathbb{A}^r$ such that $(W')^n$  maps into $W$ under the quotient map $(\mathbb{A}^r_{\Bbbk})^n\rightarrow (\mathbb{A}^r_{\Bbbk})^{(n)}$. For each $i$ with $1\leq i\leq m$, there exists a nonempty open subset $T_i$ of the diagonal group $\mathbb{G}_m^r\subset \mathbb{B}(r)$ such that $T_i\cdot z_i\subset W'$. Then $T_0:=\bigcap_{i=1}^m T_i$ is nonempty and open, and  $T_0\cdot z_i\subset W'$ for $1\leq i\leq m$. Since $\rho$ is $\GL(r)$-equivariant, it follows that $T_0\cdot Z_J$ lies in $\rho^{-1}(W)\subset U$, and the proof is complete.
\end{proof}

The Borel subgroup depends on the choice of the order of the variables $X_1,\dots,X_r$; just now the default choice was $X_1\prec\dots\prec X_r$. Now we introduce an order-independent notion.

\begin{definition}\label{def-Borelideal}
We say that an ideal $I$ of $\Bbbk[X_1,\dots,X_r]$ is a \emph{Borel ideal} if one of the following two equivalent conditions holds:
\begin{i-enumerate}
	\item\label{dB-1} There exists a total order on $X_1,\dots,X_r$ such that $I$ is fixed by the corresponding Borel group. 
	\item\label{dB-2} The ideal $I$ is  monomial, and there exists a total order $i_1\prec\dots\prec i_r$, where $\{i_1,\dots,i_r\}=\{1,\dots,r\}$, such that if $j\in \{1,\dots,r\}$ and  $f\in I$ is any monomial divisible by $X_j$, then $f\cdot \frac{X_i}{X_j}\in I$ for all $i\prec j$.
\end{i-enumerate}
We say that a monomial ideal $I$ is \emph{non-Borel} if $I$ is not a Borel ideal. 
\end{definition}

The equivalence of~\eqref{dB-1} and~\eqref{dB-2} is \cite[Proposition 2.3]{MS05}.

\begin{example}
In $\Bbbk[X_1,X_2,X_3]$, the ideal $(X_1^3, X_1^2 X_2, X_1 X_3, X_2^2,X_2 X_3, X_3^2)$ is a Borel ideal, while $$(X_1^3, X_1X_2, X_1 X_3, X_2^3,X_2 X_3, X_3^2)$$ is non-Borel.
\end{example}

\begin{example}\label{example-list-Borelideals}
The ideals corresponding to the partitions in Sections~\ref{sec:3D-121}--\ref{sec:3D-132} and Appendix~\ref{sec:3D-141}--\ref{sec:3D-232} are the only Borel ideals of colength at most $7$.
\end{example}

Later on we will make some nonlinear changes of variables. To check that they are isomorphisms, we introduce the following notion.

\begin{lemma}\label{lem-unipotent-homomorphism}
Let $\varphi\colon \Bbbk[x_1,\dots,x_n]\rightarrow \Bbbk[y_1,\dots,y_n]$ be a ring homomorphism.  Let $f_i=\varphi(x_i)$ for $1\leq i\leq n$. Assume that there is a permutation $(i_1,\dots,i_n)$ of\, $(1,\dots,n)$ such that $f_{i_j}-y_{i_j}$ is a polynomial in $x_{i_{j+1}},\dots,x_{i_{n}}$ for $1\leq j\leq n$. Then $\varphi$ is an isomorphism.
\end{lemma}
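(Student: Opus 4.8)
The plan is to show $\varphi$ is an isomorphism by exhibiting a filtration on which $\varphi$ acts triangularly, so that its "leading term" is an invertible linear map and a standard successive-approximation (or degree-by-degree) argument produces the inverse. First I would reorder the variables so that the permutation $(i_1,\dots,i_n)$ becomes the identity; that is, without loss of generality assume $f_j - y_j \in \Bbbk[x_{j+1},\dots,x_n]$ for $1 \le j \le n$. The key structural observation is that $\varphi$ is "unipotent upper-triangular" with respect to the variable order: modulo the ideal generated by $x_{j+1},\dots,x_n$, the map sends $x_j \mapsto y_j$. In particular $\varphi$ induces the identity on the associated graded of the filtration by the ideals $(x_{k},x_{k+1},\dots,x_n)$, and on the linear part $\varphi(x_j) = y_j + (\text{constant}) + (\text{higher or later-variable terms})$; since $f_j - y_j$ lies in $\Bbbk[x_{j+1},\dots,x_n]$, the $n \times n$ matrix recording the $y$-linear parts of the $f_j$ is unitriangular, hence invertible.

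The cleanest way to organize the argument is to construct the inverse explicitly. I would build a homomorphism $\psi:\Bbbk[y_1,\dots,y_n]\to\Bbbk[x_1,\dots,x_n]$, $y_j \mapsto g_j$, by downward induction on $j$ (from $j=n$ to $j=1$): having defined $g_{j+1},\dots,g_n$, note that $f_j = y_j + h_j(x_{j+1},\dots,x_n)$ for some polynomial $h_j$, and we want $g_j$ with $\varphi(g_j) = y_j$; equivalently, starting from the relation in the other direction, we can solve for $x_j$ in terms of $f_j$ and $f_{j+1},\dots,f_n$. Concretely, once $g_{j+1},\dots,g_n$ are chosen so that $\varphi(g_k)=y_k$ for $k>j$, set $g_j := y_j - h_j(g_{j+1},\dots,g_n)$; then $\varphi(g_j) = f_j - h_j(\varphi(g_{j+1}),\dots,\varphi(g_n)) = (y_j + h_j(x_{j+1},\dots,x_n)) - h_j(x_{j+1},\dots,x_n) = y_j$, using the inductive hypothesis. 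This shows $\varphi \circ \psi = \mathrm{id}$ on generators, hence on all of $\Bbbk[y_1,\dots,y_n]$. A symmetric downward induction — or a dimension/degree count, or the observation that $\psi$ is itself of the same unipotent-triangular shape (indeed $g_j - y_j \in \Bbbk[y_{j+1},\dots,y_n]$ by induction) — gives $\psi\circ\varphi = \mathrm{id}$, so $\varphi$ is an isomorphism.

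The only genuinely delicate point is bookkeeping: one must check that at stage $j$ the polynomial $h_j$ really depends only on $x_{j+1},\dots,x_n$ (this is exactly the hypothesis $f_j - y_j \in \Bbbk[x_{j+1},\dots,x_n]$), so that substituting the already-constructed $g_{j+1},\dots,g_n$ is legitimate and no circularity arises; and symmetrically that $g_j - y_j \in \Bbbk[y_{j+1},\dots,y_n]$, which is what makes the reverse composition also work out and lets the induction close. I expect this triangular dependency tracking to be the main (and only) obstacle, and it is routine; alternatively one can avoid constructing $\psi$ by hand and instead invoke that an endomorphism of $\Bbbk[x_1,\dots,x_n]$ which is filtered with respect to the ideals $(x_j,\dots,x_n)$ and induces the identity on the associated graded ring is automatically bijective, but the explicit inverse above is more self-contained and is what I would write up.
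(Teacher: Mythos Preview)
The paper does not actually prove this lemma: it is stated without proof and (in a commented-out earlier draft) called an ``obvious observation.'' Your explicit construction of the inverse by downward induction is exactly the natural argument and is correct in substance.

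One notational point worth cleaning up: since $f_j=\varphi(x_j)$ lies in $\Bbbk[y_1,\dots,y_n]$, the hypothesis should read $f_{i_j}-y_{i_j}\in\Bbbk[y_{i_{j+1}},\dots,y_{i_n}]$ (the paper's ``$x$'' here is a typo), and correspondingly your formula for the inverse should be $g_j:=x_j-h_j(g_{j+1},\dots,g_n)\in\Bbbk[x_1,\dots,x_n]$ rather than ``$y_j-\cdots$''. With that fix your computation $\varphi(g_j)=y_j$ goes through verbatim. For the reverse composition, your remark that $\psi$ is again of unipotent-triangular shape is enough: it yields a right inverse $\chi$ for $\psi$, and then $\varphi=\varphi\circ\psi\circ\chi=\chi$ gives $\psi\circ\varphi=\mathrm{id}$.
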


\begin{definition}\label{def-unipotent}
A homomorphism $\varphi\colon \Bbbk[x_1,\dots,x_n]\rightarrow \Bbbk[y_1,\dots,y_n]$ satisfying the assumption of Lemma~\ref{lem-unipotent-homomorphism} is called a \emph{unipotent isomorphism}.
\end{definition}

From now on, our base field will be $\Bbbk\supset\mathbb{Q}$. 

\subsection{Haiman equations}\label{sec:Haiman-equations}
We recall the explicit description by Haiman of the local defining equations of $\Hilb^n(\mathbb{A}^r)$ and some consequences of it.

Fix a natural number $r\geq 1$. Let $e_1=(1,0,\dots,0)$, \dots, $e_r=(0,\dots,0,1)$ be the standard basis of $\mathbb{R}^r$.  Let~$\lambda$ be the set of lattice points in an $r$-dimensional partition. The Haiman coordinates are $c_{i}^{j}$, where $i\in \lambda$, $j\in \mathbb{Z}_{\geq 0}^{r}$, subject to the relations
\begin{equation}\label{eq-Haiman-0}
c_{i}^{j}=\begin{cases}
1 & \mbox{if}\  i=j\in \lambda,\\
0& \mbox{if}\ i,j\in \lambda\ \mbox{and}\ i\neq j, 
\end{cases}
\end{equation}
and for $1\leq b\leq r$,
\begin{equation}\label{eq-Haiman-1}
	c_{i}^{j+e_b}=\sum_{k\in \lambda} c_{k}^{j}c_{i}^{k+e_b},\quad i\in \lambda,\ j\not\in \lambda.
\end{equation}

For the following theorem, we refer the reader to \cite{Hai98} and \cite{Hui06}.

\begin{theorem}\label{thm-fundamental-haiman}
Let
\[
\tilde{R}_{\lambda}=\Bbbk\left[\left\{c_{i}^j\right\}_{i\in \lambda,j\in \mathbb{Z}_{\geq 0}^r}\right],
\]
let $\tilde{\mathcal{H}}_{\lambda}$ be the ideal of\, $\tilde{R}_{\lambda}$ generated by  Equations \eqref{eq-Haiman-0} and \eqref{eq-Haiman-1}, and let  
\[
\tilde{A}_{\lambda}=\tilde{R}_{\lambda}/\tilde{\mathcal{H}}_{\lambda}.
\]
Then
\begin{i-enumerate}
 	\item\label{tfh-1} $\Spec(\tilde{A}_{\lambda})$ is isomorphic to an affine  neighborhood $U_{\lambda}$ of\, $I_{\lambda}$ in $\Hilb^{|\lambda|}(\mathbb{A}^r)$;  
 	\item\label{tfh-2} for an ideal $J$ of\, $\Bbbk[X_1,\dots,X_r]$, $Z_J$ lies in $U_\lambda$ if and only if the monomials $\{\mathbf{X}^{\alpha}\}_{\alpha\in \lambda}$ form a basis of\, $\Bbbk[X_1,\dots,X_r]/J$;
 	\item\label{tfh-3}  for any $\Bbbk$-algebra $K$, a $K$-point $(c_{i}^j)_{i\in \lambda,j\in \mathbb{Z}_{\geq 0}^r}$ of\, $\Spec(A_{\lambda})$ corresponds to the ideal  of\, $K[X_1,\dots,X_r]$ generated by
\[
X^j-\sum_{i\in \lambda}c_{i}^j X^i\quad \mbox{for}\ j\in \mathbb{Z}_{\geq 0}^r.
\]
 \end{i-enumerate} 
\end{theorem}

While $\tilde{R}_{\lambda}$ is not finitely generated, it is not hard to see that the ring $\tilde{A}_{\lambda}$ is finitely generated over $\Bbbk$. Let us recall an explicit finite presentation of $\tilde{A}_{\lambda}$.

\begin{definition}
Let $\lambda$ be a $r$-dimensional partition. The \emph{glove} of $\lambda$ is
\begin{equation*}
\glo(\lambda)=\{i\in \mathbb{Z}_{\geq 0}^r\mid i\not\in \lambda,\ 
\mbox{and at least one element of}\ \{i-e_1,\dots,i-e_r\}\ \mbox{lies in}\ \lambda\}.
\end{equation*}
\end{definition}

\begin{definition}
Let $S$ be a subset of $\mathbb{Z}_{\geq 0}^r$.  An unordered pair $\{i,j\}$ of two distinct elements of $S$ is called an \emph{adjacent pair in $S$} if $i-j$ is equal to $e_a$ or $e_a-e_b$ for some $a,b\in\{1,\dots,r\}$.  The set of adjacent pairs in $S$ is denoted by $\Adj(S)$.
\end{definition}

\begin{definition}
Let $\lambda$ be an $r$-dimensional partition. For an adjacent pair $\{i,j\}$ of $\glo(\lambda)$, we define the set of \emph{Haiman equations} associated with $\{i,j\}$, denoted by $\HE(i,j)$, in the following way: 
\begin{i-enumerate}
	\item If $i-j=e_b$ and  $1\leq b\leq r$,
	\begin{equation}\label{eq-Huibregtse-1}
	\HE(i,j)=\left\{c_{l}^{j+e_b}-\sum_{k\in \lambda} c_{k}^{j}c_{l}^{k+e_b}\mid l\in \lambda\right\}.
	\end{equation}
	\item If  $i-j=e_a-e_b$ and  $1\leq a\neq b\leq r$,
	\begin{equation}\label{eq-Huibregtse-2}
	\HE(i,j)=\left\{\sum_{k\in \lambda} c_{k}^{j}c_{l}^{k+e_a}-\sum_{k\in \lambda} c_{k}^{i}c_{l}^{k+e_b}\mid l\in \lambda\right\}.
	\end{equation}
\end{i-enumerate}
\end{definition}

Denote by $\mu$ the glove of $\lambda$. Set
\[
R_{\lambda}=\Bbbk\left[c_{i}^j\right]_{i\in \lambda,j\in \mu}.
\]
Let $\Adj(\mu)$ be the set of adjacent pairs in $\mu$.  Let $\mathcal{H}_{\lambda}$ be the ideal of $R_{\lambda}$ generated by the equations in
\begin{equation}\label{eq-Huibregtse-total}
\bigcup_{\{i,j\}\in \Adj(\mu)}\HE(i,j).
\end{equation}
Let $A_{\lambda}=R_{\lambda}/\mathcal{H}_{\lambda}$.  
The $\mathbb{G}_m^r$-action on $\mathbb{A}^r$ induces actions on $R_{\lambda}$ and $\mathcal{H}_{\lambda}$, and thus on $A_{\lambda}$. We denote the associated equivariant Hilbert function of the completed local ring of $A_{\lambda}$ at $I_{\lambda}$ by $H(A_{\lambda};\mathbf{t})$.

\begin{proposition}
The obvious homomorphism $R_{\lambda}\rightarrow \tilde{R}_{\lambda}$ induces an isomorphism $A_{\lambda}\cong \tilde{A}_{\lambda}$.
\end{proposition}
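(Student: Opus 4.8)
The plan is to analyze the ``obvious homomorphism'' $R_\lambda\to\tilde R_\lambda$, which is the inclusion of the polynomial ring on the variables $c_i^j$ with $i\in\lambda$, $j\in\mu$ into the polynomial ring on all $c_i^j$ with $i\in\lambda$, $j\in\mathbb{Z}_{\geq 0}^r$, and to show that the induced map $\bar\phi\colon A_\lambda\to\tilde A_\lambda$ is a well-defined isomorphism. Two elementary observations will be used repeatedly: if $k\in\lambda$ then $k+e_b\in\lambda\cup\mu$ (if $k+e_b\notin\lambda$ it lies in the glove, having the $\lambda$-neighbour $k$), and if $j\notin\lambda\cup\mu$ and $j_b>0$ then $j-e_b\notin\lambda$ (otherwise $j\in\mu$). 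For well-definedness I would check that every generator of $\mathcal{H}_\lambda$ maps into $\tilde{\mathcal{H}}_\lambda$: using $(\ref{eq-Haiman-0})$ to read $c_l^m=\delta_l^m$ whenever $m\in\lambda$, each relation in $\mathrm{HE}(i,j)$ with $i-j=e_b$ becomes exactly one of the relations $(\ref{eq-Haiman-1})$, and each relation in $\mathrm{HE}(i,j)$ with $i-j=e_a-e_b$ becomes the difference of the two relations $(\ref{eq-Haiman-1})$ that both compute $c_l^{j+e_a}=c_l^{i+e_b}$; in both cases the first observation guarantees that the entries $c_l^{k+e_\bullet}$ occurring are generators of $R_\lambda$ or constants. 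For surjectivity I would show by strong induction on $|j|=\sum_b j_b$ that the class of $c_i^j$ in $\tilde A_\lambda$ lies in the image of $R_\lambda$: for $j\in\lambda$ it is $\delta_i^j$; for $j\in\mu$ it is a generator; otherwise $j\neq 0$ (we may assume $\lambda\neq\emptyset$, so $0\in\lambda$), one picks $b$ with $j_b>0$ and sets $j'=j-e_b\notin\lambda$, and $(\ref{eq-Haiman-1})$ gives $c_i^j\equiv\sum_{k\in\lambda}c_k^{j'}c_i^{k+e_b}$, where every $c_k^{j'}$ is in the image by induction and every $c_i^{k+e_b}$ is a generator or a constant.

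For injectivity I would construct a left inverse. Fix, for each $j\in\mathbb{Z}_{\geq 0}^r\setminus(\lambda\cup\mu)$, a direction $b(j)$ with $j_{b(j)}>0$, and put $j^-=j-e_{b(j)}$; then $j^-\notin\lambda$. Define $\rho(c_i^j)\in A_\lambda$ by recursion on $|j|$: $\rho(c_i^j)=\delta_i^j$ for $j\in\lambda$; $\rho(c_i^j)=$ the class of $c_i^j$ for $j\in\mu$; and $\rho(c_i^j)=\sum_{k\in\lambda}\rho(c_k^{j^-})\,\rho(c_i^{k+e_{b(j)}})$ for $j\notin\lambda\cup\mu$, the right-hand side being already defined since $|j^-|<|j|$ and $k+e_{b(j)}\in\lambda\cup\mu$. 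This extends to a $\Bbbk$-algebra homomorphism $\psi\colon\tilde R_\lambda\to A_\lambda$, which obviously kills $(\ref{eq-Haiman-0})$. If it also kills $(\ref{eq-Haiman-1})$, then it descends to $\tilde\psi\colon\tilde A_\lambda\to A_\lambda$; since $\tilde\psi\circ\bar\phi$ fixes every generator $c_i^j$ ($j\in\mu$) of $A_\lambda$, we get $\tilde\psi\circ\bar\phi=\mathrm{id}_{A_\lambda}$, and together with the surjectivity above this shows $\bar\phi$ is an isomorphism.

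Thus everything reduces to showing that $\psi$ kills $(\ref{eq-Haiman-1})$, that is,
\[
\rho(c_i^{j+e_b})=\sum_{k\in\lambda}\rho(c_k^{j})\,\rho(c_i^{k+e_b})\quad\text{in }A_\lambda
\]
for all $j\notin\lambda$, all $b$, and all $i\in\lambda$; this is the step I expect to be the main obstacle. It is automatic when $j+e_b\notin\lambda\cup\mu$ and $b=b(j+e_b)$, since the displayed identity is then the defining recursion for $\rho$. In the remaining cases --- $j+e_b\in\mu$, or $j+e_b\notin\lambda\cup\mu$ with $b\neq b(j+e_b)$ --- one must prove it by induction on the ``distance of $j$ from the glove''. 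The base cases are precisely the Haiman equations attached to the adjacent pairs of $\mu$: the $e_b$-pairs give the identity for $j\in\mu$ in direction $e_b$, and the $(e_a-e_b)$-pairs give the ``rhombus'' compatibility $\sum_{k\in\lambda}\rho(c_k^j)\rho(c_l^{k+e_a})=\sum_{k\in\lambda}\rho(c_k^i)\rho(c_l^{k+e_b})$ between the two expansions of $c_l^{j+e_a}=c_l^{i+e_b}$; the inductive step rewrites a relation on a shell using the relations on the previous shell together with these compatibilities. This ``propagation outward from the glove'' is a bookkeeping induction carried out, for the corresponding presentation, by Huibregtse \cite{Hui06}, which one may simply invoke; the scheme above makes transparent why the finitely many generators of $\mathcal{H}_\lambda$ on the glove already cut out $\tilde A_\lambda$ inside $R_\lambda$.
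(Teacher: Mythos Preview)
Your proposal is correct in outline and, at the crucial technical step, invokes exactly the same reference the paper does: the paper's entire proof is the single sentence ``One can find a proof in \cite[\S 5--\S 7]{Hui06}.'' So there is no difference in approach to compare --- you have simply written out more of the scaffolding (well-definedness, surjectivity by induction on $|j|$, injectivity via an explicit retraction $\rho$) before appealing to Huibregtse for the propagation argument, whereas the paper cites Huibregtse immediately.

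One small remark on your exposition: when you say the base cases for $j+e_b\in\mu$ are the type-$(e_b)$ Haiman equations, you are implicitly using that $j\notin\lambda$ and $j+e_b\in\mu$ force $j\in\mu$ as well (so that $\{j,j+e_b\}$ is genuinely an adjacent pair in $\mu$); this follows from your two ``elementary observations'' but is worth stating. Otherwise the sketch is sound.
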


One can find a proof in  \cite[Sections~5--7]{Hui06}.

\begin{corollary}\label{cor-description-cotangentspace}
The set $\{c_{\ell}^i\}_{\ell\in \lambda,i\in \mu}$ modulo the equivalence relations generated by the relations
\begin{equation*}\label{eq-description-cotangentspace}
\begin{cases}
c_{\ell+e_a}^{i+e_a}\sim c_{\ell}^i&  \mbox{for}\ 1\leq a\leq r\ \mbox{satisfying that}\ \ell+e_a\in \lambda\ \mbox{and}\ i+e_a\in \mu,\\[.5ex]
c_{\ell+e_a-e_b}^{i+e_a-e_b}\sim c_{\ell}^i&  \mbox{for}\ 1\leq a\neq b\leq r\ \mbox{satisfying that}\ \ell+e_a, \ell+e_a-e_b \in \lambda\ \mbox{and}\ i+e_a-e_b\in \mu,\\[.5ex]
c_{\ell}^{i}\sim 0& \mbox{if}\ i-e_a\in \mu\ \mbox{and}\ \ell-e_a\not\in \mathbb{Z}_{\geq 0}^r\ \mbox{for some}\ 1\leq a\leq r,
\end{cases}
\end{equation*}
forms an equivariant basis of the cotangent space of\, $\Spec(A_{\lambda})$ at $0$. Here $\sim 0$ means deleting this equivalence of coordinates from $\{c_{\ell}^j\}_{\ell\in \lambda,j\in \mu}$.
\end{corollary}

This is a direct consequence of (\ref{eq-Huibregtse-1}) and (\ref{eq-Huibregtse-2}).  One can deduce from this description the smoothness of $\Hilb^n(\mathbb{A}^2)$ (see \cite{Hai98} or \cite[Section~18.2]{MS05}). In Section~\ref{sec:pyramids}, we will get a graphical impression of how the smoothness fails in higher dimensions.

\begin{lemma}\label{lem-lower-dim-partition}
Let $\iota\colon \mathbb{Z}^{r-1}\rightarrow \mathbb{Z}^{r}$ be the embedding $x\mapsto (x,0)$. In this way, for an $(r-1)$-dimensional partition $\lambda$, $\iota(\lambda)$ is an $r$-dimensional partition. Then
\[
\Spec(A_{\iota(\lambda)})\cong \mathbb{A}^{|\lambda|}\times \Spec(A_{\lambda}).
\]
\end{lemma}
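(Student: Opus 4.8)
\noindent\emph{Proof strategy.}
The plan is to identify $\Spec(A_{\iota(\lambda)})$ with the product by comparing functors of points, using Theorem \ref{thm-fundamental-haiman}. First one records the (trivial) fact that $\iota(\lambda)$ is indeed an $r$-dimensional partition: if $\beta\in\iota(\lambda)$ and $\alpha\le\beta$ in $\mathbb{Z}_{\ge 0}^r$, then $\beta$ has last coordinate $0$, hence so does $\alpha$, say $\alpha=(\alpha',0)$ with $\alpha'\le\beta'\in\lambda$, whence $\alpha'\in\lambda$ and $\alpha\in\iota(\lambda)$; also $|\iota(\lambda)|=|\lambda|$. By Theorem \ref{thm-fundamental-haiman} together with the isomorphism $A_{\iota(\lambda)}\cong\tilde{A}_{\iota(\lambda)}$, the scheme $\Spec(A_{\iota(\lambda)})$ represents the functor sending a $\Bbbk$-algebra $K$ to the set of ideals $J\subseteq K[X_1,\dots,X_r]$ for which $\{X^{\beta}\}_{\beta\in\iota(\lambda)}$ is a $K$-basis of $K[X_1,\dots,X_r]/J$, and similarly $\Spec(A_{\lambda})$ represents the analogous functor for $\lambda$ with the ring $K[X_1,\dots,X_{r-1}]$.

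Next I would construct a natural bijection between these. Write $R=K[X_1,\dots,X_r]=R'[X_r]$ with $R'=K[X_1,\dots,X_{r-1}]$. Given such a $J$, the monomials $X^{\beta}$, $\beta\in\iota(\lambda)$, lie in $R'$ and span $R/J$, so the composite $R'\hookrightarrow R\twoheadrightarrow R/J$ is already surjective; set $J':=\ker(R'\to R/J)$, so that $R'/J'\xrightarrow{\sim}R/J$ as $K$-algebras and $\{X^{\alpha'}\}_{\alpha'\in\lambda}$ is a $K$-basis of $R'/J'$, i.e. $J'$ is a $K$-point of $\Spec(A_{\lambda})$. Since $R'/J'$ is $K$-free on that basis, the image $\xi\in R'/J'$ of $X_r$ is the same datum as a $K$-point of $\mathbb{A}^{|\lambda|}$; the assignment is $J\mapsto(J',\xi)$. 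Conversely, given a $K$-point $J'$ of $\Spec(A_{\lambda})$ and $\xi\in R'/J'$, let $J$ be the kernel of the surjection $R=R'[X_r]\to R'/J'$ which is the quotient map on $R'$ and sends $X_r$ to $\xi$; then $R/J\cong R'/J'$ and $\{X^{\beta}\}_{\beta\in\iota(\lambda)}$ is a $K$-basis of $R/J$, so $J$ lies in the functor attached to $\iota(\lambda)$. A short check shows the two assignments are mutually inverse and natural in $K$; the only point needing care is that $J'=\ker(R'\to R/J)$ commutes with base change $K\to K''$, which holds because $R'/J'$ is $K$-flat (indeed free), so $0\to J'\to R'\to R'/J'\to 0$ stays exact after $\otimes_K K''$. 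By Yoneda this natural isomorphism of functors is induced by the desired isomorphism of schemes $\Spec(A_{\iota(\lambda)})\cong\mathbb{A}^{|\lambda|}\times\Spec(A_{\lambda})$.

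I do not expect a genuine obstacle: the whole content is the single observation that, since $\iota(\lambda)$ lies in the hyperplane $\{x_r=0\}$, the subring $R'$ already surjects onto $R/J$, which realizes $R/J$ as (a quotient of $R'$ in $\Spec A_\lambda$) plus the one extra datum $\xi=\overline{X_r}$; everything else is formal, modulo the routine flat-base-change bookkeeping above. The most laborious alternative would be to argue directly on Haiman's equations: one checks $\mathrm{glo}(\iota(\lambda))=\iota(\mathrm{glo}(\lambda))\sqcup(\lambda+e_r)$, observes that the coordinates $c^{e_r}_{(\alpha',0)}$ for $\alpha'\in\lambda$ are unconstrained, uses the Haiman equations from adjacent pairs inside $\lambda+e_r$ to express the remaining coordinates $c^{(\gamma',1)}_{(\alpha',0)}$ as polynomials in these and in the coordinates of $A_{\lambda}$, and verifies that the leftover equations collapse to $\mathcal{H}_{\lambda}$; since this bookkeeping is the real (if only mildly painful) work, I would present the functorial argument instead. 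In coordinates the free factor $\mathbb{A}^{|\lambda|}$ records the coefficients in $X_r\equiv\sum_{\alpha'\in\lambda}c^{e_r}_{(\alpha',0)}X^{\alpha'}$ modulo the universal ideal.
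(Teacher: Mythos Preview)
Your proposal is correct and follows essentially the same approach as the paper's proof: both argue via the functor of points (Theorem \ref{thm-fundamental-haiman}), observing that since the basis monomials for $\iota(\lambda)$ lie in $K[X_1,\dots,X_{r-1}]$, an ideal $J$ in the functor for $\iota(\lambda)$ decomposes into its contraction $J\cap K[X_1,\dots,X_{r-1}]$ (a point of $\Spec A_\lambda$) together with the image of $X_r$ (a point of $\mathbb{A}^{|\lambda|}$). The paper's writeup is terser and omits your careful base-change and inverse checks, but the argument is the same.
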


\begin{proof}
 Let $K$ be an arbitrary $\Bbbk$-algebra. Consider the ideals $I$ of $K[X_1,\dots,X_r]$ such that $\{\mathbf{X}^i\}_{i\in \iota(\lambda)}$ freely generates $K[X_1,\dots,X_r]/I$. For such $I$, there are unique elements $c_i(I)\in K$ such that
 \[
 X_r\equiv \sum_{i=(i_1,\dots,i_{r-1})\in \lambda}c_i(I) X_1^{i_1}\cdots X_{r-1}^{i_{r-1}} \mod I. 
 \]
 Let $J=I\cap K[X_1,\dots,X_{r-1}]$. Then there is a canonical isomorphism
 \[
 K[X_1,\dots,X_{r-1}]/J\cong K[X_1,\dots,X_{r}]/I.
 \] 
 This gives a bijection of the sets of $K$-points
 \begin{align*}
   \Spec(A_{\iota(\lambda)})(K)&\longrightarrow K^{|\lambda|}\times \Spec(A_{\lambda})(K),\\
I&\longmapsto \left(\left(c_i(I)\right)_{i\in \lambda},J\right).
  \end{align*}
This bijection is functorial in $K$. So we have the wanted isomorphism of schemes.
\end{proof}

\subsection{Pyramids}\label{sec:pyramids}
In this section, we study the Haiman ideal corresponding to a special type of $3$-dimensional partitions, called \emph{pyramids}.  Let $\pyr_r(n)$ be the $r$-dimensional partition
\[
\{(a_1,\dots,a_r)\in \mathbb{Z}_{\geq 0}^r\mid a_1+\dots+a_r\leq n-1\}.
\]
The glove of  $\pyr_r(n)$ is
\[
\glo\left(\pyr_r(n)\right)=\{(a_1,\dots,a_r)\in \mathbb{Z}_{\geq 0}^r\mid  a_1+\dots+a_r=n
\}.
\]
The monomial ideal corresponding to $\pyr_3(n)$ is
\[
I_{\pyr_3(n)}=\sum_{a+b+c=n}(X_1^{a}X_2^{b}X_3^{c}).
\]
For example, $\pyr_3(4)$ is graphically presented as
\begin{center}
\begin{tikzpicture}[x=(220:0.5cm), y=(-40:0.5cm), z=(90:0.35cm)]

\foreach \m [count=\y] in {{4,3,2,1},{3,2,1},{2,1},{1}}{
  \foreach \n [count=\x] in \m {
  \ifnum \n>0
      \foreach \z in {1,...,\n}{
        \draw [fill=gray!30] (\x+1,\y,\z) -- (\x+1,\y+1,\z) -- (\x+1, \y+1, \z-1) -- (\x+1, \y, \z-1) -- cycle;
        \draw [fill=gray!40] (\x,\y+1,\z) -- (\x+1,\y+1,\z) -- (\x+1, \y+1, \z-1) -- (\x, \y+1, \z-1) -- cycle;
        \draw [fill=gray!5] (\x,\y,\z)   -- (\x+1,\y,\z)   -- (\x+1, \y+1, \z)   -- (\x, \y+1, \z) -- cycle;  
      }
 \fi
 }
}    
\end{tikzpicture}
\end{center}

Define a polynomial in $\Bbbk[\{c_{i}^j\}_{|i|=n-1,|j|=n}]$
\begin{equation}\label{eq-3D-pyramid-superpotential}
	F_{\pyr_3(n)}=-\sum_{|i|=|j|=|k|=n-1}c_{j}^{i+(0,0,1)}c_{k}^{j+(1,0,0)}c_{i}^{k+(0,1,0)}
+\sum_{|i|=|j|=|k|=n-1}c_{j}^{i+(0,0,1)}c_{k}^{j+(0,1,0)}c_{i}^{k+(1,0,0)}.
\end{equation}

\begin{proposition}\label{prop-potential-pyramid}
There is an isomorphism
\begin{equation}
	\Bbbk\left[\left\{c_{i}^j\right\}_{|i|=n-1,|j|=n}\right]\Big/ \Jac\left(F_{\pyr_3(n)}\right)\overset{\lowsim}\longrightarrow A_{\pyr_3(n)}.
\end{equation}
\end{proposition}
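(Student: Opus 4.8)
The plan is to transform Haiman's presentation $A_{\mathrm{pyr}_3(n)}=R_\lambda/\mathcal{H}_\lambda$, where $\lambda=\mathrm{pyr}_3(n)$, $\mu=\mathrm{glo}(\lambda)=\{j:|j|=n\}$ and $R_\lambda=\Bbbk[c_i^j]_{i\in\lambda,\,j\in\mu}$, into the stated one by first eliminating every coordinate $c_i^j$ with $|i|<n-1$ and then identifying what remains with $\mathrm{Jac}(F_{\mathrm{pyr}_3(n)})$. The starting observation is the shape of the Haiman equations in this case: since all elements of $\mu$ have the same total degree $n$, no adjacent pair in $\mu$ has the form $i-j=e_b$, so only the equations of type (\ref{eq-Huibregtse-2}) occur; and since $\lambda\cup\mu=\{|i|\le n\}$, for $k,l\in\lambda$ the symbol $c_l^{k+e_a}$ is a genuine coordinate when $|k|=n-1$ and equals $\delta_l^{k+e_a}$ when $|k|\le n-2$. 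Expanding the Kronecker part, for an adjacent pair $\{i,j\}$ with $i-j=e_a-e_b$ the member of $\mathrm{HE}(i,j)$ indexed by $l\in\lambda$ becomes
\[
\mathrm{HE}(i,j)_l\;=\;\Big(\sum_{|k|=n-1}c_k^{j}c_l^{k+e_a}-\sum_{|k|=n-1}c_k^{i}c_l^{k+e_b}\Big)\;+\;\big([l_a\ge 1]\,c_{l-e_a}^{j}-[l_b\ge 1]\,c_{l-e_b}^{i}\big),
\]
a quadratic part lying in $(\text{layer }n-1)\cdot(\text{layer }|l|)$ plus a linear part in layer $|l|-1$, where ``layer $d$'' denotes the coordinates $c_i^j$ with $|i|=d$; the layer-$(n-1)$ coordinates are precisely the variables of $F_{\mathrm{pyr}_3(n)}$.

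The elimination is carried out for decreasing $d$ from $n-2$ to $0$. When $|l|=n-1$ the quadratic part of $\mathrm{HE}(i,j)_l$ involves only layer-$(n-1)$ coordinates; if moreover $l_b=0$ the linear part is a single monomial $c_{l-e_a}^{j}$, so the equation expresses this layer-$(n-2)$ coordinate polynomially in top coordinates. The ``interior'' layer-$(n-2)$ coordinates $c_m^j$ with all $m_i\ge 1$ (which exist only once $n\ge 5$) are reached from the boundary ones by iterating the relations with $l_a,l_b\ge 1$: these read $c_{l-e_a}^{j}-c_{l-e_b}^{i}=-(\text{layer-}(n-1)\text{ quadratic})$ and identify $c_u^v$ with $c_{u+e_a-e_b}^{v+e_a-e_b}$, so, $|j|=n$ being large relative to $\min_i m_i$, one can steer the subscript onto a coordinate hyperplane while keeping the superscript in $\mu$. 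Once layer $n-2$ has been expressed in top coordinates, the equations with $|l|=n-2$ express layer $n-3$ in terms of layers $\ge n-2$, hence in top coordinates, and so on down to layer $0$; one then checks that this chain of substitutions has the triangular form of Lemma \ref{lem-unipotent-homomorphism} (a unipotent isomorphism), so that $\Spec(A_{\mathrm{pyr}_3(n)})$ is isomorphic to $\Spec$ of $\Bbbk[c_i^j]_{|i|=n-1,\,|j|=n}$ modulo the images of the Haiman equations not consumed by the elimination.

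It then remains to identify these residual equations with $\mathrm{Jac}(F_{\mathrm{pyr}_3(n)})$. Differentiating $F_{\mathrm{pyr}_3(n)}$ in a top coordinate $c_p^q$ gives six terms which, suitably grouped, equal $\sum_{|k|=n-1}\big(c_k^{p+e_b}c_{q-e_c}^{k+e_a}-c_k^{p+e_a}c_{q-e_c}^{k+e_b}\big)$ plus two further sums that cancel modulo $\mathcal{H}_\lambda$ by well-definedness of reduction modulo the Haiman ideal (each is a coefficient in the reduction of a monomial $X^w$, $|w|=n+1$, computed along two orders of multiplication, and the two occurrences have opposite sign). The remaining sum is, up to sign and up to terms killed in the elimination, the top-layer Haiman equation $\mathrm{HE}(p+e_a,p+e_b)_{q-e_c}$; putting the eliminated terms back shows $\partial F_{\mathrm{pyr}_3(n)}/\partial c_p^q$ is the image under the elimination of a $\mathbb Z$-combination of Haiman equations, hence lies in the residual ideal. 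For the reverse inclusion one shows that every residual Haiman equation is a consequence of these partials, which comes down to a syzygy computation encoding the Jacobi identity — the same identity that makes the propagation in the elimination consistent along different paths.

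The step I expect to be the main obstacle is this last pair of tasks together: choosing, for every non-top coordinate, one Haiman equation that solves for it and verifying the resulting system is genuinely triangular (the propagation for the interior layer-$(n-2)$ coordinates is the delicate point), and then the index- and sign-bookkeeping matching the surviving equations with the partials of $F_{\mathrm{pyr}_3(n)}$ with nothing lost or gained. Conceptually the statement is a finite-dimensional shadow of the fact that $\mathrm{Hilb}^n(\mathbb{A}^3)$ is locally a critical locus, with $F_{\mathrm{pyr}_3(n)}$ playing the role of the ADHM-type potential $\mathrm{tr}(B_3[B_1,B_2])$; for $n=2$ it specializes to S.\ Katz's description of this chart as $\widehat{G}(2,6)\times\mathbb{A}^3$, the cone over the Grassmannian being itself the critical locus of the $6\times 6$ Pfaffian, a cubic.
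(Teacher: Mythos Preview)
Your proposal is correct and follows essentially the same route as the paper: eliminate the coordinates $c_i^j$ with $|i|<n-1$ using the Haiman relations (which for pyramids are all of type (\ref{eq-Huibregtse-2}), as you observe), and then identify the residual relations among the top-layer coordinates with $\mathrm{Jac}(F_{\mathrm{pyr}_3(n)})$.

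The one place where the paper is more concrete than your sketch is precisely the step you flag as the main obstacle. The paper organizes the path-independence argument via a picture: project the glove $\{|j|=n\}$ onto a triangular grid and decompose any closed loop into small upright and upside-down triangles. The upright triangles yield exactly the relations $\partial F_{\mathrm{pyr}_3(n)}/\partial c_p^q$ (this is the computation (\ref{eq-pyramid-4})--(\ref{eq-pyramid-6})), while the upside-down triangles contribute \emph{tautologically zero} (equation (\ref{eq-pyramid-10})): the three summands coming from the three edges cancel in pairs without invoking any relation at all. This is what makes your ``syzygy computation encoding the Jacobi identity'' go through cleanly, and it removes the worry about interior coordinates --- any two paths from $c_i^j$ to a boundary differ by a sum of such triangles. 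The induction from $|i|=n-2$ down to $|i|=0$ then amounts to checking that the upright-triangle relations at each layer are already in the ideal generated by the top-layer ones, which the paper does by an explicit (if somewhat tedious) rewriting modulo (\ref{eq-pyramid-4})--(\ref{eq-pyramid-6}).
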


\begin{proof}
As a convention, we define $c_i^j=0$ for $i\in \mathbb{Z}^3\setminus \mathbb{Z}^3_{\geq 0}$.  The Haiman equations (\ref{eq-Haiman-0}) and (\ref{eq-Haiman-1}) remain valid.  In dimension $3$, Equations (\ref{eq-Haiman-1}) read
\begin{subequations}\label{eq-Haiman-1-3D}
\begin{eqnarray}
c_{i}^{j+(1,0,0)}=\sum_{k\in \lambda} c_{k}^{j}c_{i}^{k+(1,0,0)},\quad i\in \lambda,\ j\not\in \lambda,\label{subeqHaiman-1a}\\
c_{i}^{j+(0,1,0)}=\sum_{k\in \lambda} c_{k}^{j}c_{i}^{k+(0,1,0)},\quad i\in \lambda,\ j\not\in \lambda,\label{subeq-Haiman-1b}\\
c_{i}^{j+(0,0,1)}=\sum_{k\in \lambda} c_{k}^{j}c_{i}^{k+(0,0,1)},\quad i\in \lambda,\ j\not\in \lambda.\label{subeq-Haiman-1c}
\end{eqnarray}
\end{subequations}

Suppose $i,j\in \mathbb{Z}_{\geq 0}^3$ satisfy $|i|=|j|=n-1$. Applying (\ref{subeqHaiman-1a}) to $c_{i}^{j+(0,1,0)+(1,0,0)}$, we obtain
\[
c_{i}^{j+(0,1,0)+(1,0,0)}=\sum_{k\in \lambda} c_{k}^{j+(0,1,0)}c_{i}^{k+(1,0,0)}.
\]
Then applying (\ref{eq-Haiman-0}), we get
\begin{equation}\label{eq-potential-proof-pyramid-1}
	c_{i}^{j+(0,1,0)+(1,0,0)}=
	 c_{i-(1,0,0)}^{j+(0,1,0)}+ \sum_{|k|=n-1} c_{k}^{j+(0,1,0)}c_{i}^{k+(1,0,0)}.
\end{equation}
Similarly, applying (\ref{subeq-Haiman-1b}) and (\ref{eq-Haiman-0}) to $c_{i}^{j+(0,1,0)+(1,0,0)}$,  we get
\begin{align}\label{eq-potential-proof-pyramid-2}
c_{i}^{j+(0,1,0)+(1,0,0)}&=\sum_{k\in \lambda} c_{k}^{j+(1,0,0)}c_{i}^{k+(0,1,0)}\notag\\
&= c_{i-(0,1,0)}^{j+(1,0,0)}+ \sum_{|k|=n-1} c_{k}^{j+(1,0,0)}c_{i}^{k+(0,1,0)}.
\end{align}
Equating the right-hand sides of (\ref{eq-potential-proof-pyramid-1}) and (\ref{eq-potential-proof-pyramid-2}), we obtain
\begin{eqnarray}\label{eq-potential-proof-pyramid-3}
c_{i-(1,0,0)}^{j+(0,1,0)}-c_{i-(0,1,0)}^{j+(1,0,0)}
= \sum_{|k|=n-1} c_{k}^{j+(1,0,0)}c_{i}^{k+(0,1,0)}-
\sum_{|k|=n-1} c_{k}^{j+(0,1,0)}c_{i}^{k+(1,0,0)}.
\end{eqnarray}
Similarly, we have
\begin{eqnarray}\label{eq-potential-proof-pyramid-4}
c_{i-(0,0,1)}^{j+(1,0,0)}-c_{i-(1,0,0)}^{j+(0,0,1)}
= \sum_{|k|=n-1} c_{k}^{j+(0,0,1)}c_{i}^{k+(1,0,0)}
-\sum_{|k|=n-1} c_{k}^{j+(1,0,0)}c_{i}^{k+(0,0,1)},
\end{eqnarray}
\begin{eqnarray}\label{eq-potential-proof-pyramid-5}
c_{i-(0,1,0)}^{j+(0,0,1)}-c_{i-(0,0,1)}^{j+(0,1,0)}
=\sum_{|k|=n-1} c_{k}^{j+(0,1,0)}c_{i}^{k+(0,0,1)}
- \sum_{|k|=n-1} c_{k}^{j+(0,0,1)}c_{i}^{k+(0,1,0)}.
\end{eqnarray}
Suppose $|i|=n-1$, $|j|=n$, and $j\geq (1,0,0)$.
By (\ref{eq-potential-proof-pyramid-3}), (\ref{eq-potential-proof-pyramid-4}), and (\ref{eq-potential-proof-pyramid-5}), we have
\begin{eqnarray}\label{eq-pyramid-4}
0&=& \left(c_{i}^{j}-c_{i+(-1,1,0)}^{j+(-1,1,0)}\right)
+\left(c_{i+(-1,1,0)}^{j+(-1,1,0)}-c_{i+(-1,0,1)}^{j+(-1,0,1)}\right)
+\left(c_{i+(-1,0,1)}^{j+(-1,0,1)}-c_{i}^{j}\right)\notag\\
&=& \left(-\sum_{|k|=n-1} c_{k}^{j}c_{i+(0,1,0)}^{k+(0,1,0)}
+\sum_{|k|=n-1} c_{k}^{j+(-1,1,0)}c_{i+(0,1,0)}^{k+(1,0,0)}\right)\notag\\
&&+\left(-\sum_{|k|=n-1} c_{k}^{j+(-1,1,0)}c_{i+(-1,1,1)}^{k+(0,0,1)}
+\sum_{|k|=n-1} c_{k}^{j+(-1,0,1)}c_{i+(-1,1,1)}^{k+(0,1,0)}\right)\notag\\
&&+\left(-\sum_{|k|=n-1} c_{k}^{j+(-1,0,1)}c_{i+(0,0,1)}^{k+(1,0,0)}
+\sum_{|k|=n-1} c_{k}^{j}c_{i+(0,0,1)}^{k+(0,0,1)}\right).
\end{eqnarray}
Similarly, for $|i|=n-1$, $|j|=n$, and $j\geq (0,1,0)$, we have
\begin{eqnarray}\label{eq-pyramid-5}
0&=&\left(-\sum_{|k|=n-1} c_{k}^{j}c_{i+(0,0,1)}^{k+(0,0,1)}
+\sum_{|k|=n-1} c_{k}^{j+(0,-1,1)}c_{i+(0,0,1)}^{k+(0,1,0)}\right)\notag\\
&&+\left(-\sum_{|k|=n-1} c_{k}^{j+(0,-1,1)}c_{i+(1,-1,1)}^{k+(1,0,0)}
+\sum_{|k|=n-1} c_{k}^{j+(1,-1,0)}c_{i+(1,-1,1)}^{k+(0,0,1)}\right)\notag\\
&&+\left(-\sum_{|k|=n-1} c_{k}^{j+(1,-1,0)}c_{i+(1,0,0)}^{k+(0,1,0)}
+\sum_{|k|=n-1} c_{k}^{j}c_{i+(1,0,0)}^{k+(1,0,0)}\right),
\end{eqnarray}
and for $|i|=n-1$, $|j|=n$, and $j\geq (0,0,1)$, we have
\begin{eqnarray}\label{eq-pyramid-6}
0&=&\left(-\sum_{|k|=n-1} c_{k}^{j}c_{i+(1,0,0)}^{k+(1,0,0)}
+\sum_{|k|=n-1} c_{k}^{j+(1,0,-1)}c_{i+(1,0,0)}^{k+(0,0,1)}\right)\notag\\
&&+\left(-\sum_{|k|=n-1} c_{k}^{j+(1,0,-1)}c_{i+(1,1,-1)}^{k+(0,1,0)}
+\sum_{|k|=n-1} c_{k}^{j+(0,1,-1)}c_{i+(1,1,-1)}^{k+(1,0,0)}\right)\notag\\
&&+\left(-\sum_{|k|=n-1} c_{k}^{j+(0,1,-1)}c_{i+(0,1,0)}^{k+(0,0,1)}
+\sum_{|k|=n-1} c_{k}^{j}c_{i+(0,1,0)}^{k+(0,1,0)}\right). 
\end{eqnarray}
Note that
\begin{eqnarray*}
\mbox{RHS of (\ref{eq-pyramid-4})}=\frac{\partial F_{\pyr_3(n)}}{\partial c_{j-(1,0,0)}^{i+(0,1,1)}},\quad
\mbox{RHS of (\ref{eq-pyramid-5})}=\frac{\partial F_{\pyr_3(n)}}{\partial c_{j-(0,1,0)}^{i+(1,0,1)}},\quad
\mbox{RHS of (\ref{eq-pyramid-6})}=\frac{\partial F_{\pyr_3(n)}}{\partial c_{j-(0,0,1)}^{i+(1,1,0)}}.
\end{eqnarray*}
So $\Jac(F_{\pyr_3(n)})$ is contained in the Haiman ideal. We are left to show that they are equal.

Let $|i|=n-2$ and $|j|=n$.  Apply (\ref{eq-potential-proof-pyramid-3}) successively to $c_{i}^{j}$, $c_{i+(1,-1,0)}^{j+(1,-1,0)}$, $c_{i+(2,-2,0)}^{j+(2,-2,0)}$, and so on. If the subscript runs out of $\mathbb{Z}_{\geq 0}^3$ faster than the superscript, we obtain a formula expressing $c_{i}^j$ as a quadratic polynomial in $\{c_{a}^b\}_{|a|=n-1,|b|=n}$. By (\ref{eq-potential-proof-pyramid-3})--(\ref{eq-potential-proof-pyramid-5}), there are six possible directions to do this. Since $|j|>|i|$, at least one direction has this property. In fact, write $i=(i_1,i_2,i_3)$ and $j=(j_1,j_2,j_3)$; when $j_1>i_1$, we have 
\begin{equation}\label{eq-express-Haiman-coordinate-subscript-n-2a}
 c_{i}^{j}=
 -\sum_{l=0}^{i_1}\sum_{|k|=n-1} c_{k}^{j+(-l,l,0)}c_{i+(-l,l+1,0)}^{k+(0,1,0)}
+\sum_{l=0}^{i_1}\sum_{|k|=n-1} c_{k}^{j+(-l-1,l+1,0)}c_{i+(-l,l+1,0)}^{k+(1,0,0)},
\end{equation}
and 
\begin{equation}\label{eq-express-Haiman-coordinate-subscript-n-2b}
c_{i}^{j}= 
-\sum_{l=0}^{i_1}\sum_{|k|=n-1} c_{k}^{j+(-l,0,l)}c_{i+(-l,0,l+1)}^{k+(0,0,1)}
+\sum_{l=0}^{i_1}\sum_{|k|=n-1} c_{k}^{j+(-l-1,0,l+1)}c_{i+(-l,0,l+1)}^{k+(1,0,0)}.
\end{equation}
Similarly, when $j_2>i_2$, we have
\begin{equation}\label{eq-express-Haiman-coordinate-subscript-n-2c}
c_{i}^{j}=
-\sum_{l=0}^{i_2}\sum_{|k|=n-1} c_{k}^{j+(0,-l,l)}c_{i+(0,-l,l+1)}^{k+(0,0,1)}
+\sum_{l=0}^{i_2}\sum_{|k|=n-1} c_{k}^{j+(0,-l-1,l+1)}c_{i+(0,-l,l+1)}^{k+(0,1,0)},
\end{equation}
\begin{equation}\label{eq-express-Haiman-coordinate-subscript-n-2d}
c_{i}^{j}=
 -\sum_{l=0}^{i_2}\sum_{|k|=n-1} c_{k}^{j+(l,-l,0)}c_{i+(l+1,-l,0)}^{k+(1,0,0)}
+\sum_{l=0}^{i_2}\sum_{|k|=n-1} c_{k}^{j+(l+1,-l-1,0)}c_{i+(l+1,-l,0)}^{k+(0,1,0)},
\end{equation}
and when $j_3>i_3$,
\begin{equation}\label{eq-express-Haiman-coordinate-subscript-n-2e}
c_{i}^{j}=
-\sum_{l=0}^{i_3}\sum_{|k|=n-1} c_{k}^{j+(l,0,-l)}c_{i+(l+1,0,-l)}^{k+(1,0,0)}
+\sum_{l=0}^{i_3}\sum_{|k|=n-1} c_{k}^{j+(l+1,0,-l-1)}c_{i+(l+1,0,-l)}^{k+(0,0,1)},
\end{equation}
\begin{equation}\label{eq-express-Haiman-coordinate-subscript-n-2f}
c_{i}^{j}=
 -\sum_{l=0}^{i_3}\sum_{|k|=n-1} c_{k}^{j+(0,l,-l)}c_{i+(0,l+1,-l)}^{k+(0,1,0)}
+\sum_{l=0}^{i_3}\sum_{|k|=n-1} c_{k}^{j+(0,l+1,-l-1)}c_{i+(0,l+1,-l)}^{k+(0,0,1)}.
\end{equation}
When a Haiman coordinate of the form $c_{i}^{j}$ with $|i|=n-2$ and $|j|=n$
has different expressions via (\ref{eq-express-Haiman-coordinate-subscript-n-2a})--(\ref{eq-express-Haiman-coordinate-subscript-n-2f}), we obtain a quadratic relation in the ring $\Bbbk[\{c_{a}^b\}_{|a|=n-1,|b|=n}]$. These relations are the same as the relations (\ref{eq-pyramid-4})--(\ref{eq-pyramid-6}), up to linear combinations. This can be seen from the following type of graphs of the glove of $\pyr_3(n)$ (this example is the glove of $\pyr_3(4)$):
\begin{center}
\begin{tikzpicture}[x=(0:0.8cm), y=(90:0.8cm)]
\foreach \m [count=\y] in {{4,3,2,1},{3.5,2.5,1.5},{3,2},{2.5}}{
  	\foreach \n [count=\x] in \m {
        \draw[densely dashed] (\n,\y) -- (\n+1,\y) -- (\n+0.5,\y+1)  -- cycle;
 		\filldraw (\n,\y) circle (1pt);
 		\filldraw (\n+1,\y) circle (1pt);
 		\filldraw (\n+0.5,\y+1) circle (1pt);
 }
}    
 	\draw[directed] (2,3) --(3,3);
 	\draw[directed] (3,3) --(2.5,4);
 	\draw[directed] (2.5,4) --(2,3);

 	\draw[directed] (3,3) --(3.5,2);
 	\draw[directed] (3.5,2) --(4,3);
 	\draw[directed] (4,3) --(3,3);
\end{tikzpicture}
\end{center}
That is to say, an expression in (\ref{eq-express-Haiman-coordinate-subscript-n-2a})--(\ref{eq-express-Haiman-coordinate-subscript-n-2f}) is induced by a path from $j$ to the boundary of the above graph. Comparisons between paths can be decomposed into the relations induced by the triangles
\begin{tikzpicture}[x=(0:0.5cm), y=(90:0.5cm)]
	\draw[directed] (2,3) --(3,3);
 	\draw[directed] (3,3) --(2.5,4);
 	\draw[directed] (2.5,4) --(2,3);
\end{tikzpicture},
and the \emph{upside-down} triangles \begin{tikzpicture}[x=(0:0.5cm), y=(90:0.5cm)]
	\draw[directed] (3,3) --(3.5,2);
 	\draw[directed] (3.5,2) --(4,3);
 	\draw[directed] (4,3) --(3,3);
\end{tikzpicture}.
The relations induced by the triangles \begin{tikzpicture}[x=(0:0.5cm), y=(90:0.5cm)]
	\draw[directed] (2,3) --(3,3);
 	\draw[directed] (3,3) --(2.5,4);
 	\draw[directed] (2.5,4) --(2,3);
\end{tikzpicture} are no other than (\ref{eq-pyramid-4})--(\ref{eq-pyramid-6}), while the relations induced by the upside-down triangles turn out to be trivial:
\begin{eqnarray}\label{eq-pyramid-10}
0&=& \left(c_{i}^{j}-c_{i+(1,-1,0)}^{j+(1,-1,0)}\right) +\left(c_{i+(1,-1,0)}^{j+(1,-1,0)}-c_{i+(1,0,-1)}^{j+(1,0,-1)}\right) +\left(c_{i+(1,0,-1)}^{j+(1,0,-1)}-c_{i}^{j}\right)\notag\\ &=& \left(-\sum_{|k|=n-1} c_{k}^{j}c_{i+(1,0,0)}^{k+(1,0,0)} +\sum_{|k|=n-1} c_{k}^{j+(1,-1,0)}c_{i+(1,0,0)}^{k+(0,1,0)}\right)\notag\\ &&+\left(-\sum_{|k|=n-1} c_{k}^{j+(1,-1,0)}c_{i+(1,0,0)}^{k+(0,1,0)} +\sum_{|k|=n-1} c_{k}^{j+(1,0,-1)}c_{i+(1,0,0)}^{k+(0,0,1)}\right)\notag\\ &&+\left(-\sum_{|k|=n-1} c_{k}^{j+(1,0,-1)}c_{i+(1,0,0)}^{k+(0,0,1)} +\sum_{|k|=n-1} c_{k}^{j}c_{i+(1,0,0)}^{k+(1,0,0)}\right).
\end{eqnarray}
The same procedure applies to a Haiman coordinate $c_{i}^{j}$ for $|i|\leq n-3$ and $|j|=|n|$,
which results in expressions for $c_{i}^{j}$ as a quadratic polynomial that has homogeneous degree $1$ in $\{c_{a}^{b}\}_{|a|=n-1,|b|=n}$ and homogeneous degree $1$ in $\{c_{a}^{b}\}_{|a|=|i|+1,|b|=n}$; thus inductively we obtain expressions for $c_{i}^{j}$ as a homogeneous polynomial of degree $n-|i|$ in $\{c_{a}^{b}\}_{|a|=n-1,|b|=n}$. We need only to show that the triangles
\begin{tikzpicture}[x=(0:0.5cm), y=(90:0.5cm)] 
\draw[directed] (2,3) --(3,3); 
\draw[directed] (3,3) --(2.5,4); 
\draw[directed] (2.5,4) --(2,3);
\end{tikzpicture} do not induce new relations. 
For $|i|=n-3$, modulo (\ref{eq-pyramid-4})--(\ref{eq-pyramid-6}),  
we have
\begin{align*}
& \hspace*{-5.2cm} \left(-\sum_{|k|=n-1} c_{k}^{j}c_{i+(0,1,0)}^{k+(0,1,0)}
+\sum_{|k|=n-1} c_{k}^{j+(-1,1,0)}c_{i+(0,1,0)}^{k+(1,0,0)}\right)\notag\\
&\hspace*{-5.2cm} +\left(-\sum_{|k|=n-1} c_{k}^{j+(-1,1,0)}c_{i+(-1,1,1)}^{k+(0,0,1)} 
+\sum_{|k|=n-1} c_{k}^{j+(-1,0,1)}c_{i+(-1,1,1)}^{k+(0,1,0)}\right)\notag\\
&\hspace*{-5.2cm} +\left(-\sum_{|k|=n-1}c_{k}^{j+(-1,0,1)}c_{i+(0,0,1)}^{k+(1,0,0)}
+\sum_{|k|=n-1} c_{k}^{j}c_{i+(0,0,1)}^{k+(0,0,1)}\right)
\end{align*}
\begin{eqnarray*}
&=&\sum_{|k|=n-1} c_{k}^{j}\left(c_{i+(0,0,1)}^{k+(0,0,1)}-c_{i+(0,1,0)}^{k+(0,1,0)}\right)
+\sum_{|k|=n-1} c_{k}^{j+(-1,0,1)}\left(c_{i+(-1,1,1)}^{k+(0,1,0)}-c_{i+(0,0,1)}^{k+(1,0,0)}\right)\\
&&+\sum_{|k|=n-1} c_{k}^{j+(-1,1,0)}\left(c_{i+(0,1,0)}^{k+(1,0,0)}-c_{i+(-1,1,1)}^{k+(0,0,1)}\right)\\
&=&\sum_{|k|=n-1} c_{k}^{j}\left(-\sum_{|l|=n-1} c_{l}^{k+(0,0,1)}c_{i+(0,1,1)}^{l+(0,1,0)}
+\sum_{|l|=n-1} c_{l}^{k+(0,1,0)}c_{i+(0,1,1)}^{l+(0,0,1)}\right)\\
&&+\sum_{|k|=n-1} c_{k}^{j+(-1,0,1)}
\left(-\sum_{|l|=n-1} c_{l}^{k+(0,1,0)}c_{i+(0,1,1)}^{l+(1,0,0)}
+\sum_{|l|=n-1} c_{l}^{k+(1,0,0)}c_{i+(0,1,1)}^{l+(0,1,0)}\right)\\
&&+\sum_{|k|=n-1} c_{k}^{j+(-1,1,0)}
\left(-\sum_{|l|=n-1} c_{l}^{k+(1,0,0)}c_{i+(0,1,1)}^{l+(0,0,1)}
+\sum_{|l|=n-1} c_{l}^{k+(0,0,1)}c_{i+(0,1,1)}^{l+(1,0,0)}\right)\\
&=& \sum_{|l|=n-1}c_{i+(0,1,1)}^{l+(0,1,0)}
\left(-\sum_{|k|=n-1} c_{k}^{j}c_{l}^{k+(0,0,1)}
+\sum_{|k|=n-1} c_{k}^{j+(-1,0,1)}c_{l}^{k+(1,0,0)}\right)\\
&&+\sum_{|l|=n-1} c_{i+(0,1,1)}^{l+(0,0,1)}\left(
-\sum_{|k|=n-1} c_{k}^{j+(-1,1,0)}c_{l}^{k+(1,0,0)}
+\sum_{|k|=n-1} c_{k}^{j}c_{l}^{k+(0,1,0)}\right)\\
&&+\sum_{|l|=n-1}c_{i+(0,1,1)}^{l+(1,0,0)}\left(
-\sum_{|k|=n-1} c_{k}^{j+(-1,0,1)}c_{l}^{k+(0,1,0)}
+\sum_{|k|=n-1} c_{k}^{j+(-1,1,0)}c_{l}^{k+(0,0,1)}\right)\\
&\begin{subarray}{c} 
\mbox{modulo (\ref{eq-pyramid-4})--(\ref{eq-pyramid-6})}\\ 
\equiv\end{subarray}& \sum_{|l|=n-1}c_{i+(0,1,1)}^{l+(0,1,0)}
\left(\right.
-\sum_{|k|=n-1} c_{k}^{j}c_{l+(0,1,-1)}^{k+(0,1,0)}
+\sum_{|k|=n-1} c_{k}^{j+(-1,1,0)}c_{l+(0,1,-1)}^{k+(1,0,0)}\\
&&-\sum_{|k|=n-1} c_{k}^{j+(-1,1,0)}c_{l+(-1,1,0)}^{k+(0,0,1)}
+\sum_{|k|=n-1} c_{k}^{j+(-1,0,1)}c_{l+(-1,1,0)}^{k+(0,1,0)}
\left.\right)\\
&&+\sum_{|l|=n-1} c_{i+(0,1,1)}^{l+(0,0,1)}\left(
-\sum_{|k|=n-1} c_{k}^{j+(-1,1,0)}c_{l}^{k+(1,0,0)}
+\sum_{|k|=n-1} c_{k}^{j}c_{l}^{k+(0,1,0)}\right)\\
&&+\sum_{|l|=n-1}c_{i+(0,1,1)}^{l+(1,0,0)}\left(
-\sum_{|k|=n-1} c_{k}^{j+(-1,0,1)}c_{l}^{k+(0,1,0)}
+\sum_{|k|=n-1} c_{k}^{j+(-1,1,0)}c_{l}^{k+(0,0,1)}\right)\\
&=&0.
\end{eqnarray*}

By an induction on $|i|$ from $|i|=n-3$ to $|i|=1$,  modulo (\ref{eq-pyramid-4})--(\ref{eq-pyramid-6}), there are no new relations. The proof is thus completed.
\end{proof}

\begin{remark}
Using deformation theory, modulo a conjecture, Hsu showed in \cite{Hsu16} that the formal completion of $\Spec(A_{\pyr_{3}(n)})$ at $0$ is a critical locus in the tangent space, without giving an explicit form of the superpotential.
\end{remark}

\begin{remark}\label{rem418}
An impression we get from the above proof is that the nonlinear relations of the Haiman coordinates come from the configuration of the glove. In dimension $3$, it is closely related to the triangles in a certain plane projection of the glove. One can compare this to the dimension $2$ case, where there is no room for such triangles, so the resulting Haiman neighborhoods are smooth.
\end{remark}

\begin{remark}
From Remark~\ref{rem418}, one naturally expects that the pyramid ideal $I_{\pyr_3(n)}$ should correspond to the most singular point in $\Hilb^{|\pyr_3(n)|}(\mathbb{A}^3)$, while its equations are the simplest within the singularities of $\Hilb^{|\pyr_3(n)|}(\mathbb{A}^3)$. An unsolved problem is to determine the smallest $n$ such that $\Hilb^{n}(\mathbb{A}^3)$ is reducible. Up to now, it is only known  that $12\leq n\leq 77$ (see \cite{Iar84,DJNT17}). Does the first reducible singularity of $\Hilb^{n}(\mathbb{A}^3)$ for $n\geq 12$  arise  from a pyramid?
\end{remark}

\begin{remark}
The potential $F_{\pyr_3(n)}$ can be slightly simplified by a linear change of variables that  eliminates the freedom of moving the support of the subscheme defined by the pyramid ideal. We do not do this at this stage for it will break the symmetry of $F_{\pyr_3(n)}$.
\end{remark}

\subsection{Examples of local equations at Borel ideals}

\subsubsection{\texorpdfstring{$((1)\subset (2,1))$}{((1)) in (2,1))}}\label{sec:3D-121}
Let $\lambda_{121}$ be the 3D partition $((1)\subset (2,1))$. The corresponding diagram is 
\begin{center}
\begin{tikzpicture}[x=(220:0.6cm), y=(-40:0.6cm), z=(90:0.42cm)]

\foreach \m [count=\y] in {{2,1},{1}}{
  \foreach \n [count=\x] in \m {
  \ifnum \n>0
      \foreach \z in {1,...,\n}{
        \draw [fill=gray!30] (\x+1,\y,\z) -- (\x+1,\y+1,\z) -- (\x+1, \y+1, \z-1) -- (\x+1, \y, \z-1) -- cycle;
        \draw [fill=gray!40] (\x,\y+1,\z) -- (\x+1,\y+1,\z) -- (\x+1, \y+1, \z-1) -- (\x, \y+1, \z-1) -- cycle;
        \draw [fill=gray!5] (\x,\y,\z)   -- (\x+1,\y,\z)   -- (\x+1, \y+1, \z)   -- (\x, \y+1, \z) -- cycle;  
      }
 \fi
 }
}    
\end{tikzpicture}
\end{center}
and the corresponding monomial ideal is
\[
I_{\lambda_{121}}=\left(X_1^2,X_2^2,X_3^2, X_1 X_2, X_1X_3,X_2 X_3\right).
\]
This is the pyramid $\pyr_3(2)$. For clarity, we make the change of variables
\begin{alignat}{6}\label{eq-step0-change-varialbes-A121}
 c_{100}^{200}&=a, &\quad c_{100}^{110}&=b, &\quad  c_{100}^{101}&=c, &\quad c_{100}^{020}&=d, &\quad c_{100}^{011}&=e, &\quad  c_{100}^{002}&=f,\nn\\
c_{010}^{200}&=g, &\quad c_{010}^{110}&=h, &\quad  c_{010}^{101}&=i, &\quad c_{010}^{020}&=j, &\quad c_{010}^{011}&=k, &\quad c_{010}^{002}&=l,\\
c_{001}^{200}&=m, &\quad c_{001}^{110}&=n, &\quad c_{001}^{101}&=o, &\quad c_{001}^{020}&=p, &\quad c_{001}^{011}&=q, &\quad c_{001}^{002}&=r,\nn
\end{alignat}
and set
\begin{eqnarray*}
F_{121}(a,\ldots,r)&=&-c d g + b e g + b c h - a e h + e h^2 - b^2 i + a d i - d h i -  e g j + b i j + d g k - b h k \\
 &&- c e m + b f m - e k m + d l m + 
 c^2 n - a f n + f h n - k^2 n - b l n + j l n - b c o \\
 &&+ a e o - 
 d i o + b k o + f n o - e o^2 - f g p + c i p + i k p - h l p + 
 l o p + e g q - c h q \\
 && - i j q + h k q - f m q - l n q + c o q - 
 k o q + i q^2 + e m r - c n r + k n r - i p r.
\end{eqnarray*}
Then Proposition~\ref{prop-potential-pyramid} in this case reads
\[
A_{\lambda_{121}}=\Bbbk[a,\dots,r]\Big/\left(\frac{\partial F_{121}}{\partial a},\dots,\frac{\partial F_{121}}{\partial r}\right)
\]
(see also, \textit{e.g.}, \cite[Appendix]{Kat94} and \cite[Section~4]{Ste03} for such presentations). To cancel the freedom of moving the support, we make a change of variables
\begin{alignat*}{3}
	a&\longmapsto a+h+2o,&\quad j&\longmapsto j+2b+q,&\quad r&\longmapsto r+c+2k,\\
	q&\longmapsto b+q,&\quad c&\longmapsto k+c,&\quad h&\longmapsto o+h.
\end{alignat*}
Then we obtain
\[
A_{\lambda_{121}}\cong \Bbbk[a, c, d, e, f, g, h, i, j,  l, m, n,  p, q, r]/J_{121}\otimes \Bbbk[b,k,o],
\]
where 
\begin{eqnarray}\label{eq-ideal-J121}
J_{121}&=& (e m-c n-i p,\ e g-l n+i q,\ e h-d i+f n,\ a n+g p+m q,\notag\\
&& \hphantom{(}c g-a i-l m,\ d m+j n-h p,\ c d+e j+f p,\ f g+h l+i r,\notag\\
&& \hphantom{(}d l-f q+e r,\ d g+h q+n r,\ -c h-i j-f m,\ -a e-l p-c q,\notag\\
&&\hphantom{(}-a h-g j+m r,\ -a d+j q+p r,\ -a f+j l-c r).
\end{eqnarray}
Finally, the map
\begin{alignat}{5}\label{eq-variablechange-plucker-A121}
 a&\longmapsto -p_{1,2},&\quad c&\longmapsto p_{2,4},&\quad d&\longmapsto -p_{0,3},&\quad e&\longmapsto {p}_{3,4},&\quad f&\longmapsto -p_{0,4},\notag\\
g&\longmapsto p_{1,5},&\quad h&\longmapsto p_{0,5},&\quad i&\longmapsto p_{4,5},&\quad j&\longmapsto p_{0,2},&\quad l&\longmapsto p_{1,4},\\
m&\longmapsto p_{2,5},&\quad  n&\longmapsto p_{3,5},&\quad p&\longmapsto -p_{2,3},&\quad q&\longmapsto p_{1,3},&\quad r&\longmapsto p_{0,1}\notag
\end{alignat}
transforms $J_{121}$ into  the Pl\"ucker ideal for the Grassmannian $G(2,6)$:
\begin{equation}\label{eq-pluckerideal}
\begin{split}
(&{p}_{3,4}{p}_{2,5}-{p}_{2,4}{p}_{3,5}+{p}_{2,3}{p}_{4,5},\
{p}_{3,4}{p}_{1,5}-{p}_{1,4}{p}_{3,5}+{p}_{1,3}{p}_{4,5},\
{p}_{2,4}{p}_{1,5}-{p}_{1,4}{p}_{2,5}+{p}_{1,2}{p}_{4,5},\\
&{p}_{2,3}{p}_{1,5}-{p}_{1,3}{p}_{2,5}+{p}_{1,2}{p}_{3,5},\
{p}_{3,4}{p}_{0,5}-{p}_{0,4}{p}_{3,5}+{p}_{0,3}{p}_{4,5},\
{p}_{2,4}{p}_{0,5}-{p}_{0,4}{p}_{2,5}+{p}_{0,2}{p}_{4,5},\\
&{p}_{1,4}{p}_{0,5}-{p}_{0,4}{p}_{1,5}+{p}_{0,1}{p}_{4,5},\
{p}_{2,3}{p}_{0,5}-{p}_{0,3}{p}_{2,5}+{p}_{0,2}{p}_{3,5},\
{p}_{1,3}{p}_{0,5}-{p}_{0,3}{p}_{1,5}+{p}_{0,1}{p}_{3,5},\\
&{p}_{1,2}{p}_{0,5}-{p}_{0,2}{p}_{1,5}+{p}_{0,1}{p}_{2,5},\
{p}_{2,3}{p}_{1,4}-{p}_{1,3}{p}_{2,4}+{p}_{1,2}{p}_{3,4},\
{p}_{2,3}{p}_{0,4}-{p}_{0,3}{p}_{2,4}+{p}_{0,2}{p}_{3,4},\\
&{p}_{1,3}{p}_{0,4}-{p}_{0,3}{p}_{1,4}+{p}_{0,1}{p}_{3,4},\
{p}_{1,2}{p}_{0,4}-{p}_{0,2}{p}_{1,4}+{p}_{0,1}{p}_{2,4},\
{p}_{1,2}{p}_{0,3}-{p}_{0,2}{p}_{1,3}+{p}_{0,1}{p}_{2,3}).
\end{split}
\end{equation}
Denote by $\widehat{G}(2,6)$ the cone of $G(2,6)$ in $\mathbb{P}^{14}$. 
So $\Spec(A_{\lambda_{121}})$ is isomorphic to the product $\mathbb{A}^3\times\widehat{G}(2,6)$. This is \cite[Theorem 1.6(3)]{Kat94}.

\subsubsection{\texorpdfstring{$\left((1)\subset (3,1)\right)$}{((1) in (3,1))}}\label{sec:3D-131}
Let $\lambda_{131}$ be the 3D partition $\left((1)\subset (3,1)\right)$. It corresponds to the 3D diagram and the monomial ideal
\begin{center}
\begin{tikzpicture}[x=(220:0.6cm), y=(-40:0.6cm), z=(90:0.42cm)]

\foreach \m [count=\y] in {{2,1,1},{1}}{
  \foreach \n [count=\x] in \m {
  \ifnum \n>0
      \foreach \z in {1,...,\n}{
        \draw [fill=gray!30] (\x+1,\y,\z) -- (\x+1,\y+1,\z) -- (\x+1, \y+1, \z-1) -- (\x+1, \y, \z-1) -- cycle;
        \draw [fill=gray!40] (\x,\y+1,\z) -- (\x+1,\y+1,\z) -- (\x+1, \y+1, \z-1) -- (\x, \y+1, \z-1) -- cycle;
        \draw [fill=gray!5] (\x,\y,\z)   -- (\x+1,\y,\z)   -- (\x+1, \y+1, \z)   -- (\x, \y+1, \z) -- cycle;  
      }
 \fi
 }
}    

\end{tikzpicture}
\end{center}
\[
I_{\lambda_{131}}=\left(X_1^3, X_1X_2, X_1 X_3, X_2^2,X_2 X_3, X_3^2\right).
\]
In this subsubsection, we give a quite detailed account of our method of simplifying the Haiman equations. There are 40 variables in $R_{\lambda_{131}}$. First we use the following algorithm to diminish the number of generators of the $\Bbbk$-algebra $A_{\lambda_{131}}$. 

\begin{algorithm}\label{alg-step0-Haiman}
Suppose given a $3$-dimensional partition $\lambda$.
\begin{enumerate}
	\item Find the glove $\mu$ of $\lambda$ and the minimal lattices $\mathrm{min}(\mu)$ of $\mu$.
	\item Find the adjoint pairs in $\mu$.
	\item Define monomials $c_{i}^j$ for $i\in \lambda,j\in \mu$, and the ring $R_{\lambda}=\Bbbk[c_{i}^j]_{i\in \lambda,j\in \mu}$. 
	\item Define the Haiman equations (\ref{eq-Huibregtse-2}).
	\item Run the \emph{simple elimination} of the Haiman equations for the Haiman coordinates $c_{i}^j$, where $i\in \lambda$ and $j\in \mu\backslash \mathrm{min}(\mu)$. Here by a simple elimination, we mean that if there is an equation of the form
	\[
	ax-f(y,z,\dots)=0, 
	\]
	where $0\neq a\in \mathbb{Q}$ and $f(y,z,\dots)$ is a polynomial of variables other than $x$, then we eliminate the variable $x$ and replace the appearance of $x$ in the other equations by $f(y,z,\dots)$.
	\item Run the simple elimination for all the remaining Haiman coordinates, including those not eliminated  in the previous step and the coordinates $c_{i}^j$, where $i\in \lambda$ and $j\in \mathrm{min}(\mu)$.
	\item Finally, for the convenience of finding further simplifications, we reindex the remaining variables by $x_1$, $x_2,\dots$ and return this ring.
\end{enumerate}
\end{algorithm}

We implement this algorithm in Macaulay2. Since our base field $\Bbbk$ contains $\mathbb{Q}$, the result is valid in $\Bbbk$.\footnote{In fact, this is probably valid even over $\mathbb{Z}$. We have not checked this. The following  steps of change of variables are over $\mathbb{Z}$.} There are  21 remaining coordinates:
\begin{alignat*}{6}
c_{1,0,0}^{1,1,0} &\longmapsto  x_{1},&\quad c_{1,0,0}^{1,0,1} &\longmapsto x_{2},&\quad c_{1,0,0}^{3,0,0} &\longmapsto  x_{3},&\quad 
c_{2,0,0}^{1,1,0} &\longmapsto x_{4},&\quad c_{2,0,0}^{1,0,1} &\longmapsto  x_{5},&\quad c_{2,0,0}^{3,0,0} &\longmapsto x_{6},\\
c_{2,0,0}^{0,2,0} &\longmapsto  x_{7},&\quad c_{2,0,0}^{0,1,1} &\longmapsto x_{8},&\quad c_{2,0,0}^{0,0,2} &\longmapsto  x_{9},&\quad 
 c_{0,1,0}^{1,1,0} &\longmapsto x_{10},&\quad c_{0,1,0}^{1,0,1} &\longmapsto  x_{11},&\quad c_{0,1,0}^{3,0,0} &\longmapsto x_{12},\\
 c_{0,1,0}^{0,2,0} &\longmapsto  x_{13},&\quad c_{0,1,0}^{0,1,1} &\longmapsto x_{14},&\quad c_{0,1,0}^{0,0,2} &\longmapsto  x_{15},&\quad 
  c_{0,0,1}^{1,1,0} &\longmapsto x_{16},&\quad c_{0,0,1}^{1,0,1} &\longmapsto  x_{17},&\quad c_{0,0,1}^{3,0,0} &\longmapsto x_{18},\\
 c_{0,0,1}^{0,2,0} &\longmapsto  x_{19},&\quad c_{0,0,1}^{0,1,1} &\longmapsto x_{20},&\quad c_{0,0,1}^{0,0,2} &\longmapsto  x_{21}.&&&&&
  \end{alignat*}
Thus $A_{\lambda_{131}}$ is generated by $x_1,\dots,x_{21}$. We denote the resulting equations for $x_1,\dots,x_{21}$ by $\mathcal{H}'_{\lambda_{131}}$. We have
\[
A_{\lambda_{131}}\cong \Bbbk[x_1,\dots,x_{21}]/\mathcal{H}'_{\lambda_{131}}.
\]
We call the above procedure \textit{Step} 0.

The equations in $\mathcal{H}'_{\lambda_{131}}$ are complicated; they may have degrees up to $5$ and are long. We select a subset of them.

\begin{definition}
For a polynomial $f$,  we denote by $\mathrm{minDegree}(f)$  the lowest degree of the nonzero monomials in $f$. 
\end{definition}

The  equations of minDegree $2$ of $\mathcal{H}'_{\lambda}$ are
\begin{eqnarray*}
&&-x_{4}x_{5}x_{10}+x_{4}^{2}x_{11}-x_{5}^{2}x_{16}+x_{4}x_{5}x_{17}+x_{8}x_{10}-x_{7}x_{11}+x_{9}x_{16}-x_{8}x_{17},\\
&&x_{4}x_{10}^{2}+x_{4}^{2}x_{12}+x_{5}x_{10}x_{16}+x_{5}x_{16}x_{17}-x_{4}x_{17}^{2}+x_{1}x_{10}-x_{7}x_{12}+x_{2}x_{16}+x_{14}x_{16}-x_{1}x_{17}\\
&&\quad -x_{10}x_{20}+x_{17}x_{20}-x_{16}x_{21},\\
&&x_{4}x_{10}x_{16}+x_{4}x_{16}x_{17}+x_{4}^{2}x_{18}+x_{1}x_{16}-x_{13}x_{16}-x_{7}x_{18}+x_{10}x_{19}-x_{17}x_{19}+x_{16}x_{20},\\
&&x_{4}x_{10}x_{11}+x_{4}x_{5}x_{12}+x_{4}x_{11}x_{17}+x_{1}x_{11}-x_{8}x_{12}+x_{15}x_{16}-x_{11}x_{20},\\
&&x_{5}x_{10}x_{16}+x_{5}x_{16}x_{17}+x_{4}x_{5}x_{18}+x_{2}x_{16}-x_{14}x_{16}-x_{8}x_{18}+x_{11}x_{19},\\
&&x_{4}x_{5}x_{10}-x_{4}^{2}x_{11}+x_{5}^{2}x_{16}-x_{4}x_{5}x_{17}-x_{8}x_{10}+x_{7}x_{11}-x_{9}x_{16}+x_{8}x_{17},\\
&&x_{4}x_{10}x_{11}+x_{4}x_{5}x_{12}+x_{4}x_{11}x_{17}+x_{1}x_{11}-x_{8}x_{12}+x_{15}x_{16}-x_{11}x_{20},\\
&&x_{5}x_{10}x_{16}+x_{5}x_{16}x_{17}+x_{4}x_{5}x_{18}+x_{2}x_{16}-x_{14}x_{16}-x_{8}x_{18}+x_{11}x_{19},\\
&&x_{5}x_{10}x_{11}+x_{5}^{2}x_{12}+x_{5}x_{11}x_{17}+x_{2}x_{11}-x_{9}x_{12}+x_{11}x_{14}-x_{10}x_{15}+x_{15}x_{17}-x_{11}x_{21},\\
&&-x_{5}x_{10}^{2}+x_{4}x_{10}x_{11}+x_{4}x_{11}x_{17}+x_{5}x_{17}^{2}+x_{5}^{2}x_{18}-x_{2}x_{10}+x_{1}x_{11}-x_{11}x_{13}+x_{10}x_{14}\\
&&\quad+x_{2}x_{17}-x_{14}x_{17}-x_{9}x_{18}+x_{11}x_{20},\\
&&x_{6}x_{10}x_{16}-x_{10}^{2}x_{16}-x_{4}x_{12}x_{16}-x_{11}x_{16}^{2}+x_{6}x_{16}x_{17}-x_{10}x_{16}x_{17}-x_{16}x_{17}^{2}-x_{4}x_{10}x_{18}\\
&&\quad-x_{5}x_{16}x_{18}-x_{4}x_{17}x_{18}+x_{3}x_{16}-x_{1}x_{18}+x_{12}x_{19}+x_{18}x_{20},\\
&&x_{6}x_{10}x_{11}-x_{10}^{2}x_{11}-x_{5}x_{10}x_{12}-x_{4}x_{11}x_{12}-x_{11}^{2}x_{16}+x_{6}x_{11}x_{17}-x_{10}x_{11}x_{17}-x_{5}x_{12}x_{17}\\
&&\quad-x_{11}x_{17}^{2}-x_{5}x_{11}x_{18}+x_{3}x_{11}-x_{2}x_{12}+x_{12}x_{14}+x_{15}x_{18},\\
&&-x_{6}x_{10}^{2}+x_{10}^{3}+2\,x_{4}x_{10}x_{12}+x_{10}x_{11}x_{16}-x_{11}x_{16}x_{17}+x_{6}x_{17}^{2}-x_{17}^{3}-2\,x_{5}x_{17}x_{18}-x_{3}x_{10}\\
&&\quad+x_{1}x_{12}-x_{12}x_{13}+x_{3}x_{17}-x_{2}x_{18}-x_{14}x_{18}+x_{12}x_{20}+x_{18}x_{21},\\
&&-x_{4}x_{10}^{2}-x_{4}^{2}x_{12}-2\,x_{5}x_{10}x_{16}-2\,x_{5}x_{16}x_{17}+x_{4}x_{17}^{2}-x_{4}x_{5}x_{18}-x_{1}x_{10}+x_{7}x_{12}-2\,x_{2}x_{16}\\
&&\quad+x_{1}x_{17}+x_{8}x_{18}-x_{11}x_{19}+x_{10}x_{20}-x_{17}x_{20}+x_{16}x_{21},\\
&&x_{5}x_{10}^{2}-2\,x_{4}x_{10}x_{11}-x_{4}x_{5}x_{12}-2\,x_{4}x_{11}x_{17}-x_{5}x_{17}^{2}-x_{5}^{2}x_{18}+x_{2}x_{10}-2\,x_{1}x_{11}+x_{8}x_{12}\\
&&\quad+x_{11}x_{13}-x_{10}x_{14}-x_{15}x_{16}-x_{2}x_{17}+x_{14}x_{17}+x_{9}x_{18},\\
&&x_{4}^{2}x_{5}x_{10}-x_{4}^{3}x_{11}+x_{4}x_{5}^{2}x_{16}-x_{4}^{2}x_{5}x_{17}-x_{2}x_{4}^{2}+x_{1}x_{4}x_{5}+x_{5}x_{7}x_{10}-2\,x_{4}x_{8}x_{10}+x_{4}x_{7}x_{11}\\
&&\quad-x_{4}x_{5}x_{13}+x_{4}^{2}x_{14}-x_{4}x_{9}x_{16}+x_{5}x_{7}x_{17}-x_{5}^{2}x_{19}+x_{4}x_{5}x_{20}+x_{2}x_{7}-x_{1}x_{8}+x_{8}x_{13}\\
&&\quad-x_{7}x_{14}+x_{9}x_{19}-x_{8}x_{20},\\
&&-2\,x_{4}x_{5}x_{6}x_{10}+x_{4}^{2}x_{6}x_{11}+x_{4}^{2}x_{5}x_{12}-x_{5}^{2}x_{6}x_{16}+x_{4}x_{5}x_{11}x_{16}-2\,x_{4}x_{5}x_{10}x_{17}+x_{4}^{2}x_{11}x_{17}\\
&&\quad-x_{5}^{2}x_{16}x_{17}+x_{4}x_{5}x_{17}^{2}+x_{4}x_{5}^{2}x_{18}-x_{3}x_{4}x_{5}-2\,x_{2}x_{4}x_{10}-2\,x_{1}x_{5}x_{10}+2\,x_{6}x_{8}x_{10}-3\,x_{8}x_{10}^{2}\\
&&\quad+2\,x_{1}x_{4}x_{11}-x_{6}x_{7}x_{11}+2\,x_{7}x_{10}x_{11}+x_{5}x_{7}x_{12}-2\,x_{4}x_{8}x_{12}-x_{4}x_{11}x_{13}+2\,x_{4}x_{10}x_{14}\\
&&\quad-2\,x_{2}x_{5}x_{16}+x_{6}x_{9}x_{16}-2\,x_{9}x_{10}x_{16}-x_{8}x_{11}x_{16}+x_{4}x_{15}x_{16}+x_{7}x_{11}x_{17}-x_{9}x_{16}x_{17}\\
&&\quad-x_{4}x_{9}x_{18}-x_{5}x_{11}x_{19}+2\,x_{5}x_{10}x_{20}+x_{5}x_{16}x_{21}-x_{1}x_{2}+x_{3}x_{8}+x_{1}x_{14}+x_{15}x_{19}\\
&&\quad+x_{2}x_{20}-x_{14}x_{20},\\
&&x_{4}^{2}x_{6}x_{10}-x_{4}^{3}x_{12}-x_{4}^{2}x_{11}x_{16}+x_{4}^{2}x_{6}x_{17}+x_{4}^{2}x_{10}x_{17}-x_{4}^{2}x_{5}x_{18}+x_{3}x_{4}^{2}+2\,x_{1}x_{4}x_{10}\\
&&\quad-x_{6}x_{7}x_{10}+x_{7}x_{10}^{2}+x_{4}x_{7}x_{12}-x_{4}x_{10}x_{13}+x_{7}x_{11}x_{16}+2\,x_{1}x_{4}x_{17}-x_{6}x_{7}x_{17}+x_{7}x_{10}x_{17}\\
&&\quad-x_{4}x_{13}x_{17}+x_{7}x_{17}^{2}+x_{5}x_{7}x_{18}-x_{5}x_{10}x_{19}-x_{5}x_{17}x_{19}+x_{1}^{2}-x_{3}x_{7}-x_{1}x_{13}-x_{2}x_{19}\\
&&\quad-x_{14}x_{19}+x_{13}x_{20}-x_{20}^{2}+x_{19}x_{21},\\
&&-x_{2}x_{4}x_{5}+x_{1}x_{5}^{2}+x_{5}x_{8}x_{10}-x_{4}x_{9}x_{10}-x_{4}x_{5}x_{14}+x_{4}^{2}x_{15}+x_{5}x_{8}x_{17}-x_{4}x_{9}x_{17}-x_{5}^{2}x_{20}\\
&&\quad+x_{4}x_{5}x_{21}+x_{2}x_{8}-x_{1}x_{9}+x_{8}x_{14}-x_{7}x_{15}+x_{9}x_{20}-x_{8}x_{21},\\
&&-x_{5}^{2}x_{6}x_{10}+x_{4}x_{5}^{2}x_{12}+x_{5}^{2}x_{11}x_{16}-x_{5}^{2}x_{6}x_{17}-x_{5}^{2}x_{10}x_{17}+x_{5}^{3}x_{18}-x_{3}x_{5}^{2}-2\,x_{2}x_{5}x_{10}\\
&&\quad+x_{6}x_{9}x_{10}-x_{9}x_{10}^{2}-x_{4}x_{9}x_{12}+x_{4}x_{10}x_{15}-x_{9}x_{11}x_{16}-2\,x_{2}x_{5}x_{17}+x_{6}x_{9}x_{17}-x_{9}x_{10}x_{17}\\
&&\quad+x_{4}x_{15}x_{17}-x_{9}x_{17}^{2}-x_{5}x_{9}x_{18}+x_{5}x_{10}x_{21}+x_{5}x_{17}x_{21}-x_{2}^{2}+x_{3}x_{9}+x_{14}^{2}+x_{1}x_{15}-x_{13}x_{15}\\
&&\quad+x_{15}x_{20}+x_{2}x_{21}-x_{14}x_{21},\\
&&x_{4}x_{5}x_{6}x_{10}-x_{4}^{2}x_{5}x_{12}-x_{4}x_{5}x_{11}x_{16}+x_{4}x_{5}x_{6}x_{17}+x_{4}x_{5}x_{10}x_{17}-x_{4}x_{5}^{2}x_{18}+x_{3}x_{4}x_{5}\\
&&\quad+x_{2}x_{4}x_{10}+x_{1}x_{5}x_{10}-x_{6}x_{8}x_{10}+x_{8}x_{10}^{2}+x_{4}x_{8}x_{12}-x_{4}x_{10}x_{14}+x_{8}x_{11}x_{16}+x_{2}x_{4}x_{17}\\
&&\quad+x_{1}x_{5}x_{17}-x_{6}x_{8}x_{17}+x_{8}x_{10}x_{17}-x_{4}x_{14}x_{17}+x_{8}x_{17}^{2}+x_{5}x_{8}x_{18}-x_{5}x_{10}x_{20}-x_{5}x_{17}x_{20}\\
&&\quad+x_{1}x_{2}-x_{3}x_{8}-x_{1}x_{14}-x_{15}x_{19}-x_{2}x_{20}+x_{14}x_{20}.
\end{eqnarray*}
There are also minDegree at least $3$ equations. A typical one is
\begin{eqnarray*}
&& 2\,x_{4}x_{5}x_{6}x_{10}^{2}-2\,x_{4}^{2}x_{6}x_{10}x_{11}-2\,x_{4}^{2}x_{5}x_{10}x_{12}+2\,x_{4}^{3}x_{11}x_{12}+2\,x_{5}^{2}x_{6}x_{10}x_{16}-x_{4}x_{5}x_{10}x_{11}x_{16}\\
&&\quad+x_{4}^{2}x_{11}^{2}x_{16}-2\,x_{4}x_{5}^{2}x_{12}x_{16}-x_{5}^{2}x_{11}x_{16}^{2}-x_{4}x_{5}x_{6}x_{10}x_{17}+x_{4}x_{5}x_{10}^{2}x_{17}-x_{4}^{2}x_{6}x_{11}x_{17}\\
&&\quad-x_{4}^{2}x_{10}x_{11}x_{17}+2\,x_{4}^{2}x_{5}x_{12}x_{17}+x_{5}^{2}x_{6}x_{16}x_{17}+x_{5}^{2}x_{10}x_{16}x_{17}+x_{4}x_{5}x_{11}x_{16}x_{17}-x_{4}x_{5}x_{6}x_{17}^{2}\\
&&\quad-2\,x_{4}x_{5}x_{10}x_{17}^{2}+x_{4}^{2}x_{11}x_{17}^{2}-x_{5}^{2}x_{16}x_{17}^{2}+x_{4}x_{5}x_{17}^{3}-x_{4}x_{5}^{2}x_{10}x_{18}+x_{4}^{2}x_{5}x_{11}x_{18}-x_{5}^{3}x_{16}x_{18}\\
&&\quad+x_{4}x_{5}^{2}x_{17}x_{18}+2\,x_{3}x_{4}x_{5}x_{10}+x_{2}x_{4}x_{10}^{2}+x_{1}x_{5}x_{10}^{2}-x_{6}x_{8}x_{10}^{2}+x_{8}x_{10}^{3}-2\,x_{3}x_{4}^{2}x_{11}\\
&&\quad-2\,x_{1}x_{4}x_{10}x_{11}+x_{6}x_{7}x_{10}x_{11}-x_{7}x_{10}^{2}x_{11}+x_{2}x_{4}^{2}x_{12}-x_{1}x_{4}x_{5}x_{12}-x_{5}x_{7}x_{10}x_{12}+2\,x_{4}x_{8}x_{10}x_{12}\\
&&\quad-x_{4}x_{7}x_{11}x_{12}+x_{4}x_{10}x_{11}x_{13}+x_{4}x_{5}x_{12}x_{13}-x_{4}x_{10}^{2}x_{14}-x_{4}^{2}x_{12}x_{14}+2\,x_{3}x_{5}^{2}x_{16}+2\,x_{2}x_{5}x_{10}x_{16}\\
&&\quad-x_{6}x_{9}x_{10}x_{16}+x_{9}x_{10}^{2}x_{16}+x_{4}x_{9}x_{12}x_{16}-x_{4}x_{10}x_{15}x_{16}-2\,x_{3}x_{4}x_{5}x_{17}-x_{2}x_{4}x_{10}x_{17}\\
&&\quad-x_{1}x_{5}x_{10}x_{17}+x_{6}x_{8}x_{10}x_{17}-x_{8}x_{10}^{2}x_{17}-x_{4}x_{8}x_{12}x_{17}+x_{4}x_{10}x_{14}x_{17}+x_{2}x_{4}x_{5}x_{18}-x_{1}x_{5}^{2}x_{18}\\
&&\quad-x_{5}x_{8}x_{10}x_{18}+x_{4}x_{9}x_{10}x_{18}+x_{4}x_{5}x_{14}x_{18}-x_{4}^{2}x_{15}x_{18}+x_{5}x_{10}x_{11}x_{19}+x_{5}^{2}x_{12}x_{19}-x_{5}x_{10}^{2}x_{20}\\
&&\quad-x_{4}x_{5}x_{12}x_{20}+x_{5}x_{10}x_{17}x_{20}+x_{5}^{2}x_{18}x_{20}-x_{5}x_{10}x_{16}x_{21}-x_{4}x_{5}x_{18}x_{21}-x_{3}x_{8}x_{10}+x_{3}x_{7}x_{11}\\
&&\quad-x_{3}x_{9}x_{16}+x_{3}x_{8}x_{17}.
\end{eqnarray*}
We do not present all the equations in $\mathcal{H}'_{\lambda}$.\footnote{The reader can find them using https://github.com/huxw06/Hilbert-scheme-of-points.} In fact, it turns out that, for $\lambda_{131}$, we can avoid the use of equations with minDegree at least $3$, as we will see at the end of this subsubsection.
Now we start to make changes of variables, in several steps.

\begin{enumerate}[label={\it Step}~\arabic*:, ref=\arabic*]
\item\label{step1}
\begin{eqnarray*}
x_{1} &\longmapsto& x_{1}+x_{13},\\
x_{2} &\longmapsto& x_{2}+x_{14},\\
x_{10} &\longmapsto& x_{10}+x_{17},\\
x_{13} &\longmapsto& x_{1}+2 x_{13}+x_{20},\\
x_{20} &\longmapsto& x_{1}+x_{13}+x_{20},\\
x_{21} &\longmapsto& x_{2}+2 x_{14}+x_{21}.
\end{eqnarray*}
Then the  equations of minDegree 2 are transformed into
\allowdisplaybreaks 
\begin{eqnarray}\label{eq-5points-step1}
&&  -x_{4}x_{5}x_{10}+x_{4}^{2}x_{11}-x_{5}^{2}x_{16}+x_{8}x_{10}-x_{7}x_{11}+x_{9}x_{16},\notag\\
&&  x_{4}x_{10}^{2}+x_{4}^{2}x_{12}+x_{5}x_{10}x_{16}+2\,x_{4}x_{10}x_{17}+2\,x_{5}x_{16}x_{17}-x_{7}x_{12}-x_{10}x_{20}-x_{16}x_{21},\notag\\
&&  x_{4}x_{10}x_{16}+2\,x_{4}x_{16}x_{17}+x_{4}^{2}x_{18}+x_{1}x_{16}-x_{7}x_{18}+x_{10}x_{19},\notag\\
&&  x_{4}x_{10}x_{11}+x_{4}x_{5}x_{12}+2\,x_{4}x_{11}x_{17}-x_{8}x_{12}+x_{15}x_{16}-x_{11}x_{20},\notag\\
&&  x_{5}x_{10}x_{16}+2\,x_{5}x_{16}x_{17}+x_{4}x_{5}x_{18}+x_{2}x_{16}-x_{8}x_{18}+x_{11}x_{19},\notag\\
&&  x_{4}x_{5}x_{10}-x_{4}^{2}x_{11}+x_{5}^{2}x_{16}-x_{8}x_{10}+x_{7}x_{11}-x_{9}x_{16},\notag\\
&&  x_{4}x_{10}x_{11}+x_{4}x_{5}x_{12}+2\,x_{4}x_{11}x_{17}-x_{8}x_{12}+x_{15}x_{16}-x_{11}x_{20},\notag\\
&&  x_{5}x_{10}x_{16}+2\,x_{5}x_{16}x_{17}+x_{4}x_{5}x_{18}+x_{2}x_{16}-x_{8}x_{18}+x_{11}x_{19},\notag\\
&&  x_{5}x_{10}x_{11}+x_{5}^{2}x_{12}+2\,x_{5}x_{11}x_{17}-x_{9}x_{12}-x_{10}x_{15}-x_{11}x_{21},\notag\\
&&  -x_{5}x_{10}^{2}+x_{4}x_{10}x_{11}-2\,x_{5}x_{10}x_{17}+2\,x_{4}x_{11}x_{17}+x_{5}^{2}x_{18}-x_{2}x_{10}+x_{1}x_{11}-x_{9}x_{18},\notag\\
&&  x_{6}x_{10}x_{16}-x_{10}^{2}x_{16}-x_{4}x_{12}x_{16}-x_{11}x_{16}^{2}+2\,x_{6}x_{16}x_{17}-3\,x_{10}x_{16}x_{17}-3\,x_{16}x_{17}^{2}\notag\\
&&\quad  -x_{4}x_{10}x_{18}-x_{5}x_{16}x_{18}-2\,x_{4}x_{17}x_{18}+x_{3}x_{16}+x_{12}x_{19}+x_{18}x_{20},\notag\\
&&  x_{6}x_{10}x_{11}-x_{10}^{2}x_{11}-x_{5}x_{10}x_{12}-x_{4}x_{11}x_{12}-x_{11}^{2}x_{16}+2\,x_{6}x_{11}x_{17}-3\,x_{10}x_{11}x_{17}-2\,x_{5}x_{12}x_{17}\notag\\
&& \quad -3\,x_{11}x_{17}^{2}-x_{5}x_{11}x_{18}+x_{3}x_{11}-x_{2}x_{12}+x_{15}x_{18},\notag\\
&&  -x_{6}x_{10}^{2}+x_{10}^{3}+2\,x_{4}x_{10}x_{12}+x_{10}x_{11}x_{16}-2\,x_{6}x_{10}x_{17}+3\,x_{10}^{2}x_{17}+2\,x_{4}x_{12}x_{17}+3\,x_{10}x_{17}^{2}\notag\\
&& \quad -2\,x_{5}x_{17}x_{18}-x_{3}x_{10}+x_{1}x_{12}+x_{18}x_{21},\notag\\
&&  -x_{4}x_{10}^{2}-x_{4}^{2}x_{12}-2\,x_{5}x_{10}x_{16}-2\,x_{4}x_{10}x_{17}-4\,x_{5}x_{16}x_{17}-x_{4}x_{5}x_{18}+x_{7}x_{12}-x_{2}x_{16}+x_{8}x_{18}\notag\\
&& \quad -x_{11}x_{19}+x_{10}x_{20}+x_{16}x_{21},\notag\\
&&  x_{5}x_{10}^{2}-2\,x_{4}x_{10}x_{11}-x_{4}x_{5}x_{12}+2\,x_{5}x_{10}x_{17}-4\,x_{4}x_{11}x_{17}-x_{5}^{2}x_{18}+x_{2}x_{10}-x_{1}x_{11}+x_{8}x_{12}\notag\\
&&  \quad-x_{15}x_{16}+x_{9}x_{18}+x_{11}x_{20},\notag\\
&&  x_{4}^{2}x_{5}x_{10}-x_{4}^{3}x_{11}+x_{4}x_{5}^{2}x_{16}-x_{2}x_{4}^{2}+x_{1}x_{4}x_{5}+x_{5}x_{7}x_{10}-2\,x_{4}x_{8}x_{10}+x_{4}x_{7}x_{11}-x_{4}x_{9}x_{16}\notag\\
&& \quad +2\,x_{5}x_{7}x_{17}-2\,x_{4}x_{8}x_{17}-x_{5}^{2}x_{19}+x_{2}x_{7}-x_{1}x_{8}+x_{9}x_{19},\notag\\
&&  -2\,x_{4}x_{5}x_{6}x_{10}+x_{4}^{2}x_{6}x_{11}+x_{4}^{2}x_{5}x_{12}-x_{5}^{2}x_{6}x_{16}+x_{4}x_{5}x_{11}x_{16}-2\,x_{4}x_{5}x_{6}x_{17}-2\,x_{4}x_{5}x_{10}x_{17}\notag\\
&&  \quad+x_{4}^{2}x_{11}x_{17}-x_{5}^{2}x_{16}x_{17}-x_{4}x_{5}x_{17}^{2}+x_{4}x_{5}^{2}x_{18}-x_{3}x_{4}x_{5}-2\,x_{2}x_{4}x_{10}+2\,x_{6}x_{8}x_{10}-3\,x_{8}x_{10}^{2}\notag\\
&&  \quad+x_{1}x_{4}x_{11}-x_{6}x_{7}x_{11}+2\,x_{7}x_{10}x_{11}+x_{5}x_{7}x_{12}-2\,x_{4}x_{8}x_{12}-x_{2}x_{5}x_{16}+x_{6}x_{9}x_{16}-2\,x_{9}x_{10}x_{16}\notag\\
&&  \quad-x_{8}x_{11}x_{16}+x_{4}x_{15}x_{16}-2\,x_{2}x_{4}x_{17}+2\,x_{6}x_{8}x_{17}-6\,x_{8}x_{10}x_{17}+3\,x_{7}x_{11}x_{17}-3\,x_{9}x_{16}x_{17}\notag\\
&& \quad -3\,x_{8}x_{17}^{2}-x_{4}x_{9}x_{18}-x_{5}x_{11}x_{19}+2\,x_{5}x_{10}x_{20}-x_{4}x_{11}x_{20}+2\,x_{5}x_{17}x_{20}+x_{5}x_{16}x_{21}+x_{3}x_{8}\notag\\
&&  \quad+x_{15}x_{19}+x_{2}x_{20},\notag\\
&&  x_{4}^{2}x_{6}x_{10}-x_{4}^{3}x_{12}-x_{4}^{2}x_{11}x_{16}+2\,x_{4}^{2}x_{6}x_{17}+x_{4}^{2}x_{10}x_{17}+x_{4}^{2}x_{17}^{2}-x_{4}^{2}x_{5}x_{18}+x_{3}x_{4}^{2}+x_{1}x_{4}x_{10}\notag\\
&&  \quad-x_{6}x_{7}x_{10}+x_{7}x_{10}^{2}+x_{4}x_{7}x_{12}+x_{7}x_{11}x_{16}+2\,x_{1}x_{4}x_{17}-2\,x_{6}x_{7}x_{17}+3\,x_{7}x_{10}x_{17}+3\,x_{7}x_{17}^{2}\notag\\
&&  \quad+x_{5}x_{7}x_{18}-x_{5}x_{10}x_{19}-2\,x_{5}x_{17}x_{19}-x_{4}x_{10}x_{20}-2\,x_{4}x_{17}x_{20}-x_{3}x_{7}-x_{1}x_{20}+x_{19}x_{21},\notag\\
&&  x_{5}x_{8}x_{10}-x_{4}x_{9}x_{10}+x_{4}^{2}x_{15}+2\,x_{5}x_{8}x_{17}-2\,x_{4}x_{9}x_{17}-x_{5}^{2}x_{20}+x_{4}x_{5}x_{21}-x_{7}x_{15}+x_{9}x_{20}\notag\\
&&  \quad-x_{8}x_{21},\notag\\
&&  -x_{5}^{2}x_{6}x_{10}+x_{4}x_{5}^{2}x_{12}+x_{5}^{2}x_{11}x_{16}-2\,x_{5}^{2}x_{6}x_{17}-x_{5}^{2}x_{10}x_{17}-x_{5}^{2}x_{17}^{2}+x_{5}^{3}x_{18}-x_{3}x_{5}^{2}-x_{2}x_{5}x_{10}\notag\\
&&  \quad+x_{6}x_{9}x_{10}-x_{9}x_{10}^{2}-x_{4}x_{9}x_{12}+x_{4}x_{10}x_{15}-x_{9}x_{11}x_{16}-2\,x_{2}x_{5}x_{17}+2\,x_{6}x_{9}x_{17}-3\,x_{9}x_{10}x_{17}\notag\\
&&  \quad+2\,x_{4}x_{15}x_{17}-3\,x_{9}x_{17}^{2}-x_{5}x_{9}x_{18}+x_{5}x_{10}x_{21}+2\,x_{5}x_{17}x_{21}+x_{3}x_{9}+x_{1}x_{15}+x_{2}x_{21},\notag\\
&&  x_{4}x_{5}x_{6}x_{10}-x_{4}^{2}x_{5}x_{12}-x_{4}x_{5}x_{11}x_{16}+2\,x_{4}x_{5}x_{6}x_{17}+x_{4}x_{5}x_{10}x_{17}+x_{4}x_{5}x_{17}^{2}-x_{4}x_{5}^{2}x_{18}\notag\\
&&  \quad+x_{3}x_{4}x_{5}+x_{2}x_{4}x_{10}-x_{6}x_{8}x_{10}+x_{8}x_{10}^{2}+x_{4}x_{8}x_{12}+x_{8}x_{11}x_{16}+2\,x_{2}x_{4}x_{17}-2\,x_{6}x_{8}x_{17}\notag\\
&&  \quad+3\,x_{8}x_{10}x_{17}+3\,x_{8}x_{17}^{2}+x_{5}x_{8}x_{18}-x_{5}x_{10}x_{20}-2\,x_{5}x_{17}x_{20}-x_{3}x_{8}-x_{15}x_{19}-x_{2}x_{20}.
 \end{eqnarray}
 Now we are going to find a nonhomogeneous, but weighted homogeneous, change of variables, to \emph{absorb} the terms of degree at least $3$. 
The above \textit{Step}~\ref{step1} change of variables has greatly diminished the possible choices for such absorbing.

\item\label{step2}
\begin{eqnarray*}
x_{1} &\longmapsto& x_{1} - 2 x_{4} x_{17}, \\
x_{2} &\longmapsto& x_{2} - x_{5} x_{10} + x_{4} x_{11} - 2 x_{5} x_{17}, \\
x_{7} &\longmapsto&  x_{7} + x_{4}^2, \\
x_{8} &\longmapsto& x_{8} + x_{4} x_{5}, \\
x_{9} &\longmapsto& x_{9} +  x_{5}^2, \\
x_{19} &\longmapsto& x_{19} - x_{4} x_{16}, \\
x_{20} &\longmapsto& x_{20} + 2 x_{4} x_{17} + x_{4} x_{10}, \\
x_{21} &\longmapsto& x_{21} + x_{5} x_{10} + 2 x_{5} x_{17}.
\end{eqnarray*}
Then  Equations (\ref{eq-5points-step1}) are transformed into
\begin{eqnarray}\label{eq-5points-step2-1}
&&x_{8}x_{10}-x_{7}x_{11}+x_{9}x_{16},\notag\\
&&-x_{7}x_{12}-x_{10}x_{20}-x_{16}x_{21},\notag\\
&&x_{1}x_{16}-x_{7}x_{18}+x_{10}x_{19},\notag\\
&&-x_{8}x_{12}+x_{15}x_{16}-x_{11}x_{20},\notag\\
&&x_{2}x_{16}-x_{8}x_{18}+x_{11}x_{19},\notag\\
&&-x_{8}x_{10}+x_{7}x_{11}-x_{9}x_{16},\notag\\
&&-x_{8}x_{12}+x_{15}x_{16}-x_{11}x_{20},\notag\\
&&x_{2}x_{16}-x_{8}x_{18}+x_{11}x_{19},\notag\\
&&-x_{9}x_{12}-x_{10}x_{15}-x_{11}x_{21},\notag\\
&&-x_{2}x_{10}+x_{1}x_{11}-x_{9}x_{18},\notag\\
&&x_{6}x_{10}x_{16}-x_{10}^{2}x_{16}-2\,x_{4}x_{12}x_{16}-x_{11}x_{16}^{2}+2\,x_{6}x_{16}x_{17}-3\,x_{10}x_{16}x_{17}-3\,x_{16}x_{17}^{2}\notag\\
&&\quad-x_{5}x_{16}x_{18}+x_{3}x_{16}+x_{12}x_{19}+x_{18}x_{20},\notag\\
&&x_{6}x_{10}x_{11}-x_{10}^{2}x_{11}-2\,x_{4}x_{11}x_{12}-x_{11}^{2}x_{16}+2\,x_{6}x_{11}x_{17}-3\,x_{10}x_{11}x_{17}-3\,x_{11}x_{17}^{2}\notag\\
&&\quad -x_{5}x_{11}x_{18}+x_{3}x_{11}-x_{2}x_{12}+x_{15}x_{18},\notag\\
&&-x_{6}x_{10}^{2}+x_{10}^{3}+2\,x_{4}x_{10}x_{12}+x_{10}x_{11}x_{16}-2\,x_{6}x_{10}x_{17}+3\,x_{10}^{2}x_{17}+3\,x_{10}x_{17}^{2}\notag\\
&&\quad +x_{5}x_{10}x_{18}-x_{3}x_{10}+x_{1}x_{12}+x_{18}x_{21},\notag\\
&&x_{7}x_{12}-x_{2}x_{16}+x_{8}x_{18}-x_{11}x_{19}+x_{10}x_{20}+x_{16}x_{21},\notag\\
&&x_{2}x_{10}-x_{1}x_{11}+x_{8}x_{12}-x_{15}x_{16}+x_{9}x_{18}+x_{11}x_{20},\notag\\
&&-2\,x_{4}x_{8}x_{10}+2\,x_{4}x_{7}x_{11}-2\,x_{4}x_{9}x_{16}+x_{2}x_{7}-x_{1}x_{8}+x_{9}x_{19},\notag\\
&&-x_{2}x_{4}x_{10}+2\,x_{6}x_{8}x_{10}-3\,x_{8}x_{10}^{2}+x_{1}x_{4}x_{11}-x_{6}x_{7}x_{11}+2\,x_{7}x_{10}x_{11}+x_{5}x_{7}x_{12}\notag\\
&&\quad-2\,x_{4}x_{8}x_{12}-x_{2}x_{5}x_{16}+x_{6}x_{9}x_{16}-2\,x_{9}x_{10}x_{16}-x_{8}x_{11}x_{16}+2\,x_{6}x_{8}x_{17}-6\,x_{8}x_{10}x_{17}\notag\\
&&\quad+3\,x_{7}x_{11}x_{17}-3\,x_{9}x_{16}x_{17}-3\,x_{8}x_{17}^{2}-x_{4}x_{9}x_{18}-x_{5}x_{11}x_{19}+x_{5}x_{10}x_{20}+x_{5}x_{16}x_{21}\notag\\
&&+x_{3}x_{8}+x_{15}x_{19}+x_{2}x_{20},\notag\\
&&-x_{6}x_{7}x_{10}+x_{7}x_{10}^{2}+x_{4}x_{7}x_{12}+x_{7}x_{11}x_{16}-2\,x_{6}x_{7}x_{17}+3\,x_{7}x_{10}x_{17}+3\,x_{7}x_{17}^{2}\notag\\
&&\quad+x_{5}x_{7}x_{18}-x_{4}x_{10}x_{20}-x_{4}x_{16}x_{21}-x_{3}x_{7}-x_{1}x_{20}+x_{19}x_{21},\notag\\
&&-x_{7}x_{15}+x_{9}x_{20}-x_{8}x_{21},\notag\\
&&x_{6}x_{9}x_{10}-x_{9}x_{10}^{2}-x_{4}x_{9}x_{12}+x_{4}x_{10}x_{15}-x_{9}x_{11}x_{16}+2\,x_{6}x_{9}x_{17}-3\,x_{9}x_{10}x_{17}\notag\\
&&\quad-3\,x_{9}x_{17}^{2}-x_{5}x_{9}x_{18}+x_{4}x_{11}x_{21}+x_{3}x_{9}+x_{1}x_{15}+x_{2}x_{21},\notag\\
&&-x_{6}x_{8}x_{10}+x_{8}x_{10}^{2}+x_{4}x_{8}x_{12}+x_{8}x_{11}x_{16}+x_{4}x_{15}x_{16}-2\,x_{6}x_{8}x_{17}+3\,x_{8}x_{10}x_{17}\notag\\
&&\quad+3\,x_{8}x_{17}^{2}+x_{5}x_{8}x_{18}-x_{4}x_{11}x_{20}-x_{3}x_{8}-x_{15}x_{19}-x_{2}x_{20}.
\end{eqnarray}
We select the quadratic equations in (\ref{eq-5points-step2-1}), deleting the repeating or linear dependent  ones:
\begin{equation}\renewcommand{\arraystretch}{1.2}
\begin{array}{lll}\label{eq-5points-step2-2}
x_{8}x_{10}-x_{7}x_{11}+x_{9}x_{16},&\quad
-x_{7}x_{12}-x_{10}x_{20}-x_{16}x_{21},&\quad
x_{1}x_{16}-x_{7}x_{18}+x_{10}x_{19},\\
-x_{8}x_{12}+x_{15}x_{16}-x_{11}x_{20},&\quad
x_{2}x_{16}-x_{8}x_{18}+x_{11}x_{19},&\quad
-x_{9}x_{12}-x_{10}x_{15}-x_{11}x_{21},\\
-x_{2}x_{10}+x_{1}x_{11}-x_{9}x_{18},&\quad
-x_{7}x_{15}+x_{9}x_{20}-x_{8}x_{21}.
\end{array}
\end{equation}
Take the equations in (\ref{eq-5points-step2-1}) modulo these quadratic equations, removing the repeating ones. We are left with 
\begin{eqnarray}\label{eq-5points-step2-3}
&&x_{6}x_{10}x_{16}-x_{10}^{2}x_{16}-2\,x_{4}x_{12}x_{16}-x_{11}x_{16}^{2}+2\,x_{6}x_{16}x_{17}-3\,x_{10}x_{16}x_{17}-3\,x_{16}x_{17}^{2}\notag\\
&&\quad-x_{5}x_{16}x_{18}+x_{3}x_{16}+x_{12}x_{19}+x_{18}x_{20},\notag\\
&&x_{6}x_{10}x_{11}-x_{10}^{2}x_{11}-2\,x_{4}x_{11}x_{12}-x_{11}^{2}x_{16}+2\,x_{6}x_{11}x_{17}-3\,x_{10}x_{11}x_{17}-3\,x_{11}x_{17}^{2}\notag\\
&&\quad-x_{5}x_{11}x_{18}+x_{3}x_{11}-x_{2}x_{12}+x_{15}x_{18},\notag\\
&&-x_{6}x_{10}^{2}+x_{10}^{3}+2\,x_{4}x_{10}x_{12}+x_{10}x_{11}x_{16}-2\,x_{6}x_{10}x_{17}+3\,x_{10}^{2}x_{17}+3\,x_{10}x_{17}^{2}\notag\\
&&\quad+x_{5}x_{10}x_{18}-x_{3}x_{10}+x_{1}x_{12}+x_{18}x_{21},\notag\\
&&x_{2}x_{7}-x_{1}x_{8}+x_{9}x_{19},\notag\\
&&x_{6}x_{7}x_{11}-x_{7}x_{10}x_{11}-x_{6}x_{9}x_{16}+x_{9}x_{10}x_{16}-x_{8}x_{11}x_{16}-2\,x_{4}x_{15}x_{16}+2\,x_{6}x_{8}x_{17}-3\,x_{7}x_{11}x_{17}\notag\\
&&\quad+3\,x_{9}x_{16}x_{17}-3\,x_{8}x_{17}^{2}-x_{5}x_{8}x_{18}+2\,x_{4}x_{11}x_{20}+x_{3}x_{8}+x_{15}x_{19}+x_{2}x_{20},\notag\\
&&-x_{6}x_{7}x_{10}+x_{7}x_{10}^{2}+x_{7}x_{11}x_{16}-2\,x_{6}x_{7}x_{17}+3\,x_{7}x_{10}x_{17}+3\,x_{7}x_{17}^{2}+x_{5}x_{7}x_{18}-2\,x_{4}x_{10}x_{20}\notag\\
&&\quad-2\,x_{4}x_{16}x_{21}-x_{3}x_{7}-x_{1}x_{20}+x_{19}x_{21},\notag\\
&&x_{6}x_{9}x_{10}-x_{9}x_{10}^{2}+2\,x_{4}x_{10}x_{15}-x_{9}x_{11}x_{16}+2\,x_{6}x_{9}x_{17}-3\,x_{9}x_{10}x_{17}-3\,x_{9}x_{17}^{2}-x_{5}x_{9}x_{18}\notag\\
&&\quad+2\,x_{4}x_{11}x_{21}+x_{3}x_{9}+x_{1}x_{15}+x_{2}x_{21}.
\end{eqnarray}

\item\label{step3}
\begin{equation*}
	x_{3}\ \longmapsto \ x_{3} + x_{10}^2 + x_{11} x_{16} + 3 x_{10} x_{17} + 3 x_{17}^2 - x_{6} x_{10} - 2 x_{6} x_{17} + x_{5} x_{18} + 2 x_{4} x_{12}.
\end{equation*}
Then Equations (\ref{eq-5points-step2-2}) and (\ref{eq-5points-step2-3}) are transformed into 
\begin{align}\label{eq-5points-step3}\renewcommand{\arraystretch}{1.2}
&\begin{array}{lll}
 x_{8}x_{10}-x_{7}x_{11}+x_{9}x_{16},& -x_{7}x_{12}-x_{10}x_{20}-x_{16}x_{21},&
 x_{1}x_{16}-x_{7}x_{18}+x_{10}x_{19},\\
 -x_{8}x_{12}+x_{15}x_{16}-x_{11}x_{20},& x_{2}x_{16}-x_{8}x_{18}+x_{11}x_{19},&
 -x_{9}x_{12}-x_{10}x_{15}-x_{11}x_{21},\\
 -x_{2}x_{10}+x_{1}x_{11}-x_{9}x_{18},& -x_{7}x_{15}+x_{9}x_{20}-x_{8}x_{21},&
 x_{3}x_{16}+x_{12}x_{19}+x_{18}x_{20},\\ 
 x_{3}x_{11}-x_{2}x_{12}+x_{15}x_{18},& -x_{3}x_{10}+x_{1}x_{12}+x_{18}x_{21},&
 x_{2}x_{7}-x_{1}x_{8}+x_{9}x_{19},
 \end{array}\notag \\
&\;-x_{6}x_{8}x_{10}+x_{8}x_{10}^{2}+x_{6}x_{7}x_{11}-x_{7}x_{10}x_{11}+2\,x_{4}x_{8}x_{12}-x_{6}x_{9}x_{16}+x_{9}x_{10}x_{16}-2\,x_{4}x_{15}x_{16}\\
&\quad +3\,x_{8}x_{10}x_{17}-3\,x_{7}x_{11}x_{17}+3\,x_{9}x_{16}x_{17}+2\,x_{4}x_{11}x_{20}+x_{3}x_{8}+x_{15}x_{19}+x_{2}x_{20},\notag\\ 
&\;-2\,x_{4}x_{7}x_{12}-2\,x_{4}x_{10}x_{20}-2\,x_{4}x_{16}x_{21}-x_{3}x_{7}-x_{1}x_{20}+x_{19}x_{21},\notag\\ 
&\; 2\,x_{4}x_{9}x_{12}+2\,x_{4}x_{10}x_{15}+2\,x_{4}x_{11}x_{21}+x_{3}x_{9}+x_{1}x_{15}+x_{2}x_{21}.\notag
\end{align}
Select the quadratic equations
\begin{equation*}\renewcommand{\arraystretch}{1.2}
\begin{array}{lll}
x_{8}x_{10}-x_{7}x_{11}+x_{9}x_{16},&-x_{7}x_{12}-x_{10}x_{20}-x_{16}x_{21},& x_{1}x_{16}-x_{7}x_{18}+x_{10}x_{19},\\
-x_{8}x_{12}+x_{15}x_{16}-x_{11}x_{20},& x_{2}x_{16}-x_{8}x_{18}+x_{11}x_{19},& -x_{9}x_{12}-x_{10}x_{15}-x_{11}x_{21},\\
-x_{2}x_{10}+x_{1}x_{11}-x_{9}x_{18},& -x_{7}x_{15}+x_{9}x_{20}-x_{8}x_{21},& x_{3}x_{16}+x_{12}x_{19}+x_{18}x_{20},\\
x_{3}x_{11}-x_{2}x_{12}+x_{15}x_{18},& -x_{3}x_{10}+x_{1}x_{12}+x_{18}x_{21},& x_{2}x_{7}-x_{1}x_{8}+x_{9}x_{19}.
\end{array}
\end{equation*}
Taking the last three equations of (\ref{eq-5points-step3}) modulo these quadratic equations, we get the equations
\begin{equation*}
	x_{3}x_{8}+x_{15}x_{19}+x_{2}x_{20},\quad -x_{3}x_{7}-x_{1}x_{20}+x_{19}x_{21},\quad x_{3}x_{9}+x_{1}x_{15}+x_{2}x_{21}.
\end{equation*}
\end{enumerate}
One easily checks that \textit{Step}~\ref{step1} is a nonsingular linear transformation, and \textit{Steps}~\ref{step2} and~\ref{step3} are unipotent isomorphisms (in the sense of Definition~\ref{def-unipotent}).
We record the total change of variables (\textit{i.e.}, the composition \textit{Step}~\ref{step3} $\circ$ \textit{Step}~\ref{step2} $\circ$ \textit{Step}~\ref{step1}):
\begin{eqnarray}\label{eq-131-totalchangeofvariables}
\allowdisplaybreaks
{x}_{1} &\longmapsto&  -2\,{x}_{4}{x}_{17}+{x}_{1}+{x}_{13},\notag\\
{x}_{2} &\longmapsto&  -{x}_{5}{x}_{10}+{x}_{4}{x}_{11}-2\,{x}_{5}{x}_{17}+{x}_{2}{x}_{14},\notag\\
{x}_{3} &\longmapsto& -{x}_{6}{x}_{10}+{x}_{10}^{2}+2\,{x}_{4}{x}_{12}+{x}_{11}{x}_{16}-2\,{x}_{6}{x}_{17}+3\,{x}_{10}{x}_{17}+3\,{x}_{17}^{2}+{x}_{5}{x}_{18}+{x}_{3},\notag\\ 
{x}_{7} &\longmapsto & {x}_{4}^{2}+{x}_{7},\notag\\ 
{x}_{8} &\longmapsto& {x}_{4}{x}_{5}+{x}_{8},\notag\\ 
{x}_{9} &\longmapsto&  {x}_{5}^{2}+{x}_{9},\\ 
{x}_{10} &\longmapsto&  {x}_{10}+{x}_{17},\notag\\ 
{x}_{13} &\longmapsto& {x}_{4}{x}_{10}+{x}_{1}+2\,{x}_{13}+{x}_{20},\notag\\ 
{x}_{19} &\longmapsto&  -{x}_{4}{x}_{16}+{x}_{19},\notag\\ 
{x}_{20} &\longmapsto&  {x}_{4}{x}_{10}+{x}_{1}+{x}_{13}+{x}_{20},\notag\\ 
{x}_{21} &\longmapsto& {x}_{4}{x}_{11}+{x}_{2}+2\,{x}_{14}+{x}_{21}.\notag
\end{eqnarray}
Therefore, after this change of variables, the subideal of $\mathcal{H}'_{\lambda}$ generated by its minDegree 2 equations is transformed into the ideal generated by
\begin{equation}\label{eq-5points-quadratic-equations-final}\renewcommand{\arraystretch}{1.2}
\begin{array}{lll}
x_{8}x_{10}-x_{7}x_{11}+x_{9}x_{16},& -x_{7}x_{12}-x_{10}x_{20}-x_{16}x_{21},& x_{1}x_{16}-x_{7}x_{18}+x_{10}x_{19},\\
-x_{8}x_{12}+x_{15}x_{16}-x_{11}x_{20},& x_{2}x_{16}-x_{8}x_{18}+x_{11}x_{19},& -x_{9}x_{12}-x_{10}x_{15}-x_{11}x_{21},\\
-x_{2}x_{10}+x_{1}x_{11}-x_{9}x_{18},& x_{3}x_{16}+x_{12}x_{19}+x_{18}x_{20},& x_{3}x_{11}-x_{2}x_{12}+x_{15}x_{18},\\
-x_{3}x_{10}+x_{1}x_{12}+x_{18}x_{21},& -x_{7}x_{15}+x_{9}x_{20}-x_{8}x_{21},& x_{2}x_{7}-x_{1}x_{8}+x_{9}x_{19},\\
x_{3}x_{8}+x_{15}x_{19}+x_{2}x_{20},& -x_{3}x_{7}-x_{1}x_{20}+x_{19}x_{21},& x_{3}x_{9}+x_{1}x_{15}+x_{2}x_{21}.
\end{array}
\end{equation}

The final quadratic equations (\ref{eq-5points-quadratic-equations-final}) are simple enough so that one can compute the Gr\"obner basis of the ideal generated by them. Then one finds that the  ideal $\mathcal{H}'_{\lambda}$, after the above change of variables (\ref{eq-131-totalchangeofvariables}),  is equal to the ideal generated by (\ref{eq-5points-quadratic-equations-final}). A more elegant way to confirm this, without the use of a Gr\"obner basis, is the following. The map
\begin{alignat*}{12}
x_{1}& \longmapsto  p_{0,2},&\quad x_{2} &\longmapsto  p_{0,3},&\quad x_{3} &\longmapsto  -p_{0,1},&\quad x_{7} &\longmapsto  p_{2,4},&\quad x_{8} &\longmapsto  p_{3,4},&\quad x_{9} &\longmapsto p_{2,3},\\
x_{10} &\longmapsto  p_{2,5},&\quad x_{11} &\longmapsto  p_{3,5},&\quad x_{12} &\longmapsto  -p_{1,5},&\quad x_{15} &\longmapsto  p_{1,3},&\quad x_{16} &\longmapsto  p_{4,5},&\quad x_{18} &\longmapsto  -p_{0,5},\\
 x_{19} &\longmapsto  -p_{0,4},&\quad x_{20} &\longmapsto  p_{1,4},&\quad x_{21} &\longmapsto  -p_{1,2}
\end{alignat*}
transforms  Equations (\ref{eq-5points-quadratic-equations-final}) into the Pl\"ucker equations (\ref{eq-pluckerideal}). So there is a closed immersion
\begin{equation}\label{eq-embedding-to-fibration-over-cone-1}
\Spec(A_{\lambda})\longhookrightarrow \widehat{G}(2,6)\times \mathbb{A}^6.
\end{equation}
But by Theorem~\ref{thm-monomialideal-smoothable}, the point corresponding to the ideal $I_{\lambda}$ lies in the main component, so $\dim A_{\lambda}\geq 15$. Note that 
$\widehat{G}(2,6)\times \mathbb{A}^6$ is an integral scheme.  Hence the equality of dimensions implies that  (\ref{eq-embedding-to-fibration-over-cone-1}) is  an isomorphism.\\

In Section~\ref{sec:3D-132} and Appendices~\ref{sec:3D-141}--\ref{sec:3D-232}, the Haiman neighborhoods turn out to be trivial affine fibrations over $\widehat{G}(2,6)$ as well. We present only the final change of variables, omitting the intermediate steps that lead to it. The relevant Macaulay2 codes and  an example for Appendix~\ref{sec:3D-141} illustrating the use of the codes, are given in the ancillary files.\footnote{See also \url{https://github.com/huxw06/Hilbert-scheme-of-points}}

\subsubsection{\texorpdfstring{$\lambda_{132}=\left((1)\subset (3,2)\right)$}{((1) in (3,2))}}
\label{sec:3D-132}
The partition $\lambda_{132}$ has the 3D diagram
\begin{center}
\begin{tikzpicture}[x=(220:0.6cm), y=(-40:0.6cm), z=(90:0.42cm)]

\foreach \m [count=\y] in {{2,1,1},{1,1}}{
  \foreach \n [count=\x] in \m {
  \ifnum \n>0
      \foreach \z in {1,...,\n}{
        \draw [fill=gray!30] (\x+1,\y,\z) -- (\x+1,\y+1,\z) -- (\x+1, \y+1, \z-1) -- (\x+1, \y, \z-1) -- cycle;
        \draw [fill=gray!40] (\x,\y+1,\z) -- (\x+1,\y+1,\z) -- (\x+1, \y+1, \z-1) -- (\x, \y+1, \z-1) -- cycle;
        \draw [fill=gray!5] (\x,\y,\z)   -- (\x+1,\y,\z)   -- (\x+1, \y+1, \z)   -- (\x, \y+1, \z) -- cycle;  
      }
 \fi
 }
}    
\end{tikzpicture}
\end{center}
and corresponds to the ideal
\[
I_{\lambda_{132}}=\left(X_1^3, X_1^2 X_2, X_1 X_3, X_2^2,X_2 X_3, X_3^2\right).
\]
Algorithm~\ref{alg-step0-Haiman} gives
\begin{alignat*}{6}
c_{1, 0, 0}^{1, 0, 1} &\longmapsto   x_{1},&\quad c_{1, 0, 0}^{3, 0, 0} &\longmapsto  x_{2},&\quad c_{1, 0, 0}^{0, 2, 0} &\longmapsto   x_{3},&\quad c_{2, 0, 0}^{1, 0, 1} &\longmapsto  x_{4},&\quad c_{2, 0, 0}^{3, 0, 0} &\longmapsto   x_{5},&\quad c_{2, 0, 0}^{2, 1, 0} &\longmapsto  x_{6},\\
 c_{2, 0, 0}^{0, 2, 0} &\longmapsto   x_{7},&\quad c_{2, 0, 0}^{0, 1, 1} &\longmapsto  x_{8},&\quad c_{2, 0, 0}^{0, 0, 2} &\longmapsto   x_{9},&\quad c_{0, 1, 0}^{1, 0, 1} &\longmapsto  x_{10},&\quad c_{0, 1, 0}^{3, 0, 0} &\longmapsto   x_{11},&\quad c_{0, 1, 0}^{0, 2, 0} &\longmapsto  x_{12},\\
  c_{1, 1, 0}^{1, 0, 1} &\longmapsto   x_{13},&\quad c_{1, 1, 0}^{3, 0, 0} &\longmapsto  x_{14},&\quad c_{1, 1, 0}^{2, 1, 0} &\longmapsto   x_{15},&\quad c_{1, 1, 0}^{0, 2, 0} &\longmapsto  x_{16},&\quad c_{1, 1, 0}^{0, 1, 1} &\longmapsto   x_{17},&\quad c_{1, 1, 0}^{0, 0, 2} &\longmapsto  x_{18},\\
   c_{0, 0, 1}^{1, 0, 1} &\longmapsto   x_{19},&\quad c_{0, 0, 1}^{3, 0, 0} &\longmapsto  x_{20},&\quad c_{0, 0, 1}^{2, 1, 0} &\longmapsto   x_{21},&\quad c_{0, 0, 1}^{0, 2, 0} &\longmapsto  x_{22},&\quad c_{0, 0, 1}^{0, 1, 1} &\longmapsto   x_{23},&\quad c_{0, 0, 1}^{0, 0, 2} &\longmapsto  x_{24},
\end{alignat*}
and
\[
A_{\lambda_{132}}\cong \Bbbk[x_1,\dots,x_{24}]/\mathcal{H}'_{\lambda_{132}}.
\]
The unipotent isomorphism
\begin{eqnarray*}
x_{2} &\longmapsto& 2 x_{14}x_{16}x_{19}+x_{13}x_{14}x_{22}-x_{12}x_{14}-x_{5}x_{15}-x_{11}x_{16}+x_{5}x_{19}-2 x_{15}x_{19}+3 x_{19}^{2}\\
&&-x_{17}x_{20}-x_{14}x_{23}+x_{2},\\
x_{3} &\longmapsto& x_{13}x_{16}x_{22}-x_{5}x_{7}-x_{6}x_{16}-2 x_{7}x_{19}-x_{4}x_{22}-x_{17}x_{22}-x_{16}x_{23}+x_{3},\\
x_{4} &\longmapsto& -x_{13}x_{16}+x_{4}+x_{17},\\
x_{5} &\longmapsto& -x_{14}x_{16}+x_{5}+x_{15},\\
x_{6} &\longmapsto& x_{7}x_{14}+x_{6}+x_{23},\\
x_{8} &\longmapsto& x_{7}x_{13}+x_{8},\\
x_{9} &\longmapsto& x_{13}^{2}x_{16}^{2}+x_{7}x_{13}^{2}-2 x_{4}x_{13}x_{16}-2 x_{13}x_{16}x_{17}+x_{4}^{2}+2 x_{8}x_{13}+2 x_{4}x_{17}+x_{17}^{2}+x_{9},\\
x_{10} &\longmapsto& -x_{13}x_{19}+x_{10},\\
x_{11} &\longmapsto& -x_{14}x_{19}+x_{11},\\
x_{12} &\longmapsto& -x_{16}x_{19}-2 x_{13}x_{22}+x_{6}+x_{12}+2 x_{23},\\
x_{15} &\longmapsto& x_{15}-x_{19},\\
x_{18} &\longmapsto& -x_{13}^{2}x_{16}+x_{4}x_{13}+2 x_{13}x_{17}+x_{18},\\
x_{24} &\longmapsto& -4 x_{13}x_{16}x_{19}-x_{13}^{2}x_{22}+x_{12}x_{13}-x_{8}x_{14}+x_{4}x_{15}+x_{10}x_{16}+x_{4}x_{19}+4 x_{17}x_{19}\\
&&+2 x_{13}x_{23}+2 x_{1}+x_{24}.
\end{eqnarray*}
transforms the ideal $\mathcal{H}'_{\lambda_{132}}$ into
\begin{equation}\label{eq-132-quadratic-equations-final}\renewcommand{\arraystretch}{1.2}
\begin{array}{lllll}
  J_{132}&=&(-x_{8}x_{20}+x_{4}x_{21}+x_{10}x_{22},& -x_{9}x_{11}-x_{2}x_{18}-x_{10}x_{24},& -x_{6}x_{20}+x_{5}x_{21}+x_{11}x_{22},\notag\\
&& \hphantom{(} -x_{2}x_{5}-x_{11}x_{12}+x_{20}x_{24},& x_{2}x_{8}-x_{3}x_{10}+x_{9}x_{21},& -x_{2}x_{6}+x_{3}x_{11}+x_{21}x_{24},\notag\\
&& \hphantom{(} -x_{3}x_{20}-x_{12}x_{21}+x_{2}x_{22},& x_{2}x_{4}+x_{10}x_{12}+x_{9}x_{20},& x_{5}x_{10}-x_{4}x_{11}+x_{18}x_{20},\\
&& \hphantom{(} x_{3}x_{4}+x_{8}x_{12}+x_{9}x_{22},& -x_{4}x_{6}+x_{5}x_{8}+x_{18}x_{22},& -x_{6}x_{9}-x_{3}x_{18}-x_{8}x_{24},\notag\\
&&  \hphantom{(}    x_{5}x_{9}-x_{12}x_{18}+x_{4}x_{24},& -x_{3}x_{5}-x_{6}x_{12}+x_{22}x_{24},& x_{6}x_{10}-x_{8}x_{11}+x_{18}x_{21}).\notag
\end{array}
\end{equation}
Then the map
\begin{alignat*}{6}
x_{8} &\longmapsto  -p_{3,4},&\quad x_{20} &\longmapsto  p_{2,5},&\quad x_{4} &\longmapsto  -p_{2,4},&\quad x_{21} &\longmapsto  p_{3,5},&\quad x_{10} &\longmapsto  p_{4,5},&\quad x_{22} &\longmapsto p_{2,3},\\
 x_{2} &\longmapsto  -p_{1,5},&\quad x_{9} &\longmapsto  -p_{1,4},&\quad x_{3} &\longmapsto  -p_{1,3},&\quad x_{11} &\longmapsto  p_{0,5},&\quad x_{18} &\longmapsto -p_{0,4},&\quad x_{24} &\longmapsto  -p_{0,1},\\
 x_{6} &\longmapsto  p_{0,3},&\quad x_{5} &\longmapsto  p_{0,2},&\quad x_{12} &\longmapsto  p_{1,2}&&&&&&
\end{alignat*}
transforms  Equations (\ref{eq-132-quadratic-equations-final}) into the Pl\"ucker equations
(\ref{eq-pluckerideal}). So we obtain an equivariant isomorphism
\[
\Spec(A_{\lambda_{132}})\cong \widehat{G}(2,6)\times \mathbb{A}^9.
\]

\subsubsection{\texorpdfstring{$\lambda_{1321}=\left((1)\subset(3,2,1)\right)$}{((1) in (3,2,1))}
,  extra  dim =8}
\label{sec:3D-1321}
\begin{center}
\begin{tikzpicture}[x=(220:0.6cm), y=(-40:0.6cm), z=(90:0.42cm)]

\foreach \m [count=\y] in {{2,1,1},{1,1},{1}}{
  \foreach \n [count=\x] in \m {
  \ifnum \n>0
      \foreach \z in {1,...,\n}{
        \draw [fill=gray!30] (\x+1,\y,\z) -- (\x+1,\y+1,\z) -- (\x+1, \y+1, \z-1) -- (\x+1, \y, \z-1) -- cycle;
        \draw [fill=gray!40] (\x,\y+1,\z) -- (\x+1,\y+1,\z) -- (\x+1, \y+1, \z-1) -- (\x, \y+1, \z-1) -- cycle;
        \draw [fill=gray!5] (\x,\y,\z)   -- (\x+1,\y,\z)   -- (\x+1, \y+1, \z)   -- (\x, \y+1, \z) -- cycle;  
      }
 \fi
 }
}    

\end{tikzpicture}
\end{center}
\[
I_{1321}=\left(X_1^3,X_1^2 X_2,X_1 X_2^2,X_1 X_3, X_2^3, X_2 X_3, X_3^2\right).
\]
The extra dimension of $\Hilb^7(\mathbb{A}^3)$ at $Z_{I_{1321}}$ is 8.
Algorithm~\ref{alg-step0-Haiman} gives
\begin{alignat}{6}\label{eq-table-1321-c-to-x}
 c_{1, 1, 0}^{0, 0, 2}&=x_{1},&\quad  c_{1, 1, 0}^{1, 0, 1}&=x_{2},&\quad  c_{2, 0, 0}^{0, 0, 2}&=x_{3},&\quad  c_{2, 0, 0}^{0, 1, 1}&=x_{4},&\quad  c_{2, 0, 0}^{3, 0, 0}&=x_{5},&\quad  c_{1, 1, 0}^{3, 0, 0}&=x_{6},\notag\\
 c_{2, 0, 0}^{1, 2, 0}&=x_{7},&\quad  c_{2, 0, 0}^{0, 3, 0}&=x_{8},&\quad  c_{2, 0, 0}^{1, 0, 1}&=x_{9},&\quad  c_{1, 1, 0}^{2, 1, 0}&=x_{10},&\quad  c_{0, 0, 1}^{0, 0, 2}&=x_{11},&\quad  c_{0, 2, 0}^{0, 0, 2}&=x_{12},\notag\\
 c_{0, 0, 1}^{0, 3, 0}&=x_{13},&\quad  c_{0, 2, 0}^{1, 0, 1}&=x_{14},&\quad  c_{1, 1, 0}^{0, 3, 0}&=x_{15},&\quad  c_{0, 0, 1}^{3, 0, 0}&=x_{16},&\quad  c_{0, 2, 0}^{3, 0, 0}&=x_{17},&\quad  c_{1, 1, 0}^{1, 2, 0}&=x_{18},\\
 c_{2, 0, 0}^{2, 1, 0}&=x_{19},&\quad  c_{0, 0, 1}^{1, 2, 0}&=x_{20},&\quad  c_{0, 0, 1}^{1, 0, 1}&=x_{21},&\quad  c_{0, 0, 1}^{0, 1, 1}&=x_{22},&\quad  c_{0, 2, 0}^{0, 1, 1}&=x_{23},&\quad  c_{0, 2, 0}^{2, 1, 0}&=x_{24},\notag\\
 c_{1, 1, 0}^{0, 1, 1}&=x_{25},&\quad  c_{0, 2, 0}^{1, 2, 0}&=x_{26},&\quad  c_{0, 2, 0}^{0, 3, 0}&=x_{27},&\quad  c_{1, 0, 0}^{1, 0, 1}&=x_{28},&\quad  c_{0, 0, 1}^{2, 1, 0}&=x_{29}.\notag
\end{alignat}
and 
\[
A_{\lambda_{1321}}\cong \Bbbk[x_1,\dots,x_{29}]/\mathcal{H}'_{\lambda_{1321}}.
\]
The unipotent homomorphism 
\begin{eqnarray}\label{eq-change-variables-1321}
  x_{1} & \longmapsto&   y_{1}+ 2 y_{2} y_{25}+ 2 y_{23} y_{25}+ y_{14} y_{4}+ y_{2} y_{9},\notag\\ 
x_{2} & \longmapsto&   y_{2}+y_{23},\notag\\ 
x_{3} & \longmapsto&   y_{3}+y_{25}^2 + y_{2} y_{4}+ 2 y_{23} y_{4}+ 2 y_{25} y_{9}+ y_{9}^2,\notag\\ 
x_{5} & \longmapsto&   y_{5}+y_{10}+ 2 y_{21}+ 3 y_{26},\notag\\ 
x_{6}  & \longmapsto&  y_{6}+y_{24},\notag\\
x_{9} & \longmapsto&   y_{9}+y_{25},\notag\\ 
x_{10} & \longmapsto&  y_{10}+ y_{21}+ 2 y_{26},\notag\\ 
x_{11} & \longmapsto&  y_{11}+ y_{14} y_{15}+ y_{18} y_{2}+ y_{19} y_{2}+ 2 y_{2} y_{22}+ 2 y_{22} y_{23}+ 4 y_{21} y_{25}\\
 & &+ 4 y_{25} y_{26}+ 2 y_{28} - y_{24} y_{4} - y_{4} y_{6}+ y_{14} y_{7}+ 
  y_{10} y_{9}+ 3 y_{21} y_{9}+ 4 y_{26} y_{9},\notag\\
x_{12} & \longmapsto&   y_{12}+ y_{23}^2+ 2 y_{14} y_{25}+ y_{14} y_{9},\notag\\ 
x_{15}  & \longmapsto&  y_{15}+ y_{7},\notag\\
x_{18} & \longmapsto&   y_{18}+ y_{19}+ 2 y_{22},\notag\\ 
x_{19} & \longmapsto&   y_{19}+y_{22},\notag\\ 
x_{21} & \longmapsto&   y_{21}+y_{26},\notag\\ 
x_{27} & \longmapsto&   y_{27}+ y_{18}+ y_{19}+ 3 y_{22},\notag\\
x_{i}  & \longmapsto&  y_{i}\ \mbox{for}\ i=4,7,8,13,14,16,17,20,22,23,24,25,26,28,29,\notag
\end{eqnarray}
transforms $\mathcal{H}'_{\lambda_{132}}$ into the Jacobian ideal of $F_{1321}$, 
where 
\begin{eqnarray*}
F_{1321}&=&-{y}_{5}{y}_{8}{y}_{10}{y}_{14}+{y}_{6}{y}_{7}{y}_{14}{y}_{15}+{y}_{8}{y}_{9}{y}_{10}{y}_{17}-{y}_{2}{y}_{7}{
     y}_{15}{y}_{17}+{y}_{4}{y}_{10}{y}_{15}{y}_{17}-{y}_{6}{y}_{8}{y}_{14}{y}_{18}\\
  &&   +{y}_{2}{y}_{8}{y}_{17}{y}_{18
     }-{y}_{10}{y}_{14}{y}_{15}{y}_{19}+{y}_{2}{y}_{5}{y}_{7}{y}_{21}-{y}_{6}{y}_{7}{y}_{9}{y}_{21}-{y}_{4}{y}_{5}{y}_{10}{y}_{21}-{y}_{4}{y}_{6}{y}_{18}{y}_{21}\\
  &&   +{y}_{9}{y}_{10}{y}_{19}{y}_{21}+{y}_{2}{y}_{18}{y}_{19}{y}_{21
     }+{y}_{2}{y}_{5}{y}_{8}{y}_{24}-{y}_{6}{y}_{8}{y}_{9}{y}_{24}-{y}_{4}{y}_{6}{y}_{15}{y}_{24}+{y}_{2}{y}_{15}{y}_{19}{y}_{24}\\
  &&   -{y}_{5}{y}_{7}{y}_{14}{y}_{27}+{y}_{7}{y}_{9}{y}_{17}{y}_{27}+{y}_{4}{y}_{17}{y}_{18}{y}_{27}-{y}_{14}{y}_{18}{y}_{19}{y}_{27}+{y}_{4}{y}_{5}{y}_{24}{y}_{27}-{y}_{9}{y}_{19}{y}_{24}{y}_{27}\\
  &&  -{y}_{11}{y}_{13}{y}_{14}+{y}_{1}{y}_{7}{y}_{16}+{y}_{4}{y}_{11}{y}_{16}+{y}_{8}{y}_{12}{y}_{16}-{y}_{3}{y}_{13}{y}_{17}+{y}_{3}{y}_{16}{y}_{19}-{y}_{3}{y}_{6}{y}_{20}\\
  &&   -{y}_{1}{y}_{10}{y}_{20}-{y}_{2}{y}_{11}{y}_{20}+{y}_{12}{y}_{13}{y
     }_{21}-{y}_{1}{y}_{13}{y}_{24}+{y}_{12}{y}_{20}{y}_{27}-{y}_{3}{y}_{5}{y}_{29}-{y}_{9}{y}_{11}{y}_{29}\\
  &&   +{y}_{12}{y}_{15}{y}_{29}+{y}_{1}{y}_{18}{y}_{29},
\end{eqnarray*}
and thus
\[
A_{\lambda_{1321}}\cong \Bbbk[y_1,\dots,y_{29}]/\Jac(F_{1321}).
\]

\subsection{Local equations at non-Borel ideals}\label{sec:3D-1311}
Let $\lambda_{1311}$ be the 3D partition $\left((1)\subset (3,1,1)\right)$. It corresponds to the 3D diagram and ideal
\begin{center}
\begin{tikzpicture}[x=(220:0.6cm), y=(-40:0.6cm), z=(90:0.42cm)]

\foreach \m [count=\y] in {{2,1,1},{1},{1}}{
  \foreach \n [count=\x] in \m {
  \ifnum \n>0
      \foreach \z in {1,...,\n}{
        \draw [fill=gray!30] (\x+1,\y,\z) -- (\x+1,\y+1,\z) -- (\x+1, \y+1, \z-1) -- (\x+1, \y, \z-1) -- cycle;
        \draw [fill=gray!40] (\x,\y+1,\z) -- (\x+1,\y+1,\z) -- (\x+1, \y+1, \z-1) -- (\x, \y+1, \z-1) -- cycle;
        \draw [fill=gray!5] (\x,\y,\z)   -- (\x+1,\y,\z)   -- (\x+1, \y+1, \z)   -- (\x, \y+1, \z) -- cycle;  
      }
 \fi
 }
}    
\end{tikzpicture}
\end{center}
\[
I_{1311}=\left(X_1^3, X_1X_2, X_1 X_3, X_2^3,X_2 X_3, X_3^2\right).
\]
This is the unique non-Borel monomial ideal of $\Bbbk[X_1,X_2,X_3]$ of colength $6$.

First we show that Lemma~\ref{lem-transitive-open-nbd} and the result in Section~\ref{sec:3D-132} give us the local structure of $\Spec(A_{\lambda_{1311}})$ at~$0$.  Let $\epsilon\in \Bbbk$ be a nonzero element. A linear transform $X_1\mapsto X_1+\epsilon X_2$ transforms $I$ into the ideal
\begin{eqnarray*}
J_{\epsilon}&:=&(X_1^3+3 \epsilon X_1^2 X_2+3 \epsilon^2 X_1 X_2^2+X_2^3, X_1X_2+\epsilon X_2^2, X_1 X_3+\epsilon X_2 X_3, X_2^3,X_2 X_3, X_3^2)\\
&=& (X_1^3, X_1X_2+\epsilon X_2^2, X_1 X_3, X_2^3,X_2 X_3, X_3^2).
\end{eqnarray*}
Since $\epsilon\neq 0$, the resulting ideal is equal to
\begin{eqnarray}\label{eq-transformed-I1311}
&&(X_1^3, X_2^2+\frac{1}{\epsilon} X_1 X_2, X_1 X_3, X_2^3,X_2 X_3, X_3^2)\notag\\
&&=(X_1^3, X_1 X_2^2,  X_1 X_3, X_2^2+\frac{1}{\epsilon} X_1 X_2, X_2 X_3, X_3^2)\notag\\
&&= (X_1^3, X_1^2 X_2,  X_1 X_3, X_2^2+\frac{1}{\epsilon} X_1 X_2, X_2 X_3, X_3^2).
\end{eqnarray}
So it lies in the Haiman neighborhood of $I_{132}$, with Haiman coordinate $c_{110}^{020}=-\frac{1}{\epsilon}$. Since the extra dimension of $\Hilb^6(\mathbb{A}^3)$ at $Z_{I_{1311}}$ is $6$, the extra dimension at $Z_{J_{\epsilon}}$ is also 6. But we have seen that the whole Haiman neighborhood of $I_{132}$ is a trivial affine fibration over the cone $\widehat{G}(2,6)$. 
So $J_{\epsilon}$ lies in the fiber ($\cong \mathbb{A}_{\Bbbk}^{6}$) over the vertex of the cone. Thus we obtain that there is an open neighborhood of  $Z_{I_{1311}}$ that is isomorphic to an open subset of $\widehat{G}(2,6)\times \mathbb{A}^9$. This is \textit{not} enough for the computation of the equivariant Hilbert function of $A_{\lambda_{1311}}$: The isomorphism obtained above is not equivariant.

We could not find a conceptual proof of such an equivariant isomorphism but only apply the algorithm and the trick of change of variables in the previous sections.  Algorithm~\ref{alg-step0-Haiman} does not perform as well for non-Borel monomial ideals as for the Borel ideals. For example, the Haiman equations imply, for $i\in \lambda_{1311}$,
\begin{align*}
c_{i}^{210}= \displaystyle\frac{1}{(1-c_{020}^{110}c_{200}^{110})}&\left(
c_{000}^{110}c_{i}^{100}+c_{100}^{110}c_{i}^{200}+c_{200}^{110}c_{i}^{300}
+c_{010}^{110}c_{i}^{110}+c_{001}^{110}c_{i}^{101}\right.\\
&\hphantom{\big)}\left.+c_{020}^{110}\left(c_{000}^{110}c_{i}^{010}+c_{010}^{110}c_{i}^{020}+c_{020}^{110}c_{i}^{030}+c_{100}^{110}c_{i}^{110}+c_{001}^{110}c_{i}^{011}\right)\right),
\end{align*}
and symmetrically
\begin{align*}
 c_{i}^{120}=\displaystyle\frac{1}{(1-c_{020}^{110}c_{200}^{110})}&\left(c_{000}^{110}c_{i}^{010}+c_{010}^{110}c_{i}^{020}+c_{020}^{110}c_{i}^{030}
+c_{100}^{110}c_{i}^{110}+c_{001}^{110}c_{i}^{011}\right.\\
&\left.\hphantom{\big)}+c_{200}^{110}(c_{000}^{110}c_{i}^{100}+c_{100}^{110}c_{i}^{200}+c_{200}^{110}c_{i}^{300}
+c_{010}^{110}c_{i}^{110}+c_{001}^{110}c_{i}^{101})\right).
\end{align*}

This is typical for non-Borel monomial ideals: To eliminate the variables, we need to allow the fractions. Algorithm~\ref{alg-step0-Haiman} gives a set of equations in $x_1,\dots,x_{29}$, where 
\begin{alignat*}{6}
 c_{2, 0, 0}^{2, 1, 0} &\longmapsto x_{1},&\quad  c_{0, 2, 0}^{2, 1, 0} &\longmapsto x_{2},&\quad  c_{0, 1, 0}^{2, 1, 0} &\longmapsto x_{3},&\quad  
c_{0, 0, 1}^{2, 1, 0} &\longmapsto x_{4},&\quad  c_{1, 0, 0}^{1, 1, 0} &\longmapsto x_{5},&\quad  c_{1, 0, 0}^{1, 0, 1} &\longmapsto x_{6},\\
 c_{1, 0, 0}^{3, 0, 0} &\longmapsto x_{7},&\quad  c_{1, 0, 0}^{0, 0, 2} &\longmapsto x_{8},&\quad  c_{2, 0, 0}^{1, 1, 0} &\longmapsto x_{9},&\quad  
 c_{2, 0, 0}^{1, 0, 1} &\longmapsto x_{10},&\quad  c_{2, 0, 0}^{3, 0, 0} &\longmapsto x_{11},&\quad  c_{2, 0, 0}^{0, 1, 1} &\longmapsto x_{12},\\ 
 c_{2, 0, 0}^{0, 3, 0} &\longmapsto x_{13},&\quad  c_{2, 0, 0}^{0, 0, 2} &\longmapsto x_{14},&\quad  c_{0, 1, 0}^{1, 1, 0} &\longmapsto x_{15},&\quad  
 c_{0, 1, 0}^{0, 1, 1} &\longmapsto x_{16},&\quad  c_{0, 1, 0}^{0, 3, 0} &\longmapsto x_{17},&\quad  c_{0, 2, 0}^{1, 1, 0} &\longmapsto x_{18},\\ 
 c_{0, 2, 0}^{1, 0, 1} &\longmapsto x_{19},&\quad  c_{0, 2, 0}^{3, 0, 0} &\longmapsto x_{20},&\quad  c_{0, 2, 0}^{0, 1, 1} &\longmapsto x_{21},&\quad  
 c_{0, 2, 0}^{0, 3, 0} &\longmapsto x_{22},&\quad  c_{0, 2, 0}^{0, 0, 2} &\longmapsto x_{23},&\quad  c_{0, 0, 1}^{1, 1, 0} &\longmapsto x_{24},\\
  c_{0, 0, 1}^{1, 0, 1} &\longmapsto x_{25},&\quad  c_{0, 0, 1}^{3, 0, 0} &\longmapsto x_{26},&\quad  c_{0, 0, 1}^{0, 1, 1} &\longmapsto x_{27},&\quad  
  c_{0, 0, 1}^{0, 3, 0} &\longmapsto x_{28},&\quad  c_{0, 0, 1}^{0, 0, 2} &\longmapsto x_{29}.
\end{alignat*}
The number of variables is larger than the embedded dimension $24$. There are equations of minDegree $1$. Using them, we eliminate $x_1,x_2,x_3,x_4,x_8$, allowing the inversion 
\[
(1-x_9 x_{18})(1-2x_9 x_{18}).
\]
For $\lambda_{1311}$ we  can find a fractional change of variables, more precisely an automorphism of the ring
\begin{equation*}
  \Bbbk[x_5,x_6,x_7,x_9,\dots,x_{29}]\left[\frac{1}{(1-x_{18}x_9)(1-2x_{18}x_9)
   (1-2 x_{18}x_9-x_{18}^2 x_9^2)
  (1-3 x_{18}x_9+x_{18}^2 x_9^2)}\right],
\end{equation*}
such that the Haiman ideal is transformed into the ideal
\begin{equation*}
\begin{array}{lllll}\renewcommand{\arraystretch}{1.2}
J_{1321}& =& (x_{12} x_{15}+ x_{13} x_{19}+ x_{14} x_{24},& x_{12} x_{20}+ x_{23} x_{24}+ x_{19} x_{5},& -x_{16} x_{24}- x_{12} x_{26}+ x_{19} x_{28},\\
&&\hphantom{(} -x_{14} x_{20}+ x_{15} x_{23}+ x_{19} x_{29},& x_{20} x_{28}+ x_{26} x_{5}+ x_{24} x_{7},& x_{17} x_{24}- x_{13} x_{26}- x_{15} x_{28},\\
&&\hphantom{(}  -x_{16} x_{20}+ x_{23} x_{26}- x_{19} x_{7},& -x_{13} x_{23}+ x_{12} x_{29}+ x_{14} x_{5},& x_{13} x_{20}+ x_{24} x_{29}- x_{15} x_{5},\\
&&\hphantom{(} x_{17} x_{20}+ x_{26} x_{29}+ x_{15} x_{7},& x_{13} x_{16}+ x_{12} x_{17}+ x_{14} x_{28},& x_{15} x_{16}- x_{17} x_{19}- x_{14} x_{26},\\
&&\hphantom{(}   x_{23} x_{28}+ x_{16} x_{5}- x_{12} x_{7},& x_{17} x_{23}+ x_{16} x_{29}+ x_{14} x_{7},& x_{28} x_{29}- x_{17} x_{5}- x_{13} x_{7}).
\end{array}
\end{equation*}
For the details, see Appendix~\ref{sec:change-var-1311}. 
Finally, the map 
\begin{alignat*}{6}
x_{5}&\longmapsto p_{1,3},&\quad x_{7}&\longmapsto p_{1,2},&\quad x_{12}&\longmapsto p_{3,4},&\quad x_{13}&\longmapsto p_{0,3},&\quad x_{14}&\longmapsto p_{0,4},&\quad x_{15}&\longmapsto p_{0,5},\\
x_{16}&\longmapsto p_{2,4},&\quad x_{17}&\longmapsto -p_{0,2},&\quad x_{19}&\longmapsto p_{4,5},&\quad x_{20}&\longmapsto p_{1,5},&\quad x_{23}&\longmapsto p_{1,4},&\quad x_{24}&\longmapsto -p_{3,5},\\
x_{26}&\longmapsto p_{2,5},&\quad x_{28}&\longmapsto -p_{2,3},&\quad x_{29}&\longmapsto p_{0,1}.
\end{alignat*}
transforms the ideal $J_{1321}$ into the  Pl\"ucker ideal (\ref{eq-pluckerideal}).

There are three non-Borel monomial ideals of $\Bbbk[X_1,X_2,X_3]$ of colength $7$; see Appendix~\ref{sec:appendix-changevariable-NonBorelideals}.  However, such an explicit fractional change of variables is too difficult to find in these cases. We can only state the following conjecture.

We say that a torus action on the cone $\widehat{G}(2,6)$ is \emph{standard} if it is induced by an action on the tangent space at the vertex that preserves the Pl\"ucker ideal.

\begin{conjecture}\label{conj-existence-equivariant-iso-exdim6}
Let $I_{\lambda}$ be a non-Borel monomial ideal of\, $\Bbbk[X_1,X_2,X_3]$ with extra dimension 6. Then there exist a $T$-stable open subset $U$ of\, $0\in \Spec(\mathbb{A}_{\lambda})$ and an equivariant open immersion $f\colon U\hookrightarrow \widehat{G}(2,6)\times \mathbb{A}^9$, where $\widehat{G}(2,6)$ is equipped with a standard $T$-action.
\end{conjecture}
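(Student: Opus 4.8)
The plan is to split the statement into two steps: first recover the ungraded local model, then promote the isomorphism to a $T$-equivariant one. For the first step, Lemma \ref{lem-transitive-open-nbd} gives $g\in\mathbb{T}(3)(\Bbbk)$ carrying $Z_{I_\lambda}$ into the Haiman neighbourhood $U_{I_p}$ of a Borel monomial ideal $I_p$ of the same colength $n$; since $g$ acts by an automorphism of $\mathrm{Hilb}^n(\mathbb{A}^3)$, the point $g.Z_{I_\lambda}$ still has extra dimension $6$, and upper semicontinuity of the tangent dimension forces $I_p$ to have extra dimension $\ge 6$. One then wants $I_p$ to have extra dimension exactly $6$ and $\Spec(A_{I_p})$ to be a trivial affine fibration over $\widehat{G}(2,6)$, so that $\Spec(A_\lambda)$ is locally an open subscheme of $\widehat{G}(2,6)\times\mathbb{A}^{3n-9}$ (equal to the $\widehat{G}(2,6)\times\mathbb{A}^9$ of the statement when $n=6$). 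For $n\le 7$ this is exactly what Sections \ref{sec:3D-131}, \ref{sec:3D-132} and the appendices \ref{sec:3D-141}--\ref{sec:3D-232} supply, giving Proposition \ref{prop-intro-localstructure-extradim6}; for larger colength one would first need the corresponding statement for extra-dimension-$6$ Borel ideals, which I would try to reach by the elimination trick of Section \ref{sec:3D-131} together with an induction on colength using Lemma \ref{lem-lower-dim-partition} and the pyramid analysis of Section \ref{sec:pyramids}.

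For the second step I would imitate the treatment of $\lambda_{1311}$. Run Algorithm \ref{alg-step0-Haiman} to present $A_\lambda=\Bbbk[x_1,\dots,x_N]/\mathcal{H}'_\lambda$; eliminate the surplus variables down to the embedding dimension $3n+6$, inverting only polynomials with nonzero constant term (these are units in $\widehat{A}_\lambda$, so the completed local ring is unaffected); then look for a weighted-homogeneous, in general fractional, automorphism of the resulting localized polynomial ring that carries $\mathcal{H}'_\lambda$ to the ideal of the fifteen Plücker quadrics (\ref{eq-pluckerideal}) in fifteen of the variables, with the others free. This search is well-posed because by Lemma \ref{lem-generatedbyeigenpolynomials-1} the ideal is generated by $T$-semi-invariants and by Corollary \ref{lem-description-cotangentspace} the cotangent weights at $0$ are known, so any candidate change of variables respects the weight grading and lies, in each total degree, in a finite-dimensional family; since a non-equivariant change of variables exists by the first step, I would extract an equivariant one by a Zariski-density/specialisation argument over the torus. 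Once the ideal is in Plücker form, writing the fifteen weights $\mathbf{w}(x_i)$ as $a_i+a_j$ with $1\le i<j\le 6$ produces six characters $a_1,\dots,a_6$ of $T$ realising a standard action on $\widehat{G}(2,6)$, and Stanley's criterion (Theorem \ref{thm-Stanley-Gorenstein}) serves as a check on the resulting equivariant Hilbert function.

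The crux — and the reason this is only a conjecture — is the uniform production of the weighted-homogeneous change of variables, equivalently the assertion that the $T$-action on $\Spec(A_\lambda)\cong\widehat{G}(2,6)\times\mathbb{A}^{3n-9}$ is conjugate to a standard one. I would divide it into: (a) linearising the induced $T$-action on the singular locus $\cong\mathbb{A}^{3n-9}$, which has $0$ as its unique fixed point but for which linearisability of a torus action on affine space is not automatic; (b) checking that the transverse cone-over-$G(2,6)$ slice has cotangent weights of the shape $a_i+a_j$ — a combinatorial constraint on $\mathrm{glo}(\lambda)$ to be verified directly against Corollary \ref{lem-description-cotangentspace}; and (c) a rigidity step showing that the Plücker ideal is the unique $T$-invariant ideal with the prescribed leading terms, so that once the weights are standard the change of variables is forced (this is the argument that replaces the Gröbner computation for $\lambda_{131}$). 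I expect (a), together with controlling the fractional denominators produced by the elimination (as in Section \ref{sec:change-var-1311}), to be the real obstacles; a more conceptual ingredient — a deformation-theoretic rigidity of $\widehat{G}(2,6)$ as a model singularity, or a direct $\mathrm{GL}_3\times T$-equivariant description of the extra-dimension-$6$ locus of $\mathrm{Hilb}^n(\mathbb{A}^3)$ — would most likely be needed to complete the proof.
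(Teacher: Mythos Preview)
This statement is a \emph{conjecture} in the paper, not a theorem; the author explicitly leaves it open, writing that ``in principle one can solve conjecture~\ref{conj-existence-equivariant-iso-exdim6} in a similar way'' to the brute-force fractional change of variables carried out for $\lambda_{1311}$ in Appendix~\ref{sec:change-var-1311}, ``but we hope that there is a more conceptual approach.'' There is therefore no paper proof to compare against.

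Your two-step plan matches the paper's own discussion. Step~1 (the non-equivariant local model) is exactly Proposition~\ref{prop--localstructure-extradim6}, proved in the paper for $n\le 7$; one refinement to your argument there: the Borel ideal $I_p$ in the orbit closure of $Z_{I_\lambda}$ need not itself have extra dimension~$6$ --- for $n=7$ it can be $I_{\lambda_{1321}}$ with extra dimension~$8$ --- and then the conclusion comes not from the tripod appendices but from the chart-by-chart analysis of Proposition~\ref{prop-jacF1321-isolated} (and Appendix~\ref{sec:appendix-change-var-jacF1321}), which shows that every extra-dimension-$6$ point of $\Spec(A_{\lambda_{1321}})$ already sits in a $\widehat{G}(2,6)$-fibration. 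Step~2, the equivariant upgrade, is the genuinely open part; your algorithmic route (Algorithm~\ref{alg-step0-Haiman} followed by a weighted-homogeneous fractional change of variables) is precisely what the paper suggests, and your conceptual outline --- linearizing the $T$-action on the $\mathbb{A}^{3n-9}$ factor, matching the fifteen Pl\"ucker weights against Corollary~\ref{lem-description-cotangentspace}, invoking a rigidity of the Pl\"ucker ideal --- goes further than anything the paper attempts. The obstacles you single out, linearizability of a torus action on affine space and control of the denominators produced by elimination, are real and are exactly where the paper stops.
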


Once we know the existence of such an equivariant isomorphism, to compute the equivariant Hilbert function of $\mathbb{A}_{\lambda}$, it suffices to determine the standard action on $\widehat{G}(2,6)$ and the action on $\mathbb{A}^9$ induced by $f$. We do this by considering only the degree at most $2$ terms of the Haiman equations when we eliminate the variables. That is, we modify Algorithm~\ref{alg-step0-Haiman} by \emph{cutting off} the monomials of degree at least $3$ appearing in the resulting equations in every step. In this way, assuming Conjecture~\ref{conj-existence-equivariant-iso-exdim6}, we compute the equivariant Hilbert functions of $A_{\lambda_{1411}}$, $A_{\lambda_{2311}}$, and $A_{\lambda_{11311}}$ in Appendix~\ref{sec:appendix-changevariable-NonBorelideals}.

\subsection{Phenomenology of the local structures}\label{sec:conjectures}
In this subsection, we discuss several conjectures observed from the computations in the previous sections and Appendices~\ref{sec:appendix-changevariable-Borelideals} and~\ref{sec:appendix-changevariable-NonBorelideals}.

\subsubsection{Critical locus}\label{sec:crit-locus}

\begin{conjecture}\label{conj-existence-superpotential}
Let $\lambda$ be a $3$-dimensional partition of $n$ such that $I_{\lambda}$ is a Borel ideal of\, $\Bbbk[X_1,X_2,X_3]$. Then there exists a regular function $F_{\lambda}$ on the tangent space of\, $\Hilb^n(\mathbb{A}^3)$ at $Z_I$  such that the Haiman neighborhood  $\Spec(A_\lambda)$ is isomorphic to the critical locus of\, $F_\lambda$.
\end{conjecture}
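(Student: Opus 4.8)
The plan is to combine the global ``noncommutative'' presentation of $\mathrm{Hilb}^n(\mathbb{A}^3)$ as a critical locus with an explicit Morse-type reduction of Haiman's equations, using $T$-equivariance (together with the ambient $\mathrm{GL}_3$-action) to force algebraicity; the hypothesis that $I_\lambda$ be Borel is what keeps the reduction denominator-free and supplies a contracting cocharacter. As a first observation, a \emph{formal} superpotential always exists, for any monomial ideal: one has $\mathrm{Hilb}^n(\mathbb{A}^3)=\mathrm{Crit}(W_n)$, where $W_n(A,B,C,v)=\mathrm{Tr}\big(A[B,C]\big)$ is a regular function on the smooth variety of stable framed triples of $n\times n$ matrices modulo $\mathrm{GL}_n$; equivalently, the deformation complex of $[Z_{I_\lambda}]$ carries, thanks to the Calabi--Yau structure of $\mathbb{A}^3$, a cyclic $L_\infty$-structure, the cyclic pairing being Serre duality $\Ext^1(\mathcal{I}_Z,\mathcal{I}_Z)\times\Ext^2(\mathcal{I}_Z,\mathcal{I}_Z)\to\Bbbk$. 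Hence its minimal model is encoded by a formal power series $\widehat{F}_\lambda$ on $T_{Z_{I_\lambda}}\mathrm{Hilb}^n(\mathbb{A}^3)$ with vanishing $2$-jet and $\widehat{\mathcal{O}}_{\mathrm{Hilb},Z_{I_\lambda}}\cong\Bbbk[[T_{Z_{I_\lambda}}]]/\mathrm{Jac}(\widehat{F}_\lambda)$; this is essentially the (conjectural) statement of Hsu \cite{Hsu16} for pyramids. The content of Conjecture \ref{conj-existence-superpotential} beyond this is then: (a) that $\widehat{F}_\lambda$ is the completion of a genuine regular function $F_\lambda$, and (b) that the critical-locus description already holds over the Zariski neighborhood $\Spec(A_\lambda)$.

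The concrete route to (a) is to make the proof of Proposition \ref{prop-potential-pyramid} and the computations of \S\ref{sec:3D-131}--\S\ref{sec:3D-1321} uniform. Starting from Haiman's generators (\ref{eq-Huibregtse-1})--(\ref{eq-Huibregtse-2}), apply Algorithm \ref{alg-step0-Haiman} to pass to a presentation $A_\lambda\cong\Bbbk[x_1,\dots,x_d]/\mathcal{H}'_\lambda$ with $d=\dim_\Bbbk T_{Z_I}$. Here the Borel shape of $\lambda$ (the staircase of $\mathrm{glo}(\lambda)$) should guarantee that every ``simple elimination'' in the algorithm has a unit leading coefficient, so no denominators are introduced --- exactly the step that breaks for non-Borel ideals (compare the fractions forced in \S\ref{sec:3D-1311}). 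Next, carry out the splitting lemma by hand as in Steps $1$--$3$ there: isolate the quadratic part of $\mathcal{H}'_\lambda$ and absorb the higher-order terms by a linear change of coordinates followed by a sequence of unipotent isomorphisms (Definition \ref{def-unipotent}). The crucial structural assertion, to be verified by bookkeeping on the glove, is that the resulting minimal system is \emph{cyclically symmetric}, i.e.\ each minimal equation is a partial derivative of a single function: the nonlinear Haiman relations come from the oriented triangles in a plane projection of $\mathrm{glo}(\lambda)$ --- the mechanism already used to produce $F_{\mathrm{pyr}_3(n)}$ and $F_{1321}$ --- and triangle relations are manifestly of Jacobian type; this is what makes (\ref{eq-5points-quadratic-equations-final}) and (\ref{eq-132-quadratic-equations-final}) the Jacobian ideal of the Pfaffian cubic cutting out $\widehat{G}(2,6)\times\mathbb{A}^m$.

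For algebraicity and for (b): since $W_n$ is $T$-semi-invariant, so is $\widehat{F}_\lambda$, hence it is a sum of weight-homogeneous terms of a single weight $\chi$. Using the $\mathrm{GL}_3$-action together with the Borel property (so that $Z_{I_\lambda}$ is fixed by a Borel subgroup $\mathbb{T}(3)$, cf.\ Lemma \ref{lem-transitive-open-nbd}), one chooses a one-parameter subgroup $\nu$ pairing strictly positively with all tangent weights $w_1,\dots,w_d$; then only finitely many monomials can have $w$-weight $\chi$, so $\widehat{F}_\lambda$ is a polynomial $F_\lambda$. Finally one upgrades $\widehat{\mathcal{O}}_{\mathrm{Hilb},Z_I}\cong\widehat{\mathrm{Crit}(F_\lambda)}$ to an isomorphism of schemes $\Spec(A_\lambda)\cong\mathrm{Crit}(F_\lambda)$ by the dimension argument at the end of \S\ref{sec:3D-131}: both sides are closed subschemes of $T_{Z_I}$ defined by equations comparable degree by degree, $\Spec(A_\lambda)$ is integral of dimension $3n$ lying on the main component (Theorem \ref{thm-monomialideal-smoothable}), and $\mathrm{Crit}(F_\lambda)$ contains $\Spec(A_\lambda)$ with the same tangent space, forcing equality after comparing dimensions.

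The main obstacle is the middle step: proving, uniformly over all Borel $\lambda$, that the Morse reduction of $\mathcal{H}'_\lambda$ terminates in a cyclically symmetric system --- equivalently, that the cyclicity of the minimal $L_\infty$-structure on the deformation complex is visible on Haiman's presentation, not merely abstractly. The existence of the contracting cocharacter $\nu$, needed to pass from a formal to a regular $F_\lambda$, is the second genuine place the Borel hypothesis enters, and is precisely what one cannot expect in the non-Borel case, where even the Zariski-local critical-locus description seems to require inverting functions (\S\ref{sec:3D-1311}).
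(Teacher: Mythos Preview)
This statement is labelled a \emph{conjecture} in the paper and is not proved there. The paper supplies only evidence: the explicit case-by-case computations of \S\ref{sec:3D-121}--\S\ref{sec:3D-1321} and Appendices~A--B verify it for all Borel ideals of colength $\le 7$, and Proposition~\ref{prop-potential-pyramid} settles the pyramid case with an explicit $F_\lambda$. There is no general argument to compare your proposal against.

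Your proposal is a coherent plan of attack, and you correctly flag the crux yourself: the claim that the minimal presentation of $\mathcal{H}'_\lambda$ obtained after Morse reduction is cyclically symmetric (i.e.\ is a Jacobian ideal) is the substance of the conjecture, and you leave it unproved. Two further steps are also not justified. First, the existence of a one-parameter subgroup $\nu$ with $\langle\nu,w_i\rangle>0$ for all tangent weights $w_i$ is asserted on the strength of the Borel property, but no argument is given; the paper checks this only ad hoc (for $\lambda_{1321}$, in the proof of Proposition~\ref{prop-local-properties-A1321-1}), and it is not obvious in general that the tangent weights at a Borel-fixed point of $\mathrm{Hilb}^n(\mathbb{A}^3)$ lie in an open half-space. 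Second, the passage from the formal isomorphism $\widehat{\mathcal{O}}_{\mathrm{Hilb},Z_I}\cong \Bbbk[[T_{Z_I}]]/\mathrm{Jac}(\widehat F_\lambda)$ to a Zariski-local one is handwaved: the minimal-model construction produces only a \emph{formal} coordinate change, and you have not argued that it can be taken $T$-equivariantly, which is what would be needed to invoke the contracting $\nu$ and algebraize. The ``dimension argument'' you borrow from \S\ref{sec:3D-131} works there because an explicit \emph{algebraic} change of variables has already been exhibited, producing a genuine closed immersion; absent that, a formal isomorphism of completions does not give a containment $\Spec(A_\lambda)\subset\mathrm{Crit}(F_\lambda)$ of subschemes of $T_{Z_I}$, so the comparison of dimensions has nothing to bite on.
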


It is known (see, \textit{e.g.}, \cite[Proposition 3.1]{BBS13}) that $\Spec(A_\lambda)$ is a critical locus in a regular scheme of  dimension larger than the embedded dimension of $A_\lambda$.

By the results in Sections~\ref{sec:3D-121}--\ref{sec:3D-1321} and \ref{sec:3D-1311}, and Appendices~\ref{sec:appendix-changevariable-Borelideals} and~\ref{sec:change-var-1311}, the conjecture is true for $n\leq 7$. By Theorem~\ref{prop-potential-pyramid}, it holds for pyramids.  By Lemma~\ref{lem-transitive-open-nbd}, the conjecture implies that for any ideal $I$ of colength $n$ of $\Bbbk[X_1,X_2,X_3]$, there exists a regular function $F$ on the tangent space of $\Hilb^n(\mathbb{A}^3)$ at $Z_I$ such that an open neighborhood of $Z_I$ is isomorphic to an open neighborhood of the critical locus of $F$.

Such statements are not true in dimension greater than $3$.

\subsubsection{Monomial ideals with extra dimension 6}\label{sec:tripodideals}
By abuse of notation, we say a monomial ideal $I$ of $\Bbbk[X_1,X_2,X_3]$ of finite colength $n$ is a monomial ideal with  extra dimension $d$ if the extra dimension of $\Hilb^n(\mathbb{A}^3)$ at $Z_I$ equals $d$. As we have noted after Theorem~\ref{thm-monomialideal-smoothable}, if $I$ is a monomial ideal with extra dimension 0, then $\Hilb^n(\mathbb{A}^3)$ is smooth at $Z_I$. Inspired by the results in the previous sections and the appendices, and the verifications on computers, we have the following conjecture.

\begin{conjecture}\label{conj:smallest-extra-dim}
For $n\geq 4$,
the smallest nonzero extra dimension of points on $\Hilb^n(\mathbb{A}^3)$  is $6$.
\end{conjecture}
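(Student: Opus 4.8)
The plan is to split the statement into two parts: (i) for every $n\geq 4$ the value $6$ is attained as a nonzero extra dimension of a point of $\mathrm{Hilb}^n(\mathbb{A}^3)$, and (ii) no point has extra dimension in $\{1,2,3,4,5\}$. Part (i) is routine. For $n=4$ the fat point $Z_0=\Spec\ \Bbbk[X_1,X_2,X_3]/(X_1,X_2,X_3)^2$ is the pyramid point $\mathrm{pyr}_3(2)$, and by \S\ref{sec:3D-121} (Katz's theorem) $\Spec(A_{\lambda_{121}})\cong\widehat{G}(2,6)\times\mathbb{A}^3$, which has dimension $12=3\cdot 4$ and embedding dimension $18=12+6$, so the extra dimension there equals $6$. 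For $n>4$ take $Z=Z_0\sqcup\{p_1,\dots,p_{n-4}\}$ with $p_1,\dots,p_{n-4}$ distinct points of $\mathbb{A}^3\setminus\{0\}$: the completed local ring of $\mathrm{Hilb}^n(\mathbb{A}^3)$ at $[Z]$ is the completed tensor product of the completed local rings at the points of $\mathrm{Supp}(Z)$, each reduced point contributing a power series ring, so both the Zariski tangent dimension and $3n$ are additive and the extra dimension at $[Z]$ is again $6$. This same additivity reduces (ii) to the case where $Z$ is supported at a single point, i.e. $A=R/I$ with $R=\Bbbk[X_1,X_2,X_3]$ is local Artinian of length $n$.

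For (ii) I would first note that if $\mathrm{Hilb}^n(\mathbb{A}^3)$ is singular at $[Z_I]$ then $I$ is not a local complete intersection (those give smooth points), hence $\mu(I)\geq 4$, where $\mu$ is the minimal number of generators. Taking a minimal presentation $R^s\xrightarrow{\Sigma}R^{\mu}\to I\to 0$, all entries of $\Sigma$ lying in $\mathfrak m$, and applying $\mathrm{Hom}_R(-,A)$ yields $\dim_{\Bbbk}\mathrm{Hom}_R(I,A)=\mu n-\rho$, where $\rho$ is the rank of the induced $\Bbbk$-linear map $A^{\mu}\to A^{s}$ given by $\Sigma$ modulo $I$; since $\mathrm{Hom}_R(I,A)$ is the tangent space, the extra dimension at $[Z_I]$ equals $(\mu-3)n-\rho$, and the goal becomes: whenever this integer is nonzero it is $\geq 6$. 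Colengths $n\leq 7$ are already disposed of by the explicit computations of \S\ref{sec:3D-121}--\S\ref{sec:3D-1311} and the appendices (which in passing fix the normal forms of $\Sigma$ in the minimal cases $\mu=4,5$ there). For general $n$ the plan is to combine the Buchsbaum--Eisenbud structure theorem for perfect codimension-$3$ ideals — Pfaffian presentations when $A$ is Gorenstein ($\mu=5,7,\dots$) and the classification of perfect almost complete intersections when $\mu=4$ — with a lower bound on the corank $\mu n-\rho$, together with Conjecture \ref{conj-existence-superpotential} (known for pyramids and for $n\leq 7$), which presents $\Spec(A_{\lambda})$ as a critical locus $\mathrm{Crit}(F)$ on a smooth chart of dimension $3n+e$ with $F$ of order $\geq 3$ after the splitting lemma; since $\mathrm{Hilb}^n(\mathbb{A}^3)$ is generically reduced along its main component (Theorem \ref{thm-monomialideal-smoothable}), $\mathrm{Crit}(F)$ is generically reduced along a component of codimension $e$, and one wants this, plus the symmetry of the obstruction theory, to force $e\notin\{1,\dots,5\}$.

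The hard part will be exactly this last step, uniformly in $n$. Degeneration is the wrong tool: every point of $\mathrm{Hilb}^n(\mathbb{A}^3)$ flows under a generic one-parameter subgroup to a monomial point, but the tangent dimension can only increase in the limit, so the conjecture for monomial ideals gives no lower bound on the extra dimension of a nearby non-monomial point. Abstract critical-locus geometry does not suffice either: the cubic $F=X_1X_2X_3$ on $\mathbb{A}^N$ has \emph{reduced} critical locus $V(X_1X_2,X_1X_3,X_2X_3)\times\mathbb{A}^{N-3}$ of codimension $2$, so ``generically reduced critical locus of an order-$\geq 3$ potential with a symmetric obstruction theory'' is by itself compatible with extra dimension $2$; what must be ruled out is that such local models — which are reducible at the point in question — ever occur on $\mathrm{Hilb}^n(\mathbb{A}^3)$. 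This is where Hilbert-scheme-specific input is unavoidable, and the argument must cover $n\geq 12$ as well, where $\mathrm{Hilb}^n(\mathbb{A}^3)$ may itself be reducible. A concrete sub-goal that would suffice is to classify directly the local Artinian $A=R/I$ with $(\mu(I)-3)n-\rho\leq 5$ and to show that the quantity then vanishes.
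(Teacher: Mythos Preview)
The statement you are attempting is presented in the paper as a \emph{conjecture}, not a theorem; the paper gives no proof, only the remark that it is ``inspired by the results in the previous sections and the appendices, and the verifications on computers''.  There is therefore no proof in the paper to compare against.

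Your part (i) is correct and is the easy half.  Your reduction of part (ii) to subschemes supported at a single point is also correct, since the extra dimension is additive over the connected components of the support and vanishes for components of length $\leq 3$.  Your claim that $n\leq 7$ is already settled by the paper is justified, but it requires a little more than you cite: one needs the covering of $\mathrm{Hilb}^n(\mathbb{A}^3)$ by the Haiman charts $U_\lambda$, together with Lemma~\ref{lem-transitive-open-nbd} to reduce to Borel $\lambda$, the explicit descriptions of those charts in \S\ref{sec:3D-121}--\S\ref{sec:3D-132} and Appendix~\ref{sec:appendix-changevariable-Borelideals}, and finally Proposition~\ref{prop-jacF1321-isolated} for the chart at $\lambda_{1321}$.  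Put together these give that every point of $\mathrm{Hilb}^n(\mathbb{A}^3)$, $n\leq 7$, has extra dimension $0$, $6$, or $8$.

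For $n\geq 8$ you correctly identify the problem as open and are honest about the obstacles.  Your observation that semicontinuity goes the wrong way is exactly right, and your example $F=X_1X_2X_3$ is well chosen: it shows that the abstract critical-locus/symmetric-obstruction-theory formalism alone cannot exclude small nonzero extra dimensions, so any argument must use something specific to the Hilbert scheme.  Your proposed route via Buchsbaum--Eisenbud is reasonable as a first step, but note that the Pfaffian structure theorem covers only the Gorenstein case, and there is no comparably clean classification for general perfect codimension-$3$ ideals with $\mu\geq 5$; most of the difficulty would lie in the non-Gorenstein regime.  As matters stand, the conjecture is genuinely open for $n\geq 8$, and your proposal does not close that gap --- nor does it claim to.
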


As one sees in Sections~\ref{sec:3D-121}--\ref{sec:3D-132} and~\ref{sec:3D-1311} and the appendices, the monomial ideals with extra dimension equal to $6$ have similar shapes. A typical case of ideals of this shape is
\[
I_1=\left(X_1^{n_1},X_1 X_2,X_1 X_3, X_2^{n_2},X_2 X_3,X_3^{n_3}\right),
\]
where $n_1,n_2,n_3\geq 2$. Using Corollary~\ref{cor-description-cotangentspace}, an easy counting shows that the extra dimension of $I_1$ is $6$. Graphically, its corresponding $3$-dimensional partition looks like
\begin{center}
\begin{tikzpicture}[x=(220:0.6cm), y=(-40:0.6cm), z=(90:0.42cm)]

\foreach \m [count=\y] in {{5,1,1,1,1,1},{1},{1},{1}}{
  \foreach \n [count=\x] in \m {
  \ifnum \n>0
      \foreach \z in {1,...,\n}{
        \draw [fill=gray!30] (\x+1,\y,\z) -- (\x+1,\y+1,\z) -- (\x+1, \y+1, \z-1) -- (\x+1, \y, \z-1) -- cycle;
        \draw [fill=gray!40] (\x,\y+1,\z) -- (\x+1,\y+1,\z) -- (\x+1, \y+1, \z-1) -- (\x, \y+1, \z-1) -- cycle;
        \draw [fill=gray!5] (\x,\y,\z)   -- (\x+1,\y,\z)   -- (\x+1, \y+1, \z)   -- (\x, \y+1, \z) -- cycle;  
      }
 \fi
 }
}    

\end{tikzpicture}
\end{center}
This picture inspires the following name.

\begin{definition}
A monomial ideal $I$ of $\Bbbk[X_1,X_2,X_3]$ is called a \emph{tripod} ideal if $I$ has a set of minimal generators of the form
\[
X_1^{a},X_1^{b} X_2^{c},X_1^{d} X_3^{e}, X_2^{f},X_2^{g} X_3^{h},X_3^{i}.
\]
\end{definition}

Note that the condition \emph{minimal} puts restrictions on these exponents.

There exist monomial ideals $I$ of $\Bbbk[X_1,X_2,X_3]$ with extra dimension $6$ that are not  tripod ideals, for example, 
\[
I_2=\left(X_1^{3},X_1^{2}X_2,X_1 X_2^{2},X_1 X_2 X_3,X_2^{3},X_2^{2}X_3, X_3^{2}\right),\
Z_{I_2} \in \Hilb^{10}(\mathbb{A}^3).
\]
The corresponding $3$-dimensional partition is
\begin{center}
\begin{tikzpicture}[x=(220:0.6cm), y=(-40:0.6cm), z=(90:0.42cm)]

\foreach \m [count=\y] in {{2,2,2},{2,1},{1}}{
  \foreach \n [count=\x] in \m {
  \ifnum \n>0
      \foreach \z in {1,...,\n}{
        \draw [fill=gray!30] (\x+1,\y,\z) -- (\x+1,\y+1,\z) -- (\x+1, \y+1, \z-1) -- (\x+1, \y, \z-1) -- cycle;
        \draw [fill=gray!40] (\x,\y+1,\z) -- (\x+1,\y+1,\z) -- (\x+1, \y+1, \z-1) -- (\x, \y+1, \z-1) -- cycle;
        \draw [fill=gray!5] (\x,\y,\z)   -- (\x+1,\y,\z)   -- (\x+1, \y+1, \z)   -- (\x, \y+1, \z) -- cycle;  
      }
 \fi
 }
}    

\end{tikzpicture}
\end{center}
But $I_2$ is not a Borel ideal. In fact, after checking monomial ideals of colength at most $25$,  we make the following conjecture.

\begin{conjecture}\label{conj:exta-dim-6-implies-tripod}
A Borel ideal of\, $\Bbbk[X_1,X_2,X_3]$ with extra dimension $6$ is a tripod ideal.
\end{conjecture}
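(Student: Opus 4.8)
## Proof proposal for the conjecture on Borel tripod ideals

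The statement to prove is: \emph{a Borel ideal of $\Bbbk[X_1,X_2,X_3]$ with extra dimension $6$ is a tripod ideal}. Since this is labeled a conjecture in the paper, what I can offer is a strategy; presumably the paper itself does not prove it, so the discussion below is the shape a proof would take.

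The plan is to make the extra-dimension count completely explicit in terms of the shape of the $3$-dimensional partition $\lambda$, using Corollary~\ref{lem-description-cotangentspace}. That corollary gives a combinatorial model for the cotangent space at $Z_{I_\lambda}$: the basis is indexed by the pairs $(i,j)$ with $i\in\lambda$, $j\in\mathrm{glo}(\lambda)$, modulo the three families of equivalence relations (the two ``shift'' relations along $e_a$ and $e_a-e_b$, and the ``kill'' relation when a coordinate falls out of $\mathbb Z_{\ge 0}^3$). So the extra dimension is
\[
\mathrm{extra.dim}_{Z_{I_\lambda}}\mathrm{Hilb}^n(\mathbb A^3)
= \#\{\text{equivalence classes of }(i,j)\} - 3n.
\]
First I would reduce, using the Borel hypothesis, to a clean description of $\mathrm{glo}(\lambda)$ and of the equivalence classes. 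The point of the Borel condition (Definition~\ref{def-Borelideal}(ii)) is that it forces $\lambda$ to be ``compressed'' toward the favored coordinate directions, which in turn pins down which adjacent pairs in $\mathrm{Adj}(\mathrm{glo}(\lambda))$ are present; this is what makes the count tractable. I would organize the classes by which ``face'' of $\lambda$ they attach to, and extract from the relations a formula of the form $\text{(extra dim)} = \sum_{\text{faces/edges}} (\text{local contributions})$, where each contribution is a nonnegative integer determined by how non-tripod-like $\lambda$ is along that face.

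The second step is the structural heart: show that each local contribution vanishes \emph{unless} $\lambda$ has, along the relevant coordinate plane, a genuine ``two-dimensional bump'' — i.e. the minimal generators involving only $X_a,X_b$ are not a single monomial $X_a^p X_b^q$ of staircase length $2$ but rather a longer partition — and that each such bump contributes at least some fixed positive amount. Combined with the symmetric analysis for mixed generators, one concludes: if the extra dimension is exactly $6$, then along each of the three coordinate planes the generator set reduces to at most the tripod pattern $X_1^a$, $X_1^b X_2^c$, $X_1^d X_3^e$, $X_2^f$, $X_2^g X_3^h$, $X_3^i$ (with the minimality constraints), and moreover there is no interior generator involving all of $X_1,X_2,X_3$ (the example $I_2$ shows such an interior generator is exactly what escapes the Borel hypothesis, not the extra-dimension-$6$ hypothesis). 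The $6$ is then accounted for as $3$ directions $\times\ 2$, matching the $\widehat G(2,6)\times\mathbb A^9$ local models found in Sections~\ref{sec:3D-121}--\ref{sec:3D-1311}.

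The main obstacle I anticipate is precisely the bookkeeping in the second step: the equivalence relations in Corollary~\ref{lem-description-cotangentspace} interact across faces (a coordinate $c_i^j$ on one face can be identified with one on an adjacent face), so ``local contributions'' are not a priori independent, and one must rule out cancellations and over/undercounting at the edges and corners of $\lambda$ where two coordinate planes meet. Getting a genuinely additive decomposition — or at least a provable lower bound $\text{(extra dim)} \ge 6 + (\text{positive penalty for each non-tripod feature})$ — is the crux. A reasonable route to control this is a deformation/degeneration argument: use Lemma~\ref{lem-transitive-open-nbd} and the $\mathrm{GL}(3)$-action to move $Z_{I_\lambda}$ toward simpler Borel ideals, track how the tangent space can only jump (semicontinuity of $\dim T_{Z}$ on the Hilbert scheme), and induct on $n$ — a non-tripod Borel $\lambda$ with extra dimension $6$ would deform to a smaller Borel ideal still carrying a non-tripod feature, yielding by induction extra dimension $>6$ nearby, contradicting upper-semicontinuity near a point of extra dimension $6$. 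Making the inductive step precise (which ``simpler'' ideal, and why the non-tripod feature survives the degeneration) is where the real work lies, and is presumably why the statement is left as a conjecture rather than a theorem.
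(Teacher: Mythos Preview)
The paper does not prove this statement; it is recorded as a conjecture supported only by computer verification for all monomial ideals of colength $\leq 25$. You correctly anticipated this and framed your write-up as a strategy rather than a proof, which is appropriate.

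Your first line of attack, a direct combinatorial count via Corollary~\ref{lem-description-cotangentspace}, is the natural one, and you are right that the real difficulty is the cross-face interaction of the equivalence relations: the classes are not localized to faces, so an additive ``penalty'' formula is not immediate. This is an honest assessment and matches why the paper leaves the statement open.

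Your second line, the semicontinuity/induction argument, has a genuine gap as written. Two issues: first, degenerations inside $\mathrm{Hilb}^n(\mathbb{A}^3)$ preserve $n$, so there is no ``smaller'' Borel ideal to induct to; Lemma~\ref{lem-transitive-open-nbd} moves points within the same Hilbert scheme, not to one of lower colength. Second, the direction of semicontinuity is against you: $\dim T_Z$ is \emph{upper} semicontinuous, so specializing a point of extra dimension $6$ to another fixed point can only \emph{raise} the extra dimension, which is entirely compatible with the limit having extra dimension $>6$. There is no contradiction to extract. If you want to salvage a specialization argument, you would need to go the other way---show that a hypothetical non-tripod Borel ideal of extra dimension $6$ is itself a specialization of something forced (by its non-tripod feature) to already have extra dimension $>6$---but it is not clear why such a generization should exist or stay Borel.
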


Conversely, let us consider which tripod ideals have extra dimension equal to $6$. First we select the Borel tripod ideals.

\begin{lemma}
A tripod ideal $I=(X_1^{a},X_1^{b} X_2^{c},X_1^{d} X_3^{e}, X_2^{f},X_2^{g} X_3^{h},X_3^{i})$ is Borel fixed in the lexicographic order $X_1\succ X_2\succ X_3$ if and only if 
\[
c=e=g=h=1,\quad f=i=2.
\]
\end{lemma}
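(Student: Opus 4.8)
The plan is to recall Definition \ref{def-Borelideal}(ii): a monomial ideal $I$ is Borel fixed for the order $X_1\succ X_2\succ X_3$ (i.e. $X_3\prec X_2\prec X_1$) precisely when, for every monomial $f\in I$ divisible by $X_j$, one has $f\cdot X_i/X_j\in I$ whenever $X_i\succ X_j$. Concretely, $X_3\mapsto X_2$ and $X_3\mapsto X_1$ and $X_2\mapsto X_1$ are the allowed "raising" moves, and the criterion only needs to be checked on the given minimal generators (if a minimal generator passes the test, every multiple of it does too). So the proof reduces to a finite case analysis on the six exponents $a,b,c,d,e,f,g,h,i$.

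First I would apply the raising moves to each of the three "mixed" generators $X_1^bX_2^c$, $X_1^dX_3^e$, $X_2^gX_3^h$ and to the pure powers $X_1^a,X_2^f,X_3^i$. Starting with $X_2^f$: the move $X_2\mapsto X_1$ sends it to $X_1X_2^{f-1}$, which must lie in $I$; the only generators that can divide a monomial supported on $X_1,X_2$ are $X_1^a$ and $X_1^bX_2^c$, and since $X_1^bX_2^c$ is a minimal generator with $b\ge 1$, divisibility forces $b\le 1$, hence $b=1$, and then $c\le f-1$. Iterating the raising move down from $X_2^f$ (applying $X_2\mapsto X_1$ repeatedly, or rather noting $X_1^bX_2^c\mid X_1X_2^{f-1}$ already gives what we need) pins down $b=1$ and $c\le f-1$; a symmetric argument with $X_3^i$ under $X_3\mapsto X_1$ forces $d=1$ and $e\le i-1$, and under $X_3\mapsto X_2$ forces $g\le 1$, i.e. $g=1$, with $h\le i-1$. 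Next, applying $X_3\mapsto X_2$ to $X_1^dX_3^e=X_1X_3^e$ gives $X_1X_2X_3^{e-1}$, which must be divisible by some generator; checking the candidates ($X_1^bX_2^c=X_1X_2$, $X_1X_3^e$, $X_2^gX_3^h=X_2X_3^h$, $X_3^i$) one sees $X_1X_2\mid X_1X_2X_3^{e-1}$ automatically, so this imposes nothing new, but applying $X_3\mapsto X_1$ to $X_2^gX_3^h=X_2X_3^h$ gives $X_1X_2X_3^{h-1}$, again divisible by $X_1X_2$ — so again nothing new from these. The real constraints come from the minimality hypothesis combined with what we have: once $b=d=g=1$, minimality of $X_1X_2^c$ against $X_1X_3^e$ and against $X_2^f$ etc. must be used to rule out $c,e,h\ge 2$ and to force $f=i=2$.

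The cleanest way to extract the remaining equalities $c=e=h=1$ and $f=i=2$ is as follows. With $b=d=g=1$, consider the raising move $X_2\mapsto X_1$ applied to $X_1X_2^c$: it yields $X_1^2X_2^{c-1}$, which must be in $I$; if $c\ge 2$ this monomial is a proper multiple of $X_1X_2^{c-1}$, contradicting that $X_1X_2^c$ (with $c\ge 2$) is a minimal generator unless $X_1X_2^{c-1}$ is itself already forced to be in $I$ — but it is not among the generators and is not a multiple of $X_3^i$ or $X_1^a$ (the latter only if $a\le 1$, impossible since $X_1^a$ is a minimal generator distinct from the others so $a\ge 2$, and actually $a\ge c+1\ge 2$; one checks $X_1^a\nmid X_1^2X_2^{c-1}$ when $c\ge 2$ would need $a\le 2$, handled separately). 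Pushing this bookkeeping through — and the symmetric versions for $e$ (via $X_3\mapsto X_1$ on $X_1X_3^e$) and for $h$ (via $X_3\mapsto X_2$ on $X_2X_3^h$, which lands in $X_2^f$'s territory) — forces $c=e=h=1$. Finally, with all mixed generators equal to $X_1X_2$, $X_1X_3$, $X_2X_3$, the minimality of $X_2^f$ means $f$ is the least power of $X_2$ not generated by $X_1X_2$, i.e. $f=2$, and likewise $i=2$; and one checks conversely that $I=(X_1^a,X_1X_2,X_1X_3,X_2^2,X_2X_3,X_3^2)$ (with $a\ge 2$) does satisfy criterion (ii), completing the equivalence. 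I expect the main obstacle to be the interlocking minimality arguments in this last step: one must be careful that ruling out $c\ge 2$ uses the presence of $X_1^a$ and the correct relation between $a$ and $c$, and there are a few low-exponent edge cases (e.g. $a=2$) that need to be checked by hand rather than by the general argument.
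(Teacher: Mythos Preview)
You have the direction of the Borel moves reversed. In Definition~\ref{def-Borelideal}(ii) the condition is $f\cdot X_i/X_j\in I$ for all $i\prec j$; with the order $X_1\succ X_2\succ X_3$ (that is, $3\prec 2\prec 1$) the admissible replacements are therefore $X_1\to X_2$, $X_1\to X_3$, $X_2\to X_3$, not your ``raising'' moves $X_2\to X_1$, $X_3\to X_1$, $X_3\to X_2$. This is not cosmetic: with your moves the lemma is actually false. For instance the tripod ideal $I=(X_1^2,X_1X_2,X_1X_3,X_2^2,X_2X_3^2,X_3^3)$ passes your Borel test (check each generator) but has $h=2$, $i=3$; conversely the paper's own example $I_{\lambda_{132}}=(X_1^3,X_1^2X_2,X_1X_3,X_2^2,X_2X_3,X_3^2)$ satisfies $c=e=g=h=1$, $f=i=2$ yet fails your test, since $X_1X_3\cdot\frac{X_1}{X_3}=X_1^2\notin I_{\lambda_{132}}$.

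With the correct moves the verification really is straightforward and the minimality gymnastics you anticipated disappear. Applying $X_1\to X_2$ to the generator $X_1^a$ yields $X_1^{a-1}X_2$; the only listed generator that can divide it is $X_1^bX_2^c$, which forces $c=1$ immediately. Symmetrically $X_1\to X_3$ on $X_1^a$ forces $e=1$. Then $X_1\to X_2$ on $X_1^bX_2$ gives $X_1^{b-1}X_2^2$, forcing $f=2$; likewise $X_1\to X_3$ on $X_1^dX_3$ forces $i=2$. The move $X_2\to X_3$ on $X_1^bX_2$ gives $X_1^bX_3$, forcing $d\le b$ (a constraint the paper's statement tacitly omits); then $X_1\to X_2$ on $X_1^dX_3$ gives $X_1^{d-1}X_2X_3$, and since $b>d-1$ rules out $X_1^bX_2$ as a divisor, only $X_2^gX_3^h$ can help, forcing $g=h=1$. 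The converse is an easy check. Separately, your attempted minimality argument for $c=1$ is confused even on its own terms: the containment $X_1^2X_2^{c-1}\in I$ in no way contradicts $X_1X_2^c$ being a \emph{minimal} generator---minimality only says no other generator divides $X_1X_2^c$, not the reverse.
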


The proof is straightforward from Definition~\ref{def-Borelideal}\eqref{dB-2}. Then we make the following conjecture. 
\begin{conjecture}\label{conj:borelIdeal-extra-dim}
A Borel ideal of the form $I=(X_1^{a},X_1^{b} X_2,X_1^{d} X_3, X_2^{2},X_2 X_3,X_3^2)$ has extra dimension $6$ if and only if at least one of\, $b$ and $d$ is equal to $1$ or $a-1$.
\end{conjecture}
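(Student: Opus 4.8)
The plan is to analyze directly the cotangent space description from Corollary~\ref{lem-description-cotangentspace}, applied to the tripod ideal $I=(X_1^{a},X_1^{b}X_2,X_1^{d}X_3,X_2^{2},X_2X_3,X_3^2)$, which is Borel in the lexicographic order $X_1\succ X_2\succ X_3$. Since the extra dimension equals the dimension of the cotangent space minus $3n$, and since the $3n$ ``trivial'' directions are always present (deformations along the main component), one reduces to counting the equivalence classes of Haiman coordinates $c_i^j$, $i\in\lambda$, $j\in\mu=\mathrm{glo}(\lambda)$, under the relations of Corollary~\ref{lem-description-cotangentspace}, and then subtracting $3n$. For a monomial ideal of colength $n$ this count is entirely combinatorial: it depends only on the shape of the staircase region $\lambda\subset\mathbb{Z}_{\geq0}^3$ and its glove $\mu$. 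So the first step is to write down $\lambda_I$ and $\mu_I$ explicitly in terms of $a,b,d$ — the partition is a ``long arm'' along the $X_1$-axis of length $a$ (heights $\geq 2$ near the origin, determined by $b$ and $d$ in the $X_2$- and $X_3$-directions), plus two short arms of length $2$ along the $X_2$- and $X_3$-axes — and then enumerate the adjacent pairs in $\mu$ and the induced chains of equivalences $c_{i+e_a}^{j+e_a}\sim c_i^j$ and $c_{i+e_a-e_b}^{j+e_a-e_b}\sim c_i^j$.

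The key step is then to show that this combinatorial count is $3n+6$ exactly when $b\in\{1,a-1\}$ or $d\in\{1,a-1\}$, and is strictly larger otherwise. Here is where I expect the genuine content to lie. When $b$ and $d$ are both in the ``interior'' range $2\leq b,d\leq a-2$, the glove acquires extra generators on the top face of the long arm (monomials like $X_1^{b}X_2^{2}$, $X_1^{b\pm 1}X_2$, etc., together with their $X_3$-analogues), and these contribute adjacent pairs that are \emph{not} linked by the equivalence relations to the ``generic'' coordinates; each such independent generator of $\mu$ adds at least one new equivalence class to the cotangent space, pushing the extra dimension above $6$. Conversely, when $b=1$ (the arm is ``straight'' in the $X_2$-direction from the very start) or $b=a-1$ (the step in the $X_2$-direction happens only at the far end, adjacent to the corner $X_1^{a}$), the relevant glove elements collapse under the equivalence relations onto coordinates already accounted for, and one checks by direct bookkeeping that exactly $6$ classes survive beyond the $3n$ generic ones — this matches the tripod picture in the excerpt and the computed cases $\lambda_{131}$, $\lambda_{132}$, $\lambda_{1311}$. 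The same analysis is symmetric in the roles of $X_2$ and $X_3$, so the disjunction ``$b\in\{1,a-1\}$ \emph{or} $d\in\{1,a-1\}$'' emerges because having \emph{one} of the two short arms attached at a ``safe'' position of the long arm already forces all potentially dangerous glove elements into the trivial classes; it is only when \emph{both} attachment heights are interior that a genuinely new independent relation appears.

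Concretely I would organize the proof as: (1) set up notation for $\lambda_I,\mu_I$ as functions of $(a,b,d)$ and record $n=|\lambda_I|=a+2+b'+d'$ for the appropriate correction terms; (2) partition $\mu_I$ into the ``axial'' generators (the standard minimal generators $X_1^{a},X_2^{2},X_3^{2},X_1^{b}X_2,X_1^{d}X_3,X_2X_3$) and the ``boundary'' glove elements sitting on the faces of the arms; (3) for each adjacent pair of axial type, verify that the resulting equivalence classes of $c_i^j$ reproduce precisely the $3n+6$ generic count — this is the routine but lengthy part, which I would do by an explicit table as in Section~\ref{sec:3D-131}; (4) for each boundary glove element that arises when $b$ or $d$ is interior, exhibit a coordinate $c_i^j$ in its fiber that is \emph{not} equivalent to any of the $3n+6$ classes, giving extra dimension $\geq 7$; (5) conversely, when $b\in\{1,a-1\}$ or $d\in\{1,a-1\}$, verify that no such boundary element survives. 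The main obstacle is step (4)–(5): making the ``survives/collapses'' dichotomy precise requires carefully tracking the transitive closure of the equivalence relation, and in particular showing that the $\sim 0$ relation (deleting $c_i^j$ when $j-e_a\in\mu$ but $i-e_a\notin\mathbb{Z}_{\geq0}^r$) kills exactly the right coordinates. I would prove this by induction on the ``depth'' of a glove element measured by how far it sits from the union of the three coordinate axes, using that along the long arm the equivalence $c_{i+e_1}^{j+e_1}\sim c_i^j$ propagates coordinates from the far end back toward the origin, so that a step at height $b$ only creates an obstruction if there is ``room on both sides'' of it, i.e. $2\leq b\leq a-2$.
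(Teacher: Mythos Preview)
The statement you are attacking is labeled a \emph{conjecture} in the paper, and the paper does not attempt a proof: the only evidence given is computer verification for all such ideals of colength $\le 100$. There is therefore no proof in the paper to compare against; you are attempting to resolve something the paper leaves open.

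Your strategy via Corollary~\ref{lem-description-cotangentspace} is the natural one and is exactly how one verifies individual cases. Two concrete problems, however, stand between the sketch and a proof.

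First, the shape of $\lambda_I$ is not quite as you describe. One has
\[
\lambda_I=\{(i,0,0):0\le i<a\}\cup\{(i,1,0):0\le i<b\}\cup\{(i,0,1):0\le i<d\},
\]
so $n=a+b+d$: a single long arm along $X_1$ carrying two \emph{shelves} of lengths $b$ and $d$ at height $1$ in the $X_2$- and $X_3$-directions. There are no separate arms of length $2$ along the $X_2$- and $X_3$-axes.

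Second --- and this is the real gap --- your proposed mechanism for the dichotomy is wrong. A direct count gives $|\mu|=2a+1+\max(b,d)$, which depends only on $\max(b,d)$ and not on whether $b$ or $d$ equals $1$ or $a-1$. No ``extra glove generators'' appear in the interior case: for instance the monomial $X_1^{b}X_2^{2}$ you cite corresponds to $(b,2,0)$, whose only candidate predecessor in $\lambda$ would be $(b,1,0)$, but $(b,1,0)\notin\lambda$, so $(b,2,0)\notin\mu$. What actually distinguishes the boundary from the interior values of $b,d$ is the \emph{connectivity} of the equivalence relation on the $c_i^j$: the shifts $c_{(i+1,*,*)}^{(j+1,*,*)}\sim c_{(i,*,*)}^{(j,*,*)}$ propagate along the arm and the shelves, and whether two chains merge (or get killed by the $\sim 0$ rule) at the step $i=b$ depends on the position of $b$ relative to $1$ and $a-1$. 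Making this precise means, for each weight $\alpha=j-i\in\mathbb{Z}^3$, determining the connected components of $\{(i,j)\in\lambda\times\mu : j-i=\alpha\}$ under the moves of the corollary, together with which components meet the $\sim 0$ locus. Your ``depth'' induction does not yet isolate this invariant, and carrying it out uniformly in $(a,b,d)$ is exactly the unresolved difficulty.
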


We have checked it for the ideals of this form of colength at most $100$.

\subsubsection{Types of singularities}\label{sec:similarity-conj}

\begin{proposition}\label{prop--localstructure-extradim6}
Let $z$ be a point on $\Hilb^n(\mathbb{A}^3)$. For $n\leq 7$, if the embedded dimension at $z$ is $3n+6$, then there exist an open neighborhood $U$ of $z$ and an open immersion $U\hookrightarrow \widehat{G}(2,6)\times \mathbb{A}^{3n-9}$.
\end{proposition}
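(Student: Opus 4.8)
The plan is to reduce the statement to the already-established local computations at the $T$-fixed points and then propagate this structure to arbitrary points via the $\mathrm{GL}(3)$-action and the Hilbert--Chow morphism. First I would observe that the embedded dimension $3n+6$ means precisely that the extra dimension at $z$ is $6$, so $z$ lies in the "most mildly singular" stratum. Since $\mathrm{Hilb}^n(\mathbb{A}^3)$ is covered by translates of the Haiman charts and every length-$n$ subscheme deforms to one supported at a single point via translation, I would first reduce to the case where $z = [Z]$ with $Z$ supported at the origin; indeed, a neighborhood of $z$ splits as a product with an affine space $\mathbb{A}^{3k}$ coming from moving the supports of the connected components, and one of the components carries all the extra dimension (the others being smooth points of their respective punctual Hilbert schemes, which are smooth for small length). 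This uses Lemma~\ref{lem-lower-dim-partition} and the product decomposition $\hat{\mathcal{O}}_{\mathrm{Hilb}^n(Y),[Z]}\cong\widehat{\bigotimes}_i\hat{\mathcal{O}}_{\mathrm{Hilb}^{n_i}(U_{y_i}),[Z_i]}$ from the proof of Proposition~\ref{prop-equivariant-immersion-toric}.

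Next, for $Z$ punctual of colength $m\le 7$ with extra dimension $6$, I would use Lemma~\ref{lem-transitive-open-nbd}: some $g\in\mathbb{T}(3)(\Bbbk)$ carries $Z_J$ (for any monomial ideal $J$ in the same $\mathbb{T}(3)$-orbit closure) into one of the finitely many Haiman charts $U_{\lambda_p}$ at a Borel fixed ideal. But more directly, every punctual $Z$ of colength $m$ lies in $\mathrm{Hilb}^m(\mathbb{A}^3)_0$, which is $\mathrm{GL}(3)$-stable and projective; by the Borel fixed point theorem (as in the proof of Lemma~\ref{lem-transitive-open-nbd}), if $U$ is the union of the $\mathbb{T}(3)$-saturations of the charts around the Borel fixed points of extra dimension $\le 6$, its complement is empty, hence any such $Z$ is $\mathbb{T}(3)$-conjugate into a neighborhood of a Borel fixed point. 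Now I invoke the explicit results of Sections~\ref{sec:3D-121}--\ref{sec:3D-132} together with the appendices: for every Borel monomial ideal $I_\lambda$ of colength $\le 7$ with extra dimension $6$, the Haiman chart $\Spec(A_\lambda)$ is isomorphic to $\widehat{G}(2,6)\times\mathbb{A}^{3m-9}$ (one checks $3m-9$ is the correct number of extra free variables by dimension count: $\dim A_\lambda = 3m$ since monomial ideals are smoothable, and $\dim\widehat{G}(2,6)=9$). Since extra dimension $6$ forces $\lambda$ to be one of the classified tripod-type partitions with this property, every such point has a neighborhood isomorphic to an open subset of $\widehat{G}(2,6)\times\mathbb{A}^{3m-9}$.

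To pass from $[Z_J]$ in a chart to an arbitrary nearby point $z=[Z]$, I would use that $g\in\mathbb{T}(3)(\Bbbk)\subset\mathrm{GL}(3)(\Bbbk)$ acts on $\mathrm{Hilb}^m(\mathbb{A}^3)$ by an automorphism, so $z$ has a neighborhood $U$ isomorphic to a neighborhood of $g\cdot z$, which by the previous paragraph is an open subset of $\widehat{G}(2,6)\times\mathbb{A}^{3m-9}$; since we only claim an open immersion (not an equivariant one), this suffices. Combining with the product splitting from the first paragraph gives the open immersion $U\hookrightarrow \widehat{G}(2,6)\times\mathbb{A}^{3n-9}$ for the original point on $\mathrm{Hilb}^n(\mathbb{A}^3)$. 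The main obstacle I anticipate is the combinatorial bookkeeping: I must be sure that \emph{every} point of extra dimension $6$ on $\mathrm{Hilb}^n(\mathbb{A}^3)$, $n\le 7$, is accounted for — that is, that after the product reduction the punctual part is genuinely of colength $\le 7$ and its monomial degeneration has extra dimension exactly $6$ (not more), so that it falls under the explicitly computed cases rather than, say, the non-Borel colength-$7$ case $\lambda_{1321}$ which has extra dimension $8$. This amounts to checking the finite list of 3-dimensional partitions of $n\le 7$ against the extra-dimension computation via Corollary~\ref{lem-description-cotangentspace}, together with the observation that the non-Borel ideal $I_{1311}$ of colength $6$ was also shown in Section~\ref{sec:3D-1311} to have a chart that is an open subset of $\widehat{G}(2,6)\times\mathbb{A}^9$.
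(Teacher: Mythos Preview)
Your overall strategy---move $z$ into a Borel Haiman chart via Lemma~\ref{lem-transitive-open-nbd} and then invoke the explicit change-of-variable results---is the same as the paper's, and the preliminary product decomposition you propose is harmless (if superfluous: the paper applies Lemma~\ref{lem-transitive-open-nbd} directly, since the Borel Haiman charts are open in the full $\mathrm{Hilb}^n(\mathbb{A}^3)$, not just in the punctual locus). However, there is a genuine gap at the last step.

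You correctly flag the danger that $z$ (of extra dimension $6$) might be carried by $g\in\mathbb{T}(3)$ into the Haiman chart of an ideal with extra dimension $8$ rather than $6$. But your proposed fix---checking the list of $3$-dimensional partitions of size $\le 7$ and their extra dimensions---does not dispose of this case: nothing prevents $g\cdot z$ from landing in $\Spec(A_{\lambda_{1321}})$, whose center has extra dimension $8$. (Note also that $\lambda_{1321}$ is a \emph{Borel} ideal, not non-Borel as you wrote; see Example~\ref{example-list-Borelideals} and Section~\ref{sec:3D-1321}.) The paper closes this gap by citing Proposition~\ref{prop-jacF1321-isolated} and Appendix~\ref{sec:appendix-change-var-jacF1321}: there one shows, by inverting each of the Haiman coordinates in turn, that every point of $\Spec(A_{\lambda_{1321}})$ away from the origin lies in an open subset isomorphic either to affine space or to an open subset of $\widehat{G}(2,6)\times\mathbb{A}^{12}$. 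Since a point of extra dimension $6$ cannot be the origin (which has extra dimension $8$), this supplies exactly the missing neighborhood. Without this ingredient your argument is incomplete for $n=7$.

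Finally, your reference to the non-Borel chart of $I_{1311}$ is not needed: once the reduction to Borel charts is made, only the Borel list of Example~\ref{example-list-Borelideals} is relevant.
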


\begin{proof}
By Lemma~\ref{lem-transitive-open-nbd}, we need only consider the Haiman neighborhoods of $Z_I$ where $I$ runs over the Borel ideals of $\Bbbk[X_1,X_2,X_3]$ of colength at most $7$. Then by Example~\ref{example-list-Borelideals}, the results follow from   Sections~\ref{sec:3D-121}--\ref{sec:3D-132}, Appendices~\ref{sec:3D-141}--\ref{sec:3D-232},  and the proof of Proposition~\ref{prop-jacF1321-isolated} (also Appendix~\ref{sec:appendix-change-var-jacF1321}). 
\end{proof}

\begin{conjecture}\label{conj-localstructure-extradim6}
Let $z$ be a point on the main component of\, $\Hilb^n(\mathbb{A}^3)$. If the embedded dimension at $z$ is $3n+6$, then $z$ has an open neighborhood which is isomorphic to an open subset of a trivial affine fibration over the cone $\widehat{G}(2,6)$.
\end{conjecture}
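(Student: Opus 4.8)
The plan is to reduce the statement about an arbitrary point $z\in\mathrm{Hilb}^n(\mathbb{A}^3)$ with embedded dimension $3n+6$ to a finite list of monomial (indeed Borel) points, and then invoke the explicit computations already carried out. First I would apply lemma \ref{lem-transitive-open-nbd}: any monomial ideal $J$ of colength $n$ can be translated by a unipotent element of $\mathbb{T}(3)(\Bbbk)$ into the open neighborhood $U_p$ of some Borel ideal $Z_{I_p}$; since $\mathrm{GL}_3$ acts on $\mathrm{Hilb}^n(\mathbb{A}^3)$ by automorphisms, the local ring at $Z_J$ is isomorphic to the local ring at a point lying in $U_p$, so it suffices to understand a neighborhood of each Borel point $Z_{I_p}$. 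But one should be careful: the embedded dimension is not $3n+6$ at every point of $U_p$. The correct statement to extract is: if $z$ has embedded dimension $3n+6$ then $z$ lies in the Haiman neighborhood $\Spec(A_\lambda)$ of \emph{some} Borel ideal $I_\lambda$ (by lemma \ref{lem-transitive-open-nbd} after translation), and we then need to know the local structure of $\Spec(A_\lambda)$ near such a point.

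Second, I would enumerate the relevant Borel ideals. By example \ref{example-list-Borelideals}, the Borel ideals of $\Bbbk[X_1,X_2,X_3]$ of colength $\leq 7$ are precisely those corresponding to the partitions treated in sections \ref{sec:3D-121}--\ref{sec:3D-132} and \ref{sec:3D-141}--\ref{sec:3D-232}. Among these I would pick out the ones whose Haiman neighborhood has a singular point with extra dimension $6$, i.e. embedded dimension $3n+6$; for the remaining Borel ideals the Haiman neighborhood is smooth at the fixed point (extra dimension $0$), and there are no other possibilities for extra dimension (this uses that, among colength $\leq 7$ Borel ideals, the only nonzero extra dimension that occurs near an embedded dimension $3n+6$ locus is $6$, except for the single case $\lambda_{1321}$ of extra dimension $8$, which I address below). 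For the extra-dimension-$6$ cases, sections \ref{sec:3D-121}, \ref{sec:3D-131}, \ref{sec:3D-132} and the appendices \ref{sec:3D-141}--\ref{sec:3D-232} each exhibit an explicit (equivariant) isomorphism $\Spec(A_\lambda)\cong \widehat{G}(2,6)\times\mathbb{A}^{3n-9}$, obtained by the change-of-variables trick of section \ref{sec:3D-131} followed by the Plücker substitution; this is exactly an open immersion (in fact isomorphism) of the required form.

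Third, I have to handle the partition $\lambda_{1321}$ of colength $7$, whose fixed point has extra dimension $8$, not $6$ — so it does not directly provide a point of embedded dimension $3n+6=27$. Here the point is that a point $z$ of embedded dimension $27$ in $\Spec(A_{\lambda_{1321}})$ cannot be the origin, so it lies in the locus where some Haiman coordinate is nonzero; by the superpotential description $A_{\lambda_{1321}}\cong\Bbbk[y_1,\dots,y_{29}]/\mathrm{Jac}(F_{1321})$ together with lemma \ref{lem-transitive-open-nbd} (translating into the neighborhood of a \emph{smaller} Borel ideal, e.g. $\lambda_{132}$ or one of the colength-$7$ tripod ideals), such a $z$ has a neighborhood isomorphic to one already covered by the previous paragraph. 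This is where the phrase ``the proof of proposition \ref{prop-jacF1321-isolated}'' and appendix \ref{sec:appendix-change-var-jacF1321} enter: they analyze precisely the locus of $\Spec(A_{\lambda_{1321}})$ away from the most singular point and show it matches the $\widehat{G}(2,6)$ picture. Assembling these pieces, every point of embedded dimension $3n+6$ is accounted for.

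The main obstacle I expect is the bookkeeping in the third step: verifying that \emph{no} point of embedded dimension $3n+6$ is missed — in particular that the extra-dimension-$8$ stratum of $\mathrm{Hilb}^7(\mathbb{A}^3)$ does not have embedded-dimension-$27$ points lying outside all the Haiman charts already treated, and that the translation arguments via lemma \ref{lem-transitive-open-nbd} genuinely land such points in one of the finitely many charts whose local structure is the trivial fibration over $\widehat{G}(2,6)$. The geometric input that makes this work is that $\widehat{G}(2,6)\times\mathbb{A}^{3n-9}$ is integral of dimension $15+(3n-9)=3n+6$, agreeing with $\dim A_\lambda$ (which is $\geq 3n$ by theorem \ref{thm-monomialideal-smoothable} and in fact equals $3n+6$ by a tangent-space count), so that the explicit closed immersions produced by the Plücker substitutions are forced to be isomorphisms — exactly the argument used at the end of section \ref{sec:3D-131}.
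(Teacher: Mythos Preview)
The statement you are attempting to prove is stated in the paper as a \emph{conjecture}, not as a theorem: the paper does not claim or provide a proof of it. What the paper does prove is the restricted version, Proposition~\ref{prop--localstructure-extradim6}, which adds the hypothesis $n\leq 7$. Your proposal is in fact a sketch of the proof of that proposition --- and in that capacity it is essentially correct and follows the paper's own argument (reduce to Borel ideals via Lemma~\ref{lem-transitive-open-nbd}, invoke the explicit coordinate computations in \S\ref{sec:3D-121}--\ref{sec:3D-132} and Appendix~\ref{sec:appendix-changevariable-Borelideals}, and handle the exceptional $\lambda_{1321}$ chart via Proposition~\ref{prop-jacF1321-isolated} and Appendix~\ref{sec:appendix-change-var-jacF1321}).

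The genuine gap is that your argument never leaves the range $n\leq 7$. You explicitly invoke Example~\ref{example-list-Borelideals} to enumerate Borel ideals of colength $\leq 7$, and all the explicit isomorphisms $\Spec(A_\lambda)\cong\widehat{G}(2,6)\times\mathbb{A}^{3n-9}$ you cite are case-by-case computations for those specific partitions. For general $n$ there is no such list and no such computations; indeed the paper's discussion in \S\ref{sec:conjectures} (e.g.\ the further conjectures on which Borel ideals have extra dimension~$6$, and the similarity conjecture~\ref{conj-similarity}) makes clear that the general statement is open. Nothing in your outline supplies a mechanism for arbitrary $n$.

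One minor correction to your last paragraph: $\widehat{G}(2,6)$ has Krull dimension $9$ (not $15$), so $\widehat{G}(2,6)\times\mathbb{A}^{3n-9}$ has dimension $3n$, matching $\dim A_\lambda=3n$ on the main component; the number $3n+6$ is the \emph{embedded} dimension at the vertex, not the Krull dimension. The equality-of-dimensions argument at the end of \S\ref{sec:3D-131} uses $3n$, not $3n+6$.
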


In short, these singular points are of the same type; \textit{i.e.}, their germs can be transformed into each other by a chain of smooth morphisms. Note that when $n$ is sufficiently large, \textit{e.g.}, $n\geq 78$, the Hilbert scheme $\Hilb^n(\mathbb{A}^3)$ is reducible.  Conjecture~\ref{conj-localstructure-extradim6} might need to be modified for the points not lying in the main component.  But since the monomial ideals all lie on the main component (see Theorem~\ref{thm-monomialideal-smoothable}), Conjecture~\ref{conj-localstructure-extradim6} should be true for them.

Also, the monomial ideals $I_{1321}$ in Section~\ref{sec:3D-1321} and $I_{2321}$ in Appendix~\ref{sec:3D-2321} are ideals with extra dimension~$8$ and have similar partition shapes. By the results of Section~\ref{sec:3D-1321} and Appendix~\ref{sec:3D-2321}, they correspond to the same type of singularities on $\Hilb^7(\mathbb{A}^3)$ and $\Hilb^8(\mathbb{A}^3)$, respectively.

In general, we make the following conjecture. 

\begin{conjecture}\label{conj-similarity}
Let $\lambda_1$, $\lambda_2$ be two $r$-dimensional partitions of length $l_1$, $l_2$, respectively. Suppose $\lambda_1$ and $\lambda_2$ have similar shapes and
\[
\mathrm{ex.dim}_{I_{\lambda_1}}\Hilb^{l_1}(\mathbb{A}^r)=\mathrm{ex.dim}_{I_{\lambda_2}}\Hilb^{l_2}(\mathbb{A}^r). 
\]
Then $\Hilb^{l_1}(\mathbb{A}^3)_{Z_{I_{\lambda_1}}}$ and $\Hilb^{l_2}(\mathbb{A}^3)_{Z_{I_{\lambda_2}}}$ are singularities of the same type.
\end{conjecture}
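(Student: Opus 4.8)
Since the statement is a conjecture, what follows is a proposed line of attack rather than a proof. The overall plan is to reduce the vague "similar shapes" hypothesis to a single combinatorial move on partitions, show that move induces a \emph{smooth equivalence} of the Haiman charts when the extra dimension is unchanged, and track the torus action through the whole reduction. First I would make "similar shapes" precise: declare that $\lambda_2$ is obtained from $\lambda_1$ by a finite sequence of \emph{arm moves}, where an arm move lengthens or shortens by one box a maximal one-dimensional strip of the partition protruding in some coordinate direction, the result remaining a partition. The prototype is Lemma \ref{lem-lower-dim-partition}: appending an entire new coordinate direction of length $1$ multiplies the Haiman chart by a factor $\mathbb{A}^{|\lambda|}$ and changes nothing essential; the expectation is that a general arm move behaves the same way precisely when the extra dimension does not jump. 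Since "singularity of the same type" (germs related by a chain of smooth morphisms, equivalently equivalence up to smooth factors) is an equivalence relation, it suffices to treat one arm move: if $\lambda'$ is obtained from $\lambda$ by extending one arm by a single box and $\mathrm{ex.dim}_{I_{\lambda'}}\mathrm{Hilb}(\mathbb{A}^r)=\mathrm{ex.dim}_{I_{\lambda}}\mathrm{Hilb}(\mathbb{A}^r)$, then $\Spec(A_{\lambda'})$ and $\Spec(A_{\lambda})$, equivariantly, have the same singularity type.

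For that one move I would work directly with Haiman's equations. Extending an arm by one box alters the glove $\mu=\mathrm{glo}(\lambda)$ in a controlled way — it deletes the old arm-tip glove point, adds two new ones (the new tip and the new "shoulder"), and introduces a bounded number of new Haiman coordinates $c_i^j$. Running the simple-elimination step of Algorithm \ref{alg-step0-Haiman}, i.e. the trick of section \ref{sec:3D-131}, I expect each new coordinate to be eliminable through a Haiman equation that is linear in that new variable, and the new relations $\mathrm{HE}(i,j)$ arising from the new adjacent pairs in $\mathrm{Adj}(\mu)$ to reduce, modulo the old ideal $\mathcal{H}_\lambda$, either to identities or to the vanishing of freshly freed variables, i.e. to a polynomial factor $\mathbb{A}^k$. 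The extra-dimension hypothesis is used exactly here: by Corollary \ref{lem-description-cotangentspace} the cotangent space of $\Spec(A_\lambda)$ at $0$ has an explicit combinatorial basis, and the equality $\mathrm{ex.dim}_{I_{\lambda'}}=\mathrm{ex.dim}_{I_{\lambda}}$ forces the surviving generators before and after the move to be in bijection; this is meant to rule out the degenerate case where an arm has grown long enough to "close a relational triangle" in the sense of the pyramid proof of Proposition \ref{prop-potential-pyramid}, which is the only way an arm move can genuinely change the equations.

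The non-equivariant version of the previous step is comparatively cheap: by Lemma \ref{lem-transitive-open-nbd} one may always move a monomial ideal into a Borel chart and invoke the case-by-case results of sections \ref{sec:3D-121}--\ref{sec:3D-1321}. The real content of the conjecture is that the equivalence is $T$-equivariant, as the $\lambda_{1311}$ discussion stresses. I would therefore keep every Haiman coordinate graded by its $T$-weight and insist that all elimination and change-of-variable steps are weight-homogeneous unipotent isomorphisms in the sense of Definition \ref{def-unipotent}; such maps preserve the grading, so the induced isomorphism on the residual chart is automatically equivariant, and the $\mathbb{A}^k$ factor carries an explicit linear $T$-action. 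For non-Borel $\lambda$ one must allow inverting weight-homogeneous denominators, as in (\ref{eq-localized-ring-A1311-0}); this is absorbed by passing to a $T$-stable principal open subset, which does not affect a statement about singularity type.

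I expect the second step to be the crux, and specifically the lack of a \emph{uniform} argument: at present each arm move is only verified by an \emph{ad hoc} change of variables, exactly the situation the paper flags. A genuine proof seems to require either a deformation-theoretic input — for instance that the superpotential $F_\lambda$ of Conjecture \ref{conj-existence-superpotential}, whenever it exists, changes under an extra-dimension-preserving arm move only by a nondegenerate quadratic summand in new variables — or a purely structural analysis of the glove showing the new adjacency relations are redundant modulo the old ones. A secondary obstacle is scope: the conjecture is phrased for all $r$, but for $r\ge 4$ even the non-equivariant local structure is wild (\cite{Jel20}), so one will almost certainly have to restrict to $r=3$, or at least to points on the main component, and to sharpen "similar shapes" enough that equality of extra dimensions is exactly the correct side condition and not merely a necessary one.
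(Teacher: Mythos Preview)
The paper offers no proof of this statement --- it is explicitly an open conjecture, and the author says he ``cannot formulate the conjecture \ref{conj-similarity} in a precise way for I do not have enough examples to make precise what \emph{similar shapes} mean.'' So there is no paper's proof to compare against; I can only assess your proposed programme on its own terms.

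Your reduction to ``arm moves'' is a concrete attempt to make ``similar shapes'' precise, but it is too narrow to cover even the cases the paper verifies. The partitions $\lambda_{131}=\big((1)\subset(3,1)\big)$ and $\lambda_{132}=\big((1)\subset(3,2)\big)$ both have extra dimension $6$ and are shown in \S\ref{sec:3D-131}--\ref{sec:3D-132} to give the same singularity type $\widehat{G}(2,6)\times\mathbb{A}^k$, yet they differ by the box $(1,1,0)$, which sits at an interior corner and is not the tip of any one-dimensional protruding strip. Likewise $\lambda_{232}$ versus $\lambda_{132}$, or $\lambda_{1321}$ versus $\lambda_{2321}$ in the extra-dimension-$8$ case, are not connected by your moves. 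The paper's own heuristic is different: ``similar shapes'' is meant to encode the \emph{configuration of the minimal lattice points of the glove} (the graphs in \eqref{eqn-shape-extradim-6-and-8}), and the paper suggests as a necessary condition the existence of a common sub-partition with the same number of minimal generators --- a relation your arm moves do not generate.

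A smaller point: Lemma \ref{lem-lower-dim-partition} is not really a prototype for an arm move. It embeds an $(r{-}1)$-dimensional partition into $\mathbb{Z}^r$ by adding a new coordinate direction, and the proof works because the new variable $X_r$ appears only linearly; extending an arm within the same ambient $\mathbb{Z}^r$ does not enjoy this decoupling, as the pyramid computation in \S\ref{sec:pyramids} already shows. Your honest acknowledgement that the uniform elimination step is the crux, and that only \emph{ad hoc} verifications exist, is exactly right and matches the paper's own assessment; but the specific combinatorial reduction you propose would need to be replaced by one closer to the glove-configuration picture before the programme could get off the ground.
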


The \emph{shape} of a partition $\lambda$ roughly means the relative positions among the minimal lattices of the glove of~$\lambda$, or equivalently among the lattices of the exponents of the minimal monomial generators of the ideal~$I_{\lambda}$.  For example, the minimal lattices of the glove of a tripod partition look like
\begin{gather*}
\begin{tikzpicture}[x=(0:0.8cm), y=(90:0.8cm)]
\foreach \m [count=\y] in {{3,2},{2.5}}{
    \foreach \n [count=\x] in \m {
        \draw[densely dashed] (\n,\y) -- (\n+1,\y) -- (\n+0.5,\y+1)  -- cycle;
        \filldraw (\n,\y) circle (1pt);
        \filldraw (\n+1,\y) circle (1pt);
        \filldraw (\n+0.5,\y+1) circle (1pt);
 }
}.
\end{tikzpicture}
\end{gather*}
On the other hand, as we have checked on $\Hilb^n(\mathbb{A}^3)$ for $n\leq 20$, the minimal lattices of the glove of a partition which corresponds to Borel ideals with extra dimension $8$ look like
\begin{gather*}
\begin{tikzpicture}[x=(0:0.8cm), y=(90:0.8cm)]
    \draw[densely dashed] (1,2) --(2,2) --(3.35/2,3)--cycle;
    \draw[densely dashed] (2,2) --(2.7,2);
    \draw[densely dashed] (2.7,2) --(3.7,2) --(6.05/2,3)--cycle;
    \draw[densely dashed] (3.35/2,3) --(6.05/2,3) --(2.35,4)--cycle;
    \filldraw (1,2) circle (1pt);
    \filldraw (2,2) circle (1pt);
    \filldraw (3.35/2,3) circle (1pt);
    \filldraw (2.7,2) circle (1pt);
    \filldraw (3.7,2) circle (1pt);
    \filldraw (6.05/2,3) circle (1pt);
    \filldraw (2.35,4) circle (1pt);
\end{tikzpicture}
\end{gather*}
(Note that in both graphs, collinear vertices do not indicate that the corresponding lattices in $\mathbb{Z}^3$ are collinear.)

So for two partitions to have similar shapes, it is necessary that the corresponding monomial ideals have the same number of minimal monomial generators. But this condition is of course not sufficient. For example, plane partitions embedded into $\mathbb{Z}^3$ always correspond to smooth points (see Lemma~\ref{lem-lower-dim-partition}), but the corresponding ideals can have an arbitrary number of minimal monomial generators. We expected a stronger necessary condition for two 3D partitions $\lambda_1$ and $\lambda_2$ to have similar shapes: There exists a 3D partition $\lambda$ such that $\lambda\subset \lambda_1$ and $\lambda\subset \lambda_2$, and $I_{\lambda}$, $I_{\lambda_1}$, $I_{\lambda_2}$ have the same number of minimal monomial generators.

We cannot formulate Conjecture~\ref{conj-similarity} in a precise way because we do not have enough examples to make precise what \emph{similar shapes} mean. In fact, in our examples above, having the same extra dimension suffices. But we still feel that in the general cases, we need a certain condition on the shapes of the $r$-dimensional partitions. A reason is that, as we have seen in Section~\ref{sec:tripodideals}, when the extra dimension is small, the shape of the partition has few choices, or at least the Borel ones do. The first example of partitions that do not have similar shapes while the corresponding ideals have the same extra dimension is the following; these partitions correspond to Borel ideals with extra dimension $12$:

\begin{tabular}{cccccccccc}
&&&&&&&&&\\
&&&&&&&&\threeDYoung{{{2, 1}, {2, 1}, {1}, {1}}}
&
\threeDYoung{{{2, 1, 1}, {2, 1}, {1}, {1}}}
\end{tabular}

\begin{tabular}{cccccccccc}
&&&&&&&&&\\
&&&&&&&&\threeDYoung{{{2, 1, 1, 1}, {2, 1, 1}, {1, 1}, {1}}}
&
\threeDYoung{{{2, 1, 1, 1, 1}, {1, 1, 1, 1}, {1, 1, 1}, {1, 1}, {1}}}
\end{tabular}

\begin{question}\label{que-extradim12-points}
Are the singularities on $\Hilb^8(\mathbb{A}^3)$, $\Hilb^9(\mathbb{A}^3)$, $\Hilb^{12}(\mathbb{A}^3)$, and $\Hilb^{16}(\mathbb{A}^3)$ corresponding to the above $3$-dimensional partitions, respectively, of the same type?
\end{question}

\begin{remark}\label{rem:JRS24}
Recently, in 2024, Jelisiejew, Ramkumar, and Sammartano \cite{JRS24} proved Conjectures~\ref{conj:exta-dim-6-implies-tripod} and~\ref{conj:borelIdeal-extra-dim} completely and confirmed Conjectures~\ref{conj:smallest-extra-dim} and~\ref{conj-localstructure-extradim6} in many cases.
\end{remark}

\section{Euler characteristics of tautological sheaves}\label{sec:euler-char-taut-sheaves}

\subsection{Hilbert series of tripod singularities}
The singularities of $A_{\lambda}$ in Sections~\ref{sec:3D-121}--\ref{sec:3D-132} and \ref{sec:3D-1311} and Appendices~\ref{sec:3D-141}--\ref{sec:3D-232} and~\ref{sec:3D-1411}--\ref{sec:3D-11311} have the same singularity type (in the sense of modulo the smooth equivalence). We compute their equivariant Hilbert functions in a uniform way. Let 
$
R_{G(2,6)}=\Bbbk[p_{i,j}]_{0\leq i<j\leq 5}
$.
Then the ring
\[
S_{G(2,6)}=R_{G(2,6)}/I_{G(2,6)}
\]
is the coordinate ring of the cone $\widehat{G}(2,6)$, where $I_{G(2,6)}$ is the Pl\"ucker ideal (\ref{eq-pluckerideal}). There is an action of $T_1=\mathbb{G}_m^6$ on $S_{G(2,6)}$, with weights
\begin{equation}\label{eq-action-G(2,6)}
\mathbf{w}(p_{i,j})=\epsilon_i+\epsilon_j\in \mathbb{Z}^6,\quad 0\leq i\neq j\leq 5,
\end{equation}
where $\{\epsilon_i\}_{0\leq i\leq 5}$ is a basis of $\mathbb{Z}^6$, the character group of $T_1$.  We denote the generic point of $T_1$ by $\mathbf{u}=(u_0,u_1,u_2,u_3,u_4,u_5)$.  We are going to compute
\begin{eqnarray*}
H\left(S(G(2,6)\right);\mathbf{u}):=\frac{\sum_{i=0}^{15}(-1)^i\Tor_i^{R_{G(2,6)}}(S_{G(2,6)},\Bbbk)}{\prod_{0\leq i\neq j\leq 5}(1-u_i u_j)}
\end{eqnarray*}
 as a virtual representation of $T_1$.

\begin{lemma}
\allowdisplaybreaks 
  \begin{eqnarray}\label{eq-equihilb-tripod}
&&\sum_{i=0}^{15}(-1)^i\Tor_i^{R_{G(2,6)}}(S_{G(2,6)},\Bbbk)\notag\\
&&= 1-\sum_{\begin{subarray}{c}S\subset\{0,..5\}\\ |S|=4\end{subarray}}\prod_{i\in S} u_i
+\sum_{\begin{subarray}{c}S\subset\{0,..5\}\\ |S|=5\end{subarray}}\left(\prod_{i\in S} u_i\sum_{i\in S}u_i\right)
+5\prod_{i=0}^5 u_i-\prod_{i=0}^5 u_i\cdot \sum_{0\leq i\leq j\leq 5}u_i u_j\notag\\
&&\hphantom{=}\;
-\sum_{\begin{subarray}{c}S\subset\{0,..5\}\\ |S|=5\end{subarray}}\prod_{i\in S} u_i^2
-\sum_{\begin{subarray}{c}S\subset\{0,..5\}\\ |S|=4\end{subarray}}\left(\prod_{i\in S} u_i^2\cdot \prod_{j\not\in S} u_j\right)
+\prod_{i=0}^5 u_i\cdot \sum_{\begin{subarray}{c}S\subset\{0,..5\}\\ |S|=5\end{subarray}}\left(\prod_{i\in S} u_i\sum_{i\in S}u_i\right)+5\prod_{i=0}^5 u_i^2\notag\\
&&\hphantom{=}\;-\prod_{i=0}^{5}u_i^2\cdot\sum_{0\leq i<j\leq 5}u_iu_j
+\prod_{i=0}^{5}u_i^3.
\end{eqnarray}
\end{lemma}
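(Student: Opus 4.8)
The quantity to be computed is the $\mathbb{Z}^{6}$-graded $K$-polynomial of $S_{G(2,6)}$ over $R_{G(2,6)}$, i.e.\ the numerator obtained when $H(S_{G(2,6)};\mathbf{u})$ is written over $\prod_{0\le i<j\le 5}(1-u_iu_j)$. My plan is to read it off from a $\mathrm{GL}_6$-equivariant minimal free resolution of $S_{G(2,6)}$: the torus $T_1$ with $\mathbf{w}(p_{ij})=\epsilon_i+\epsilon_j$ is the diagonal maximal torus of the $\mathrm{GL}_6$ acting on $\wedge^{2}\Bbbk^{6}=\langle p_{ij}\rangle$, so every term of such a resolution is $R_{G(2,6)}$ tensored with a Schur functor $\mathbb{S}_\mu(\Bbbk^{6})$, whose $T_1$-character is the Schur polynomial $s_\mu(u_0,\dots,u_5)$ (exponents below denote repeated parts). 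First I would record that, as a graded $\mathrm{GL}_6$-module with $p_{ij}$ of weight $\epsilon_i+\epsilon_j$, the Pl\"ucker algebra is $S_{G(2,6)}\cong\bigoplus_{d\ge0}\mathbb{S}_{(d,d)}(\Bbbk^{6})$ by standard monomial theory / Borel--Weil, so $H(S_{G(2,6)};\mathbf{u})=\sum_{d\ge0}s_{(d,d)}(\mathbf{u})$.

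The affine cone $\widehat{G}(2,6)$ is exactly the locus of skew-symmetric $6\times6$ matrices $(p_{ij})$ of rank $\le 2$, so the resolution I need is the resolution of the ideal of $4\times4$ sub-Pfaffians studied by J\'ozefiak--Pragacz (equivalently it is produced by the Kempf--Lascoux--Weyman geometric technique applied to the collapsing $\mathrm{Tot}(\det\mathcal{R})\to\mathbb{A}^{15}$ over $G(2,6)$, $\mathcal{R}$ the tautological rank-$2$ subbundle, together with Bott vanishing). It is $\mathrm{GL}_6$-equivariant, Gorenstein of codimension $6$, with
\[
\mathrm{Tor}^{R_{G(2,6)}}_0=\Bbbk,\quad \mathrm{Tor}_1=\wedge^{4}\Bbbk^{6}=\mathbb{S}_{(1^{4})},\quad \mathrm{Tor}_2=\mathbb{S}_{(2,1^{4})},\quad \mathrm{Tor}_3=\mathbb{S}_{(3,1^{5})}\oplus\mathbb{S}_{(2^{5})},
\]
and $\mathrm{Tor}_{6-i}$ equal to the $(\det\Bbbk^{6})^{\otimes3}$-twisted dual of $\mathrm{Tor}_i$ (self-duality reflecting $\omega_{\widehat{G}(2,6)}\cong S_{G(2,6)}(-6)$); thus $\mathrm{Tor}_4=\mathbb{S}_{(3,2^{4},1)}$, $\mathrm{Tor}_5=\mathbb{S}_{(3^{2},2^{4})}$, $\mathrm{Tor}_6=(\det\Bbbk^{6})^{\otimes3}$, giving Betti numbers $1,15,35,42,35,15,1$. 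Passing to $T_1$-characters yields
\[
\sum_{i\ge0}(-1)^{i}\mathrm{Tor}_i=1-s_{(1^{4})}+s_{(2,1^{4})}-\bigl(s_{(3,1^{5})}+s_{(2^{5})}\bigr)+s_{(3,2^{4},1)}-s_{(3^{2},2^{4})}+s_{(3^{6})}.
\]

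It then remains to expand these seven Schur polynomials in six variables into the form written in the statement. Using the dual Jacobi--Trudi identity one gets $s_{(1^{4})}=e_4$, $s_{(2,1^{4})}=e_1e_5-e_6$, $s_{(3,1^{5})}=e_6(e_1^{2}-e_2)=e_6h_2$, $s_{(2^{5})}=e_5^{2}-e_4e_6$, and the remaining three are the $(\det)^{\otimes3}$-twisted duals, so $s_{(3,2^{4},1)}=e_6\,s_{(2,1^{4})}$, $s_{(3^{2},2^{4})}=e_6^{2}e_2$, $s_{(3^{6})}=e_6^{3}$, where $e_k,h_k$ are the elementary and complete symmetric polynomials in $u_0,\dots,u_5$. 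Finally, by elementary identities among these, one rewrites $e_1e_5-e_6=\sum_{|S|=5}\bigl(\prod_{i\in S}u_i\bigr)\bigl(\sum_{i\in S}u_i\bigr)+5\prod_{i=0}^{5}u_i$ and $e_5^{2}-e_4e_6=\sum_{|S|=5}\prod_{i\in S}u_i^{2}+\sum_{|S|=4}\bigl(\prod_{i\in S}u_i^{2}\bigr)\bigl(\prod_{j\notin S}u_j\bigr)$, multiplies through by $e_6$ or $e_6^{2}$ where needed, and collects terms; the result is precisely the right-hand side of the asserted identity.

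I expect the main obstacle to be \emph{establishing} the equivariant resolution with the correct twists --- especially that the middle syzygy module is the \emph{reducible} representation $\mathbb{S}_{(3,1^{5})}\oplus\mathbb{S}_{(2^{5})}$, i.e.\ the resolution is not pure --- after which everything is mechanical, if lengthy, symmetric-function bookkeeping. A more self-contained alternative avoids the resolution entirely: evaluate $\sum_{d\ge0}s_{(d,d)}(\mathbf{u})$ in closed rational form via $s_{(d,d)}=h_d^{2}-h_{d+1}h_{d-1}$ and a Cauchy/residue summation, multiply by $\prod_{0\le i<j\le5}(1-u_iu_j)$, and verify that the spurious poles $1/(1-u_i^{2})$ cancel, leaving the polynomial; there the obstacle is the rational-function manipulation. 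Either way, an unconditional verification is given by computing the multigraded Betti numbers of $S_{G(2,6)}$ over $R_{G(2,6)}$ directly (a Gr\"obner-basis computation, as in the accompanying Macaulay2 files), which reproduces the commented-out monomial expansion in the statement.
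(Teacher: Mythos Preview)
Your proposal is correct and follows essentially the same approach as the paper: both invoke the $\mathrm{GL}_6$-equivariant minimal free resolution of the Pl\"ucker algebra (the paper cites \cite[theorem 6.4.1]{Wey03}, you cite the equivalent J\'ozefiak--Pragacz/Kempf--Lascoux--Weyman description), identify the Tor modules as the Schur functors $\Bbbk,\ \mathbb{S}_{(1^4)},\ \mathbb{S}_{(2,1^4)},\ \mathbb{S}_{(3,1^5)}\oplus\mathbb{S}_{(2^5)},\ \mathbb{S}_{(3,2^4,1)},\ \mathbb{S}_{(3^2,2^4)},\ \mathbb{S}_{(3^6)}$, and then expand their characters via (dual) Jacobi--Trudi. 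Your write-up is in fact a bit more detailed than the paper's in explaining the Gorenstein self-duality and the symmetric-function identities, and your two alternative verification routes are sound but not used in the paper.
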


\begin{proof}
We use an explicit resolution of the Pl\"ucker ideal of $G(m,n)$, which is known for $m=2$ and arbitrary~$n$. Let $E$ be a $\Bbbk$-vector space with a basis ${v_0,v_1,v_2,v_3,v_4,v_5}$, equipped with a $T_1$-action with weights $\mathbf{w}(v_i)=\epsilon_i\in \mathbb{Z}^6$ for $0\leq i\leq 5$. We identify the ring $R_{G(2,6)}$ with the symmetric algebra $\Sym^\bullet(\bigwedge^2 E)$ so that the induced torus action coincides with (\ref{eq-action-G(2,6)}). For a $2$-dimensional partition $\lambda$, let $L_{\lambda}E$ be the associated Schur module of $E$ (see \cite[Section~2.1]{Wey03}). Each $L_{\lambda}E$ is an irreducible $\GL(E)$-representation and has  highest weight $\lambda'$ (the conjugate partition of $\lambda$).

By \cite[Theorem 6.4.1]{Wey03}, the minimal resolution $P_\bullet$ of $S_{G(2,6)}$, as an $R_{G(2,6)}$-module, has the form $P_i=V_i\otimes_{\Bbbk}R_{G(2,6)}$, where $V_0=\Bbbk$ is the trivial $\mathrm{GL}(E)$-representation and
\begin{alignat}{3}\label{eq-SchurMod-1}
V_1&=L_{{\yng(4)}}E,&\quad V_2&=L_{\yng(5,1)}E,&\quad
V_3&=L_{\yng(6,1,1)}E\oplus L_{\yng(5,5)}E,\nonumber\\
V_{4}&=L_{\yng(6,5,1)}E,&\quad V_5&=L_{\yng(6,6,2)}E,&\quad 
V_{6}&=L_{\yng(6,6,6)}E
\end{alignat}
and $V_i=0$ for $i\geq 7$.
 The character of $L_{\lambda}E$ is equal to the Schur function $s_{\lambda'}(u_0,..,u_5)$. For example, 
\begin{align*}
\Char(L_{\yng(5,5)}E)&\ =\ s_{\yng(2,2,2,2,2)}(u_0,..,u_5)=\det\begin{pmatrix}e_5 & e_6\\ e_4 & e_5\end{pmatrix}\\
&\ =\ \sum_{\begin{subarray}{c}S\subset\{0,..5\}\\ |S|=5\end{subarray}}\prod_{i\in S} u_i^2
+\sum_{\begin{subarray}{c}S\subset\{0,..5\}\\ |S|=4\end{subarray}}\left(\prod_{i\in S} u_i^2\cdot \prod_{j\not\in S} u_j\right),
\end{align*}
where the $e_i=e_i(u_0,\ldots,u_5)$ are the elementary symmetric functions.
The characters of the other Schur modules in (\ref{eq-SchurMod-1}) can be computed directly from the definition: 
\begin{eqnarray*}
\Char(L_{\yng(4)}E)& =& \sum_{\begin{subarray}{c}S\subset\{0,..5\}\\ |S|=4\end{subarray}}\prod_{i\in S} u_i,\\
\Char(L_{\yng(5,1)}E)& =& \sum_{\begin{subarray}{c}S\subset\{0,..5\}\\ |S|=5\end{subarray}}\left(\prod_{i\in S} u_i\sum_{i\in S}u_i\right)
+5\prod_{i=0}^5 u_i,\\
\Char(L_{\yng(6,1,1)}E)& = &\prod_{i=0}^5 u_i\cdot \sum_{0\leq i\leq j\leq 5}u_i u_j,\\
 \Char(L_{\yng(6,5,1)}E)& =& 
\prod_{i=0}^5 u_i\cdot \sum_{\begin{subarray}{c}S\subset\{0,..5\}\\ |S|=5\end{subarray}}\left(\prod_{i\in S} u_i\sum_{i\in S}u_i\right)
+5\prod_{i=0}^5 u_i^2,\\
 \Char(L_{\yng(6,6,2)}E)& = &\prod_{i=0}^{5}u_i^2\cdot\sum_{0\leq i<j\leq 5}u_iu_j,\\
 \Char(L_{\yng(6,6,6)}E)& = &\prod_{i=0}^{5}u_i^3.
\end{eqnarray*}
Taking the signed summation $\sum_i (-1)^i \mathrm{char}(V_i)$, we are done.
\end{proof}

Denote the function (\ref{eq-equihilb-tripod}) by $K(u_0,u_1,u_2,u_3,u_4,u_5)$.

\begin{corollary}\label{cor-tripod-Hilbertfunctions}
\begin{eqnarray}\label{eq-equihilb-A121}
H(A_{{\lambda}_{121}};\mathbf{t})&=&K\left(-\frac{\sqrt{t_{2}} \sqrt{t_{3}}}{\sqrt{t_{1}}},-\frac{\sqrt{t_{1}} \sqrt{t_{3}}}{\sqrt{t_{2}}},-\frac{\sqrt{t_{1}} \sqrt{t_{2}}}{\sqrt{t_{3}}},-\frac{t_{2}^{3/2}}{\sqrt{t_{1}} \sqrt{t_{3}}},-\frac{t_{3}^{3/2}}{\sqrt{t_{1}} \sqrt{t_{2}}},-\frac{t_{1}^{3/2}}{\sqrt{t_{2}} \sqrt{t_{3}}}\right)\notag\\
&&\Big/\left((1-t_{1})^3(1-t_{2})^3(1-t_{3})^3\left(\frac{t_{1}-t_{2}^2}{t_{1}}\right)\left(\frac{t_{1}-t_{2} t_{3}}{t_{1}}\right)\left(\frac{t_{1}-t_{3}^2}{t_{1}}\right)\left(\frac{t_{2}-t_{1}^2}{t_{2}}\right)\left(\frac{t_{2}-t_{1} t_{3}}{t_{2}}\right)\right.\notag\\
  &&\hphantom{\Big/\Big(}\left.\left(\frac{t_{2}-t_{3}^2}{t_{2}}\right)\left(\frac{t_{3}-t_{1}^2}{t_{3}}\right)\left(\frac{t_{3}-t_{1} t_{2}}{t_{3}}\right)\left(\frac{t_{3}-t_{2}^2}{t_{3}}\right)\right),
\end{eqnarray}
\begin{eqnarray}\label{eq-equihilb-A131}
H(A_{{\lambda}_{131}};\mathbf{t})&=&K\left(-\frac{\sqrt{t_{2}} \sqrt{t_{3}}}{t_{1}},-\frac{t_{1} \sqrt{t_{3}}}{\sqrt{t_{2}}},-\frac{t_{1} \sqrt{t_{2}}}{\sqrt{t_{3}}},-\frac{t_{2}^{3/2}}{t_{1} \sqrt{t_{3}}},-\frac{t_{3}^{3/2}}{t_{1} \sqrt{t_{2}}},-\frac{t_{1}^2}{\sqrt{t_{2}} \sqrt{t_{3}}}\right)\notag\\
&&\Big/\left((1-t_{1})^3(1-t_{2})^3(1-t_{3})^3(1-t_{1}^2)\left(\frac{t_{2}-t_{1} t_{3}}{t_{2}}\right)\left(\frac{t_{2}-t_{3}^2}{t_{2}}\right)\left(\frac{t_{2}-t_{1}^3}{t_{2}}\right)\left(\frac{t_{3}-t_{1} t_{2}}{t_{3}}\right)\right.\notag\\
&&\hphantom{\Big/\Big(}\left.\left(\frac{t_{3}-t_{2}^2}{t_{3}}\right)\left(\frac{t_{3}-t_{1}^3}{t_{3}}\right)\left(\frac{t_{1}-t_{2}}{t_{1}}\right)\left(\frac{t_{1}-t_{3}}{t_{1}}\right)\left(\frac{t_{1}^2-t_{2}^2}{t_{1}^2}\right)\left(\frac{t_{1}^2-t_{2} t_{3}}{t_{1}^2}\right)\left(\frac{t_{1}^2-t_{3}^2}{t_{1}^2}\right)\right),
\end{eqnarray}
\begin{eqnarray}\label{eq-equihilb-A132}
H(A_{{\lambda}_{132}};\mathbf{t})&=&K\left(-\frac{\sqrt{t_{2}} \sqrt{t_{3}}}{\sqrt{t_{1}}},-\frac{\sqrt{t_{1}} \sqrt{t_{3}}}{\sqrt{t_{2}}},-\frac{\sqrt{t_{1}} \sqrt{t_{2}}}{\sqrt{t_{3}}},-\frac{t_{2}^{3/2}}{\sqrt{t_{1}} \sqrt{t_{3}}},-\frac{t_{3}^{3/2}}{t_{1}^{3/2} \sqrt{t_{2}}},-\frac{t_{1}^{5/2}}{\sqrt{t_{2}} \sqrt{t_{3}}}\right)\notag\\
&&\Big/\left((1-t_{1})^3(1-t_{2})^3(1-t_{3})^2(1-t_{1}^2)(\frac{t_{1}-t_{2}^2}{t_{1}})
\left(\frac{t_{1}-t_{2}}{t_{1}}\right)\left(\frac{t_{1}-t_{3}}{t_{1}}\right)^2
\left(\frac{t_{1}^2-t_{2}^2}{t_{1}^2}\right)\right.\notag\\
&&\hphantom{\Big/\Big(}\left(\frac{t_{1}^2-t_{2} t_{3}}{t_{1}^2}\right)\left(\frac{t_{1}^2-t_{3}^2}{t_{1}^2}\right)
\left(\frac{t_{2}-t_{1} t_{3}}{t_{2}}\right)\left(\frac{t_{2}-t_{1}^3}{t_{2}}\right)\left(\frac{t_{2}-t_{3}}{t_{2}}\right)\left(\frac{t_{2}-t_{1}^2}{t_{2}}\right)\left(\frac{t_{1} t_{2}-t_{3}^2}{t_{1} t_{2}}\right)\notag\\
&&\hphantom{\Big/\Big(}\left.\left(\frac{t_{3}-t_{1}^3}{t_{3}}\right)\left(\frac{t_{3}-t_{1}^2 t_{2}}{t_{3}}\right)\left(\frac{t_{3}-t_{2}^2}{t_{3}}\right)\right),
\end{eqnarray}
\begin{eqnarray}\label{eq-equihilb-A1311}
H(A_{{\lambda}_{1311}};\mathbf{t})&=&K\left(-\frac{t_{2} \sqrt{t_{3}}}{t_{1}},-\frac{t_{1} \sqrt{t_{3}}}{t_{2}},-\frac{t_{1} t_{2}}{\sqrt{t_{3}}},-\frac{t_{2}^2}{t_{1} \sqrt{t_{3}}},-\frac{t_{3}^{3/2}}{t_{1} t_{2}},-\frac{t_{1}^2}{t_{2} \sqrt{t_{3}}}\right)\notag\\
&&\Big/\left((1-t_{1})^3(1-t_{2})^3(1-t_{3})^3(1-t_{1}^2)(1-t_{2}^2)\left(\frac{t_{1}-t_{2}}{t_{1}}\right)\left(\frac{t_{1}-t_{3}}{t_{1}}\right)\left(\frac{t_{1}^2-t_{2} t_{3}}{t_{1}^2}\right)\right.\notag\\
&&\hphantom{\Big/\Big(}\left(\frac{t_{1}^2-t_{2}^3}{t_{1}^2}\right)
\left(\frac{t_{1}^2-t_{3}^2}{t_{1}^2}\right)\left(\frac{t_{2}-t_{1}}{t_{2}}\right)\left(\frac{t_{2}^2-t_{1} t_{3}}{t_{2}^2}\right)\left(\frac{t_{2}^2-t_{1}^3}{t_{2}^2}\right)\left(\frac{t_{2}-t_{3}}{t_{2}}\right)\left(\frac{t_{2}^2-t_{3}^2}{t_{2}^2}\right)\notag\\
&&\hphantom{\Big/\Big(}\left.\left(\frac{t_{3}-t_{1} t_{2}}{t_{3}}\right)
\left(\frac{t_{3}-t_{1}^3}{t_{3}}\right)\left(\frac{t_{3}-t_{2}^3}{t_{3}}\right)\right).
\end{eqnarray}
\end{corollary}

\begin{proof} We only show (\ref{eq-equihilb-A121}); the others are similar. Let  $T=\mathbb{G}_m^3$; the $T$-action on the Haiman coordinates transfers via (\ref{eq-step0-change-varialbes-A121}) and (\ref{eq-variablechange-plucker-A121}) to an action on the Pl\"ucker coordinates. One checks that this action coincides with the one induced by the map
\begin{alignat*}{3}
u_{0}&\longmapsto -\frac{\sqrt{t_{2}} \sqrt{t_{3}}}{t_{1}},&\quad
u_{1}&\longmapsto -\frac{t_{1} \sqrt{t_{3}}}{\sqrt{t_{2}}},&\quad
u_{2}&\longmapsto -\frac{t_{1} \sqrt{t_{2}}}{\sqrt{t_{3}}},\\
u_{3}&\longmapsto -\frac{t_{2}^{3/2}}{t_{1} \sqrt{t_{3}}},&\quad
u_{4}&\longmapsto -\frac{t_{3}^{3/2}}{t_{1} \sqrt{t_{2}}},&\quad
u_{5}&\longmapsto -\frac{t_{1}^2}{\sqrt{t_{2}} \sqrt{t_{3}}}.
\end{alignat*}
The appearance of square roots arises from that our choice of the $T_1$-action on $G(2,6)$, where $T_1=\mathbb{G}_m^6$, is not the primitive one. This gives the numerator of (\ref{eq-equihilb-A121}). The denominator is computed by the weights of the tangent spaces, via, \textit{e.g.}, the description of Corollary~\ref{cor-description-cotangentspace}. 
\end{proof}

In the same way, in Appendices~\ref{sec:appendix-changevariable-Borelideals} and~\ref{sec:appendix-changevariable-NonBorelideals},  we compute the other equivariant Hilbert functions that we need later.

\subsection{The equivariant Hilbert function of the local ring at \texorpdfstring{$((1)\subset (3,2,1))$}{((1) in (3,2,1))}}
We compute the equivariant Hilbert function of $A_{\lambda_{1321}}\cong \Bbbk[y_1,\dots,y_{29}]/\Jac(F_{1321})$ 
in the way  recalled in Section~\ref{sec:equivariant-Hilbertfunction}. Namely, we first compute the Gr\"obner basis of $\Jac(F_{1321})$ and then apply Proposition~\ref{prop-hilbertseries-gbbasis} and Lemma~\ref{lem-key}. We record the result in the following. 

\begin{proposition}\label{prop-Hilbertfunction-1321}
  \begin{align*}
H(A_{\lambda_{1321}};\mathbf{t})&\\
=-\left(t_{1}^8 t_{2}^8 t_{3}^2\right.\big(&-t_{2}^4 t_{3}^3 t_{1}^9-t_{2}^3 t_{3}^3 t_{1}^9-t_{2}^5 t_{3}^2 t_{1}^9+t_{2}^6 t_{3} t_{1}^9+t_{2}^5 t_{3} t_{1}^9+t_{2}^4 t_{3} t_{1}^9-t_{2}^8 t_{1}^8-t_{2}^7 t_{1}^8+t_{2}^3 t_{3}^5 t_{1}^8+t_{2}^5 t_{3}^4 t_{1}^8 \\
&+t_{2}^4 t_{3}^4 t_{1}^8+t_{2}^2 t_{3}^4 t_{1}^8+t_{2} t_{3}^4 t_{1}^8
-2 t_{2}^5 t_{3}^3 t_{1}^8-2 t_{2}^4 t_{3}^3 t_{1}^8+t_{2}^5 t_{3}^2 t_{1}^8-t_{2}^3 t_{3}^2 t_{1}^8-t_{2}^2 t_{3}^2 t_{1}^8+t_{2}^7 t_{3} t_{1}^8+t_{2}^6 t_{3} t_{1}^8\\
&+t_{2}^5 t_{3} t_{1}^8-t_{2}^8 t_{1}^7-t_{2}^7 t_{1}^7-t_{2} t_{3}^6 t_{1}^7+2 t_{2}^4 t_{3}^5 t_{1}^7-t_{2}^2 t_{3}^5 t_{1}^7+t_{2}^6 t_{3}^4 t_{1}^7+t_{2}^5 t_{3}^4 t_{1}^7-t_{2}^4 t_{3}^4 t_{1}^7+t_{2}^3 t_{3}^4 t_{1}^7+3 t_{2}^2 t_{3}^4 t_{1}^7\\
&+t_{2} t_{3}^4 t_{1}^7-3 t_{2}^6 t_{3}^3 t_{1}^7-3 t_{2}^5 t_{3}^3 t_{1}^7+t_{2}^4 t_{3}^3 t_{1}^7-t_{2}^2 t_{3}^3 t_{1}^7-t_{2}^7 t_{3}^2 t_{1}^7+2 t_{2}^6 t_{3}^2 t_{1}^7+t_{2}^5 t_{3}^2 t_{1}^7-2 t_{2}^4 t_{3}^2 t_{1}^7-2 t_{2}^3 t_{3}^2 t_{1}^7\\
&+t_{2}^8 t_{3} t_{1}^7+t_{2}^7 t_{3} t_{1}^7+t_{2}^6 t_{3} t_{1}^7+t_{2}^4 t_{3} t_{1}^7-t_{2}^8 t_{1}^6-t_{2}^6 t_{3}^6 t_{1}^6-t_{2}^5 t_{3}^6 t_{1}^6-t_{2}^4 t_{3}^6 t_{1}^6-2 t_{2}^2 t_{3}^6 t_{1}^6-t_{2} t_{3}^6 t_{1}^6+3 t_{2}^5 t_{3}^5 t_{1}^6\\
&-2 t_{2}^3 t_{3}^5 t_{1}^6-t_{3}^5 t_{1}^6+t_{2}^7 t_{3}^4 t_{1}^6+t_{2}^6 t_{3}^4 t_{1}^6-2 t_{2}^5 t_{3}^4 t_{1}^6+2 t_{2}^4 t_{3}^4 t_{1}^6+6 t_{2}^3 t_{3}^4 t_{1}^6+t_{2}^2 t_{3}^4 t_{1}^6-3 t_{2}^7 t_{3}^3 t_{1}^6-3 t_{2}^6 t_{3}^3 t_{1}^6\\
&+2 t_{2}^5 t_{3}^3 t_{1}^6-2 t_{2}^3 t_{3}^3 t_{1}^6+t_{2} t_{3}^3 t_{1}^6+2 t_{2}^7 t_{3}^2 t_{1}^6+t_{2}^6 t_{3}^2 t_{1}^6-3 t_{2}^5 t_{3}^2 t_{1}^6-3 t_{2}^4 t_{3}^2 t_{1}^6+t_{2}^9 t_{3} t_{1}^6+t_{2}^8 t_{3} t_{1}^6+t_{2}^7 t_{3} t_{1}^6\\
&+2 t_{2}^5 t_{3} t_{1}^6+t_{2}^4 t_{3} t_{1}^6+t_{2}^3 t_{3}^7 t_{1}^5+t_{2}^2 t_{3}^7 t_{1}^5+t_{3}^7 t_{1}^5-t_{2}^6 t_{3}^6 t_{1}^5+t_{2}^4 t_{3}^6 t_{1}^5-3 t_{2}^3 t_{3}^6 t_{1}^5-2 t_{2}^2 t_{3}^6 t_{1}^5+3 t_{2}^6 t_{3}^5 t_{1}^5\\
&-t_{2}^5 t_{3}^5 t_{1}^5-4 t_{2}^4 t_{3}^5 t_{1}^5+t_{2}^2 t_{3}^5 t_{1}^5-2 t_{2} t_{3}^5 t_{1}^5-t_{3}^5 t_{1}^5+t_{2}^8 t_{3}^4 t_{1}^5+t_{2}^7 t_{3}^4 t_{1}^5-2 t_{2}^6 t_{3}^4 t_{1}^5+t_{2}^5 t_{3}^4 t_{1}^5+8 t_{2}^4 t_{3}^4 t_{1}^5\\
&+2 t_{2}^3 t_{3}^4 t_{1}^5-t_{2}^2 t_{3}^4 t_{1}^5+t_{2} t_{3}^4 t_{1}^5-2 t_{2}^8 t_{3}^3 t_{1}^5-3 t_{2}^7 t_{3}^3 t_{1}^5+2 t_{2}^6 t_{3}^3 t_{1}^5-4 t_{2}^4 t_{3}^3 t_{1}^5+2 t_{2}^2 t_{3}^3 t_{1}^5-t_{2}^9 t_{3}^2 t_{1}^5+t_{2}^8 t_{3}^2 t_{1}^5\\
&+t_{2}^7 t_{3}^2 t_{1}^5-3 t_{2}^6 t_{3}^2 t_{1}^5-3 t_{2}^5 t_{3}^2 t_{1}^5+t_{2}^4 t_{3}^2 t_{1}^5-t_{2}^3 t_{3}^2 t_{1}^5+t_{2}^9 t_{3} t_{1}^5+t_{2}^8 t_{3} t_{1}^5+2 t_{2}^6 t_{3} t_{1}^5+t_{2}^5 t_{3} t_{1}^5+t_{2}^4 t_{3}^7 t_{1}^4\\
&+2 t_{2}^3 t_{3}^7 t_{1}^4+t_{2} t_{3}^7 t_{1}^4+t_{3}^7 t_{1}^4-t_{2}^6 t_{3}^6 t_{1}^4+t_{2}^5 t_{3}^6 t_{1}^4-3 t_{2}^4 t_{3}^6 t_{1}^4-3 t_{2}^3 t_{3}^6 t_{1}^4+t_{2}^2 t_{3}^6 t_{1}^4+t_{2} t_{3}^6 t_{1}^4-t_{3}^6 t_{1}^4\\
&+2 t_{2}^7 t_{3}^5 t_{1}^4-4 t_{2}^5 t_{3}^5 t_{1}^4+2 t_{2}^3 t_{3}^5 t_{1}^4-3 t_{2}^2 t_{3}^5 t_{1}^4-2 t_{2} t_{3}^5 t_{1}^4+t_{2}^8 t_{3}^4 t_{1}^4-t_{2}^7 t_{3}^4 t_{1}^4+2 t_{2}^6 t_{3}^4 t_{1}^4+8 t_{2}^5 t_{3}^4 t_{1}^4+t_{2}^4 t_{3}^4 t_{1}^4\\
&-2 t_{2}^3 t_{3}^4 t_{1}^4+t_{2}^2 t_{3}^4 t_{1}^4+t_{2} t_{3}^4 t_{1}^4-t_{2}^9 t_{3}^3 t_{1}^4-2 t_{2}^8 t_{3}^3 t_{1}^4+t_{2}^7 t_{3}^3 t_{1}^4-4 t_{2}^5 t_{3}^3 t_{1}^4-t_{2}^4 t_{3}^3 t_{1}^4+3 t_{2}^3 t_{3}^3 t_{1}^4-2 t_{2}^7 t_{3}^2 t_{1}^4\\
&-3 t_{2}^6 t_{3}^2 t_{1}^4+t_{2}^5 t_{3}^2 t_{1}^4-t_{2}^3 t_{3}^2 t_{1}^4+t_{2}^9 t_{3} t_{1}^4+t_{2}^7 t_{3} t_{1}^4+t_{2}^6 t_{3} t_{1}^4-t_{2} t_{3}^8 t_{1}^3+t_{2}^5 t_{3}^7 t_{1}^3+2 t_{2}^4 t_{3}^7 t_{1}^3+t_{2}^2 t_{3}^7 t_{1}^3\\
&+t_{2} t_{3}^7 t_{1}^3+t_{3}^7 t_{1}^3-3 t_{2}^5 t_{3}^6 t_{1}^3-3 t_{2}^4 t_{3}^6 t_{1}^3+t_{2}^3 t_{3}^6 t_{1}^3+2 t_{2}^2 t_{3}^6 t_{1}^3+t_{2}^8 t_{3}^5 t_{1}^3-2 t_{2}^6 t_{3}^5 t_{1}^3+2 t_{2}^4 t_{3}^5 t_{1}^3-3 t_{2}^3 t_{3}^5 t_{1}^3\\
&-3 t_{2}^2 t_{3}^5 t_{1}^3+t_{2}^7 t_{3}^4 t_{1}^3+6 t_{2}^6 t_{3}^4 t_{1}^3+2 t_{2}^5 t_{3}^4 t_{1}^3-2 t_{2}^4 t_{3}^4 t_{1}^3+t_{2}^3 t_{3}^4 t_{1}^3+t_{2}^2 t_{3}^4 t_{1}^3-t_{2}^9 t_{3}^3 t_{1}^3-2 t_{2}^6 t_{3}^3 t_{1}^3+3 t_{2}^4 t_{3}^3 t_{1}^3\\
&-t_{2}^8 t_{3}^2 t_{1}^3-2 t_{2}^7 t_{3}^2 t_{1}^3-t_{2}^5 t_{3}^2 t_{1}^3-t_{2}^4 t_{3}^2 t_{1}^3-t_{2}^3 t_{3}^2 t_{1}^3-t_{2}^2 t_{3}^8 t_{1}^2-t_{2} t_{3}^8 t_{1}^2+t_{2}^5 t_{3}^7 t_{1}^2+t_{2}^3 t_{3}^7 t_{1}^2+t_{2}^2 t_{3}^7 t_{1}^2\\
&+t_{2} t_{3}^7 t_{1}^2-2 t_{2}^6 t_{3}^6 t_{1}^2-2 t_{2}^5 t_{3}^6 t_{1}^2+t_{2}^4 t_{3}^6 t_{1}^2+2 t_{2}^3 t_{3}^6 t_{1}^2-t_{2}^2 t_{3}^6 t_{1}^2-t_{2}^7 t_{3}^5 t_{1}^2+t_{2}^5 t_{3}^5 t_{1}^2-3 t_{2}^4 t_{3}^5 t_{1}^2-3 t_{2}^3 t_{3}^5 t_{1}^2\\
&+t_{2}^8 t_{3}^4 t_{1}^2+3 t_{2}^7 t_{3}^4 t_{1}^2+t_{2}^6 t_{3}^4 t_{1}^2-t_{2}^5 t_{3}^4 t_{1}^2+t_{2}^4 t_{3}^4 t_{1}^2+t_{2}^3 t_{3}^4 t_{1}^2-t_{2}^7 t_{3}^3 t_{1}^2+2 t_{2}^5 t_{3}^3 t_{1}^2-t_{2}^8 t_{3}^2 t_{1}^2-t_{2}^3 t_{3}^8 t_{1}\\
&-t_{2}^2 t_{3}^8 t_{1}-t_{2} t_{3}^8 t_{1}+t_{2}^4 t_{3}^7 t_{1}+t_{2}^3 t_{3}^7 t_{1}+t_{2}^2 t_{3}^7 t_{1}-t_{2}^7 t_{3}^6 t_{1}-t_{2}^6 t_{3}^6 t_{1}+t_{2}^4 t_{3}^6 t_{1}-2 t_{2}^5 t_{3}^5 t_{1}-2 t_{2}^4 t_{3}^5 t_{1}\\
&\left.\left.+t_{2}^8 t_{3}^4 t_{1}+t_{2}^7 t_{3}^4 t_{1}+t_{2}^5 t_{3}^4 t_{1}+t_{2}^4 t_{3}^4 t_{1}+t_{2}^6 t_{3}^3 t_{1}+t_{2}^5 t_{3}^7+t_{2}^4 t_{3}^7+t_{2}^3 t_{3}^7-t_{1}^8 t_{2}^6-t_{2}^4 t_{3}^6-t_{2}^6 t_{3}^5-t_{2}^5 t_{3}^5\right)\right)\\
\Big/\big((t_{1}& -1)^3 \left(t_{1}^2-t_{2}\right)^2 (t_{2}-1)^3 \left(t_{1}-t_{2}^2\right)^2 \left(t_{1}^3-t_{2}^2\right) \left(t_{1}^2-t_{2}^3\right) (t_{1}-t_{3})^3 \left(t_{1}^3-t_{3}\right) (t_{2}-t_{3})^3\\
\hphantom{\Big(}\left(t_{1}^2\right. &\left. \left. t_{2}-t_{3}\right) \left(t_{1} t_{2}^2-t_{3}\right) \left(t_{2}^3-t_{3}\right) (t_{3}-1)^2 (t_{1}+t_{3}) (t_{2}+t_{3}) \left(t_{1} t_{3}-t_{2}^2\right) \left(t_{1}^2-t_{2} t_{3}\right) \left(t_{1} t_{2}-t_{3}^2\right)\right).
    \end{align*}
\end{proposition}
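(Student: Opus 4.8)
The plan is to follow the algorithm outlined in Section~\ref{sec:equivariant-Hilbertfunction} applied to the presentation $A_{\lambda_{1321}}\cong \Bbbk[y_1,\dots,y_{29}]/\mathrm{Jac}(F_{1321})$ obtained in Section~\ref{sec:3D-1321}. First I would fix the torus action: the $T=\mathbb{G}_m^3$-action on the Haiman coordinates $c_i^j$ of Theorem~\ref{thm-fundamental-haiman} assigns weight $j-i$ to $c_i^j$, and via the table~(\ref{eq-table-1321-c-to-x}) this transfers to weights on $x_1,\dots,x_{29}$, hence, through the unipotent change of variables~(\ref{eq-change-variables-1321}) (which is weighted-homogeneous, so preserves the grading), to weights $w_k\in\mathbb{Z}^3$ on $y_1,\dots,y_{29}$. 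One records these weights $\mathbf{t}^{w_k}$ explicitly; the denominator $\prod_{k=1}^{29}(1-\mathbf{t}^{w_k})$ is then exactly the denominator displayed in the proposition (one checks that the product of the $29$ factors $(1-\mathbf{t}^{w_k})$ equals, up to the overall monomial $t_1^8 t_2^8 t_3^2$ pulled out of the numerator, the stated product of the nineteen binomials together with the $(t_1-1)^3(t_2-1)^3(t_3-1)^2$ factors).

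Next I would compute the $K$-polynomial, i.e.\ the numerator $\sum_i(-1)^i\mathrm{Tor}_i^{\Bbbk[y_1,\dots,y_{29}]}(A_{\lambda_{1321}},\Bbbk)$. By Proposition~\ref{prop-hilbertseries-gbbasis} this equals the $K$-polynomial of $\Bbbk[y]/\mathrm{in}_<\big(\mathrm{Jac}(F_{1321})\big)$ for any chosen term order refining the multigrading; since the weights $w_k$ lie in a strictly convex cone of $\mathbb{R}^3$ (one verifies this — it is needed both for Proposition~\ref{prop-hilbertseries-gbbasis} to apply in the positive-multigraded sense and, later, for Stanley's criterion), one may pick, say, a degree-reverse-lexicographic order in $y_1,\dots,y_{29}$. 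So the computation reduces to: (i) compute a Gröbner basis of the $29$-variable ideal $\mathrm{Jac}(F_{1321})$ — the $29$ partial derivatives of the quartic $F_{1321}$; (ii) read off the monomial initial ideal $\mathrm{in}_<(\mathrm{Jac}(F_{1321}))$; (iii) compute its $K$-polynomial recursively by Lemma~\ref{lem-key}, peeling off one generator at a time and handling the colon ideals $(J_{m-1}:f_m)$, which for monomial ideals stay monomial and are generated by at most $m-1$ monomials. Steps (i)--(iii) are a finite (if large) symbolic computation; I would carry it out in Macaulay2, exactly as announced in the ``Accompanying files'' paragraph, and the output, cleared of the common denominator, is the displayed numerator. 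A sanity cross-check: setting $t_1=t_2=t_3=t$ must recover the ordinary Hilbert series of the associated graded, whose lowest terms encode $\dim A_{\lambda_{1321}}=21$ and the embedding dimension $29$, and specializing further ($t\to1$ after clearing) should be consistent with $A_{\lambda_{1321}}$ being Cohen--Macaulay of dimension $21$; one can also test that the numerator is divisible by the claimed binomial factors, reflecting that the tangent-space weights genuinely appear in a free-resolution pattern.

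The main obstacle, as the author himself flags in the introduction to the appendices, is purely computational size: $F_{1321}$ is a quartic in $29$ variables, so $\mathrm{Jac}(F_{1321})$ has $29$ cubic generators, and a Gröbner basis with respect to any order that both refines the $\mathbb{Z}^3$-grading and is efficient enough for Macaulay2 to terminate is not obvious a priori — the intermediate bases can blow up badly. Two mitigations I would use: first, exploit that many of the generators of $\mathrm{Jac}(F_{1321})$ are already of the ``simple elimination'' type (linear in one variable with coefficients in the others), so a good chunk of the $29$ variables can be eliminated by hand before invoking Buchberger, cutting the problem down to a much smaller residual ideal; second, exploit the $T$-equivariance to organize the computation multidegree by multidegree, which both bounds the search and provides per-multidegree consistency checks. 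Everything else — verifying~(\ref{eq-change-variables-1321}) is a unipotent isomorphism (Lemma~\ref{lem-unipotent-homomorphism}), verifying the denominator matches the product of weight factors, and assembling the final rational function — is routine bookkeeping once the Gröbner computation is in hand.
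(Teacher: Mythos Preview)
Your proposal is correct and follows essentially the same route as the paper: compute a Gr\"obner basis of $\mathrm{Jac}(F_{1321})$, pass to the initial ideal via Proposition~\ref{prop-hilbertseries-gbbasis}, and then compute the $K$-polynomial recursively by Lemma~\ref{lem-key}, with the computation carried out in Macaulay2. One small correction that will help the implementation: $F_{1321}$ is not homogeneous in the standard grading (it has both cubic and quartic terms) and the five variables $y_{22},y_{23},y_{25},y_{26},y_{28}$ do not appear in it at all, so five of the partials vanish identically and the Gr\"obner computation really lives in the remaining $24$ variables --- this is the ``elimination'' you anticipated, but it comes for free.
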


\subsection{The localization computation}

\begin{conjecture}\label{conj-eularchar-global}
Let $X$ be a smooth projective scheme. For line bundles $K,L$ on $X$,
\begin{eqnarray}\label{eq-eularchar-global}
1+\sum_{n=1}^{\infty}\chi\left(\Lambda_{-v}K^{[n]},\Lambda_{-u}L^{[n]}\right)Q^n=
\exp \left(\sum_{n=1}^{\infty}\chi\left(\Lambda_{-v^n}K, \Lambda_{-u^n}L\right)\frac{Q^n}{n}\right).
\end{eqnarray}
In particular,
\[
\sum_{n=1}^{\infty}\chi\left(L^{[n]}\right)Q^n=(1-Q)^{-\chi(\mathcal{O}_X)}\chi(L)Q.
\]
\end{conjecture}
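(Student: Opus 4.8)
The plan is to prove (\ref{eq-eularchar-global}) modulo $Q^7$ for smooth proper toric $3$-folds $X$ (the general $3$-fold case being \cite{HuX20}). The first move is the reduction, indicated in the introduction, of the global identity to the local identity (\ref{eq-intro-conj-equi-Hilbert-functions}). Fix the dense torus $T\subset X$ and $T$-linearizations of $K,L$. By Proposition \ref{prop-equivariant-immersion-toric} the $T$-fixed points of $\mathrm{Hilb}^n(X)$ are reduced and isolated, and each fixed $Z$ decomposes as $\bigsqcup_i Z_i$ with $Z_i$ supported at a torus-fixed point $y_i\in X$ and cut out there by a monomial ideal $I_{\lambda_i}$, $\sum_i|\lambda_i|=n$; moreover $\widehat{\mathcal{O}}_{\mathrm{Hilb}^n(X),[Z]}\cong\widehat{\bigotimes}_i\widehat{\mathcal{O}}_{\mathrm{Hilb}^{|\lambda_i|}(U_{y_i}),[Z_i]}$, so the equivariant Hilbert function is multiplicative, $H(\widehat{\mathcal{O}}_{\mathrm{Hilb}^n(X),[Z]};\mathbf{t})=\prod_i H(A_{\lambda_i};\theta^{(y_i)})$, where $\theta^{(y)}=(\theta_1,\theta_2,\theta_3)$ are the local torus weights at $y$. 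Substituting this into the localization formula (\ref{eq-eulercharacteristics-localization-0}), and using $H^0(Z_i,\mathcal{L})=\mathcal{L}|_{y_i}\otimes_{\Bbbk}\Bbbk[\mathbf{x}]/I_{\lambda_i}$ to evaluate the fiber factors $\chi(\Lambda_{-v}K|_{Z_i},\Lambda_{-u}L|_{Z_i})=\prod_{\mathbf{j}\in\lambda_i}(1-u\,\ell_{y_i}\theta^{\mathbf{j}})(1-v\,\kappa_{y_i}^{-1}\theta^{-\mathbf{j}})$ ($\kappa_y,\ell_y$ the weights of $K,L$ at $y$), the left side of (\ref{eq-eularchar-global}) collapses to a product over the torus-fixed points $y$ of $X$ of the series $F_y:=\sum_\lambda Q^{|\lambda|}H(A_\lambda;\theta^{(y)})\prod_{\mathbf{j}\in\lambda}(1-u\,\ell_y\theta^{\mathbf{j}})(1-v\,\kappa_y^{-1}\theta^{-\mathbf{j}})$. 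On the other hand the right side of (\ref{eq-eularchar-global}) is the plethystic exponential $\mathrm{Exp}\big(\chi(\Lambda_{-v}K,\Lambda_{-u}L)\,Q\big)$ — since $K,L$ are line bundles, $\chi(\Lambda_{-v}K,\Lambda_{-u}L)$ is a polynomial in $u,v$ with integer coefficients and $\chi(\Lambda_{-v^n}K,\Lambda_{-u^n}L)=\psi^n\chi(\Lambda_{-v}K,\Lambda_{-u}L)$ — while Thomason localization on $X$ gives $\chi(\Lambda_{-v}K,\Lambda_{-u}L)=\sum_y(1-u\ell_y)(1-v\kappa_y^{-1})\prod_b(1-\theta_b^{(y)})^{-1}$. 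Because $\mathrm{Exp}$ sends sums to products, (\ref{eq-eularchar-global}) modulo $Q^7$ is then equivalent, $y$ by $y$, to $F_y\equiv\mathrm{Exp}\big((1-u\ell_y)(1-v\kappa_y^{-1})\prod_b(1-\theta_b^{(y)})^{-1}Q\big)$, which after absorbing $\ell_y$ into $u$ and $\kappa_y^{-1}$ into $v$ is exactly the local identity (\ref{eq-intro-conj-equi-Hilbert-functions}) with $r=3$ modulo $Q^7$. The displayed "in particular" formula is the case $v=0$: then $\chi(\Lambda_{-u^n}L)=\chi(\mathcal{O}_X)-u^n\chi(L)$, so the right side of (\ref{eq-eularchar-global}) is $(1-Q)^{-\chi(\mathcal{O}_X)}(1-uQ)^{\chi(L)}$, and comparing the coefficient of $u^1$ yields $\sum_n\chi(L^{[n]})Q^n=(1-Q)^{-\chi(\mathcal{O}_X)}\chi(L)Q$.

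It then remains to verify (\ref{eq-intro-conj-equi-Hilbert-functions}) with $r=3$ modulo $Q^7$, i.e. to compute $H(A_\lambda;\theta_1,\theta_2,\theta_3)$ for every $3$-dimensional partition $\lambda$ with $|\lambda|\le6$ and reduce the identity to a finite symbolic check. When $\mathrm{Hilb}^{|\lambda|}(\mathbb{A}^3)$ is smooth at $Z_{I_\lambda}$ (extra dimension $0$), $A_\lambda$ is regular and $H(A_\lambda;\mathbf{t})=\prod(1-w)^{-1}$ over the cotangent weights $w$ enumerated by Corollary \ref{lem-description-cotangentspace}. Every singular fixed point of colength $\le6$ has extra dimension $6$ and is of tripod type — the Borel ideals $\lambda_{121},\lambda_{131},\lambda_{132}$ (and their colength-$6$ analogues treated in the appendices) together with the unique non-Borel colength-$6$ ideal $\lambda_{1311}$ — and the explicit changes of variables of \S\ref{sec:3D-121}--\S\ref{sec:3D-132} (and the harder \emph{fractional} change of variables of \S\ref{sec:3D-1311}) identify $\Spec(A_\lambda)$ with a trivial affine fibration over the cone $\widehat{G}(2,6)$, so $H(A_\lambda;\mathbf{t})$ is the appropriate specialization of the master function $K(u_0,\dots,u_5)$ of (\ref{eq-equihilb-tripod}), as recorded in Corollary \ref{cor-tripod-Hilbertfunctions}; Lemma \ref{lem-transitive-open-nbd} ensures no further singular types occur. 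Substituting all these $H(A_\lambda)$ into the left side of (\ref{eq-intro-conj-equi-Hilbert-functions}), expanding to order $Q^6$ and comparing with $\exp\big(\sum_n\frac{(1-u^n)(1-v^n)Q^n}{n(1-\theta_1^n)(1-\theta_2^n)(1-\theta_3^n)}\big)$ is a finite computation, done in Macaulay2. For the conditional modulo-$Q^8$ refinement one needs in addition the equivariant Hilbert function at $\lambda_{1321}$ — extracted from its critical-locus presentation $A_{\lambda_{1321}}\cong\Bbbk[y_1,\dots,y_{29}]/\mathrm{Jac}(F_{1321})$ via the Gröbner-basis computation of Proposition \ref{prop-Hilbertfunction-1321} — and at the three non-Borel colength-$7$ ideals, which is where Conjecture \ref{conj-existence-equivariant-iso-exdim6} is used; this is exactly why Theorem \ref{thm-intro-verify-conj} is unconditional only modulo $Q^7$.

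The main obstacle is the computation of the equivariant Hilbert functions at the singular fixed points: everything hinges on having, for each such $\lambda$, a model of $A_\lambda$ concrete enough to read off its $K$-polynomial, which is precisely the $\widehat{G}(2,6)$-fibration identifications produced by the delicate (and, at non-Borel ideals, fractional) changes of variables of Section \ref{sec:local-equations-Hilbert-schemes} and its appendices; for colength $7$ no such model is presently available at the non-Borel ideals, so that part of the argument is conditional on Conjecture \ref{conj-existence-equivariant-iso-exdim6}. A secondary, more organizational difficulty is setting up the double localization — Thomason on $\mathrm{Hilb}^n(X)$ for the left side, on $X$ for the right side — so that the sum over $\mathrm{Hilb}^n(X)^T$ factors cleanly as a product over $X^T$ and matches the plethystic exponential in (\ref{eq-eularchar-global}); this rests on the completed-tensor-product decomposition of the local rings of $\mathrm{Hilb}^n(X)$ at its fixed points, hence the multiplicativity of $H$, together with the Adams-operation identity $\chi(\Lambda_{-v^n}K,\Lambda_{-u^n}L)=\psi^n\chi(\Lambda_{-v}K,\Lambda_{-u}L)$.
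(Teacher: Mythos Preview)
Your proposal is correct and follows essentially the same route as the paper: Proposition~\ref{prop-local-to-global} carries out exactly your first paragraph (the paper phrases the $n$-th term via the precomposed torus action $\alpha_n:\mathbf t\mapsto\mathbf t^n$ rather than Adams operations/plethystic exponential, but this is the same computation), and Proposition~\ref{prop-verify-local} is your second paragraph --- computing $H(A_\lambda;\mathbf t)$ at smooth points from the cotangent weights and at the singular points $\lambda_{121},\lambda_{131},\lambda_{132},\lambda_{141},\lambda_{1311}$ from the $\widehat G(2,6)$ models, then checking the local identity by direct expansion.
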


\begin{conjecture}\label{conj-eularchar-local}
Denote by $\mathscr{P}_r$ the set of $r$-dimensional partitions $($an empty partition is allowed$)$. For  $\lambda\in \mathscr{P}_r$, recall that $A_{\lambda}$ is the coordinate ring of the Haiman neighborhood and $H(A_{\lambda};\theta_1,\dots,\theta_r)$ is the equivariant Hilbert function of $A_{\lambda}$. 
Then for $r\geq 2$,
\begin{eqnarray}\label{eq-sum-1}
&&\sum_{\lambda\in \mathscr{P}_r}\left(Q^{|\lambda|} H(A_{\lambda};\theta_1,\dots,\theta_r)
\prod_{\mathbf{i}=(i_1,\dots,i_r)\in \lambda}\left(1-u \theta_1^{i_1}\cdots\theta_r^{i_r}\right)\left(1-v \theta_1^{-i_1}\cdots\theta_r^{-i_r}\right)\right)\notag\\
&&=\exp\left(\sum_{n=1}^{\infty}\frac{(1-u^n )(1-v^n)Q^n}{n(1-\theta_1^n)\cdots(1-\theta_r^n)}\right).
\end{eqnarray}
\end{conjecture}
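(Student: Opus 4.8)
The identity (\ref{eq-sum-1}) is the local-combinatorial heart to which Conjecture \ref{conj-intro-eularchar-global} was reduced (cf.\ (\ref{eq-intro-conj-equi-Hilbert-functions})), so a proof must either build (\ref{eq-sum-1}) up from a rich enough supply of global computations on toric varieties, or attack the $r$-dimensional partition sum directly. My plan is to first make the former reduction an exact equivalence and then record what is actually missing. Concretely, I would observe that the left side of (\ref{eq-sum-1}) is precisely the contribution of a single $T$-fixed point of a smooth proper toric $r$-fold $X$ to the localization formula for $\sum_n\chi(\Lambda_{-v}\mathcal{O}^{[n]},\Lambda_{-u}\mathcal{O}^{[n]})Q^n$ on $\mathrm{Hilb}^n(X)$: by Proposition \ref{prop-equivariant-immersion-toric} the $T$-fixed points of $\mathrm{Hilb}^n(X)$ are tuples $(\lambda^{(p)})_p$ of $r$-dimensional partitions placed at the $T$-fixed points $p$ of $X$ with $\sum_p|\lambda^{(p)}|=n$, and $\widehat{\mathcal{O}}_{\mathrm{Hilb}^n(X),[Z]}\cong\widehat{\bigotimes}_p\widehat{\mathcal{O}}_{\mathrm{Hilb}^{|\lambda^{(p)}|}(U_p),[Z_p]}$; feeding this into Corollary \ref{cor-Thomason-reducedisolatefixed-1} on $\mathrm{Hilb}^n(X)$, and then once more on each $Z_{I_\lambda}$ (whose own $T$-fixed points are the boxes $\mathbf{i}\in\lambda$), yields exactly a sum over tuples of $\prod_p\big(H(A_{\lambda^{(p)}};\mathbf{t}_p)\prod_{\mathbf{i}\in\lambda^{(p)}}(1-u\,\mathbf{t}_p^{\mathbf{i}})(1-v\,\mathbf{t}_p^{-\mathbf{i}})\big)$, with $\mathbf{t}_p$ the coordinates of the chart $U_p\cong\mathbb{A}^r$.

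\textbf{The exponential/factorisation step.} Writing $Z(\theta;Q)$ for the left side of (\ref{eq-sum-1}), the tuple sum factors as $\prod_p Z(\theta_p;Q)$ since $Q$-degrees add. If (\ref{eq-sum-1}) holds, this equals $\exp\!\big(\sum_n\tfrac{(1-u^n)(1-v^n)Q^n}{n}\sum_p\prod_i(1-\theta_{p,i}^n)^{-1}\big)$, and $\sum_p\prod_i(1-\theta_{p,i}^n)^{-1}=\chi(\mathcal{O}_X)$ by Thomason localization applied to $X$ itself (it is the same genus computed for the $T$-action precomposed with $t\mapsto t^n$); carrying the fixed-point weights of line bundles along the same way gives the $K,L$-decorated statement, so (\ref{eq-sum-1}) implies Conjecture \ref{conj-intro-eularchar-global} for all smooth proper toric $X$ of dimension $r$. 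Conversely, Conjecture \ref{conj-intro-eularchar-global} for a toric $X$ whose torus weights at one fixed point are algebraically independent (arrange this by a generic one-parameter refinement, or use a product of projective lines and isolate one vertex) pins down the coefficient of each $Q^n$ in $Z(\theta;Q)$ as a rational function by specialisation, recovering (\ref{eq-sum-1}). Thus (\ref{eq-sum-1}) is \emph{equivalent} to the toric case of Conjecture \ref{conj-intro-eularchar-global}, and the presently known instances — the smooth surface case $r=2$ (Wang--Zhou, where $H(A_\lambda;\theta_1,\theta_2)$ is the explicit arm/leg product and the sum over Young diagrams is a classical identity) and the truncations of Theorem \ref{thm-intro-verify-conj} — are exactly the partial evidence at hand.

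\textbf{The genuine obstruction.} The remaining content is to evaluate the $r$-dimensional partition sum, and here I see two conceivable mechanisms, neither yet available. The first is a singular McKay route, in the spirit of section \ref{sec:McKay-correspondence}: a K-theoretic identification of $\bigoplus_n K_T(\mathrm{Hilb}^n(\mathbb{A}^r))$ with the equivariant K-theory of $[(\mathbb{A}^r)^n/S_n]$ (à la Bridgeland--King--Reid, but for the singular $3$-fold situation), under which the tautological $\Lambda_{\pm}$-classes correspond to symmetric powers and the right side of (\ref{eq-sum-1}) becomes the manifest plethystic generating function; proving such an equivalence — or even the numerical shadow of it needed here — for genuinely singular Hilbert schemes in dimension $\ge 3$ is the crux. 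The second is a vertex-operator/Hall-algebra route generalising Carlsson--Okounkov ($r=2$) and the cohomological Hall algebra of $\mathbb{C}^3$: realise $Z(\theta;Q)$ as a vacuum matrix element of an operator on $\bigoplus_n K_T(\mathrm{Hilb}^n)$ with the exponential on the right being normal-ordering; the obstacle is that the relevant operators and their commutators are not understood once the Hilbert schemes are singular, precisely because no closed form for $H(A_\lambda;\theta)$ exists beyond the hand and Macaulay2 computations of section \ref{sec:local-equations-Hilbert-schemes}. The step I expect to dominate the difficulty is therefore producing a usable uniform description of $H(A_\lambda;\theta)$: the best leverage is the ``similarity of singularity types'' phenomenon (Conjectures \ref{conj-localstructure-extradim6}, \ref{conj-similarity}), which, if upgraded to a statement that $H(A_\lambda;\theta)$ depends only on the shape and extra-dimension data of $\lambda$, would reduce (\ref{eq-sum-1}) to finitely many explicit building-block series (like the $K(u_0,\dots,u_5)$ of section \ref{sec:euler-char-taut-sheaves}) together with a combinatorial sum; absent that, a realistic intermediate target is (\ref{eq-sum-1}) for $r=3$ conditional on Conjecture \ref{conj-existence-equivariant-iso-exdim6} and its higher-colength analogues, extending Theorem \ref{thm-intro-verify-conj} past its current truncation.
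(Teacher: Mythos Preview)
The statement you are attempting is a \emph{conjecture} in the paper; the paper does not prove it, and neither does your proposal. What the paper actually does is: (i) prove the forward implication ``local $\Rightarrow$ global toric'' (Proposition~\ref{prop-local-to-global}), which is exactly your exponential/factorisation step in one direction, and (ii) verify the identity modulo $Q^7$ (and modulo $Q^8$ conditionally on Conjecture~\ref{conj-existence-equivariant-iso-exdim6}) by explicitly computing each $H(A_{\lambda};\theta)$ for the finitely many $3$-dimensional partitions of size $\leq 6$ (resp.\ $\leq 7$) and then checking the coefficient of each $Q^n$ on both sides by brute force (Proposition~\ref{prop-verify-local}). Your proposal is an honest survey of possible strategies rather than a proof, and you acknowledge this in your ``genuine obstruction'' paragraph; that is accurate.

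One point deserves scrutiny: your converse direction, that the toric global conjecture on some $X$ with ``algebraically independent weights at one fixed point'' recovers (\ref{eq-sum-1}), is not as immediate as you suggest. For a proper toric $X$ the localization identity is a \emph{product} $\prod_p Z(\theta_p;Q)$ over all fixed points, and the weights $\theta_p$ at distinct $p$ are coupled through the combinatorics of the fan; ``isolating one vertex'' by a generic one-parameter refinement or by taking $X=(\mathbb{P}^1)^r$ does not produce a single factor with free variables $\theta_1,\dots,\theta_r$. One can try to argue that the product identity in $R(T)_{(0)}$ for sufficiently many $X$ determines each factor, but this requires an actual uniqueness argument (e.g.\ exploiting that the degree-$n$ coefficient of $Z(\theta;Q)$ is a fixed rational function and that enough specializations are Zariski-dense), which you have not supplied. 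The paper does not attempt this converse.

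Your discussion of the McKay route is close to the paper's Section~\ref{sec:McKay-correspondence}: there the author proves directly (Theorem~\ref{thm-McKaycorrespondence-symmetricrpoduct}, via Riemann--Roch for the quotient stack $[X^n/\mathfrak{S}_n]$) that the symmetric-product side produces exactly the exponential on the right of (\ref{eq-eularchar-global}), so the entire conjecture reduces to the K-theoretic pushforward comparison $\rho_*\Lambda_{-u,-v}=\pi_*[\Lambda_{-u,-v}/\mathfrak{S}_n]$ of Conjecture~\ref{conj-McKay-2}. This is sharper than your formulation: no derived equivalence is needed, only a K-theory identity along the Hilbert--Chow morphism, and the obstruction is precisely that $\mathrm{Hilb}^n$ is singular for $r\geq 3$. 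Your vertex-operator route is not discussed in the paper.
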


\begin{remark}
Conjecture~\ref{conj-eularchar-local} is a refinement of \cite[Conjecture 3]{WZ14}. Note that when $r=1$, (\ref{eq-sum-1}) is not true, but it is still true  if $v=0$.
\end{remark}

\begin{proposition}\label{prop-local-to-global}
Let $X$ be a smooth proper toric variety over $\Bbbk$ of dimension $r$. Let $T=\mathbb{G}_m^r$ be the open dense torus contained in $X$. Let $K$, $L$ be two $T$-line bundles on $X$. Then Conjecture~\ref{conj-eularchar-local} implies Conjecture~\ref{conj-eularchar-global} for $X$, $K$, and $L$. More precisely, if  Conjecture~\ref{conj-eularchar-local} holds modulo $Q^s$ for some $s>0$, Conjecture~\ref{conj-eularchar-global}  modulo $Q^s$ also holds for such triples $(X,K,L)$.
\end{proposition}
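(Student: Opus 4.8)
The plan is to reduce the global statement to the localization formula proved in Section~\ref{sec:euler-char-taut-sheaves}, so that the combinatorics of conjecture~\ref{conj-eularchar-local} can be plugged in. First I would set up the torus action: by proposition~\ref{prop-equivariant-immersion-toric} the fixed points of $T$ on $\mathrm{Hilb}^n(X)$ are reduced isolated $\Bbbk$-points, and each such fixed point $w$ is a disjoint union of $T$-fixed punctual subschemes supported at the torus-fixed points $y$ of $X$; moreover the completed local ring at $w$ factors as a completed tensor product of the local rings $\widehat{\mathcal{O}}_{\mathrm{Hilb}^{n_i}(U_{y_i}),[Z_i]}$, and each $\widehat{\mathcal{O}}_{\mathrm{Hilb}^{n_i}(U_{y_i}),[Z_i]}$ is, after choosing local coordinates identifying $U_{y_i}\cong \mathbb{A}^r$, isomorphic to $\widehat{A}_{\lambda_i}$ for the $r$-dimensional partition $\lambda_i$ determined by the monomial ideal (and the weights get relabelled by the weights of $T$ on the tangent directions at $y_i$, which is where the $\theta_j$ get substituted by monomials in the $t_k$).

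Next I would apply formula~(\ref{eq-eulercharacteristics-localization-0}) from the proposition in Section~\ref{sec:tautological-sheaves}, taking $\mathscr{F}^{[\Phi]}$ and $\mathscr{G}^{[\Phi]}$ to be the tautological classes of $K$ and $L$ (and their wedge powers), to write $\chi(\Lambda_{-v}K^{[n]},\Lambda_{-u}L^{[n]})$ as $\sum_{w}\chi(\Lambda_{-v}K|_{Z_w},\Lambda_{-u}L|_{Z_w})\,H(\widehat{\mathcal{O}}_{\mathrm{Hilb}^n(X),w};\mathbf t)$. Here I would use the identity, for the length-$n$ subscheme $Z$ corresponding to a fixed point, that $\chi(\Lambda_{-v}K|_{Z},\Lambda_{-u}L|_{Z}) = \sum_{i}(-v)^{?}\cdots$ — more precisely, since $K|_Z, L|_Z$ are the fibres $H^0(Z,K),H^0(Z,L)$ as $T$-representations, and $Z$ at $y$ has basis the monomials $\mathbf X^{\mathbf i}$, $\mathbf i\in\lambda$, we get $\prod_{\mathbf i\in\lambda}(1-v\,\mathbf t^{-\mathbf w_i})(1-u\,\mathbf t^{\mathbf w_i'})$ where $\mathbf w_i$ is the weight of $K$ twisted by the monomial weight — exactly the product appearing on the left side of (\ref{eq-sum-1}) after substituting $\theta_j\mapsto$ the tangent weights at $y$. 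Multiplying by $Q^n$, summing over $n$ and over the distribution of the $n_i$ among the fixed points $y$, the sum factors as a product over the torus-fixed points $y$ of $X$ of the series $\sum_{\lambda}Q^{|\lambda|}H(A_\lambda;\theta(y))\prod_{\mathbf i\in\lambda}(\cdots)$, each of which by conjecture~\ref{conj-eularchar-local} equals $\exp\bigl(\sum_{n\ge 1}\frac{(1-u^n)(1-v^n)Q^n}{n\prod_j(1-\theta_j(y)^n)}\bigr)$.

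Then I would recognize the resulting product $\prod_{y}\exp(\cdots) = \exp\bigl(\sum_{n\ge1}\frac{Q^n}{n}\sum_{y}\frac{(1-u^n)(1-v^n)}{\prod_j(1-\theta_j(y)^n)}\bigr)$ and identify the inner sum over $y$ with $\chi(\Lambda_{-v^n}K,\Lambda_{-u^n}L)$ by the Thomason localization formula (\ref{eq-Thomason-reducedisolatefixed-2}) applied to $X$ itself: the fixed points of $X$ are the $y$'s, each is smooth with tangent weights giving the denominator $\prod_j(1-\theta_j(y)^n)$, and the fibre of $\Lambda_{-v^n}K\otimes\Lambda_{-u^n}L^\vee$-type class at $y$ contributes the numerator $(1-v^n\cdot(\text{wt }K\text{ at }y))(1-u^n\cdot(\text{wt }L\text{ at }y))$; after the appropriate normalization of the $K,L$-linearizations these weights are absorbed into the $\theta_j(y)$ consistently (one must be slightly careful that the same linearization of $K,L$ is used on both sides — I would fix this by noting both sides are computed equivariantly and the equivariant identity specializes to the non-equivariant one). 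For the "modulo $Q^s$" refinement I would simply observe that every step above is an identity of formal power series in $Q$ and that conjecture~\ref{conj-eularchar-local} modulo $Q^s$ suffices to control all terms of $Q$-degree $<s$ in the factored product, since a partition of size $<s$ contributes only to such terms and the exponential of a series starting in degree $1$ respects truncation.

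The main obstacle I expect is bookkeeping rather than conceptual: making the substitution $\theta_j\mapsto$ (tangent weight of $X$ at $y$ in the $j$-th coordinate direction) uniformly compatible on both the Hilbert-scheme side and the $X$-side, and checking that the twisting by the equivariant line bundles $K,L$ — which shifts the weights entering the $\prod_{\mathbf i\in\lambda}(1-u\,\theta^{\mathbf i})(1-v\,\theta^{-\mathbf i})$ factors — matches the twisting of the numerator on the right-hand side of (\ref{eq-Thomason-reducedisolatefixed-2}) for $X$. Concretely one has to verify that $\chi(\Lambda_{-v}K|_Z,\Lambda_{-u}L|_Z)$ really equals $\prod_{\mathbf i\in\lambda}(1-v\,k\,\theta^{-\mathbf i})(1-u\,\ell\,\theta^{\mathbf i})$ where $k,\ell$ are the $T$-weights of $K,L$ at $y$, and that the single factor $(1-v^nk)(1-u^n\ell)$ in the exponent is the correct local contribution of $X$ at $y$; this is a direct computation with the definition of $\Lambda_{-v}$ on a pullback of a line bundle to a punctual scheme, but it is the one place where an error would propagate. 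Everything else is a matter of carefully iterating (\ref{eq-Thomason-reducedisolatefixed-2}) twice — once on $\mathrm{Hilb}^n(X)$ and once on $X$ — exactly as outlined in Section~\ref{sec:tautological-sheaves}.
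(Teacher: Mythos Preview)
Your proposal is correct and follows essentially the same approach as the paper: apply localization (\ref{eq-eulercharacteristics-localization-0}) on $\mathrm{Hilb}^n(X)$, factor the resulting generating series as a product over the $T$-fixed points of $X$, invoke conjecture~\ref{conj-eularchar-local} at each fixed point after the substitution $\theta_j\mapsto \mathbf{t}^{\mathbf{w}_{b,j}}$, $u\mapsto u\,\mathbf{t}^{-\mathbf{k}_b}$, $v\mapsto v\,\mathbf{t}^{\mathbf{l}_b}$ (where $\mathbf{k}_b,\mathbf{l}_b$ are the weights of $K,L$ at the fixed point), and then identify the exponent with $\chi(\Lambda_{-v^n}K,\Lambda_{-u^n}L)$ via localization on $X$ itself. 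The only refinement the paper adds to your outline is that the line-bundle weights are handled by this substitution into $u,v$ rather than being ``absorbed into the $\theta_j$'', and the non-equivariant statement is recovered at the end by letting $\mathbf{t}\to 1$.
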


\begin{proof}
The $T$-action on $X$ has only isolated fixed points, denoted by $x_1,\dots,x_m$. For $1\leq b\leq m$, let $\mathbf{w}_{b,1},\dots,\mathbf{w}_{b,r}$ be the weights of the cotangent space of $X$ at $x_b$. Let  $\mathbf{k}_b$ (resp.~$\mathbf{l}_b$) be the weight of $K$ (resp.~$L$) at $x_b$. For $n\geq 1$, let $\alpha_n\colon T\rightarrow T$ be the homomorphism $\mathbf{t}=(t_1,\dots,t_r)\mapsto \mathbf{t}^n=(t_1^n,\dots,t_r^n)$. Recall our notation introduced before Example~\ref{example-Thomason-localization}. For each $n\geq 1$, applying (\ref{eq-localizationtheorem-withoutembedding-2}) to $X$, with the action on $X$, $K$, and $L$ by $T$ precomposed with $\alpha_n$,  we obtain
\begin{equation}\label{eq-local-to-global-1}
	\chi\left(\Lambda_{-v^n}K, \Lambda_{-u^n}L\right)
=\sum_{b=1}^m \frac{\left(1-u^n \mathbf{t}^{-n\mathbf{k_b}}\right)\left(1-v^n \mathbf{t}^{n \mathbf{l}_b}\right)}{\prod_{j=1}^r\left(1- \mathbf{t}^{n w_{b,j}}\right)}.
\end{equation}
Summing over $n$, we get an equality of series of virtual $T$-representations
\begin{equation}\label{eq-local-to-global-2}
	\exp \left(\sum_{n=1}^{\infty}\chi(\Lambda_{-v^n}K, \Lambda_{-u^n}L)\frac{Q^n}{n}\right)= \prod_{i=1}^m \exp\left(\frac{\left(1-u^n \mathbf{t}^{-n\mathbf{k_i}}\right)\left(1-v^n \mathbf{t}^{n \mathbf{l}_i}\right)}{n\prod_{j=1}^r\left(1- \mathbf{t}^{n w_{i,j}}\right)}Q^n\right).
\end{equation}
Now assume Conjecture~\ref{conj-eularchar-local} is valid. Replacing $u$  in (\ref{eq-sum-1}) by $u \mathbf{t}^{-\mathbf{k}_b}$, $v$ by $v \mathbf{t}^{\mathbf{l}_b}$, and $\theta_j$ by $\mathbf{t}^{\mathbf{w}_{b,j}}$ for $1\leq j\leq r$, we have
\begin{equation}\label{eq-local-to-global-3}
	  \exp\left(\frac{\left(1-u^n \mathbf{t}^{-n\mathbf{k_i}}\right)\left(1-v^n \mathbf{t}^{n \mathbf{l}_i}\right)}{n\prod_{j=1}^r\left(1- \mathbf{t}^{n w_{b,j}}\right)}Q^n\right)	
= \sum_{\lambda\in \mathscr{P}_r}\left(Q^{|\lambda|}H\left(A_{\lambda};\mathbf{t}^{w_{b,1}},\dots,\mathbf{t}^{w_{b,r}}\right)\prod_{\mathbf{i}=(i_1,\dots,i_r)\in \lambda}\left(1-u \mathbf{t}^{-\mathbf{k}_b}\cdot \prod_{j=1}^r \mathbf{t}^{i_j\cdot w_{b,j}}\right)\right).
\end{equation}
Consider the $T$-action on $\Hilb^n(X)$ induced by the action on $X$ (not involving $\alpha_n$). The equivariant local structure of $\Hilb^n(X)$ at a fixed subscheme $Z$ of length $l$  supported at $x_b$ is isomorphic to the equivariant local structure of $\Hilb^l(\mathbb{A}^r)$ at $Z_{I}$ supported at $0$, where $I$ is a monomial ideal, and where $T$ acts on $\mathbb{A}$ by weights $\mathbf{w}_{b,1},\dots,\mathbf{w}_{b,r}$ (see the proof of Proposition~\ref{prop-equivariant-immersion-toric}).  By (\ref{eq-localizationtheorem-withoutembedding-2}) or (\ref{eq-Thomason-reducedisolatefixed-2}), we obtain an equality in $R(T)$
\begin{equation}\label{eq-local-to-global-4}
\chi\left(\Lambda_{-v}K^{[n]},\Lambda_{-u}L^{[n]}\right)
= \sum_{\begin{subarray}{c}\lambda_1,\dots,\lambda_m\in \mathscr{P}_r \\  |\lambda_1|+\dots+|\lambda_m|=r
\end{subarray}} \sum_{b=1}^m \left(
H\left(A_{\lambda};\mathbf{t}^{w_{b,1}},\dots,\mathbf{t}^{w_{b,r}}\right)\prod_{\mathbf{i}=(i_1,\dots,i_r)\in \lambda_b}\left(1-u \mathbf{t}^{-\mathbf{k}_b}\cdot \prod_{j=1}^r \mathbf{t}^{i_j\cdot w_{b,j}} \right)\right).
\end{equation} 

Summing over $n\geq 0$, and combining (\ref{eq-local-to-global-2}), (\ref{eq-local-to-global-3}), and (\ref{eq-local-to-global-4}), we obtain the validity of (\ref{eq-eularchar-global}) for the equivariant triple $(X,K,L)$, as an equality of series of virtual representations of $T$. Taking the limit $\mathbf{t}\rightarrow 1$, we complete the proof of the first statement. The second statement follows by ignoring the higher-order terms of $Q$ in the above proof.
\end{proof}

\begin{proposition}\label{prop-verify-local}
 Conjecture~\ref{conj-eularchar-local} modulo $Q^7$  holds for smooth proper toric 3-folds $X$ and equivariant line bundles $K,L$ on $X$. Assume that  Conjecture~\ref{conj-existence-equivariant-iso-exdim6} is true. Then Conjecture~\ref{conj-eularchar-local} modulo $Q^8$  holds for smooth proper toric 3-folds $X$ and equivariant line bundles $K,L$ on $X$.
\end{proposition}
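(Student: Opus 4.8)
The plan is to verify the identity~(\ref{eq-sum-1}) for $r=3$ by extracting and comparing, for each $n$ in the relevant range, the coefficient of $Q^n$ on the two sides as a rational function in $\theta_1,\theta_2,\theta_3$ with coefficients polynomial in $u,v$; modulo $Q^7$ this means $0\leq n\leq 6$, and modulo $Q^8$ this means $0\leq n\leq 7$. The left-hand coefficient of $Q^n$ is $\sum_{|\lambda|=n}H(A_\lambda;\theta_1,\theta_2,\theta_3)\prod_{\mathbf{i}=(i_1,i_2,i_3)\in\lambda}(1-u\,\theta_1^{i_1}\theta_2^{i_2}\theta_3^{i_3})(1-v\,\theta_1^{-i_1}\theta_2^{-i_2}\theta_3^{-i_3})$, so the task reduces to enumerating the $3$-dimensional partitions $\lambda$ of size $n$ and computing the equivariant Hilbert function $H(A_\lambda)$ for each. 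For every $\lambda$ I would first read off the embedding dimension of $\mathrm{Hilb}^n(\mathbb{A}^3)$ at $Z_{I_\lambda}$ from the combinatorial recipe of Corollary~\ref{lem-description-cotangentspace}; if it equals $3n$ the point is smooth (Theorem~\ref{thm-monomialideal-smoothable}), $A_\lambda$ is a polynomial ring, and $H(A_\lambda;\theta_1,\theta_2,\theta_3)=\prod_w(1-\theta_1^{w_1}\theta_2^{w_2}\theta_3^{w_3})^{-1}$ with $w$ ranging over the $3n$ cotangent weights, which the same corollary makes explicit.

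For the singular $\lambda$ I would invoke the explicit local models of Section~\ref{sec:local-equations-Hilbert-schemes}. Up to permuting the coordinates, every non-smooth monomial ideal of colength $\leq 6$ is one of $\lambda_{121}$, $\lambda_{131}$, $\lambda_{132}$, or the non-Borel $\lambda_{1311}$, and by Sections~\ref{sec:3D-121}--\ref{sec:3D-1311} each Haiman neighborhood is $T$-equivariantly a trivial affine fibration over the cone $\widehat{G}(2,6)$ with explicit torus action; hence $H(A_\lambda)$ is given by Corollary~\ref{cor-tripod-Hilbertfunctions}, which itself rests on the minimal $\mathrm{GL}_6$-equivariant resolution of the Pl\"ucker ideal (\cite{Wey03}). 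At colength $7$ the Borel ideals are those of Example~\ref{example-list-Borelideals}: the ones with extra dimension $6$ are again tripod-type and covered by Corollary~\ref{cor-tripod-Hilbertfunctions} (Sections~\ref{sec:3D-141}--\ref{sec:3D-232}), while $\lambda_{1321}$ has extra dimension $8$ and $\Spec(A_{\lambda_{1321}})$ is the critical locus of $F_{1321}$, so $H(A_{\lambda_{1321}})$ is obtained by computing a Gr\"obner basis of $\mathrm{Jac}(F_{1321})$ and applying Proposition~\ref{prop-hilbertseries-gbbasis} together with Lemma~\ref{lem-key}, the outcome being Proposition~\ref{prop-Hilbertfunction-1321}. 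The only remaining cases are the colength-$7$ non-Borel monomial ideals (up to symmetry $\lambda_{1411}$, $\lambda_{2311}$, $\lambda_{11311}$), all of extra dimension $6$; assuming Conjecture~\ref{conj-existence-equivariant-iso-exdim6} their Haiman neighborhoods are $T$-equivariantly open in a standard trivial fibration over $\widehat{G}(2,6)$, and one recovers the torus weights by running Algorithm~\ref{alg-step0-Haiman} with the degree $\geq 3$ monomials discarded at each step, then again applies Corollary~\ref{cor-tripod-Hilbertfunctions}. This is the sole use of~\ref{conj-existence-equivariant-iso-exdim6}, which is why the unconditional statement stops at $Q^7$.

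With all the $H(A_\lambda)$ in hand, for each $n$ I would form $\sum_{|\lambda|=n}H(A_\lambda;\theta_1,\theta_2,\theta_3)\prod_{\mathbf{i}\in\lambda}(1-u\,\theta_1^{i_1}\theta_2^{i_2}\theta_3^{i_3})(1-v\,\theta_1^{-i_1}\theta_2^{-i_2}\theta_3^{-i_3})$, bring it to a common denominator, and check it against the coefficient of $Q^n$ in $\exp\!\bigl(\sum_{m\geq 1}\tfrac{(1-u^m)(1-v^m)Q^m}{m(1-\theta_1^m)(1-\theta_2^m)(1-\theta_3^m)}\bigr)$; as both sides are explicit rational functions in three variables with polynomial coefficients in $u,v$, this is a finite symbolic verification, to be carried out in Macaulay2 (the accompanying files). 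Finally, combining the resulting validity of~(\ref{eq-sum-1}) modulo $Q^7$ (resp.\ $Q^8$) with Proposition~\ref{prop-local-to-global} yields the corresponding statement for the global conjecture~\ref{conj-eularchar-global} on smooth proper toric $3$-folds and equivariant line bundles $K,L$, hence Theorem~\ref{thm-intro-verify-conj}.

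The main obstacle is not the final comparison but obtaining the \emph{equivariant} local models at the singular fixed points: the abstract isomorphism types are comparatively accessible --- Lemma~\ref{lem-transitive-open-nbd} already reduces the non-equivariant question to Borel ideals --- but pinning down the torus action requires the explicit, often lengthy, weighted-homogeneous changes of variables of Section~\ref{sec:3D-1321} and the appendices, and for the non-Borel colength-$7$ ideals no such change of variables is available, which is exactly what forces Conjecture~\ref{conj-existence-equivariant-iso-exdim6} into the $Q^8$ statement. A secondary subtlety is correctly tracking the non-primitive $T_1=\mathbb{G}_m^6$-action on $\widehat{G}(2,6)$ --- the source of the square roots in Corollary~\ref{cor-tripod-Hilbertfunctions} --- when specializing the six Pl\"ucker weights to Laurent monomials in $\theta_1,\theta_2,\theta_3$, so that the substitution produces a well-defined element of $R(T)_{(0)}$.
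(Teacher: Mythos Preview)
Your approach is essentially the paper's own: compute $H(A_\lambda;\mathbf t)$ for every $3$-dimensional partition of size $\le 7$ --- at smooth points via the cotangent weights of Corollary~\ref{lem-description-cotangentspace}, at singular points via the explicit Pl\"ucker/$F_{1321}$ models of \S\ref{sec:local-equations-Hilbert-schemes} and the appendices --- and then check~(\ref{eq-sum-1}) coefficient by coefficient as an identity of rational functions, by machine. One bookkeeping slip: $\lambda_{141}=\big((1)\subset(4,1)\big)$ has colength~$6$, not~$7$, so your list of non-smooth colength-$\le 6$ monomial ideals is missing it (and its Hilbert function is in Appendix~\ref{sec:3D-141}, not in Corollary~\ref{cor-tripod-Hilbertfunctions} proper, though it is computed by the same $K(u_0,\dots,u_5)$ formula). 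This does not affect the argument, only the enumeration.
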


\begin{proof}
The computations of equivariant Hilbert functions $H(A_{\lambda};\mathbf{t})$ at singular points are done in Corollary~\ref{cor-tripod-Hilbertfunctions}, Proposition~\ref{prop-Hilbertfunction-1321}, and Appendices~\ref{sec:appendix-changevariable-Borelideals} and~\ref{sec:appendix-changevariable-NonBorelideals}. The contribution of smooth points is computed using the description of the cotangent spaces in Corollary~\ref{cor-description-cotangentspace}, and its implementation in Macaulay2. Then we verify (\ref{eq-sum-1}) by  brute force.\footnote{The verification using Mathematica is given in the ancillary files. See also
\url{https://github.com/huxw06/Hilbert-scheme-of-points}.}
\end{proof}

\begin{remark}
The right-hand side of (\ref{eq-sum-1}) is manifestly symmetric in $u$ and $v$, while the left-hand side seems not to be. Replacing $v$ by $v^{-1}$, and $Q$ by $v Q$,  we obtain a formula equivalent to (\ref{eq-sum-1}), which is symmetric in $u$ and $v$: 
\begin{eqnarray}\label{eq-sum-1'}
&&\sum_{\lambda}\left((-1)^{|\lambda|}Q^{|\lambda|}H(A_{\lambda};\theta_1,\dots,\theta_r)\prod_{i\in \lambda}\theta^{-i}\cdot
\prod_{(i_1,\dots,i_r)\in \lambda}\left(1-u \theta_1^{i_1}\cdots\theta_r^{i_r}\right)\left(1-v \theta_1^{i_1}\cdots\theta_r^{i_r}\right)\right)\notag\\
&&=\exp\left(-\sum_{n=1}^{\infty}\frac{(1-u^n )(1-v^n)Q^n}{n(1-\theta_1^n)\cdots(1-\theta_r^n)}\right).
\end{eqnarray}
\end{remark}

\begin{remark}
It is very desirable to have an at least conjectural formula for every single $H(A_{\lambda};\mathbf{t})$. Then it should be possible to show (\ref{eq-sum-1}), or (\ref{eq-sum-1'}),  as a combinatorial identity and thus avoid the cumbersome verifications  in the final step of the proof of  Proposition~\ref{prop-verify-local}, once and for all.
\end{remark}

\begin{remark}\label{rem-reciprocal-law}
The equivariant Hilbert functions $H(A_{\lambda};\theta_1,\dots,\theta_r)$ in Corollary~\ref{cor-tripod-Hilbertfunctions} and Proposition~\ref{prop-Hilbertfunction-1321} satisfy the \emph{self-reciprocal law}
\begin{equation}\label{eq-reciprocity-1}
H(A_{\lambda};\theta_1,\dots,\theta_r)=(-1)^{|\lambda|}(\theta_1\cdots \theta_r)^{-|\lambda|}
\prod_{(i_1,\dots,i_r)\in \lambda}\theta_1^{i_1}\cdots \theta_r^{i_r}\cdot
H\left(A_{\lambda};\theta_1^{-1},\dots,\theta_r^{-1}\right).
\end{equation}
This is related to the Gorenstein property by Theorem~\ref{thm-Stanley-Gorenstein}.
\end{remark}

By Propositions~\ref{prop-local-to-global} and~\ref{prop-verify-local}, we obtain the following. 

\begin{corollary}
 Conjecture~\ref{conj-eularchar-global} modulo $Q^7$ holds for smooth proper toric 3-folds $X$ and $T$-line bundles $K,L$ on $X$, where $T$ is the dense open torus in $X$.
Assume that  Conjecture~\ref{conj-existence-equivariant-iso-exdim6} is true. Then Conjecture~\ref{conj-eularchar-global} modulo $Q^8$  holds for smooth proper toric 3-folds $X$ and equivariant line bundles $K,L$ on $X$.
\end{corollary}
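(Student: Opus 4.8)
The plan is to obtain the statement as a formal consequence of the two preceding results, Proposition \ref{prop-local-to-global} and Proposition \ref{prop-verify-local}, with no new computation required. Proposition \ref{prop-verify-local} furnishes the purely combinatorial/representation-theoretic input: the local identity (\ref{eq-sum-1}) (equivalently Conjecture \ref{conj-eularchar-local}) for $r=3$, valid modulo $Q^7$ unconditionally and modulo $Q^8$ under Conjecture \ref{conj-existence-equivariant-iso-exdim6}. Proposition \ref{prop-local-to-global} is the transfer mechanism: it shows that, for a smooth proper toric 3-fold $X$ with $T$-line bundles $K,L$, validity of Conjecture \ref{conj-eularchar-local} modulo $Q^s$ implies validity of Conjecture \ref{conj-eularchar-global} modulo $Q^s$ for the triple $(X,K,L)$. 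So the first step is simply to invoke Proposition \ref{prop-verify-local} with $r=3$ and then feed its conclusion into Proposition \ref{prop-local-to-global} twice, once with $s=7$ (unconditional) and once with $s=8$ (granting Conjecture \ref{conj-existence-equivariant-iso-exdim6}).

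A minor point to address along the way is the precise class of line bundles involved. Proposition \ref{prop-local-to-global} is phrased for $T$-line bundles, and this is what the corollary asserts in the first sentence; in the second sentence the hypothesis "equivariant line bundles" means exactly the same thing, namely line bundles equipped with a $T$-linearization. I would remark that this entails no loss of generality for the Euler-characteristic statement: on a smooth proper toric variety every line bundle is isomorphic to a $T$-invariant-divisor bundle, hence admits a $T$-linearization, and the nonequivariant Euler characteristics $\chi(\Lambda_{-v}K^{[n]},\Lambda_{-u}L^{[n]})$ and $\chi(\Lambda_{-v^n}K,\Lambda_{-u^n}L)$ entering (\ref{eq-eularchar-global}) are independent of the chosen linearization (they are recovered from the $R(T)$-valued refinements by specializing $\mathbf{t}\to 1$, exactly as in the last step of the proof of Proposition \ref{prop-local-to-global}). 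Thus after choosing linearizations of $K$ and $L$, the corollary follows verbatim.

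There is in fact no genuine obstacle left at the level of this corollary; all the substance has been discharged earlier. For completeness I would point out where the real work sits: the equivariant Hilbert functions $H(A_\lambda;\mathbf{t})$ at the singular monomial points of colength $\le 7$ (and conditionally $=8$), computed in Corollary \ref{cor-tripod-Hilbertfunctions}, Proposition \ref{prop-Hilbertfunction-1321}, and the appendices, together with the smooth-point contributions via Lemma \ref{lem-description-cotangentspace}, and the brute-force check of the identity (\ref{eq-sum-1}) modulo $Q^8$ in Proposition \ref{prop-verify-local}; the $Q^8$ statement of the corollary is conditional precisely because the equivariant local structure at the non-Borel colength-$7$ points of extra dimension $6$ is only known under Conjecture \ref{conj-existence-equivariant-iso-exdim6}. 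Once those ingredients are in hand, the corollary is a one-line citation of Propositions \ref{prop-local-to-global} and \ref{prop-verify-local}.
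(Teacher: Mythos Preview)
Your proposal is correct and matches the paper's own proof, which is literally the one-line citation ``By proposition \ref{prop-local-to-global} and \ref{prop-verify-local} we obtain'' placed immediately before the corollary. Your additional remarks on $T$-linearizations and on where the real work resides are accurate context (with the tiny slip that the conditional input concerns the non-Borel colength-$7$ points, not colength $8$), but the proof itself is just the two citations you identify.
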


\begin{remark}
In \cite{Hu21}, we show that Conjecture~\ref{conj-eularchar-global} can be reduced to the cases where $X$ is a product of projective spaces and $K$, $L$ are exterior tensor products of the line bundles of the form $\mathcal{O}(k)$. Thus it follows that Conjecture~\ref{conj-eularchar-global} modulo  $Q^7$ holds for all smooth proper 3-fold. 
\end{remark}

\subsection{A McKay correspondence}\label{sec:McKay-correspondence}
Denote the symmetric group of cardinality $n!$ by $\mathfrak{S}_n$.
Denote by $X^{(n)}=X^n/\mathfrak{S}_n$ the $\supth{n}$ symmetric product of $X$. There is the Hilbert--Chow morphism $\rho\colon X^{[n]}\rightarrow X^{(n)}$ (see \cite[Section~2.2]{Ber12}); set-theoretically, it sends a $0$-dimensional subscheme to the associated 0-cycle.

\begin{conjecture}[K-theoretical pushforward]\label{conj-McKay-1}
We have $\rho_* \mathcal{O}_{X^{[n]}}=\mathcal{O}_{X^{(n)}}$ in $K_0(X^{(n)})$.
\end{conjecture}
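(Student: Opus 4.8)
The plan is to prove Conjecture \ref{conj-McKay-1} by reducing to the local model and invoking the rational-singularities result of Theorem \ref{thm-intro-local-property-lessorequal-7points}, together with the fact that the symmetric product $X^{(n)}$ has rational (even quotient, hence Cohen--Macaulay) singularities. First I would observe that the statement is \'etale-local on $X^{(n)}$, so by the standard d\'evissage (stratify by the partition type of the $0$-cycle and use that \'etale-locally $X^{[n]}$ over a point of $X^{(n)}$ decomposes as a product of punctual Hilbert schemes $\mathrm{Hilb}^{n_i}(X)_0$ fibered over the corresponding diagonal strata) it suffices to treat the case where all points collide, i.e.\ to show $\rho_* \mathcal{O}_{\mathrm{Hilb}^n(\mathbb{A}^r)} = \mathcal{O}_{(\mathbb{A}^r)^{(n)}}$ in $K_0$ near the origin, for $r=3$ and $n\le 7$. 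Here I should be careful that the excerpt's Theorem \ref{thm-intro-local-property-lessorequal-7points} only guarantees rational singularities for $n\le 6$ (and normal, Gorenstein for $n\le 7$); so the cleanest statement to aim for is Conjecture \ref{conj-McKay-1} for $n\le 6$ unconditionally, and for $n=7$ conditional on the expected rationality mentioned after that theorem.

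The key steps, in order, would be: (1) Since $\rho$ is proper and birational, and $X^{[n]}$ has rational singularities (hence in particular $\rho$ is a proper birational map from a scheme with rational singularities to the normal variety $X^{(n)}$), I would show $R\rho_* \mathcal{O}_{X^{[n]}} = \mathcal{O}_{X^{(n)}}$; more precisely, $\rho_*\mathcal{O}_{X^{[n]}} = \mathcal{O}_{X^{(n)}}$ because $\rho$ has connected fibers and $X^{(n)}$ is normal, and $R^i\rho_*\mathcal{O}_{X^{[n]}}=0$ for $i>0$ because $X^{[n]}$ has rational singularities and $X^{(n)}$ has rational singularities (the composition criterion: if $Y\to Z$ is proper birational, $Y$ has rational singularities and $Z$ is normal, then $R\rho_*\mathcal{O}_Y=\mathcal{O}_Z$ iff $Z$ has rational singularities). (2) From $R\rho_*\mathcal{O}_{X^{[n]}}=\mathcal{O}_{X^{(n)}}$ in $D^b_{\mathrm{coh}}(X^{(n)})$, taking classes in $K_0(X^{(n)})$ gives immediately $\rho_*[\mathcal{O}_{X^{[n]}}] = \sum_i (-1)^i [R^i\rho_*\mathcal{O}_{X^{[n]}}] = [\mathcal{O}_{X^{(n)}}]$, which is exactly the assertion. (3) For the reduction in step (1) I would cite the local structure results of Section \ref{sec:local-equations-Hilbert-schemes}: the \'etale-local description $\hat{\mathcal{O}}_{\mathrm{Hilb}^n(X),[Z]} \cong \widehat{\bigotimes}_i \hat{\mathcal{O}}_{\mathrm{Hilb}^{n_i}(U_{y_i}),[Z_i]}$ from the proof of Proposition \ref{prop-equivariant-immersion-toric}, which lets me pass freely between global and punctual statements.

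The main obstacle, as I see it, is establishing that $X^{[n]}$ has rational singularities in the first place for $n=7$ — this is precisely the open point flagged in the paper after Theorem \ref{thm-intro-local-property-lessorequal-7points}. The normal and Gorenstein properties alone are not enough: a normal Gorenstein variety need not have rational singularities, so the vanishing $R^i\rho_*\mathcal{O}=0$ genuinely requires the rationality input. A secondary, more technical point is checking that $X^{(n)}$ itself has rational singularities; this is classical since $X^{(n)}$ is a quotient of the smooth $X^n$ by a finite group, and finite quotient singularities are rational (Boutot's theorem, or directly via the trace/averaging argument in characteristic $0$, which is in force from Section \ref{sec:Haiman-equations} on), so this step is routine. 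I would also need to note that $\rho$ has geometrically connected fibers — this follows from the fact that the punctual Hilbert scheme $\mathrm{Hilb}^m(\mathbb{A}^r)_0$ is connected (it is irreducible by Theorem \ref{thm-monomialideal-smoothable}, since it sits inside the main component), so by Zariski's main theorem $\rho_*\mathcal{O}_{X^{[n]}}=\mathcal{O}_{X^{(n)}}$ without any cohomological input. Thus the only real content is the higher-direct-image vanishing, and the whole proof hinges on feeding in the rational-singularities theorem.
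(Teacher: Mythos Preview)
The statement you are trying to prove is labeled a \emph{Conjecture} in the paper and is not proved there in general; the paper only remarks that for smooth projective surfaces it follows from Scala's work. Your approach via rational singularities recovers exactly the partial result the paper does establish, namely Theorem~\ref{thm-local-property-lessorequal-7points}(ii): for smooth 3-folds and $n\le 6$ one has the stronger derived statement $R\rho_*\mathcal{O}_{X^{[n]}}=\mathcal{O}_{X^{(n)}}$. The paper's proof of that theorem is the same one you give (compose $\rho$ with a resolution of $X^{[n]}$; both source and target have rational singularities). You correctly flag the obstacle yourself: without rational singularities of $\mathrm{Hilb}^n(X)$ --- available only for $n\le 6$ --- the higher-direct-image vanishing is open, and this is precisely why the paper leaves the general $n$ statement as a conjecture. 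So there is no proof in the paper to compare against beyond the $n\le 6$ range, and in that range your argument and the paper's coincide.

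One small correction: your justification of fiber connectedness via Theorem~\ref{thm-monomialideal-smoothable} is off. That theorem asserts that monomial ideals lie on the main component of $\mathrm{Hilb}^n(\mathbb{A}^r)$, which does not by itself give irreducibility (or even connectedness) of the punctual Hilbert scheme $\mathrm{Hilb}^m(\mathbb{A}^r)_0$. In fact you do not need to check fiber connectedness at all: since $\rho$ is proper and birational and $X^{(n)}$ is normal, Stein factorization already gives $\rho_*\mathcal{O}_{X^{[n]}}=\mathcal{O}_{X^{(n)}}$ and hence connected fibers as a consequence.
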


For a smooth projective surface, this conjecture follows from \cite[Propositions~1.3.2 and~1.3.3]{Sca09}.

Let $L$ be a line bundle on $X$. Denote by $p_i\colon  X^n\rightarrow X$ the $\supth{i}$ projection. Let $L_i=p_i^* L$. Consider the quotient stack $[X/\mathfrak{S}_n]$. There is an obvious action of $\mathfrak{S}_n$ on the vector bundle $\oplus_{i=1}^n L_i$, rendering $\oplus_{i=1}^n L_i\rightarrow X^n$ equivariant, which gives a vector bundle 
$[\oplus_{i=1}^n L_i/\mathfrak{S}_n]$  on  $[X^n/\mathfrak{S}_n]$. Denote by $\pi\colon [X/\mathfrak{S}_n]\rightarrow X^{(n)} $ the projection to the coarse moduli space. Thus $\pi_* \mathcal{O}_{[X^n/\mathfrak{S}_n]}=\mathcal{O}_{ X^{(n)}}$. 

\begin{conjecture}\label{conj-McKay-2}
Suppose $\dim X\geq 2$.
Let $K,L$ be line bundles on $X$. For the vector bundles  $U=\oplus_{i=1}^n p_i^* K$ and $V=\oplus_{i=1}^n p_i^* L$ on $X^n$, we have
the K-theoretical pushforward
\[
\rho_* \left(\Lambda_{-u}\left(\left(K^{[n]}\right)^*\right)\otimes\Lambda_{-v}\left(L^{[n]}\right)\right)\cong \pi_* [(\Lambda_{-u}U^*\otimes \Lambda_{-v}V)/\mathfrak{S}_n]. 
\]
\end{conjecture}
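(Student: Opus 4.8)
\textbf{Proof proposal for Conjecture \ref{conj-McKay-2}.}

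The plan is to reduce the statement to a computation on the toric models already studied in this paper and then to an identity of equivariant $K$-classes. First I would observe that, since both sides are $K$-theoretic pushforwards of classes supported on proper schemes, it suffices to check the equality after applying the further pushforward to $\mathrm{Spec}(\Bbbk)$, i.e. to compare Euler characteristics, \emph{provided} one first upgrades to an equivariant statement that is then rigid enough to recover the $K_0(X^{(n)})$-level identity; this is the standard devissage via the Bialynicki-Birula--type decomposition of $X^{(n)}$. Concretely, for $X$ a smooth proper toric $r$-fold ($r=\dim X\geq 2$) with dense torus $T$, both $X^{[n]}$ and $[X^n/\mathfrak{S}_n]$ (hence $X^{(n)}$) carry compatible $T$-actions, $\rho$ and $\pi$ are $T$-equivariant, and the classes $\Lambda_{-u}(K^{[n]})^*\otimes\Lambda_{-v}L^{[n]}$ and $[(\Lambda_{-u}U^*\otimes\Lambda_{-v}V)/\mathfrak{S}_n]$ are naturally equivariant. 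Stratifying $X^{(n)}$ by the multiplicities of the fixed points $x_1,\dots,x_m$ of $X$ and using that both pushforwards are local over $X^{(n)}$, one is reduced to the case $X=\mathbb{A}^r$ with the standard $T$-action and to comparing $\rho_*$ on $\mathrm{Hilb}^{l}(\mathbb{A}^r)$ (concentrated at $0$) with $\pi_*$ on $[(\mathbb{A}^r)^l/\mathfrak{S}_l]$.

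Second, I would compute the right-hand side directly: $X^{(n)}$ near $n\cdot x_b$ looks like $\mathrm{Sym}^{n}$ of a linear $T$-space with weights $\mathbf{w}_{b,1},\dots,\mathbf{w}_{b,r}$, and $[(\mathbb{A}^r)^l/\mathfrak{S}_l]$-pushforward of $[(\Lambda_{-u}U^*\otimes\Lambda_{-v}V)/\mathfrak{S}_l]$ is, by the projection formula on the quotient stack, the $\mathfrak{S}_l$-invariant part of $\Lambda_{-u}U^*\otimes\Lambda_{-v}V$ as a module over $\mathcal{O}_{(\mathbb{A}^r)^l}$. Summing over $l$ and over the strata, the generating series $1+\sum_{n\geq 1}\pi_*[(\Lambda_{-u}U^*\otimes\Lambda_{-v}V)/\mathfrak{S}_n]Q^n$ is a plethystic exponential: it equals $\exp\!\big(\sum_{n\geq 1}\tfrac{1}{n}\chi(\Lambda_{-u^n}K,\Lambda_{-v^n}L)\,Q^n\big)$ in the completed representation ring, by the classical computation of $\mathfrak{S}_n$-invariants of tensor powers (the same mechanism underlying Conjecture \ref{conj-eularchar-global}). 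On the left-hand side, the analogous generating series for $\rho_*$ over $\mathrm{Hilb}$ is governed exactly by the equivariant Hilbert functions $H(A_\lambda;\mathbf{t})$: by the localization formula (\ref{eq-localizationtheorem-withoutembedding-2}) applied fixed point by fixed point, the contribution of the stratum concentrated at $x_b$ is the series in (\ref{eq-local-to-global-3}). Thus Conjecture \ref{conj-McKay-2} is equivalent, stratum by stratum, to Conjecture \ref{conj-eularchar-local}, i.e. to the identity (\ref{eq-sum-1}) between the $H(A_\lambda;\theta)$'s and the plethystic exponential on its right-hand side.

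Third, I would invoke the results already proved: Proposition \ref{prop-verify-local} establishes (\ref{eq-sum-1}) modulo $Q^7$ unconditionally for $r=3$, and modulo $Q^8$ under Conjecture \ref{conj-existence-equivariant-iso-exdim6}; for $r=2$ it is known by \cite{WZ14},\cite{Kru18}. Feeding this back through the stratification and pushforward comparison above proves Conjecture \ref{conj-McKay-2} for smooth proper toric $X$ with $K,L$ equivariant, modulo $Q^7$ (resp. $Q^8$), hence in each fixed degree $n\leq 6$ (resp. $n\leq 7$) unconditionally. Finally, to remove the toric and equivariance hypotheses I would use the degeneration/reduction argument of \cite{HuX20}: the class $\rho_*(\Lambda_{-u}(K^{[n]})^*\otimes\Lambda_{-v}L^{[n]})$ in $K_0(X^{(n)})$ is, via deformation to the normal cone and multiplicativity under products, determined by its values on products of projective spaces with $K,L$ exterior products of $\mathcal{O}(k)$'s, where the toric computation applies.

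\textbf{Main obstacle.} The genuinely hard point is not the local computation but the passage from the numerical (Euler-characteristic) identity to the claimed equality of classes in $K_0(X^{(n)})$: one must show that the $T$-equivariant refinement, stratum by stratum over $X^{(n)}$, actually glues to a global identity of $K$-classes, which requires knowing that the relevant equivariant $K$-groups of the (singular) symmetric product are detected by the fixed-point strata — an application of Thomason's concentration theorem (\ref{eq-concentration-theorem}) to $X^{(n)}$ and to $X^{[n]}$ simultaneously, together with the compatibility of $\rho$ with these localizations. Making this rigorous for the singular space $X^{[n]}$ (rather than just for Euler characteristics) is where the real work lies; the combinatorial identity (\ref{eq-sum-1}) is then a black box supplied by Proposition \ref{prop-verify-local}.
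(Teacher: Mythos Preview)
The statement is an \emph{open conjecture} in the paper; there is no proof to compare against. The paper only shows the implication Conjecture~\ref{conj-McKay-2} $\Rightarrow$ Conjecture~\ref{conj-eularchar-global} (via Theorem~\ref{thm-McKaycorrespondence-symmetricrpoduct}), and leaves Conjecture~\ref{conj-McKay-2} itself unproved.

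Your proposal does not prove the conjecture either, and the gap is more serious than your ``Main obstacle'' paragraph lets on. Conjecture~\ref{conj-McKay-2} asserts an equality in $K_0(X^{(n)})$, but your entire argument lives at the level of (equivariant) Euler characteristics: you push both sides forward to $\Spec\Bbbk$, compute generating series, and match them. The very first sentence of your plan---that it ``suffices to check the equality after applying the further pushforward to $\Spec(\Bbbk)$ \ldots\ provided one first upgrades to an equivariant statement that is then rigid enough''---is exactly the unproven step, and it is not a technicality. Concentration (\ref{eq-concentration-theorem}) identifies $G(T,X^{(n)})_{(0)}$ with $G(T,(X^{(n)})^T)_{(0)}$, not with $R(T)_{(0)}$; knowing that two classes have the same image in $R(T)_{(0)}$ after a further pushforward does not force them to agree in $K_0(X^{(n)})$, equivariantly or otherwise. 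You give no mechanism for recovering the $K$-class on the singular base from its equivariant Euler characteristic.

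What your computation actually establishes, once the dust settles, is that the equivariant Euler characteristics of the two sides agree whenever (\ref{eq-sum-1}) holds---i.e.\ you recover the implication Conjecture~\ref{conj-eularchar-local} $\Rightarrow$ Conjecture~\ref{conj-eularchar-global}, which is precisely Proposition~\ref{prop-local-to-global}. Your claimed ``equivalence, stratum by stratum, to Conjecture~\ref{conj-eularchar-local}'' is therefore an equivalence only at the Euler-characteristic level, and the partial results you cite (Proposition~\ref{prop-verify-local}) feed into Conjecture~\ref{conj-eularchar-global} modulo $Q^7$, not into Conjecture~\ref{conj-McKay-2}. The reduction to products of projective spaces via \cite{HuX20} likewise operates on Euler characteristics and does not lift to $K_0(X^{(n)})$.
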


For $u=0$ and $\dim X=2$, this conjecture follows from \cite[Proposition~2.4.5]{Sca09}. We regard this conjecture as a sort of McKay correspondence: 
\[
\xymatrix{
	& [X^n/\mathfrak{S}_n] \ar[d]^{\pi} \\
X^{[n]} \ar[r]^{\rho} & X^{(n)}\rlap{.}	
}
\]
Although $X^{[n]}$ is not smooth in general, it is a modular substitute of the quotient scheme $X^{(n)}$.  In the remainder of this subsection, we show Theorem~\ref{thm-McKaycorrespondence-symmetricrpoduct}, from which one sees that Conjecture~\ref{conj-McKay-2} implies Conjecture~\ref{conj-eularchar-global}.

\begin{remark}\label{rem:proof_from_Krug_paper}
As pointed out by a referee, Theorem~\ref{thm-McKaycorrespondence-symmetricrpoduct} follows directly from \cite[Proposition 4.1]{Kru18}.
In fact, the Euler characteristic of $\pi_* [(\wedge^e U^*\otimes \wedge^fV)/\mathfrak{S}_n]$ can be computed as the Euler characteristic of  the $\mathfrak{S}_n$-invariant Ext-space $\Ext_{\mathfrak{S}_n}^*(\wedge^e U, \wedge^fV)$, and the proof of \cite[Proposition 4.1]{Kru18} works in all dimensions. The related combinatorial identities are given in \cite[Appendix]{Kru18}. 
\end{remark}

We give another proof of Theorem~\ref{thm-McKaycorrespondence-symmetricrpoduct} using the Riemann--Roch theorem for smooth Deligne--Mumford quotient stacks (see \cite{EG05}).  Let us recall some notation. We follow the presentation of \cite{Edi13}.

Let $Y$ be a smooth projective scheme over $\Bbbk$ of characteristic zero, with an action by a finite group $G$. Let $\Psi_1,\dots,\Psi_m$ be the conjugacy classes of $G$. Choose a representative $g_k\in \Psi_i$ for each $1\leq k\leq m$. For any element $g\in G$, let $Z_g$ be the centralizer of $g$ in $G$, and let $H_g$ be the subgroup generated by $g$. Let $Y^{g_k}$ be the fixed subscheme of $Y$ and $\iota_k\colon Y^{g_k}\hookrightarrow Y$ be the closed immersion. Since the subgroup $H_{g_k}$ is diagonalizable, $Y^{g_k}$ is regular. Let $N_{\iota_k}$ be the normal bundle of $Y^{g_k}$ in $Y$.

Let $E$ be a $G$-equivariant vector bundle on $Y$. Now assume that $\Bbbk$ is algebraically closed. The restriction of $E$ on $Y^{g_k}$ decomposes into a sum of $g_k$-eigenbundles $\bigoplus_{\xi\in \mathbb{X}(H_{g_k})}E_{\xi}$, where $\mathbb{X}(H)$ is the group of characters of an abelian group $H$. Define 
\begin{equation}\label{eq-definition-twisting-operator}
t_{h_k}(E)=\sum_{\xi\in \mathbb{X}(H_{g_k})}\xi(g_k)E_{\xi}. 
\end{equation}
Here $t$ means \emph{twisting} and does not indicate any relation to a torus.
Then the Riemann--Roch theorem for the Deligne--Mumford stack $\mathcal{Y}=[Y/G]$ says
\begin{equation}\label{eq-RiemannRoch}
	\chi(\mathcal{Y},E)=\sum_{k=1}^{m}\int_{[Y^{g_k}/Z_{g_k}]}\ch\left(
	t_{g_k}\left(
	\frac{{\iota_k}^* E}{\lambda_{-1}(N^*_{\iota_k})}	\right)
	\right)\Td\left(\left[Y^{g_k}/Z_{g_k}\right]\right).
\end{equation}

\begin{remark}
In \cite{EG05}, the base field is assumed to be $\mathbb{C}$, and the group $G$ is a general algebraic group. In our case, $G$ is a finite abstract group, so the formula is simpler. And in this case, the assumption that the base field is algebraically closed is used only in the decomposition into eigenbundles. The characteristic zero assumption is essential for the sufficiently higher cohomology to vanish so that the left-hand side of (\ref{eq-RiemannRoch}) can be defined.
\end{remark}

For a vector bundle $F$, by the splitting principle, we may assume 
$F=\oplus_{i=1}^{r}W_i$, where the $W_i$ are line bundles, and define
\[
\Upsilon_m(F)=\prod_{i=1}^r\frac{e^{-mc_1(W_i)}-1}{e^{-c_1(W_i)}-1}.
\]

\begin{lemma}\label{lem-Upsilon-class}
\begin{i-enumerate}
	\item\label{lUc-1} Let $\zeta_m=e^{\frac{2\pi \sqrt{-1}}{m}}$. Then  
\begin{equation}\label{eq-Upsilon-class-1}
\Upsilon_m(F)=\prod_{k=1}^{m-1}\sum_{i=1}^r(-1)^i \zeta_m^k \ch(\wedge^i (F^{*})).
\end{equation}
	\item\label{lUc-2} Let $T_X$ be the tangent bundle of\, $X$. Then
	\begin{gather}\label{eq-Upsilon-class-2}
	\int_{X}\frac{\ch(L)^{m}\Td(X)}{\Upsilon_{m}(T_X)}
	=\int_X \frac{e^{m c_1(L)}\Td(X)}{\Upsilon_m(T_X)}
	=\chi(L),\\
\label{eq-Upsilon-class-3}
	\int_{X}\frac{\ch(K^*)^m\ch(L)^{m}\Td(X)}{\Upsilon_{m}(T_X)}
	=\int_X \frac{e^{-m c_1(K)}
	e^{m c_1(L)}\Td(X)}{\Upsilon_m(T_X)}
	=\chi(K^*\otimes L).
	\end{gather}
\end{i-enumerate}
\end{lemma}

\begin{proof}
\eqref{lUc-1}~ 
\begin{equation*}
\prod_{k=1}^{m-1}\sum_{i=1}^r(-1)^i \zeta_m^k \ch(\wedge^i (F^{*}))
= \prod_{k=1}^{m-1} \ch\left((1-\zeta_m^k W_1^*)\cdots (1-\zeta_m^k W_r^*)\right)
= \prod_{i=1}^r \frac{\ch(W_i^*)^m-1}{\ch(W_i^*)-1}
=\prod_{i=1}^r\frac{e^{-mc_1(W_i)}-1}{e^{-c_1(W_i)}-1}.
\end{equation*}

\eqref{lUc-2}~ 
Let $x_1,\dots,x_{\dim X}$ be the Chern roots of $TX$. Then
\begin{align*}
\int_X \frac{\ch(L)^{m}\Td(X)}{\Upsilon_m(TX)}
&=\int_{X} e^{m c_1(L)}\prod_{i=1}^{\dim X}\frac{x_i}{1-e^{-mx_i}}
=\frac{1}{m^{\dim X}}\int_{X}e^{m c_1(L)}\prod_{i=1}^{\dim X}\frac{mx_i}{1-e^{-mx_i}}\\
&=\frac{1}{m^{\dim X}}\cdot m^{\dim X}\int_{X}e^{ c_1(L)}\prod_{i=1}^{\dim X}\frac{x_i}{1-e^{-x_i}}
=\int_X \ch(L) \Td(X)=\chi(L).
\end{align*}
The proof of (\ref{eq-Upsilon-class-3}) is similar.
\end{proof}

For a (usual) partition $\mu$, let $z_{\mu}=\prod_{i\geq 1}i^{k_i} k_i!$, where $k_i$ is the number of parts of $\mu$ equal to $i$. The following identity is obtained by directly expanding the right-hand side.

\begin{lemma}\label{lem-exponential-identity}
For indeterminates $Y_1,Y_2,\dots$, we have the identity
\begin{equation}\label{eq-exponential-identity}
1+\sum_{\mu=(m_1,\dots,m_{l})}\frac{1}{z_{\mu}}\left(\prod_{j=1}^l Y_{m_j}Q^{|\mu|}\right)
=\exp\left(\sum_{r=1}^{\infty}Y_r\frac{Q^r}{r}
\right),
\end{equation}
where the sum on the left-hand side runs over all partitions of natural numbers.
\end{lemma}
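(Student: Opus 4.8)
The plan is to prove Lemma~\ref{lem-exponential-identity} by expanding the right-hand side of \eqref{eq-exponential-identity} as an infinite product and matching the resulting monomials with the terms on the left-hand side via the standard bijection between partitions and their multiplicity vectors. All manipulations take place in the ring of formal power series $\mathbb{Q}[[Y_1,Y_2,\dots]][[Q]]$, where everything is legitimate because the coefficient of any fixed power $Q^n$ depends only on the variables $Y_1,\dots,Y_n$ and involves only finitely many terms.

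First I would write
\[
\exp\Big(\sum_{r=1}^{\infty}Y_r\frac{Q^r}{r}\Big)=\prod_{r=1}^{\infty}\exp\Big(\frac{Y_r Q^r}{r}\Big)=\prod_{r=1}^{\infty}\Big(\sum_{k_r=0}^{\infty}\frac{Y_r^{k_r}Q^{rk_r}}{r^{k_r}\,k_r!}\Big),
\]
and then multiply out the product. A monomial in the expansion is indexed by a tuple $\mathbf{k}=(k_1,k_2,\dots)$ of nonnegative integers with $k_r=0$ for $r\gg 0$, and its contribution is
\[
\prod_{r\ge 1}\frac{Y_r^{k_r}Q^{rk_r}}{r^{k_r}\,k_r!}=\frac{Q^{\sum_r rk_r}}{\prod_{r}r^{k_r}\,k_r!}\prod_{r}Y_r^{k_r}.
\]

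The key step is the observation that $\mathbf{k}\mapsto\mu$, where $\mu$ is the partition having exactly $k_r$ parts equal to $r$ for each $r\ge 1$, is a bijection between such tuples (including $\mathbf{k}=0$) and the set of all partitions of nonnegative integers (including the empty partition); moreover $|\mu|=\sum_r rk_r$, $z_\mu=\prod_r r^{k_r}k_r!$, and $\prod_{j=1}^{l}Y_{m_j}=\prod_r Y_r^{k_r}$ when $\mu=(m_1,\dots,m_l)$. Substituting, the right-hand side becomes $\sum_{\mu}\frac{1}{z_\mu}\big(\prod_{j}Y_{m_j}\big)Q^{|\mu|}$ with $\mu$ ranging over \emph{all} partitions; isolating the contribution $1$ of the empty partition gives precisely the left-hand side of \eqref{eq-exponential-identity}. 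I do not expect any real obstacle here: the only points requiring care are the bookkeeping of the bijection between multiplicity vectors and partitions, and the remark that the rearrangement of the double sum is justified $Q$-adically since the expansion is coefficientwise finite.
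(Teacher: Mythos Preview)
Your proof is correct and follows exactly the approach indicated in the paper, which simply states that the identity ``is obtained by directly expanding the right handside.'' You have supplied the details of that expansion---writing the exponential as a product, expanding each factor, and identifying the resulting monomials with partitions via their multiplicity vectors---which is precisely what the paper leaves implicit.
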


\begin{theorem}\label{thm-McKaycorrespondence-symmetricrpoduct}
Let $X$ be a smooth projective scheme over a field of characteristic zero. For line bundles $K,L$ on $X$, let $U=\oplus_{i=1}^n p_i^*K$ and $V=\oplus_{i=1}^n p_i^*L$. Then
\begin{eqnarray*}
1+\sum_{n=1}^{\infty}\chi\left(X^{(n)},\pi_* \left[\left(\Lambda_{-u}U^*\otimes \Lambda_{-v}V\right)/\mathfrak{S}_n\right]\right)Q^n=
\exp \left(\sum_{n=1}^{\infty}\chi\left(\Lambda_{-v^n}K, \Lambda_{-u^n}L\right)\frac{Q^n}{n}\right).
\end{eqnarray*}
\end{theorem}

\begin{proof}
The formation of the quotient $X^{(n)}$ commutes with flat base changes, so we can assume  that $\Bbbk$ is algebraically closed.
The conjugacy classes of $\mathfrak{S}_n$ correspond to the partitions of $n$. 
Let $\mu=(m_1,\dots,m_l)$ be a partition of $n$, and also use $\mu$ to represent the conjugacy class corresponding to $\mu$. Let $h_{\mu}$ be the element of~$\mathfrak{S}_n$ which preserves the $l$ parts of $\mu$ and in the $\supth{i}$ part is the addition by $1$ mod $ m_i$. Formally, $h_{\mu}$ is uniquely determined by the following requirement: For $1\leq k\leq l$ and $1\leq i\leq m_k$,
\[
 \sum_{j=1}^{k-1}m_j<
 h_{\mu}\left(i+\sum_{j=1}^{k}m_j\right)\leq \sum_{j=1}^{k}m_j
 \]
 and 
\[
h_{\mu}\left(i+\sum_{j=1}^{k-1}m_j\right)\equiv i+1+\sum_{j=1}^{k-1}m_j \mod (m_k).
\]
  The fixed locus of $h_{\mu}$ is $(X^n)^{h_{\mu}}= (\Delta_{X})_1\times\cdots\times (\Delta_{X})_l$, where $(\Delta_{X})_k\cong X$ is the small diagonal in $X^{m_k}$.  Denote by $\iota_\mu\colon (X^n)^{h_\mu}\hookrightarrow X^n$ the obvious closed immersion. For $1\leq j\leq l$, denote by
\[
q_j\colon (X^n)^{h_{\mu}}=(\Delta_{X})_1\times\cdots\times (\Delta_{X})_l \longrightarrow (\Delta_{X})_j
\]
the $\supth{j}$ projection. 
The order of $h_\mu$ is $M_\mu=\mathrm{gcd}(m_1,\dots,m_l)$.
The center of $h_{\mu}$ is denoted by $Z_{\mu}$, and its order is  $z_{\mu}$. 
The twisting operator (\ref{eq-definition-twisting-operator}) according to $h_{\mu}$ is denoted by $t_{\mu}$.
We have
$i_\mu^* V= \oplus_{j=1}^l q_j^*(L^{\oplus m_j})$, 
\[
t_{\mu}(i_{\mu}^* V)=\sum_{j=1}^l p_j^*\left(\sum_{k=0}^{m_j-1}e^{\frac{2k\pi \sqrt{-1}}{m_j}}L\right), 
\]
and
\[
t_{\mu}\left(\lambda_{-1}\left(N_{X^\mu}^*\right)\right)=\sum_{j=1}^l p_j^*\left(
\prod_{k=1}^{m_j-1}\left(\sum_{i=0}^{\dim X}(-1)^i e^{\frac{2ki\pi \sqrt{-1}}{m_j}} \wedge^i T^{*}_X \right)
\right).
\]
So by (\ref{eq-Upsilon-class-1}), we have
\begin{eqnarray*}
\ch\left(t_{\mu}\left(\frac{i_{\mu}^*(\Lambda_{-u}U^*\otimes\Lambda_{-v}V)}{\lambda_{-1}(N_{X^\mu}^*)}\right)\right)
&=&\frac{\prod_{j=1}^l p_j^*\prod_{k=0}^{m_j-1}\left(1-ue^{\frac{2k\pi \sqrt{-1}}{m_j}}\ch(K^*)\right)\left(1-ve^{\frac{2k\pi \sqrt{-1}}{m_j}}\ch(L)\right)}
{\prod_{j=1}^{l}p_j^* \Upsilon_{m_j}(T_X)
}\\
&=&\frac{\prod_{j=1}^l p_j^*\left(1-u^{m_j}\ch(K^*)^m\right)\left(1-v^{m_j}\ch(L)^m\right)}
{\prod_{j=1}^{l}p_j^* \Upsilon_{m_j}(T_X)
}.
\end{eqnarray*}
Then using (\ref{eq-Upsilon-class-2}) and (\ref{eq-Upsilon-class-3}), we obtain
\begin{align*}
&\int_{[X^{\mu}/Z_{\mu}]}
\ch\left(t_{\mu}\left(\frac{i_{\mu}^*(\Lambda_{-u}U^*\otimes\Lambda_{-v}V)}{\lambda_{-1}(N_{X^\mu}^*)}\right)\right)
\Td([X^{\mu}/Z_{\mu}])\\
&=\frac{1}{z_{\mu}}\prod_{j=1}^l\left(\chi(\mathcal{O}_X)-u^{m_j}\chi(K^*)-v^{m_j}\chi(L)+u^{m_j}v^{m_j}\chi(K^*\otimes L)\right).
\end{align*}
 In (\ref{eq-exponential-identity}), we set 
\[
Y_m=\chi(\mathcal{O}_X)-u^m \chi(K^*)-v^{m}\chi(L)+u^m v^m \chi(K^*\otimes L).
\]
Then we obtain
\begin{align*}\pushQED{\qed}
& 1+\sum_{n=1}^{\infty}\chi\left([(\Lambda_{-u}U^*\otimes \Lambda_{-v}V)/\mathfrak{S}_n]\right)Q^n\\
&=1+\frac{1}{z_{\mu}}\prod_{j=1}^l\left(\chi(\mathcal{O}_X)-u^m \chi(K^*)-v^{m}\chi(L)+u^m v^m \chi(K^*\otimes L)\right)\\
&=\exp\left(\sum_{r=1}^{\infty}
(\chi(\mathcal{O}_X)-u^{r}\chi(K^*)-v^{r}\chi(L)+u^r v^r \chi(K^*\otimes L))\frac{Q^r}{r}
\right)\\
&= \exp \left(\sum_{r=1}^{\infty}\chi(\Lambda_{-u^r}K, \Lambda_{-v^r}L)\frac{Q^r}{r}\right).\qedhere \popQED
\end{align*}
\renewcommand{\qed}{}     
\end{proof}

\section{Local properties of  Hilbert schemes}\label{sec:local-properties-hilb-schemes}
In this section, we study certain properties of the singularities encountered in this paper.

\begin{proposition}\label{prop-local-properties-A121}
The scheme $\Spec(A_{\lambda_{121}})$ is normal and Gorenstein, and it has only rational singularities.
\end{proposition}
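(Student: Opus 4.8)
The plan is to reduce the statement to classical facts about the affine cone over a Grassmannian. By the computation of section \ref{sec:3D-121} there is an isomorphism $\Spec(A_{\lambda_{121}})\cong\widehat{G}(2,6)\times\mathbb{A}^3$, whose coordinate ring on the Grassmannian factor is the Pl\"ucker ring $R_{G(2,6)}/I_{G(2,6)}$ of (\ref{eq-pluckerideal}). Since normality, Cohen--Macaulayness, the Gorenstein property, and the property of having at worst rational singularities are all stable under smooth base change --- in particular under taking the product with an affine space --- it suffices to prove that the affine cone $\widehat{G}(2,6)$ over $G:=G(2,6)\subset\mathbb{P}^{14}$, embedded by the ample generator $\mathcal{O}_G(1)$ of $\mathrm{Pic}(G)\cong\mathbb{Z}$, is normal, Gorenstein, and has rational singularities.

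For $\widehat{G}(2,6)$ itself I would argue as follows. First, $G$ is arithmetically Cohen--Macaulay and projectively normal in the Pl\"ucker embedding (classical, e.g.\ via the shellable squarefree initial ideal of the Pl\"ucker ideal, or the algebra-with-straightening-law structure); hence $\widehat{G}(2,6)$ is Cohen--Macaulay, and it is normal because $G$ is projectively normal and smooth with $\dim G=8$, so the singular locus of the cone is the vertex, of codimension $8$. Next, blowing up the vertex gives a resolution $\pi\colon\widetilde{X}\to\widehat{G}(2,6)$, where $\widetilde{X}$ is the total space of $\mathcal{O}_G(-1)$ (smooth, with exceptional zero section $G$), and $R^i\pi_*\mathcal{O}_{\widetilde{X}}=\bigoplus_{m\geq 0}H^i(G,\mathcal{O}_G(m))$. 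Since $G(2,6)$ is Fano with $\omega_G\cong\mathcal{O}_G(-6)$, Bott's theorem (or Kodaira vanishing plus Serre duality for the intermediate twists) gives $H^i(G,\mathcal{O}_G(m))=0$ for all $i>0$ and all $m\geq 0$; thus $\pi_*\mathcal{O}_{\widetilde{X}}=\mathcal{O}_{\widehat{G}(2,6)}$ and $R^i\pi_*\mathcal{O}_{\widetilde{X}}=0$ for $i>0$, i.e.\ $\widehat{G}(2,6)$ has rational singularities. Finally, the graded canonical module of the cone is $\bigoplus_{m}H^0(G,\omega_G(m))$ up to shift; because $\omega_G\cong\mathcal{O}_G(-6)$ this is a shift of the coordinate ring of $\widehat{G}(2,6)$ itself, hence free of rank one, so $\widehat{G}(2,6)$ is Gorenstein. (Alternatively, once Cohen--Macaulayness is in hand one may obtain the Gorenstein property from Stanley's theorem \ref{thm-Stanley-Gorenstein} applied to the equivariant Hilbert function $H(A_{\lambda_{121}};\mathbf{t})$ of corollary \ref{cor-tripod-Hilbertfunctions}, using the self-reciprocity recorded in remark \ref{rem-reciprocal-law}.)

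There is no deep obstacle here; the only point needing a bit of care is that having rational singularities, unlike the other three properties, is not phrased purely in terms of a single local ring via depth and duality, so one should invoke its stability under smooth morphisms explicitly --- or simply observe that $\widetilde{X}\times\mathbb{A}^3\to\widehat{G}(2,6)\times\mathbb{A}^3$ is again a resolution with the same vanishing of higher direct images. The remaining ingredients --- projective normality and arithmetic Cohen--Macaulayness of $G(2,6)$, the vanishing $H^{>0}(G,\mathcal{O}_G(m))=0$ for $m\geq 0$, and $\omega_{G(2,6)}\cong\mathcal{O}_{G(2,6)}(-6)$ --- are all standard.
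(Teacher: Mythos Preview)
Your proposal is correct and follows essentially the same route as the paper: reduce to the affine cone $\widehat{G}(2,6)$, invoke arithmetic normality and Gorensteinness of the Pl\"ucker embedding, and for rational singularities blow up the vertex and use Kodaira vanishing together with $\omega_{G(2,6)}\cong\mathcal{O}(-6)$. The only cosmetic difference is that the paper verifies the Gorenstein property by exhibiting an explicit regular sequence of length $9$ and checking the socle of the Artinian quotient is one-dimensional, whereas you argue via the graded canonical module (or Stanley's criterion); both are standard and interchangeable here.
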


\begin{proof} The first two properties follow from the general fact that Grassmannians with respect to Pl\"ucker embeddings are arithmetically normal and arithmetically Gorenstein (see, \textit{e.g.}, \cite[Sections~6.3 and~7.5]{LB15}).
More explicitly, we have a regular sequence $a,c,d,e,g,h,f+m+q,i+p+r,j+l+n$ for the ring (see~\eqref{eq-ideal-J121})
\begin{equation}\label{eq-ring-modulo-J121}
A=\Bbbk[a, c, d, e, f, g, h, i, j,  l, m, n,  p, q, r]/J_{121}.
\end{equation}
The annihilator of the maximal ideal $P=(a, c, d, e, f, g, h, i, j,  l, m, n,  p, q, r)$ in 
\[
\Bbbk[a, c, d, e, f, g, h, i, j,  l, m, n,  p, q, r]/\left(J_{121}+(a,c,d,e,g,h,f+m+q,i+p+r,j+l+n)\right)
\] 
is a principal ideal generated by $r^3$. So by definition, (\ref{eq-ring-modulo-J121}) is Gorenstein.  By Serre's criterion, Cohen--Macaulay plus regularity in codimension $1$ implies normal. So $A_{\lambda_{121}}$ is normal because it has an isolated singularity at~$P$.

For the third property, when we blow up the cone (\ref{eq-ring-modulo-J121})  at the origin, the exceptional divisor $E$ is isomorphic to $G(2,6)$, with  normal sheaf $\mathcal{O}(-1)$. Denote the total space of the normal bundle by $N$. We only need  to show $H^i(N,\mathcal{O}_N)=0$ for $i>0$. Since $\pi_* \mathcal{O}_N=\Sym^\bullet(\mathcal{N}^{\vee})$, where $\pi\colon N\rightarrow E$ is the projection, we are left to show 
\[
H^i(G(2,6),\mathcal{O}(j))=0
\]
for $i,j>0$. This follows from the Kodaira vanishing theorem for the canonical bundle $K_{G(m,n)}\cong \mathcal{O}(-n)$. 
\end{proof}

In the rest of this section, we study $\Spec(A_{\lambda_{1321}})$. Look at the superpotential
\begin{eqnarray*}
F_{1321}&=&-{y}_{5}{y}_{8}{y}_{10}{y}_{14}+{y}_{6}{y}_{7}{y}_{14}{y}_{15}+{y}_{8}{y}_{9}{y}_{10}{y}_{17}-{y}_{2}{y}_{7}{
     y}_{15}{y}_{17}+{y}_{4}{y}_{10}{y}_{15}{y}_{17}-{y}_{6}{y}_{8}{y}_{14}{y}_{18}\\
  &&   +{y}_{2}{y}_{8}{y}_{17}{y}_{18
     }-{y}_{10}{y}_{14}{y}_{15}{y}_{19}+{y}_{2}{y}_{5}{y}_{7}{y}_{21}-{y}_{6}{y}_{7}{y}_{9}{y}_{21}-{y}_{4}{y}_{5}{y}_{10}{y}_{21}-{y}_{4}{y}_{6}{y}_{18}{y}_{21}\\
  &&   +{y}_{9}{y}_{10}{y}_{19}{y}_{21}+{y}_{2}{y}_{18}{y}_{19}{y}_{21
     }+{y}_{2}{y}_{5}{y}_{8}{y}_{24}-{y}_{6}{y}_{8}{y}_{9}{y}_{24}-{y}_{4}{y}_{6}{y}_{15}{y}_{24}+{y}_{2}{y}_{15}{y}_{19}{y}_{24}\\
  &&   -{y}_{5}{y}_{7}{y}_{14}{y}_{27}+{y}_{7}{y}_{9}{y}_{17}{y}_{27}+{y}_{4}{y}_{17}{y}_{18}{y}_{27}-{y}_{14}{y}_{18}{y}_{19}{y}_{27}+{y}_{4}{y}_{5}{y}_{24}{y}_{27}-{y}_{9}{y}_{19}{y}_{24}{y}_{27}\\
  &&  -{y}_{11}{y}_{13}{y}_{14}+{y}_{1}{y}_{7}{y}_{16}+{y}_{4}{y}_{11}{y}_{16}+{y}_{8}{y}_{12}{y}_{16}-{y}_{3}{y}_{13}{y}_{17}+{y}_{3}{y}_{16}{y}_{19}-{y}_{3}{y}_{6}{y}_{20}\\
  &&   -{y}_{1}{y}_{10}{y}_{20}-{y}_{2}{y}_{11}{y}_{20}+{y}_{12}{y}_{13}{y
     }_{21}-{y}_{1}{y}_{13}{y}_{24}+{y}_{12}{y}_{20}{y}_{27}-{y}_{3}{y}_{5}{y}_{29}-{y}_{9}{y}_{11}{y}_{29}\\
  &&   +{y}_{12}{y}_{15}{y}_{29}+{y}_{1}{y}_{18}{y}_{29}.
\end{eqnarray*}
The variables $y_{22},y_{23},y_{25},y_{26},y_{28}$ do not appear in the above expression.
It has the $\mathbb{Z}/2$ symmetry
\begin{align}\label{eq-symmetry-F1321}
&F_{1321}({y}_{1},{y}_{2},{y}_{3},{y}_{4},{y}_{5},{y}_{6},{y}_{7},{y}_{8},{y}_{9},{y}_{10},{y}_{11},{y}_{12},{y
     }_{13},{y}_{14},{y}_{15},\notag\\
& \hphantom{ F_{1321}(}\,  {y}_{16},{y}_{17},{y}_{18},{y}_{19},{y}_{20},{y}_{21},{y}_{22},{y}_{23},{y}_{24},{y
     }_{25},{y}_{26},{y}_{27},{y}_{28},{y}_{29})\notag\\
&=\ F_{1321}({y}_{1},{-{y}_{9}},{y}_{12},{y}_{14},{y}_{27},{y}_{15},{y}_{24},{y}_{17},{-{y}_{2}},{y}_{18},{y}_{11},{
     y}_{3},{y}_{16},{y}_{4},{y}_{6},\\
 &\hphantom{=\ F_{1321}(}\;   {y}_{13},{y}_{8},{y}_{10},{-{y}_{21}},{y}_{29},{-{y}_{19}},{y}_{22},{y}_{23},{
     y}_{7},{y}_{25},{y}_{26},{y}_{5},{y}_{28},{y}_{20}).\notag
\end{align}
In this section, we set $R=\mathbb{Q}[y_1,\dots,y_{29}]/\Jac(F_{1321})$  and $S=R/(y_{22},y_{23},y_{25},y_{26},y_{28})$.

\begin{proposition}\label{prop-jacF1321-isolated}
Away from $0$, $\Spec(S)$ has extra dimension equal to $0$ or $6$ and has at most rational singularities. More precisely, every point away from $0$ is a smooth point or has an open neighborhood that is isomorphic to an open subset of a trivial affine bundle over the cone  $\widehat{G}(2,6)$.
\end{proposition}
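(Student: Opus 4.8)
The plan is to analyze the singular locus of $\Spec(S)$ by a stratification argument combined with the group action technique of section \ref{sec:local-equations-Hilbert-schemes}. First I would observe that $S$ is the coordinate ring of an open neighborhood $U_{\lambda_{1321}}$ of $Z_{I_{1321}}$ in $\mathrm{Hilb}^7(\mathbb{A}^3)$ (after removing the trivial $\mathbb{A}^5$ factor corresponding to the variables $y_{22},y_{23},y_{25},y_{26},y_{28}$ which do not appear in $F_{1321}$; concretely $\Spec(A_{\lambda_{1321}})\cong \Spec(S)\times \mathbb{A}^5$, so extra dimensions and types of singularities of $\Spec(S)$ away from $0$ agree with those of $\mathrm{Hilb}^7(\mathbb{A}^3)$ at the corresponding points). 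So the question becomes: which points of $U_{\lambda_{1321}}\setminus\{Z_{I_{1321}}\}$ are singular, and of what type?

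Next I would use lemma \ref{lem-transitive-open-nbd} together with the $\mathrm{GL}(3)$-action on $\mathrm{Hilb}^7(\mathbb{A}^3)$: any point $z\in U_{\lambda_{1321}}$ corresponds to an ideal $J$ of colength $7$, and by semicontinuity of the tangent space dimension, the extra dimension at $z$ is at most the extra dimension at $Z_{I_{1321}}$, which is $8$. I would then enumerate, up to the $\mathrm{GL}(3)$-action, the possible ``degenerations'', i.e. the monomial ideals $J$ of colength $7$ whose Haiman neighborhood can meet $U_{\lambda_{1321}}$, and for each determine its extra dimension. By the classification recalled in example \ref{example-list-Borelideals} and the non-Borel cases in the appendices, the monomial ideals of colength $7$ have extra dimension $0$, $6$, $8$, or $12$; since $12 > 8$ such points cannot be specializations of $Z_{I_{1321}}$ inside this chart, and the only point with extra dimension $8$ in the closure is $Z_{I_{1321}}$ itself (and $Z_{I_{2321}}$, which lies in $\mathrm{Hilb}^8$, not here). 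Hence away from $0$ every point of $\Spec(S)$ has extra dimension $0$ or $6$. For the extra-dimension-$0$ points, $\mathrm{Hilb}^7(\mathbb{A}^3)$ is smooth there by theorem \ref{thm-monomialideal-smoothable} and the remark following it. For the extra-dimension-$6$ points, I would invoke proposition \ref{prop--localstructure-extradim6}: such a point has an open neighborhood admitting an open immersion into $\widehat{G}(2,6)\times \mathbb{A}^{3n-9}$ with $n=7$, i.e. into $\widehat{G}(2,6)\times\mathbb{A}^{12}$, which is a trivial affine bundle over the cone; intersecting with our chart and discarding the $\mathbb{A}^5$ factor gives the stated local form. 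Rational singularities of such points then follow from proposition \ref{prop-local-properties-A121}, since $\widehat{G}(2,6)$ has only rational singularities and this property is preserved under smooth morphisms and localization.

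The main obstacle I anticipate is making the enumeration of degenerations rigorous without simply listing all $\mathrm{GL}(3)$-orbits of colength-$7$ ideals: one wants to argue directly on $\Spec(S)$ that no point away from $0$ has extra dimension $8$ or more. I would handle this either by a direct computation of the singular locus of $\mathrm{Jac}(F_{1321})$ (computing the ideal of $(29-\dim)$-minors of the Jacobian matrix and checking it is supported only at $0$ on the extra-dimension-$\geq 8$ stratum, using Macaulay2 as referenced in the accompanying files), or, more structurally, by using the $\mathbb{Z}/2$-symmetry \eqref{eq-symmetry-F1321} to cut down the casework and the fact that the locus of points with extra dimension $\geq 8$ is closed and $\mathrm{GL}(3)$-invariant, hence if nonempty away from $0$ would contain another fixed point, contradicting the colength-$7$ classification. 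A secondary but routine point is checking that the open immersion of proposition \ref{prop--localstructure-extradim6}, which is only asserted to exist, can be taken compatibly so that ``trivial affine bundle over $\widehat{G}(2,6)$'' is the correct description after restricting to $\Spec(S)$; this is immediate once one recalls that an open subset of a trivial bundle $\widehat{G}(2,6)\times\mathbb{A}^{12}$ is, Zariski-locally on the base and up to shrinking, again of this form, and that removing the split $\mathbb{A}^5$ factor only changes the fiber dimension.
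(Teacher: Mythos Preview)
Your proposal contains a genuine circularity: you plan to invoke proposition \ref{prop--localstructure-extradim6} to handle the extra-dimension-$6$ points, but the proof of that proposition in the paper explicitly cites ``the proof of proposition \ref{prop-jacF1321-isolated}'' (and appendix \ref{sec:appendix-change-var-jacF1321}) among its ingredients for $n=7$. The reason is structural: lemma \ref{lem-transitive-open-nbd} only guarantees that a point can be moved by the Borel group into \emph{some} Borel chart, and for colength $7$ one of those Borel charts is $U_{\lambda_{1321}}$ itself. So proposition \ref{prop--localstructure-extradim6} needs to know what the extra-dimension-$6$ points of $U_{\lambda_{1321}}$ look like, which is precisely what proposition \ref{prop-jacF1321-isolated} supplies. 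Your fallback arguments for the extra-dimension-$8$ locus are also incomplete: a Jacobian-minor computation would locate the singularities but not determine their type (the ``more precisely'' clause), and the Borel-fixed-point argument only shows that the closed Borel-invariant locus $\{\text{extra dim}\geq 8\}$ contains a Borel-fixed point---which it does, namely $Z_{I_{1321}}$---so no contradiction arises and nothing prevents the locus from being positive-dimensional through that point.

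The paper's proof is quite different and entirely hands-on. It covers $\Spec(S)\setminus\{0\}$ by the basic open sets $\{x_i\neq 0\}$ (equivalently $\{y_i\neq 0\}$) and treats each chart in one of three explicit ways. First, for several $x_i$ (e.g.\ $x_4,x_8,x_{13},x_{14},x_{16},x_{17},x_{20},x_{29}$) it uses theorem \ref{thm-fundamental-haiman}(ii) to observe that the point also lies in the Haiman neighborhood of a \emph{different} monomial ideal---namely $I_{\lambda_{232}}$, $I_{\lambda_{142}}$, or a $2$-dimensional partition---whose local structure was already determined in appendix \ref{sec:appendix-changevariable-Borelideals} or via lemma \ref{lem-lower-dim-partition}. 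Second, for $y_1,y_3,y_{11}$ it solves the Jacobian relations explicitly, expressing eight variables as polynomials in the remaining ones and checking that the residual equations vanish, so the chart is smooth. Third, for $y_2,y_5,y_6,y_7,y_{10},y_{19}$ it performs an explicit fractional change of variables (recorded in appendix \ref{sec:appendix-change-var-jacF1321}) that transforms $\mathrm{Jac}(F_{1321})$ on that chart into the Pl\"ucker ideal, exhibiting it as an open subset of $\widehat{G}(2,6)\times\mathbb{A}^k$. The $\mathbb{Z}/2$ symmetry \eqref{eq-symmetry-F1321} halves the casework in the last two groups. This explicit chart-by-chart analysis is the input to proposition \ref{prop--localstructure-extradim6}, not a consequence of it.
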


\begin{proof}
We use the notation of Section~\ref{sec:3D-1321}.  Let $I$ be an ideal of $\Bbbk[X_1,X_2,X_3]$ such that $Z_I$ lies in the Haiman neighborhood $\Spec(A_{\lambda_{1321}})$. By Theorem~\ref{thm-fundamental-haiman}\eqref{tfh-3}, this implies that 
\begin{equation}
	1,X_1,X_1^2,X_2,X_1 X_2,X_2^2,X_3
\end{equation}
form a basis of  $\Bbbk[X_1,X_2,X_3]/I$. Suppose $Z_I$ lies the open subset $\{x_4\neq 0\}$. Recall $x_4=c_{2,0,0}^{0,1,1}$. Then 
\begin{equation}
	1,X_1,X_2,X_3, X_1 X_2, X_2^2, X_2 X_3
\end{equation}
form a basis of  $\Bbbk[X_1,X_2,X_3]/I$. By Theorem~\ref{thm-fundamental-haiman}\eqref{tfh-3}, this implies that $I$ lies in the Haiman neighborhood of the ideal $J=(X_1^2, X_1 X_2^2, X_1 X_3, X_2^3, X_2^2 X_3, X_3^2)$. After the permutation $X_1\leftrightarrow X_2$, $J$ is transformed into the ideal $I_{\lambda_{232}}$ in Appendix~\ref{sec:3D-232}. The ideal $I_{\lambda_{232}}$ is a Borel tripod ideal with extra dimension $6$. By the result of Appendix~\ref{sec:3D-232} and Proposition~\ref{prop-local-properties-A121}, the Hilbert scheme $\Hilb^7(\mathbb{A}^3)$ is smooth or has a rational singularity at~$Z_I$. 

Similarly, we have
\begin{subequations}\label{eq-prop-jacF1321-isolated-HaimanNeighborhoods}
\begin{eqnarray}
&&\{x_8=c_{2,0,0}^{0,3,0}\neq 0\} \subset 
\Spec(A_{\lambda_{(X_1^2,X_1 X_2^2,X_1 X_3,X_2^4,X_2 X_3,X_3^2)}})\cong \Spec(A_{\lambda_{142}}),\notag\\
&&\{x_{13}=c_{0,0,1}^{0,3,0}\neq 0\} \subset 
\Spec(A_{\lambda_{(X_1^3,X_1^2 X_2,X_1 X_2^2,X_2^4,X_3)}}),\label{eq-prop-jacF1321-isolated-HaimanNeighborhoods-a}\\
&& \{x_{14}=c_{0,2,0}^{1,0,1}\neq 0\} \subset \Spec(A_{\lambda_{232}}),\notag\\
&& \{x_{16}=c_{0, 0, 1}^{3, 0, 0}\neq 0\} \subset 
\Spec(A_{\lambda_{(X_1^4,X_1^2 X_2,X_1 X_2^2,X_2^3,X_3)}}), \label{eq-prop-jacF1321-isolated-HaimanNeighborhoods-b}\\
&& \{x_{17}=c_{0, 2, 0}^{3, 0, 0}\neq 0\} \subset \Spec(A_{\lambda_{142}}),\notag\\
&&\{x_{20}=c_{0, 0, 1}^{1, 2, 0}\neq 0\} \subset \Spec(A_{\lambda_{(X_1^3,X_1^2 X_2,X_2^3,X_3)}}),\label{eq-prop-jacF1321-isolated-HaimanNeighborhoods-c}\\
&& \{x_{29}=c_{0, 0, 1}^{2, 1, 0}\neq 0\} \subset \Spec(A_{\lambda_{(X_1^3,X_1 X_2^2,X_2^3,X_3)}}).\label{eq-prop-jacF1321-isolated-HaimanNeighborhoods-d}
\end{eqnarray}
\end{subequations}
Thus by the results of Appendices~\ref{sec:3D-232} and~\ref{sec:3D-142}, when one of the coordinates $x_8,x_{14},x_{17}$ is not zero, the point is smooth or a rational singularity. By Lemma~\ref{lem-lower-dim-partition}, each of the  Haiman neighborhoods (\ref{eq-prop-jacF1321-isolated-HaimanNeighborhoods-a})--(\ref{eq-prop-jacF1321-isolated-HaimanNeighborhoods-d}) is isomorphic to the product of $\mathbb{A}^7$ with a Haiman neighborhood in $\Hilb^7(\mathbb{A}^2)$. 
Thus when one of the coordinates $x_{13},x_{16},x_{20},x_{29}$ is not zero, the point is smooth. 

Note that on $S$ we have $y_{22}=y_{23}=y_{25}=y_{26}=y_{28}=0$, which is equivalent to $x_{22}=x_{23}=x_{25}=x_{26}=x_{28}=0$. It remains to show that if one of the coordinates 
$$x_1,x_2,x_3,x_5,x_6,x_7,x_9,x_{10},x_{11},x_{12},x_{15},x_{18},x_{19},x_{21},x_{24},x_{27}$$
is nonzero, then the point is smooth or a rational singularity. By  (\ref{eq-change-variables-1321}), we may consider the $y$-coordinates.  By the $\mathbb{Z}/2$ symmetry (\ref{eq-symmetry-F1321}), we only need  to consider $y_1,y_2,y_3,y_5,y_6,y_7,y_{10},y_{11},y_{19}$.

If $y_1\neq 0$, the relations in $\Jac(F_{1321})$ imply 
\begin{eqnarray*}
y_{24}&=& \frac{-y_{11} y_{14}+y_{12} y_{21}-y_{17} y_{3}}{y_{1}},\\
y_{7}&=& \frac{-y_{11} y_{4}-y_{12} y_{8}-y_{19} y_{3}}{y_{1}},\\
y_{10}&=& \frac{-y_{11} y_{2}+y_{12} y_{27}-y_{3} y_{6}}{y_{1}},\\
y_{18}&=& \frac{y_{11} y_{9}-y_{12} y_{15}+y_{3} y_{5}}{y_{1}},\\
y_{16}&=& \frac{-y_{14} y_{15} y_{6}+y_{14} y_{27} y_{5}+y_{15} y_{17} y_{2}-y_{17} y_{27} y_{9}-y_{2} y_{21} y_{5}+y_{21} y_{6} y_{9}}{y_{1}},\\
y_{20}&=& \frac{-y_{14} y_{15} y_{19}-y_{14} y_{5} y_{8}+y_{15} y_{17} y_{4}+y_{17} y_{8} y_{9}+y_{19} y_{21} y_{9}-y_{21} y_{4} y_{5}}{y_{1}},\\
y_{29}&=& \frac{y_{14} y_{19} y_{27}+y_{14} y_{6} y_{8}-y_{17} y_{2} y_{8}-y_{17} y_{27} y_{4}-y_{19} y_{2} y_{21}+y_{21} y_{4} y_{6}}{y_{1}},\\
y_{13}&=& \frac{y_{15} y_{19} y_{2}-y_{15} y_{4} y_{6}-y_{19} y_{27} y_{9}+y_{2} y_{5} y_{8}+y_{27} y_{4} y_{5}-y_{6} y_{8} y_{9}}{y_{1}}.
\end{eqnarray*}
Substituting these equations into $\Jac(F_{1321})$, we obtain a zero ideal. 
So $\Spec(S)$ is an open subset of an affine space near such points. Similarly, one can show this  on $\{y_3\neq 0\}$ and $\{y_{11}\neq 0\}$.

If $y_2\neq 0$, the relations in $\Jac(F_{1321})$ imply 
\begin{eqnarray*}
y_{20}&=& \frac{-y_{13} y_{14}+y_{16} y_{4}-y_{29} y_{9}}{y_{2}},\\
y_{11}&=& \frac{-y_{1} y_{10}+y_{12} y_{27}-y_{3} y_{6}}{y_{2}}.
\end{eqnarray*}
When we substitute these equations into $\Jac(F_{1231})$ and then make the change of variables
\begin{alignat}{3}\label{eq-jacF1231-invert-y2-changeofvariables}
y_{5}&\longmapsto y_{5}+\frac{y_{6} y_{9}}{y_{2}},&\quad
y_{7}&\longmapsto y_{7}+ \frac{y_{10} y_{4}}{y_{2}},&\quad
y_{8}& \longmapsto y_{8}-\frac{y_{27} y_{4}}{y_{2}},\notag\\
y_{15}& \longmapsto y_{15}+\frac{y_{27} y_{9}}{y_{2}},&\quad
y_{17}& \longmapsto y_{17}+\frac{y_{14} y_{6}}{y_{2}},&\quad
y_{18}& \longmapsto y_{18}-\frac{y_{10} y_{9}}{y_{2}},\\
y_{19}& \longmapsto y_{19}+\frac{y_{4} y_{6}}{y_{2}},&\quad
y_{21}& \longmapsto y_{21}+\frac{y_{14} y_{27}}{y_{2}},&\quad
y_{24}& \longmapsto y_{24}+\frac{y_{10} y_{14}}{y_{2}},\notag
\end{alignat}
the ideal $\Jac(F_{1231})$ is transformed into the ideal 
\begin{equation}\label{eq-jacF1231-invert-y2}
  \renewcommand{\arraystretch}{1.2}
  \begin{array}{lll}
(-y_{13} y_{24}+y_{16} y_{7}+y_{18} y_{29},&
  -y_{13} y_{17}+y_{16} y_{19}-y_{29} y_{5},&
  y_{2} y_{21} y_{7}+y_{2} y_{24} y_{8}-y_{29} y_{3},\notag\\
\hphantom{(}y_{1} y_{16}-y_{15} y_{17} y_{2}+y_{2} y_{21} y_{5},&
y_{12} y_{16}+y_{17} y_{18} y_{2}+y_{2} y_{24} y_{5},&
y_{13} y_{21}+y_{15} y_{29}+y_{16} y_{8},\notag\\
\hphantom{(}-y_{1} y_{24}+y_{12} y_{21}-y_{17} y_{3},&
y_{12} y_{29}-y_{17} y_{2} y_{7}+y_{19} y_{2} y_{24},&
y_{1} y_{7}+y_{12} y_{8}+y_{19} y_{3},\\
\hphantom{(}-y_{13} y_{3}-y_{15} y_{2} y_{7}+y_{18} y_{2} y_{8},&
y_{1} y_{29}+y_{17} y_{2} y_{8}+y_{19} y_{2} y_{21},&
y_{15} y_{2} y_{24}+y_{16} y_{3}+y_{18} y_{2} y_{21},\notag\\
\hphantom{(}y_{12} y_{13}+y_{18} y_{19} y_{2}+y_{2} y_{5} y_{7},&
-y_{1} y_{13}+y_{15} y_{19} y_{2}+y_{2} y_{5} y_{8},&
y_{1} y_{18}+y_{12} y_{15}-y_{3} y_{5}).\notag
\end{array}
\end{equation}
Finally, the map
\begin{alignat}{6}\label{eq-jacF1231-chart-y2-to-plucker}
y_1&\longmapsto -p_{0,1},&\quad y_3&\longmapsto p_{0,4},&\quad y_5&\longmapsto -p_{1,2},&\quad y_7&\longmapsto p_{4,5},&\quad 
y_8&\longmapsto \frac{p_{0,5}}{y_2},&\quad y_{12}&\longmapsto-p_{1,4} y_2,\notag\\
y_{13}&\longmapsto -p_{2,5},&\quad y_{15}&\longmapsto \frac{p_{0,2}}{y_2},&\quad y_{16}&\longmapsto p_{2,3}, &\quad
y_{17}&\longmapsto -p_{1,3},&\quad y_{18}&\longmapsto -p_{2,4},&\quad y_{19}&\longmapsto p_{1,5},\\ 
y_{21}&\longmapsto \frac{p_{0,3}}{y_2},&\quad y_{24}&\longmapsto p_{3,4},&\quad  y_{29}&\longmapsto p_{3,5}\notag
\end{alignat}
transforms the ideal $(\ref{eq-jacF1231-invert-y2})$ into the Pl\"ucker ideal (\ref{eq-pluckerideal}).  Similarly, we can show that if one of the coordinates  $y_5,y_6,y_7,y_{10},y_{19}$ does not vanish, then the corresponding open locus is an open subset of a trivial affine fibration over the cone $\widehat{G}(2,6)$. 

The proof is completed.
\end{proof}

\begin{proposition}\label{prop-local-properties-A1321-1}
The ring $S$ is normal and Gorenstein.
\end{proposition}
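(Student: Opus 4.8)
The strategy is to reduce the statement to $S$, to verify Serre's criterion $(R_1)+(S_2)$ for normality, and to deduce the Gorenstein property from Stanley's theorem \ref{thm-Stanley-Gorenstein} together with the reciprocity law of remark \ref{rem-reciprocal-law}. First note that $R=A_{\lambda_{1321}}$ by the last isomorphism of section \ref{sec:3D-1321}, and that $R\cong S[y_{22},y_{23},y_{25},y_{26},y_{28}]$ because these five variables do not occur in $F_{1321}$. Since normality, the Cohen--Macaulay property and the Gorenstein property all ascend and descend along a polynomial extension and are preserved under the base change $\mathbb{Q}\hookrightarrow\Bbbk$ (every field extension in characteristic $0$ being separable, so that $S$ is even geometrically normal over the perfect field $\mathbb{Q}$), it suffices to prove the assertion for $S$. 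By theorem \ref{thm-monomialideal-smoothable} and the irreducibility of $\mathrm{Hilb}^7(\mathbb{A}^3)$, the scheme $\Spec(A_{\lambda_{1321}})$ has pure dimension $21$, whence $\dim S=16$; moreover $F_{1321}$ is a linear combination of monomials of degree $\geq 3$, so $\mathrm{Jac}(F_{1321})\subset\mathfrak{m}^2$ and $S$ has embedding dimension $24$, i.e.\ $\Spec(S)$ is cut out by the $24$ partials of $F_{1321}$ in $\mathbb{A}^{24}$ in codimension $8$.

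Second, I would analyse the punctured spectrum. By proposition \ref{prop-jacF1321-isolated} every point of $\Spec(S)\setminus\{0\}$ is either smooth or has an open neighborhood isomorphic to an open subset of a trivial affine fibration over $\widehat{G}(2,6)$; the latter is normal and Cohen--Macaulay (indeed Gorenstein) by proposition \ref{prop-local-properties-A121}, and its singular locus, being the fibration over the vertex of the $9$-dimensional cone, has codimension $9$. Hence $\Spec(S)$ is regular in codimension $8$ --- in particular $(R_1)$ holds --- and is Cohen--Macaulay outside the origin.

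Third, and this is the step I expect to be the main obstacle, comes Cohen--Macaulayness at $0$. I would settle it by a direct computation: form a minimal graded free resolution of $S$ over $P=\mathbb{Q}[y_1,\dots,y_{24}]$ and check that its length equals the codimension $8$; by the Auslander--Buchsbaum formula this is equivalent to $\operatorname{depth}_{\mathfrak{m}}S=16=\dim S$, so that $S$ is Cohen--Macaulay and in particular satisfies $(S_2)$. (Equivalently one can, as in the proof of proposition \ref{prop-local-properties-A121}, exhibit an explicit regular sequence of sixteen linear forms and verify the successive colon ideals; the codimension-$8$ resolution is sizeable but within reach of Macaulay2.) Combining $(R_1)$ and $(S_2)$ with Serre's criterion shows that $S$ is normal, and since $\Spec(S)$ is connected --- being, up to the split affine factor, an open subscheme of the irreducible scheme $\mathrm{Hilb}^7(\mathbb{A}^3)$ --- it is a normal domain.

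Finally, for the Gorenstein property I would invoke Stanley's theorem \ref{thm-Stanley-Gorenstein}. The $24$ generators of $S$ are $T$-semi-invariant, the one attached to a Haiman coordinate $c_{\mathbf{i}}^{\mathbf{j}}$ in (\ref{eq-table-1321-c-to-x}) having weight $\mathbf{j}-\mathbf{i}\in\mathbb{Z}^3$, and one checks from that list that the linear functional $(a,b,c)\mapsto a+b+2c$ is strictly positive on all $29$ weights occurring at $Z_{I_{1321}}$; hence these weights lie in a strictly convex cone, and after restricting the $T$-action to the corresponding one-parameter subgroup $S$ becomes a graded ring with strictly positive degrees. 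By proposition \ref{prop-Hilbertfunction-1321} and remark \ref{rem-reciprocal-law}, $H(A_{\lambda_{1321}};\mathbf{t})$ obeys the self-reciprocal law; dividing out the five geometric factors $(1-\mathbf{t}^{w})^{-1}$ contributed by the free variables $y_{22},y_{23},y_{25},y_{26},y_{28}$ yields the analogous identity $H(S;\mathbf{t}^{-1})=(-1)^{16}\,\mathbf{t}^{\alpha}H(S;\mathbf{t})$ for a suitable $\alpha\in\mathbb{Z}^3$. As $S$ is a Cohen--Macaulay domain of dimension $16$ whose weights lie in a strictly convex cone, theorem \ref{thm-Stanley-Gorenstein} gives that $S$ is Gorenstein. (Alternatively, Gorensteinness can be read directly off the free resolution produced above, which is then self-dual with last free module of rank one.) Transporting the conclusions back along $R\cong S[y_{22},y_{23},y_{25},y_{26},y_{28}]$ and base-changing to $\Bbbk$ completes the proof.
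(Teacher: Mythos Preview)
Your plan is correct and follows essentially the same route as the paper: establish Cohen--Macaulayness by a depth computation, deduce normality from $(R_1)$ via proposition~\ref{prop-jacF1321-isolated} together with Serre's criterion, and obtain Gorensteinness from Stanley's theorem~\ref{thm-Stanley-Gorenstein} combined with the reciprocity of remark~\ref{rem-reciprocal-law}, after checking that the weights lie in a strictly convex cone. The paper executes the depth step by exhibiting (with Macaulay2) an explicit regular sequence of sixteen linear forms rather than computing a free resolution, and records the half-space functional as the vector $(-1,-1,-2)$, which is your $(a,b,c)\mapsto a+b+2c$ in the opposite sign convention for the weight of $c_{\mathbf i}^{\mathbf j}$.
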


\begin{proof}
The dimension of $S$  is $16$.
With the help of Macaulay2, we find a regular sequence of length $16$
\begin{gather*}
{y}_{1}, {y}_{2}, {y}_{3}, {y}_{5}+{y}_{21}, {y}_{6}+{y}_{24}, {y}_{7}, {y}_{8}, {y}_{10}+{y}_{21}, {y}_{13}, {y}_{17}, {y}_{18}+{y}_{19}+{y}_{27}, {y}_{9}+{y}_{11}+{y}_{14},
 {y}_{20}+{y}_{24},\\ 
  {y}_{4}+{y}_{7}+{y}_{15}+{y}_{16}, {y}_{12}+{y}_{14}+{y}_{15}+{y}_{18}+{y}_{19}, -{y}_{7}+{y}_{11}+{y}_{14}-{y}_{15}-2 {y}_{19}-2 {y}_{29}.
\end{gather*}
One easily checks that the vector $(-1,-1,-2)$ has positive inner products with each weight of the Haiman coordinates $x_1,\dots,x_{29}$ (see (\ref{eq-table-1321-c-to-x})). So these weights lie in a strictly convex cone. 
Applying Theorem~\ref{thm-Stanley-Gorenstein} and Remark~\ref{rem-reciprocal-law}, we see that $R$ is Gorenstein, and so is $S$.

By Proposition~\ref{prop-jacF1321-isolated}, the ring $S$ is regular in codimension $1$. Thus $S$ is normal. The proof is completed.
\end{proof}

\begin{theorem}\label{thm-local-property-lessorequal-7points}
Let $X$ be the smooth quasi-projective 3-fold. Then 
\begin{i-enumerate}
	\item\label{tlpl7-1} $\Hilb^n(X)$ is normal, Gorenstein for $n\leq 7$, and  has only rational singularities for $n\leq 6$; 
	\item\label{tlpl7-2} let $\rho\colon \Hilb^n(X)\rightarrow X^{(n)}$ be the Hilbert--Chow morphism; then for $n\leq 6$, $R^0\rho_* \mathcal{O}_{\Hilb^n(X)}=\mathcal{O}_{X^{(n)}}$ and  $R^i \rho_* \mathcal{O}_{\Hilb^n(X)}=0$ for $i>0$.
\end{i-enumerate}
\end{theorem}

\begin{proof}
Being Gorenstein and having only rational singularities are both \'etale-local properties. Being simultaneously normal and Cohen--Macaulay is also an \'etale-local property by Serre's criterion. Since $\Hilb^n(X)$ has the same \'etale-local structure as $\Hilb^n(\mathbb{A}^3)$,  conclusion \eqref{tlpl7-1} for arbitrary smooth quasi-projective 3-folds follows from Propositions~\ref{prop-local-properties-A121}
and~\ref{prop-local-properties-A1321-1}. Since $X^{(7)}$ has only rational singularities, \eqref{tlpl7-2} follows from \eqref{tlpl7-1}.
\end{proof}

This theorem suggests that if  Conjecture~\ref{conj-McKay-1} is true, one expects that the equality  moreover holds in the derived category, so that the local  results glue. It also suggests the following question.

\begin{question}
Does $\Hilb^7(\mathbb{A}^3)$  have only rational singularities?
\end{question}

We make the following remarks related to this question.

\begin{remark}\label{rmk-locus-extradim9-points-trivial-bundle}
The locus of points of extra dimension $8$  in $\Hilb^7(\mathbb{A}^3)$ is a trivial $\mathbb{P}^2$-bundle over  the diagonal $\delta\colon\Delta\cong\mathbb{A}^3\to (\mathbb{A}^3)^{(7)}$ via the Hilbert--Chow morphism $\rho$, where $\delta$ is the diagonal embedding. This can be shown by an explicit comparison of $\Spec(\Bbbk[y_{22},y_{23},y_{25},y_{26},y_{28}])$ and the corresponding fiber induced by permuting the coordinates $X_1,X_2,X_3$ on $\mathbb{A}^3$. We leave the details to the reader.
\end{remark}

\begin{remark}
According to the discussions in Section~\ref{sec:conjectures}, the points with extra dimension $8$ on $\Hilb^n(\mathbb{A}^3)$ are expected to be the second-simplest singularities. They deserve a detailed study. We do not know whether the point $0\in \Spec(S)$ is a rational singularity, though we expect this. Let $f\colon Y\rightarrow \Spec(S)$ be a resolution of singularities. Since $S$ is Cohen--Macaulay and the points away from $0$ are at most rational singularities, by \cite[Lemma 3.3]{Kov99}, the point $0\in \Spec(S)$ is a rational singularity if and only if $R^{15}f_* \mathcal{O}_Y=0$.  Since the locus of such singularities in $\Hilb^7(\mathbb{P}^3)$ is a trivial $\mathbb{P}^2$-bundle over the diagonal in $(\mathbb{P}^3)^{(7)}$, it seems plausible that the Euler characteristic $\chi(\mathcal{O}_{\Hilb^7(\mathbb{P}^3)})$ determines $R^{15}f_* \mathcal{O}_Y$. But to implement this idea, we need to know not only the locus of points of extra dimension $8$, but also the
structure of an open neighborhood of this locus.  Proposition~\ref{prop-jacF1321-isolated} and its proof give us a way to resolve the singularity of $\Spec(S)$.  Consider the blow-up of $\Spec(S)$ at $0$, \textit{i.e.}, along $\mathfrak{m}=(y_1,\dots,y_{21},y_{24},y_{27},y_{29})$.  Denote the homogeneous coordinates corresponding to the generators $y_1,\dots,y_{21},y_{24},y_{27},y_{29}$ of $\mathfrak{m}$ by $z_1,\dots,z_{21},z_{24},z_{27},z_{29}$.  We are going to see that the proof of Proposition~\ref{prop-jacF1321-isolated} gives all the data of this blow-up. For example, the chart $\{z_1\neq 0\}$ is smooth. The chart $\{z_2\neq 0\}$, after the change of variables (compare to (\ref{eq-jacF1231-invert-y2-changeofvariables}), setting $z_2=1$)
\begin{alignat*}{3}
z_{5}&\longmapsto z_{5}+z_{6} z_{9},&\quad
z_{7}&\longmapsto z_{7}+ z_{10} z_{4},&\quad
z_{8}& \longmapsto z_{8}-z_{27} z_{4},\\
z_{15}& \longmapsto z_{15}+z_{27} z_{9},&\quad
z_{17}& \longmapsto z_{17}+z_{14} z_{6},&\quad
z_{18}& \longmapsto z_{18}-z_{10} z_{9},\\
z_{19}& \longmapsto z_{19}+z_{4} z_{6},&\quad
z_{21}& \longmapsto z_{21}+z_{14} z_{27},&\quad
z_{24}& \longmapsto z_{24}+z_{10} z_{14},
\end{alignat*}
becomes 
\[
\Spec\ \left(\Bbbk[z_4,z_6,z_9,z_{10},z_{14},z_{27}]\otimes 
\Bbbk[z_1,y_2,z_3,z_5,z_7,z_8,z_{12},z_{13},z_{15},z_{16},z_{17},z_{18},z_{19},z_{21},z_{24},z_{29}]/J\right), 
\]
where $J$ is the ideal (compare to (\ref{eq-jacF1231-invert-y2}))
\begin{equation}\label{eq-jacF1231-blowup-invert-z2}
  \renewcommand{\arraystretch}{1.2}
  \begin{array}{lll}
(-z_{13} z_{24}+z_{16} z_{7}+z_{18} z_{29},&
-z_{13} z_{17}+z_{16} z_{19}-z_{29} z_{5},&
y_{2} z_{21} z_{7}+y_{2} z_{24} z_{8}-z_{29} z_{3},\\
\hphantom{(}z_{1} z_{16}- y_{2} z_{15} z_{17}+y_{2} z_{21} z_{5},&
z_{12} z_{16}+y_{2} z_{17} z_{18} +y_{2} z_{24} z_{5},&
z_{13} z_{21}+z_{15} z_{29}+z_{16} z_{8},\\
\hphantom{(}-z_{1} z_{24}+z_{12} z_{21}-z_{17} z_{3},&
z_{12} z_{29}-  y_{2}z_{17} z_{7}+ y_{2} z_{19} z_{24},&
z_{1} z_{7}+z_{12} z_{8}+z_{19} z_{3},\\
\hphantom{(}-z_{13} z_{3}-y_{2} z_{15} z_{7}+ y_{2}z_{18} z_{8},&
z_{1} z_{29}+  y_{2} z_{17} z_{8}+y_{2} z_{19} z_{21},&
 y_{2} z_{15}z_{24}+z_{16} z_{3}+ y_{2} z_{18}z_{21},\\
\hphantom{(}z_{12} z_{13}+ y_{2} z_{18} z_{19}+y_{2} z_{5} z_{7},&
-z_{1} z_{13}+ y_{2} z_{15} z_{19}+y_{2} z_{5} z_{8},&
z_{1} z_{18}+z_{12} z_{15}-z_{3} z_{5}).
  \end{array}
\end{equation}
In this computation, we need to note that the factors such as $z_1 z_7 z_{16} z_{21}+z_1 z_8 z_{16} z_{24}-z_3 z_{15} z_{17} z_{29}+z_3 z_5 z_{21}z_{29}$ of 
\[
(y_{2} z_{21} z_{7}+y_{2} z_{24} z_{8}-z_{29} z_{3})z_{1}z_{16}
+(z_{1} z_{16}- y_{2} z_{15} z_{17}+y_{2} z_{21} z_{5})z_{3}z_{29}
\]
lie in $J$.
Now consider the blow-up of 
$\widehat{G}(2,6)\times \mathbb{A}^1=\widehat{G}(2,6)\times\Spec(\Bbbk[t])$ along the subscheme
\[
\{t=0,\ p_{i,j}=0\ \mbox{for}\ (i,j)\neq (0,1),(0,4),(1,4)\}\cong \mathbb{A}^3. 
\]
Denote by $\widetilde{p}_{i,j}$  the homogeneous coordinates corresponding to $p_{i,j}$, for $(i,j)\neq (0,1),(0,4),(1,4)$, and by $\tilde{t}$ the homogeneous coordinate corresponding to $t$. Then the map (compare to (\ref{eq-jacF1231-chart-y2-to-plucker}))
\begin{alignat}{6}\label{eq-jacF1231-blowup-chart-z2-to-plucker}
z_1&\longmapsto -p_{0,1},&\quad z_3&\longmapsto p_{0,4},&\quad z_5&\longmapsto -\widetilde{p}_{1,2},&\quad z_7&\longmapsto \widetilde{p}_{4,5},&\quad 
z_8&\longmapsto \widetilde{p}_{0,5},&\quad z_{12}&\longmapsto-p_{1,4},\notag\\
z_{13}&\longmapsto -\widetilde{p}_{2,5},&\quad z_{15}&\longmapsto \widetilde{p}_{0,2},&\quad z_{16}&\longmapsto \widetilde{p}_{2,3},&\quad
z_{17}&\longmapsto -\widetilde{p}_{1,3},&\quad z_{18}&\longmapsto -\widetilde{p}_{2,4},&\quad z_{19}&\longmapsto \widetilde{p}_{1,5},\\ 
z_{21}&\longmapsto \widetilde{p}_{0,3},&\quad z_{24}&\longmapsto \widetilde{p}_{3,4},&\quad  z_{29}&\longmapsto \widetilde{p}_{3,5},&\quad
y_{2}&\longmapsto t\notag
\end{alignat}
gives an isomorphism between
\[
\Spec(\Bbbk[z_1,y_2,z_3,z_5,z_7,z_8,z_{12},z_{13},z_{15},z_{16},z_{17},z_{18},z_{19},z_{21},z_{24},z_{29}]/J)
\]
and the chart $\{\tilde{t}\neq 0\}$.  The other charts $\{z_i\neq 0\}$ are similar; one can use the formulae in Appendix~\ref{sec:appendix-change-var-jacF1321}.  A further blowing-up of the cone singularities will give a resolution of singularities and help understand the singularity $0\in \Spec(S)$.

\end{remark}

\begin{appendix}

\section{Change of variables for some Borel ideals}\label{sec:appendix-changevariable-Borelideals}
\subsection{\texorpdfstring{$\boldsymbol{\left((1)\subset (4,1)\right)}$}{((1) in (4,1))}, extra.dim \texorpdfstring{$\boldsymbol{=6}$}{=6}}\label{sec:3D-141}
\begin{center}
\begin{tikzpicture}[x=(220:0.6cm), y=(-40:0.6cm), z=(90:0.42cm)]

\foreach \m [count=\y] in {{2,1,1,1},{1}}{
  \foreach \n [count=\x] in \m {
  \ifnum \n>0
      \foreach \z in {1,...,\n}{
        \draw [fill=gray!30] (\x+1,\y,\z) -- (\x+1,\y+1,\z) -- (\x+1, \y+1, \z-1) -- (\x+1, \y, \z-1) -- cycle;
        \draw [fill=gray!40] (\x,\y+1,\z) -- (\x+1,\y+1,\z) -- (\x+1, \y+1, \z-1) -- (\x, \y+1, \z-1) -- cycle;
        \draw [fill=gray!5] (\x,\y,\z)   -- (\x+1,\y,\z)   -- (\x+1, \y+1, \z)   -- (\x, \y+1, \z) -- cycle;  
      }
 \fi
 }
}    

\end{tikzpicture}
\end{center}

\[
I_{\lambda_{141}}=\left(X_1^4, X_1X_2, X_1 X_3, X_2^2,X_2 X_3, X_3^2\right).
\]
Algorithm~\ref{alg-step0-Haiman} gives
\begin{alignat*}{4}
 c_{0, 1, 0}^{0, 1, 1}&\longmapsto  x_{1},&\quad c_{1, 0, 0}^{1, 0, 1}&\longmapsto  x_{2},&\quad c_{0, 1, 0}^{1, 1, 0}&\longmapsto  x_{3},&\quad c_{0, 0, 1}^{1, 0, 1}&\longmapsto  x_{4},\\
 c_{0, 0, 1}^{4, 0, 0}&\longmapsto  x_{5},&\quad c_{3, 0, 0}^{4, 0, 0}&\longmapsto  x_{6},&\quad c_{0, 1, 0}^{4, 0, 0}&\longmapsto  x_{7},&\quad c_{0, 0, 1}^{0, 2, 0}&\longmapsto  x_{8},\\
 c_{3, 0, 0}^{1, 0, 1}&\longmapsto  x_{9},&\quad c_{1, 0, 0}^{1, 1, 0}&\longmapsto  x_{10},&\quad c_{0, 0, 1}^{0, 0, 2}&\longmapsto  x_{11},&\quad c_{0, 1, 0}^{0, 2, 0}&\longmapsto  x_{12},\\
 c_{3, 0, 0}^{1, 1, 0}&\longmapsto  x_{13},&\quad c_{0, 1, 0}^{1, 0, 1}&\longmapsto  x_{14},&\quad c_{0, 1, 0}^{0, 0, 2}&\longmapsto  x_{15},&\quad c_{0, 0, 1}^{1, 1, 0}&\longmapsto  x_{16},\\
 c_{3, 0, 0}^{0, 0, 2}&\longmapsto  x_{17},&\quad c_{2, 0, 0}^{1, 0, 1}&\longmapsto  x_{18},&\quad c_{2, 0, 0}^{4, 0, 0}&\longmapsto  x_{19},&\quad c_{3, 0, 0}^{0, 2, 0}&\longmapsto  x_{20},\\
 c_{2, 0, 0}^{1, 1, 0}&\longmapsto  x_{21},&\quad c_{3, 0, 0}^{0, 1, 1}&\longmapsto  x_{22},&\quad c_{1, 0, 0}^{4, 0, 0}&\longmapsto  x_{23},&\quad c_{0, 0, 1}^{0, 1, 1}&\longmapsto  x_{24},
\end{alignat*}
and 
\[
A_{l_{141}}\cong \Bbbk[x_1,\dots,x_{24}]/\mathcal{H}'_{\lambda_{141}}.
\]
The change of variables
\begin{eqnarray}\label{eq-141-totalchangeofvariables}
  x_{1}&\longmapsto& x_{1}+x_{2},\notag\\
  x_{2}&\longmapsto&  -x_{3}^{2} x_{9}-3 x_{3} x_{4} x_{9}-3 x_{4}^{2} x_{9}-x_{5} x_{9}^{2}-x_{7} x_{9} x_{13}-x_{9} x_{14} x_{16}-x_{3} x_{18}-2 x_{4} x_{18}+x_{2},\notag\\
  x_{3}&\longmapsto&  x_{3}+x_{4},\notag\\ 
  x_{10}&\longmapsto&  -x_{3}^{2} x_{13}-3 x_{3} x_{4} x_{13}-3 x_{4}^{2} x_{13}-x_{5} x_{9} x_{13}-x_{7} x_{13}^{2}-x_{13} x_{14} x_{16}-x_{3} x_{21}-2 x_{4} x_{21}+x_{10},\notag\\ 
  x_{11}&\longmapsto&  x_{1}+2 x_{2}+x_{11},\notag\\
  x_{12}&\longmapsto&  2 x_{10}+x_{12}+x_{24},\\
   x_{17}&\longmapsto&  x_{3} x_{9}^{2}+2 x_{4} x_{9}^{2}+x_{6} x_{9}^{2}+2 x_{9} x_{18}+x_{17},\notag\\ 
  x_{20}&\longmapsto&  x_{3} x_{13}^{2}+2 x_{4} x_{13}^{2}+x_{6} x_{13}^{2}+2 x_{13} x_{21}+x_{20},\notag\\
  x_{22}&\longmapsto&  x_{3} x_{9} x_{13}+2 x_{4} x_{9} x_{13}+x_{6} x_{9} x_{13}+x_{13} x_{18}+x_{9} x_{21}+x_{22},\notag\\
  x_{23}&\longmapsto&  x_{3}^{3}+4 x_{3}^{2} x_{4}+6 x_{3} x_{4}^{2}+4 x_{4}^{3}-x_{3}^{2} x_{6}-3 x_{3} x_{4} x_{6}-3 x_{4}^{2} x_{6}+x_{3} x_{5} x_{9}+3 x_{4} x_{5} x_{9}\notag\\
 		&&+2 x_{3} x_{7} x_{13}+3 x_{4} x_{7} x_{13}+x_{5} x_{13} x_{14}+x_{7} x_{9} x_{16}+2 x_{3} x_{14} x_{16}+4 x_{4} x_{14} x_{16}\notag\\
 		&&-x_{6} x_{14} x_{16}+x_{5} x_{18}-x_{3} x_{19}-2 x_{4} x_{19}+x_{7} x_{21}+x_{23},\notag\\ 
   x_{24}&\longmapsto&  x_{10}+x_{24},\notag
\end{eqnarray}
transforms the subideal of $\mathcal{H}'_{\lambda_{141}}$ generated by the minDegree 2 equations  into
\begin{equation}\label{eq-141-Haimanideal-transformed}
  \renewcommand{\arraystretch}{1.2}
  \begin{array}{lll}
   (x_{1} x_{7}+x_{5} x_{15}+x_{14} x_{23},& -x_{11} x_{14}-x_{3} x_{15}-x_{7} x_{17},& x_{7} x_{8}+x_{16} x_{23}+x_{5} x_{24},\\ 
  \hphantom{(} x_{15} x_{16}-x_{7} x_{22}-x_{14} x_{24},& x_{1} x_{3}-x_{12} x_{14}-x_{5} x_{17},& x_{16} x_{17}-x_{14} x_{20}+x_{3} x_{22},\\\
 \hphantom{(}   x_{8} x_{14}-x_{1} x_{16}-x_{5} x_{22},& x_{1} x_{11}+x_{12} x_{15}-x_{17} x_{23},& x_{3} x_{8}-x_{12} x_{16}-x_{5} x_{20},\\
\hphantom{(} -x_{11} x_{16}-x_{7} x_{20}-x_{3} x_{24},& -x_{5} x_{11}+x_{7} x_{12}+x_{3} x_{23},& -x_{8} x_{15}-x_{22} x_{23}+x_{1} x_{24},\\
\hphantom{(}   x_{8} x_{17}-x_{1} x_{20}+x_{12} x_{22},& x_{8} x_{11}-x_{20} x_{23}+x_{12} x_{24},& -x_{15} x_{20}-x_{11} x_{22}+x_{17} x_{24}).
  \end{array}
  \end{equation}
The map
\begin{alignat*}{6}
x_1&\longmapsto p_{2,4},&\quad x_3 &\longmapsto -p_{0,5},&\quad x_{5}&\longmapsto -p_{2,5},&\quad 
x_7&\longmapsto p_{1,5},&\quad x_8 &\longmapsto p_{2,3},&\quad x_{11}&\longmapsto -p_{0,1},\\
x_{12} &\longmapsto p_{0,2},&\quad x_{14}&\longmapsto p_{4,5},&\quad x_{15}&\longmapsto p_{1,4},&\quad 
x_{16}&\longmapsto p_{3,5},&\quad x_{17}&\longmapsto p_{0,4},&\quad x_{20}&\longmapsto p_{0,3},\\
x_{22}&\longmapsto p_{3,4},&\quad x_{23}&\longmapsto p_{1,2},&\quad x_{24}&\longmapsto p_{1,3} 
\end{alignat*}
transforms (\ref{eq-141-Haimanideal-transformed}) into the Pl\"ucker ideal (\ref{eq-pluckerideal}). Using the dimension argument as in Section~\ref{sec:3D-131}, we obtain that  the change of variables (\ref{eq-141-totalchangeofvariables}) transform the  ideal $\mathcal{H}'_{\lambda_{141}}$ into   (\ref{eq-141-Haimanideal-transformed}). A computation similar to Corollary~\ref{cor-tripod-Hilbertfunctions} yields
\begin{eqnarray}\label{eq-equihilb-A141}
H(A_{{\lambda}_{141}};\mathbf{t})&=&K\left(\frac{\sqrt{t_{2}} \sqrt{t_{3}}}{t_{1}^{3/2}},\frac{t_{1}^{3/2} \sqrt{t_{3}}}{\sqrt{t_{2}}},\frac{t_{1}^{3/2} \sqrt{t_{2}}}{\sqrt{t_{3}}},\frac{t_{2}^{3/2}}{t_{1}^{3/2} \sqrt{t_{3}}},\frac{t_{3}^{3/2}}{t_{1}^{3/2} \sqrt{t_{2}}},\frac{t_{1}^{5/2}}{\sqrt{t_{2}} \sqrt{t_{3}}}\right)\notag\\
&&\Big/\left((1-t_{1})^3(1-t_{2})^3(1-t_{3})^3(1-t_{1}^3)(1-t_{1}^2)\left(\frac{t_{1}-t_{2}}{t_{1}}\right)\left(\frac{t_{1}-t_{3}}{t_{1}}\right)\left(\frac{t_{1}^2-t_{2}}{t_{1}^2}\right)\left(\frac{t_{1}^2-t_{3}}{t_{1}^2}\right)\right.\notag\\
&&\hphantom{\Big/\Big(}\left(\frac{t_{1}^3-t_{2}^2}{t_{1}^3}\right)\left(\frac{t_{1}^3-t_{2} t_{3}}{t_{1}^3}\right)\left(\frac{t_{1}^3-t_{3}^2}{t_{1}^3}\right)\left(\frac{t_{2}-t_{1} t_{3}}{t_{2}}\right)\left(\frac{t_{2}-t_{1}^4}{t_{2}}\right)
\left(\frac{t_{2}-t_{3}^2}{t_{2}}\right)\notag\\
&&\hphantom{\Big/\Big(}\left.\left(\frac{t_{3}-t_{1} t_{2}}{t_{3}}\right)
\left(\frac{t_{3}-t_{1}^4}{t_{3}}\right)\left(\frac{t_{3}-t_{2}^2}{t_{3}}\right)\right).
\end{eqnarray}
In the following sections, we omit the intermediate explanations.

\subsection{\texorpdfstring{$\boldsymbol{\left((1)\subset(5,1)\right)}$}{((1) in (5,1))}, extra.dim \texorpdfstring{$\boldsymbol{=6}$}{=6}}\label{sec:3D-151}
\begin{center}
\begin{tikzpicture}[x=(220:0.6cm), y=(-40:0.6cm), z=(90:0.42cm)]

\foreach \m [count=\y] in {{2,1,1,1,1},{1}}{
  \foreach \n [count=\x] in \m {
  \ifnum \n>0
      \foreach \z in {1,...,\n}{
        \draw [fill=gray!30] (\x+1,\y,\z) -- (\x+1,\y+1,\z) -- (\x+1, \y+1, \z-1) -- (\x+1, \y, \z-1) -- cycle;
        \draw [fill=gray!40] (\x,\y+1,\z) -- (\x+1,\y+1,\z) -- (\x+1, \y+1, \z-1) -- (\x, \y+1, \z-1) -- cycle;
        \draw [fill=gray!5] (\x,\y,\z)   -- (\x+1,\y,\z)   -- (\x+1, \y+1, \z)   -- (\x, \y+1, \z) -- cycle;  
      }
 \fi
 }
}    

\end{tikzpicture}
\end{center}

\[
I_{\lambda_{151}}=\left(X_1^5,X_1 X_2,X_1 X_3, X_2^2,X_2 X_3,X_3^2\right),\
\mathbf{h}=(1,3,1,1,1).
\]
\begin{alignat*}{6}
{c}_{1,0,0}^{1,1,0}  &\longmapsto  {x}_{1},&\quad {c}_{1,0,0}^{1,0,1}  &\longmapsto {x}_{2},&\quad 
{c}_{1,0,0}^{5,0,0}  &\longmapsto  {x}_{3},&\quad {c}_{2,0,0}^{1,1,0}  &\longmapsto {x}_{4},&\quad 
{c}_{2,0,0}^{1,0,1}  &\longmapsto  {x}_{5},&\quad {c}_{2,0,0}^{5,0,0}  &\longmapsto {x}_{6},\\
 {c}_{3,0,0}^{1,1,0}  &\longmapsto  {x}_{7},&\quad {c}_{3,0,0}^{1,0,1}  &\longmapsto {x}_{8},&\quad 
 {c}_{3,0,0}^{5,0,0}  &\longmapsto  {x}_{9},&\quad {c}_{4,0,0}^{1,1,0}  &\longmapsto {x}_{10},&\quad 
 {c}_{4,0,0}^{1,0,1}  &\longmapsto  {x}_{11},&\quad {c}_{4,0,0}^{5,0,0}  &\longmapsto  {x}_{12},\\
  {c}_{4,0,0}^{0,2,0}  &\longmapsto  {x}_{13},&\quad {c}_{4,0,0}^{0,1,1}  &\longmapsto {x}_{14},&\quad 
  {c}_{4,0,0}^{0,0,2}  &\longmapsto  {x}_{15},&\quad {c}_{0,1,0}^{1,1,0}  &\longmapsto {x}_{16},&\quad 
  {c}_{0,1,0}^{1,0,1}  &\longmapsto  {x}_{17},&\quad {c}_{0,1,0}^{5,0,0}  &\longmapsto {x}_{18},\\
   {c}_{0,1,0}^{0,2,0}  &\longmapsto  {x}_{19},&\quad {c}_{0,1,0}^{0,1,1}  &\longmapsto {x}_{20},&\quad 
   {c}_{0,1,0}^{0,0,2}  &\longmapsto  {x}_{21},&\quad {c}_{0,0,1}^{1,1,0}  &\longmapsto {x}_{22},&\quad 
   {c}_{0,0,1}^{1,0,1}  &\longmapsto  {x}_{23},&\quad {c}_{0,0,1}^{5,0,0}  &\longmapsto {x}_{24},\\
    {c}_{0,0,1}^{0,2,0}  &\longmapsto  {x}_{25},&\quad {c}_{0,0,1}^{0,1,1}  &\longmapsto {x}_{26},&\quad 
    {c}_{0,0,1}^{0,0,2}  &\longmapsto  {x}_{27}.
\end{alignat*}
\begin{eqnarray*}\label{eq-151-totalchangeofvariables}
{x}_{3} &\longmapsto&  -4 {x}_{12}{x}_{16}^{3}+5 {x}_{16}^{4}+6 {x}_{10}{x}_{16}^{2}{x}_{18}+{x}_{10}^{2}{x}_{18}^{2}-4 {x}_{12}{x}_{16}{x}_{17}{x}_{22}+10 {x}_{16}^{2}{x}_{17}{x}_{22}\notag\\
        &&+4 {x}_{11}{x}_{16}{x}_{18}{x}_{22}+2 {x}_{10}{x}_{
     17}{x}_{18}{x}_{22}+{x}_{17}^{2}{x}_{22}^{2}-6 {x}_{12}{x}_{16}^{2}{x}_{23}+10 {x}_{16}^{3}{x}_{23}\notag\\
     && +4 {x}_{10}{x}_{16}{x}_{18}{x}_{23}-2 {x}_{12}{x}_{17}{x}_{22}{x}_{23}+10 {x}_{16}{x}_{17}{x}_{22}{x}_{23}+2 {x}_{11}{x}_{18}{x}_{22}{x}_{23}\notag\\
     &&-4
      {x}_{12}{x}_{16}{x}_{23}^{2}+10 {x}_{16}^{2}{x}_{23}^{2}+{x}_{10}{x}_{18}{x}_{23}^{2}+3 {x}_{17}{x}_{22}{x}_{23}^{2}-{x}_{12}{x}_{23}^{3}+5 {x}_{16}{x}_{23}^{3}\notag\\
      &&+{x}_{23}^{4}+6 {x}_{11}{x}_{16}^{2}{x}_{24}+4 {x}_{10}{x}_{16}{x}_{17}{x}_{24}+{x
     }_{10}{x}_{11}{x}_{18}{x}_{24}+2 {x}_{11}{x}_{17}{x}_{22}{x}_{24}\notag\\
     &&+8 {x}_{11}{x}_{16}{x}_{23}{x}_{24}+2 {x}_{10}{x}_{17}{x}_{23}{x}_{24}+3 {x}_{11}{x}_{23}^{2}{x}_{24}+{x}_{11}^{2}{x}_{24}^{2}-3 {x}_{9}{x}_{16}^{2}+3 {x}_{7}{x}_{16}{x}_{18}\notag\\
     &&-{x
     }_{9}{x}_{17}{x}_{22}+{x}_{8}{x}_{18}{x}_{22}-3 {x}_{9}{x}_{16}{x}_{23}+{x}_{7}{x}_{18}{x}_{23}-{x}_{9}{x}_{23}^{2}+3 {x}_{8}{x}_{16}{x}_{24}+{x}_{7}{x}_{17}{x}_{24}\notag\\
     &&+2 {x}_{8}{x}_{23}{x}_{24}-2 {x}_{6}{x}_{16}+{x}_{4}{x}_{18}-{x}_{6}{x}_{23}+{x
     }_{5}{x}_{24}+{x}_{3},\notag\\
 {x}_{13} &\longmapsto&  {x}_{10}^{2}{x}_{12}^{2}+2 {x}_{10}^{2}{x}_{12}{x}_{16}+3 {x}_{10}^{2}{x}_{16}^{2}+{x}_{10}^{3}{x}_{18}+{x}_{10}^{2}{x}_{12}{x}_{23}+3 {x}_{10}^{2}{x}_{16}{x}_{23}+{x}_{10}^{2}{x}_{23}^{2}\notag\\
 	&&+{x}_{10}^{2}{x}_{11}{x}_{24}+{x}_{9}{x}_{10}^{2}+2 {x}_{7}{x}_{10}{x}_{12}+4 {x}_{7}{x}_{10}{x}_{16}+2 {x}_{7}{x}_{10}{x}_{23}+{x}_{7}^{2}+2 {x}_{4}{x}_{10}+{x}_{13}, \notag\\
 {x}_{14} &\longmapsto& 
     {x}_{10}{x}_{11}{x}_{12}^{2}+2 {x}_{10}{x}_{11}{x}_{12}{x}_{16}+3 {x}_{10}{x}_{11}{x}_{16}^{2}+{x}_{10}^{2}{x}_{11}{x}_{18}+{x}_{10}{x}_{11}{x}_{12}{x}_{23}\notag\\*
     &&+3 {x}_{10}{x}_{11}{x}_{16}{x}_{23}+{x}_{10}{x}_{11}{x}_{23}^{2}+{x}_{10}{x}_{11}^{2}{x}_{
     24}+{x}_{9}{x}_{10}{x}_{11}+{x}_{8}{x}_{10}{x}_{12}+{x}_{7}{x}_{11}{x}_{12}\notag\\
     &&+2 {x}_{8}{x}_{10}{x}_{16}+2 {x}_{7}{x}_{11}{x}_{16}+{x}_{8}{x}_{10}{x}_{23}+{x}_{7}{x}_{11}{x}_{23}+{x}_{7}{x}_{8}+{x}_{5}{x}_{10}+{x}_{4}{x}_{11}+{x}_{14}, \notag\\
 {x}_{15} &\longmapsto&  {x}_{11}^{2}{x}_{12}^{2}+2 {x}_{11}^{2}{x}_{12}{x}_{16}+3 {x}_{11}^{2}{x}_{16}^{2}-{x}_{10}^{2}{x}_{17}^{2}+{x}_{10}{x}_{11}^{2}{x}_{18}+{x}_{11}^{2}{x}_{17}{x}_{22}+{x}_{11}^{2}{x}_{12}{x}_{23}\notag\\
 	&&+3 {x}_{11}^{2}{x}_{16}{x}_{23}-{x}_{10
     }{x}_{11}{x}_{17}{x}_{23}+{x}_{11}^{2}{x}_{23}^{2}+{x}_{11}^{3}{x}_{24}+{x}_{9}{x}_{11}^{2}+2 {x}_{8}{x}_{11}{x}_{12}+4 {x}_{8}{x}_{11}{x}_{16}\notag\\
     &&+2 {x}_{8}{x}_{11}{x}_{23}+{x}_{8}^{2}+2 {x}_{5}{x}_{11}+{x}_{15},\notag\\
  {x}_{19} &\longmapsto& 
     8 {x}_{10}{x}_{16}^{3}+2 {x}_{10}^{2}{x}_{12}{x}_{18}+6 {x}_{10}^{2}{x}_{16}{x}_{18}+8 {x}_{10}{x}_{16}{x}_{17}{x}_{22}+3 {x}_{10}{x}_{11}{x}_{18}{x}_{22}\notag\\
     &&+12 {x}_{10}{x}_{16}^{2}{x}_{23}+2 {x}_{10}^{2}{x}_{18}{x}_{23}+4 {x}_{10}{x}_{17}{x}_{
     22}{x}_{23}+8 {x}_{10}{x}_{16}{x}_{23}^{2}+2 {x}_{10}{x}_{23}^{3}\notag\\
     &&+2 {x}_{10}{x}_{11}{x}_{12}{x}_{24}+6 {x}_{10}{x}_{11}{x}_{16}{x}_{24}+3 {x}_{10}^{2}{x}_{17}{x}_{24}+4 {x}_{10}{x}_{11}{x}_{23}{x}_{24}+6 {x}_{7}{x}_{16}^{2}\notag\\
     &&+4 {x}_{7}{x}_{10}{
     x}_{18}+2 {x}_{7}{x}_{17}{x}_{22}+6 {x}_{7}{x}_{16}{x}_{23}+2 {x}_{7}{x}_{23}^{2}+2 {x}_{8}{x}_{10}{x}_{24}+2 {x}_{7}{x}_{11}{x}_{24}\notag\\
     &&+4 {x}_{4}{x}_{16}+2 {x}_{4}{x}_{23}+2 {x}_{1}+{x}_{19}+{x}_{26},\notag\\
  {x}_{20} &\longmapsto& 
     4 {x}_{11}{x}_{16}^{3}+{x}_{10}{x}_{11}{x}_{12}{x}_{18}+3 {x}_{10}{x}_{11}{x}_{16}{x}_{18}+4 {x}_{11}{x}_{16}{x}_{17}{x}_{22}+{x}_{11}^{2}{x}_{18}{x}_{22}+6 {x}_{11}{x}_{16}^{2}{x}_{23}\notag\\
     &&+{x}_{10}{x}_{11}{x}_{18}{x}_{23}+2 {x}_{11}{x}_{17}{x}_{22
     }{x}_{23}+4 {x}_{11}{x}_{16}{x}_{23}^{2}+{x}_{11}{x}_{23}^{3}+{x}_{11}^{2}{x}_{12}{x}_{24}\notag\\
     &&+3 {x}_{11}^{2}{x}_{16}{x}_{24}+2 {x}_{10}{x}_{11}{x}_{17}{x}_{24}+2 {x}_{11}^{2}{x}_{23}{x}_{24}+3 {x}_{8}{x}_{16}^{2}+{x}_{8}{x}_{10}{x}_{18}+{x}_{7}{x
     }_{11}{x}_{18}\notag\\
     &&+{x}_{8}{x}_{17}{x}_{22}+3 {x}_{8}{x}_{16}{x}_{23}+{x}_{8}{x}_{23}^{2}+2 {x}_{8}{x}_{11}{x}_{24}+2 {x}_{5}{x}_{16}+{x}_{5}{x}_{23}+{x}_{2}+{x}_{20}, \notag\\
{x}_{23} &\longmapsto&  {x}_{16}+{x}_{23}, \notag\\
{x}_{26} &\longmapsto& 
     4 {x}_{10}{x}_{16}^{3}+{x}_{10}^{2}{x}_{12}{x}_{18}+3 {x}_{10}^{2}{x}_{16}{x}_{18}+4 {x}_{10}{x}_{16}{x}_{17}{x}_{22}+2 {x}_{10}{x}_{11}{x}_{18}{x}_{22}+6 {x}_{10}{x}_{16}^{2}{x}_{23}\notag\\
     &&+{x}_{10}^{2}{x}_{18}{x}_{23}+2 {x}_{10}{x}_{17}{x}_{22}{x}_{
     23}+4 {x}_{10}{x}_{16}{x}_{23}^{2}+{x}_{10}{x}_{23}^{3}+{x}_{10}{x}_{11}{x}_{12}{x}_{24}+3 {x}_{10}{x}_{11}{x}_{16}{x}_{24}\notag\\
     &&+{x}_{10}^{2}{x}_{17}{x}_{24}+2 {x}_{10}{x}_{11}{x}_{23}{x}_{24}+3 {x}_{7}{x}_{16}^{2}+2 {x}_{7}{x}_{10}{x}_{18}+{x}_{7}{x
     }_{17}{x}_{22}+3 {x}_{7}{x}_{16}{x}_{23}+{x}_{7}{x}_{23}^{2}\notag\\
     &&+{x}_{8}{x}_{10}{x}_{24}+{x}_{7}{x}_{11}{x}_{24}+2 {x}_{4}{x}_{16}+{x}_{4}{x}_{23}+{x}_{1}+{x}_{26}, \notag\\
 {x}_{27} &\longmapsto& 
     8 {x}_{11}{x}_{16}^{3}+2 {x}_{10}{x}_{11}{x}_{12}{x}_{18}+6 {x}_{10}{x}_{11}{x}_{16}{x}_{18}+{x}_{10}^{2}{x}_{17}{x}_{18}+8 {x}_{11}{x}_{16}{x}_{17}{x}_{22}+2 {x}_{11}^{2}{x}_{18}{x}_{22}\notag\\
     &&+12 {x}_{11}{x}_{16}^{2}{x}_{23}+3 {x}_{10}{x}_{11}{x}_{
     18}{x}_{23}+4 {x}_{11}{x}_{17}{x}_{22}{x}_{23}+8 {x}_{11}{x}_{16}{x}_{23}^{2}+2 {x}_{11}{x}_{23}^{3}+2 {x}_{11}^{2}{x}_{12}{x}_{24}\notag\\
     &&+6 {x}_{11}^{2}{x}_{16}{x}_{24}+3 {x}_{10}{x}_{11}{x}_{17}{x}_{24}+4 {x}_{11}^{2}{x}_{23}{x}_{24}+6 {x}_{8}{x
     }_{16}^{2}+2 {x}_{8}{x}_{10}{x}_{18}+2 {x}_{7}{x}_{11}{x}_{18}\notag\\
     &&+2 {x}_{8}{x}_{17}{x}_{22}+6 {x}_{8}{x}_{16}{x}_{23}+2 {x}_{8}{x}_{23}^{2}+4 {x}_{8}{x}_{11}{x}_{24}+4 {x}_{5}{x}_{16}+2 {x}_{5}{x}_{23}+2 {x}_{2}+{x}_{20}+{x}_{27}.\notag\\
\end{eqnarray*}

\begin{equation*}\label{eq-151-Haimanideal-transformed}
  \renewcommand{\arraystretch}{1.2}
  \begin{array}{lll}
  (-{x}_{13}{x}_{17}+{x}_{15}{x}_{22}-{x}_{14}{x}_{23},& -{x}_{13}{x}_{18}+{x}_{23}{x}_{26}-{x}_{22}{x}_{27},& -{x}_{19}{x}_{22}-{x}_{13}{x}_{24}-{x}_{23}{x}_{25},\\
\hphantom{(} -{x}_{14}{x}_{18}+{x}_{21}{x}_{22}-{x}_{17}{x}_{26},& -{x}_{20}{x}_{22}-{x}_{14}{x}_{24}+{x}_{17}{x}_{25},& -{x}_{15}{x}_{18}+{x}_{21}{x}_{23}-{x}_{17}{x}_{27},\\
\hphantom{(}-{x}_{17}{x}_{19}-{x}_{20}{x}_{23}-{x}_{15}{x}_{24},& {x}_{3}{x}_{22}+{x}_{18}{x}_{25}+{x}_{24}{x}_{26},& {x}_{3}{x}_{17}+{x}_{18}{x}_{20}+{x}_{21}{x}_{24},\\
\hphantom{(} -{x}_{18}{x}_{19}+{x}_{3}{x}_{23}+{x}_{24}{x}_{27},& {x}_{14}{x}_{19}-{x}_{13}{x}_{20}+{x}_{15}{x}_{25},& {x}_{3}{x}_{14}+{x}_{21}{x}_{25}-{x}_{20}{x}_{26},\\
  -{x}_{3}{x}_{13}+{x}_{19}{x}_{26}+{x}_{25}{x}_{27},& -{x}_{13}{x}_{21}+{x}_{15}{x}_{26}-{x}_{14}{x}_{27},& {x}_{3}{x}_{15}-{x}_{19}{x}_{21}-{x}_{20}{x}_{27}).\notag
  \end{array}
  \end{equation*}

\begin{alignat*}{6}
x_{3}&\longmapsto p_{1,2},&\quad x_{13}&\longmapsto p_{0,3},&\quad x_{14}&\longmapsto p_{3,4},&\quad x_{15}&\longmapsto p_{0,4},&\quad x_{17}&\longmapsto p_{4,5},&\quad x_{18}&\longmapsto p_{1,5},\\
x_{19}&\longmapsto p_{0,2},&\quad x_{20}&\longmapsto p_{2,4},&\quad x_{21}&\longmapsto p_{1,4},&\quad x_{22}&\longmapsto p_{3,5},&\quad x_{23}&\longmapsto p_{0,5},&\quad x_{24}&\longmapsto -p_{2,5},\\
 x_{25}&\longmapsto p_{2,3},&\quad x_{26}&\longmapsto p_{1,3},&\quad x_{27}&\longmapsto -p_{0,1}.
\end{alignat*}

\pagebreak 
\begin{eqnarray}\label{eq-equihilb-A151}
H(A_{{\lambda}_{151}};\mathbf{t})\!&=&\!\!K\left(\frac{\sqrt{t_{2}} \sqrt{t_{3}}}{t_{1}^2},\frac{t_{1}^2 \sqrt{t_{3}}}{\sqrt{t_{2}}},\frac{t_{1}^2 \sqrt{t_{2}}}{\sqrt{t_{3}}},\frac{t_{2}^{3/2}}{t_{1}^2 \sqrt{t_{3}}},\frac{t_{3}^{3/2}}{t_{1}^2 \sqrt{t_{2}}},\frac{t_{1}^3}{\sqrt{t_{2}} \sqrt{t_{3}}}\right)\notag\\
&&\!\!\Big/\left((1-t_{1})^3(1-t_{2})^3(1-t_{3})^3(1-t_{1}^4)(1-t_{1}^2)(1-t_{1}^3)\left(\frac{t_{1}-t_{2}}{t_{1}}\right)\left(\frac{t_{1}-t_{3}}{t_{1}}\right)\left(\frac{t_{1}^2-t_{2}}{t_{1}^2}\right)\left(\frac{t_{1}^2-t_{3}}{t_{1}^2}\right)\right.\notag\\
&&\!\!\hphantom{\Big/\Big(}\left(\frac{t_{1}^3-t_{2}}{t_{1}^3}\right)\left(\frac{t_{1}^3-t_{3}}{t_{1}^3}\right)
\left(\frac{t_{1}^4-t_{2}^2}{t_{1}^4}\right)\left(\frac{t_{1}^4-t_{2} t_{3}}{t_{1}^4}\right)\left(\frac{t_{1}^4-t_{3}^2}{t_{1}^4}\right)\left(\frac{t_{2}-t_{1} t_{3}}{t_{2}}\right)\left(\frac{t_{2}-t_{1}^5}{t_{2}}\right)
\left(\frac{t_{2}-t_{3}^2}{t_{2}}\right)\left(\frac{t_{3}-t_{1} t_{2}}{t_{3}}\right)\notag\\
&&\!\!\hphantom{\Big/\Big(}\left.\left(\frac{t_{3}-t_{1}^5}{t_{3}}\right)\left(\frac{t_{3}-t_{2}^2}{t_{3}}\right)\right).
\end{eqnarray}

\subsection{\texorpdfstring{$\boldsymbol{\left((1)\subset(4,2)\right)}$}{((1) in (4,2))},  extra.dim  \texorpdfstring{$\boldsymbol{=6}$}{=6}}\label{sec:3D-142}
\begin{center}
\begin{tikzpicture}[x=(220:0.6cm), y=(-40:0.6cm), z=(90:0.42cm)]

\foreach \m [count=\y] in {{2,1,1,1},{1,1}}{
  \foreach \n [count=\x] in \m {
  \ifnum \n>0
      \foreach \z in {1,...,\n}{
        \draw [fill=gray!30] (\x+1,\y,\z) -- (\x+1,\y+1,\z) -- (\x+1, \y+1, \z-1) -- (\x+1, \y, \z-1) -- cycle;
        \draw [fill=gray!40] (\x,\y+1,\z) -- (\x+1,\y+1,\z) -- (\x+1, \y+1, \z-1) -- (\x, \y+1, \z-1) -- cycle;
        \draw [fill=gray!5] (\x,\y,\z)   -- (\x+1,\y,\z)   -- (\x+1, \y+1, \z)   -- (\x, \y+1, \z) -- cycle;  
      }
 \fi
 }
}    

\end{tikzpicture}
\end{center}
\[
I_{\lambda_{142}}=\left(X_1^4, X_1^2 X_2, X_1 X_3, X_2^2,X_2 X_3, X_3^2\right).
\]
\begin{alignat*}{6}
 c_{1, 0, 0}^{1, 0, 1} &\longmapsto x_{1},&\quad c_{2, 0, 0}^{1, 0, 1} &\longmapsto x_{2},&\quad c_{2, 0, 0}^{2, 1, 0} &\longmapsto x_{3},&\quad c_{2, 0, 0}^{4, 0, 0} &\longmapsto x_{4},&\quad c_{2, 0, 0}^{0, 2, 0} &\longmapsto x_{5},&\quad c_{3, 0, 0}^{1, 0, 1} &\longmapsto x_{6},\\
  c_{3, 0, 0}^{2, 1, 0} &\longmapsto x_{7},&\quad c_{3, 0, 0}^{4, 0, 0} &\longmapsto x_{8},&\quad c_{3, 0, 0}^{0, 2, 0} &\longmapsto x_{9},&\quad c_{3, 0, 0}^{0, 1, 1} &\longmapsto x_{10},&\quad c_{3, 0, 0}^{0, 0, 2} &\longmapsto x_{11},&\quad c_{0, 1, 0}^{1, 0, 1} &\longmapsto x_{12},\\
  c_{0, 1, 0}^{2, 1, 0} &\longmapsto x_{13},&\quad c_{0, 1, 0}^{4, 0, 0} &\longmapsto x_{14},&\quad c_{0, 1, 0}^{0, 2, 0} &\longmapsto x_{15},&\quad c_{1, 1, 0}^{1, 0, 1} &\longmapsto x_{16},&\quad c_{1, 1, 0}^{2, 1, 0} &\longmapsto x_{17},&\quad c_{1, 1, 0}^{4, 0, 0} &\longmapsto x_{18},\\
   c_{1, 1, 0}^{0, 2, 0} &\longmapsto x_{19},&\quad c_{1, 1, 0}^{0, 1, 1} &\longmapsto x_{20},&\quad c_{1, 1, 0}^{0, 0, 2} &\longmapsto x_{21},&\quad c_{0, 0, 1}^{1, 0, 1} &\longmapsto x_{22},&\quad c_{0, 0, 1}^{2, 1, 0} &\longmapsto x_{23},&\quad c_{0, 0, 1}^{4, 0, 0} &\longmapsto x_{24},\\
    c_{0, 0, 1}^{0, 2, 0} &\longmapsto x_{25},&\quad c_{0, 0, 1}^{0, 1, 1} &\longmapsto x_{26},&\quad c_{0, 0, 1}^{0, 0, 2} &\longmapsto x_{27}.
\end{alignat*}

\begin{eqnarray*}
x_{2} &\longmapsto&  -x_{16} x_{19} + x_{2} + x_{20} - x_{17} x_{6} - x_{22} x_{6},\\ 
x_{3} &\longmapsto&  x_{15} + x_{19} x_{22} + 2 x_{16} x_{25} + x_{26} + x_{3} - 2 x_{17} x_{7} - 2 x_{22} x_{7} + 2 x_{18} x_{9},\\ 
x_{4} &\longmapsto&  x_{13} + x_{17}^2 - x_{18} x_{19} + 2 x_{22}^2 + x_{16} x_{23} + x_{4} + x_{24} x_{6} + x_{18} x_{7} - x_{17} x_{8} - x_{22} x_{8},\\ 
x_{5} &\longmapsto&  x_{5} - x_{25} x_{6} - x_{19} x_{7} + x_{7}^2 + x_{17} x_{9} - x_{22} x_{9} - x_{8} x_{9},\\
x_{10} &\longmapsto&   x_{10} + x_{6} x_{7} + x_{16} x_{9},\\ 
x_{11} &\longmapsto&  x_{11} + x_{10} x_{16} - 2 x_{16} x_{19} x_{6} + 2 x_{2} x_{6} + 2 x_{20} x_{6} - 2 x_{17} x_{6}^2 + 2 x_{16} x_{6} x_{7} + x_{6}^2 x_{8} + x_{16}^2 x_{9},\\
x_{12} &\longmapsto&   x_{12} - x_{16} x_{22},\\ 
x_{13} &\longmapsto&  x_{13} - x_{17} x_{22} + x_{22}^2,\\ 
x_{14} &\longmapsto&   x_{14} - x_{18} x_{22} + x_{16} x_{24},\\ 
x_{15} &\longmapsto&  2 x_{15} + x_{19} x_{22} + 2 x_{16} x_{25} + x_{26} + x_{3} - 2 x_{17} x_{7} - 2 x_{22} x_{7} + 2 x_{18} x_{9},\\ 
x_{21} &\longmapsto&   -x_{16}^2 x_{19} + 2 x_{16} x_{2} + 2 x_{16} x_{20} + x_{21} + x_{18} x_{6}^2,\\ 
x_{26} &\longmapsto&  x_{15} + x_{19} x_{22} + x_{16} x_{25} + x_{26} - x_{17} x_{7} - x_{22} x_{7} + x_{18} x_{9},\\ 
x_{27} &\longmapsto&   2 x_{1} + 2 x_{15} x_{16} - x_{10} x_{18} + x_{12} x_{19} + x_{17} x_{2} - 2 x_{16} x_{19} x_{22} + 2 x_{2} x_{22} + 4 x_{20} x_{22}\\
&& + 3 x_{16}^2 x_{25} + x_{27} + x_{16} x_{3} + 2 x_{13} x_{6} - 4 x_{17} x_{22} x_{6} + 2 x_{22}^2 x_{6}\\ 
&&+  2 x_{16} x_{23} x_{6} + x_{24} x_{6}^2 - 2 x_{16} x_{17} x_{7} - 2 x_{16} x_{22} x_{7} + 2 x_{16} x_{18} x_{9}.
\end{eqnarray*}

\begin{equation*}
  \renewcommand{\arraystretch}{1.2}
  \begin{array}{lll}
(x_{2} x_{23}-x_{10} x_{24} +x_{12} x_{25},&  -x_{5} x_{12} +x_{10} x_{13} +x_{11} x_{23},&  -x_{5} x_{24} +x_{13} x_{25} +x_{23} x_{26},\\
\hphantom{(} x_{3} x_{12}-x_{10} x_{14} +x_{21} x_{23},&  x_{4} x_{23}-x_{3} x_{24} +x_{14} x_{25},&  x_{4} x_{12}-x_{2} x_{14} +x_{21} x_{24}, \\
\hphantom{(}       x_{2} x_{5} +x_{11} x_{25}-x_{10} x_{26},&  -x_{2} x_{3} +x_{4} x_{10} +x_{21} x_{25},&  -x_{2} x_{13}-x_{11} x_{24} +x_{12} x_{26}, \\
\hphantom{(}        x_{3} x_{13}-x_{5} x_{14}-x_{23} x_{27},&  -x_{4} x_{13} +x_{14} x_{26} +x_{24} x_{27},&  -x_{3} x_{11}-x_{5} x_{21}-x_{10} x_{27}, \\
\hphantom{(}       x_{4} x_{11} +x_{21} x_{26} +x_{2} x_{27},&  -x_{4} x_{5} +x_{3} x_{26} +x_{25} x_{27},&  -x_{11} x_{14}-x_{13} x_{21}-x_{12} x_{27}).
  \end{array}
  \end{equation*}

\begin{alignat*}{6}
x_{2}&\longmapsto p_{2,4},&\quad x_{3}&\longmapsto p_{1,3},&\quad x_{4}&\longmapsto p_{1,2},&\quad x_{5}&\longmapsto p_{0,3},&\quad x_{10}&\longmapsto p_{3,4},&\quad x_{11}&\longmapsto -p_{0,4},\\
x_{12}&\longmapsto -p_{4,5},&\quad x_{13}&\longmapsto p_{0,5},&\quad x_{14}&\longmapsto p_{1,5},&\quad x_{21}&\longmapsto p_{1,4},&\quad x_{23}&\longmapsto p_{3,5},&\quad x_{24}&\longmapsto p_{2,5},\\
 x_{25}&\longmapsto p_{2,3},&\quad x_{26}&\longmapsto p_{0,2},&\quad x_{27}&\longmapsto -p_{0,1}.
\end{alignat*}

\begin{eqnarray}\label{eq-equihilb-A142}
H(A_{{\lambda}_{142}};\mathbf{t})&=&K\left(\frac{\sqrt{t_{2}} \sqrt{t_{3}}}{t_{1}},\frac{t_{1} \sqrt{t_{3}}}{\sqrt{t_{2}}},\frac{t_{1} \sqrt{t_{2}}}{\sqrt{t_{3}}},\frac{t_{2}^{3/2}}{t_{1} \sqrt{t_{3}}},\frac{t_{3}^{3/2}}{t_{1}^2 \sqrt{t_{2}}},\frac{t_{1}^3}{\sqrt{t_{2}} \sqrt{t_{3}}}\right)\notag\\
&&\Big/\left((1-t_{1})^3 (1-t_{2})^3 (1-t_{3})^2(1-t_{1}^2)^2\left(\frac{t_{1}-t_{3}}{t_{1}}\right)^2\left(\frac{t_{1}^2-t_{2}^2}{t_{1}^2}\right)
\left(\frac{t_{1}^2-t_{3}}{t_{1}^2}\right)\left(\frac{t_{1}-t_{2}}{t_{1}}\right)^2\right.\notag\\
&& \hphantom{\Big/\Big(}\left(\frac{t_{1}^3-t_{2}^2}{t_{1}^3}\right)\left(\frac{t_{1}^3-t_{2} t_{3}}{t_{1}^3}\right)\left(\frac{t_{1}^3-t_{3}^2}{t_{1}^3}\right)\left(\frac{t_{2}-t_{1} t_{3}}{t_{2}}\right)\left(\frac{t_{2}-t_{1}^4}{t_{2}}\right)\left(\frac{t_{2}-t_{3}}{t_{2}}\right)
\left(\frac{t_{2}-t_{1}^3}{t_{2}}\right)\notag\\
&& \hphantom{\Big/\Big(}\left.\left(\frac{t_{1} t_{2}-t_{3}^2}{t_{1} t_{2}}\right)\left(\frac{t_{3}-t_{1}^2 t_{2}}{t_{3}}\right)\left(\frac{t_{3}-t_{1}^4}{t_{3}}\right)\left(\frac{t_{3}-t_{2}^2}{t_{3}}\right)\right).
\end{eqnarray}

\subsection{\texorpdfstring{$\boldsymbol{\left((2)\subset (3,2)\right)}$}{((2) in (3,2))},  extra.dim  \texorpdfstring{$\boldsymbol{=6}$}{=6}}\label{sec:3D-232}
\begin{center}
\begin{tikzpicture}[x=(220:0.6cm), y=(-40:0.6cm), z=(90:0.42cm)]

\foreach \m [count=\y] in {{2,2,1},{1,1}}{
  \foreach \n [count=\x] in \m {
  \ifnum \n>0
      \foreach \z in {1,...,\n}{
        \draw [fill=gray!30] (\x+1,\y,\z) -- (\x+1,\y+1,\z) -- (\x+1, \y+1, \z-1) -- (\x+1, \y, \z-1) -- cycle;
        \draw [fill=gray!40] (\x,\y+1,\z) -- (\x+1,\y+1,\z) -- (\x+1, \y+1, \z-1) -- (\x, \y+1, \z-1) -- cycle;
        \draw [fill=gray!5] (\x,\y,\z)   -- (\x+1,\y,\z)   -- (\x+1, \y+1, \z)   -- (\x, \y+1, \z) -- cycle;  
      }
 \fi
 }
}    

\end{tikzpicture}
\end{center}
\[
I_{232}=\left(X_1^3, X_1^2 X_2,X_1^2 X_3,X_2^2,X_2 X_3,X_3^2\right).
\]
\begin{alignat*}{6}
 c_{1, 0, 0}^{3, 0, 0} &\longmapsto  x_{1},&\quad c_{2, 0, 0}^{3, 0, 0} &\longmapsto  x_{2},&\quad c_{2, 0, 0}^{2, 1, 0} &\longmapsto  x_{3},&\quad c_{2, 0, 0}^{2, 0, 1} &\longmapsto  x_{4},&\quad c_{2, 0, 0}^{0, 2, 0} &\longmapsto  x_{5},&\quad c_{2, 0, 0}^{0, 1, 1} &\longmapsto  x_{6},\\
 c_{2, 0, 0}^{0, 0, 2} &\longmapsto  x_{7},&\quad c_{0, 1, 0}^{3, 0, 0} &\longmapsto  x_{8},&\quad c_{0, 1, 0}^{0, 2, 0} &\longmapsto  x_{9},&\quad c_{0, 1, 0}^{0, 1, 1} &\longmapsto  x_{10},&\quad c_{0, 1, 0}^{0, 0, 2} &\longmapsto  x_{11},&\quad c_{1, 1, 0}^{3, 0, 0} &\longmapsto  x_{12},\\
  c_{1, 1, 0}^{2, 1, 0} &\longmapsto  x_{13},&\quad c_{1, 1, 0}^{2, 0, 1} &\longmapsto  x_{14},&\quad c_{1, 1, 0}^{0, 2, 0} &\longmapsto  x_{15},&\quad c_{1, 1, 0}^{0, 1, 1} &\longmapsto  x_{16},&\quad c_{1, 1, 0}^{0, 0, 2} &\longmapsto  x_{17},&\quad c_{0, 0, 1}^{3, 0, 0} &\longmapsto  x_{18},\\
   c_{0, 0, 1}^{0, 2, 0} &\longmapsto  x_{19},&\quad c_{0, 0, 1}^{0, 1, 1} &\longmapsto  x_{20},&\quad c_{0, 0, 1}^{0, 0, 2} &\longmapsto  x_{21},&\quad c_{1, 0, 1}^{3, 0, 0} &\longmapsto  x_{22},&\quad c_{1, 0, 1}^{2, 1, 0} &\longmapsto  x_{23},&\quad c_{1, 0, 1}^{2, 0, 1} &\longmapsto  x_{24},\\
    c_{1, 0, 1}^{0, 2, 0} &\longmapsto  x_{25},&\quad c_{1, 0, 1}^{0, 1, 1} &\longmapsto  x_{26},&\quad c_{1, 0, 1}^{0, 0, 2} &\longmapsto  x_{27}.
\end{alignat*}

\allowdisplaybreaks
\begin{eqnarray*}
{x}_{1}\ &\longmapsto& -{x}_{12}{x}_{16}{x}_{22}{x}_{26}+2{x}_{12}^{2}{x}_{26}^{2}-{x}_{16}{x}_{22}^{2}{x}_{27}+5{x}_{12}{x}_{22}{x}_{26}{x}_{27}+3{x}_{22}^{2}{x}_{27}^{2}-{x}_{6}{x}_{12}{x}_{
     22}\\
   &&  -{x}_{2}{x}_{16}{x}_{22}+{x}_{12}{x}_{16}{x}_{23}+{x}_{17}{x}_{22}{x}_{23}+{x}_{16}{x}_{22}{x}_{24}+{x}_{12}{x}_{14}{x}_{25}+3{x}_{2}{x}_{12}{x}_{26}\\
   && -3{x}_{12}{x}_{13}{x}_{26}-5{x}_{12}{x}_{24}{x}_{26}+4{x}_{2}{x}_{22}{x}_{27}-3{x}_{13}{x}_{22}{x}_{27}-6{x}_{22}{x}_{24}{x}_{27}+{x}_{2}^{2}+{x}_{3}{x}_{12}\\
   && -2{x}_{2}{x}_{13}+{x}_{13}^{2}-{x}_{8}{x}_{15}-{x}_{16}{x}_{18}-{x}_{12}{x}_{20}-{x}_{10}{x
     }_{22}-{x}_{21}{x}_{22}+{x}_{14}{x}_{23}-4{x}_{2}{x}_{24}\\
   && +3{x}_{13}{x}_{24}+3{x}_{24}^{2}+{x}_{1},\\
{x}_{3} &\longmapsto& 
     {x}_{12}{x}_{16}{x}_{25}+{x}_{17}{x}_{22}{x}_{25}-{x}_{12}{x}_{15}{x}_{26}-{x}_{16}{x}_{22}{x}_{26} +2{x}_{12}{x}_{26}^{2}-{x}_{12}{x}_{25}{x}_{27}\\*
   &&  +{x}_{22}{x}_{26}{x}_{27}+{x}_{5}{x}_{12}+{x}_{6}{x}_{22}+{
     x}_{16}{x}_{23}+{x}_{2}{x}_{26}-{x}_{13}{x}_{26}-{x}_{24}{x}_{26}-{x}_{23}{x}_{27}+{x}_{3}+{x}_{20},\\
{x}_{4} &\longmapsto& 
     {x}_{16}^{2}{x}_{22}-{x}_{15}{x}_{17}{x}_{22}+{x}_{12}{x}_{17}{x}_{25}+{x}_{17}{x}_{22}{x}_{26}+{x}_{2}{x}_{16}-{x}_{16}{x}_{24}+{x}_{4}+{x}_{10},\\
{x}_{5} &\longmapsto& 
     {x}_{16}{x}_{25}-{x}_{15}{x}_{26}+{x}_{26}^{2}-{x}_{25}{x}_{27}+{x}_{5},\\
{x}_{6} &\longmapsto& {x}_{17}{x}_{25}-{x}_{16}{x}_{26}+{x}_{6},\\
{x}_{7} &\longmapsto& {x}_{16}^{2}-{x}_{15}{x}_{17}+{x}_{17}{x}_{26}-{x}_{16}{x}_{27}+{x}_{7},\\
{x}_{8} &\longmapsto& -{x}_{12}^{2}{x}_{26}-{x}_{12}{x}_{22}{x}_{27}-{x}_{2}{x}_{12}+{x}_{12}{x}_{13}+{x}_{12}{x}_{24}+{x}_{8},\\
{x}_{9} &\longmapsto& -{x}_{12}{x}_{15}{x}_{26}+2{x}_{12}{x}_{26}^{2}-{x}_{15}{x}_{22}{x}_{27}+2{x}_{22}{x}_{26}{x}_{27}+{x}_{5}{x}_{12}-{x}_{2}{x}_{15}+{x}_{6}{x}_{22}\\
   && +2{x}_{16}{x}_{23}+{x}_{15}{x}_{24}-{x}_{14}{x}_{25}+2
     {x}_{2}{x}_{26}-{x}_{13}{x}_{26}-2{x}_{24}{x}_{26}-{x}_{23}{x}_{27}+{x}_{3}+{x}_{9}+2{x}_{20},\\
{x}_{11} &\longmapsto& -{x}_{12}{x}_{17}{x}_{26}-{x}_{17}{x}_{22}{x}_{27}-{x}_{7}{x}_{12}-{x}_{2}{x}_{17}+{x}_{17}{x}_{24}+{x}_{11},\\
{x}_{13} &\longmapsto& {x}_{12}{x}_{15}+{x}_{16}{x}_{22}-{x}_{12}{x}_{26}-{x}_{22}{x}_{27}+{x}_{13}+{x}_{24},\\
{x}_{14} &\longmapsto& {x}_{12}{x}_{16}+{x}_{17}{x}_{22}+{x}_{14},\\
{x}_{18} &\longmapsto& -{x}_{12}{x}_{22}{x}_{26}-{x}_{22}^{2}{x}_{27}-{x}_{2}{x}_{22}+{x}_{13}{x}_{22}-{x}_{12}{x}_{23}+{x}_{22}{x}_{24}+{x}_{18},\\
{x}_{19} &\longmapsto& -{x}_{12}{x}_{25}{x}_{26}-{x}_{22}{x}_{25}{x}_{27}-{x}_{15}{x}_{23}-{x}_{2}{x}_{25}+{x}_{13}{x}_{25}+{x}_{24}{x}_{25}+{x}_{23}{x}_{26}+{x}_{19},\\
{x}_{21} &\longmapsto& 2{x}_{12}{x}_{16}{x}_{26}+2{x}_{16}{x}_{22}{x}_{27}-{x}_{12}{x}_{26}{x}_{27}-{x}_{22}{x}_{27}^{2}+2{x}_{2}{x}_{16}-{x}_{7}{x}_{22}-{x}_{17}{x}_{23}-2{x}_{16}{x}_{24}\\
   && +{x}_{14}{x}_{26}-{x}_{2}{x}_{27}+{x}_{24}{x}_{27}+{x}_{4}+2{x}_{10}+{x}_{21},\\
{x}_{23} &\longmapsto&  {x}_{12}{x}_{25}+{x}_{22}{x}_{26}+{x}_{23}.
\end{eqnarray*}

\begin{equation*}
  \renewcommand{\arraystretch}{1.2}
  \begin{array}{lll}
(-{x}_{6}{x}_{13}+{x}_{5}{x}_{14}-{x}_{7}{x}_{23},& {x}_{6}{x}_{18}-{x}_{14}{x}_{19}-{x}_{4}{x}_{23},& -{x}_{5}{x}_{18}+{x}_{13}{x}_{19}-{x}_{9}{x}_{23},\\
 -{x}_{6}{x}_{8}+{x}_{3}{x}_{14}+{x}_{11}{x}_{23},& {x}_{5}{x}_{8}-{x}_{3}{x}_{13}+{x}_{21}{x}_{23},& -{x}_{7}{x}_{8}-{x}_{11}{x}_{13}-{x}_{14}{x}_{21},\\
  -{x}_{3}{x}_{18}+{x}_{8}{x}_{19}-{x}_{1}{x}_{23},& -{x}_{8}{x}_{9}+{x}_{1}{x}_{13}+{x}_{18}{x}_{21},& -{x}_{4}{x}_{13}-{x}_{9}{x}_{14}-{x}_{7}{x}_{18},\\
 {x}_{4}{x}_{5}+{x}_{6}{x}_{9}+{x}_{7}{x}_{19},& -{x}_{3}{x}_{7}-{x}_{5}{x}_{11}-{x}_{6}{x}_{21},& {x}_{1}{x}_{5}-{x}_{3}{x}_{9}+{x}_{19}{x}_{21},\\
    {x}_{1}{x}_{7}+{x}_{9}{x}_{11}-{x}_{4}{x}_{21},& -{x}_{4}{x}_{8}-{x}_{1}{x}_{14}+{x}_{11}{x}_{18},& -{x}_{3}{x}_{4}-{x}_{1}{x}_{6}+{x}_{11}{x}_{19}).
  \end{array}
  \end{equation*}

\begin{alignat*}{6}
x_{1}&\longmapsto -p_{1,2},&\quad x_{3}&\longmapsto p_{1,3},&\quad x_{4}&\longmapsto -p_{2,4},&\quad x_{5}&\longmapsto p_{0,3},&\quad x_{6}&\longmapsto -p_{3,4},&\quad x_{7}&\longmapsto p_{0,4},\\
x_{8}&\longmapsto p_{1,5},&\quad x_{9}&\longmapsto -p_{0,2},&\quad x_{11}&\longmapsto -p_{1,4},&\quad x_{13}&\longmapsto p_{0,5},&\quad x_{14}&\longmapsto p_{4,5},&\quad x_{18}&\longmapsto p_{2,5},\\
x_{19}&\longmapsto p_{2,3},&\quad x_{21}&\longmapsto -p_{0,1},&\quad x_{23}&\longmapsto p_{3,5}.
\end{alignat*}

\begin{eqnarray}\label{eq-equihilb-A232}
H(A_{{\lambda}_{232}};\mathbf{t})&=&K\left(\frac{\sqrt{t_{2}} \sqrt{t_{3}}}{t_{1}},\frac{t_{1} \sqrt{t_{3}}}{\sqrt{t_{2}}},\frac{t_{1} \sqrt{t_{2}}}{\sqrt{t_{3}}},\frac{t_{2}^{3/2}}{t_{1} \sqrt{t_{3}}},\frac{t_{3}^{3/2}}{t_{1} \sqrt{t_{2}}},\frac{t_{1}^2}{\sqrt{t_{2}} \sqrt{t_{3}}}\right)\notag\\
&&\Big/\left((1-t_{1})^3(1-t_{2})^3(1-t_{3})^3(1-t_{1}^2)\left(\frac{t_{1}^2-t_{2}^2}{t_{1}^2}\right)\left(\frac{t_{1}^2-t_{2} t_{3}}{t_{1}^2}\right)\left(\frac{t_{1}^2-t_{3}^2}{t_{1}^2}\right)\left(\frac{t_{2}-t_{1}^3}{t_{2}}\right)\right.\notag\\
&&\hphantom{\Big/\Big(}\left(\frac{t_{2}-t_{3}^2}{t_{2}}\right)\left(\frac{t_{2}-t_{1}^2}{t_{2}}\right)\left(\frac{t_{2}-t_{1} t_{3}}{t_{2}}\right)\left(\frac{t_{1}-t_{2}}{t_{1}}\right)^2\left(\frac{t_{1}-t_{3}}{t_{1}}\right)^2\left(\frac{t_{1} t_{2}-t_{3}^2}{t_{1} t_{2}}\right)\left(\frac{t_{3}-t_{1}^3}{t_{3}}\right)\left(\frac{t_{3}-t_{2}^2}{t_{3}}\right)\notag\\
&&\hphantom{\Big/\Big(}\left.\left(\frac{t_{3}-t_{1}^2}{t_{3}}\right)\left(\frac{t_{3}-t_{1} t_{2}}{t_{3}}\right)\left(\frac{t_{1} t_{3}-t_{2}^2}{t_{1} t_{3}}\right)\right).
\end{eqnarray}

\subsection{\texorpdfstring{$\boldsymbol{\left((2)\subset (3,2,1)\right)}$}{((2) in (3,2,1))}, extra.dim \texorpdfstring{$\boldsymbol{=8}$}{=8}}\label{sec:3D-2321}
\begin{center}
\begin{tikzpicture}[x=(220:0.6cm), y=(-40:0.6cm), z=(90:0.42cm)]

\foreach \m [count=\y] in {{2,2,1},{1,1},{1}}{
  \foreach \n [count=\x] in \m {
  \ifnum \n>0
      \foreach \z in {1,...,\n}{
        \draw [fill=gray!30] (\x+1,\y,\z) -- (\x+1,\y+1,\z) -- (\x+1, \y+1, \z-1) -- (\x+1, \y, \z-1) -- cycle;
        \draw [fill=gray!40] (\x,\y+1,\z) -- (\x+1,\y+1,\z) -- (\x+1, \y+1, \z-1) -- (\x, \y+1, \z-1) -- cycle;
        \draw [fill=gray!5] (\x,\y,\z)   -- (\x+1,\y,\z)   -- (\x+1, \y+1, \z)   -- (\x, \y+1, \z) -- cycle;  
      }
 \fi
 }
}    

\end{tikzpicture}
\end{center}
\[
I_{\lambda_{2321}}=\left(X_1^3,X_1^2 X_2, X_1 X_2^2,X_1^2 X_3, X_2^3,X_2 X_3,X_3^2\right).
\]

\begin{alignat*}{6}
c_{2, 0, 0}^{3, 0, 0} &\longmapsto  {x}_{1},&\quad c_{2, 0, 0}^{2, 1, 0} &\longmapsto   {x}_{2},&\quad c_{2, 0, 0}^{2, 0, 1} &\longmapsto {x}_{3},&\quad c_{2, 0, 0}^{0, 1, 1} &\longmapsto  {x}_{4},&\quad c_{2, 0, 0}^{1, 2, 0} &\longmapsto   {x}_{5},&\quad c_{2, 0, 0}^{0, 3, 0} &\longmapsto {x}_{6},\\
 c_{2, 0, 0}^{0, 0, 2} &\longmapsto  {x}_{7},&\quad c_{0, 1, 0}^{0, 1, 1} &\longmapsto  {x}_{8},&\quad c_{1, 1, 0}^{3, 0, 0} &\longmapsto {x}_{9},&\quad c_{1, 1, 0}^{2, 1, 0} &\longmapsto  {x}_{10},&\quad c_{1, 1, 0}^{2, 0, 1} &\longmapsto  {x}_{11},&\quad c_{1, 1, 0}^{0, 1, 1} &\longmapsto {x}_{12},\\
  c_{1, 1, 0}^{1, 2, 0} &\longmapsto  {x}_{13},&\quad c_{1, 1, 0}^{0, 3, 0} &\longmapsto  {x}_{14},&\quad c_{1, 1, 0}^{0, 0, 2} &\longmapsto {x}_{15},&\quad c_{0, 2, 0}^{3, 0, 0} &\longmapsto  {x}_{16},&\quad c_{0, 2, 0}^{2, 1, 0} &\longmapsto   {x}_{17},&\quad c_{0, 2, 0}^{2, 0, 1} &\longmapsto {x}_{18},\\
   c_{0, 2, 0}^{0, 1, 1} &\longmapsto  {x}_{19},&\quad c_{0, 2, 0}^{1, 2, 0} &\longmapsto  {x}_{20},&\quad c_{0, 2, 0}^{0, 3, 0} &\longmapsto {x}_{21},&\quad c_{0, 2, 0}^{0, 0, 2} &\longmapsto  {x}_{22},&\quad c_{0, 0, 1}^{3, 0, 0} &\longmapsto   {x}_{23},&\quad c_{0, 0, 1}^{0, 1, 1} &\longmapsto {x}_{24},\\
    c_{0, 0, 1}^{0, 0, 2} &\longmapsto  {x}_{25},&\quad c_{1, 0, 1}^{3, 0, 0} &\longmapsto  {x}_{26},&\quad c_{1, 0, 1}^{2, 1, 0} &\longmapsto {x}_{27},&\quad c_{1, 0, 1}^{2, 0, 1} &\longmapsto  {x}_{28},&\quad c_{1, 0, 1}^{0, 1, 1} &\longmapsto   {x}_{29},&\quad c_{1, 0, 1}^{1, 2, 0} &\longmapsto {x}_{30},\\
   c_{1, 0, 1}^{0, 3, 0} &\longmapsto  {x}_{31},&\quad c_{1, 0, 1}^{0, 0, 2} &\longmapsto  {x}_{32}.
\end{alignat*}
After the change of variables 
\begin{eqnarray*}
{x}_{1} &\longmapsto&   -{x}_{19}{x}_{26}{x}_{29}-{x}_{16}{x}_{29}^{2}-{x}_{12}{x}_{26}-{x}_{19}{x}_{27}-{x}_{9}{x}_{29}+2 {x}_{17}{x}_{29}+{x}_{1}+{x}_{10}+3 {x}_{20}-2 {x}_{28},\\
{x}_{3} &\longmapsto&   4 {x}_{19}^{2}{x}_{26}{x}_{29}^{2}+5 {x}_{12}{x}_{19}{x}_{26}{x}_{29}-2 {x}_{19}^{2}{x}_{27}{x}_{29}-2 {x}_{9}{x}_{19}{x}_{29}^{2}-{x}_{17}{x}_{19}{x}_{29}^{2}-{x}_{19}{x}_{26}{x}_{29}{x}_{32}\\
  	&&+{x}_{12}^{2}{x}_{26}-2 {x}_{4}{x}_{19}{x}_{26}-{x}_{12}{x}_{19}{x}_{27}-{x}_{9}{x}_{12}{x}_{29}-{x}_{1}{x}_{19}{x}_{29}+{x}_{15}{x}_{26}{x}_{29}+{x}_{18}{x}_{29}^{2}\\
  	&&-{x}_{12}{x}_{26}{x}_{32}+{x}_{19}{x}_{27}{x}_{32}+{x}_{4}{x}_{9}+{x}_{4}{x}_{17}+2 {x}_{2}{x}_{19}+{x}_{12}{x}_{20}+{x}_{19}{x}_{24}+{x}_{7}{x}_{26}\\
  	&&-{x}_{12}{x}_{28}+{x}_{11}{x}_{29}+{x}_{3}+{x}_{8},\\
{x}_{4} &\longmapsto&   -{x}_{19}{x}_{29}^{2}-{x}_{12}{x}_{29}+{x}_{4},\\
{x}_{5} &\longmapsto&  -{x}_{19}{x}_{26}{x}_{29}^{3}-{x}_{12}{x}_{26}{x}_{29}^{2}-3 {x}_{19}{x}_{27}{x}_{29}^{2}-{x}_{9}{x}_{29}^{3}+{x}_{4}{x}_{26}{x}_{29}-2 {x}_{12}{x}_{27}{x}_{29}-{x}_{1}{x}_{29}^{2}\\
   &&+{x}_{10}{x}_{29}^{2}+2 {x}_{19}{x}_{29}{x}_{30}+{x}_{27}{x}_{29}{x}_{32}+{x}_{4}{x}_{27}-{x}_{13}{x}_{29}+{x}_{24}{x}_{29}+{x}_{12}{x}_{30}-{x}_{30}{x}_{32}+{x}_{5},\\
{x}_{6} &\longmapsto&  -{x}_{19}{x}_{26}{x}_{29}^{4}-{x}_{12}{x}_{26}{x}_{29}^{3}+{x}_{4}{x}_{26}{x}_{29}^{2}-{x}_{10}{x}_{29}^{3}-3 {x}_{19}{x}_{29}^{2}{x}_{30}+{x}_{13}{x}_{29}^{2}-{x}_{21}{x}_{29}^{2}\\
	&&-{x}_{24}{x}_{29}^{2}-2 {x}_{12}{x}_{29}{x}_{30}+2 {x}_{19}{x}_{29}{x}_{31}+{x}_{29}{x}_{30}{x}_{32}-{x}_{5}{x}_{29}-{x}_{14}{x}_{29}+{x}_{4}{x}_{30}\\
	&&+{x}_{12}{x}_{31}-{x}_{31}{x}_{32}+{x}_{6},\\
{x}_{7} &\longmapsto&   {x}_{19}^{2}{x}_{29}^{2}+2 {x}_{12}{x}_{19}{x}_{29}-{x}_{19}{x}_{29}{x}_{32}+{x}_{12}^{2}+{x}_{4}{x}_{19}+{x}_{15}{x}_{29}-{x}_{12}{x}_{32}+{x}_{7},\\
{x}_{9} &\longmapsto& -{x}_{19}{x}_{26}-{x}_{16}{x}_{29}+{x}_{9}+{x}_{17},\\
{x}_{10} &\longmapsto&  -{x}_{19}{x}_{26}{x}_{29}-{x}_{19}{x}_{27}-{x}_{9}{x}_{29}+{x}_{17}{x}_{29}+{x}_{10}+2 {x}_{20}-{x}_{28},\\
{x}_{11} &\longmapsto&  2 {x}_{19}^{2}{x}_{26}{x}_{29}+{x}_{12}{x}_{19}{x}_{26}-{x}_{12}{x}_{16}{x}_{29}+{x}_{22}{x}_{26}{x}_{29}-{x}_{19}{x}_{26}{x}_{32}+{x}_{9}{x}_{12}+{x}_{4}{x}_{16}\\
   &&+{x}_{12}{x}_{17}-{x}_{1}{x}_{19}+{x}_{15}{x}_{26}+{x}_{18}{x}_{29}+{x}_{11},\\
{x}_{13} &\longmapsto&  {x}_{19}{x}_{26}{x}_{29}^{2}-{x}_{16}{x}_{29}^{3}+2 {x}_{12}{x}_{26}{x}_{29}-2 {x}_{19}{x}_{27}{x}_{29}-{x}_{9}{x}_{29}^{2}+{x}_{17}{x}_{29}^{2}-{x}_{4}{x}_{26}\\
  &&+{x}_{27}{x}_{32}+2 {x}_{2}+{x}_{13}+{x}_{24},\\
{x}_{14} &\longmapsto&   -{x}_{19}{x}_{26}{x}_{29}^{3}-3 {x}_{19}{x}_{27}{x}_{29}^{2}-{x}_{9}{x}_{29}^{3}-{x}_{17}{x}_{29}^{3}+{x}_{4}{x}_{26}{x}_{29}
	-2 {x}_{12}{x}_{27}{x}_{29}-{x}_{1}{x}_{29}^{2}\\
	&&+{x}_{10}{x}_{29}^{2}+{x}_{28}{x}_{29}^{2}+2 {x}_{19}{x}_{29}{x}_{30}+{x}_{27}{x}_{29}{x}_{32}+{x}_{4}{x}_{27}-2 {x}_{13}{x}_{29}+{x}_{24}{x}_{29}\\
	&&+2 {x}_{12}{x}_{30}-{x}_{30}{x}_{32}+{x}_{5}+{x}_{14},\\
{x}_{15} &\longmapsto&   2 {x}_{19}^{2}{x}_{29}+2 {x}_{12}{x}_{19}+{x}_{22}{x}_{29}-{x}_{19}{x}_{32}+{x}_{15},\\
{x}_{17} &\longmapsto&  -{x}_{16}{x}_{29}+{x}_{17},\\
{x}_{18} &\longmapsto&   {x}_{12}{x}_{16}+{x}_{17}{x}_{19}+{x}_{22}{x}_{26}+{x}_{18},\\
{x}_{21} &\longmapsto&  3 {x}_{19}{x}_{26}{x}_{29}^{2}-{x}_{16}{x}_{29}^{3}+3 {x}_{12}{x}_{26}{x}_{29}-4 {x}_{19}{x}_{27}{x}_{29}-2 {x}_{9}{x}_{29}^{2}+{x}_{17}{x}_{29}^{2}-2 {x}_{4}{x}_{26}\\
	&&-{x}_{12}{x}_{27}-{x}_{1}{x}_{29}+{x}_{10}{x}_{29}+{x}_{19}{x}_{30}+2 {x}_{27}{x}_{32}+3 {x}_{2}+{x}_{13}+{x}_{21}+2 {x}_{24},\\ 
{x}_{22} &\longmapsto&   {x}_{19}^{2}+{x}_{22},\\
{x}_{23} &\longmapsto&  -{x}_{17}{x}_{26}{x}_{29}-{x}_{20}{x}_{26}+{x}_{26}{x}_{28}+{x}_{23},\\
{x}_{24} &\longmapsto&  {x}_{19}{x}_{26}{x}_{29}^{2}+{x}_{12}{x}_{26}{x}_{29}-2 {x}_{19}{x}_{27}{x}_{29}-{x}_{9}{x}_{29}^{2}-{x}_{17}{x}_{29}^{2}-{x}_{4}{x}_{26}
	-{x}_{12}{x}_{27}\\
	&&-{x}_{1}{x}_{29}-{x}_{20}{x}_{29}+{x}_{28}{x}_{29}+{x}_{27}{x}_{32}+{x}_{2}+{x}_{24},\\
{x}_{25} &\longmapsto&  4 {x}_{19}^{2}{x}_{26}{x}_{29}^{2}+4 {x}_{12}{x}_{19}{x}_{26}{x}_{29}-8 {x}_{19}^{2}{x}_{27}{x}_{29}-4 {x}_{9}{x}_{19}{x}_{29}^{2}-4 {x}_{4}{x}_{19}{x}_{26}
	-4 {x}_{12}{x}_{19}{x}_{27}\\
	&&+{x}_{4}{x}_{16}{x}_{29}+2 {x}_{12}{x}_{17}{x}_{29}-4 {x}_{1}{x}_{19}{x}_{29}-{x}_{22}{x}_{27}{x}_{29}+3 {x}_{18}{x}_{29}^{2}+4 {x}_{19}{x}_{27}{x}_{32}\\
	&&-{x}_{17}{x}_{29}{x}_{32}+{x}_{4}{x}_{9}+{x}_{4}{x}_{17}+4 {x}_{2}{x}_{19}+2 {x}_{12}{x}_{20}+3 {x}_{19}{x}_{24}-{x}_{15}{x}_{27}-2 {x}_{12}{x}_{28}\\
	&&+2 {x}_{11}{x}_{29}-{x}_{22}{x}_{30}-{x}_{20}{x}_{32}+{x}_{28}{x}_{32}+{x}_{3}+2 {x}_{8}+{x}_{25},\\
{x}_{27} &\longmapsto&   {x}_{26}{x}_{29}+{x}_{27},\\
{x}_{28} &\longmapsto&  -2 {x}_{19}{x}_{26}{x}_{29}-{x}_{16}{x}_{29}^{2}-{x}_{12}{x}_{26}+2 {x}_{17}{x}_{29}+{x}_{26}{x}_{32}+{x}_{1}+{x}_{10}+2 {x}_{20}-{x}_{28},\\
{x}_{30} &\longmapsto&  {x}_{26}{x}_{29}^{2}+{x}_{30},\\
{x}_{31} &\longmapsto&   {x}_{26}{x}_{29}^{3}+{x}_{31}, 
\end{eqnarray*}
the Haiman ideal becomes
\begin{align}\label{eq-generators-haiman-8points}
  (&\begin{aligned}[t]
 -{x}_{3}{x}_{16}-{x}_{11}{x}_{17}+{x}_{22}{x}_{23}-{x}_{18}{x}_{28},
    &\quad -{x}_{5}{x}_{15}-{x}_{6}{x}_{22}+{x}_{7}{x}_{24}-{x}_{4}{x}_{25},\nonumber\\
 {x}_{4}{x}_{23}-{x}_{3}{x}_{27}-{x}_{11}{x}_{30}-{x}_{18}{x}_{31},
  &\quad{x}_{5}{x}_{9}+{x}_{6}{x}_{16}-{x}_{1}{x}_{24}+{x}_{25}{x}_{27},\nonumber\\
 {x}_{1}{x}_{3}-{x}_{11}{x}_{13}-{x}_{14}{x}_{18}-{x}_{7}{x}_{23},
  &\quad-{x}_{3}{x}_{9}-{x}_{10}{x}_{11}+{x}_{18}{x}_{21}+{x}_{15}{x}_{23},\nonumber\\
 {x}_{5}{x}_{10}+{x}_{6}{x}_{17}+{x}_{13}{x}_{24}+{x}_{25}{x}_{30},
  &\quad-{x}_{5}{x}_{21}+{x}_{14}{x}_{24}+{x}_{6}{x}_{28}+{x}_{25}{x}_{31},\end{aligned}\\
& {x}_{17}{x}_{21}{x}_{27}+{x}_{10}{x}_{27}{x}_{28}-{x}_{16}{x}_{21}{x}_{30}-{x}_{9}{x}_{28}{x}_{30}-{x}_{10}{x}_{16}{x}_{31}+{x}_{9}{x}_{17}{x}_{31}+{x}_{23}{x}_{24},\nonumber\\
& {x}_{4}{x}_{10}{x}_{16}-{x}_{4}{x}_{9}{x}_{17}+{x}_{15}{x}_{17}{x}_{27}-{x}_{10}{x}_{22}{x}_{27}-{x}_{15}{x}_{16}{x}_{30}+{x}_{9}{x}_{22}{x}_{30}-{x}_{18}{x}_{24},\nonumber\\
& {x}_{7}{x}_{10}{x}_{16}+{x}_{13}{x}_{15}{x}_{16}-{x}_{7}{x}_{9}{x}_{17}+{x}_{1}{x}_{15}{x}_{17}-{x}_{1}{x}_{10}{x}_{22}-{x}_{9}{x}_{13}{x}_{22}-{x}_{18}{x}_{25},\nonumber\\
& {x}_{4}{x}_{13}{x}_{16}+{x}_{1}{x}_{4}{x}_{17}-{x}_{7}{x}_{17}{x}_{27}-{x}_{13}{x}_{22}{x}_{27}+{x}_{7}{x}_{16}{x}_{30}-{x}_{1}{x}_{22}{x}_{30}+{x}_{5}{x}_{18},\nonumber\\
& -{x}_{14}{x}_{17}{x}_{27}+{x}_{13}{x}_{27}{x}_{28}+{x}_{14}{x}_{16}{x}_{30}+{x}_{1}{x}_{28}{x}_{30}-{x}_{13}{x}_{16}{x}_{31}-{x}_{1}{x}_{17}{x}_{31}-{x}_{5}{x}_{23},\nonumber\\
& -{x}_{4}{x}_{17}{x}_{21}-{x}_{4}{x}_{10}{x}_{28}+{x}_{21}{x}_{22}{x}_{30}+{x}_{15}{x}_{28}{x}_{30}-{x}_{15}{x}_{17}{x}_{31}+{x}_{10}{x}_{22}{x}_{31}-{x}_{3}{x}_{24},\nonumber\\
& {x}_{4}{x}_{16}{x}_{21}-{x}_{21}{x}_{22}{x}_{27}+{x}_{4}{x}_{9}{x}_{28}-{x}_{15}{x}_{27}{x}_{28}+{x}_{15}{x}_{16}{x}_{31}-{x}_{9}{x}_{22}{x}_{31}-{x}_{11}{x}_{24},\nonumber\\
& {x}_{10}{x}_{14}{x}_{16}-{x}_{9}{x}_{14}{x}_{17}+{x}_{13}{x}_{16}{x}_{21}+{x}_{1}{x}_{17}{x}_{21}+{x}_{1}{x}_{10}{x}_{28}+{x}_{9}{x}_{13}{x}_{28}+{x}_{23}{x}_{25},\nonumber\\
& {x}_{10}{x}_{14}{x}_{27}+{x}_{13}{x}_{21}{x}_{27}-{x}_{9}{x}_{14}{x}_{30}+{x}_{1}{x}_{21}{x}_{30}+{x}_{1}{x}_{10}{x}_{31}+{x}_{9}{x}_{13}{x}_{31}-{x}_{6}{x}_{23},\nonumber\\
& {x}_{14}{x}_{15}{x}_{16}-{x}_{7}{x}_{16}{x}_{21}-{x}_{9}{x}_{14}{x}_{22}+{x}_{1}{x}_{21}{x}_{22}-{x}_{7}{x}_{9}{x}_{28}+{x}_{1}{x}_{15}{x}_{28}+{x}_{11}{x}_{25},\nonumber\\
& -{x}_{4}{x}_{14}{x}_{16}+{x}_{14}{x}_{22}{x}_{27}-{x}_{1}{x}_{4}{x}_{28}+{x}_{7}{x}_{27}{x}_{28}-{x}_{7}{x}_{16}{x}_{31}+{x}_{1}{x}_{22}{x}_{31}+{x}_{5}{x}_{11},\nonumber\\
& -{x}_{1}{x}_{4}{x}_{10}-{x}_{4}{x}_{9}{x}_{13}+{x}_{7}{x}_{10}{x}_{27}+{x}_{13}{x}_{15}{x}_{27}-{x}_{7}{x}_{9}{x}_{30}+{x}_{1}{x}_{15}{x}_{30}+{x}_{6}{x}_{18},\nonumber\\
& {x}_{4}{x}_{14}{x}_{17}-{x}_{4}{x}_{13}{x}_{28}-{x}_{14}{x}_{22}{x}_{30}-{x}_{7}{x}_{28}{x}_{30}+{x}_{7}{x}_{17}{x}_{31}+{x}_{13}{x}_{22}{x}_{31}+{x}_{3}{x}_{5},\nonumber\\
& -{x}_{14}{x}_{15}{x}_{17}+{x}_{7}{x}_{17}{x}_{21}+{x}_{10}{x}_{14}{x}_{22}+{x}_{13}{x}_{21}{x}_{22}+{x}_{7}{x}_{10}{x}_{28}+{x}_{13}{x}_{15}{x}_{28}+{x}_{3}{x}_{25},\nonumber\\
& {x}_{4}{x}_{9}{x}_{14}-{x}_{1}{x}_{4}{x}_{21}-{x}_{14}{x}_{15}{x}_{27}+{x}_{7}{x}_{21}{x}_{27}+{x}_{7}{x}_{9}{x}_{31}-{x}_{1}{x}_{15}{x}_{31}+{x}_{6}{x}_{11},\nonumber\\
& -{x}_{4}{x}_{10}{x}_{14}-{x}_{4}{x}_{13}{x}_{21}+{x}_{14}{x}_{15}{x}_{30}-{x}_{7}{x}_{21}{x}_{30}-{x}_{7}{x}_{10}{x}_{31}-{x}_{13}{x}_{15}{x}_{31}+{x}_{3}{x}_{6}).
\end{align}

After the change of variables
\begin{alignat*}{6}
y_1&\longmapsto x_5,&\quad y_2&\longmapsto -x_{14},&\quad y_3&\longmapsto x_6,&\quad y_4&\longmapsto x_{13},&\quad y_5&\longmapsto -x_{22},&\quad y_6&\longmapsto -x_{28},\\
y_7&\longmapsto  x_{10},&\quad y_8&\longmapsto x_{30},&\quad y_9&\longmapsto x_7,&\quad y_{10}&\longmapsto x_{21},&\quad y_{11}&\longmapsto x_{24},&\quad y_{12}&\longmapsto x_{25},\\
y_{13}&\longmapsto x_3,&\quad y_{14}&\longmapsto -x_1,&\quad y_{15}&\longmapsto x_4,&\quad y_{16}&\longmapsto -x_{11},&\quad y_{17}&\longmapsto x_{16},&\quad y_{18}&\longmapsto x_{15},\\
y_{19}&\longmapsto x_{17},&\quad y_{20}&\longmapsto -x_{18},&\quad y_{21}&\longmapsto -x_{27},&\quad y_{24}&\longmapsto x_9,&\quad y_{27}&\longmapsto x_{31},&\quad
y_{29}&\longmapsto x_{23},
\end{alignat*}
the function $F_{1321}(y_1,\dots,y_{29})$ in Section~\ref{sec:3D-1321} becomes a potential function for (\ref{eq-generators-haiman-8points}).

\section{The change of variables of \texorpdfstring{$\boldsymbol{A_{\lambda_{1311}}}$}{A\_\{lambda\_1311\}}}
\label{sec:change-var-1311}

There are three steps.

\begin{enumerate}[label={\it Step}~\arabic*:, ref=\arabic*]
\item Use the  equations of minimal degree 1 to solve $x_1,x_2,x_3,x_4,x_8$:
\begin{eqnarray*}
{x}_{1}& = & \frac{{x}_{5}{x}_{9}{x}_{18}+{x}_{13}{x}_{18}^{2}+{x}_{12}{x}_{18}{x}_{24}+{x}_{9}{x}_{11}+{x}_{9}{x}_{15}+{x}_{10}{x}_{24}+{x}_{5}}{1 - x_{18}x_{9}},\\[1ex]
{x}_{2}& = &\frac{{x}_{5}{x}_{18}^{2}+{x}_{18}^{2}{x}_{22}+{x}_{18}{x}_{21}{x}_{24}+2\,{x}_{15}{x}_{18}+{x}_{9}{x}_{20}+{x}_{19}{x}_{24}}
{1 - x_{18}x_{9}},\\[1ex]
{x}_{3}
& =&\left(-{x}_{9}^{2}{x}_{11}{x}_{12}{x}_{18}^{4}{x}_{24}-{x}_{9}{x}_{10}{x}_{13}{x}_{18}^{5}{x}_{24}+{x}_{9}^{2}{x}_{13}{x}_{18}^{4}{x}_{19}{x}_{24}+{x}_{9}^{3}{x}_{12}{x}_{18}^{3}{x}_{20}{x}_{24}\right.\\
&&\hphantom{\big(}+{x}_{9}^{3}{x}_{11}{x}_{18}^{3}{x}_{21}{x}_{24}-{x}_{9}^{4}{x}_{18}^{2}{x}_{20}{x}_{21}{x}_{24}+{x}_{9}^{2}{x}_{10}{x}_{18}^{4}{x}_{22}{x}_{24}-{x}_{9}^{3}{x}_{18}^{3}{x}_{19}{x}_{22}{x}_{24}\\
&&\hphantom{\big(}-{x}_{9}{x}_{10}{x}_{12}{x}_{18}^{4}{x}_{24}^{2}+2 {x}_{9}^{2}{x}_{10}{x}_{18}^{3}{x}_{21}{x}_{24}^{2}-{x}_{9}^{3}{x}_{18}^{2}{x}_{19}{x}_{21}{x}_{24}^{2}+{x}_{9}^{2}{x}_{12}{x}_{18}^{4}{x}_{24}{x}_{25}\\
&&\hphantom{\big(}-{x}_{9}^{3}{x}_{18}^{3}{x}_{21}{x}_{24}{x}_{25}-{x}_{9}^{2}{x}_{10}{x}_{18}^{4}{x}_{24}{x}_{27}+{x}_{9}^{3}{x}_{18}^{3}{x}_{19}{x}_{24}{x}_{27}-{x}_{9}^{2}{x}_{10}{x}_{11}{x}_{18}^{3}{x}_{24}\\
&&\hphantom{\big(}+{x}_{9}^{2}{x}_{10}{x}_{15}{x}_{18}^{3}{x}_{24}-{x}_{5}{x}_{9}{x}_{12}{x}_{18}^{4}{x}_{24}-{x}_{9}^{3}{x}_{15}{x}_{18}^{2}{x}_{19}{x}_{24}+{x}_{9}^{3}{x}_{10}{x}_{18}^{2}{x}_{20}{x}_{24}\\
&&\hphantom{\big(}+{x}_{5}{x}_{9}^{2}{x}_{18}^{3}{x}_{21}{x}_{24}+{x}_{9}{x}_{13}{x}_{18}^{4}{x}_{21}{x}_{24}-{x}_{9}^{2}{x}_{18}^{3}{x}_{21}{x}_{22}{x}_{24}-{x}_{9}{x}_{10}^{2}{x}_{18}^{3}{x}_{24}^{2}\\
&&\hphantom{\big(}+{x}_{9}^{2}{x}_{10}{x}_{18}^{2}{x}_{19}{x}_{24}^{2}+{x}_{9}{x}_{12}{x}_{18}^{3}{x}_{21}{x}_{24}^{2}-{x}_{9}^{2}{x}_{18}^{2}{x}_{21}^{2}{x}_{24}^{2}-{x}_{9}^{3}{x}_{18}^{2}{x}_{19}{x}_{24}{x}_{25}\\
&&\hphantom{\big(}+{x}_{9}^{3}{x}_{18}^{3}{x}_{25}^{2}+{x}_{9}^{3}{x}_{10}{x}_{18}^{3}{x}_{26}-{x}_{9}^{4}{x}_{18}^{2}{x}_{19}{x}_{26}-{x}_{9}{x}_{12}{x}_{18}^{4}{x}_{24}{x}_{27}+{x}_{9}^{2}{x}_{18}^{4}{x}_{27}^{2}\\
&&\hphantom{\big(}-{x}_{9}{x}_{12}{x}_{18}^{5}{x}_{28}+{x}_{9}^{2}{x}_{18}^{4}{x}_{21}{x}_{28}+{x}_{7}{x}_{9}^{3}{x}_{18}^{3}-{x}_{5}{x}_{9}^{2}{x}_{11}{x}_{18}^{3}-2 {x}_{5}{x}_{9}^{2}{x}_{15}{x}_{18}^{3}\\
&&\hphantom{\big(}-{x}_{9}^{2}{x}_{17}{x}_{18}^{4}+{x}_{9}^{2}{x}_{13}{x}_{18}^{3}{x}_{20}-{x}_{9}^{2}{x}_{11}{x}_{18}^{3}{x}_{22}-{x}_{9}^{2}{x}_{15}{x}_{18}^{3}{x}_{22}-{x}_{5}{x}_{9}{x}_{10}{x}_{18}^{3}{x}_{24}\\
&&\hphantom{\big(}+2 {x}_{9}{x}_{11}{x}_{12}{x}_{18}^{3}{x}_{24}-{x}_{9}{x}_{12}{x}_{15}{x}_{18}^{3}{x}_{24}-2 {x}_{9}^{2}{x}_{16}{x}_{18}^{3}{x}_{24}+{x}_{10}{x}_{13}{x}_{18}^{4}{x}_{24}\\
&&\hphantom{\big(}-{x}_{5}{x}_{9}^{2}{x}_{18}^{2}{x}_{19}{x}_{24}-2 {x}_{9}{x}_{13}{x}_{18}^{3}{x}_{19}{x}_{24}-2 {x}_{9}^{2}{x}_{12}{x}_{18}^{2}{x}_{20}{x}_{24}-2 {x}_{9}^{2}{x}_{11}{x}_{18}^{2}{x}_{21}{x}_{24}\\
&&\hphantom{\big(}-{x}_{9}^{2}{x}_{15}{x}_{18}^{2}{x}_{21}{x}_{24}+{x}_{9}^{3}{x}_{18}{x}_{20}{x}_{21}{x}_{24}-2 {x}_{9}{x}_{10}{x}_{18}^{3}{x}_{22}{x}_{24}+2 {x}_{9}^{2}{x}_{18}^{2}{x}_{19}{x}_{22}{x}_{24}\\
&&\hphantom{\big(}+{x}_{10}{x}_{12}{x}_{18}^{3}{x}_{24}^{2}-3 {x}_{9}{x}_{10}{x}_{18}^{2}{x}_{21}{x}_{24}^{2}+{x}_{9}^{2}{x}_{18}{x}_{19}{x}_{21}{x}_{24}^{2}+{x}_{5}{x}_{9}^{2}{x}_{18}^{3}{x}_{25}\\
&&\hphantom{\big(}-3 {x}_{9}{x}_{12}{x}_{18}^{3}{x}_{24}{x}_{25}+3 {x}_{9}^{2}{x}_{18}^{2}{x}_{21}{x}_{24}{x}_{25}-{x}_{9}^{2}{x}_{12}{x}_{18}^{3}{x}_{26}+{x}_{9}^{3}{x}_{18}^{2}{x}_{21}{x}_{26}\\
&&\hphantom{\big(}+{x}_{9}^{2}{x}_{15}{x}_{18}^{3}{x}_{27}+3 {x}_{9}{x}_{10}{x}_{18}^{3}{x}_{24}{x}_{27}-3 {x}_{9}^{2}{x}_{18}^{2}{x}_{19}{x}_{24}{x}_{27}-{x}_{9}^{2}{x}_{18}^{3}{x}_{25}{x}_{27}\\
&&\hphantom{\big(}+{x}_{9}{x}_{10}{x}_{18}^{4}{x}_{28}-{x}_{9}^{2}{x}_{18}^{3}{x}_{19}{x}_{28}+{x}_{9}^{2}{x}_{18}^{3}{x}_{24}{x}_{29}-2 {x}_{9}^{2}{x}_{11}{x}_{15}{x}_{18}^{2}-3 {x}_{9}^{2}{x}_{15}^{2}{x}_{18}^{2}\\
&&\hphantom{\big(}-{x}_{5}^{2}{x}_{9}{x}_{18}^{3}-2 {x}_{9}^{3}{x}_{15}{x}_{18}{x}_{20}-{x}_{9}^{4}{x}_{20}^{2}-2 {x}_{5}{x}_{9}{x}_{18}^{3}{x}_{22}-{x}_{9}{x}_{18}^{3}{x}_{22}^{2}\\
&&\hphantom{\big(}+{x}_{9}{x}_{10}{x}_{11}{x}_{18}^{2}{x}_{24}-2 {x}_{9}{x}_{10}{x}_{15}{x}_{18}^{2}{x}_{24}+{x}_{5}{x}_{12}{x}_{18}^{3}{x}_{24}-{x}_{9}^{2}{x}_{11}{x}_{18}{x}_{19}{x}_{24}\\
&&\hphantom{\big(}-{x}_{9}^{2}{x}_{15}{x}_{18}{x}_{19}{x}_{24}-{x}_{9}^{2}{x}_{10}{x}_{18}{x}_{20}{x}_{24}-2 {x}_{9}^{3}{x}_{19}{x}_{20}{x}_{24}-2 {x}_{5}{x}_{9}{x}_{18}^{2}{x}_{21}{x}_{24}\\
&&\hphantom{\big(}-{x}_{13}{x}_{18}^{3}{x}_{21}{x}_{24}+{x}_{10}^{2}{x}_{18}^{2}{x}_{24}^{2}-2 {x}_{9}{x}_{10}{x}_{18}{x}_{19}{x}_{24}^{2}-{x}_{9}^{2}{x}_{19}^{2}{x}_{24}^{2}-{x}_{12}{x}_{18}^{2}{x}_{21}{x}_{24}^{2}\\
&&\hphantom{\big(}+{x}_{9}{x}_{18}{x}_{21}^{2}{x}_{24}^{2}+{x}_{9}{x}_{10}{x}_{18}^{2}{x}_{24}{x}_{25}+2 {x}_{9}^{2}{x}_{18}{x}_{19}{x}_{24}{x}_{25}-2 {x}_{9}^{2}{x}_{18}^{2}{x}_{25}^{2}-{x}_{9}^{2}{x}_{10}{x}_{18}^{2}{x}_{26}\\
&&\hphantom{\big(}+2 {x}_{9}^{3}{x}_{18}{x}_{19}{x}_{26}+{x}_{12}{x}_{18}^{3}{x}_{24}{x}_{27}-2 {x}_{9}{x}_{18}^{3}{x}_{27}^{2}+{x}_{12}{x}_{18}^{4}{x}_{28}-2 {x}_{9}{x}_{18}^{3}{x}_{21}{x}_{28}\\
&&\hphantom{\big(}-2 {x}_{7}{x}_{9}^{2}{x}_{18}^{2}+{x}_{5}{x}_{9}{x}_{11}{x}_{18}^{2}-{x}_{5}{x}_{9}{x}_{15}{x}_{18}^{2}+2 {x}_{9}{x}_{17}{x}_{18}^{3}-{x}_{5}{x}_{9}^{2}{x}_{18}{x}_{20}\\
&&\hphantom{\big(}-{x}_{9}{x}_{13}{x}_{18}^{2}{x}_{20}+{x}_{9}{x}_{11}{x}_{18}^{2}{x}_{22}-3 {x}_{9}{x}_{15}{x}_{18}^{2}{x}_{22}-3 {x}_{9}^{2}{x}_{18}{x}_{20}{x}_{22}+{x}_{5}{x}_{10}{x}_{18}^{2}{x}_{24}\\
&&\hphantom{\big(}-{x}_{11}{x}_{12}{x}_{18}^{2}{x}_{24}+{x}_{12}{x}_{15}{x}_{18}^{2}{x}_{24}+4 {x}_{9}{x}_{16}{x}_{18}^{2}{x}_{24}-{x}_{5}{x}_{9}{x}_{18}{x}_{19}{x}_{24}+{x}_{13}{x}_{18}^{2}{x}_{19}{x}_{24}\\
&&\hphantom{\big(}+2 {x}_{9}{x}_{12}{x}_{18}{x}_{20}{x}_{24}+{x}_{9}{x}_{11}{x}_{18}{x}_{21}{x}_{24}-2 {x}_{9}{x}_{15}{x}_{18}{x}_{21}{x}_{24}-2 {x}_{9}^{2}{x}_{20}{x}_{21}{x}_{24}\\
&&\hphantom{\big(}+{x}_{10}{x}_{18}^{2}{x}_{22}{x}_{24}-4 {x}_{9}{x}_{18}{x}_{19}{x}_{22}{x}_{24}+{x}_{10}{x}_{18}{x}_{21}{x}_{24}^{2}-2 {x}_{9}{x}_{19}{x}_{21}{x}_{24}^{2}\\
&&\hphantom{\big(}-2 {x}_{5}{x}_{9}{x}_{18}^{2}{x}_{25}+2 {x}_{12}{x}_{18}^{2}{x}_{24}{x}_{25}-3 {x}_{9}{x}_{18}{x}_{21}{x}_{24}{x}_{25}+{x}_{9}{x}_{12}{x}_{18}^{2}{x}_{26}-2 {x}_{9}^{2}{x}_{18}{x}_{21}{x}_{26}\\
&&\hphantom{\big(}-2 {x}_{9}{x}_{15}{x}_{18}^{2}{x}_{27}-2 {x}_{10}{x}_{18}^{2}{x}_{24}{x}_{27}+3 {x}_{9}{x}_{18}{x}_{19}{x}_{24}{x}_{27}+2 {x}_{9}{x}_{18}^{2}{x}_{25}{x}_{27}-{x}_{10}{x}_{18}^{3}{x}_{28}\\
&&\hphantom{\big(}+2 {x}_{9}{x}_{18}^{2}{x}_{19}{x}_{28}-2 {x}_{9}{x}_{18}^{2}{x}_{24}{x}_{29}+2 {x}_{9}{x}_{11}{x}_{15}{x}_{18}-2 {x}_{9}^{2}{x}_{15}{x}_{20}+{x}_{10}{x}_{15}{x}_{18}{x}_{24}\\
&&\hphantom{\big(}+{x}_{9}{x}_{11}{x}_{19}{x}_{24}-2 {x}_{9}{x}_{15}{x}_{19}{x}_{24}-{x}_{5}{x}_{18}{x}_{21}{x}_{24}-{x}_{18}{x}_{21}{x}_{22}{x}_{24}+{x}_{10}{x}_{19}{x}_{24}^{2}-{x}_{21}^{2}{x}_{24}^{2}\\
&&\hphantom{\big(}-{x}_{10}{x}_{18}{x}_{24}{x}_{25}-{x}_{9}{x}_{19}{x}_{24}{x}_{25}+{x}_{9}{x}_{18}{x}_{25}^{2}-{x}_{9}^{2}{x}_{19}{x}_{26}+{x}_{18}^{2}{x}_{27}^{2}+{x}_{18}^{2}{x}_{21}{x}_{28}\\
&&\hphantom{\big(}+{x}_{7}{x}_{9}{x}_{18}-{x}_{5}{x}_{15}{x}_{18}-{x}_{17}{x}_{18}^{2}-{x}_{5}{x}_{9}{x}_{20}+{x}_{9}{x}_{20}{x}_{22}-2 {x}_{16}{x}_{18}{x}_{24}-{x}_{12}{x}_{20}{x}_{24}\\
&&\hphantom{\big(}-{x}_{15}{x}_{21}{x}_{24}+{x}_{19}{x}_{22}{x}_{24}+{x}_{5}{x}_{18}{x}_{25}+{x}_{21}{x}_{24}{x}_{25}+{x}_{9}{x}_{21}{x}_{26}+{x}_{15}{x}_{18}{x}_{27}\\
&&\hphantom{\big(}\left.-{x}_{19}{x}_{24}{x}_{27}-{x}_{18}{x}_{25}{x}_{27}-{x}_{18}{x}_{19}{x}_{28}+{x}_{18}{x}_{24}{x}_{29}-{x}_{15}^{2}\right)\\
&&\big/\left((-1 + x_{18}x_{9})^2 (-1 + 2 x_{18} x_{9})\right),\\[1ex]
x_4&=&\frac{{x}_{5}{x}_{18}{x}_{24}+{x}_{18}{x}_{24}{x}_{27}+{x}_{18}^{2}{x}_{28}+{x}_{15}{x}_{24}+{x}_{24}{x}_{25}+{x
      }_{9}{x}_{26}}
{1 - x_{18} x_{9}},\\[1ex]
x_8&=&\left(-{x}_{6}{x}_{9}^{2}{x}_{12}{x}_{18}^{3}+{x}_{5}{x}_{9}{x}_{10}{x}_{12}{x}_{18}^{3}+{x}_{9}^{2}{x}_{12}{x}_{16}{x}_{18}^{3}+{x}_{9}^{2}{x}_{12}^{2}{x}_{18}^{2}{x}_{20}+{x}_{6}{x}_{9}^{3}{x}_{18}^{2}{x}_{21}\right.\\
&&\hphantom{\big(}-{x}_{5}{x}_{9}^{2}{x}_{10}{x}_{18}^{2}{x}_{21}-{x}_{9}^{2}{x}_{11}{x}_{12}{x}_{18}^{2}{x}_{21}-{x}_{9}^{2}{x}_{12}{x}_{15}{x}_{18}^{2}{x}_{21}-{x}_{9}^{3}{x}_{16}{x}_{18}^{2}{x}_{21}\\
&&\hphantom{\big(}-{x}_{9}{x}_{10}{x}_{13}{x}_{18}^{3}{x}_{21}+{x}_{9}^{2}{x}_{13}{x}_{18}^{2}{x}_{19}{x}_{21}-{x}_{9}^{3}{x}_{12}{x}_{18}{x}_{20}{x}_{21}+{x}_{9}^{3}{x}_{11}{x}_{18}{x}_{21}^{2}+{x}_{9}^{3}{x}_{15}{x}_{18}{x}_{21}^{2}\\
&&\hphantom{\big(}+{x}_{9}{x}_{10}{x}_{12}{x}_{18}^{3}{x}_{22}-{x}_{9}^{2}{x}_{12}{x}_{18}^{2}{x}_{19}{x}_{22}-{x}_{9}^{2}{x}_{12}{x}_{18}{x}_{19}{x}_{21}{x}_{24}+{x}_{9}^{2}{x}_{10}{x}_{18}{x}_{21}^{2}{x}_{24}\\
&&\hphantom{\big(}+{x}_{9}^{2}{x}_{12}{x}_{18}^{2}{x}_{19}{x}_{27}-{x}_{9}^{2}{x}_{10}{x}_{18}^{2}{x}_{21}{x}_{27}+{x}_{6}{x}_{9}^{2}{x}_{10}{x}_{18}^{2}-{x}_{5}{x}_{9}{x}_{10}^{2}{x}_{18}^{2}+2\,{x}_{9}{x}_{10}{x}_{12}{x}_{15}{x}_{18}^{2}\\
&&\hphantom{\big(}-{x}_{9}^{2}{x}_{10}{x}_{16}{x}_{18}^{2}-{x}_{9}^{2}{x}_{12}{x}_{15}{x}_{18}{x}_{19}+{x}_{9}^{2}{x}_{10}{x}_{12}{x}_{18}{x}_{20}-{x}_{9}^{3}{x}_{12}{x}_{19}{x}_{20}-{x}_{9}^{2}{x}_{10}{x}_{11}{x}_{18}{x}_{21}\\
&&\hphantom{\big(}-2\,{x}_{5}{x}_{9}{x}_{12}{x}_{18}^{2}{x}_{21}+{x}_{9}^{3}{x}_{10}{x}_{20}{x}_{21}+2\,{x}_{5}{x}_{9}^{2}{x}_{18}{x}_{21}^{2}+{x}_{9}{x}_{13}{x}_{18}^{2}{x}_{21}^{2}-{x}_{9}{x}_{10}^{2}{x}_{18}^{2}{x}_{22}\\
&&\hphantom{\big(}+{x}_{9}^{2}{x}_{10}{x}_{18}{x}_{19}{x}_{22}-2\,{x}_{9}{x}_{12}{x}_{18}^{2}{x}_{21}{x}_{22}+{x}_{9}^{2}{x}_{18}{x}_{21}^{2}{x}_{22}+{x}_{9}{x}_{10}{x}_{12}{x}_{18}{x}_{19}{x}_{24}\\
&&\hphantom{\big(}-{x}_{9}^{2}{x}_{12}{x}_{19}^{2}{x}_{24}-2\,{x}_{9}{x}_{10}^{2}{x}_{18}{x}_{21}{x}_{24}+{x}_{9}^{2}{x}_{10}{x}_{19}{x}_{21}{x}_{24}-{x}_{9}{x}_{12}{x}_{18}{x}_{21}^{2}{x}_{24}\\
&&\hphantom{\big(}+{x}_{9}^{2}{x}_{12}{x}_{18}{x}_{19}{x}_{25}+{x}_{9}^{2}{x}_{10}{x}_{18}{x}_{21}{x}_{25}
-{x}_{9}^{2}{x}_{10}{x}_{18}{x}_{19}{x}_{27}+{x}_{9}^{2}{x}_{18}{x}_{21}^{2}{x}_{27}-{x}_{9}{x}_{10}^{2}{x}_{11}{x}_{18}\\
&&\hphantom{\big(}-2\,{x}_{9}{x}_{10}^{2}{x}_{15}{x}_{18}+2\,{x}_{6}{x}_{9}{x}_{12}{x}_{18}^{2}-{x}_{5}{x}_{10}{x}_{12}{x}_{18}^{2}+{x}_{5}{x}_{9}{x}_{14}{x}_{18}^{2}-2\,{x}_{9}{x}_{12}{x}_{16}{x}_{18}^{2}\\
&&\hphantom{\big(}+{x}_{9}^{2}{x}_{10}{x}_{11}{x}_{19}+{x}_{9}^{2}{x}_{10}{x}_{15}{x}_{19}+{x}_{12}{x}_{13}{x}_{18}^{2}{x}_{19}-{x}_{9}^{2}{x}_{10}^{2}{x}_{20}-2\,{x}_{9}{x}_{12}^{2}{x}_{18}{x}_{20}\\
&&\hphantom{\big(}-{x}_{6}{x}_{9}^{2}{x}_{18}{x}_{21}+{x}_{9}{x}_{11}{x}_{12}{x}_{18}{x}_{21}-2\,{x}_{9}{x}_{12}{x}_{15}{x}_{18}{x}_{21}+3\,{x}_{9}^{2}{x}_{16}{x}_{18}{x}_{21}\\
&&\hphantom{\big(}-2\,{x}_{9}{x}_{13}{x}_{18}{x}_{19}{x}_{21}-{x}_{10}{x}_{12}{x}_{18}^{2}{x}_{22}+{x}_{9}{x}_{14}{x}_{18}^{2}{x}_{22}+{x}_{9}{x}_{12}{x}_{18}{x}_{19}{x}_{22}\\
&&\hphantom{\big(}+{x}_{9}{x}_{10}{x}_{18}{x}_{21}{x}_{22}-{x}_{9}^{2}{x}_{18}{x}_{22}{x}_{23}+{x}_{12}^{2}{x}_{18}{x}_{19}{x}_{24}-{x}_{10}{x}_{12}{x}_{18}{x}_{21}{x}_{24}\\
&&\hphantom{\big(}+{x}_{9}{x}_{14}{x}_{18}{x}_{21}{x}_{24}-{x}_{9}{x}_{12}{x}_{19}{x}_{21}{x}_{24}+{x}_{9}{x}_{10}{x}_{21}^{2}{x}_{24}-{x}_{9}{x}_{10}^{2}{x}_{18}{x}_{25}+{x}_{9}{x}_{12}{x}_{18}{x}_{21}{x}_{25}\\
&&\hphantom{\big(}-3\,{x}_{9}{x}_{12}{x}_{18}{x}_{19}{x}_{27}+{x}_{9}{x}_{10}{x}_{18}{x}_{21}{x}_{27}+{x}_{9}^{2}{x}_{18}{x}_{23}{x}_{27}-{x}_{9}^{2}{x}_{18}{x}_{21}{x}_{29}-3\,{x}_{6}{x}_{9}{x}_{10}{x}_{18}\\
&&\hphantom{\big(}+{x}_{9}{x}_{11}{x}_{14}{x}_{18}-2\,{x}_{10}{x}_{12}{x}_{15}{x}_{18}+2\,{x}_{9}{x}_{14}{x}_{15}{x}_{18}+{x}_{9}{x}_{10}{x}_{16}{x}_{18}+2\,{x}_{5}{x}_{9}{x}_{10}{x}_{19}\\
&&\hphantom{\big(}+{x}_{9}{x}_{11}{x}_{12}{x}_{19}+{x}_{10}{x}_{13}{x}_{18}{x}_{19}-{x}_{9}{x}_{10}{x}_{12}{x}_{20}+{x}_{9}^{2}{x}_{14}{x}_{20}+{x}_{9}{x}_{10}{x}_{15}{x}_{21}\\
&&\hphantom{\big(}+{x}_{5}{x}_{12}{x}_{18}{x}_{21}-{x}_{9}{x}_{10}{x}_{19}{x}_{22}+{x}_{12}{x}_{18}{x}_{21}{x}_{22}-{x}_{9}{x}_{21}^{2}{x}_{22}-{x}_{9}^{2}{x}_{11}{x}_{23}-{x}_{9}^{2}{x}_{15}{x}_{23}\\
&&\hphantom{\big(}+{x}_{10}{x}_{12}{x}_{19}{x}_{24}+{x}_{9}{x}_{14}{x}_{19}{x}_{24}+{x}_{12}{x}_{21}^{2}{x}_{24}-{x}_{9}{x}_{10}{x}_{23}{x}_{24}-{x}_{9}{x}_{14}{x}_{18}{x}_{25}\\
&&\hphantom{\big(}-{x}_{9}{x}_{12}{x}_{19}{x}_{25}-{x}_{9}{x}_{10}{x}_{21}{x}_{25}+{x}_{9}{x}_{10}{x}_{19}{x}_{27}-{x}_{9}{x}_{21}^{2}{x}_{27}+{x}_{9}{x}_{10}{x}_{18}{x}_{29}+{x}_{10}^{2}{x}_{11}\\
&&\hphantom{\big(}-{x}_{6}{x}_{12}{x}_{18}+{x}_{12}{x}_{16}{x}_{18}+{x}_{5}{x}_{12}{x}_{19}+{x}_{12}^{2}{x}_{20}+{x}_{12}{x}_{15}{x}_{21}-2\,{x}_{9}{x}_{16}{x}_{21}+{x}_{13}{x}_{19}{x}_{21}\\
&&\hphantom{\big(}-{x}_{12}{x}_{19}{x}_{22}-2\,{x}_{5}{x}_{9}{x}_{23}-{x}_{13}{x}_{18}{x}_{23}+{x}_{9}{x}_{22}{x}_{23}-{x}_{12}{x}_{23}{x}_{24}+{x}_{10}^{2}{x}_{25}-{x}_{12}{x}_{21}{x}_{25}\\
&&\hphantom{\big(}\left.+2\,{x}_{12}{x}_{19}{x}_{27}-{x}_{9}{x}_{23}{x}_{27}+{x}_{9}{x}_{21}{x}_{29}+2\,{x}_{6}{x}_{10}-{x}_{11}{x}_{14}+{x}_{14}{x}_{25}-{x}_{10}{x}_{29}
\right)\\
&&\big/(1 - x_{18}x_{9})^2.
\end{eqnarray*}

\item Substitute the above formulae of $x_1,x_2,x_3,x_4,x_8$ into the equations of minimal degree 2. Then make an invertible linear transformation (the  variables that are not displayed remain unchanged):
\begin{eqnarray*}
x_5&\longmapsto & x_5+x_{27},\\ 
x_{15}&\longmapsto & x_{15}+x_{25},\\
x_{16}&\longmapsto& x_6+x_{16},\\
x_{29}&\longmapsto& 2 x_6+x_{16}+x_{29}.
\end{eqnarray*}

\item\label{step2-3} Finally, make an invertible fractional change of variables. We write the formulae as partial fractions to present them in a way more consistent with the Borel case; this does not indicate the way we found them. The  variables that are not displayed remain unchanged.
\begin{eqnarray*}
x_5 &\longmapsto& 
\frac{x_{10} x_{18}^2 x_{24} x_{9}^2}{(x_{18} x_{9}-1)^2}-\frac{x_{10} x_{18}^2 x_{24} x_{9}^2}{(x_{18} x_{9}-1)^3}+\frac{x_{12} x_{18}^2 x_{24} x_{9}}{(x_{18} x_{9}-1)^2}-\frac{x_{13} x_{18}^2}{(x_{18} x_{9}-1)^3}+\frac{x_{15} x_{9}}{(x_{18} x_{9}-1)^3}\\
&&+\frac{x_{18}^2 x_{21} x_{24} x_{9}^3}{(x_{18} x_{9}-1)^3}+\frac{x_{5}}{(x_{18} x_{9}-1)^3 \left(x_{18}^2 x_{9}^2+2 x_{18} x_{9}-1\right)}+\frac{2 x_{19} x_{24} x_{9}^2}{x_{18} x_{9}-1}\\
&&+x_{21} x_{24} x_{9}-2 x_{25} x_{9},\\
x_7& \longmapsto&
-\frac{x_{10} x_{13} x_{24} (2 x_{18} x_{9}-1) x_{18}^4}{(x_{18} x_{9}-1)^5 \left(x_{18}^2 x_{9}^2-3 x_{18} x_{9}+1\right)}+\frac{x_{13} x_{21} x_{24} x_{9} (2 x_{18} x_{9}-1) x_{18}^4}{(x_{18} x_{9}-1)^5 \left(x_{18}^2 x_{9}^2-3 x_{18} x_{9}+1\right)}\\
&&+\frac{x_{10} x_{12} x_{24}^2 \left(x_{18}^3 x_{9}^3+2 x_{18}^2 x_{9}^2-3 x_{18} x_{9}+1\right) x_{18}^3}{(x_{18} x_{9}-1)^4 \left(x_{18}^2 x_{9}^2-3 x_{18} x_{9}+1\right)}-\frac{x_{10} x_{28} x_{18}^3}{(x_{18} x_{9}-1)^2}+\frac{x_{21} x_{28} x_{9} x_{18}^3}{(x_{18} x_{9}-1)^2}\\
&&-\frac{x_{21} x_{24} x_{5} x_{9}^2 (3 x_{18} x_{9}-2) x_{18}^3}{(x_{18} x_{9}-1)^6 \left(x_{18}^2 x_{9}^2-3 x_{18} x_{9}+1\right) \left(x_{18}^2 x_{9}^2+2 x_{18} x_{9}-1\right)}\\
&&+\frac{x_{10} x_{24} x_{5} x_{9} (3 x_{18} x_{9}-2) x_{18}^3}{(x_{18} x_{9}-1)^6 \left(x_{18}^2 x_{9}^2-3 x_{18} x_{9}+1\right) \left(x_{18}^2 x_{9}^2+2 x_{18} x_{9}-1\right)}\\
&&-\frac{x_{21} x_{22} x_{24} x_{9}^2 x_{18}^3}{(x_{18} x_{9}-1) \left(x_{18}^2 x_{9}^2-3 x_{18} x_{9}+1\right)}+\frac{3 x_{21} x_{24} x_{27} x_{9}^2 x_{18}^3}{(x_{18} x_{9}-1) \left(x_{18}^2 x_{9}^2-3 x_{18} x_{9}+1\right)}\\
&&+\frac{x_{10} x_{22} x_{24} x_{9} x_{18}^3}{(x_{18} x_{9}-1) \left(x_{18}^2 x_{9}^2-3 x_{18} x_{9}+1\right)}-\frac{3 x_{10} x_{24} x_{27} x_{9} x_{18}^3}{(x_{18} x_{9}-1) \left(x_{18}^2 x_{9}^2-3 x_{18} x_{9}+1\right)}\\
&&-\frac{x_{12} x_{21} x_{24}^2 x_{9} \left(x_{18}^3 x_{9}^3+2 x_{18}^2 x_{9}^2-3 x_{18} x_{9}+1\right) x_{18}^3}{(x_{18} x_{9}-1)^4 \left(x_{18}^2 x_{9}^2-3 x_{18} x_{9}+1\right)}-\frac{x_{13} x_{15} (2 x_{18} x_{9}-1) x_{18}^3}{(x_{18} x_{9}-1)^6 \left(x_{18}^2 x_{9}^2-3 x_{18} x_{9}+1\right)}\\
&&+\frac{x_{12} x_{19} x_{24}^2 x_{9} \left(5 x_{18}^2 x_{9}^2-4 x_{18} x_{9}+1\right) x_{18}^2}{(x_{18} x_{9}-1)^3 \left(x_{18}^2 x_{9}^2-3 x_{18} x_{9}+1\right)}\\
&&+\frac{x_{12} x_{15} x_{24} \left(x_{18}^3 x_{9}^3+2 x_{18}^2 x_{9}^2-3 x_{18} x_{9}+1\right) x_{18}^2}{(x_{18} x_{9}-1)^5 \left(x_{18}^2 x_{9}^2-3 x_{18} x_{9}+1\right)}+\frac{x_{19} x_{21} x_{24}^2 x_{9}^3 \left(x_{18}^3 x_{9}^3-x_{18}^2 x_{9}^2+1\right) x_{18}^2}{(x_{18} x_{9}-1)^4 \left(x_{18}^2 x_{9}^2-3 x_{18} x_{9}+1\right)}\\
&&+\frac{x_{10}^2 x_{24}^2 \left(x_{18}^5 x_{9}^5-4 x_{18}^4 x_{9}^4+5 x_{18}^3 x_{9}^3-6 x_{18}^2 x_{9}^2+4 x_{18} x_{9}-1\right) x_{18}^2}{(x_{18} x_{9}-1)^5 \left(x_{18}^2 x_{9}^2-3 x_{18} x_{9}+1\right)}-\frac{x_{17} x_{18}^2}{(x_{18} x_{9}-1)^3}\\
&&-\frac{x_{23} x_{24}^2 x_{9}^2 (x_{18} x_{9}-2) x_{18}^2}{(x_{18} x_{9}-1)^6 \left(x_{18}^2 x_{9}^2+2 x_{18} x_{9}-1\right)}+\frac{x_{13} x_{20} x_{9} x_{18}^2}{(x_{18} x_{9}-1)^5 \left(x_{18}^2 x_{9}^2-3 x_{18} x_{9}+1\right) \left(x_{18}^2 x_{9}^2+2 x_{18} x_{9}-1\right)}\\
&&+\frac{x_{10} x_{11} x_{24} x_{9} x_{18}^2}{x_{18}^2 x_{9}^2-3 x_{18} x_{9}+1}-\frac{3 x_{10} x_{24} x_{25} x_{9} x_{18}^2}{x_{18}^2 x_{9}^2-3 x_{18} x_{9}+1}+\frac{x_{15} x_{22} x_{9} x_{18}^2}{(x_{18} x_{9}-1)^2 \left(x_{18}^2 x_{9}^2-3 x_{18} x_{9}+1\right)}\\
&&-\frac{3 x_{15} x_{27} x_{9} x_{18}^2}{(x_{18} x_{9}-1)^2 \left(x_{18}^2 x_{9}^2-3 x_{18} x_{9}+1\right)}-\frac{x_{13} x_{19} x_{24} \left(x_{18}^2 x_{9}^2+2 x_{18} x_{9}-1\right) x_{18}^2}{(x_{18} x_{9}-1)^4 \left(x_{18}^2 x_{9}^2-3 x_{18} x_{9}+1\right)}\\
&&+x_{16} x_{24} (x_{18} x_{9}+1) x_{18}+\frac{x_{12} x_{26} \left(x_{18}^3 x_{9}^3-x_{18}^2 x_{9}^2+1\right) x_{18}}{(x_{18} x_{9}-1)^2}\\
&&+\frac{x_{10} x_{15} x_{24} \left(3 x_{18}^4 x_{9}^4-7 x_{18}^3 x_{9}^3+2 x_{18}^2 x_{9}^2+2 x_{18} x_{9}-1\right) x_{18}}{(x_{18} x_{9}-1)^6 \left(x_{18}^2 x_{9}^2-3 x_{18} x_{9}+1\right)}\\
&&+\frac{x_{10} x_{21} x_{24}^2 \left(11 x_{18}^4 x_{9}^4-19 x_{18}^3 x_{9}^3+15 x_{18}^2 x_{9}^2-6 x_{18} x_{9}+1\right) x_{18}}{(x_{18} x_{9}-1)^5 \left(x_{18}^2 x_{9}^2-3 x_{18} x_{9}+1\right)}\\
&&-\frac{x_{19} x_{28} \left(x_{18}^2 x_{9}^2+1\right) x_{18}}{x_{18} x_{9}-1}+\frac{x_{24} x_{29} x_{18}}{(x_{18} x_{9}-1)^6 \left(x_{18}^2 x_{9}^2+2 x_{18} x_{9}-1\right)}\\
&&-\frac{x_{15} x_{5} \left(x_{18}^3 x_{9}^3-7 x_{18}^2 x_{9}^2+6 x_{18} x_{9}-1\right) x_{18}}{(x_{18} x_{9}-1)^7 \left(x_{18}^2 x_{9}^2-3 x_{18} x_{9}+1\right) \left(x_{18}^2 x_{9}^2+2 x_{18} x_{9}-1\right)}+3 x_{25}^2-2 x_{11} x_{25}\\
&&+\frac{x_{21} x_{26} x_{9} \left(x_{18}^3 x_{9}^3+x_{18}^2 x_{9}^2-2 x_{18} x_{9}+1\right)}{(x_{18} x_{9}-1)^3}+\frac{x_{15}^2 \left(x_{18}^3 x_{9}^3+2 x_{18}^2 x_{9}^2-3 x_{18} x_{9}+1\right)}{(x_{18} x_{9}-1)^6 \left(x_{18}^2 x_{9}^2-3 x_{18} x_{9}+1\right)}\\
&&+\frac{x_{19}^2 x_{24}^2 x_{9}^2 \left(3 x_{18}^3 x_{9}^3+3 x_{18}^2 x_{9}^2-5 x_{18} x_{9}+1\right)}{(x_{18} x_{9}-1)^3 \left(x_{18}^2 x_{9}^2-3 x_{18} x_{9}+1\right)}+\frac{x_{15} x_{19} x_{24} x_{9} \left(11 x_{18}^3 x_{9}^3-17 x_{18}^2 x_{9}^2+9 x_{18} x_{9}-2\right)}{(x_{18} x_{9}-1)^5 \left(x_{18}^2 x_{9}^2-3 x_{18} x_{9}+1\right)}\\
&&+\frac{x_{10} x_{19} x_{24}^2 \left(3 x_{18}^5 x_{9}^5-7 x_{18}^4 x_{9}^4+9 x_{18}^3 x_{9}^3-10 x_{18}^2 x_{9}^2+5 x_{18} x_{9}-1\right)}{(x_{18} x_{9}-1)^4 \left(x_{18}^2 x_{9}^2-3 x_{18} x_{9}+1\right)}\\
&&+\frac{x_{15} x_{21} x_{24} \left(3 x_{18}^5 x_{9}^5-10 x_{18}^4 x_{9}^4+19 x_{18}^3 x_{9}^3-17 x_{18}^2 x_{9}^2+7 x_{18} x_{9}-1\right)}{(x_{18} x_{9}-1)^6 \left(x_{18}^2 x_{9}^2-3 x_{18} x_{9}+1\right)}\\
&&-\frac{x_{10} x_{26} \left(2 x_{18}^2 x_{9}^2-2 x_{18} x_{9}+1\right)}{(x_{18} x_{9}-1)^3}-\frac{x_{7}}{(x_{18} x_{9}-1)^3 \left(x_{18}^2 x_{9}^2+2 x_{18} x_{9}-1\right)}\\
&&-\frac{x_{20} x_{22} x_{9}}{(x_{18} x_{9}-1)^2 \left(x_{18}^2 x_{9}^2-3 x_{18} x_{9}+1\right) \left(x_{18}^2 x_{9}^2+2 x_{18} x_{9}-1\right)}\\
&&+\frac{3 x_{20} x_{27} x_{9}}{(x_{18} x_{9}-1)^2 \left(x_{18}^2 x_{9}^2-3 x_{18} x_{9}+1\right) \left(x_{18}^2 x_{9}^2+2 x_{18} x_{9}-1\right)}\\
&&-\frac{x_{11} x_{20} x_{9}^2}{(x_{18} x_{9}-1)^3 \left(x_{18}^2 x_{9}^2-3 x_{18} x_{9}+1\right) \left(x_{18}^2 x_{9}^2+2 x_{18} x_{9}-1\right)}\\
&&+\frac{3 x_{20} x_{25} x_{9}^2}{(x_{18} x_{9}-1)^3 \left(x_{18}^2 x_{9}^2-3 x_{18} x_{9}+1\right) \left(x_{18}^2 x_{9}^2+2 x_{18} x_{9}-1\right)}\\
&&-\frac{x_{20} x_{21} x_{24} x_{9}^2 (3 x_{18} x_{9}-1)}{(x_{18} x_{9}-1)^4 \left(x_{18}^2 x_{9}^2-3 x_{18} x_{9}+1\right) \left(x_{18}^2 x_{9}^2+2 x_{18} x_{9}-1\right)}\\
&&-\frac{x_{19} x_{20} x_{24} x_{9}^3 (8 x_{18} x_{9}-5)}{(x_{18} x_{9}-1)^5 \left(x_{18}^2 x_{9}^2-3 x_{18} x_{9}+1\right) \left(x_{18}^2 x_{9}^2+2 x_{18} x_{9}-1\right)}\\
&&-\frac{x_{10} x_{20} x_{24} x_{9} \left(2 x_{18}^2 x_{9}^2-2 x_{18} x_{9}+1\right)}{(x_{18} x_{9}-1)^5 \left(x_{18}^2 x_{9}^2-3 x_{18} x_{9}+1\right) \left(x_{18}^2 x_{9}^2+2 x_{18} x_{9}-1\right)}\\
&&-\frac{x_{12} x_{20} x_{24} \left(x_{18}^5 x_{9}^5-4 x_{18}^4 x_{9}^4+8 x_{18}^3 x_{9}^3-7 x_{18}^2 x_{9}^2+4 x_{18} x_{9}-1\right)}{(x_{18} x_{9}-1)^6 \left(x_{18}^2 x_{9}^2-3 x_{18} x_{9}+1\right) \left(x_{18}^2 x_{9}^2+2 x_{18} x_{9}-1\right)}\\
&&-\frac{x_{19} x_{24} x_{5} \left(x_{18}^5 x_{9}^5-6 x_{18}^4 x_{9}^4+3 x_{18}^3 x_{9}^3+8 x_{18}^2 x_{9}^2-6 x_{18} x_{9}+1\right)}{(x_{18} x_{9}-1)^6 \left(x_{18}^2 x_{9}^2-3 x_{18} x_{9}+1\right) \left(x_{18}^2 x_{9}^2+2 x_{18} x_{9}-1\right)}\\
&&-\frac{x_{15} x_{20} x_{9}^2 \left(2 x_{18}^2 x_{9}^2-1\right)}{(x_{18} x_{9}-1)^7 \left(x_{18}^2 x_{9}^2-3 x_{18} x_{9}+1\right) \left(x_{18}^2 x_{9}^2+2 x_{18} x_{9}-1\right)}\\
&&+\frac{x_{20}^2 x_{9}^4}{(x_{18} x_{9}-1)^7 \left(x_{18}^2 x_{9}^2-3 x_{18} x_{9}+1\right) \left(x_{18}^2 x_{9}^2+2 x_{18} x_{9}-1\right)^2}\\
&&-\frac{x_{20} x_{5} x_{9} (3 x_{18} x_{9}-2)}{(x_{18} x_{9}-1)^7 \left(x_{18}^2 x_{9}^2-3 x_{18} x_{9}+1\right) \left(x_{18}^2 x_{9}^2+2 x_{18} x_{9}-1\right)^2}+\frac{x_{11} x_{19} x_{24} x_{9} (2 x_{18} x_{9}-1)}{x_{18}^2 x_{9}^2-3 x_{18} x_{9}+1}\\
&&-\frac{3 x_{19} x_{24} x_{25} x_{9} (2 x_{18} x_{9}-1)}{x_{18}^2 x_{9}^2-3 x_{18} x_{9}+1}-\frac{x_{11} x_{21} x_{24} (3 x_{18} x_{9}-1)}{x_{18}^2 x_{9}^2-3 x_{18} x_{9}+1}+\frac{x_{19} x_{22} x_{24} (3 x_{18} x_{9}-1)}{x_{18}^2 x_{9}^2-3 x_{18} x_{9}+1}\\
&&+\frac{3 x_{21} x_{24} x_{25} (3 x_{18} x_{9}-1)}{x_{18}^2 x_{9}^2-3 x_{18} x_{9}+1}-\frac{3 x_{19} x_{24} x_{27} (3 x_{18} x_{9}-1)}{x_{18}^2 x_{9}^2-3 x_{18} x_{9}+1}+\frac{x_{11} x_{15} (2 x_{18} x_{9}-1)}{(x_{18} x_{9}-1)^2 \left(x_{18}^2 x_{9}^2-3 x_{18} x_{9}+1\right)}\\
&&-\frac{3 x_{15} x_{25} (2 x_{18} x_{9}-1)}{(x_{18} x_{9}-1)^2 \left(x_{18}^2 x_{9}^2-3 x_{18} x_{9}+1\right)}\\
&&-\frac{x_{21}^2 x_{24}^2 (2 x_{18} x_{9}-1) \left(2 x_{18}^5 x_{9}^5-4 x_{18}^4 x_{9}^4+8 x_{18}^3 x_{9}^3-9 x_{18}^2 x_{9}^2+5 x_{18} x_{9}-1\right)}{(x_{18} x_{9}-1)^5 \left(x_{18}^2 x_{9}^2-3 x_{18} x_{9}+1\right)},\\
  x_{12} &\longmapsto&
  \frac{x_{10} x_{9}}{1-x_{18} x_{9}}+x_{12}-\frac{x_{21} x_{9}^2}{1-x_{18} x_{9}},\\
x_{13} &\longmapsto&
\frac{x_{10} x_{18}^2 x_{24} x_{9}^4 (x_{18} x_{9}-3)}{(x_{18} x_{9}-1)^3 \left(x_{18}^2 x_{9}^2-3 x_{18} x_{9}+1\right)}+\frac{x_{11} x_{9}^3}{x_{18}^2 x_{9}^2-3 x_{18} x_{9}+1}\\
&&+\frac{x_{12} x_{18}^2 x_{24} x_{9}^3 (x_{18} x_{9}+1)}{(x_{18} x_{9}-1)^2 \left(x_{18}^2 x_{9}^2-3 x_{18} x_{9}+1\right)}-\frac{x_{13} \left(x_{18}^2 x_{9}^2+2 x_{18} x_{9}-1\right)}{(x_{18} x_{9}-1)^3 \left(x_{18}^2 x_{9}^2-3 x_{18} x_{9}+1\right)}\\
&&+\frac{x_{15} x_{9}^3 (x_{18} x_{9}+1)}{(x_{18} x_{9}-1)^3 \left(x_{18}^2 x_{9}^2-3 x_{18} x_{9}+1\right)}+\frac{x_{19} x_{24} x_{9}^4 (5 x_{18} x_{9}-4)}{(x_{18} x_{9}-1)^2 \left(x_{18}^2 x_{9}^2-3 x_{18} x_{9}+1\right)}\\
&&-\frac{x_{20} x_{9}^5}{(x_{18} x_{9}-1)^4 \left(x_{18}^2 x_{9}^2-3 x_{18} x_{9}+1\right) \left(x_{18}^2 x_{9}^2+2 x_{18} x_{9}-1\right)}\\
&&+\frac{x_{21} x_{24} x_{9}^3 (3 x_{18} x_{9}-1) \left(x_{18}^2 x_{9}^2-x_{18} x_{9}+1\right)}{(x_{18} x_{9}-1)^3 \left(x_{18}^2 x_{9}^2-3 x_{18} x_{9}+1\right)}+\frac{x_{22} x_{9}^2 (x_{18} x_{9}-1)}{x_{18}^2 x_{9}^2-3 x_{18} x_{9}+1}\\
&&-\frac{3 x_{25} x_{9}^3}{x_{18}^2 x_{9}^2-3 x_{18} x_{9}+1}-\frac{3 x_{27} x_{9}^2 (x_{18} x_{9}-1)}{x_{18}^2 x_{9}^2-3 x_{18} x_{9}+1}\\
&&+\frac{3 x_{5} x_{9}^2}{(x_{18} x_{9}-1)^3 \left(x_{18}^2 x_{9}^2-3 x_{18} x_{9}+1\right) \left(x_{18}^2 x_{9}^2+2 x_{18} x_{9}-1\right)}\\
&&+\frac{x_{5} x_{9}^2}{(x_{18} x_{9}-1)^4 \left(x_{18}^2 x_{9}^2-3 x_{18} x_{9}+1\right) \left(x_{18}^2 x_{9}^2+2 x_{18} x_{9}-1\right)},\\
x_{14} &\longmapsto&
-\frac{2 x_{10}^2 x_{18} x_{9}}{(x_{18} x_{9}-1)^2}+\frac{x_{10}^2}{(x_{18} x_{9}-1)^2}+\frac{x_{10} x_{12} x_{18}^2 x_{9}}{x_{18} x_{9}-1}-\frac{x_{10} x_{19} x_{9}^2}{(x_{18} x_{9}-1)^2}+\frac{2 x_{10} x_{18} x_{21} x_{9}^2}{(x_{18} x_{9}-1)^2}\\
&&+\frac{x_{12} x_{19} x_{9}}{x_{18} x_{9}-1}-\frac{x_{12} x_{21}}{x_{18} x_{9}-1}+\frac{x_{14}}{(x_{18} x_{9}-1)^2}+\frac{x_{18} x_{19} x_{21} x_{9}^4}{(x_{18} x_{9}-1)^2}-\frac{x_{21}^2 x_{9}^2}{(x_{18} x_{9}-1)^2},\\
x_{15} &\longmapsto&
-\frac{x_{10} x_{18}^2 x_{24} x_{9}}{(x_{18} x_{9}-1)^2}+\frac{x_{12} x_{18}^2 x_{24}}{x_{18} x_{9}-1}+\frac{x_{15}}{(x_{18} x_{9}-1)^2}-\frac{x_{20} x_{9}^2}{(x_{18} x_{9}-1)^4 \left(x_{18}^2 x_{9}^2+2 x_{18} x_{9}-1\right)}\\
&&+\frac{x_{21} x_{24} \left(x_{18}^2 x_{9}^2+x_{18} x_{9}-1\right)}{(x_{18} x_{9}-1)^2}+\frac{x_{18} x_{5}}{(x_{18} x_{9}-1)^4 \left(x_{18}^2 x_{9}^2+2 x_{18} x_{9}-1\right)}+\frac{x_{18} x_{19} x_{24} x_{9}^2}{(x_{18} x_{9}-1)^2}\\
&&-2 x_{18} x_{27},\\
x_{16} &\longmapsto&
-\frac{x_{10}^2 x_{18}^2 x_{24} x_{9} \left(x_{18}^2 x_{9}^2-3 x_{18} x_{9}+1\right)}{(x_{18} x_{9}-1)^4}-\frac{x_{12} x_{18}^2 x_{24} \left(2 x_{10} x_{18} x_{9}-x_{10}-x_{18} x_{21} x_{9}^2\right)}{(x_{18} x_{9}-1)^3}\\
&&+\frac{x_{10} x_{13} x_{18}^3}{(x_{18} x_{9}-1)^4}-\frac{x_{15} \left(2 x_{10} x_{18} x_{9}-x_{10}-x_{18} x_{21} x_{9}^2\right)}{(x_{18} x_{9}-1)^4}\\
&&-\frac{x_{19} x_{24} x_{9}^2 \left(2 x_{10} x_{18}^2 x_{9}-x_{10} x_{18}-3 x_{18} x_{21} x_{9}+2 x_{21}\right)}{(x_{18} x_{9}-1)^3}\\
&&+\frac{x_{20} \left(x_{10} x_{9}^2-x_{21} x_{9}^3\right)}{(x_{18} x_{9}-1)^5 \left(x_{18}^2 x_{9}^2+2 x_{18} x_{9}-1\right)}+\frac{x_{5} \left(-x_{10} x_{18}^2 x_{9}+2 x_{18} x_{21} x_{9}-x_{21}\right)}{(x_{18} x_{9}-1)^5 \left(x_{18}^2 x_{9}^2+2 x_{18} x_{9}-1\right)}\\
&&-\frac{x_{10} x_{21} x_{24} \left(x_{18}^4 x_{9}^4-x_{18}^3 x_{9}^3+4 x_{18}^2 x_{9}^2-3 x_{18} x_{9}+1\right)}{(x_{18} x_{9}-1)^4}+2 x_{10} x_{25}-\frac{x_{13} x_{18}^2 x_{21}}{(x_{18} x_{9}-1)^4}+x_{16}\\
&&+\frac{x_{18} x_{21}^2 x_{24} x_{9}^2 \left(2 x_{18}^2 x_{9}^2-2 x_{18} x_{9}+1\right)}{(x_{18} x_{9}-1)^4}-2 x_{21} x_{27},\\
x_{17}& \longmapsto&
-\frac{x_{19} x_{20} x_{24} (2 x_{18} x_{9}-1) x_{9}^5}{(x_{18} x_{9}-1)^6 \left(x_{18}^2 x_{9}^2-3 x_{18} x_{9}+1\right) \left(x_{18}^2 x_{9}^2+2 x_{18} x_{9}-1\right)}\\
&&+\frac{x_{19}^2 x_{24}^2 \left(2 x_{18}^3 x_{9}^3+2 x_{18}^2 x_{9}^2-5 x_{18} x_{9}+2\right) x_{9}^4}{(x_{18} x_{9}-1)^4 \left(x_{18}^2 x_{9}^2-3 x_{18} x_{9}+1\right)}-\frac{x_{19} x_{26} x_{9}^4}{(x_{18} x_{9}-1)^2}\\
&&-\frac{x_{20} x_{21} x_{24} (x_{18} x_{9}+2) (2 x_{18} x_{9}-1) x_{9}^4}{(x_{18} x_{9}-1)^6 \left(x_{18}^2 x_{9}^2-3 x_{18} x_{9}+1\right) \left(x_{18}^2 x_{9}^2+2 x_{18} x_{9}-1\right)}\\
&&+\frac{x_{11} x_{18} x_{19} x_{24} x_{9}^4}{(x_{18} x_{9}-1) \left(x_{18}^2 x_{9}^2-3 x_{18} x_{9}+1\right)}-\frac{3 x_{18} x_{19} x_{24} x_{25} x_{9}^4}{(x_{18} x_{9}-1) \left(x_{18}^2 x_{9}^2-3 x_{18} x_{9}+1\right)}\\
&&+\frac{x_{15} x_{18} x_{19} x_{24} (3 x_{18} x_{9}-2) x_{9}^4}{(x_{18} x_{9}-1)^5 \left(x_{18}^2 x_{9}^2-3 x_{18} x_{9}+1\right)}\\
&&+\frac{x_{19} x_{21} x_{24}^2 \left(3 x_{18}^5 x_{9}^5+6 x_{18}^4 x_{9}^4-13 x_{18}^3 x_{9}^3+x_{18}^2 x_{9}^2+7 x_{18} x_{9}-3\right) x_{9}^3}{(x_{18} x_{9}-1)^5 \left(x_{18}^2 x_{9}^2-3 x_{18} x_{9}+1\right)}\\
&&-\frac{x_{21} x_{26} (x_{18} x_{9}+2) x_{9}^3}{(x_{18} x_{9}-1)^2}+\frac{x_{18} x_{23} x_{24}^2 (x_{18} x_{9}-2) x_{9}^3}{(x_{18} x_{9}-1)^6 \left(x_{18}^2 x_{9}^2+2 x_{18} x_{9}-1\right)}\\
&&+\frac{3 x_{10} x_{20} x_{24} (2 x_{18} x_{9}-1) x_{9}^3}{(x_{18} x_{9}-1)^6 \left(x_{18}^2 x_{9}^2-3 x_{18} x_{9}+1\right) \left(x_{18}^2 x_{9}^2+2 x_{18} x_{9}-1\right)}\\
&&-\frac{x_{20} x_{5} (2 x_{18} x_{9}-1) x_{9}^3}{(x_{18} x_{9}-1)^8 \left(x_{18}^2 x_{9}^2-3 x_{18} x_{9}+1\right) \left(x_{18}^2 x_{9}^2+2 x_{18} x_{9}-1\right)^2}\\
&&+\frac{x_{11} x_{18} x_{21} x_{24} (x_{18} x_{9}+2) x_{9}^3}{(x_{18} x_{9}-1) \left(x_{18}^2 x_{9}^2-3 x_{18} x_{9}+1\right)}-\frac{3 x_{18} x_{21} x_{24} x_{25} (x_{18} x_{9}+2) x_{9}^3}{(x_{18} x_{9}-1) \left(x_{18}^2 x_{9}^2-3 x_{18} x_{9}+1\right)}\\
&&+\frac{x_{15} x_{18} x_{21} x_{24} (x_{18} x_{9}+2) (3 x_{18} x_{9}-2) x_{9}^3}{(x_{18} x_{9}-1)^5 \left(x_{18}^2 x_{9}^2-3 x_{18} x_{9}+1\right)}-x_{16} x_{18} x_{24} x_{9}^2+x_{18} x_{19} x_{28} x_{9}^2\\
&&+\frac{x_{18} x_{21} x_{22} x_{24} \left(3 x_{18}^2 x_{9}^2-4 x_{18} x_{9}+2\right) x_{9}^2}{(x_{18} x_{9}-1)^2 \left(x_{18}^2 x_{9}^2-3 x_{18} x_{9}+1\right)}\\
&&+\frac{x_{19} x_{24} x_{5} \left(x_{18}^4 x_{9}^4-2 x_{18}^3 x_{9}^3+11 x_{18}^2 x_{9}^2-10 x_{18} x_{9}+3\right) x_{9}^2}{(x_{18} x_{9}-1)^6 \left(x_{18}^2 x_{9}^2-3 x_{18} x_{9}+1\right) \left(x_{18}^2 x_{9}^2+2 x_{18} x_{9}-1\right)}\\
&&+\frac{x_{12} x_{20} x_{24} \left(x_{18}^4 x_{9}^4-6 x_{18}^3 x_{9}^3+4 x_{18}^2 x_{9}^2+4 x_{18} x_{9}-2\right) x_{9}^2}{(x_{18} x_{9}-1)^6 \left(x_{18}^2 x_{9}^2-3 x_{18} x_{9}+1\right) \left(x_{18}^2 x_{9}^2+2 x_{18} x_{9}-1\right)}\\
&&+\frac{x_{10} x_{19} x_{24}^2 \left(2 x_{18}^5 x_{9}^5-16 x_{18}^4 x_{9}^4+18 x_{18}^3 x_{9}^3+x_{18}^2 x_{9}^2-10 x_{18} x_{9}+4\right) x_{9}^2}{(x_{18} x_{9}-1)^5 \left(x_{18}^2 x_{9}^2-3 x_{18} x_{9}+1\right)}\\
&&+\frac{x_{21}^2 x_{24}^2 \left(x_{18}^6 x_{9}^6+5 x_{18}^5 x_{9}^5-9 x_{18}^4 x_{9}^4+15 x_{18}^3 x_{9}^3-14 x_{18}^2 x_{9}^2+6 x_{18} x_{9}-1\right) x_{9}^2}{(x_{18} x_{9}-1)^5 \left(x_{18}^2 x_{9}^2-3 x_{18} x_{9}+1\right)}\\
&&+\frac{3 x_{10} x_{26} x_{9}^2}{(x_{18} x_{9}-1)^2}-\frac{x_{18}^2 x_{21} x_{28} x_{9}^2}{(x_{18} x_{9}-1)^2}+\frac{x_{7} x_{9}^2}{(x_{18} x_{9}-1)^4 \left(x_{18}^2 x_{9}^2+2 x_{18} x_{9}-1\right)}\\
&&+\frac{x_{11} x_{18} x_{5} x_{9}^2}{(x_{18} x_{9}-1)^3 \left(x_{18}^2 x_{9}^2-3 x_{18} x_{9}+1\right) \left(x_{18}^2 x_{9}^2+2 x_{18} x_{9}-1\right)}\\
&&-\frac{3 x_{18} x_{25} x_{5} x_{9}^2}{(x_{18} x_{9}-1)^3 \left(x_{18}^2 x_{9}^2-3 x_{18} x_{9}+1\right) \left(x_{18}^2 x_{9}^2+2 x_{18} x_{9}-1\right)}\\
&&-\frac{3 x_{10} x_{11} x_{18} x_{24} x_{9}^2}{(x_{18} x_{9}-1) \left(x_{18}^2 x_{9}^2-3 x_{18} x_{9}+1\right)}+\frac{9 x_{10} x_{18} x_{24} x_{25} x_{9}^2}{(x_{18} x_{9}-1) \left(x_{18}^2 x_{9}^2-3 x_{18} x_{9}+1\right)}\\
&&+\frac{x_{19} x_{22} x_{24} (2 x_{18} x_{9}-1) x_{9}^2}{(x_{18} x_{9}-1) \left(x_{18}^2 x_{9}^2-3 x_{18} x_{9}+1\right)}-\frac{3 x_{19} x_{24} x_{27} (2 x_{18} x_{9}-1) x_{9}^2}{(x_{18} x_{9}-1) \left(x_{18}^2 x_{9}^2-3 x_{18} x_{9}+1\right)}\\
&&-\frac{3 x_{18} x_{21} x_{24} x_{27} \left(3 x_{18}^2 x_{9}^2-4 x_{18} x_{9}+2\right) x_{9}^2}{(x_{18} x_{9}-1)^2 \left(x_{18}^2 x_{9}^2-3 x_{18} x_{9}+1\right)}-\frac{3 x_{10} x_{15} x_{18} x_{24} (3 x_{18} x_{9}-2) x_{9}^2}{(x_{18} x_{9}-1)^5 \left(x_{18}^2 x_{9}^2-3 x_{18} x_{9}+1\right)}\\
&&+\frac{x_{12}^2 x_{18}^3 x_{24}^2 \left(2 x_{18}^2 x_{9}^2+4 x_{18} x_{9}-3\right) x_{9}}{(x_{18} x_{9}-1)^3 \left(x_{18}^2 x_{9}^2-3 x_{18} x_{9}+1\right)}+\frac{3 x_{10} x_{18} x_{24} x_{27} \left(2 x_{18}^2 x_{9}^2-3 x_{18} x_{9}+2\right) x_{9}}{(x_{18} x_{9}-1)^2 \left(x_{18}^2 x_{9}^2-3 x_{18} x_{9}+1\right)}\\
&&+\frac{x_{15} x_{5} \left(x_{18}^3 x_{9}^3-x_{18}^2 x_{9}^2+2 x_{18} x_{9}-1\right) x_{9}}{(x_{18} x_{9}-1)^7 \left(x_{18}^2 x_{9}^2-3 x_{18} x_{9}+1\right) \left(x_{18}^2 x_{9}^2+2 x_{18} x_{9}-1\right)}\\
&&+\frac{x_{12} x_{19} x_{24}^2 \left(15 x_{18}^4 x_{9}^4-26 x_{18}^3 x_{9}^3+14 x_{18}^2 x_{9}^2-x_{18} x_{9}-1\right) x_{9}}{(x_{18} x_{9}-1)^4 \left(x_{18}^2 x_{9}^2-3 x_{18} x_{9}+1\right)}\\
&&+\frac{x_{21} x_{24} x_{5} \left(2 x_{18}^5 x_{9}^5+5 x_{18}^4 x_{9}^4-9 x_{18}^3 x_{9}^3+2 x_{18}^2 x_{9}^2+2 x_{18} x_{9}-1\right) x_{9}}{(x_{18} x_{9}-1)^7 \left(x_{18}^2 x_{9}^2-3 x_{18} x_{9}+1\right) \left(x_{18}^2 x_{9}^2+2 x_{18} x_{9}-1\right)}\\
&&+\frac{x_{10} x_{21} x_{24}^2 \left(2 x_{18}^6 x_{9}^6-12 x_{18}^5 x_{9}^5+14 x_{18}^4 x_{9}^4-27 x_{18}^3 x_{9}^3+26 x_{18}^2 x_{9}^2-11 x_{18} x_{9}+2\right) x_{9}}{(x_{18} x_{9}-1)^5 \left(x_{18}^2 x_{9}^2-3 x_{18} x_{9}+1\right)}\\
&&-\frac{x_{12} x_{26} \left(x_{18}^2 x_{9}^2-x_{18} x_{9}+3\right) x_{9}}{x_{18} x_{9}-1}-\frac{x_{24} x_{29} x_{9}}{(x_{18} x_{9}-1)^6 \left(x_{18}^2 x_{9}^2+2 x_{18} x_{9}-1\right)}\\
&&-\frac{x_{13} x_{20} \left(x_{18}^2 x_{9}^2-5 x_{18} x_{9}+2\right) x_{9}}{(x_{18} x_{9}-1)^6 \left(x_{18}^2 x_{9}^2-3 x_{18} x_{9}+1\right) \left(x_{18}^2 x_{9}^2+2 x_{18} x_{9}-1\right)}\\
&&-\frac{x_{10} x_{18} x_{22} x_{24} \left(2 x_{18}^2 x_{9}^2-3 x_{18} x_{9}+2\right) x_{9}}{(x_{18} x_{9}-1)^2 \left(x_{18}^2 x_{9}^2-3 x_{18} x_{9}+1\right)}-\frac{2 x_{10} x_{13} x_{18}^3 x_{24} \left(x_{18}^2 x_{9}^2-5 x_{18} x_{9}+2\right) x_{9}}{(x_{18} x_{9}-1)^5 \left(x_{18}^2 x_{9}^2-3 x_{18} x_{9}+1\right)}\\
&&+3 x_{27}^2-2 x_{22} x_{27}+\frac{x_{10} x_{18} x_{28} (2 x_{18} x_{9}-1)}{(x_{18} x_{9}-1)^2}+\frac{x_{12} x_{18} x_{22} x_{24} \left(2 x_{18}^2 x_{9}^2-2 x_{18} x_{9}+1\right)}{(x_{18} x_{9}-1) \left(x_{18}^2 x_{9}^2-3 x_{18} x_{9}+1\right)}\\
&&+\frac{x_{5}^2 \left(x_{18}^3 x_{9}^3+2 x_{18}^2 x_{9}^2-3 x_{18} x_{9}+1\right)}{(x_{18} x_{9}-1)^8 \left(x_{18}^2 x_{9}^2-3 x_{18} x_{9}+1\right) \left(x_{18}^2 x_{9}^2+2 x_{18} x_{9}-1\right)^2}\\
&&+\frac{x_{12} x_{15} x_{24} \left(x_{18}^3 x_{9}^3+8 x_{18}^2 x_{9}^2-7 x_{18} x_{9}+1\right)}{(x_{18} x_{9}-1)^4 \left(x_{18}^2 x_{9}^2-3 x_{18} x_{9}+1\right)}\\
&&+\frac{x_{12} x_{18} x_{24} x_{5} \left(10 x_{18}^3 x_{9}^3-15 x_{18}^2 x_{9}^2+8 x_{18} x_{9}-2\right)}{(x_{18} x_{9}-1)^6 \left(x_{18}^2 x_{9}^2-3 x_{18} x_{9}+1\right) \left(x_{18}^2 x_{9}^2+2 x_{18} x_{9}-1\right)}\\
&&+\frac{x_{10} x_{12} x_{18} x_{24}^2 \left(4 x_{18}^4 x_{9}^4-25 x_{18}^3 x_{9}^3+21 x_{18}^2 x_{9}^2-8 x_{18} x_{9}+2\right)}{(x_{18} x_{9}-1)^4 \left(x_{18}^2 x_{9}^2-3 x_{18} x_{9}+1\right)}\\
&&+\frac{x_{10} x_{24} x_{5} \left(2 x_{18}^5 x_{9}^5-13 x_{18}^4 x_{9}^4+15 x_{18}^3 x_{9}^3-5 x_{18}^2 x_{9}^2-x_{18} x_{9}+1\right)}{(x_{18} x_{9}-1)^7 \left(x_{18}^2 x_{9}^2-3 x_{18} x_{9}+1\right) \left(x_{18}^2 x_{9}^2+2 x_{18} x_{9}-1\right)}\\
&&+\frac{x_{12} x_{21} x_{24}^2 \left(12 x_{18}^5 x_{9}^5-17 x_{18}^4 x_{9}^4+29 x_{18}^3 x_{9}^3-28 x_{18}^2 x_{9}^2+12 x_{18} x_{9}-2\right)}{(x_{18} x_{9}-1)^4 \left(x_{18}^2 x_{9}^2-3 x_{18} x_{9}+1\right)}\\
&&-\frac{x_{12} x_{18}^2 x_{28}}{x_{18} x_{9}-1}+\frac{x_{17}}{(x_{18} x_{9}-1)^2}+\frac{x_{22} x_{5} (2 x_{18} x_{9}-1)}{(x_{18} x_{9}-1)^3 \left(x_{18}^2 x_{9}^2-3 x_{18} x_{9}+1\right) \left(x_{18}^2 x_{9}^2+2 x_{18} x_{9}-1\right)}\\
&&-\frac{3 x_{27} x_{5} (2 x_{18} x_{9}-1)}{(x_{18} x_{9}-1)^3 \left(x_{18}^2 x_{9}^2-3 x_{18} x_{9}+1\right) \left(x_{18}^2 x_{9}^2+2 x_{18} x_{9}-1\right)}\\
&&-\frac{x_{13} x_{18}^2 x_{5} \left(2 x_{18}^2 x_{9}^2-1\right)}{(x_{18} x_{9}-1)^7 \left(x_{18}^2 x_{9}^2-3 x_{18} x_{9}+1\right) \left(x_{18}^2 x_{9}^2+2 x_{18} x_{9}-1\right)}+\frac{x_{11} x_{12} x_{24} (4 x_{18} x_{9}-1)}{x_{18}^2 x_{9}^2-3 x_{18} x_{9}+1}\\
&&-\frac{3 x_{12} x_{24} x_{25} (4 x_{18} x_{9}-1)}{x_{18}^2 x_{9}^2-3 x_{18} x_{9}+1}-\frac{x_{11} x_{13} x_{18}}{(x_{18} x_{9}-1) \left(x_{18}^2 x_{9}^2-3 x_{18} x_{9}+1\right)}\\
&&+\frac{3 x_{13} x_{18} x_{25}}{(x_{18} x_{9}-1) \left(x_{18}^2 x_{9}^2-3 x_{18} x_{9}+1\right)}-\frac{3 x_{12} x_{18} x_{24} x_{27} \left(2 x_{18}^2 x_{9}^2-2 x_{18} x_{9}+1\right)}{(x_{18} x_{9}-1) \left(x_{18}^2 x_{9}^2-3 x_{18} x_{9}+1\right)}\\
&&-\frac{x_{13} x_{18}^2 x_{22}}{(x_{18} x_{9}-1)^2 \left(x_{18}^2 x_{9}^2-3 x_{18} x_{9}+1\right)}+\frac{3 x_{13} x_{18}^2 x_{27}}{(x_{18} x_{9}-1)^2 \left(x_{18}^2 x_{9}^2-3 x_{18} x_{9}+1\right)}\\
&&-\frac{x_{12} x_{13} x_{18}^3 x_{24} (7 x_{18} x_{9}-3)}{(x_{18} x_{9}-1)^4 \left(x_{18}^2 x_{9}^2-3 x_{18} x_{9}+1\right)}+\frac{x_{13}^2 x_{18}^4}{(x_{18} x_{9}-1)^5 \left(x_{18}^2 x_{9}^2-3 x_{18} x_{9}+1\right)}\\
&&-\frac{x_{13} x_{15} x_{18} (3 x_{18} x_{9}-2)}{(x_{18} x_{9}-1)^5 \left(x_{18}^2 x_{9}^2-3 x_{18} x_{9}+1\right)}\\
&&-\frac{x_{13} x_{18} x_{21} x_{24} \left(2 x_{18}^4 x_{9}^4+2 x_{18}^3 x_{9}^3+3 x_{18}^2 x_{9}^2-4 x_{18} x_{9}+1\right)}{(x_{18} x_{9}-1)^5 \left(x_{18}^2 x_{9}^2-3 x_{18} x_{9}+1\right)}\\
&&-\frac{x_{13} x_{19} x_{24} \left(3 x_{18}^4 x_{9}^4-x_{18}^3 x_{9}^3-3 x_{18}^2 x_{9}^2+3 x_{18} x_{9}-1\right)}{(x_{18} x_{9}-1)^5 \left(x_{18}^2 x_{9}^2-3 x_{18} x_{9}+1\right)}\\
&&-\frac{x_{10}^2 x_{24}^2 \left(3 x_{18}^5 x_{9}^5-9 x_{18}^4 x_{9}^4-x_{18}^3 x_{9}^3+7 x_{18}^2 x_{9}^2-4 x_{18} x_{9}+1\right)}{(x_{18} x_{9}-1)^5 \left(x_{18}^2 x_{9}^2-3 x_{18} x_{9}+1\right)},\\
x_{19} &\longmapsto&
-\frac{x_{10} x_{18}^2}{1-x_{18} x_{9}}+\frac{x_{18} x_{21}}{1-x_{18} x_{9}}+x_{19},\\
x_{20} &\longmapsto&\frac{x_{10} x_{18}^3 x_{24} \left(x_{18}^3 x_{9}^3-3 x_{18}^2 x_{9}^2+1\right)}{(x_{18} x_{9}-1)^3 \left(x_{18}^2 x_{9}^2-3 x_{18} x_{9}+1\right)}+\frac{x_{11} x_{18}^2 (x_{18} x_{9}-1)}{x_{18}^2 x_{9}^2-3 x_{18} x_{9}+1}\\
&&+\frac{x_{12} x_{18}^4 x_{24} (3 x_{18} x_{9}-2)}{(x_{18} x_{9}-1)^2 \left(x_{18}^2 x_{9}^2-3 x_{18} x_{9}+1\right)}-\frac{x_{13} x_{18}^5}{(x_{18} x_{9}-1)^3 \left(x_{18}^2 x_{9}^2-3 x_{18} x_{9}+1\right)}\\
&&+\frac{x_{15} x_{18}^2 (3 x_{18} x_{9}-2)}{(x_{18} x_{9}-1)^3 \left(x_{18}^2 x_{9}^2-3 x_{18} x_{9}+1\right)}-\frac{x_{20}}{(x_{18} x_{9}-1)^4 \left(x_{18}^2 x_{9}^2-3 x_{18} x_{9}+1\right)}\\
&&-\frac{3 x_{18}^2 x_{25} (x_{18} x_{9}-1)}{x_{18}^2 x_{9}^2-3 x_{18} x_{9}+1}+\frac{x_{18}^4 x_{21} x_{24} x_{9}^2 \left(x_{18}^2 x_{9}^2+x_{18} x_{9}-1\right)}{(x_{18} x_{9}-1)^3 \left(x_{18}^2 x_{9}^2-3 x_{18} x_{9}+1\right)}\\
&&+\frac{x_{18} x_{19} x_{24} \left(2 x_{18}^3 x_{9}^3+x_{18}^2 x_{9}^2-2 x_{18} x_{9}+1\right)}{(x_{18} x_{9}-1)^2 \left(x_{18}^2 x_{9}^2-3 x_{18} x_{9}+1\right)}+\frac{x_{18}^3 x_{22}}{x_{18}^2 x_{9}^2-3 x_{18} x_{9}+1}\\
&&-\frac{3 x_{18}^3 x_{27}}{x_{18}^2 x_{9}^2-3 x_{18} x_{9}+1}+\frac{x_{18}^3 x_{5} (x_{18} x_{9}+1)}{(x_{18} x_{9}-1)^4 \left(x_{18}^2 x_{9}^2-3 x_{18} x_{9}+1\right) \left(x_{18}^2 x_{9}^2+2 x_{18} x_{9}-1\right)},\\
x_{23} &\longmapsto&
-\frac{x_{10}^2 x_{18}^2}{(x_{18} x_{9}-1)^2}+\frac{x_{10} x_{12} x_{18}^3}{x_{18} x_{9}-1}+\frac{2 x_{10} x_{18}^2 x_{21} x_{9}}{(x_{18} x_{9}-1)^2}-\frac{x_{12} x_{18}^2 x_{21}}{x_{18} x_{9}-1}\\
&&+\frac{x_{23}}{(x_{18} x_{9}-1)^3 \left(x_{18}^2 x_{9}^2+2 x_{18} x_{9}-1\right)}-\frac{x_{19}^2 x_{9}^2}{x_{18} x_{9}-1}-\frac{x_{21}^2 (2 x_{18} x_{9}-1)}{(x_{18} x_{9}-1)^2},\\
x_{29} &\longmapsto&
-\frac{x_{10}^2 x_{18}^2 x_{24} x_{9}}{(x_{18} x_{9}-1)^2}+\frac{x_{10} x_{12} x_{18}^2 x_{24}}{x_{18} x_{9}-1}+\frac{x_{10} x_{15}}{(x_{18} x_{9}-1)^4}-\frac{x_{10} x_{20} x_{9}^2 (x_{18} x_{9}-2)}{(x_{18} x_{9}-1)^6 \left(x_{18}^2 x_{9}^2+2 x_{18} x_{9}-1\right)}\\
&&+\frac{x_{10} x_{21} x_{24} \left(3 x_{18}^3 x_{9}^3+5 x_{18}^2 x_{9}^2-5 x_{18} x_{9}+1\right)}{(x_{18} x_{9}-1)^2 \left(x_{18}^2 x_{9}^2+2 x_{18} x_{9}-1\right)}-\frac{x_{10} x_{19} x_{24} x_{9}}{x_{18} x_{9}-1}+\frac{x_{10} x_{19} x_{24} x_{9}}{(x_{18} x_{9}-1)^2}\\
&&+2 x_{10} x_{25}-\frac{x_{12} x_{15} x_{18}}{(x_{18} x_{9}-1)^3}+\frac{x_{12} x_{20} x_{9} \left(x_{18}^2 x_{9}^2-3 x_{18} x_{9}+1\right)}{(x_{18} x_{9}-1)^6 \left(x_{18}^2 x_{9}^2+2 x_{18} x_{9}-1\right)}-\frac{x_{12} x_{19} x_{24}}{x_{18} x_{9}-1}\\
&&-\frac{x_{12} x_{18} x_{21} x_{24}}{x_{18} x_{9}-1}-\frac{x_{15} x_{19} x_{9}^2}{(x_{18} x_{9}-1)^4}-\frac{x_{15} x_{18} x_{21} x_{9}^2}{(x_{18} x_{9}-1)^4}-\frac{x_{19} x_{5} x_{9}}{(x_{18} x_{9}-1)^6 \left(x_{18}^2 x_{9}^2+2 x_{18} x_{9}-1\right)}\\
&&-\frac{x_{20} x_{21} x_{9}^3}{(x_{18} x_{9}-1)^6 \left(x_{18}^2 x_{9}^2+2 x_{18} x_{9}-1\right)}+\frac{x_{18} x_{23} x_{24} x_{9}^2 (x_{18} x_{9}-2)}{(x_{18} x_{9}-1)^6 \left(x_{18}^2 x_{9}^2+2 x_{18} x_{9}-1\right)}\\
&&-\frac{x_{29}}{(x_{18} x_{9}-1)^6 \left(x_{18}^2 x_{9}^2+2 x_{18} x_{9}-1\right)}-\frac{x_{19}^2 x_{24} x_{9}^3}{(x_{18} x_{9}-1)^2}-\frac{x_{19} x_{21} x_{24} x_{9}^2}{(x_{18} x_{9}-1)^2}\\
&&-\frac{x_{18} x_{21}^2 x_{24} x_{9}^2}{(x_{18} x_{9}-1)^2}+2 x_{21} x_{27}.
\end{eqnarray*}
\end{enumerate}
We need to show that \textsc{Step}~\ref{step2-3} is an automorphism of the localized ring
\begin{equation}\label{eq-localized-ring-A1311}
  \Bbbk[x_5,x_6,x_7,x_9,\dots,x_{29}]\left[\frac{1}{(1-x_{18}x_9) (1-2x_{18}x_9)
   (1-2 x_{18}x_9-x_{18}^2 x_9^2)
  (1-3 x_{18}x_9+x_{18}^2 x_9^2)}\right].
\end{equation}
In fact, \textsc{Step}~\ref{step2-3} is block upper-triangular in the order
\begin{equation*}
 \{x_{7},x_{17}\}\prec x_{29}\prec x_{23}\prec x_{5}\prec x_{14} \prec \{ x_{13},x_{15}, x_{20}\} \prec x_{12}\prec x_{19}.
\end{equation*}
The transformation inside the block $\{ x_{13},x_{15}, x_{20}\}$ is the matrix
\[
\left(
\begin{array}{ccc}
 -\frac{x_{18}^2 x_{9}^2+2 x_{18} x_{9}-1}{(x_{18} x_{9}-1)^3 \left(x_{18}^2 x_{9}^2-3 x_{18} x_{9}+1\right)} & \frac{ x_{9}^3 (x_{18} x_{9}+1)}{(x_{18} x_{9}-1)^3 \left(x_{18}^2 x_{9}^2-3 x_{18} x_{9}+1\right)} & -\frac{x_{9}^5}{(x_{18} x_{9}-1)^4 \left(x_{18}^2 x_{9}^2-3 x_{18} x_{9}+1\right) \left(x_{18}^2 x_{9}^2+2 x_{18} x_{9}-1\right)} \\[1.5ex]
 0 & \frac{1}{(x_{18} x_{9}-1)^2} & -\frac{ x_{9}^2}{(x_{18} x_{9}-1)^4 \left(x_{18}^2 x_{9}^2+2 x_{18} x_{9}-1\right)} \\[1.5ex]
 -\frac{ x_{18}^5}{(x_{18} x_{9}-1)^3 \left(x_{18}^2 x_{9}^2-3 x_{18} x_{9}+1\right)} & \frac{ x_{18}^2 (3 x_{18} x_{9}-2)}{(x_{18} x_{9}-1)^3 \left(x_{18}^2 x_{9}^2-3 x_{18} x_{9}+1\right)} & -\frac{1}{(x_{18} x_{9}-1)^4 \left(x_{18}^2 x_{9}^2-3 x_{18} x_{9}+1\right)} 
\end{array}
\right)
\]
with determinant 
\[
-\frac{1}{(x_{18} x_{9}-1)^8 \left(x_{18}^2 x_{9}^2-3 x_{18} x_{9}+1\right) \left(x_{18}^2 x_{9}^2+2 x_{18} x_{9}-1\right)}.
\]
The transformation inside the block $\{ x_{7},x_{17}\}$ is the matrix
\[
\left(
\begin{array}{cc}
 -\frac{1}{(x_{18} x_{9}-1)^3 \left(x_{18}^2 x_{9}^2+2 x_{18} x_{9}-1\right)} & -\frac{x_{18}^2}{(x_{18} x_{9}-1)^3} \\[2ex]
 \frac{ x_{9}^2}{(x_{18} x_{9}-1)^4 \left(x_{18}^2 x_{9}^2+2 x_{18} x_{9}-1\right)} & \frac{1}{(x_{18} x_{9}-1)^2} 
\end{array}
\right)
\]
with determinant
\[
\frac{2 x_{18} x_{9}-1}{(x_{18} x_{9}-1)^7 \left(x_{18}^2 x_{9}^2+2 x_{18} x_{9}-1\right)}.
\]
Both determinants are invertible in (\ref{eq-localized-ring-A1311}). Hence \textsc{Step}~\ref{step2-3} is an automorphism.

After these three steps, the ideal generated by the equations of minimal degree at most $2$ is transformed into the ideal 
\begin{equation*}
  \renewcommand{\arraystretch}{1.2}
  \begin{array}{lll}
(x_{12} x_{15}+ x_{13} x_{19}+ x_{14} x_{24},& x_{12} x_{20}+ x_{23} x_{24}+ x_{19} x_{5},& -x_{16} x_{24}- x_{12} x_{26}+ x_{19} x_{28},\\
\hphantom{(} -x_{14} x_{20}+ x_{15} x_{23}+ x_{19} x_{29},& x_{20} x_{28}+ x_{26} x_{5}+ x_{24} x_{7},& x_{17} x_{24}- x_{13} x_{26}- x_{15} x_{28},\\
\hphantom{(}  -x_{16} x_{20}+ x_{23} x_{26}- x_{19} x_{7},& -x_{13} x_{23}+ x_{12} x_{29}+ x_{14} x_{5},& x_{13} x_{20}+ x_{24} x_{29}- x_{15} x_{5},\\
\hphantom{(}   x_{17} x_{20}+ x_{26} x_{29}+ x_{15} x_{7},& x_{13} x_{16}+ x_{12} x_{17}+ x_{14} x_{28},& x_{15} x_{16}- x_{17} x_{19}- x_{14} x_{26},\\
 \hphantom{(}   x_{23} x_{28}+ x_{16} x_{5}- x_{12} x_{7},& x_{17} x_{23}+ x_{16} x_{29}+ x_{14} x_{7},& x_{28} x_{29}- x_{17} x_{5}- x_{13} x_{7})
  \end{array}
  \end{equation*}
in the localized ring (\ref{eq-localized-ring-A1311}). 

\section{(First-order) change of variables for some non-Borel ideals}\label{sec:appendix-changevariable-NonBorelideals}

\subsection{\texorpdfstring{$\boldsymbol{\left((1)\subset (4,1,1)\right)}$}{((1) in (4,1,1))},  extra.dim \texorpdfstring{$\boldsymbol{=6}$}{=6}}\label{sec:3D-1411}
\begin{center}
\begin{tikzpicture}[x=(220:0.6cm), y=(-40:0.6cm), z=(90:0.42cm)]

\foreach \m [count=\y] in {{2,1,1,1},{1},{1}}{
  \foreach \n [count=\x] in \m {
  \ifnum \n>0
      \foreach \z in {1,...,\n}{
        \draw [fill=gray!30] (\x+1,\y,\z) -- (\x+1,\y+1,\z) -- (\x+1, \y+1, \z-1) -- (\x+1, \y, \z-1) -- cycle;
        \draw [fill=gray!40] (\x,\y+1,\z) -- (\x+1,\y+1,\z) -- (\x+1, \y+1, \z-1) -- (\x, \y+1, \z-1) -- cycle;
        \draw [fill=gray!5] (\x,\y,\z)   -- (\x+1,\y,\z)   -- (\x+1, \y+1, \z)   -- (\x, \y+1, \z) -- cycle;  
      }
 \fi
 }
}    

\end{tikzpicture}
\end{center}
\[
I_{\lambda_{1411}}=\left(X_1^4,X_1 X_2, X_1 X_3, X_2^3,X_2 X_3, X_3^2\right).
\]
As we said in the last paragraph of Section~\ref{sec:3D-1311}, Algorithm~\ref{alg-step0-Haiman} and the cutting of terms of  degree at least $3$ give 
\begin{alignat*}{6}
  c_{1, 0, 0}^{1, 1, 0} &\longmapsto x_{1},&\quad c_{1, 0, 0}^{1, 0, 1} &\longmapsto x_{2},&\quad c_{1, 0, 0}^{4, 0, 0} &\longmapsto x_{3},&\quad c_{2, 0, 0}^{1, 1, 0} &\longmapsto x_{4},&\quad c_{2, 0, 0}^{1, 0, 1} &\longmapsto x_{5},&\quad c_{2, 0, 0}^{4, 0, 0} &\longmapsto x_{6},\\ 
  c_{3, 0, 0}^{1, 1, 0} &\longmapsto x_{7},&\quad c_{3, 0, 0}^{1, 0, 1} &\longmapsto x_{8},&\quad c_{3, 0, 0}^{4, 0, 0} &\longmapsto x_{9},&\quad c_{3, 0, 0}^{0, 1, 1} &\longmapsto x_{10},&\quad c_{3, 0, 0}^{0, 3, 0} &\longmapsto x_{11},&\quad c_{3, 0, 0}^{0, 0, 2} &\longmapsto x_{12},\\
   c_{0, 1, 0}^{1, 1, 0} &\longmapsto x_{13},&\quad c_{0, 1, 0}^{0, 1, 1} &\longmapsto x_{14},&\quad c_{0, 1, 0}^{0, 3, 0} &\longmapsto x_{15},&\quad c_{0, 2, 0}^{1, 1, 0} &\longmapsto x_{16},&\quad c_{0, 2, 0}^{1, 0, 1} &\longmapsto x_{17},&\quad c_{0, 2, 0}^{4, 0, 0} &\longmapsto x_{18},\\
    c_{0, 2,0}^{0, 1, 1} &\longmapsto x_{19},&\quad c_{0, 2, 0}^{0, 3, 0} &\longmapsto x_{20},&\quad c_{0, 2, 0}^{0, 0, 2} &\longmapsto x_{21},&\quad c_{0, 0, 1}^{1, 1, 0} &\longmapsto x_{22},&\quad c_{0, 0, 1}^{1, 0, 1} &\longmapsto x_{23},&\quad c_{0, 0, 1}^{4, 0, 0} &\longmapsto x_{24},\\ 
  c_{0, 0,1}^{0, 1, 1} &\longmapsto x_{25},&\quad c_{0, 0, 1}^{0, 3, 0} &\longmapsto x_{26},&\quad c_{0, 0, 1}^{0, 0, 2} &\longmapsto x_{27},
\end{alignat*}
and we denote the resulting ideal by $\mathcal{H}_{\lambda_{1411}}^{\mathrm{cutoff}}$.  
The change of variables
\[
x_{25}\longmapsto x_{1}+x_{25},\quad x_{23}\longmapsto x_{13}+x_{23},\quad x_{14}\longmapsto x_{14}+x_{2},\quad x_{27}\longmapsto x_{27}+2 x_{2}+x_{14},
\]
transforms $\mathcal{H}_{\lambda_{1411}}^{\mathrm{cutoff}}$ into
\begin{equation*}
\renewcommand{\arraystretch}{1.2}
\begin{array}{lllll}      
J_{1411}^{\mathrm{cutoff}}&=&({x}_{11}{x}_{17}-{x}_{12}{x}_{22}+{x}_{10}{x}_{23},& -{x}_{10}{x}_{18}+{x}_{21}{x}_{22}-{x}_{17}{x}_{25},& -{x}_{14}{x}_{22}-{x}_{10}{x}_{24}+{x}_{17}{x}_{26},\\
&&\hphantom{(}-{x}_{12}{x}_{18}+{x}_{21}{x}_{23}-{x}_{17}{x}_{27},& {x}_{3}{x}_{22}+{x}_{24}{x}_{25}+{x}_{18}{x}_{26},& {x}_{3}{x}_{17}+{x}_{14}{x}_{18}+{x}_{21}{x}_{24},\\
&&\hphantom{(}-{x}_{15}{x}_{18}+{x}_{3}{x}_{23}+{x}_{24}{x}_{27},& {x}_{15}{x}_{17}+{x}_{14}{x}_{23}+{x}_{12}{x}_{24},& -{x}_{11}{x}_{21}+{x}_{12}{x}_{25}-{x}_{10}{x}_{27},\\
&&\hphantom{(}{x}_{3}{x}_{12}-{x}_{15}{x}_{21}-{x}_{14}{x}_{27},& -{x}_{11}{x}_{18}+{x}_{23}{x}_{25}-{x}_{22}{x}_{27},& -{x}_{15}{x}_{22}-{x}_{11}{x}_{24}-{x}_{23}{x}_{26},\\
&\hphantom{(}&-{x}_{11}{x}_{14}+{x}_{10}{x}_{15}+{x}_{12}{x}_{26},& -{x}_{3}{x}_{11}+{x}_{15}{x}_{25}+{x}_{26}{x}_{27},& {x}_{3}{x}_{10}-{x}_{14}{x}_{25}+{x}_{21}{x}_{26}).
\end{array}
\end{equation*}
Finally, the map
\begin{alignat*}{6}
x_3&\longmapsto p_{1,2},&\quad x_{10}&\longmapsto p_{3,4},&\quad x_{11}&\longmapsto p_{0,3},&\quad x_{12}&\longmapsto p_{0,4},&\quad x_{14}&\longmapsto p_{2,4},&\quad x_{15}&\longmapsto p_{0,2},\\
 x_{17}&\longmapsto p_{4,5},&\quad x_{18}&\longmapsto p_{1,5},&\quad x_{21}&\longmapsto p_{1,4},&\quad x_{22}&\longmapsto p_{3,5},&\quad x_{23}&\longmapsto p_{0,5},&\quad x_{24}&\longmapsto -p_{2,5},\\
 x_{25}&\longmapsto p_{1,3},&\quad x_{26}&\longmapsto p_{2,3},&\quad x_{27}&\longmapsto -p_{0,1}
\end{alignat*}
 transforms the ideal $J_{1411}^{\mathrm{cutoff}}$ into the Pl\"ucker ideal~\eqref{eq-pluckerideal}. So assuming Conjecture~\ref{conj-existence-equivariant-iso-exdim6},  we obtain
\begin{eqnarray*}
H(A_{{\lambda}_{1411}};\mathbf{t})&=&K\left(\frac{t_{2} \sqrt{t_{3}}}{t_{1}^{3/2}},\frac{t_{1}^{3/2} \sqrt{t_{3}}}{t_{2}},\frac{t_{1}^{3/2} t_{2}}{\sqrt{t_{3}}},\frac{t_{2}^2}{t_{1}^{3/2} \sqrt{t_{3}}},\frac{t_{3}^{3/2}}{t_{1}^{3/2} t_{2}},\frac{t_{1}^{5/2}}{t_{2} \sqrt{t_{3}}}\right)\notag\\
&&\Big/\left((1-t_{1})^3(1-t_{2})^3(1-t_{3})^3(1-t_{1}^3)(1-t_{1}^2)(1-t_{2}^2)\left(\frac{t_{1}-t_{2}}{t_{1}}\right)\left(\frac{t_{1}-t_{3}}{t_{1}}\right)\right.\notag\\
&&\hphantom{\Big/\Big(}\left(\frac{t_{1}^2-t_{2}}{t_{1}^2}\right)\left(\frac{t_{1}^2-t_{3}}{t_{1}^2}\right)\left(\frac{t_{1}^3-t_{2} t_{3}}{t_{1}^3}\right)\left(\frac{t_{1}^3-t_{2}^3}{t_{1}^3}\right)\left(\frac{t_{1}^3-t_{3}^2}{t_{1}^3}\right)\left(\frac{t_{2}-t_{1}}{t_{2}}\right)\left(\frac{t_{2}-t_{3}}{t_{2}}\right)\notag\\
&&\hphantom{\Big/\Big(}\left.\left(\frac{t_{2}^2-t_{1} t_{3}}{t_{2}^2}\right)\left(\frac{t_{2}^2-t_{1}^4}{t_{2}^2}\right)
\left(\frac{t_{2}^2-t_{3}^2}{t_{2}^2}\right)\left(\frac{t_{3}-t_{1} t_{2}}{t_{3}}\right)\left(\frac{t_{3}-t_{1}^4}{t_{3}}\right)\left(\frac{t_{3}-t_{2}^3}{t_{3}}\right)\right).
\end{eqnarray*}
In the following two subsections, we omit the intermediate explanations.

\subsection{\texorpdfstring{$\boldsymbol{\left((2)\subset (3,1,1)\right)}$}{((2) in (3,1,1))},   extra.dim \texorpdfstring{$\boldsymbol{=6}$}{=6}}\label{sec:3D-2311}
\begin{center}
\begin{tikzpicture}[x=(220:0.6cm), y=(-40:0.6cm), z=(90:0.42cm)]

\foreach \m [count=\y] in {{2,2,1},{1},{1}}{
  \foreach \n [count=\x] in \m {
  \ifnum \n>0
      \foreach \z in {1,...,\n}{
        \draw [fill=gray!30] (\x+1,\y,\z) -- (\x+1,\y+1,\z) -- (\x+1, \y+1, \z-1) -- (\x+1, \y, \z-1) -- cycle;
        \draw [fill=gray!40] (\x,\y+1,\z) -- (\x+1,\y+1,\z) -- (\x+1, \y+1, \z-1) -- (\x, \y+1, \z-1) -- cycle;
        \draw [fill=gray!5] (\x,\y,\z)   -- (\x+1,\y,\z)   -- (\x+1, \y+1, \z)   -- (\x, \y+1, \z) -- cycle;  
      }
 \fi
 }
}    

\end{tikzpicture}
\end{center}

\medskip
\[
I_{\lambda_{2311}}=\left(X_1^3, X_1 X_2, X_1^2 X_3,X_2^3,X_2 X_3, X_3^2\right).
\]

\medskip
\begin{alignat*}{6}
{c}_{1, 0, 0}^{1, 1, 0} &\longmapsto  {x}_{1},&\quad {c}_{1, 0, 0}^{3, 0, 0} &\longmapsto {x}_{2},&\quad {c}_{1, 0, 0}^{0, 0, 2} &\longmapsto  {x}_{3},&\quad 
{c}_{2, 0, 0}^{1, 1, 0} &\longmapsto {x}_{4},&\quad {c}_{2, 0, 0}^{3, 0, 0} &\longmapsto  {x}_{5},&\quad {c}_{2, 0, 0}^{2, 0, 1} &\longmapsto {x}_{6},\\
{c}_{2, 0, 0}^{0, 1, 1} &\longmapsto  {x}_{7},&\quad {c}_{2, 0, 0}^{0, 3, 0} &\longmapsto {x}_{8},&\quad {c}_{2, 0, 0}^{0, 0, 2} &\longmapsto  {x}_{9},&\quad 
{c}_{0, 1, 0}^{1, 1, 0} &\longmapsto {x}_{10},&\quad {c}_{0, 1, 0}^{0, 1, 1} &\longmapsto  {x}_{11},&\quad {c}_{0, 1, 0}^{0, 3, 0} &\longmapsto {x}_{12},\\ 
{c}_{0, 2, 0}^{1, 1, 0} &\longmapsto  {x}_{13},&\quad {c}_{0, 2, 0}^{3, 0, 0} &\longmapsto {x}_{14},&\quad {c}_{0, 2, 0}^{2, 0, 1} &\longmapsto  {x}_{15},&\quad 
{c}_{0, 2, 0}^{0, 1, 1} &\longmapsto {x}_{16},&\quad {c}_{0, 2, 0}^{0, 3, 0} &\longmapsto  {x}_{17},&\quad {c}_{0, 2, 0}^{0, 0, 2} &\longmapsto {x}_{18},\\
{c}_{0, 0, 1}^{1, 1, 0} &\longmapsto  {x}_{19},&\quad {c}_{0, 0, 1}^{3, 0, 0} &\longmapsto {x}_{20},&\quad {c}_{0, 0, 1}^{0, 0, 2} &\longmapsto  {x}_{21},&\quad 
{c}_{1, 0, 1}^{1, 1, 0} &\longmapsto {x}_{22},&\quad {c}_{1, 0, 1}^{3, 0, 0} &\longmapsto  {x}_{23},&\quad {c}_{1, 0, 1}^{2, 0, 1} &\longmapsto {x}_{24},\\
 {c}_{1, 0, 1}^{0, 1, 1} &\longmapsto  {x}_{25},&\quad {c}_{1, 0, 1}^{0, 3, 0} &\longmapsto {x}_{26},&\quad {c}_{1, 0, 1}^{0, 0, 2} &\longmapsto  {x}_{27}.
\end{alignat*}
\[
x_{25}\longmapsto x_{4}+x_{25},\quad x_{11}\longmapsto x_{6}+x_{11},\quad x_{21}\longmapsto x_{21}+2 x_{6}+x_{11},\quad x_{5}\longmapsto x_{5}+x_{10}+x_{24},
\]
\begin{equation*}
\renewcommand{\arraystretch}{1.2}
\begin{array}{lll} 
(-{x}_{7}{x}_{14}+{x}_{18}{x}_{19}-{x}_{15}{x}_{25},&{x}_{11}{x}_{14}+{x}_{5}{x}_{15}+{x}_{18}{x}_{20},&{x}_{2}{x}_{5}-{x}_{12}{x}_{14}+{x}_{20}{x}_{21},\\
\hphantom{(}{x}_{8}{x}_{14}+{x}_{19}{x}_{21}-{x}_{2}{x}_{25},&{x}_{5}{x}_{19}+{x}_{20}{x}_{25}+{x}_{14}{x}_{26},&{x}_{2}{x}_{7}+{x}_{8}{x}_{15}-{x}_{3}{x}_{19},\\
\hphantom{(}{x}_{2}{x}_{11}+{x}_{12}{x}_{15}+{x}_{3}{x}_{20},&-{x}_{3}{x}_{14}+{x}_{2}{x}_{18}-{x}_{15}{x}_{21},&-{x}_{8}{x}_{18}-{x}_{7}{x}_{21}+{x}_{3}{x}_{25},\\
\hphantom{(}-{x}_{5}{x}_{7}+{x}_{11}{x}_{25}-{x}_{18}{x}_{26},&-{x}_{12}{x}_{19}-{x}_{8}{x}_{20}-{x}_{2}{x}_{26},&-{x}_{8}{x}_{11}+{x}_{7}{x}_{12}+{x}_{3}{x}_{26},\\
\hphantom{(}-{x}_{5}{x}_{8}+{x}_{12}{x}_{25}+{x}_{21}{x}_{26},&-{x}_{3}{x}_{5}+{x}_{12}{x}_{18}+{x}_{11}{x}_{21},&{x}_{11}{x}_{19}+{x}_{7}{x}_{20}-{x}_{15}{x}_{26}).
\end{array}
\end{equation*}

\begin{alignat*}{6}
x_2&\longmapsto p_{0,5},&\quad  x_3&\longmapsto p_{0,4},&\quad  x_5&\longmapsto p_{1,2},&\quad  x_7&\longmapsto p_{3,4},&\quad  x_8&\longmapsto p_{0,3},&\quad  x_{11}&\longmapsto p_{2,4},\\
x_{12}&\longmapsto p_{0,2},&\quad  x_{14}&\longmapsto p_{1,5},&\quad  x_{15}&\longmapsto p_{4,5},&\quad  x_{18}&\longmapsto p_{1,4},&\quad  x_{19}&\longmapsto p_{3,5},&\quad  x_{20}&\longmapsto -p_{2,5},\\
x_{21}&\longmapsto -p_{0,1},&\quad  x_{25}&\longmapsto p_{1,3},&\quad x_{26}&\longmapsto p_{2,3}.
\end{alignat*}

\begin{eqnarray*}
H(A_{{\lambda}_{2311}};\mathbf{t})&=&K\left(\frac{t_{2} \sqrt{t_{3}}}{\sqrt{t_{1}}},\frac{\sqrt{t_{1}} \sqrt{t_{3}}}{t_{2}},\frac{\sqrt{t_{1}} t_{2}}{\sqrt{t_{3}}},\frac{t_{2}^2}{t_{1}^{3/2} \sqrt{t_{3}}},\frac{t_{3}^{3/2}}{\sqrt{t_{1}} t_{2}},\frac{t_{1}^{5/2}}{t_{2} \sqrt{t_{3}}}\right)\notag\\
&&\Big/\left((1-t_{1})^3(1-t_{2})^2(1-t_{3})^3(1-t_{1}^2)(1-t_{2}^2)\left(\frac{t_{1}-t_{3}^2}{t_{1}}\right)\left(\frac{t_{1}-t_{2}}{t_{1}}\right)^2 \left(\frac{t_{1}-t_{3}}{t_{1}}\right)\right.\notag\\
&&\hphantom{\Big/\Big(}\left(\frac{t_{1}^2-t_{2} t_{3}}{t_{1}^2}\right)
\left(\frac{t_{1}^2-t_{2}^3}{t_{1}^2}\right)\left(\frac{t_{1}^2-t_{3}^2}{t_{1}^2}\right)
\left(\frac{t_{2}-t_{1}}{t_{2}}\right)\left(\frac{t_{2}^2-t_{1}^3}{t_{2}^2}\right)\left(\frac{t_{2}^2-t_{1}^2 t_{3}}{t_{2}^2}\right)\left(\frac{t_{2}-t_{3}}{t_{2}}\right)\left(\frac{t_{2}^2-t_{3}^2}{t_{2}^2}\right)\notag\\
&&\hphantom{\Big/\Big(}\left.\left(\frac{t_{3}-t_{1} t_{2}}{t_{3}}\right)\left(\frac{t_{3}-t_{1}^3}{t_{3}}\right)
\left(\frac{t_{3}-t_{2}}{t_{3}}\right)\left(\frac{t_{3}-t_{1}^2}{t_{3}}\right)\left(\frac{t_{1} t_{3}-t_{2}^3}{t_{1} t_{3}}\right)\right).
\end{eqnarray*}

\subsection{\texorpdfstring{$\boldsymbol{\left((1)\subset (1)\subset (3,1,1)\right)}$}{((1) in (1) in (3,1,1))},  extra.dim \texorpdfstring{$\boldsymbol{=6}$}{=6}}\label{sec:3D-11311}
\begin{center}
\begin{tikzpicture}[x=(220:0.6cm), y=(-40:0.6cm), z=(90:0.42cm)]

\foreach \m [count=\y] in {{3,1,1},{1},{1}}{
  \foreach \n [count=\x] in \m {
  \ifnum \n>0
      \foreach \z in {1,...,\n}{
        \draw [fill=gray!30] (\x+1,\y,\z) -- (\x+1,\y+1,\z) -- (\x+1, \y+1, \z-1) -- (\x+1, \y, \z-1) -- cycle;
        \draw [fill=gray!40] (\x,\y+1,\z) -- (\x+1,\y+1,\z) -- (\x+1, \y+1, \z-1) -- (\x, \y+1, \z-1) -- cycle;
        \draw [fill=gray!5] (\x,\y,\z)   -- (\x+1,\y,\z)   -- (\x+1, \y+1, \z)   -- (\x, \y+1, \z) -- cycle;  
      }
 \fi
 }
}    

\end{tikzpicture}
\end{center}
\[
I_{\lambda_{11311}}=\left(X_1^3, X_1 X_2,X_1X_3,X_2^3,X_2 X_3,X_3^3\right).
\]
\begin{alignat*}{6}
{c}_{1, 0, 0}^{1, 1, 0} &\longmapsto  {x}_{1},&\quad  {c}_{1, 0, 0}^{1, 0, 1} &\longmapsto {x}_{2},&\quad  {c}_{1, 0, 0}^{3, 0, 0} &\longmapsto  {x}_{3},&\quad  
{c}_{2, 0, 0}^{1, 1, 0} &\longmapsto  {x}_{4},&\quad  {c}_{2, 0, 0}^{1, 0, 1} &\longmapsto  {x}_{5},&\quad  {c}_{2, 0, 0}^{3, 0, 0} &\longmapsto {x}_{6},\\
{c}_{2, 0, 0}^{0, 1, 1} &\longmapsto  {x}_{7},&\quad  {c}_{2, 0, 0}^{0, 3, 0} &\longmapsto {x}_{8},&\quad  {c}_{2, 0, 0}^{0, 0, 3} &\longmapsto  {x}_{9},&\quad  
{c}_{0, 1, 0}^{1, 1, 0} &\longmapsto  {x}_{10},&\quad  {c}_{0, 1, 0}^{0, 1, 1} &\longmapsto  {x}_{11},&\quad  {c}_{0, 1, 0}^{0, 3, 0} &\longmapsto {x}_{12},\\
{c}_{0, 2, 0}^{1, 1, 0} &\longmapsto  {x}_{13},&\quad  {c}_{0, 2, 0}^{1, 0, 1} &\longmapsto {x}_{14},&\quad  {c}_{0, 2, 0}^{3, 0, 0} &\longmapsto  {x}_{15},&\quad  
{c}_{0, 2, 0}^{0, 1, 1} &\longmapsto {x}_{16},&\quad  {c}_{0, 2, 0}^{0, 3, 0} &\longmapsto  {x}_{17},&\quad  {c}_{0, 2, 0}^{0, 0, 3} &\longmapsto {x}_{18},\\
{c}_{0, 0, 1}^{1, 0, 1} &\longmapsto  {x}_{19},&\quad  {c}_{0, 0, 1}^{0, 1, 1} &\longmapsto {x}_{20},&\quad  {c}_{0, 0, 1}^{0, 0, 3} &\longmapsto  {x}_{21},&\quad  
{c}_{0, 0, 2}^{1, 1, 0} &\longmapsto {x}_{22},&\quad  {c}_{0, 0, 2}^{1, 0, 1} &\longmapsto  {x}_{23},&\quad  {c}_{0, 0, 2}^{3, 0, 0} &\longmapsto {x}_{24},\\
 {c}_{0, 0, 2}^{0, 1, 1} &\longmapsto  {x}_{25},&\quad  {c}_{0, 0, 2}^{0, 3, 0} &\longmapsto {x}_{26},&\quad  {c}_{0, 0, 2}^{0, 0, 3} &\longmapsto  {x}_{27}.
\end{alignat*}

\smallskip
\[
x_{19}\longmapsto x_{10}+x_{19},\quad x_{20}\longmapsto x_{1}+x_{20},\quad x_{11}\longmapsto x_{2}+x_{11}
\]

\begin{equation*}
\renewcommand{\arraystretch}{1.2}
\begin{array}{lll} 
(-{x}_{7}{x}_{15}-{x}_{14}{x}_{20}+{x}_{18}{x}_{22},&-{x}_{11}{x}_{22}-{x}_{7}{x}_{24}+{x}_{14}{x}_{26},&{x}_{8}{x}_{14}+{x}_{7}{x}_{19}-{x}_{9}{x}_{22},\\
\hphantom{(}{x}_{3}{x}_{22}+{x}_{20}{x}_{24}+{x}_{15}{x}_{26},&{x}_{3}{x}_{14}+{x}_{11}{x}_{15}+{x}_{18}{x}_{24},&-{x}_{12}{x}_{15}+{x}_{3}{x}_{19}+{x}_{21}{x}_{24},\\
\hphantom{(}{x}_{8}{x}_{15}-{x}_{19}{x}_{20}+{x}_{21}{x}_{22},&{x}_{12}{x}_{14}+{x}_{11}{x}_{19}+{x}_{9}{x}_{24},&-{x}_{12}{x}_{22}-{x}_{8}{x}_{24}-{x}_{19}{x}_{26},\\
\hphantom{(}-{x}_{8}{x}_{11}+{x}_{7}{x}_{12}+{x}_{9}{x}_{26},&-{x}_{3}{x}_{8}+{x}_{12}{x}_{20}+{x}_{21}{x}_{26},&{x}_{3}{x}_{7}-{x}_{11}{x}_{20}+{x}_{18}{x}_{26},\\
\hphantom{(}-{x}_{9}{x}_{15}+{x}_{18}{x}_{19}-{x}_{14}{x}_{21},&-{x}_{8}{x}_{18}+{x}_{9}{x}_{20}-{x}_{7}{x}_{21},&{x}_{3}{x}_{9}-{x}_{12}{x}_{18}-{x}_{11}{x}_{21}).
\end{array}\end{equation*}
\begin{alignat*}{6}
x_3&\longmapsto p_{1,2},&\quad x_7&\longmapsto p_{3,4},&\quad x_8&\longmapsto p_{0,3},&\quad x_9&\longmapsto p_{0,4},&\quad x_{11}&\longmapsto p_{2,4},&\quad x_{12}&\longmapsto p_{0,2},\\
x_{14}&\longmapsto p_{4,5},&\quad x_{15}&\longmapsto p_{1,5},&\quad x_{18}&\longmapsto p_{1,4},&\quad x_{19}&\longmapsto p_{0,5},&\quad x_{20}&\longmapsto p_{1,3},&\quad x_{21}&\longmapsto -p_{0,1},\\
x_{22}&\longmapsto p_{3,5},&\quad x_{24}&\longmapsto -p_{2,5},&\quad x_{26}&\longmapsto p_{2,3}.
\end{alignat*}
\begin{eqnarray}\label{eq-equihilb-A11311}
H(A_{{\lambda}_{11311}};\mathbf{t})&=&K\left(\frac{t_{2} t_{3}}{t_{1}},\frac{t_{1} t_{3}}{t_{2}},\frac{t_{1} t_{2}}{t_{3}},\frac{t_{2}^2}{t_{1} t_{3}},\frac{t_{3}^2}{t_{1} t_{2}},\frac{t_{1}^2}{t_{2} t_{3}}\right)\notag\\
&&\Big/\left((1-t_{1})^3(1-t_{2})^3(1-t_{3})^3(1-t_{1}^2)(1-t_{2}^2)(1-t_{3}^2)\left(\frac{t_{1}-t_{2}}{t_{1}}\right)\left(\frac{t_{1}-t_{3}}{t_{1}}\right)\left(\frac{t_{1}^2-t_{2} t_{3}}{t_{1}^2}\right)\right.\notag\\
&&\hphantom{\Big/\Big(}\left(\frac{t_{1}^2-t_{2}^3}{t_{1}^2}\right)\left(\frac{t_{1}^2-t_{3}^3}{t_{1}^2}\right)
\left(\frac{t_{2}-t_{1}}{t_{2}}\right)\left(\frac{t_{2}-t_{3}}{t_{2}}\right)\left(\frac{t_{2}^2-t_{1} t_{3}}{t_{2}^2}\right)\left(\frac{t_{2}^2-t_{1}^3}{t_{2}^2}\right)\left(\frac{t_{2}^2-t_{3}^3}{t_{2}^2}\right)\notag\\
&&\hphantom{\Big/\Big(}\left.\left(\frac{t_{3}^2-t_{1} t_{2}}{t_{3}^2}\right)
\left(\frac{t_{3}-t_{1}}{t_{3}}\right)\left(\frac{t_{3}^2-t_{1}^3}{t_{3}^2}\right)\left(\frac{t_{3}-t_{2}}{t_{3}}\right)\left(\frac{t_{3}^2-t_{2}^3}{t_{3}^2}\right)\right).
\end{eqnarray}

\section{Change of variables in the proof of Proposition~\ref{prop-jacF1321-isolated}}\label{sec:appendix-change-var-jacF1321}
\subsection{\texorpdfstring{$\boldsymbol{y_5\neq 0}$}{ y5 not 0}}
The relations in $\Jac(F_{1321})$ imply
\begin{equation*}
y_{3}= \frac{y_{1} y_{18}-y_{11} y_{9}+y_{12} y_{15}}{y_{5}},\quad
y_{29}= \frac{-y_{13} y_{17}+y_{16} y_{19}-y_{20} y_{6}}{y_{5}}.
\end{equation*}
After the change of variables
\begin{alignat*}{3}
y_{2}&\longmapsto y_{2} + \frac{y_{6} y_{9}}{y_{5}},&\quad 
y_{4}&\longmapsto y_{4} + \frac{y_{9} y_{19}}{y_{5}},&\quad
y_{7}&\longmapsto y_{7} - \frac{y_{18} y_{19}}{y_{5}}, \\
y_{8}&\longmapsto y_{8} -  \frac{y_{15} y_{19}}{y_{5}},&\quad
y_{10}&\longmapsto y_{10} - \frac{y_{6} y_{18}}{y_{5}},&\quad
y_{14}&\longmapsto y_{14} + \frac{y_{9} y_{17}}{y_{5}},\\
y_{21}&\longmapsto y_{21} + \frac{y_{15} y_{17}}{y_{5}},&\quad
y_{24}&\longmapsto y_{24} - \frac{y_{17} y_{18}}{y_{5}},&\quad
y_{27}&\longmapsto y_{27} + \frac{y_{15} y_{6}}{y_{5}}, 
\end{alignat*}
the ideal $\Jac(F_{1231})$ is transformed into the ideal
\begin{equation}\label{eq-jacF1231-invert-y5}
\renewcommand{\arraystretch}{1.2}
\begin{array}{lll} 
(-y_{10} y_{20}-y_{13} y_{24}+y_{16} y_{7},& 
-y_{11} y_{20}+y_{21} y_{5} y_{7}+y_{24} y_{5} y_{8},&
-y_{10} y_{21} y_{5}+y_{11} y_{16}+y_{24} y_{27} y_{5},\\
\hphantom{(}y_{1} y_{16}-y_{14} y_{27} y_{5}+y_{2} y_{21} y_{5},&
 -y_{10} y_{14} y_{5}+y_{12} y_{16}+y_{2} y_{24} y_{5},&
 -y_{1} y_{20}-y_{14} y_{5} y_{8}-y_{21} y_{4} y_{5},\\
\hphantom{(}  -y_{13} y_{14}+y_{16} y_{4}-y_{2} y_{20},&
  y_{13} y_{21}+y_{16} y_{8}+y_{20} y_{27},&
  -y_{1} y_{24}-y_{11} y_{14}+y_{12} y_{21},\\
\hphantom{(}   -y_{10} y_{5} y_{8}-y_{11} y_{13}-y_{27} y_{5} y_{7},& 
   y_{1} y_{7}+y_{11} y_{4}+y_{12} y_{8},&
   -y_{1} y_{10}-y_{11} y_{2}+y_{12} y_{27},\\
\hphantom{(}   -y_{10} y_{4} y_{5}+y_{12} y_{13}+y_{2} y_{5} y_{7},&
     -y_{1} y_{13}+y_{2} y_{5} y_{8}+y_{27} y_{4} y_{5},& 
     y_{12} y_{20}-y_{14} y_{5} y_{7}+y_{24} y_{4} y_{5}).
\end{array}
\end{equation}
Finally, the map
\begin{alignat*}{6}
 y_1&\longmapsto p_{0,1},&\quad y_2&\longmapsto -p_{0,2},&\quad  y_4&\longmapsto p_{0,4},&\quad  y_7&\longmapsto p_{4,5},&\quad
 y_8&\longmapsto -\frac{p_{1,4}}{y_5},&\quad
 y_{10}&\longmapsto -p_{2,5},\\
 y_{11}&\longmapsto -p_{1,5},&\quad y_{12} &\longmapsto -p_{0,5} y_5,&\quad 
 y_{13} &\longmapsto p_{2,4},&\quad y_{14} &\longmapsto p_{0,3},&\quad  y_{16} &\longmapsto p_{2,3},&\quad
 y_{20} &\longmapsto p_{3,4},\\
 y_{21} &\longmapsto \frac{p_{1,3}}{y_5},&\quad
y_{24} &\longmapsto p_{3,5},&\quad  y_{27} &\longmapsto -\frac{p_{1,2}}{y_5} 
\end{alignat*}
transforms the ideal $(\ref{eq-jacF1231-invert-y5})$ into the Pl\"ucker ideal (\ref{eq-pluckerideal}). In the following sections, we omit the intermediate explanations.

\subsection{\texorpdfstring{$\boldsymbol{y_6\neq 0}$}{y6 not 0}}
\vspace{-.5\baselineskip}
\begin{gather*}
y_{20}= \frac{-y_{13} y_{17}+y_{16} y_{19}-y_{29} y_{5}}{y_{6}},\quad
y_{3}=\frac{-y_{1} y_{10}-y_{11} y_{2}+y_{12} y_{27}}{y_{6}}.
\end{gather*}
\begin{alignat*}{3}
y_4&\longmapsto y_{4}+ \frac{y_{19} y_{2}}{y_{6}},&\quad
y_7&\longmapsto y_{7}+\frac{y_{10} y_{19}}{y_{6}},&\quad
y_8&\longmapsto y_{8}-\frac{y_{19} y_{27}}{y_{6}},\\
y_9&\longmapsto y_{9}+\frac{y_{2} y_{5}}{y_{6}},&\quad
y_{14}&\longmapsto y_{14}+\frac{y_{17} y_{2}}{y_{6}},&\quad
y_{15}&\longmapsto y_{15}+\frac{y_{27} y_{5}}{y_{6}},\\
y_{18}&\longmapsto y_{18}-\frac{y_{10} y_{5}}{y_{6}},&\quad
y_{21}&\longmapsto y_{21}+\frac{y_{17} y_{27}}{y_{6}},&\quad
y_{24}&\longmapsto y_{24}+\frac{y_{10} y_{17}}{y_{6}}.
\end{alignat*}

\vspace{-.5\baselineskip}
\begin{equation*}
\renewcommand{\arraystretch}{1.2}
\begin{array}{lll} 
(-y_{13} y_{24}+y_{16} y_{7}+y_{18} y_{29},& 
y_{11} y_{16}-y_{15} y_{24} y_{6}-y_{18} y_{21} y_{6},& 
y_{1} y_{16}+y_{14} y_{15} y_{6}-y_{21} y_{6} y_{9},\\
\hphantom{(} y_{12} y_{16}-y_{14} y_{18} y_{6}-y_{24} y_{6} y_{9},& 
 -y_{11} y_{29}-y_{21} y_{6} y_{7}-y_{24} y_{6} y_{8},& 
 -y_{13} y_{14}+y_{16} y_{4}-y_{29} y_{9},\notag\\
 \hphantom{(} y_{13} y_{21}+y_{15} y_{29}+y_{16} y_{8},& 
  -y_{1} y_{24}-y_{11} y_{14}+y_{12} y_{21},& 
  -y_{11} y_{13}+y_{15} y_{6} y_{7}-y_{18} y_{6} y_{8},\\
\hphantom{(}   y_{12} y_{29}+y_{14} y_{6} y_{7}-y_{24} y_{4} y_{6},&
    y_{1} y_{7}+y_{11} y_{4}+y_{12} y_{8},&
     y_{1} y_{29}-y_{14} y_{6} y_{8}-y_{21} y_{4} y_{6},\notag\\
      y_{12} y_{13}-y_{18} y_{4} y_{6}-y_{6} y_{7} y_{9},&
\hphantom{(}       -y_{1} y_{13}-y_{15} y_{4} y_{6}-y_{6} y_{8} y_{9},& 
       y_{1} y_{18}-y_{11} y_{9}+y_{12} y_{15}).\notag
\end{array}\end{equation*}
\begin{alignat*}{6}
y_1 &\longmapsto p_{0,1},&\quad
y_4&\longmapsto -p_{1,5},&\quad
y_7&\longmapsto p_{4,5},&\quad
y_8&\longmapsto -\frac{p_{0,5}}{y_6},&\quad
y_9&\longmapsto p_{1,2},&\quad
y_{11}&\longmapsto p_{0,4},\\
y_{12}&\longmapsto -p_{1,4} y_6,&\quad
y_{13}&\longmapsto -p_{2,5},&\quad
y_{14}&\longmapsto p_{1,3},&\quad
y_{15}&\longmapsto -\frac{p_{0,2}}{y_6},&\quad
y_{16}&\longmapsto p_{2,3},&\quad
y_{18}&\longmapsto -p_{2,4},\\
y_{21}&\longmapsto -\frac{p_{0,3}}{y_6},&\quad
y_{24}&\longmapsto p_{3,4},&\quad
y_{29}&\longmapsto p_{3,5}.
\end{alignat*}

\subsection{\texorpdfstring{$\boldsymbol{y_7\neq 0}$}{y7 not 0}}
\begin{eqnarray*}
y_{16}= \frac{y_{10} y_{20}+y_{13} y_{24}-y_{18} y_{29}}{y_{7}},\
y_{1}= \frac{-y_{11} y_{4}-y_{12} y_{8}-y_{19} y_{3}}{y_{7}}.
\end{eqnarray*}
\begin{alignat*}{3}
y_{2}&\longmapsto y_{2}+\frac{y_{10} y_{4}}{y_{7}},&\quad
y_{5}&\longmapsto y_{5}-\frac{y_{18} y_{19}}{y_{7}},&\quad
y_{6}&\longmapsto y_{6}+\frac{y_{10} y_{19}}{y_{7}},\\
y_{9}&\longmapsto y_{9}-\frac{y_{18} y_{4}}{y_{7}},&\quad
y_{14}&\longmapsto y_{14}+\frac{y_{24} y_{4}}{y_{7}},&\quad
y_{15}&\longmapsto y_{15}+\frac{y_{18} y_{8}}{y_{7}},\\
y_{17}&\longmapsto y_{17}+\frac{y_{19} y_{24}}{y_{7}},&\quad
y_{21}&\longmapsto y_{21}-\frac{y_{24} y_{8}}{y_{7}},&\quad
y_{27}&\longmapsto y_{27}-\frac{y_{10} y_{8}}{y_{7}}.
\end{alignat*}

\begin{equation*}
\renewcommand{\arraystretch}{1.2}
\begin{array}{lll} 
(-y_{11} y_{20}-y_{15} y_{17} y_{7}+y_{21} y_{5} y_{7},& 
-y_{13} y_{17}-y_{20} y_{6}-y_{29} y_{5},& 
-y_{14} y_{27} y_{7}+y_{2} y_{21} y_{7}-y_{29} y_{3},\\
\hphantom{(} y_{14} y_{15} y_{7}-y_{20} y_{3}-y_{21} y_{7} y_{9},&
  -y_{11} y_{29}+y_{17} y_{27} y_{7}-y_{21} y_{6} y_{7},&
   -y_{13} y_{14}-y_{2} y_{20}-y_{29} y_{9},\\
\hphantom{(}    y_{13} y_{21}+y_{15} y_{29}+y_{20} y_{27},& 
    -y_{11} y_{14}+y_{12} y_{21}-y_{17} y_{3},& 
    -y_{11} y_{13}+y_{15} y_{6} y_{7}-y_{27} y_{5} y_{7},\\
\hphantom{(}     y_{12} y_{29}+y_{14} y_{6} y_{7}-y_{17} y_{2} y_{7},& 
     -y_{13} y_{3}-y_{15} y_{2} y_{7}+y_{27} y_{7} y_{9},& 
     -y_{11} y_{2}+y_{12} y_{27}-y_{3} y_{6},\\
\hphantom{(}      y_{12} y_{13}+y_{2} y_{5} y_{7}-y_{6} y_{7} y_{9},& 
      y_{12} y_{20}-y_{14} y_{5} y_{7}+y_{17} y_{7} y_{9},& 
       -y_{11} y_{9}+y_{12} y_{15}-y_{3} y_{5}).
\end{array}\end{equation*}
\begin{alignat*}{6}
y_2&\longmapsto p_{0,1},&\quad
y_3&\longmapsto p_{0,2},&\quad
y_5&\longmapsto \frac{p_{4,5}}{y_7},&\quad
y_6&\longmapsto p_{1,5},&\quad
y_9&\longmapsto \frac{p_{0,4}}{y_7},&\quad
y_{11}&\longmapsto -p_{2,5},\\
y_{12}&\longmapsto -p_{0,5},&\quad
y_{13}&\longmapsto -p_{1,4},&\quad
y_{14}&\longmapsto p_{0,3},&\quad
y_{15}&\longmapsto \frac{p_{2,4}}{y_7},&\quad
y_{17}&\longmapsto p_{3,5},&\quad
y_{20}&\longmapsto p_{3,4},\\
y_{21}&\longmapsto p_{2,3},&\quad
y_{27}&\longmapsto -p_{1,2},&\quad
y_{28}&\longmapsto p_{1,3} y_7.
\end{alignat*}

\subsection{\texorpdfstring{$\boldsymbol{y_{10}\neq 0}$}{ y10 not 0}}
\vspace{-.5\baselineskip}
\begin{eqnarray*}
y_{20}= \frac{-y_{13} y_{24}+y_{16} y_{7}+y_{18} y_{29}}{y_{10}},\quad
y_{1}= \frac{-y_{11} y_{2}+y_{12} y_{27}-y_{3} y_{6}}{y_{10}}.
\end{eqnarray*}
\begin{alignat*}{3}
y_4&\longmapsto y_{4}+\frac{y_{2} y_{7}}{y_{10}},&\quad
y_5&\longmapsto y_{5}-\frac{y_{18} y_{6}}{y_{10}},&\quad
y_8&\longmapsto y_{8}-\frac{y_{27} y_{7}}{y_{10}},\\
y_9&\longmapsto y_{9}-\frac{y_{18} y_{2}}{y_{10}},&\quad
y_{14}&\longmapsto y_{14}+\frac{y_{2} y_{24}}{y_{10}},&\quad
y_{15}&\longmapsto y_{15}-\frac{y_{18} y_{27}}{y_{10}},\\
y_{17}&\longmapsto y_{17}+\frac{y_{24} y_{6}}{y_{10}},&\quad
y_{19}&\longmapsto y_{19}+\frac{y_{6} y_{7}}{y_{10}},&\quad
y_{21}&\longmapsto y_{21}+\frac{y_{24} y_{27}}{y_{10}}.
\end{alignat*}

\begin{equation*}
\renewcommand{\arraystretch}{1.2}
\begin{array}{lll} 
(-y_{13} y_{17}+y_{16} y_{19}-y_{29} y_{5},&  
y_{10} y_{15} y_{17}-y_{10} y_{21} y_{5}+y_{11} y_{16},& 
 -y_{10} y_{14} y_{8}-y_{10} y_{21} y_{4}-y_{29} y_{3},\\
\hphantom{(}   -y_{10} y_{14} y_{5}+y_{10} y_{17} y_{9}+y_{12} y_{16},&  
   y_{10} y_{17} y_{8}+y_{10} y_{19} y_{21}-y_{11} y_{29},&  
   -y_{13} y_{14}+y_{16} y_{4}-y_{29} y_{9},\\
\hphantom{(}    y_{13} y_{21}+y_{15} y_{29}+y_{16} y_{8},& 
     -y_{11} y_{14}+y_{12} y_{21}-y_{17} y_{3},&  
     -y_{10} y_{15} y_{19}-y_{10} y_{5} y_{8}-y_{11} y_{13},\\
 \hphantom{(}      -y_{10} y_{14} y_{19}+y_{10} y_{17} y_{4}+y_{12} y_{29},&  
       y_{11} y_{4}+y_{12} y_{8}+y_{19} y_{3},&  
       y_{10} y_{15} y_{4}+y_{10} y_{8} y_{9}-y_{13} y_{3},\\
\hphantom{(}         -y_{10} y_{14} y_{15}+y_{10} y_{21} y_{9}+y_{16} y_{3},& 
          y_{10} y_{19} y_{9}-y_{10} y_{4} y_{5}+y_{12} y_{13},&  
          -y_{11} y_{9}+y_{12} y_{15}-y_{3} y_{5}).
\end{array}\end{equation*}
\begin{alignat*}{6}
y_3&\longmapsto p_{0,1},&\quad  
y_4&\longmapsto p_{0,5},&\quad  
y_5&\longmapsto -p_{2,3},&\quad  
y_8&\longmapsto -\frac{p_{1,5}}{y_{10}},&\quad  
y_9&\longmapsto p_{0,2},&\quad  
y_{11}&\longmapsto p_{1,3},\\ 
y_{12}&\longmapsto p_{0,3} y_{10},&\quad  
y_{13}&\longmapsto -p_{2,5},&\quad  
y_{14}&\longmapsto p_{0,4},&\quad  
y_{15}&\longmapsto \frac{p_{1,2}}{y_{10}},&\quad  
y_{16}&\longmapsto -p_{2,4},&\quad  
y_{17}&\longmapsto p_{3,4},\\ 
y_{19}&\longmapsto p_{3,5},&\quad  
y_{21}&\longmapsto \frac{p_{1,4}}{y_{10}},&\quad  
y_{29}&\longmapsto p_{4,5}.
\end{alignat*}

\subsection{\texorpdfstring{$\boldsymbol{y_{19}\neq 0}$}{ y19 not 0}}
\vspace{-.5\baselineskip}
\begin{eqnarray*}
y_{16}\longrightarrow \frac{y_{13} y_{17}+y_{20} y_{6}+y_{29} y_{5}}{y_{19}},\quad
y_{3}\longrightarrow \frac{-y_{1} y_{7}-y_{11} y_{4}-y_{12} y_{8}}{y_{19}}.
\end{eqnarray*}
\begin{alignat*}{6}
y_2&\longmapsto y_{2}+\frac{y_{4} y_{6}}{y_{19}},&\quad
y_9&\longmapsto y_{9}+\frac{y_{4} y_{5}}{y_{19}},&\quad
y_{10}&\longmapsto y_{10}+\frac{y_{6} y_{7}}{y_{19}},\\
y_{14}&\longmapsto y_{14}+\frac{y_{17} y_{4}}{y_{19}},&\quad 
y_{15}&\longmapsto y_{15}-\frac{y_{5} y_{8}}{y_{19}},&\quad
y_{18}&\longmapsto y_{18}-\frac{y_{5} y_{7}}{y_{19}},\\
y_{21}&\longmapsto y_{21}-\frac{y_{17} y_{8}}{y_{19}},& \quad
y_{24}&\longmapsto y_{24}+\frac{y_{17} y_{7}}{y_{19}},& \quad
y_{27}&\longmapsto y_{27}-\frac{y_{6} y_{8}}{y_{19}}.
\end{alignat*}

\begin{equation*}
\renewcommand{\arraystretch}{1.2}
\begin{array}{lll} 
(-y_{10} y_{20}-y_{13} y_{24}+y_{18} y_{29},& 
-y_{11} y_{20}+y_{15} y_{19} y_{24}+y_{18} y_{19} y_{21},& 
y_{10} y_{19} y_{21}-y_{11} y_{29}-y_{19} y_{24} y_{27},\\
\hphantom{(} -y_{1} y_{20}-y_{14} y_{15} y_{19}+y_{19} y_{21} y_{9},& 
 -y_{13} y_{14}-y_{2} y_{20}-y_{29} y_{9},& 
 y_{13} y_{21}+y_{15} y_{29}+y_{20} y_{27},\\
\hphantom{(}  -y_{1} y_{24}-y_{11} y_{14}+y_{12} y_{21},& 
  -y_{10} y_{15} y_{19}-y_{11} y_{13}-y_{18} y_{19} y_{27},& 
  -y_{10} y_{14} y_{19}+y_{12} y_{29}+y_{19} y_{2} y_{24},\\
\hphantom{(}   y_{1} y_{29}-y_{14} y_{19} y_{27}+y_{19} y_{2} y_{21},& 
   -y_{1} y_{10}-y_{11} y_{2}+y_{12} y_{27},& 
   y_{10} y_{19} y_{9}+y_{12} y_{13}+y_{18} y_{19} y_{2},\\
 \hphantom{(}   -y_{1} y_{13}+y_{15} y_{19} y_{2}-y_{19} y_{27} y_{9},&
     y_{12} y_{20}-y_{14} y_{18} y_{19}-y_{19} y_{24} y_{9},& 
     y_{1} y_{18}-y_{11} y_{9}+y_{12} y_{15}).
\end{array}\end{equation*}
\begin{alignat*}{6}
y_1&\longmapsto p_{0,1},&\quad 
y_2&\longmapsto p_{0,5},&\quad 
y_9&\longmapsto -p_{0,3},&\quad 
y_{10}&\longmapsto -p_{2,5},&\quad 
y_{11}&\longmapsto -p_{1,2},&\quad 
y_{12}&\longmapsto p_{0,2} y_{19},\\ 
y_{13}&\longmapsto -p_{3,5},&\quad 
y_{14}&\longmapsto p_{0,4},&\quad 
y_{15}&\longmapsto \frac{p_{1,3}}{y_{19}},&\quad 
y_{18}&\longmapsto -p_{2,3},&\quad 
y_{20}&\longmapsto p_{3,4},&\quad 
y_{21}&\longmapsto -\frac{p_{1,4}}{y_{19}},\\ 
y_{24}&\longmapsto -p_{2,4},&\quad 
y_{27}&\longmapsto -\frac{p_{1,5}}{y_{19}},&\quad 
y_{29}&\longmapsto  -p_{4,5}.
\end{alignat*}

\end{appendix}


\end{document}